\tikzset{dot/.style={circle, fill=black, inner sep=.05cm}}
\newtheorem{theorem}{Theorem}[section]
\newtheorem{lemma}[theorem]{Lemma}
\newtheorem{proposition}[theorem]{Proposition}
\newtheorem{corollary}[theorem]{Corollary} 
\newtheorem*{thmrd}{Theorem \ref{thm:representable diagonal}}
\theoremstyle{definition}
\newtheorem{definition}[theorem]{Definition}
\newtheorem{example}[theorem]{Example}
\newtheorem{convention}[theorem]{Convention}
\newtheorem{remark}[theorem]{Remark}
\newcommand{\acolor}[1]{{\color{purple!60!black} #1}}
\newcommand{\comment}[1]{}
\newcommand{\R}{\mathds{R}}
\newcommand{\Rbar}{\overline{\R}}
\newcommand{\Z}{\mathds{Z}}
\newcommand{\ZZ}{\mathds{Z}}
\newcommand{\Q}{\mathds{Q}}
\newcommand{\N}{\mathds{N}}
\newcommand{\PP}{\mathds{P}}
\newcommand{\T}{\mathds{T}}
\newcommand{\TT}{\mathds{T}}
\newcommand{\mc}{\mathcal}
\newcommand{\sigmabar}{\overline{\sigma}}
\newcommand{\mMbar}{\overline{\mathcal M}}
\newcommand{\calO}{\mathcal{O}}
\newcommand\relint{\mathrm{relint}}
\DeclareMathOperator{\Pic}{Pic}
\DeclareMathOperator{\Div}{Div}
\DeclareMathOperator{\Aut}{Aut}
\DeclareMathOperator{\Isom}{Isom}
\newcommand\alg{{\mathrm{alg}}}
\newcommand\fin{{\mathrm{fin}}}
\DeclareMathOperator{\val}{val}
\DeclareMathOperator{\Trop}{Trop}
\DeclareMathOperator{\id}{id}
\newcommand{\APL}{\mathrm{Rat}}
\newcommand{\Divs}{\Div}
\newcommand{\bL}{{\mathds L}}
\newcommand{\SPic}{{\mathrm{RPic}}}
\newcommand\lcm{\mathrm{lcm}}
\newcommand{\TPL}{TPL}
\newcommand{\mTPL}{\mathrm{TPL}}
\DeclareMathOperator{\Aff}{Aff}
\DeclareMathOperator{\PL}{PL}
\DeclareMathOperator{\Harm}{H}
\newcommand\Mgn[1]{\mathcal M_{#1}}
\newcommand\Mgnbar[1]{\mMbar_{#1}}
\newcommand\MgnMf[1]{\mc M_{#1}^{\mathrm{Mf}}}
\newcommand\Atlass[1]{{\overline {\mc V}_{#1}}}
\newcommand\Atlas[1]{\mc V^{Mf}_{#1}}
\newcommand\GoodAtlas[1]{\mc V^{\mathrm{good}}_{#1}}
\newcommand\br{\mathrm{br}}
\newcommand\src{\mathrm{src}}
\newcommand\WS{{\mathrm{WS}}}
\newcommand\TSM{{\mathrm{TSM}}}
\newcommand\ev{{\mathrm{ev}}}
\renewcommand\injlim\varinjlim
\newcommand{\mysetminusD}{\hbox{\tikz{\draw[line width=0.6pt,line cap=round] (3pt,0) -- (0,6pt);}}}
\newcommand{\mysetminusT}{\mysetminusD}
\newcommand{\mysetminusS}{\hbox{\tikz{\draw[line width=0.45pt,line cap=round] (2pt,0) -- (0,4pt);}}}
\newcommand{\mysetminusSS}{\hbox{\tikz{\draw[line width=0.4pt,line cap=round] (1.5pt,0) -- (0,3pt);}}}
\newcommand{\mysetminus}{\mathbin{\mathchoice{\mysetminusD}{\mysetminusT}{\mysetminusS}{\mysetminusSS}}}
\renewcommand\setminus\mysetminus
\renewcommand\smallsetminus\mysetminus
\title[genusone]{Tropical $\psi$ classes}
\author{Renzo Cavalieri}
\address{Department of Mathematics, Colorado State University, Fort Collins, Colorado 80523-1874}
\email{\href{mailto:renzo@math.colostate.edu}{renzo@math.colostate.edu}}
\author{Andreas Gross}
\address{Department of Mathematics, Colorado State University, Fort Collins, Colorado 80523-1874}
\email{\href{mailto:andreas.gross@colostate.edu }{andreas.gross@colostate.edu }}
\author{Hannah Markwig}
\address{Eberhard Karls Universit\"at T\"ubingen, Fachbereich Mathematik, Auf der Morgenstelle 10, 72076 T\"ubingen, Germany}
\email{\href{mailto:hannah@math.uni-tuebingen.de}{hannah@math.uni-tuebingen.de}}
\thanks{}
\subjclass[2010]{14T05; 14A20}
\begin{document}

\begin{abstract}
We introduce a tropical geometric framework that allows us to define $\psi$ classes for moduli spaces of tropical curves of arbitrary genus. We prove correspondence theorems between algebraic and tropical $\psi$ classes for some one-dimensional families of genus-one tropical curves.
\end{abstract}

\maketitle


\section{Introduction}
\renewcommand*{\thetheorem}{\Alph{theorem}}

\subsection{Results}
The main goal of this manuscript is to introduce  {\it $\psi$ classes}  for moduli spaces of tropical curves of arbitrary genus. This is achieved in Definition \ref{def:psi}: \emph{$\psi_i$ is the first Chern class of the $i$-th cotangent line bundle on the stack $\Mgnbar{g,n}$ of families of tropical, $n$-marked curves of genus $g$}. We postpone by a few paragraphs  an overview of the technical work  necessary to make such a natural  definition, and begin by discussing some  features of the resulting theory.

First off, $\psi_i$ is a Chern class on a stack, that is the assignment of a (weak)  Chow class on $B$ for any family of tropical curves $\pi: \mc C \to B$, appropriately compatible with base changes (Definition \ref{def:fcc}). As such, any {\it correspondence} statement requires as input a family of curves in algebraic geometry.
We provide two non-trivial correspondence theorems for one-dimensional families of tropical curves of genus one. The first arises from a cycle of well-spaced tropical stable maps to $\T\PP^2$ passing through eight general points.

\begin{theorem}[Proposition \ref{prop:degree of psi1 on pencil}, Proposition \ref{prop:degree of map from pencil to moduli space}]
\label{thm:pcub}
Let $WS_{8}(\T\PP^2,3)$ denote the space of well-spaced tropical stable maps of degree three to the tropical projective plane, with eight marked ends. We denote by $B= \cap_{i=1}^8 ev^{-1}_i(P_i)\subset WS_{8}(\T\PP^2,3)$ the one-dimensional locus of maps where the marked ends are mapped to eight fixed general points in the plane. 
We define  a one-dimensional cycle $\alpha_B$ supported on $B$ and a family of tropical curves $\mc C \to B$, giving rise to a map $g: B \to \Mgnbar{1,17}$. Forgetting the nine non-contracted ends and seven out of the eight marks, one gets eight covering functions $f_i: B \to \Mgnbar{1,\{i\}}$.
For $i = 1, \ldots, 8$, we have
\begin{equation}
    \frac{\deg (f_i^\ast\psi_i \cdot \alpha_B)}{\deg (f_{i\ \ast}(\alpha_B)) } = \frac{1}{24}.
\end{equation}
\end{theorem}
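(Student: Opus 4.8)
The plan is to compute the two quantities in the ratio separately—the numerator $\deg(f_i^\ast\psi_i\cdot\alpha_B)$ is the content of Proposition~\ref{prop:degree of psi1 on pencil} and the denominator $\deg(f_{i\,\ast}\alpha_B)$ that of Proposition~\ref{prop:degree of map from pencil to moduli space}—and then to divide. The organizing principle is that the target $\Mgnbar{1,\{i\}}=\Mgnbar{1,1}$ is one-dimensional: it is the ray whose coordinate is the length $\ell$ of the genus-one loop seen after forgetting all marks but the $i$-th, carrying the generic $\Z/2$ automorphism that flips the loop. Consequently $f_{i\,\ast}\alpha_B$ is a multiple of the fundamental ray, and the projection formula for the weak Chow classes of Definition~\ref{def:fcc} gives $\deg(f_i^\ast\psi_i\cdot\alpha_B)=\deg(\psi_i\cdot f_{i\,\ast}\alpha_B)$. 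If $f_{i\,\ast}\alpha_B=m\cdot[\mathrm{ray}]$ with $m=\deg(f_{i\,\ast}\alpha_B)$, the ratio collapses to the intrinsic degree of $\psi_i$ against the fundamental ray of $\Mgnbar{1,1}$, which I expect to be $\tfrac1{24}$ and independent of $i$ and of the pencil. In this reading the algebraic input—the realizable, i.e.\ well-spaced, family coming from the pencil of cubics—serves only to provide a genuine family through which this intrinsic number becomes visible.

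To prove Proposition~\ref{prop:degree of map from pencil to moduli space} I would first describe $B$ as a one-dimensional weighted polyhedral complex, reading off its cells from the combinatorial types of well-spaced degree-three tropical stable maps through the eight fixed points and reading the weights defining $\alpha_B$ from the associated tropical multiplicities. On each cell the loop length $\ell$ is an explicit affine function of the cell parameter, and $f_i$ is exactly the assignment of $\ell$. The degree $m=\deg(f_{i\,\ast}\alpha_B)$ is then the weighted count of preimages of a generic point of the ray, namely $\sum_{\text{cells}}(\text{weight})\cdot(\text{slope of }\ell)$. The work here is combinatorial: enumerate the types occurring in $B$, verify balancing so that this count is well-defined, and confirm that $f_{i\,\ast}\alpha_B$ is indeed a clean nonzero multiple of the fundamental ray.

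For Proposition~\ref{prop:degree of psi1 on pencil} I would unwind Definition~\ref{def:psi}: $f_i^\ast\psi_i$ is the first Chern class of the cotangent line along the $i$-th leg, pulled back to $B$, hence a piecewise-linear function of the cell parameter whose Chern class is supported at the vertices of $B$ where the combinatorial type changes. At each such vertex I would compute the weight that $\psi_i$ assigns from the local model of the cotangent line bundle, and then pair these weights against $\alpha_B$. The essential contribution comes from the vertices where the loop degenerates ($\ell\to0$, the weight-one genus-one vertex), and it is precisely this local datum that should reproduce the factor $\tfrac1{24}$.

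The crux, and the genuinely new point beyond genus zero, is the local computation of $\psi_i$ at the loop-degeneration vertex of $\Mgnbar{1,1}$: one must show that the tropical cotangent line bundle, defined through the family $\mc C\to B$, contributes exactly $\tfrac1{24}$ per unit degree of the covering. This is the tropical shadow of the classical statement that $\psi_1$ has degree $\tfrac1{24}$ on the moduli stack of one-pointed genus-one curves, and the delicate part is fixing the stacky normalization correctly—the generic $\Z/2$ of the loop together with the extra automorphism structure concentrated at the weight-one vertex. Once this local degree is in hand, combining it with the projection formula and the denominator of Proposition~\ref{prop:degree of map from pencil to moduli space} yields the ratio $\tfrac1{24}$ uniformly in $i$.
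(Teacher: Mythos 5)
Your central reduction does not survive contact with the framework of this paper. You propose to use a projection formula to collapse the ratio to ``the intrinsic degree of $\psi_i$ against the fundamental ray of $\Mgnbar{1,1}$,'' and then to extract the factor $\tfrac1{24}$ from a local stacky computation at the loop-degeneration vertex. But in the tropical theory there is no such intrinsic number: $\Mgnbar{1,1}$ is not a geometric stack, it carries no fundamental cycle (Remark \ref{rem:fundcl}), and tropical cycles cannot even be defined on it --- cycles and degrees only exist on tropical spaces $T$ mapping to the stack. Worse, Example \ref{ex:psimanyvalues} is a direct counterexample to your claim: the families $\mc C^a\to\T\PP^1$ of Example \ref{ex:tropfam} all induce maps to $\Mgnbar{1,1}$ of the same set-theoretic degree, yet $\deg(f_a^\ast\psi_1)=a$ depends on the affine structure of the total space. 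So $\deg(f_i^\ast\psi_i\cdot\alpha_B)$ is \emph{not} determined by $f_{i\,\ast}\alpha_B$, the projection-formula step is meaningless here (there is no divisor on $\Mgnbar{1,1}$, nor even on $\Atlass{1,1}$, representing $\psi_1$ intrinsically), and the whole point of Theorem \ref{thm:pcub} is that the value $\tfrac1{24}$ is a feature of this particular (realizable, well-spaced) family rather than of the moduli stack.

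The localization you propose for the numerator is also wrong in substance. By Theorem \ref{thm:pull-back of psi class} one first reduces to the $17$-marked family $g\colon B\to\Mgnbar{1,17}$, and then Proposition \ref{prop:representing psi-class by divisor} and Corollary \ref{cor:support of psi-class} show that $\psi_i$ is represented by a divisor supported where the $i$-marked leg is adjacent to a \emph{four-valent genus-zero vertex} --- i.e.\ at the finitely many points of $B$ where $p_i$ collides with a vertex of the image cubic --- not at the vertices where the loop contracts. At each such point the contribution is an integer ($1$ when the image curve has no multiplicity-two edge, $2$ when it does), computed from a local genus-zero model isomorphic to an open set in a family over $\Mgnbar{0,4}$ and Corollary \ref{cor:psi class in genus 0}; these sum to $432$. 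The denominator is $\deg(f_{i\,\ast}\alpha_B)=2\cdot 12\cdot 432$, where $12$ is the count of rational cubics through the eight points, $432$ accounts for markings and orientations, and the factor $2$ comes from the double cover $\Atlass{1,1}\to\Mgnbar{1,1}$. The fraction $\tfrac1{24}$ appears only as this ratio of two global integers; it is never a local contribution, and no ``$\tfrac1{24}$ per unit degree'' computation at a genus-one vertex exists or could exist (divisor--cycle pairings at a vertex produce integers against the integer weights of $\alpha_B$). Your outline for the denominator (weighted preimage count of a generic loop length) is fine in spirit, but the numerator requires the cotangent-line machinery of Section \ref{sec:intersection theory}, not a stacky normalization at the cusp.
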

The second correspondence theorem arises from tropical Hurwitz theory.
\begin{theorem}[Proposition \ref{prop:cycle on admissible cover space}, Proposition \ref{prop:degree of psi on admissible covers space}]
\label{thm:ac}
For $d \geq 2$, denote by $B_d$ (a finite cover of) the space of genus-one tropical admissible covers of degree $d$ of a genus-zero tropical curve with ramification profile $((d),(d),(2,1^{d-2}),(2,1^{d-2}))$. 
We define a cycle $\alpha_{B_d}$ supported on $B_d$ and a family of tropical curves giving rise to a map $\pi: B_d \to \Mgnbar{1,1} $. We have:
\begin{equation}
    \frac{\deg (\pi^\ast\psi_1 \cdot \alpha_{B_d})}{\deg (\pi_{\ast}(\alpha_{B_d})) } = \frac{1}{24}.
\end{equation}
\end{theorem}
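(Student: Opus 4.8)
The plan is to reduce the ratio to the single intersection number $\int_{\Mgnbar{1,1}}\psi_1$ by a projection formula, and then to evaluate this number as $1/24$. Recall that the coarse space of the tropical moduli stack $\Mgnbar{1,1}$ is the ray $\R_{\geq 0}$, whose coordinate is the length $\ell$ of the unique cycle of a genus-one one-marked tropical curve and whose generic point carries the order-two automorphism flipping the loop. In particular every one-dimensional cycle on $\Mgnbar{1,1}$ is a scalar multiple of the fundamental cycle. So the first goal is to show that $\pi_\ast\alpha_{B_d}=c_d\cdot[\Mgnbar{1,1}]$ for some weight $c_d>0$; this $c_d$ is a Hurwitz-type count, namely the weighted number of covers in $B_d$ whose source has a cycle of prescribed length.

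First I would describe $B_d$ explicitly. The target is a genus-zero tropical curve carrying the four branch points, so that the relevant base of the family is (a piece of) $\Mt{0,4}$, the tripod recording the length of the bounded edge together with the three ways of pairing the branch points. Over each combinatorial type I would enumerate the degree-$d$ harmonic morphisms with ramification profile $((d),(d),(2,1^{d-2}),(2,1^{d-2}))$ whose source has genus one; by Riemann--Hurwitz the total ramification equals $2d$, matching $g=1$, and the cycle of the source is created precisely when the two simple branch points become linked through the cover. The quantitative heart of this step is to express the loop length $\ell$ of the source as an integral-affine function of the edge length of the base, so that $\pi$ is realized as an explicit morphism of cone complexes whose local degrees, weighted by the defining data of $\alpha_{B_d}$ and by the automorphisms of each cover, assemble to $\pi_\ast\alpha_{B_d}=c_d\cdot[\Mgnbar{1,1}]$.

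Granting this, the conclusion is formal. The projection formula for the pullback of the weak Chow class $\psi_1$ along the family defining $\pi$ gives
\begin{equation}
\deg\bigl(\pi^\ast\psi_1\cdot\alpha_{B_d}\bigr)=\deg\bigl(\psi_1\cdot\pi_\ast\alpha_{B_d}\bigr)=c_d\cdot\int_{\Mgnbar{1,1}}\psi_1,
\end{equation}
while $\deg(\pi_\ast\alpha_{B_d})=c_d$. Thus the weight $c_d$, which carries all of the $d$-dependence and all of the combinatorial content, cancels, and the ratio equals $\int_{\Mgnbar{1,1}}\psi_1$ for every $d$. It then remains to evaluate this integral: writing $\psi_1$ as the first Chern class of the cotangent line of the universal curve over $\R_{\geq 0}$, I expect the class to be concentrated at the cone point $\ell=0$, with weight dictated by the stacky structure of $\Mgnbar{1,1}$ at the totally degenerate curve, producing exactly $\int_{\Mgnbar{1,1}}\psi_1=\tfrac{1}{24}$.

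The main obstacle is twofold. On the structural side, one must justify the projection formula for the weak Chow classes of Definition \ref{def:fcc} on the tropical stack $\Mgnbar{1,1}$ and the compatibility of $\pi^\ast\psi_1$ with the family cutting out $\pi$; this is precisely what licenses the cancellation of $c_d$ and hence the $d$-independence of the answer. On the computational side, the genuinely delicate point is the value $\int_{\Mgnbar{1,1}}\psi_1=\tfrac{1}{24}$: in contrast to the genus-zero situation the answer is not an integer and appears only after correctly weighting the loop-flip automorphism along the generic stratum and the enhanced automorphisms of the curve at $\ell=0$. By contrast the explicit determination of $B_d$ and $\alpha_{B_d}$, though laborious, influences only the cancelling weight $c_d$ and not the final ratio.
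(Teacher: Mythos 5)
Your proposed reduction fails at its very first structural step, and it fails for a reason this paper goes out of its way to emphasize: in this tropical theory the stack $\Mgnbar{1,1}$ carries no fundamental cycle and, more fundamentally, no notion of tropical cycles at all. By Remark \ref{rem:fundcl}, the underlying set $\vert\Mgnbar{1,1}\vert$ has a natural \TPL{}-structure but no natural affine structure, hence no balancing condition and no group of cycles; so your premise that ``every one-dimensional cycle on $\Mgnbar{1,1}$ is a scalar multiple of the fundamental cycle'' cannot even be formulated, and neither can the expression $\psi_1\cdot\pi_\ast\alpha_{B_d}$ in your projection formula, since $\psi_1$ is defined (Definition \ref{def:fcc}) only through pull-backs to tropical spaces mapping to the stack. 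Worse, the conclusion you want to extract --- that $\deg(\pi^\ast\psi_1\cdot\alpha_{B_d})$ depends on the family only through $\pi_\ast\alpha_{B_d}$, so that all combinatorial content cancels in the ratio --- is explicitly false in this theory: Example \ref{ex:psimanyvalues} exhibits families $\mc C^a\to\T\PP^1$ inducing maps to $\Mgnbar{1,1}$ of equal degree for which $\deg(f_a^\ast\psi_1)=a$ takes every integer value. The degree of $\pi^\ast\psi_1$ genuinely depends on the affine structure of the total space of the family, which the push-forward of $\alpha_{B_d}$ does not record. For the same reason there is no tropical statement ``$\int_{\Mgnbar{1,1}}\psi_1=\tfrac{1}{24}$'' concentrated at the cone point waiting to be computed; in the paper the number $1/24$ only ever arises as a ratio of two separately computed degrees.

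What the paper does instead is route the $\psi$-computation through genus zero, where the theory is unproblematic. The space $B_d$ carries two maps: the source map $\src$ to the cycle-rigidified space $\Atlass{1,2d}$ and the branch map $\br$ to $\Mgnbar{0,4}$, which is an honest tropical space carrying a fundamental cycle. Since the admissible cover has degree $d$ on the first marked leg, one has the comparison $d\cdot \pi^\ast\psi_1=\br^\ast\psi_\diamond$ of pulled-back line bundles on $B_d$; the projection formula is then applied along $\br$ (a morphism of tropical spaces, not a morphism to a stack), and $\int_{\Mgnbar{0,4}}\psi_\diamond\cdot\br_\ast\alpha_{B_d}$ is evaluated using the genus-zero description of $\psi_\diamond$ (Corollary \ref{cor:psi class in genus 0}) together with the degree of the tropical branch map (Proposition \ref{prop:cycle on admissible cover space}~b). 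The denominator $\deg(\pi_\ast\alpha_{B_d})$ is computed not on the stack but by pushing forward to the rigidified cover $\Atlass{1,1}$, ray by ray through the four classes of combinatorial types in Figure \ref{fig:admissible combinatorial types} (Proposition \ref{prop:cycle on admissible cover space}~c); the final factor of $2$ relating $\Atlass{1,1}$ to $\Mgnbar{1,1}$ (the generic $B\mu_2$ stabilizer) is what turns $1/12$ into $1/24$. There is also a technical point your outline skips entirely: the naive family $\mc C_1\to B_d$ is not a family of tropical curves (legs adjacent to genus-one vertices lack affine functions), and the paper must pass to a degree-$\lcm(2,d)$ cover $\widetilde B_d$ before any intersection theory can be done. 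If you want to salvage your outline, the fix is exactly this substitution: replace the push-pull along $\pi$ to the stack, which does not exist, by a push-pull along $\br$ to $\Mgnbar{0,4}$, which does.
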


Theorems \ref{thm:pcub}, \ref{thm:ac} are instances of  the tropical counterpart to the algebraic statement 
\begin{equation}\label{eq:cltr}
    \int_{[\Mgnbar{1,1}^\alg]} \psi_1 = \frac{1}{24}.
\end{equation}
As we will discuss more in depth later in the introduction, for $g>0$ the moduli spaces $\Mgnbar{g,n}$ do not carry a universal family, nor a canonical fundamental cycle.
Not only can one not make \eqref{eq:cltr} into a tropical statement;  in  Example \ref{ex:psimanyvalues} we construct families of genus-one tropical curves giving rise to  maps of equal degree to $\Mgnbar{1,1}$, but with different evaluations for $\psi_1$. This is not surprising since the theory we constructed is entirely combinatorial; the best correspondence statement one may expect would compare the tropical and algebraic $\psi$ classes for any family of tropical curves which arises as the tropicalization of an algebraic family of curves. While we do not prove such a statement in full generality here, Theorems \ref{thm:pcub} and \ref{thm:ac} provide some compelling initial evidence. 

A key feature of $\psi$ classes in algebraic geometry is their behavior with respect to pull-back and push-forward via tautological forgetful morphisms. Our next result recovers similar statements for the tropical theory.

\begin{theorem}[Theorem \ref{thm:pull-back of psi class}, Theorem \ref{thm:germ of dilaton}] \label{thm:C} Let $g,n \in \Z_{\geq 0}$ such that $2g-2+n >0$, and let $\pi_\star: \Mgnbar{g,n\sqcup \{\star\}} \to \Mgnbar{g,n}$ denote the morphism forgetting the end marked with $\star$. Then:
\begin{enumerate}
    \item for $i = 1, \ldots, n$,
    \begin{equation*}
     \psi_i = \pi_\star^\ast \psi_i+
    D_{i,\star},   
    \end{equation*}
    where $D_{i,\star}$ denotes the divisor of curves where the $i$- and $\star$-ends are are part of a tripod whose edges are all infinite. 
    \item
    \[
\pi_*(\psi_\star\cdot[\MgnMf{g,n\sqcup\{\star\}}])=(2g-2+n)\cdot [\MgnMf{g,n}] \ , 
\]
where $[\MgnMf{g,n}]$ denotes the cycle supported with  weight one on the cones of tropical curves with all vertices of genus zero.
\end{enumerate}
\end{theorem}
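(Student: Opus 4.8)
The plan is to realize both identities as the tropical shadows of the classical comparison and dilaton relations for $\psi$ classes, reducing each to an explicit computation on the cones of the moduli stack. Throughout I write $\bL_i$ for the $i$-th cotangent line bundle, so that $\psi_i = c_1(\bL_i)$ by Definition~\ref{def:psi}, and I use that tropical line bundles are represented by piecewise linear functions, with tensor product corresponding to addition and $c_1$ to the associated Weil divisor.

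For part (1), I would construct a comparison between $\bL_i$ on $\Mgnbar{g,n\sqcup\{\star\}}$ and the pullback $\pi_\star^*\bL_i$ of the $i$-th cotangent line bundle from $\Mgnbar{g,n}$. The guiding observation is that the cotangent line at the $i$-th end is determined by the germ of the curve at that end, and forgetting $\star$ leaves this germ untouched except where the stabilization triggered by $\pi_\star$ alters it. This alteration occurs precisely along the divisor $D_{i,\star}$, where $i$ and $\star$ are the two infinite legs of a tripod joined to the remainder of the curve by a third infinite edge (the tropical image of the rational bubble carrying $i$, $\star$, and a node). A local computation on the cone parametrizing such curves, comparing the piecewise linear functions cutting out $\bL_i$ and $\pi_\star^*\bL_i$, should show that these two tropical Cartier divisors differ by the function whose associated Weil divisor is $D_{i,\star}$ with weight one. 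Passing to $c_1$ then yields $\psi_i = \pi_\star^*\psi_i + D_{i,\star}$.

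For part (2), I would identify $\pi_\star\colon\Mgnbar{g,n\sqcup\{\star\}}\to\Mgnbar{g,n}$ with the universal curve and compute the pushforward fiber by fiber. Over a generic point of a maximal cone of $\MgnMf{g,n}$ --- a tropical curve $C$ with all vertices trivalent of genus zero --- the fiber of $\pi_\star$ is a modification of $C$ parametrizing the attaching point of the new end $\star$, and the restriction of $\psi_\star$ to this fiber is a balanced $0$-cycle. The core of the argument is to show that this $0$-cycle has degree equal to the number of vertices of $C$: as $\star$ sweeps the interior of an edge the cotangent line $\bL_\star$ varies linearly and contributes nothing, while each time $\star$ crosses a trivalent vertex $v$ the fan structure has a wall and $c_1(\bL_\star)$ picks up a single point of weight one at $v$. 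For a connected genus-$g$ tropical curve with $n$ legs and only trivalent genus-zero vertices one has $E - V + 1 = g$ and $3V = 2E + n$, hence $V = 2g-2+n$; so the fiberwise degree is $2g-2+n$, constant over $\MgnMf{g,n}$, and $\pi_*(\psi_\star\cdot[\MgnMf{g,n\sqcup\{\star\}}]) = (2g-2+n)\,[\MgnMf{g,n}]$.

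The main obstacle is the fiberwise analysis in part (2): one must make precise the transition of $\bL_\star$ at each vertex, confirm that $c_1(\bL_\star)$ is supported at the vertices with weight exactly one and that the resulting $0$-cycle is balanced, and check that the fiberwise degree genuinely assembles into the global cycle identity within the weak-Chow and stacky formalism of Definition~\ref{def:fcc}. The strata where a vertex of $C$ has positive genus or valence greater than three lie outside the support of $[\MgnMf{g,n}]$, so they contribute to neither side and the computation may be carried out on the trivalent genus-zero-vertex locus; nonetheless, verifying that the pushforward acquires no spurious components along these strata will require care.
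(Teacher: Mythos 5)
Your intuition for both halves is the right one, but each half of your plan runs into a concrete obstruction in this paper's formalism. For part (1), the step ``show that the two tropical Cartier divisors differ by the function whose associated Weil divisor is $D_{i,\star}$ with weight one'' cannot be carried out: $D_{i,\star}$ is the locus where the third edge of the tripod has \emph{infinite} length, i.e.\ a stratum at infinity, and in this paper rational functions take finite values, so no tropical Cartier divisor can be supported on such a stratum (this is exactly the point of Remark \ref{rem:divisors at infinity} and of the remark following Definition \ref{def:boundary line bundle}). For the same reason $\mc L(D_{i,\star})$ is \emph{defined} directly as a line bundle, not as $\mc L$ of a divisor, and the identity must be proven at the level of line bundles, $\bL_i=\pi_\star^\ast\bL_i\otimes \mc L(D_{i,\star})$, from which the Chern class statement follows. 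The paper's proof of Theorem \ref{thm:pull-back of psi class} does this by an explicit \v{C}ech cocycle computation: it covers the base by three kinds of open sets (away from the locus $Y$ where the $i$- and $\star$-legs share a trivalent vertex, on $Y$ minus the all-infinite locus $D$, and near $D$), builds sections of $\Aff_{\mc D}(-s_i)$ and $\Aff_{\mc C}(-\phi\circ s_i)$ out of fiberwise harmonic functions (using Lemmas \ref{lem:affineness is a clopen condition on fibers} and \ref{lem:characterization of affineness in terms of section and harmonicity} to see they are affine), and matches the three cocycles term by term. Your sketch has no substitute for this mechanism, and also tacitly assumes every tropical line bundle is cut out by piecewise linear functions with an associated Weil divisor, which fails here (see Example \ref{ex:ratsec}: line bundles need not admit rational sections).

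For part (2), your fiberwise picture (weight one at each trivalent vertex, $V=2g-2+n$ vertices) agrees with the paper's answer, but the assembly step you flag as ``requiring care'' is in fact the entire content of the proof, and you do not supply it: in this intersection theory $\psi_\star\cdot(\alpha\circ\pi)$ is computed by intersecting $\alpha\circ\pi$ with a Cartier divisor on the \emph{total space} $\mc C$, and there is no a priori compatibility between this operation and restriction of cycles to fibers, so fiberwise degrees do not automatically glue into a pushforward identity. The paper's Theorem \ref{thm:germ of dilaton} closes exactly this gap: Proposition \ref{prop:representing psi-class by divisor} produces local rational functions $\chi_x$ on $\mc C$ representing $\psi_\star$ (valid because the vertex adjacent to the $\star$-leg always has genus zero), Lemma \ref{lem:local structure at genus 0 point} shows $\mc C$ is locally a product away from codimension two so that $\alpha\circ\pi$ is a cycle, and the local computation then shows $\psi_\star\cdot(\alpha\circ\pi)$ is the cycle with weight $(\val(x)-2)\cdot\alpha(\pi(x))$ at fiber vertices --- after which the pushforward and the identity $\sum_v(\val(v)-2)=2g-2+n$ give the dilaton statement for arbitrary (not only trivalent) fibers. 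To repair your argument you would need precisely this total-space divisor representative; once you have it, your vertex count is the special case $\val(v)=3$.
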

Finally, our theory agrees with
all previously made statements for moduli spaces of tropical curves of genus zero.

\begin{theorem}[Theorem \ref{thm:Mg0n is what it's supposed to be}, Corollary \ref{cor:psi class in genus 0}]
The  tropicalization  of the forgetful morphism 
\begin{equation*}
  \Trop(\Mgnbar{0,n \sqcup \{\star\}}^\alg)\to \Trop(\Mgnbar{0,n}^\alg)  
\end{equation*}
is a family of tropical stable curves inducing an isomorphism between $\Trop(\Mgnbar{0,n}^\alg)$ and $\Mgnbar{0,n}$.

The class $\psi_i$ is represented by the cycle on $\Mgnbar{0,n}$ taking value one on all points corresponding to curves where the $i$-marked leg is adjacent to a four-valent vertex, and zero everywhere else.
\end{theorem}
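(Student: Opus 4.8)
\emph{Proof strategy.} The plan is to deduce both assertions from the general theory of tropicalization of toroidal embeddings together with the pullback formula of Theorem~\ref{thm:C}. For the first statement, I would begin by recording that $\Mgnbar{0,n}^\alg$ is a smooth variety whose boundary is a simple normal crossings divisor, so that it carries a canonical toroidal structure. The skeleton (equivalently, the tropicalization) of such an embedding is a generalized cone complex whose cones are indexed by the boundary strata and whose coordinates record the smoothing parameters of the nodes; for $\Mgnbar{0,n}^\alg$ these smoothing parameters are exactly the lengths of the bounded edges of the dual tree. I would make this precise by matching, stratum by stratum, the cone $\R_{\geq 0}^{E(\Gamma)}$ attached to a stable $n$-marked tree type $\Gamma$ with the corresponding cone of $\Mgnbar{0,n}$, and by checking that the face identifications induced by degenerations agree with the edge-contraction maps defining the cone complex $\Mgnbar{0,n}$. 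This yields the (well known) canonical isomorphism $\Trop(\Mgnbar{0,n}^\alg)\cong\Mgnbar{0,n}$, which I would re-derive in the present framework.

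Next, since the forgetful morphism $\pi\colon\Mgnbar{0,n+1}^\alg\to\Mgnbar{0,n}^\alg$ carries boundary to boundary, it is a toroidal morphism, and functoriality of tropicalization produces a morphism of cone complexes $\Trop(\pi)$. The task is then to identify $\Trop(\pi)$ with a family of tropical stable curves in the sense of Definition~\ref{def:fcc}. I would do this by exhibiting the fiber of $\Trop(\pi)$ over a metric tree $T$ as $T$ itself, with the $n$ tautological sections given by the marked legs and with the relative edge-length data matching the universal family; the stability and balancing conditions then follow from the corresponding properties of the algebraic universal curve. Combining this with the isomorphism above proves the first statement.

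For the second statement I would use Definition~\ref{def:psi}: $\psi_i=c_1(\mathbb L_i)$, where $\mathbb L_i$ is the $i$-th cotangent line bundle of the universal family just identified. On the cone complex $\Mgnbar{0,n}$ a line bundle is given by a piecewise-linear function, and its first Chern class is the associated tropical Weil divisor. I would compute the piecewise-linear function $\varphi_i$ representing $\mathbb L_i$ by pulling back the relative dualizing datum along the $i$-th section, whose image is the boundary divisor $D_{i,\star}$ of curves where $i$ and $\star$ sit on a common tripod; concretely, $\varphi_i$ records the length of the bounded edge adjacent to the leg $i$ in the universal curve. Taking the tropical divisor of $\varphi_i$, the only locus of non-linearity is where the vertex supporting leg $i$ acquires valence four, and a lattice-index computation gives weight one. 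As a consistency check and an alternative route, I would run the recursion $\psi_i=\pi_\star^\ast\psi_i+D_{i,\star}$ of Theorem~\ref{thm:C}(1) starting from the base case $\Mgnbar{0,3}$, a single point on which $\psi_i=0$ and on which no four-valent vertex occurs.

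The main obstacle I anticipate is the explicit weight-one computation in the second statement: one must track precisely how the inductively known four-valent locus pulls back under the forgetful map, accounting for the stabilization that occurs when the forgotten leg $\star$ sits at the vertex of $i$, and how this interacts with the boundary divisor $D_{i,\star}$ at infinite edge length. Verifying that the pullback term and the correction divisor assemble to exactly the reduced four-valent locus, with no spurious multiplicities and no contribution away from it, is the delicate combinatorial heart of the argument; the same bookkeeping underlies the direct identification of $\varphi_i$ and the determination of its lattice-theoretic multiplicity.
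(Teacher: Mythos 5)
Your proposal has genuine gaps in both halves, and in each case the gap is precisely the part that carries the mathematical content in this paper's framework.

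For the first assertion, your toroidal/skeleton argument only identifies $\Trop(\Mgnbar{0,n}^\alg)$ with $\Mgnbar{0,n}$ as \emph{cone complexes}, i.e.\ as \TPL{}-spaces. That step is essentially quoted in the paper's proof of Theorem \ref{thm:Mg0n is what it's supposed to be} as already known from the phylogenetic-tree description; the actual work is the comparison of \emph{affine structures}. On the abstract side the affine structure of $\Mgnbar{0,n}$ is generated by cross ratios, while on the tropicalization side it is the one induced by the distance/Pl\"ucker embedding into $\R^{n\choose 2}/\R^n$. The paper proves these agree by showing that the integral linear functions on $\R^{n\choose 2}/\R^n$ are generated by the four-point functions $x_{ij}+x_{kl}-x_{ik}-x_{jl}$ (Lemma \ref{lem:cross ratio vs distance coordinates}), that these pull back exactly to primitive cross ratios, that the stalks $\Aff_{\Mgn{0,n},p}$ are generated by such pull-backs (closed embedding), and that the affine structures match at the infinite points (Lemma \ref{lem:cross ratios at infinity}). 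Your "matching of cones" and appeal to functoriality of toroidal tropicalization never touches linearity, so it does not prove the isomorphism of \emph{tropical} spaces. Similarly, the claim that $\Trop(\pi_\star)$ is a family of tropical stable curves cannot be imported "from the corresponding properties of the algebraic universal curve": in this paper that notion requires exactness of the cotangent sequence \eqref{equ:exact sequence} and that the fibers' affine functions are exactly the harmonic ones, which is established by explicit cross-ratio manipulations (Propositions \ref{prop:left-exactness of sequence on atlas} and \ref{prop:family of tropical curve over atlas}, feeding into Corollary \ref{cor:representative for M0n}); no mechanism is given in your proposal for transferring these tropical-side conditions from algebraic geometry.

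For the second assertion, your concrete representative is wrong. The function "length of the bounded edge adjacent to the $i$-marked leg" has, at a point of $\Mgnbar{0,4}$ type (four-valent vertex), slope one on \emph{all three} adjacent maximal cones, so its divisor has weight $3$ there, not $1$; more generally it produces the dilaton-type multiplicity $\val(v)-2$ rather than the $\psi$-class multiplicity. The correct representative, constructed in Proposition \ref{prop:representing psi-class by divisor} and used in the paper's proof of Corollary \ref{cor:psi class in genus 0}, is the cross ratio $\chi_b$ measuring the overlap of the paths from $v(b)$ to two fixed auxiliary marks $j_b,k_b$; this function vanishes identically on two of the three local resolutions and has slope one on the third, which is exactly what yields weight one. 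Your fallback induction via $\psi_i=\pi_\star^*\psi_i+D_{i,\star}$ also does not close the gap as stated: in this paper's intersection theory, Cartier divisors cannot be supported on strata at infinity (Remark \ref{rem:divisors at infinity}), and $D_{i,\star}$ is precisely such a stratum, so turning the boundary line bundle $\mc L(D_{i,\star})$ into a cycle contribution requires finding a finite-support divisor representative — which is again the cross-ratio computation you were trying to avoid.
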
 

\vspace{0.2cm}
We now take a few steps back to discuss some of the ingredients used to set up the theory. The first step is to define a notion of families of tropical curves such that the corresponding moduli stack is both reasonably well-behaved and with enough structure to do intersection theory on it.

We define a {\it tropical space $X$} (Definition \ref{def:tropsp}) to be a locally polyhedral space endowed with an {\it affine structure $Aff_X$}, a subsheaf of the sheaf of piecewise (affine) linear functions on $X$. The sections of $Aff_X$ are called \textit{affine functions}.
A {\it tropical curve} is then a tropical space of dimension one, with the additional structure of a genus function (Definition \ref{def:tropcurve}). While at this point there is still a fair amount of freedom in the affine structures allowed for tropical curves, we make the concept restrictive for \textit{smooth and nodal curves}: germs of affine functions are given by harmonic functions for genus-zero points, and are constant for  points of positive genus. 
We define a \textit{family of tropical curves} (Definition \ref{def:family}) to be a morphism of tropical spaces such that the fibers are nodal tropical curves, with an additional compatibility between the affine structures of base and total space. Formally, this compatibility is stated as requiring the exactness of the sequence of sheaves \eqref{equ:exact sequence}; in simple terms, we are stipulating that any (germ of an) affine function on a fiber should extend to an open thickening of the fiber, and that any (germ of an) affine function that restricts constantly on fibers is the pull-back of an affine function on the base.

The  moduli stack $\Mgnbar{g,n}$ (Definition \ref{def:moduli}) associated to families of tropical curves over tropical spaces has some desirable properties.

\begin{thmrd}
For every pair $g,n\in \Z_{\geq 0}$ of non-negative integers with $2g-2+n>0$, the diagonal of the stack $\Mgnbar{g,n}$ is representable.
\end{thmrd}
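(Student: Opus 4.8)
The plan is to reduce the statement to the representability of Isom functors, and then to construct these directly from the combinatorial rigidity of nodal tropical curves. For a stack $\mathcal{M}$ over tropical spaces, the diagonal $\Delta\colon \mathcal{M}\to\mathcal{M}\times\mathcal{M}$ is representable exactly when, for every tropical space $B$ and every morphism $B\to\mathcal{M}\times\mathcal{M}$---that is, every pair of families $\pi_j\colon\mathcal{C}_j\to B$ for $j=1,2$---the $2$-fiber product $B\times_{\mathcal{M}\times\mathcal{M}}\mathcal{M}$ is representable by a tropical space. This fiber product is canonically identified with the functor $\underline{\Isom}_B(\mathcal{C}_1,\mathcal{C}_2)$ sending $S\to B$ to the set of isomorphisms of families $\mathcal{C}_1\times_B S\to\mathcal{C}_2\times_B S$ over $S$. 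Thus it suffices to produce, for each such pair, a tropical space $I$ with a morphism $I\to B$ and a natural bijection $\Hom_B(S,I)\cong\underline{\Isom}_B(\mathcal{C}_1,\mathcal{C}_2)(S)$.

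The core observation is that an isomorphism of families is pinned down by finite combinatorial data together with equalities of affine length functions. First I would examine an isomorphism over a single point $b\in B$: since the fibers are nodal tropical curves and $2g-2+n>0$, an isomorphism of fibers is an isometry realizing an isomorphism of the underlying genus-decorated marked graphs (the combinatorial types), and there are only finitely many such graph isomorphisms. Globally, $B$ carries a stratification by the combinatorial type of each $\mathcal{C}_j$, constant on each stratum and degenerating by edge contraction along the boundary; on each stratum the edge lengths are recorded by sections of the affine sheaf, available through the exact sequence \eqref{equ:exact sequence} attached to a family.

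I would then fix the combinatorial types of $\mathcal{C}_1$ and $\mathcal{C}_2$ on a stratum to weighted marked graphs $G_1,G_2$. For each of the finitely many isomorphisms $\gamma\colon G_1\to G_2$ of such graphs, the condition that $\gamma$ underlies an isomorphism of the families is precisely that it matches edge lengths, $\ell_e^{(1)}=\ell_{\gamma(e)}^{(2)}$ for every edge $e$; being an equality of affine functions, this cuts out a subspace $Z_\gamma$ that inherits the structure of a tropical space. Taking the union over the finitely many admissible $\gamma$ and over the strata, and checking that these pieces are compatible along the boundary---where a degeneration of combinatorial type forces compatibility with the edge-contraction specialization maps---yields a tropical space $I$ with a morphism to $B$, obtained by gluing the pieces $Z_\gamma$. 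The universal property is then a direct verification: a morphism $S\to B$ factors through $Z_\gamma$ iff the pulled-back length functions agree, which is exactly the datum of the corresponding isomorphism over $S$.

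The step I expect to be most delicate is showing that the pieces $Z_\gamma$ assemble into a genuine tropical space over $B$ rather than a set-theoretic union: as a combinatorial type degenerates by contracting an edge, the matching condition on the surviving edges must be shown compatible with the matching on the degenerate type, so that the $Z_\gamma$ patch along the poset of combinatorial types. Closely related is the need to verify, against Definition \ref{def:tropsp} and the exactness in \eqref{equ:exact sequence}, that the inclusions $Z_\gamma\hookrightarrow B$ respect the affine structure and the family condition, so that $I\to B$ is a morphism of tropical spaces; this should follow from the fact that the matching conditions are expressed entirely through sections of the affine sheaves, but it is exactly here that the specific Definitions \ref{def:tropsp}--\ref{def:family} must be brought to bear.
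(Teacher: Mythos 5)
Your reduction to $\underline{\Isom}$ functors and your combinatorial construction --- stratify by combinatorial type, enumerate the finitely many graph isomorphisms $\gamma$, cut out the length-matching loci, and glue over the poset of specializations --- is exactly the paper's Proposition \ref{prop:representability of diagonal TPL-version}, and it does produce the correct underlying \TPL{}-space $J\to B$ together with a universal \TPL{}-isomorphism $\chi$. The genuine gap is in what comes after, which is the actual content of Theorem \ref{thm:representable diagonal}: the affine structure on $I$. You propose, implicitly (``the matching conditions are expressed entirely through sections of the affine sheaves'', ``the inclusions $Z_\gamma\hookrightarrow B$ respect the affine structure''), that $I$ carries the affine structure induced by pulling back $\Aff_B$. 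That is not enough, and with it the universal property fails at the very first step: for the pair $(I,\chi)$ to define an element of $\underline{\Isom}_B(\mathcal C_1,\mathcal C_2)(I)$ at all, the universal isomorphism $\chi\colon g^*\mathcal C_1\to g^*\mathcal C_2$ must itself be \emph{linear}, i.e.\ an isomorphism of families of tropical curves, and with only pull-backs of $\Aff_B$ on $I$ it generally is not. The underlying issue is that the affine structure on the total space of a family is not determined by the fibers (Example \ref{ex:tropfam}), so comparing the two affine structures through $\chi$ creates honest new affine functions on the base of the Isom space.

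Concretely, the paper's proof takes an affine function $l'$ on $\mathcal C_2'$ near $\chi(y)$, chooses an affine $l$ on $\mathcal C_1'$ near $y$ whose restriction to the fiber agrees with that of $\chi^*l'$ (possible by exactness of \eqref{equ:exact sequence}), and then invokes Lemma \ref{lem:exact sequence for harmonic functions} to write $\chi^*l'-l=(\pi_1')^*m$ for a piecewise linear function $m$ on $J$; the sheaf $\Aff_I$ is then \emph{generated} by the pull-backs from $B$ together with all such $m$. These $m$ are in general not pulled back from $B$: in Example \ref{ex:diagonal is representable}, for the families $\mathcal C^a,\mathcal C^b$ over $\T\PP^1$ with $a\neq b$, the relevant $m$ has slope $0$ on the rays of $I$ lying over $\Rbar_{\geq 0}$ and slope $b-a$ on the rays over $\Rbar_{\leq 0}$, so it cannot be a pull-back from $\T\PP^1$. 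Without this enlargement your $I$ does not represent the functor (its identity map corresponds to no isomorphism of families of tropical curves); and once you do enlarge, the universal property still requires the converse check --- that for any $h\colon B'\to B$ and linear isomorphism $\zeta$ of the pulled-back families, the induced lift $k\colon B'\to I$ pulls the new generators $m$ back to affine functions on $B'$, which is precisely where the assumed linearity of $\zeta$ enters. This entire second half of the argument is absent from your proposal.
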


Unfortunately, $\Mgnbar{g,n}$ is not a geometric stack: one cannot construct an \textit{atlas}, i.e.\ an \'etale surjective morphism from a tropical space. In fact, one can get pretty close: let $\Atlass{g,n}$ denote the extended cone complex parameterizing tropical curves together with a {\it cycle rigidification}. One can construct piecewise linear functions on $\Atlass{g,n}$ through {\it cross ratios} on tropical curves (Definition \ref{def:cycleriggraph}): informally, via integration of a one-form along a directed path. Endowing $\Atlass{g,n}$ with the affine structure generated by cross ratios, one obtains that the forgetful morphism $\pi_\star: \Atlass{g, n\sqcup \{\star\}}\to \Atlass{g,n}$ is a morphism of tropical spaces and even that the exact sequence \eqref{equ:exact sequence} holds. However, $\pi_\star$ is not a family of tropical curves as the fibers do not have enough functions: consider  $x \in \Atlass{g, n\sqcup \{\star\}}$ corresponding to a point
in a fiber of $\pi_\star$ which is either on a leg  adjacent to a vertex of positive genus, or on an edge adjacent to two vertices of positive genus; there are no non-trivial cross ratios involving the leg marked with $\star$ (which is incident to $x$), and therefore no  non-constant functions at $x$. This observation allows one to identify a natural open substack of $\Mgnbar{g,n}$ for which the above rigidification does provide an atlas: we denote by $\MgnMf{g,n}$ the moduli stack of families of \textit{Mumford curves}, i.e.\ tropical curves with no points of positive genus.

\begin{theorem}[Theorem \ref{thm:existance of atlas}, Theorem \ref{thm:universal family}] \label{thm:E}
The morphism $\Atlas{g,n}\to \MgnMf{g,n}$   defined by the family $\pi_\star\colon\Atlas{g,n\sqcup\{\star\}}\to \Atlas{g,n}$ is a covering of $\MgnMf{g,n}$. Further, the forgetful morphism $\pi_\star \colon \MgnMf{g,n\sqcup\{\star\}}\to \MgnMf{g,n}$ represents the universal family.
\end{theorem}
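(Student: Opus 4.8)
The plan is to establish the two assertions in turn: that $\Atlas{g,n}\to\MgnMf{g,n}$ is a covering, i.e.\ a representable, \'etale and surjective morphism from a tropical space, and then that $\pi_\star$ exhibits $\MgnMf{g,n\sqcup\{\star\}}$ as the universal family.

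First I would check that $\pi_\star\colon\Atlas{g,n\sqcup\{\star\}}\to\Atlas{g,n}$ is genuinely a family of Mumford curves, so that it classifies a morphism $\Atlas{g,n}\to\MgnMf{g,n}$ at all. By the discussion preceding the statement, the only property of Definition \ref{def:family} that fails for the rigidified complex $\Atlass{g,n}$ is the existence of enough non-constant affine germs on fibers, and this failure occurs solely over the positive-genus locus, while the exact sequence \eqref{equ:exact sequence} already holds. Passing to the Mumford locus deletes exactly the offending points, so on $\Atlas{g,n}$ every fiber is equipped through its cross-ratio affine structure with enough functions and $\pi_\star$ becomes a family. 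Representability of the classifying morphism then follows from Theorem \ref{thm:representable diagonal}: since $\MgnMf{g,n}$ is an open substack of $\Mgnbar{g,n}$ its diagonal is again representable, and a representable diagonal makes every morphism from a tropical space representable.

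The core of the first part is \'etale-ness and surjectivity. For surjectivity I would show that a cycle rigidification exists \'etale-locally on the base of any family of Mumford curves: over a relatively open locus where the combinatorial type of the fibers is constant, a rigidification is the purely combinatorial datum of Definition \ref{def:cycleriggraph}, which can always be chosen and varies compatibly, producing local lifts through $\Atlas{g,n}\to\MgnMf{g,n}$. For \'etale-ness the decisive point is that a cycle-rigidified Mumford curve has no non-trivial automorphisms, so that $\Atlas{g,n}$ is an honest tropical space and the morphism is, over each cone, a quotient by the discrete group of changes of rigidification; the cross-ratio functions then identify such a cone with the corresponding chart of the stack. I expect this to be the main obstacle, since it requires verifying that the cross-ratios genuinely cut out local coordinates compatible with the affine structures on both source and target, and that the change-of-rigidification action is free once automorphisms have been killed.

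For the second part I would verify the universal property directly. Given a family of Mumford curves $\pi\colon\mathcal C\to B$ with classifying map $\phi\colon B\to\MgnMf{g,n}$, pulling $\pi$ back along itself yields $\mathcal C\times_B\mathcal C\to\mathcal C$, which carries the tautological diagonal section; after the appropriate stabilization I would use this section as the $\star$-marking to obtain a family of $(n\sqcup\{\star\})$-marked Mumford curves over $\mathcal C$, hence a morphism $\mathcal C\to\MgnMf{g,n\sqcup\{\star\}}$ sitting in a square over $\phi$. To see that this presents $\mathcal C$ as $B\times_{\MgnMf{g,n}}\MgnMf{g,n\sqcup\{\star\}}$, I would check the comparison morphism is an isomorphism after pulling back along the covering of the first part; there the family is literally $\pi_\star\colon\Atlas{g,n\sqcup\{\star\}}\to\Atlas{g,n}$, whose total space $\Atlas{g,n\sqcup\{\star\}}$ is simultaneously the atlas for $g$ and $n\sqcup\{\star\}$ marks, so the universal property becomes transparent and only the comparison of cone structures remains. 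The subtle points to monitor are the stabilization of the $\star$-marking at vertices, in edge interiors, and on legs, where the combinatorial type jumps, and the compatibility of the forgetful contraction with the family structure across these strata.
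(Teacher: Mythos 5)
Your first half follows the same route as the paper: local existence of compatible cycle-rigidifications for families of Mumford curves (the paper's Proposition \ref{prop:local lifts to atlas}, which produces the Cartesian local lifts), combined with the rigidity of cycle-rigidified graphs (Lemma \ref{lem:ridig graphs are rigid}). The paper packages the freeness you invoke as an orbit--stabilizer count showing that the fiber of $T\times_{\MgnMf{g,n}}\Atlas{g,n}\to T$ over $t$ has cardinality equal to the number of cycle-rigidifications of the combinatorial type of $\mc C_t$, and that this number is locally constant because rigidifications lift uniquely under specialization of Mumford types; your "quotient by the discrete group of changes of rigidification" is the same mechanism, just stated over cones rather than over an arbitrary base $T$.

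The second half has a genuine gap. Both of your key steps --- promoting the diagonal section of $\mc C\times_B\mc C\to \mc C$ to a $\star$-marking, and the claim that over the atlas "the universal property becomes transparent" --- silently invoke the one statement that carries all the technical weight: given a family of stable tropical curves and a linear section $t$ through genus-zero points, there exists a family $\mc D\to B$ of $(n\sqcup\{\star\})$-marked curves together with $\phi\colon \mc D\to \mc C$ exhibiting $\mc C$ as the stabilization of $\mc D$ with $\phi\circ s_\star=t$, and this datum is unique up to \emph{unique} isomorphism (the paper's Lemma \ref{lem:remembering a section}). Note that this is the \emph{inverse} of stabilization --- one must sprout a leg, and possibly a new vertex, along $t$, uniformly across strata where $t$ passes through vertices, edge interiors, and loci where the combinatorial type jumps --- not "an appropriate stabilization" as you write; the paper proves it by localizing at genus-zero points (Lemma \ref{lem:local structure at genus 0 point}) and using the representability of $\Mgnbar{0,n}$ (Corollary \ref{cor:representative for M0n}) together with Proposition \ref{prop:forgetful morphisms between atlases are forgetful morphisms}. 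Without it your comparison map is not constructed, and your reduction to the atlas does not avoid it: identifying $\Atlas{g,n}\times_{\MgnMf{g,n}}\MgnMf{g,n\sqcup\{\star\}}$ with $\Atlas{g,n\sqcup\{\star\}}$ is precisely the assertion that families-with-stabilization-data over a base mapping to $\Atlas{g,n}$ correspond to sections of the pulled-back universal curve --- the same lemma again, so the descent step relocates the difficulty rather than removing it. The uniqueness clause of that lemma is also what makes the fiber product $B\times_{\MgnMf{g,n}}\MgnMf{g,n\sqcup\{\star\}}$ a discrete groupoid, i.e.\ an honest set-valued functor; you need this before you can speak of representing it by the tropical space $\mc C$, and your sketch never addresses it. Once the lemma is available, the paper's argument is in fact shorter than yours: the fiber product functor is identified directly with the functor of linear sections of $\mc C_T\to T$, which is tautologically represented by $\mc C$, and no descent along the covering from the first part is needed.
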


Theorem \ref{thm:E}  essentially transfers to the Mumford locus of $\Mgnbar{g,n}$ any statment about moduli spaces of rational tropical curves. In fact, one may canonically construct a family of tropical curves over a locus called $\GoodAtlas{g,n}$, giving rise to a map onto a  larger open substack than $\MgnMf{g,n}$, as described in Definition \ref{def:good} and Proposition \ref{prop:family of tropical curve over atlas}.

The intuitive meaning of the lack of an atlas for $\Mgnbar{g,n}$ is that the affine structure on the total space of a family of tropical curves is not determined, even up to automorphisms, by the fibers. This means that one cannot define a fundamental cycle on $\Mgnbar{g,n}$;  in Remark \ref{rem:fundcl}  this issue is illustrated for $\Mgnbar{1,1}$, where the obvious candidate for a fundamental cycle is the constant function  $1 :|\Mgnbar{1,1}|\to \ZZ$; the pull-back of $1$ to the base of the families $\mc C_a$ from Example \ref{ex:tropfam} is balanced, hence a tropical cycle on $\R$. However,  when $a\not=b$ the fiber product
$\mc C_a \times_{\Mgnbar{1,1}} \mc C_b$ gives a one dimensional family of tropical curves such that the pull-back of $1$ is not balanced. While the absence of a fundamental cycle on  $\Mgnbar{g,n}$ prevents one from  globally integrating, one may still do intersection theory on the stack by working with families whose bases admit a fundamental cycle.

\subsection{Context, Connections and Considerations}

This work is at the confluence of three current areas of mathematical research: tautological classes on moduli spaces of curves, tropical intersection theory, and tropical moduli spaces of curves. 

Tautological classes were introduced by Mumford in \cite{m:taegotmsoc}, as a set of Chow classes on the moduli spaces of curves naturally arising from the geometry of the curves parameterized. The notion of $\psi$ classes stems from the observation that cotangent spaces of curves at a marked point naturally organize themselves in families to produce a line bundle on the moduli space of curves, of which $\psi$ is the first Chern class.
The intersection theory of $\psi$ classes exhibits a rich combinatorial structure \cite{ac:cag}, culminating in  {\it Witten's conjecture/Kontsevich's theorem} \cite{witten,k:loc}: the generating function for intersection numbers of $\psi$ classes satisfies a classical integrable hierarchy. 
From the perspective of tropical geometry, $\psi$ classes  describe the first Chern class of the normal bundles to boundary divisors, and are therefore intimately related to the  edge coordinates on tropical moduli spaces of curves.

From its inception, one of the applications of tropical geometry has been to combinatorialize classical enumerative geometric problems. To this end, Mikhalkin \cite{MikhalkinICM} sketched the foundations of tropical intersection theory, introducing the notions of {\it tropical cycles} and their {\it stable intersection}. Allerman and Rau \cite{AllermannRau} gave an alternative definition of intersection products by pulling-back Cartier divisors. Katz \cite{Katz} showed the agreement of the two constructions by relating both to toric intersection theory of \cite{FS}. In all these cases, it is essential that the tropical space on which one does intersection theory is  embedded in a vector space, which provides a global notion of integral linear functions. 
Shaw \cite{Shaw} generalizes the intersection product to {\it locally matroidal fans}, and the second author \cite{GrossToroidal} to {\it weakly embedded cone complexes}, arising from tropicalizations of cycles on toroidal embeddings.

It is hard to pinpoint the precise moment in which  moduli spaces of tropical rational  curves are born. While \cite{MikhalkinICM} introduces the name and notation which we currently use, the space of phylogenetic trees studied in \cite{billeraetal} and the tropical Grassmannian of \cite{SpeyerSturmfels} lay the foundation for the combinatorial analysis  of  $\Mgn{0,n}$ as a balanced, rational, polyhedral fan. Work of Kapranov \cite{kapranovchowquot} and Tevelev \cite{Tev} exhibit  $\Mgnbar{0,n}^\alg$ as a tropical compactification, and Gibney-Maclagan \cite{GM} study its ideal  in the Cox ring of the toric variety whose fan is $\Mgn{0,n}$. Fran\c{c}ois and Hampe \cite{FrancoisHampe} show that $\Mgn{0,n}$ represents a moduli functor over smooth tropical spaces.
The embedding of $\Mgn{0,n}^\alg$ into a torus induced by the Pl\"ucker embedding of $Gr(2,n)$ is the driving force for all these results. In higher genus there is no such embedding, so tropicalization  takes a different perspective: \cite{ACP} relates the edge lengths of tropical curves to the smoothing parameters of the corresponding nodes of algebraic curves; tropicalization is then a deformation retraction of the analytic moduli space $\Mgn{g,n}^{an}$ onto a skeleton induced by the toroidal structure of the boundary and isomorphic, as a cone complex, to the moduli space of tropical curves $\Mgn{g,n}$. Another perspective on tropicalization uses logarithmic geometry \cite{CCUW}, where it is also shown that $\Mgn{g,n}$ represents a functor over the category of rational polyhedral cone complexes.

The notion of $\psi$ classes on moduli spaces of rational tropical curves was introduced by Mikhalkin in \cite{MikhalkinModuli}  and later investigated by Kerber and Markwig \cite{KerberMarkwig}, who proved a correspondence theorem stating the equality of intersection numbers of algebraic and tropical $\psi$ classes. Katz \cite{Katz} later gave a non-computational proof of such equality  by combining the connection of tropical and toric intersection theory with the fact that $\Mgn{0,n}$ gives a tropical compactification.
Correspondence theorems for intersection numbers of $\psi$ classes on moduli spaces of higher genus tropical curves appear in Jin's Phd thesis \cite{jinthesis}: higher genus curves produce \'etale covers of genus zero (orbifold) twisted curves, giving rise to morphisms among the corresponding moduli spaces.  The equality of algebraic and tropical intersection numbers is shown using the algebraic degree of the branch morphism  as a geometric input datum and doing tropical intersection theory on the  moduli spaces of genus zero twisted curves. 

In the current work, the construction of $\psi$ classes lies entirely in the tropical world. The main technical issue to overcome is how to make the integral lattices of adjacent cones of the extended cone complex $\Mgnbar{g,n}$ {\it communicate} with each other. The solution we propose is that families of tropical curves must be endowed with such information, in the form of a sheaf of functions  to be considered affine. Germs of functions at points of faces then provide the appropriate transition data among the integral lattices of  adjacent cones. Once one knows how to transition affine functions across faces, it is  possible to define a notion of balancing, which is the key tool for tropical intersection theory. One may also define sheaves $Aff_{\mc C}(ks)$ of affine functions with prescribed order  along a linear section, which are torsors over  affine functions. This allows to sidestep the technical issue of making sense of the notions of tangent bundles or relative dualizing sheaves in the category of tropical spaces: the \textit{$i$-th cotangent line bundle}  of a family of tropical curves $\mc C$ is defined to be $Aff_{\mc C}(-s_i)$, drawing from the algebraic identification of the $i$-th cotangent line bundle with the conormal bundle to the $i$-th section.

Given the absence of any algebraic input, it is not surprising that one obtains a combinatorial theory which is broader than the algebraic theory. It  appears that when a family of tropical curves arises from an algebraic one, then the combinatorial theory agrees with the algebraic one. The computation of the degree of $\psi$ on $\Mgnbar{1,1}$ we make in Section \ref{sec:71} is very much parallel to its classical counterpart \cite[Section 3.13]{v:mscgw}: a pencil of plane cubics has nine base points, and hence it provides a family of genus one curves with nine sections, with total space  $Bl_{p_1, \ldots,p_9 }\PP^2$. The $\psi$ classes on this family are dual to the self-intersections of the exceptional divisors, and hence have degree one. Since the family has twelve rational fibers, the pencil gives  a degree-twelve covering of $\Mgnbar{1,1}^\alg$. We consider tropical stable maps instead of cubic curves to obtain a covering of $\Mgnbar{1,1}$, as it would be impossible for curves with very large $j$-invariant to satisfy the point constraints without contracting any edge. Well-spacedness, which is also a realizability condition \cite{SpeyerUniformizing,RanganathanSantosParkerWiseI}, ensures that the family is pure-dimensional. After that, the proof is parallel to the classical one: the degree of the covering is twelve, as a consequence of the count of rational curves in the family, or from \cite{KerberMarkwig}. The  class $\psi_i$ is supported on the unique curve where the $i$-th leg is incident to a four-valent vertex, and an explicit computation shows that the multiplicity is one.

The intersection theory of $\psi$ classes in algebraic geometry is  controlled by the {\it Virasoro constraints} \cite{v:mscgw}: an infinite sequence of recursive relations which reconstruct all intersection numbers from the initial condition $\int_{\Mgnbar{0,3}^\alg} 1 = 1$. The first two relations, known as the {\it string} and {\it dilaton} equations, follow from the pull-back and push-forward properties of $\psi$ classes along forgetful morphisms. Theorem \ref{thm:C} should then be interpreted as the analogous properties (i.e.\ \textit{germs} of string and dilaton) holding in the tropical theory.

\subsection{Future directions} 

We consider this work very much as a beginning, rather than a story close to conclusion, and discuss here some of the  avenues of investigation that we intend to pursue.

As mentioned earlier, the ultimate goal would be to prove a general correspondence statement, comparing tropical and algebraic $\psi$ classes for families related via tropicalization.

A first natural step towards such a statement consists in verifying it for a larger class of examples. After some preliminary investigation, we feel confident that the correspondence statement of Section \ref{sec:71} will naturally extend to one-dimensional families of tropical curves obtained by resolving  pencils of genus one curves on smooth toric surfaces using well-spaced tropical stable maps. We have also analyzed some one-dimensional families of well-spaced tropical stable maps to tropical $\PP^1\times \PP^1$ of bidegrees $(2,3)$ and $(2,4)$, where a correspondence statement depends on establishing a comparison lemma analogous to Theorem $\ref{thm:pull-back of psi class}$ for the forgetful morphism from the moduli space of maps to the moduli space of curves. A rather general statement which seems  reasonable given the current technology is a correspondence theorem for stationary genus one descendant invariants with one $\psi$-insertion.

Tropical stable maps give a natural way to construct families of tropical curves; in positive genus superabundance causes loci of tropical stable maps subject to geometric constraints to  not be equidimensional. In genus one, imposing the condition of well-spacedness restores equidimensionality and allows to naturally define intersection cycles of tropical stable maps. One should think of well-spacedness as identifying a {\it virtual fundamental cycle} very much analogously to the case of algebraic Gromov-Witten theory. For genus strictly greater than one, such a tool is at present not available.

One may construct families of tropical curves through tropical admissible covers, generalizing the construction in Section \ref{sec:72}. At present, this seems the most direct avenue to seek correspondence statements for families of arbitrary genus.

Eventually, it would be desirable to gain a complete understanding of the intersection theory of tropical $\psi$ classes, including the description of positive dimensional cycles, and of intersection cycles of multiple $\psi$ classes. While a purely combinatorial analysis seems rather laborious, recent developments in  logarithmic geometry \cite{LogarithmicTropicalization,GrossToroidal} are offering a conceptual approach to the coordinate-free tropicalization of cycles. While currently the main technical obstacle is that  for $g\geq 2$, there is no sufficiently explicit description of a cycle representing the algebraic $\psi$ class, we are hopeful that in the not too distant future it will be possible to answer in the positive the fundamental question: {\it are tropical $\psi$ classes the tropicalization of algebraic $\psi$ classes?}

\subsection{Structure of the paper}
Sections \ref{sec-tropicalspaces} through \ref{sec:intersection theory} are devoted to establishing foundations.  Section \ref{sec-tropicalspaces} introduces the categories of tropical piecewise linear (\TPL{}) spaces and tropical spaces.
Section \ref{sec:3} develops the notion of families of tropical curves and defines the corresponding moduli stack $\Mgnbar{g,n}$. This stack has representable diagonal.

In Section \ref{sec:mumford} we show that the stack $\Mgnbar{g,n}$ is not geometric: it does not admit an atlas. We then restrict our attention to an open substack which is geometric: it parameterizes families of Mumford (also called explicit) curves. We show that $\MgnMf{g,n}$ is essentially as well behaved as spaces of rational tropical curves.

Section \ref{sec:morphisms} establishes that the natural forgetful and section morphisms in the category of \TPL{}-spaces in fact are morphisms in the category of tropical curves. The key technical tool here is to define the notion of the \textit{stabilization} of a family of tropical curves when forgetting a mark.

In Section \ref{sec:intersection theory}, the necessary concepts of tropical intersection theory are recalled and adapted to the context necessary to define tropical $\psi$ classes. Basic properties of $\psi$ classes are then investigated.

Finally, Section \ref{sec:classes} contains two correspondence statements for tropical $\psi$ classes in genus one. Section \ref{sec:71} studies a one-dimensional family of well-spaced tropical stable maps of degree three to the tropical projective plane incident to  eight general  points. While the section has been written in a rather concise way to highlight the key ideas of the computation, a complete combinatorial description of the family is given in Appendix \ref{app:main}. Section \ref{sec:72} analyzes a family of one-dimensional spaces of of genus one admissible covers.

\subsection{Glossary of Notation}

This paper lives almost in its entirety in the tropical world. For this reason, we chose to omit the superscript {\it trop} from our notation; we have instead added the superscript {\it alg} anytime an algebro-geometric moduli space makes an appearance. We acknowledge this is not standard, but mantain it is a sound choice for this work. Here is a list of some of the notation used in the paper.
$$
\begin{array}{cl}
\T &  \mbox{[page \pageref{pr:t}]; the tropical affine line $\R \cup \{\infty\}.$}\\
\Rbar & \mbox{[page \pageref{pr:pline}]; the tropical projective line $\R \cup \{\pm \infty\}.$} \\
PL_X & \mbox{Definition \ref{def:tpl}; sheaf of piecewise linear functions on $X$.} \\
Aff_X & \mbox{Definition \ref{def:tropsp}; sheaf of  linear functions on $X$.} \\
Aff_{\mc C}(ks) & \mbox{Definition \ref{def:funwipo}; sheaf of   functions with  prescribed order along a section.} \\
\Omega^1_X & \mbox{Definition \ref{def:tropsp}; sheaf of  one forms on $X$. }\\
H_{\mc C} & \mbox{Definition \ref{def:harm}; sheaf of fiberwise harmonic functions.}\\
\Mgnbar{g,n}^{TPL} & \mbox{Definition \ref{def:tplstack}; stack of families of TPL curves. }\\
\Mgnbar{g,n} & \mbox{Definition \ref{def:moduli}; stack of families of tropical curves. }\\
\MgnMf{g,n} & \mbox{Definition \ref{def:mumfcusp}; open substack of families of Mumford curves.}\\
\Atlass{g,n} & \mbox{Definition \ref{def:atlas};  cone complex of  cycle rigidified curves}. \\
\GoodAtlas{g,n}& \mbox{Definition \ref{def:good}; restriction of $\Atlass{g,n}$ to {\it good} curves.}\\
\Atlas{g,n}& \mbox{Definition \ref{def:good}; restriction of $\Atlass{g,n}$ to Mumford curves.}\\
\end{array}
$$
We conclude by warning the reader of a slight abuse of notation we chose to adopt: it is common to denote by $\Mgnbar{g,n}$ the moduli space of families of curves with markings labeled by elements of the finite set $n = \{1, \ldots, n\}.$ When we have an additional mark playing a distinguished role we call it $\star$ and denote the indexing set $n\sqcup \{\star\}$ rather than $[n]\sqcup \{\star\}$.

\subsection{Acknowledgements}
R.C. is grateful for the support from the Simons collaboration grant 420720. H.M. is grateful for the support of the DFG collaborative research center SFB 1489.
We would like to thank Dhruv Ranganathan for answering our questions about well-spacedness, and Martin Ulirsch and Dimitry Zakharov for conversations and comments on the manuscript.

\renewcommand*{\thetheorem}{\arabic{section}.\arabic{theorem}}

\section{Tropical Spaces}
\label{sec-tropicalspaces}
This section introduces the category of spaces containing  the families of tropical stable curves we consider.
What we call tropical spaces  are generalizations of (abstract) tropical varieties, previously existing notions of tropical spaces, rational polyhedral spaces, and affine manifolds with singularities  \cite{MikhalkinICM,BIMS,BIMS,AllermannRau,MZeigenwave,Lefschetz,GrossSiebert,IKMZ,MR,Cartwright,ShawSurfaces}. 

\subsection{Polyhedral sets, \TPL{}-spaces, and affine structures}

A \emph{(rational) polyhedron} in $\R^n$ is a finite intersection of half spaces 
$
\{x\in \R^n \mid \langle m,x\rangle \leq a \}$,
where $m\in (\Z^n)^\vee$ and $a\in\R$. Let \label{pr:t} $\T=\R\cup\{\infty\}$, topologized to be homeomorphic to a half-line. The space $\T^n$ has a natural stratification
\begin{equation}
\T^n=\bigsqcup_{I\subseteq\{1,\ldots,n\}} \T^n_I \ ,
\end{equation}
where the stratum
\begin{equation}
\T^n_I=\{(x_i)\in \T^n\mid x_i=\infty \text{ if and only if }i\in I\} 
\end{equation}
can be identified with $\R^{n-|I|}$. A (rational) polyhedron in $\T^n$ is the closure of a polyhedron in $\T^n_I$ for some subset $I\subseteq \{1,\ldots, n\}$. If $P\subseteq \T^n_I$ is a polyhedron, then the \emph{finite faces} of the polyhedron $\overline P$ in $\T^n$ are the closures of the faces of $P$, and the \emph{infinite faces} of $\overline P$ are the closures of the nonempty sets of the form $\overline P\cap \T^n_J$ for some $I\subsetneq J\subseteq \{1,\ldots, n\}$. The complement in $P$ of the union of its proper faces (finite or infinite) 
is called the \emph{relative interior of $P$}, denoted $\relint(P)$.
A \emph{polyhedral set} in $\T^n$ is a finite union of polyhedra. 

 Let $\Rbar=\R\cup\{\pm\infty\}$ \label{pr:pline}, topologized to be homeomorphic to a closed interval. A \emph{potentially infinite (integral) affine (linear) function} on an open subset $U$ of a polyhedral set in $\T^n$ is a continuous function $f\colon U \to \Rbar$ such that 
 $f$ is locally a restriction of a function of the form
\begin{equation*}
x\mapsto \langle m,x\rangle +a \ ,
\end{equation*}
where $m\in (\Z^n)^\vee$ and $a\in \Rbar$. 
We follow standard conventions to extend operations of addition and multiplication to include  $\pm \infty $, with only $\infty-\infty$ being undefined.
In particular, if $f(x)=\langle m,x\rangle +a$ and $x\in U\cap \T^n_I$, then either $m_i\geq 0$ for all $i\in I$ or $m_i\leq 0$ for all $i\in I$. 

A function is \emph{(integral) affine (linear)} if it is a potentially infinite affine function and has values in $\R$. Being affine is a local condition, so we obtain a sheaf of Abelian groups $\Aff_X$ of affine functions on an open subset $U$ of a polyhedral set in $\T^n$.


Let $U\subseteq \T^n$ be an open subset of a polyhedral set. A closed subset $A\subseteq U$ of $U$ is \emph{locally polyhedral in $U$} if for every $x\in U$ there exists a polyhedral set $P$ in $\T^n$ and an open neighborhood $V$ of $x$ in $U$ such that $V\cap A= V\cap P$.

A \emph{piecewise (integral, affine) linear function} on an open subset $U$ of a polyhedral set in $\T^n$ is a continuous function $f\colon U\to \Rbar$ such that for every point $x\in U$ there exist polyhedra $P_1,\ldots,P_k$ in $\T^n$ with $U\cap \bigcup_{i=1}^k P_i$  a neighborhood of $x$ in $U$ and $f\vert_{U\cap P_i}$ a potentially infinite affine function on $U\cap P_i$ for all $1\leq i\leq k$.

Being piecewise linear is a local condition on $f$, so we obtain a sheaf $\PL_U$ of piecewise linear functions on $U$. Because piecewise linear function can have infinite values, $\PL_U$ is not a sheaf of Ablian groups. However, the subsheaf $\PL^\fin_U$ consisinting of all piecewise linear functions with finite values is a sheaf of Abelian groups. By definition, the sheaf $\Aff_U$ is a subsheaf of $\PL^\fin_U$.

\begin{definition}\label{def:tpl}

A \textbf{tropical piecewise linear space}, or \TPL{}-space for short, is a pair $(X,\PL_X)$ consisting of a paracompact Hausdorff topological space $X$ and a sheaf $\PL_X$ of $\Rbar$-valued continuous functions on $X$, the piecewise (integral, affine) linear functions, such that for every point $x\in X$ there exists a neighborhood $U$ of $x$, an open subset $V$ of a polyhedral set in $\T^n$ for some $n\in\N$, and a homeomorphism $\phi\colon U\to V$ that induces an isomorphism $\phi^{-1}\PL_V\to \PL_U$ via precomposition with $\phi$. The datum $U\xrightarrow{\phi} V$ is called a \textbf{chart} at $x$.

We denote by $\PL^\fin_X$ the sheaf of Abelian groups on $X$ whose sections are the piecewise linear function on $X$ with finite values.

A \textbf{morphism} between two \TPL{}-spaces $X$ and $Y$ is a continuous map $X\to Y$ that pulls back piecewise linear functions to piecewise linear functions. 
\end{definition}

\begin{example}
If $\Sigma$ is a cone complex or an extended cone complex \cite[\S 2]{ACP}, one obtains a sheaf $\PL_{\Sigma}$ of $\Rbar$-valued functions on $\Sigma$ by defining $\Gamma(U,\PL_\Sigma)$ for an open subset $U\subseteq \Sigma$ as the set of continuous functions $\phi:U\to \Rbar$ such that the restriction $\phi\vert_{\sigma\cap U}$ is piecewise linear  for all $\sigma\in \Sigma$. Then the pair $(\Sigma,\PL_\Sigma)$ is a \TPL{}-space.
\end{example}

\begin{remark}
Constant functions are  piecewise linear, so $\PL_X$  contains the constant sheaf $\R_X$ 
as a subsheaf.
\end{remark}

\begin{definition}
\label{def:finite point}
A point $x$ of a \TPL{}-space $X$ is \textbf{finite} if for every function $f\in \Gamma(U,\PL_X)$ defined on an open neighborhood $U$ of $x$, the equality $f(x)=\infty$ implies that $f$ is constant on a neighborhood of $x$. The \textbf{irreducible components} of a \TPL{}-space $X$ are the closures in $X$ of the connected components of the set of finite points of $X$.

\end{definition}

\begin{definition}
A closed subset $A$ of a \TPL{}-space $X$ is called \textbf{locally polyhedral} if for every chart $X\supseteq U\xrightarrow{\phi}V\subseteq \T^n$ the image $\phi(A\cap U)$ is locally polyhedral in $V$ as defined above. 
\end{definition}


\begin{definition}
Let $X$ be a \TPL{}- space.  A \textbf{polyhedron} in $X$ is a locally polyhedral subset $P$ of $X$, together with an affine structure $\Aff_P$ on $P$ 
such that there exists a chart $X\supseteq U\xrightarrow{\phi} V\subseteq \T^n$ with $P\subseteq U$ that maps $P$ isomorphically (as a tropical space) onto a polyhedron in $\T^n$.
\end{definition}

\begin{definition}
Let $X$ be a \TPL{}-space and let $x\in X$. A \textbf{local face structure} of $X$ at $x$ is a collection $\Sigma$ of polyhedra in $X$ 
\begin{enumerate}
    \item for every $\sigma\in \Sigma$ we have $x\in \sigma$ , 
    \item if $\tau$ is a face of $\sigma\in\Sigma$ such that $x\in \tau$, then $\tau\in \Sigma$, 
    \item for all $\sigma,\sigma'\in \Sigma$ we have $\sigma\cap \sigma'\in \Sigma$, 
    \item the set $\vert \Sigma\vert\coloneqq \bigcup_{\sigma\in\Sigma}\sigma$ is a neighborhood for $x$, and
    \item there exists a chart $U\xrightarrow{\phi}V\subseteq \T^n$ such that $\vert \Sigma\vert\subseteq U$ and such that $\phi$ maps every $\sigma\in\Sigma$ isomorphically (as a tropical space) onto a polyhedron in $\T^n$.
\end{enumerate}
\end{definition}

\begin{definition} \label{def:tropsp}
An \textbf{affine structure} on a \TPL{}-space $X$ is a subsheaf of Abelian groups $\Aff_X$ of $\PL^\fin_X$ containing the constant sheaf $\R_X$.
A \TPL{}-space that is equipped with an affine structure is called a \textbf{tropical space}.
We call the sections of $\Aff_X$ \textbf{affine (linear) functions} on $X$ and the sections of $\Omega^1_X\coloneqq \Aff_X/\R_X$ \textbf{tropical one-forms on $X$}. 
\end{definition}

\begin{example}
If $P$ is a polyhedral set in $\T^n$, then $P$ together with the inclusion $\Aff_P\to \PL_P$ is a tropical space. 
\end{example}

\begin{example}
\label{ex:minimal and maximal affine structures}
Every \TPL{}-space $X$ has a minimal affine structure given by $\Aff_X=\R_X$ and a maximal affine structure given by $\Aff_X=\PL_X^\fin$.
\end{example}

\begin{example}
Let $\Sigma$ be a fan in $\R^n$. Then it is a polyhedral set in $\R^n$ and thus has an induced affine structure, making it a tropical space. Let $\overline\Sigma$ denote the tropical toric variety associated to $\Sigma$ \cite{KajiwaraTropicalToric,PayneLimit}. One obtains an affine structure on $\overline\Sigma$ by defining $\Gamma(U,\Aff_{\overline\Sigma})$ for an open subset $U\subseteq \overline\Sigma$ as the group of all piecewise linear functions $\phi\in \Gamma(U,\PL_{\overline\Sigma})$ that have finite values everywhere and satisfy $\phi\vert_{U\cap\Sigma}\in \Gamma(U\cap\Sigma, \Aff_\Sigma)$. We can thus consider tropical toric varieties as tropical spaces in a natural way. This construction generalizes to the weakly embedded extended cone complexes of \cite{GrossToroidal}. If $\Sigma$ is the fan of $\PP^n$ considered with its natural torus action, we denote the tropical toric variety $\overline \Sigma$ by $\T\PP^n$, the tropical projective $n$-space. Note that the underlying space of $\T\PP^1$ is precisely $\Rbar$.
\end{example}

{
\begin{remark}
This definition of affine structures is very general. As seen in Example \ref{ex:minimal and maximal affine structures}, the sheaf of affine functions can be arbitrarily close to the (very large) sheaf of all finite piecewise linear functions. In particular, affine structures do not necessarily admit a combinatorial description. In almost all examples that we consider, including all moduli spaces appearing in this paper, the sheaf of tropical one-forms is constructible in a suitable sense and therefore the affine structure can in fact be described combinatorially on the spaces of interest.
\end{remark}
}

\begin{definition}
A \textbf{morphism} between two tropical spaces $X$ and $Y$ is a morphism $f\colon X\to Y$ of \TPL{}-spaces such that the pull-back $\PL_Y\to f_*\PL_X$ maps $\Aff_Y$ into $f_*\Aff_X$. Sometimes we will call such a morphism a \textbf{linear morphism} from $X$ to $Y$ to stress that it respects the affine (linear) structure.
\end{definition}

If $A$ is a locally polyhedral subset of a \TPL{}-space $X$, then the restrictions of the functions in $\PL_X$ to $A$ define a sheaf $\PL_A$ on $A$, and $(A,\PL_A)$ is a $\TPL{}$-space. If $\Aff_X$ is an affine structure on $X$, then the restrictions of the functions in $\Aff_X$ to $A$ define an affine structure $\Aff_A$ on $A$.

\subsection{Fiber products of \TPL{}-spaces}

\begin{definition}
A morphism $f\colon X\to Y$ of \TPL{}-spaces is a \textbf{closed immersion} if $f$ maps $X$ homeomorphically onto a closed subset of $Y$, and the morphism 
$f^{-1}\PL_Y\to \PL_X$
is surjective. 
\end{definition}

Every locally polyhedral  subset $Z$ of a \TPL{}-space $X$ has a unique \TPL{}-structure, the \emph{induced \TPL{}-structure}, making the inclusion $\iota\colon Z\to X$ a closed immersion. Namely, one defines $\PL_Z$ as the sheaf consisting of all functions on $Z$ that are locally restrictions of functions in $\PL_X$. It follows directly from the definitions that any closed immersion $f\colon Z\to X$ induces an isomorphism $Z\to f(Z)$, where $f(Z)$ is equipped with the induced \TPL{}-structure. 
Since the underlying maps of closed immersions are injective, they are monomorphisms in the category of \TPL{}-spaces. Moreover, a closed immersion $f\colon Z\to X$ satisfies the following universal property: a morphism of \TPL{}-spaces from $W$ to $X$ whose image is contained in $f(Z)$ determines a  morphism 
from $W$ to $Z$ making the following diagram commutative:
$$
\xymatrix{& &W\ar[d] \ar@{.>}[dll]_{\exists \,!}  \\
Z \ar[rr]^f& &\hspace{0.5cm}f(Z) \subseteq X}
$$


\begin{proposition}
\label{prop:tpl-fiber products}
Let $p\colon X\to S$ and $q\colon Y\to S$ be morphisms of \TPL{}-spaces. Then the fiber product $X\times^{\TPL{}}_S Y$ in the category of \TPL{}-spaces exists. Furthermore, the underlying topological space of $X\times^{\TPL{}}_S Y$ coincides with the fiber product of the underlying topological spaces of $X$ and $Y$ over the underlying topological space of $S$.
\end{proposition}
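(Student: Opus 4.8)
The plan is to first construct the product $X\times Y$ (the case where $S$ is a point) and then realize the fiber product as a locally polyhedral closed subset of it, exploiting two facts established in the excerpt: a locally polyhedral closed subset carries a canonical induced \TPL{}-structure for which the inclusion is a closed immersion, and closed immersions enjoy the stated universal property. For the product I would take the topological product as underlying space and cover it by sets $U\times V$, where $\alpha\colon U\to U'\subseteq\T^n$ and $\beta\colon V\to V'\subseteq\T^m$ are charts of $X$ and $Y$. Using the canonical identification $\T^n\times\T^m=\T^{n+m}$ and the fact that a product of polyhedra is again a polyhedron, $\alpha\times\beta$ maps $U\times V$ homeomorphically onto an open subset of a polyhedral set in $\T^{n+m}$; pulling back $\PL_{\T^{n+m}}$ gives a candidate sheaf $\PL_{U\times V}$. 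On overlaps these sheaves agree because the transition maps are products $(\alpha_2\alpha_1^{-1})\times(\beta_2\beta_1^{-1})$ of \TPL{}-isomorphisms, and a product of \TPL{}-morphisms is again one: in coordinates the pullback of a potentially infinite affine function $\langle(m',m''),(\xi,\eta)\rangle+a$ is $\langle m',\alpha\text{-part}\rangle+\langle m'',\beta\text{-part}\rangle+a$, piecewise linear as a combination of piecewise linear functions. This yields a \TPL{}-space $X\times Y$ whose projections are morphisms, and whose universal property follows by the same coordinatewise computation: a pair $f\colon W\to X$, $g\colon W\to Y$ induces a continuous $(f,g)\colon W\to X\times Y$ that pulls back piecewise linear functions to piecewise linear functions.

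Next I would set $P:=\{(x,y)\in X\times Y\mid p(x)=q(y)\}$. Since the spaces are Hausdorff, $P$ is closed. To see that $P$ is locally polyhedral, fix $(x_0,y_0)\in P$ with $s_0=p(x_0)=q(y_0)$, choose charts $\alpha,\beta$ as above together with a chart $\gamma\colon O\to O'\subseteq\T^k$ at $s_0$ with $p(U),q(V)\subseteq O$, and set $\tilde p=\gamma\circ p\circ\alpha^{-1}$ and $\tilde q=\gamma\circ q\circ\beta^{-1}$, piecewise linear maps into $\T^k$. Under the chart $\alpha\times\beta$ of $X\times Y$, the set $P\cap(U\times V)$ corresponds to $\{(\xi,\eta)\mid\tilde p(\xi)=\tilde q(\eta)\}$, which is exactly the preimage of the diagonal $\Delta\subseteq\T^{2k}$ under the piecewise linear map $(\tilde p,\tilde q)$. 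As $\Delta$ is a polyhedral set, this preimage is locally polyhedral, so $P$ is locally polyhedral in $X\times Y$ and thus inherits a \TPL{}-structure making the inclusion a closed immersion.

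It remains to check the universal property of $X\times^{\TPL{}}_S Y:=P$. Given morphisms $f\colon W\to X$ and $g\colon W\to Y$ with $p\circ f=q\circ g$, the induced morphism $(f,g)\colon W\to X\times Y$ has image contained in $P$, so by the universal property of the closed immersion $P\hookrightarrow X\times Y$ it factors uniquely through $P$; uniqueness of this factorization, together with the uniqueness in the product, yields the required unique morphism $W\to P$ compatible with the projections. Finally, the underlying space of $P$ is by construction the set-theoretic fiber product carrying the subspace topology of the product topology, which is precisely the topological fiber product, proving the last sentence. I expect the main obstacle to be the supporting lemma that the preimage of a polyhedron under a piecewise linear map between (open subsets of) polyhedral sets in $\T^N$ is locally polyhedral: one subdivides the source into finitely many polyhedra on which the map is potentially infinite affine and intersects each with the preimage of the target polyhedron, and the delicate points are the bookkeeping across the infinite strata $\T^N_I$ and the sign constraints on the slopes along infinite directions built into the notion of a potentially infinite affine function.
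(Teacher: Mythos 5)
Your proof is correct in outline, and its remaining gaps are no deeper than the ones the paper itself leaves implicit, but it is organized along a genuinely different route. You first build the absolute product $X\times Y$ by gluing product charts, and then carve out the fiber product as the preimage of the diagonal $\Delta\subseteq\T^{2k}$ under $(\tilde p,\tilde q)$, finishing with the induced \TPL{}-structure on a locally polyhedral closed subset and the universal property of closed immersions. The paper never constructs a global product: it localizes immediately (existence of fiber products is local on all three spaces), extends $p$ and $q$ to morphisms $p'\colon P\to R$ and $q'\colon Q\to R$ of ambient polyhedral sets, and then replaces $P$ and $Q$ by the \emph{graphs} of $p'$ and $q'$, after which both maps become restrictions of coordinate projections $\T^{m+k}\to\T^k$ and $\T^{n+k}\to\T^k$; the set-theoretic fiber product is then visibly a locally polyhedral subset of $\T^{m+n+k}=\T^{m+k}\times^{\TPL{}}_{\T^k}\T^{n+k}$. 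Your diagonal-pullback and the paper's graph trick are cognate maneuvers, and both rest on the same technical fact you isolate as your supporting lemma: a \TPL{}-morphism into $\T^k$ is locally piecewise (potentially infinite) affine, so graphs and preimages of polyhedra are locally polyhedral. What your route buys is the absolute product as a standalone object and a clean categorical finish via the closed-immersion universal property; what the paper's route buys is economy, since it avoids global gluing and any separate universal-property verification for $X\times Y$. One point you should make explicit: your ``coordinatewise computation'' for the universal property of $X\times Y$ only treats affine functions, and by the paper's own remark following this proposition, $\PL$-functions on a product such as $(x,y)\mapsto\min\{x,y\}$ are \emph{not} sums of pullbacks from the factors; so checking that $(f,g)$ pulls back all piecewise linear functions, not just affine ones, is precisely where your subdivision lemma is needed, in addition to its role in the polyhedrality of the diagonal preimage. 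Since you flag that lemma as the main obstacle and sketch the correct proof of it, this is a matter of wiring rather than a missing idea.
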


\begin{proof}
 Given open covers $\{X_i\}_{i\in I}$, $\{Y_i\}_{i\in I}$ and $\{S_i\}_{i\in I}$ of $X$, $Y$, and $S$, respectively, such that $p(X_i)\cup q(Y_i)\subseteq S_i$ for all $i\in I$, the existence of $X\times^{\TPL{}}_S Y$ is equivalent to the existence of $X_i\times^{\TPL{}}_{S_i} Y_i$ for all $i\in I$. We may thus assume that there exist locally polyhedral sets $P\subseteq \T^m$, $Q\subseteq \T^n$, and $R\subseteq \T^k$ for some $m,n,k\in \N$ such that $X$ is an open subset of $P$, $Y$ is an open subset of $Q$, and $S$ is an open subset of $R$. After further shrinking $X$, $P$, $Y$, and $Q$, we may assume that $p$ and $q$ extends to morphisms $p'\colon P\to R$ and $q'\colon Q\to R$. It then suffices to show that the fiber product $P\times^{\TPL{}}_R Q$ exists. After replacing $P$ and $Q$ by their graphs with respect to $p'$ and $q'$, respectively, we may assume that $p$ is the restriction of the projection $\T^{m+k}\to \T^k$, and $q$ is the restriction of the projection $\T^{n+k}\to \T^k$. It follows immediately that the set-theoretic fiber product $P\times^{\mathrm{set}}_R Q$ is a locally polyhedral subset of $\T^{m+n+k}$. Since $\T^{m+n+k}$ is the fiber product $\T^{m+k}\times^{\TPL{}}_{\T^k} \T^{n+k}$ in the category of \TPL{}-spaces, the locally polyhedral subset $P\times^{\mathrm{set}}_R Q$ of $\T^{m+n+k}$, equipped with the induced \TPL{}-structure, is a fiber product in the category of \TPL{}-spaces. 

With the same reductions, the statement about the topology on fiber products on \TPL{}-spaces is reduced to the fact that  the topology of $\T^{m+n+k}$ coincides with the topology on the fiber product $\T^{m+k}\times^{\mathrm{top}}_{\T^k}\T^{n+k}$ taken in the category of topological spaces.
\end{proof}

\begin{remark}
The $\PL$-functions on a fiber product are in general not the sums of pull-backs of functions from the factors.
Indeed, if $p,q\colon \R^2\to \R$ are the two projections, then the morphism 
\begin{equation*}
p^{-1}\PL_\R\oplus ~ q^{-1}\PL_\R\to \PL_{\R^2}
\end{equation*}
 of sheaves is not an epimorphism. For example, the global $\PL$-function
\begin{equation*}
\R^2\to \Rbar, \;\; (x,y)\mapsto\min\{x,y\}
\end{equation*}
is not in the image, and neither are its germs at any point on the diagonal.
\end{remark}

\begin{convention}
\label{conv:TPL site}
We view the category of \TPL{}-spaces as a site by equipping it with the Grothendieck topology in which the coverings are the jointly surjective collections of  local isomorphisms.
\end{convention}

\begin{proposition}
\label{prop:TPL-base change of immersion is immersion}
Let $f\colon X\to Y$ be a morphism of \TPL{}-spaces, and let $\iota\colon Z\to Y$ be a closed immersion. Then the induced morphism $Z\times^{\TPL{}}_Y X\to X$ is a closed immersion. 
\end{proposition}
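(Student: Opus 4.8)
The plan is to show that the projection $g\colon W\to X$, where $W\coloneqq Z\times^{\TPL{}}_Y X$, realizes $W$ as the locally polyhedral subset $A\coloneqq f^{-1}(\iota(Z))$ of $X$ equipped with its induced \TPL{}-structure; since the inclusion of such a subset is a closed immersion, this gives the claim. First I would pin down the underlying set. By Proposition \ref{prop:tpl-fiber products} the underlying space of $W$ is the topological fiber product $\{(z,x)\in Z\times X\mid \iota(z)=f(x)\}$, and $g$ is the restriction of the projection to $X$. Because $\iota$ is injective, $g$ is injective, and its image is exactly $A=f^{-1}(\iota(Z))$. As $\iota(Z)$ is closed in $Y$ and $f$ is continuous, $A$ is closed in $X$.

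The key geometric input is that $A$ is locally polyhedral in $X$. Recall from the discussion following the definition of closed immersions that $\iota$ identifies $Z$ with the locally polyhedral subset $\iota(Z)\subseteq Y$ carrying the induced \TPL{}-structure. In a chart of $Y$, the set $\iota(Z)$ is therefore cut out locally by a Boolean combination of conditions $\{h\leq a\}$ with $h\in\PL_Y$. Pulling these back along the morphism $f$, which carries $\PL_Y$ into $\PL_X$, exhibits $A=f^{-1}(\iota(Z))$ locally as the corresponding Boolean combination of conditions $\{f^\ast h\leq a\}$ with $f^\ast h\in \PL_X$; such a set is locally polyhedral. Consequently $A$, endowed with its induced \TPL{}-structure, is a \TPL{}-space, and the inclusion $\iota_A\colon A\to X$ is a closed immersion.

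It then remains to match the fiber-product \TPL{}-structure on $W$ with the induced structure on $A$, which I would do via universal properties rather than by computation. Since $f(A)\subseteq \iota(Z)$, the composite $A\xrightarrow{\iota_A}X\xrightarrow{f}Y$ factors, by the universal property of the closed immersion $\iota$, through a morphism $\alpha\colon A\to Z$. The pair $(\alpha,\iota_A)$ satisfies $\iota\circ\alpha=f\circ\iota_A$, so the universal property of $W=Z\times^{\TPL{}}_Y X$ yields a morphism $h\colon A\to W$ with $\pr_X\circ h=\iota_A$. Conversely, $g=\pr_X$ has image $A$, so the universal property of the closed immersion $\iota_A$ corestricts it to a morphism $\bar g\colon W\to A$ with $\iota_A\circ\bar g=g$. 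From $\iota_A\circ(\bar g\circ h)=g\circ h=\iota_A$ and the fact that $\iota_A$ is a monomorphism we get $\bar g\circ h=\id_A$; since $g$, and hence $\bar g$, is injective and therefore a monomorphism, the identity $\bar g\circ(h\circ \bar g)=(\bar g\circ h)\circ\bar g=\bar g$ forces $h\circ\bar g=\id_W$. Thus $\bar g$ is an isomorphism and $g=\iota_A\circ\bar g$ is the composition of an isomorphism with a closed immersion, hence a closed immersion.

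The main obstacle is the second paragraph: verifying that the preimage $f^{-1}(\iota(Z))$ is genuinely locally polyhedral, so that it carries a well-defined induced \TPL{}-structure making its inclusion a closed immersion. Once this is secured, the identification of the two \TPL{}-structures on $W$ is formal, following purely from the universal properties of closed immersions and of the fiber product together with the observation that the projection $g$ is a monomorphism.
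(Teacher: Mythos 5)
Your overall route is the same as the paper's: the paper's proof consists of asserting that $f^{-1}(\iota(Z))$ is locally polyhedral, equipping it with the induced \TPL{}-structure, and invoking the universal property of closed immersions, which is exactly what your first and third paragraphs do in expanded form (and that formal bookkeeping — the set-theoretic identification via the topological fiber product, and the mutually inverse maps obtained from the two universal properties using that $\iota_A$ and $\bar g$ are monomorphisms — is correct).

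The problem is the second paragraph, and it is a genuine gap: the claim that, in a chart, $\iota(Z)$ is locally cut out by a (positive) Boolean combination of conditions $\{h\le a\}$ with $h\in\PL_Y$ and $a$ a \emph{constant} is false at points at infinity. Take $Y$ a neighborhood of $(\infty,\infty)$ in $\T^2$ and $\iota$ the closed immersion $\T\to\T^2$, $t\mapsto (t,t)$, whose image is the closed diagonal $D$. A potentially infinite affine function defined at a point of $\T^2_{\{1,2\}}$ must have linear part $m$ with $m_1,m_2\ge 0$ or $m_1,m_2\le 0$. Consequently, if $h\in\PL_Y$ is constant on the far part of $D$, then on every polyhedron of a decomposition for $h$ that contains a far diagonal ray (and hence contains $(\infty,\infty)$, polyhedra in $\T^2$ being closed), the linear part of $h$ is orthogonal to $(1,1)$, hence of the form $\lambda(1,-1)$, hence zero by the sign condition; one deduces that $h$ is constant on a whole neighborhood of the far diagonal. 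If instead $h$ is nonconstant on the far diagonal, the same sign condition forces $h$ to be monotone in each coordinate separately on those polyhedra, so any sub- or super-level set of $h$ at a constant threshold that contains the far diagonal also contains the off-diagonal points $(t+\epsilon,t)$ for $t$ large. Either way, no finite Boolean combination of constant-threshold $\PL$-inequalities equals $D$ near $(\infty,\infty)$. Since the spaces in this paper are extended cone complexes and closed immersions routinely pass through infinity, this is not a marginal case: your argument establishes local polyhedrality of $f^{-1}(\iota(Z))$ only near finite points.

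The conclusion you need is still true, but it requires a different argument. Either replace constant thresholds by comparisons $\{h_1\le h_2\}$ of $\PL$-functions (in the diagonal example $\{\max(x,y)\le\min(x,y)\}$ works, and such conditions do pull back along $f$), and then prove that sets of this form are locally polyhedral; or, more directly, work in charts: the coordinate functions of $\T^n$ are sections of $\PL$, so their pullbacks $f^{\ast}x_1,\dots,f^{\ast}x_n$ lie in $\PL_X$, one can subdivide a neighborhood of any point of $X$ into polyhedra on which all of them are simultaneously potentially infinite affine, and preimages of polyhedra under such piecewise affine maps are polyhedral sets. Note that this is precisely the one step the paper asserts without proof, so it is exactly where your proposal needed to supply content and, as written, does not.
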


\begin{proof}
The preimage $f^{-1}(\iota(Z))$ is a locally polyhedral subset of $X$. If we equip it with the induced \TPL{}-structure, then the universal property of closed immersions implies  $f^{-1}(\iota(Z))=Z\times^{\TPL{}}_Y X$.
\end{proof}

\subsection{Fiber products of tropical spaces}

\begin{proposition}
\label{prop:fiber products of tropical spaces}
Let $p\colon X\to S$ and $q\colon Y\to S$ be morphisms of tropical spaces. Then the fiber product $X\times_S Y$ in the category of tropical spaces exists. Furthermore, the underlying \TPL{}-space of  $X\times_S Y$ coincides with $X\times^{\TPL{}}_S Y$ and the morphism of sheaves
\begin{equation}
q'^{-1}\Aff_X\oplus ~ p'^{-1}\Aff_Y\to \Aff_{X\times_S Y} \ ,
\end{equation}
where $p'\colon X\times_S Y\to Y$ and $q'\colon X\times_S Y\to X$ are the projection maps, is surjective.
\end{proposition}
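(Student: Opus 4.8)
The plan is to build the fiber product directly on top of the \TPL{}-fiber product $W\coloneqq X\times^{\TPL{}}_S Y$ furnished by Proposition \ref{prop:tpl-fiber products}, and to equip it with the smallest affine structure for which both projections $q'\colon W\to X$ and $p'\colon W\to Y$ are linear. Concretely, I would define $\Aff_W$ to be the image subsheaf of the morphism of sheaves of Abelian groups
\[
\phi\colon q'^{-1}\Aff_X\oplus p'^{-1}\Aff_Y\to \PL^\fin_W,\qquad (a,b)\mapsto q'^*a+p'^*b,
\]
and then check that $(W,\Aff_W)$ together with $q'$ and $p'$ is a fiber product in the category of tropical spaces. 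Since $q'$ and $p'$ are morphisms of \TPL{}-spaces, they pull affine (hence finite-valued) functions back to finite-valued piecewise linear functions, so $\phi$ really lands in $\PL^\fin_W$; its image is therefore a subsheaf of Abelian groups of $\PL^\fin_W$, and it contains $\R_W$ because real constants on $X$ pull back to the corresponding constants on $W$. This shows that $\Aff_W$ is a genuine affine structure, and with this definition the surjectivity assertion in the statement is immediate, as $\Aff_W$ is by construction the image of $\phi$. The statement about the underlying \TPL{}-space is then exactly Proposition \ref{prop:tpl-fiber products}.

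The substance of the argument is the universal property. By construction $q'$ and $p'$ are linear, since $q'^*a$ and $p'^*b$ are sections of $\Aff_W$ for all local affine functions $a$ on $X$ and $b$ on $Y$. Now suppose a tropical space $T$ is given with linear morphisms $f\colon T\to X$ and $g\colon T\to Y$ satisfying $p\circ f=q\circ g$. The universal property of the \TPL{}-fiber product supplies a unique morphism of \TPL{}-spaces $h\colon T\to W$ with $q'\circ h=f$ and $p'\circ h=g$, so it remains only to verify that $h$ is linear. I would do this on stalks: a germ of $\Aff_W$ at a point $w$ has the form $q'^*a+p'^*b$ for germs $a\in(\Aff_X)_{q'(w)}$ and $b\in(\Aff_Y)_{p'(w)}$, and pulling back along $h$ gives
\[
h^*(q'^*a+p'^*b)=(q'\circ h)^*a+(p'\circ h)^*b=f^*a+g^*b,
\]
which lies in $\Aff_T$ because $f$ and $g$ are linear and $\Aff_T$ is a subsheaf of Abelian groups. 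Hence $h$ pulls $\Aff_W$ into $\Aff_T$, so it is a morphism of tropical spaces, while its uniqueness is inherited from the \TPL{}-level. This establishes that $(W,\Aff_W)$ is the desired fiber product.

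The main obstacle I anticipate is formal rather than geometric: one must be careful with the inverse-image sheaves $q'^{-1}\Aff_X$ and $p'^{-1}\Aff_Y$ and with the image sheaf $\Aff_W$. In particular, one must justify that a local section of $\Aff_W$ is of the shape $q'^*a+p'^*b$ only after passing to a sufficiently small neighborhood (that is, at the level of germs, or after sheafifying the image presheaf), and that the stalkwise computation above genuinely pins down the sheaf map $h^*$ on $\Aff_W$. Once this bookkeeping is in place, the conceptual content is simply that the affine structure generated by the pullbacks is at once large enough to make the projections linear and small enough that every induced map $h$ is linear, which is precisely what characterizes the fiber product; no estimate or construction beyond the \TPL{}-fiber product already in hand is required.
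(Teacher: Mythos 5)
Your proposal is correct and follows essentially the same route as the paper: both endow $W=X\times^{\TPL{}}_S Y$ with the affine structure given by the image of $q'^{-1}\Aff_X\oplus p'^{-1}\Aff_Y\to \PL_W$ and then deduce the universal property from the \TPL{}-level one, with your stalkwise check of linearity of $h$ just making explicit the paper's observation that a map into $W$ is linear if and only if its composites with the projections are.
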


\begin{proof}
Let $W=X\times^{\TPL{}}_S Y$ and let $p'\colon W\to Y$ and $q'\colon W\to X$ denote the projections (which are morphisms of \TPL{}-spaces). Define the affine structure $\Aff_W$ on $W$ as the image of the morphism of sheaves $q'^{-1}\Aff_X\oplus ~ p'^{-1}\Aff_Y\to \PL_W$. Then for every tropical space $Z$, a map $f\colon Z\to W$ is a morphism of tropical spaces if and only if it is a morphism \TPL{}-spaces and the composites $p'\circ f$ and $q'\circ f$ are morphisms of tropical spaces. It follows from this and the universal property of $W$ as a fiber product in the category of \TPL{}-spaces that $W$ is also the fiber product of $X$ and $Y$ over $S$ in the category of tropical spaces. By construction, it has all of the asserted properties.
\end{proof}

\begin{convention}
\label{conv:tropical site}
We view the category of tropical spaces as a site by equipping it with the Grothendieck topology in which the coverings are the jointly surjective collections of  local isomorphisms.
\end{convention}

\begin{definition}
A morphism $f\colon Z\to X$ of tropical spaces is a \textbf{closed immersion} if its underlying morphism of \TPL{}-spaces is a closed immersion and the morphism of sheaves
$
f^{-1}\Aff_X\to \Aff_Z$
is an epimorphism.
\end{definition}

Given a locally polyhedral subset $Z$ of a tropical space $X$, there is a unique way to make $Z$ into a tropical space such that the inclusion $Z\to X$ is a closed immersion of tropical spaces. 
Similar as for \TPL{}-spaces, this defines a bijection between isomorphism classes of closed immersions into a given tropical space $X$ and locally polyhedral subsets of $X$. 


\begin{proposition}
Let $f\colon X\to Y$ and $g\colon Y'\to Y$ be a morphisms of tropical spaces. If $f$ is a closed immersion, then the induced morphism $X\times_Y Y'\to Y'$ has the same property.
\end{proposition}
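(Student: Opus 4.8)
The plan is to verify directly the two conditions in the definition of a closed immersion of tropical spaces for the base-change morphism, using the two preceding propositions to handle the $\TPL{}$-level and the structure of the affine sheaf on a fiber product. Write $W=X\times_Y Y'$, and let $\pi_X\colon W\to X$ and $h\colon W\to Y'$ denote the two projections, so that $h$ is the morphism whose closed-immersion property I must establish. By definition I need to check (i) that the underlying morphism of $\TPL{}$-spaces of $h$ is a closed immersion, and (ii) that the pull-back $h^{-1}\Aff_{Y'}\to\Aff_W$ is an epimorphism of sheaves.

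For (i) I would invoke Proposition \ref{prop:fiber products of tropical spaces}: the underlying $\TPL{}$-space of $W$ is the $\TPL{}$-fiber product $X\times^{\TPL{}}_Y Y'$, and the underlying map of $h$ is the corresponding projection, i.e.\ the base change of the underlying $\TPL{}$-morphism of $f$ along $g$. Since $f$ is a closed immersion of tropical spaces, its underlying $\TPL{}$-morphism is a closed immersion of $\TPL{}$-spaces, so Proposition \ref{prop:TPL-base change of immersion is immersion} applies and yields (i) at once.

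The real content is condition (ii), and my approach is to work on stalks. By Proposition \ref{prop:fiber products of tropical spaces}, $\Aff_W$ is the image of $\pi_X^{-1}\Aff_X\oplus h^{-1}\Aff_{Y'}\to\PL_W$, so it suffices to show that every germ in the image of $\pi_X^{-1}\Aff_X$ already lies in the image of $h^{-1}\Aff_{Y'}$. Fix $w\in W$ with $\pi_X(w)=x$ and $h(w)=y'$, and set $y=f(x)=g(y')$, the equality holding because the fiber-product square commutes, i.e.\ $f\circ\pi_X=g\circ h$. Let $\alpha\in\Aff_{X,x}$. Since $f$ is a closed immersion of tropical spaces, the stalk map $\Aff_{Y,y}=(f^{-1}\Aff_Y)_x\to\Aff_{X,x}$ is surjective, so there is a germ $\gamma\in\Aff_{Y,y}$ with $\alpha=\gamma\circ f$ near $x$. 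Then, as germs at $w$,
\begin{equation*}
\alpha\circ\pi_X=\gamma\circ f\circ\pi_X=\gamma\circ g\circ h=(\gamma\circ g)\circ h \ ,
\end{equation*}
and $\gamma\circ g\in\Aff_{Y',y'}$ because $g$ is a morphism of tropical spaces. Hence $\alpha\circ\pi_X$ lies in the image of $h^{-1}\Aff_{Y'}$. As $\Aff_{W,w}$ is generated by the images of $\pi_X^{-1}\Aff_X$ and $h^{-1}\Aff_{Y'}$, this forces $\Aff_{W,w}$ to equal the image of the latter; thus $h^{-1}\Aff_{Y'}\to\Aff_W$ is surjective on stalks and therefore an epimorphism, establishing (ii).

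The only genuinely delicate point is condition (ii), and even there the work is entirely formal: one keeps track of germs and uses the commutativity of the fiber-product square to transport an affine function first from $X$ back to $Y$ (using that $f$ is a closed immersion) and then across to $Y'$ (using that $g$ is linear). No real geometric difficulty arises, since Propositions \ref{prop:TPL-base change of immersion is immersion} and \ref{prop:fiber products of tropical spaces} have already done the structural work of producing the fiber product and identifying its underlying $\TPL{}$-space and affine sheaf.
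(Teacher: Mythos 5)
Your proof is correct and follows the same route as the paper: the paper's own proof is a one-line citation of Proposition \ref{prop:TPL-base change of immersion is immersion} (for the underlying \TPL{}-level) and Proposition \ref{prop:fiber products of tropical spaces} (for the affine structure), and your argument is precisely the explicit expansion of that citation. The stalk-level verification that germs pulled back from $X$ factor through $Y'$ via the surjectivity of $f^{-1}\Aff_Y\to\Aff_X$ and the linearity of $g$ is exactly the detail the paper leaves implicit.
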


\begin{proof}
This follows directly from Proposition \ref{prop:TPL-base change of immersion is immersion} and Proposition \ref{prop:fiber products of tropical spaces}.
\end{proof}

\section{The stack of tropical stable curves}\label{sec:3}

Abstract tropical curves and their moduli spaces have been studied from various perspectives \cite{MZJacobians,MikhalkinICM,MikhalkinModuli,GathmannKerberMarkwig,ACP,GM,Tev,CapComparing,CapICM}. Here, we define tropical curves and their families suiting the context of Section \ref{sec-tropicalspaces}.

\subsection{Tropical curves}

\begin{definition}
A \textbf{\TPL{}-curve} is a purely one-dimensional \TPL{}-space $\Gamma$, together with a \textbf{genus function} $\gamma_\Gamma\colon \Gamma\to \Z^{\geq 0}$ with finite support. The \textbf{genus} of a compact \TPL{}-curve $\Gamma$ is given by $
h^1(\Gamma)
+\sum_{p\in \Gamma}\gamma_\Gamma(p)$.
\end{definition}

The underlying space of a \TPL{}-curve $\Gamma$  is a topological graph. 
For every point $p\in \Gamma$ we define the set $T_p\Gamma$ of \emph{direction vectors} at $p$ as the set of germs of immersions $(\R_{\geq 0},0)\to (\Gamma,p)$. One may describe $T_p\Gamma$  concretely by picking a local face structure $\Sigma$ at $p$, and
identifying $T_p\Gamma$ with the set of one-dimensional strata of $\Sigma$.
We call an element of $T_p\Gamma$ a \emph{direction at $p$} and the number of directions at $p$ the \emph{valence of $\Gamma$ at $p$}, denoted  $val(p)$.

Given a function $m\in \Gamma(U,\PL_\Gamma^\fin)$ defined on a neighborhood $U$ of $p$ and a direction $v\in T_p\Gamma$ represented by a germ $(\R_{\geq 0},0)\to (\Gamma,p)$, it makes sense to take the slope of $m$ with respect to $v$ by pulling $m$ back to a neighborhood of zero in $\R_{\geq 0}$. We denote this slope by $d_v m$ and call it the \emph{slope of $m$ at $p$  along $v$}. We say that a function $m\in \Gamma(U,\PL_\Gamma^\fin)$ on an open subset $U$ of a \TPL{}-curve $\Gamma$ is \emph{harmonic} if at every $p\in U$ the sum of all slopes at $p$ is zero, that is if
\begin{equation}
\sum_{v\in T_p\Gamma}d_v m=0 \ .
\end{equation}
Since harmonicity is a local condition, harmonic functions on $\Gamma$ give a subsheaf of Abelian groups $\Harm_\Gamma$ of $\PL_\Gamma^\fin$.


\begin{definition}\label{def:tropcurve}
A \textbf{tropical curve} is a \TPL{}-curve equipped with an affine structure. The genus of a tropical curve is the genus of the underlying \TPL{}-curve. 
A point $p$ of a tropical curve $\Gamma$ is said to be \textbf{smooth} if one of the following conditions hold:
\begin{enumerate}
\item  $\gamma_\Gamma(p)>0$, $p$ is finite, and $\Aff_{\Gamma,p} = \R$;
\item  $\gamma_\Gamma(p)=0$, $p$ is  finite, and $\Aff_{\Gamma,p}=\Harm_{\Gamma,p}$;
\item  $\gamma_\Gamma(p)=0$, $p$ is infinite with $\val(p)=1$, and $\Aff_{\Gamma,p}= \R ~(=\Harm_{\Gamma,p})$.
\end{enumerate}
We say $\Gamma$ is smooth if all of its points are smooth. A point $p\in\Gamma$ is a \textbf{node at infinity} if $\gamma_\Gamma(p)=0$, $\val(p)=2$, and $\PL_{\Gamma,p}^\fin=\R$.
We say that $\Gamma$ is nodal if every point in $\Gamma$ is either smooth or a node at infinity. On a nodal tropical curve, the affine structure is uniquely determined by the underlying \TPL{}-curve. We say that a \TPL{}-curve is smooth (nodal) if it is the underlying \TPL{}-curve of a smooth (nodal) tropical curve.
\end{definition}

\begin{remark}
For $d\geq 2$, define $C_{r,d}$ as the union in $\R^{d-1}$ of the open rays $\{te_i \mid 0\leq t< r\}$ for $0\leq i\leq d-1$, where $e_i$ is the $i$-th standard basis vector for $i>0$, and $e_0=-\sum_{i=1}^{d-1}e_i$; then, a point $p$  with $\gamma_\Gamma(p)=0$ of a tropical curve $\Gamma$ is smooth if and only if $p$ has a neighborhood isomorphic to either $C_{r,d}$ for some $r \in \R_{>0}$ and $d\geq 2$, or to $(s,\infty]$ for some $s\in \R$. A point of $\Gamma$ is a node at infinity if an only if it is not smooth and has a neighborhood isomorphic to $(r,\infty]\times \{\infty\} \cup \{\infty\} \times (r,\infty]\subseteq \T^2$ for some $r\in \R$.
\end{remark}
%
%

\begin{definition}
Let $I$ be a finite set. An  \textbf{$I$-marked semi-stable}  tropical (resp.\ \TPL{}) curve is a tuple $(\Gamma,(s_i)_{i\in I})$ such that $\Gamma$ is a compact connected nodal tropical (resp.\ \TPL{}) curve, and $i\mapsto s_i$ is a bijection from $I$ to the set of infinite smooth points of $\Gamma$. An $I$-marked semi-stable tropical (resp.\ \TPL{}) curve is called \textbf{stable} if every irreducible component of $\Gamma$ of genus zero contains at least three special points (marked points or nodes at infinity), and every irreducible component of $\Gamma$ of genus one contains at least one special point.
\end{definition}

For the purposes of this paper a graph $G$ is a tuple 
\[
G=(V(G),F(G),L(G)\subseteq E_\infty(G)\subseteq E(G),v_G,e_G,\gamma_G) \ ,
\]
where $V(G)$, $F(G)$, $E(G)$, are finite sets, the sets of vertices, flags, and edges, respectively; $v_G\colon F(G)\to V(G)$ is a map assigning to every flag its adjacent vertex, $e_G\colon F(G)\to E(G)$ is a surjective map with fibers of cardinality at most two, and $\gamma_G\colon V(G)\to \N$ is a map assigning to every vertex its genus. The set $L(G)$ of legs of $G$ is the subset of $E(G)$ consisting of edges whose fiber under $e_G$ has cardinality one. The subset $E_\infty(G)$ that comes with the datum $G$ is the set of unbounded edges of $G$, and we always require that $L(G)\subseteq E_\infty(G)$. We denote the set of bounded edges of $G$ by $E_b(G)=E(G)\setminus E_\infty(G)$. For every vertex $v$ we denote by $T_v G=v_G^{-1}\{v\}$ its set of adjacent flags and by  $\val(v)=\#T_v G$ the valence of $v$. A graph is \textit{stable} if it is connected and for every vertex $v\in V(G)$ one has
\begin{equation}
\val(v)+2\gamma_G(v)\geq 3 \ .
\end{equation}
The {\it genus} $g(G)$ of $G$ is given by 
\begin{equation}
g(G)=\#E(G)-\#L(G)-\#V(G)+1+\sum_{v\in V(G)}\gamma_G(v) \ .
\end{equation}
For any finite set $I$, an $I$-marked graph is a graph $G$ together with a bijection $I\to L(G)$. {In the geometric realization of a graph, we assume the legs to be compact segments, but we do not consider the lone endpoint to be a vertex.}

\begin{definition}
Let $\Gamma$ be an $I$-marked stable \TPL{}-curve. The \textbf{combinatorial type of $\Gamma$} is the unique stable graph $G$ whose geometric realization is homeomorphic (as $I$-marked weighted spaces) to the underlying topological graph of $\Gamma$ in such a way that the unbounded edges of $G$ are precisely those edges that contain an infinite point of $\Gamma$. In particular, the set of vertices of $G$ consists of the  points of $\Gamma$ which have strictly positive genus or valence strictly greater than two.

Given a stable graph $G$ and a bounded edge $e\in E_b(G)$, one can \emph{specialize} $e$ in two ways. Namely, one can \emph{stretch} $e$, which  adds $e$ to $E_\infty(G)$, or one can \emph{contract} $e$, which is achieved by identifying the endpoints of $e$ if it is not a loop, and by deleting $e$ and increasing the genus of its adjacent vertex by one if it is a loop. Any graph obtained from $G$ by a sequence of edge specializations is called a specialization of $G$. A \emph{morphism} between two stable graphs is given by a sequence of graph specializations and graph isomorphisms. 



\end{definition}

\subsection{Families of tropical curves}

The starting point for defining families of tropical curves is the cone over an $I$-marked stable graph $G$, as introduced in \cite[Definition 4.1]{CCUW}. In our case, we choose the base to equal the extended cone $\overline\sigma_G =\Rbar_{\geq 0}^{E_b(G)}$. We recall the construction in this context.


For every edge $e\in E_b(G)$, denote by $l_e$ the coordinate on $\sigmabar_G$ corresponding to $e$. 
For every $v\in V(G)$, define $\sigmabar_{G,v}=\sigmabar_G$. For every bounded edge $e\in E_b(G)$, define $\sigmabar_{G,e}$ as the extended cone of
\begin{equation}
\sigma_{G,e}=\{(x,y)\in \R_{\geq 0}^{E_b(G)} \times \R_{\geq 0} \mid y\leq l_e(x) \} \ .
\end{equation}
If $v_1$ and $v_2$ are the vertices of $e$, then the two faces of $\sigmabar_{G,e}$ given by the extended cones of the faces
$
\{(x,0)\in \sigma_{G,e}\}
$
and
$
\{(x,y)\in \sigma_{G,e}\mid y=l_e(x)\} 
$
of $\sigma_{G,e}$
can be naturally identified with $\sigma_{G,v_1}$ and $\sigma_{G,v_2}$, respectively.
For every leg $l\in L(G)$ define
\begin{equation}
\sigmabar_{G,l}=\sigmabar_G\times \Rbar_{\geq 0} \ .
\end{equation}
The cone $\sigmabar_{G,v_G(l)}\cong \sigmabar_G\times \{0\}$ is naturally identified with a face of $\sigmabar_{G,l}$.
Finally, for every $e\in E_\infty(G)\setminus L(G)$, we define $\overline\sigma_{G,e}= \overline\sigma_G\times (\Rbar_{\geq 0} \times\{\infty\}\cup \{\infty\}\times \Rbar_{\geq 0})$. If $v_1$ and $v_2$ are the vertices of $e$, then the two subsets $\overline\sigma_G\times\{(0,\infty)\}$ and $\overline\sigma_G\times\{(\infty,0)\}$ of $\overline\sigma_{G,e}$ can be naturally identified with $\sigma_{G,v_1}$ and $\sigma_{G,v_2}$.

Denote by $\pi_G\colon\mc C_G\to \overline\sigma_G$ the \TPL{}-space over $\overline \sigma_G$ obtained by gluing the cones $\sigmabar_{G,v}, \sigmabar_{G,e}, \sigmabar_{G,l}$
according to how edges and vertices are glued in $G$. Define a function $\gamma_{\mc C_G}\colon \mc C_G \to \N$ by 
\begin{equation}
\gamma_{\mc C_G}(x)=\sum_{v\in V(G)} \gamma_G(v)\cdot\mathbf{1}_{\overline{\sigma}_{G,v}}(x) + \dim(\injlim_{x\in U}H_1(U\cap \pi_G^{-1}\relint(\sigma_G) ;\Q
)) \ ,
\end{equation}
where $\mathbf 1_{\overline{\sigma}_{G,v}}$ denotes the indicator function associated to $\overline{\sigma}_{G,v}$ and the direct limit is taken over all neighborhoods $U$ of $x$.

By construction, the fiber of $\pi_G\colon\mc C_G\to \overline\sigma_G$ over a point $x\in \relint(\sigma_G)$, together with the restriction of $\gamma_{\mc C_G}$, is a stable \TPL{}-curve whose combinatorial type can be identified with $G$ in such a way that the length of the edge of $\pi_G^{-1}\{x\}$ corresponding to $e\in E_b(G)$ is given by $l_e(x)$. 

{
Any morphism $G\to G'$ of stable graphs induces a morphism $\overline \sigma_G\to \overline \sigma_{G'}$ of extended cones that identifies $\overline\sigma_G$ with a (potentially infinite) face of $\overline\sigma_{G'}$. By construction, the \TPL{}-space $\mc C_G$ can be naturally identified with the \TPL{}-space $\mc C_{G'}\times^{\text{\TPL{}}}_{\overline \sigma_{G'}}\overline\sigma_G$ in a way that respects the projection to $\overline \sigma_G$.
}


\begin{definition}
Let $B$ be a \TPL{}-space and $n$ a non-negative integer. A family of \textbf{$n$-marked genus-$g$ stable \TPL{}-curves over $B$} is a tuple $(\pi\colon \mc C\to B, (s_i)_{i\in [n]}, \gamma_{\mc C}$) consisting of a morphism $\pi\colon\mc C\to B$, a section  $s_i$ of $\pi$ for every $i\in [n]$, and a \textbf{genus function}  $\gamma_{\mc C}\colon \mc C\to \N$ such that
\begin{enumerate}
\item for every $b\in B$, the fiber $\mc C_b\coloneqq \{b\}\times^{\text{\TPL{}}}_B\mc C$, together with the restrictions of the sections and the genus function, is an $n$-marked genus-$g$ stable \TPL{}-curve.
\item for every $b\in B$ there exists a local face structure $\Sigma$ of $B$ at $b$ such that for every $\sigma\in \Sigma$ there exists an $n$-marked genus-$g$ stable graph $G_\sigma$, a linear morphism $f_\sigma\colon \sigma\to  \sigmabar_{G_\sigma}$ with
\begin{equation}
f_\sigma(\relint(\sigma))\subseteq \relint(\sigma_{G_\sigma}) \ ,
\end{equation}  
and  an isomorphism
\begin{equation}
\chi_\sigma\colon \mc C\times^{\text{\TPL{}}}_B\sigma \xrightarrow\cong \mc C_{G_\sigma}\times^{\text{\TPL{}}}_{\sigmabar_{G_\sigma}} \sigma 
\end{equation}
respecting the induced genus functions and the induced sections.
\end{enumerate}
The datum $(\Sigma, (G_\sigma,f_\sigma,\chi_\sigma)_{\sigma\in\Sigma})$ is called a \textbf{\TPL{}-trivialization} of the family $\mc C\to B$ at $b$. 
\end{definition}

%

It  follows from the definition that if $\pi\colon \mc C\to B$ is a family of $n$-marked genus-$g$ stable \TPL{}-curves and $B'\to B$ is a morphism of \TPL{}-spaces, then $\mc C\times_B B'\to B'$, with the induced sections and genus function, is a family of $n$-marked genus-$g$ stable \TPL{}-curves again. Therefore, the following definition makes sense:

\begin{definition}\label{def:tplstack}
We denote by $\Mgnbar{g,n}^{\text{\TPL{}}}$ the category fibered in groupoids over the category of $\TPL{}$-spaces with 
\begin{equation*}
\Mgnbar{g,n}^{\text{\TPL{}}}(B) = \{\text{families of } n\text{-marked genus-}g \text{ stable \TPL{}-curves over } B \} 
\end{equation*}
for any \TPL{}-space $B$. Considering the category of \TPL{}-spaces as a site, as outlined in Convention \ref{conv:TPL site}, the fibered category $\Mgnbar{g,n}^{\text{\TPL{}}}$  is a stack.
\end{definition}

Since the affine functions on smooth tropical curves are all harmonic, the following definition and lemma will be of great importance to us:

\begin{definition} \label{def:harm}
Let $\pi\colon \mc C\to B$ be a family of stable \TPL{}-curves. The \textbf{sheaf of fiberwise harmonic functions}, denoted by $\Harm_{\mc C}$, is the subsheaf of $\PL_{\mc C}^\fin$ whose sections, when restricted to any fiber, are harmonic.
\end{definition}

\begin{lemma}
\label{lem:exact sequence for harmonic functions}

Let $\pi\colon \mc C\to B$ be a family of stable \TPL{}-curves, let $x\in \mc C$ be a point with $\gamma_{\mc C}(x)=0$, and let $b=\pi(x)$. Then the sequence
\begin{equation} \label{eq:harmes}
0\to \PL_{B,b}^\fin\to \Harm_{\mc C,x} \to\raisebox{.7ex}{$\Harm_{\mc C_b,x}$}\Big/\raisebox{-.7ex}{$\R$}\to 0 
\end{equation}
is exact. 
\end{lemma}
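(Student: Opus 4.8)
The plan is to establish exactness of the sequence
\begin{equation*}
0\to \PL_{B,b}^\fin\to \Harm_{\mc C,x} \to \Harm_{\mc C_b,x}\big/\R\to 0
\end{equation*}
by analyzing each of the three maps in turn, working in a suitably small neighborhood of $x$. First I would fix a \TPL{}-trivialization of the family at $b$, so that locally $\mc C$ is identified with $\mc C_{G_\sigma}\times^{\TPL{}}_{\sigmabar_{G_\sigma}}\sigma$ for the relevant cones $\sigma$, and $x$ lies over a point of $\relint(\sigma_{G_\sigma})$. Since $\gamma_{\mc C}(x)=0$, the point $x$ is either in the relative interior of an edge or at a genus-zero vertex of the fiber $\mc C_b$, and correspondingly has a neighborhood in $\mc C$ that looks like $U\times^{\TPL{}} (\text{fiber germ})$, i.e.\ a product of an open subset $U\subseteq B$ with a germ of $C_{r,d}$ (a star of rays) or of an interior edge segment. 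This local product structure is what makes the computation tractable.

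The first map $\PL_{B,b}^\fin\to \Harm_{\mc C,x}$ is pullback along $\pi$. Pullbacks of finite $\PL$-functions are fiberwise constant, hence fiberwise harmonic, so the map lands in $\Harm_{\mc C,x}$; injectivity is immediate since $\pi$ admits local sections (the $s_i$, or the trivialization), so $\pi^\ast$ is injective on germs. The third map $\Harm_{\mc C,x}\to \Harm_{\mc C_b,x}/\R$ is restriction to the fiber followed by the quotient. Well-definedness and the claim that the image of the first map is exactly the kernel of the third is the heart of \emph{exactness in the middle}: a fiberwise-harmonic germ restricts to the constant $0$ class on $\mc C_b$ precisely when its restriction to the fiber through $x$ is constant, and I would argue that such a germ must then be constant along \emph{all} nearby fibers, hence is a pullback from the base. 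Here the local product structure is decisive: writing a germ $m$ in the product coordinates, the fiberwise slopes $d_v m$ are continuous functions on the base, and if they vanish on the central fiber one uses harmonicity together with the rigidity of slopes (they are integers, and integer-valued continuous functions are locally constant) to conclude they vanish on a whole neighborhood, forcing $m$ to factor through $\pi$.

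The main obstacle, and the step I would spend the most care on, is \emph{surjectivity of the restriction map} onto $\Harm_{\mc C_b,x}/\R$: given a harmonic germ on the central fiber, one must extend it to a fiberwise-harmonic germ on the total space. In the edge case and the genus-zero-vertex case one has an explicit model: a harmonic function on the star $C_{r,d}$ is determined by integer slopes $a_0,\dots,a_{d-1}$ summing to zero, and one simply extends these slopes to the product $U\times C_{r,d}$, defining the extension by the same slopes along the fibers. The subtlety is at the edges whose length varies over the base (the cones $\sigmabar_{G,e}$): here the fiber-length $l_e$ is a coordinate function on $B$, and one must check that the naive slope-extension glues consistently across the two vertex faces of $\sigmabar_{G,e}$ and defines a genuine global section of $\PL^\fin_{\mc C}$ near $x$. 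Because $x$ has $\gamma_{\mc C}(x)=0$ and lies over the interior of $\sigma$, its local neighborhood meets only the interior of a single edge or the star around a single vertex, so these gluing issues are confined to the germ level and can be resolved by the explicit coordinate description; I would verify that the resulting extension is indeed fiberwise harmonic at every nearby point and that any two extensions differ by a pullback, completing the exactness on the right and tying back to the middle.
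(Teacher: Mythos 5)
The central gap is the local product structure you posit at the outset and then rely on in every step: the claim that $x$ has a neighborhood in $\mc C$ of the form $U\times F$, with $U\subseteq B$ open and $F$ a germ of a star $C_{r,d}$ or of an edge segment. This is true (and easy) when $x$ lies in the interior of an edge of $\mc C_b$, but it is false at a genus-zero vertex whenever the combinatorial type of the fibers jumps at $b$ --- which is exactly the interesting case, since $b$ generally lies in the closure of several cones of its local face structure, over whose relative interiors the fibers have different combinatorial types. For instance, if $x$ is a four-valent genus-zero vertex of $\mc C_b$, then the fibers $\mc C_c$ over nearby points $c$ in a larger cone resolve $x$ into two trivalent vertices joined by a short edge, and more generally into a whole tree collapsing to $x$ as $c\to b$; no neighborhood of $x$ is a product. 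Both of your key steps use the product essentially: in the middle exactness you identify directions $v\in T_x\mc C_b$ with directions in nearby fibers in order to say that the fiberwise slopes $d_v m$ vanish on a neighborhood, and in the surjectivity you ``extend by the same slopes along the fibers.'' Neither operation makes sense on a nearby fiber whose local picture is a tree with new vertices: there the restriction of a fiberwise harmonic germ is governed by its slopes at the boundary of the collapsing region together with one value, and harmonicity at the new interior vertices is what pins it down. Your closing remark that the subtlety sits in the edge cones $\sigmabar_{G,e}$ misplaces the difficulty; the difficulty is the resolution of vertices, and the hypothesis $\gamma_{\mc C}(x)=0$ is not what gives you a product neighborhood.

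The paper's proof is built precisely around this. It constructs, for each direction $d\in T_x\mc C_b$, a local section $s_d$ of $\pi$ through a point of the adjacent edge (edges adjacent to $x$ do persist in nearby fibers), and observes that, after shrinking the base, the region of each nearby fiber enclosed by these sections is a \emph{tree} --- this is where $\gamma_{\mc C}(x)=0$ actually enters, via the $H_1$-term in the definition of the genus function, which rules out cycles collapsing to $x$ in addition to vertex genus. A harmonic function on such a tree is uniquely determined by its slopes at the leaves $s_d(c)$ and its value at one of them. Combined with your (correct) observation that slopes are integral and therefore locally constant in $c$, this yields both the middle exactness, in the form $\phi=\pi^*(s_{d_0}^*\phi)$, and the surjectivity, by defining the extension $\tilde\xi$ on each fiber as the unique harmonic function on the tree with the prescribed leaf slopes and leaf value. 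So your two main ingredients --- discreteness of slopes and fiberwise extension by prescribed slope data --- are the right ones, but they must be run on the tree bounded by the sections $s_d$, not on a product neighborhood that does not exist.
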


\begin{proof}
 It is clear that \eqref{eq:harmes} is a complex and that it is exact on the left. To see exactness in the middle, given a function $\phi$ on an open neighborhood $W$ of $x$ such that $\phi\vert_{W_b}$ is constant, we show that $\phi$ must be constant along fibers for all points $c$ in a neighborhood of $b$.
Using a \TPL{}-trivialization at $b$, one may construct for every $d\in T_x\mc C_b$ a  local section $s_d$ of $\pi$ such that that $s_d(b)$ is a point arbitrarily close to $b$ in the direction given by $d$, and  $s_d$ does not intersect the locus of vertices of the fibers of $\mc C$. We denote by $C$ a connected neighborhood of $b$ contained in the common locus of definition of the sections $s_d$. 

Let $U$ be the connected component of $\left(W\cap \pi^{-1}(C) \right)\setminus \bigcup_{d\in T_x\mc C_b}s_d(B)$ containing $x$. After potentially shrinking $C$, the fiber $\overline U_c$ of a point $c\in C$ is a tree whose leaves are precisely the points $s_d(c)$, $d\in T_x\mc C_b$. Therefore, the restriction $\phi\vert_{U_c}$ is uniquely determined by the slopes of $\phi\vert_{W_c}$ at the points $s_d(c)$, $d\in T_x\mc C_b$, and by its value at $s_{d_0}(c)$ for some fixed $d_0\in T_x\mc C_b$.
For any $d\in T_x\mc C_b$, the slope of $\phi\vert_{W_c}$ at $s_d(c)$ is the same for all $c\in C$ since $C$ is connected and slopes take values in a discrete set. As the slope of $\phi\vert_{W_b}$ is zero everywhere, it follows that $\phi$ is constant on all fibers in $C$, and therefore that $\phi=\pi^*(s_{d_0}^*\phi)$ on $U$. This proves the exactness in the middle.

With all notation as above, we observe that given a harmonic function $\xi$ on $W_b$, we can define a fiberwise harmonic function $\tilde{\xi}$ on $\overline U$ by defining it on a fiber $\overline U_c$ as the unique harmonic function with $\tilde{\xi}(s_{d_0}(c))=\xi(s_{d_0}(b))$ and slope at $s_d(c)$ equal to the slope of $\xi$ at $s_d(b)$ for all $d\in T_x\mc C_b$. By construction, one then has $\tilde{\xi}\vert_{U_b}=\xi\vert_{U_b}$, which proves exactness on the right.
\end{proof}

\begin{definition}\label{def:family}
Let $B$ be a tropical space. A \textbf{family of $n$-marked  genus-$g$ stable tropical curves over $B$} is a family $\pi\colon\mc C\to B$ of $n$-marked genus-$g$ stable \TPL{}-curves, together with an affine structure on $\mc C$ making $\pi$ linear, and such that for every $b\in B$ the fiber $\mc C_b= \{b\}\times_B \mc C$ is a stable tropical curve; further we require that the sequence
\begin{equation}
\label{equ:exact sequence}
0\to \Omega^1_{B,b}\to \Omega^1_{\mc C}|_{\mc C_b} \to \Omega^1_{\mc C_b}\to 0 
\end{equation}
is an exact sequence of sheaves on $\mc C_b$. 
 \end{definition}

\begin{remark}
The exactness of the sequence displayed in \eqref{equ:exact sequence} is equivalent to asking that every affine function on $\mc C$ whose restriction to the fiber over $b$ is constant is locally a pull-back of an affine function defined on a neighborhood of $b$ in $B$.
\end{remark}

{
\begin{remark}
As a consequence of Lemma \ref{lem:exact sequence for harmonic functions}, every family of stable \TPL{}-curves $\pi\colon\mc C\to B$ can be  equipped with affine structures on base and total space to yield a family of stable tropical curves. More precisely, the lemma proves that if $\widetilde\Harm_{\mc C}$ denotes the affine structure on $\mc C$ with stalks
\begin{equation*}
\widetilde\Harm_{\mc C,x} = 
\begin{cases}
\Harm_{\mc C,x} & \text{ if } \gamma_{\mc C}(x)=0 \\
\PL^\fin_{B,\pi(x)} & \text{ if }\gamma_{\mc C}(x)>0 \,
\end{cases}    
\end{equation*}
then $(\mc C,\widetilde\Harm_{\mc C})\to (B, \PL^\fin_B)$ is a family of stable tropical curves. Families of stable tropical curves obtained in this way are not particularly useful from the point of view of tropical intersection theory; there are "too many" affine functions on the base to have positive-dimensional tropical cycles on $(B,\PL^\fin_B)$ {(see \eqref{def:balancing} for the definition of balancing used to define tropical cycles on tropical spaces)}.
\end{remark}
}


\begin{example} \label{ex:tropfam}
For any integer $a$, we describe a family $\mc C^a$ of elliptic curves over $\T\PP^1 = \Rbar$; refer to Figure \ref{fig:famell} to keep up with the notation. As a cone complex, this is the union of four extended plane  orthants $[0, \infty]\times [0, \infty]$, denoted  $L_{\pm}, T_{\pm}$. We denote by $(u_\pm, v_\pm)$ (integral) coordinate functions for $L_\pm$, and by $(x_\pm, y_\pm)$  coordinate functions for $T_\pm$; we denote by $x$ a coordinate on the base $\T\PP^1$.

\begin{figure}[tb]
\begin{center}
\begin{tikzpicture}[scale=.4]

\fill[ fill = gray!30] (0,0) -- (0,8.5)--(8.5,8.5) -- (8.5,0) --(0,0);
\fill[ fill = gray!30] (0,0) -- (0,8.5)--(-8.5,8.5) -- (-8.5,-8.5) --(0,0);
\draw[very thick] (8.5,0)--($(8.5,0)!.8!(8.5,8.5)$);
\draw[dashed,very thick] ($(8.5,0)!.8!(8.5,8.5)$) --(8.5,8.5);
\draw[very thick] (0,0)--($(0,0)!.8!(0,8.5)$);
\draw[dashed,very thick] ($(0,0)!.8!(0,8.5)$) --(0,8.5);
\draw[very thick] (-8.5,-8.5)--($(-8.5,-8.5)!.8!(-8.5,8.5)$);
\draw[dashed,very thick] ($(-8.5,-8.5)!.8!(-8.5,8.5)$) --(-8.5,8.5);

\draw[very thick, black] (-8.5,8.5) -- (8.5,8.5) ;
\draw[very thick, fill = white] (0,0)-- (8.5,0)--($.97*(8.5,-4.05)$) --(0,0);
\fill[thick, fill = white] (8.5,-2) ellipse (1 and 2);
\draw[very thick] (8.5,0) arc (90:-90:1 and 2);
\draw[very thick,dashed] (8.5,0) arc (90:270:1 and 2);

\draw[very thick, fill = white] (0,0)-- (-8.5,-8.5)--($.95*(-8.2,-12.5)$) --(0,0);
\draw[very thick, fill = white] (-8.5,-10.5) ellipse (1 and 2);
 
 \begin{scope}
 \clip (0,0) circle (4.8cm);
 \foreach \i in {-8,...,8}
      \foreach \j in {0,...,8}{
        \draw[fill = black] (\i,\j) circle(1pt);};

 \foreach \i in {-8,...,0}
      \foreach \j in {\i,...,0}{
        \draw[fill = black] (\i,\j) circle(1pt);};
 \end{scope}
 
\draw[very thick] (-8.5,-8.5) -- (0,0) -- (8.5,0) ;

\node at(5,5) {$T_+$};
\node at(-5,5) {$T_-$};
\node at(5,-1.5) {$L_+$};
\node at(-5,-6.3) {$L_-$};
\node at(9.5,8.5) {$s_1$};
\node at(0.5,-0.8) {$v$};

\end{tikzpicture}

\caption{A neighborhood of the fiber $\mc C^a_0$ of the family $\mc C^a$ in the case $a=1$. Using the affine structure, the finite part of $T_+\cup T_-$, indicated in gray, can be embedded into the plane. This embedding depends on $a$, as illustrated by the integral lattice depicted in a neighborhood of $v$.
}
\label{fig:famell}
\end{center}
\end{figure}
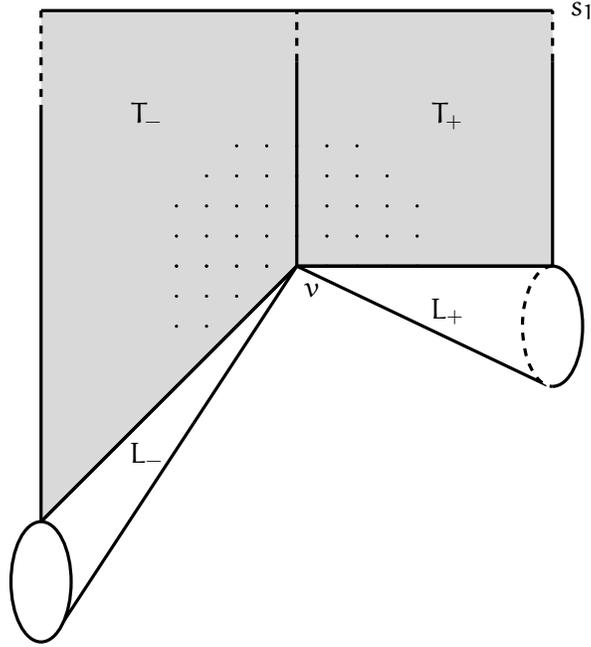

The finite one dimensional faces of  $L_-$ are identified, and glued with the horizontal axis of $T_{-}$ (and similarly for the cones labeled by $+$). The finite vertical axes of  $T_-$ and $T_+$ are identified.
We define a genus function $\gamma_{\mc C^a}: \mc C^a \to \ZZ$ to have value one at the point $v$ of $\mc C^a$ obtained  gluing the four $(0,0)$-vertices  of the extended cones, and zero elsewhere.

The map  $\pi: \mc C^a \to \T\PP^1 = [-\infty, \infty]$
 defined by
\begin{align}
    \pi_{|T_-}(x_-, y_-) &=  -x_-, &
     \pi_{|T_+}(x_+, y_+) &= x_+, \nonumber \\
      \pi_{|L_-}(u_-, v_-) &= -u_--v_-,&
        \pi_{|L_+}(u_+, v_+) &= u_++v_+,
\end{align}
 has  tropical elliptic curves for fibers. 

To make the extended cone complex $\mc C^a$ into a tropical space, we endow it with a sheaf of affine linear functions. Let ${\widetilde{\Aff}_{\mc C^a}}$ denote the shubsheaf of $\PL^\fin_{\mc C^a}$ consisting of all piecewise linear function whose restriction to any fiber is harmonic and whose pull-backs to $T_{\pm}$ and $L_{\pm}$ are affine, where we consider $[0,\infty]\times [0,\infty]$ with its standard affine structure. One quickly checks that with this affine structure, the projection $\pi$ is linear, all fibers of $\pi$ are stable tropical curves, and the sequence \eqref{equ:exact sequence} is exact away from the central fiber $\mc C^a_0$. The sequence is not exact on the central fiber because the horizontal slopes of a function in ${\widetilde{\Aff}_{\mc C^a}}$  defined in a neighborhood of a point $x\in \mc C^a_0$ in the positive (on $T_+$) and negative (on $T_-$) directions are independent. To fix this, one needs to replace ${\widetilde{\Aff}_{\mc C^a}}$ by a subsheaf $\Aff_{\mc C^a}$ in which the transition from $T_+$ to $T_-$ is specified:
\begin{equation}\label{tf}
x_+ = -x_- \ \ \ \ \ \ \  y_+= y_- - ax_-,
\end{equation}
for some $a\in \Z$; different values of $a$ yield different affine structures on the same underlying \TPL{}-space, that may be interpreted as different local embeddings of $T_-\cup T_+$ into $\T^2$: the value $a$ measures the difference of the slopes of the images of the horizontal axes, as shown in Figure \ref{fig:famell}. 


\end{example}

\begin{lemma}
\label{lem:pull-back of family is family}
Let $\pi \colon \mc C\to B$ be a family of $I$-marked genus-$g$ stable tropical curves and let $f\colon B'\to B$ be a morphism of tropical spaces. If $\mc C'=B'\times_B \mc C$ and $\pi'\colon \mc C'\to B'$ is the projection, then $\pi'$, together with the induced sections and genus function, is a family of $I$-marked genus-$g$ stable tropical curves.
\end{lemma}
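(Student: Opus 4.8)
The plan is to verify the three conditions of Definition \ref{def:family} for $\pi'\colon\mc C'\to B'$ one by one, reducing each to the corresponding property of $\pi\colon\mc C\to B$ via the structure of fiber products. First I would record that the underlying morphism of \TPL{}-spaces $\mc C'\to B'$ is a family of $I$-marked genus-$g$ stable \TPL{}-curves: this is exactly the base-change stability already noted for \TPL{}-families. By Proposition \ref{prop:fiber products of tropical spaces}, the fiber product $\mc C' = B'\times_B\mc C$ in the category of tropical spaces exists, has underlying \TPL{}-space $B'\times^{\text{\TPL{}}}_B\mc C$, and carries an affine structure for which both projections---in particular $\pi'$---are linear; the surjectivity statement of that proposition, that $q'^{-1}\Aff_{\mc C}\oplus p'^{-1}\Aff_{B'}\to\Aff_{\mc C'}$ is an epimorphism (with $q'\colon\mc C'\to\mc C$, $p'=\pi'\colon\mc C'\to B'$), will be the main input below. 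For the fibers I would use cancellation of fiber products: for $b'\in B'$ with $b=f(b')$ one has $\mc C'_{b'}=\{b'\}\times_{B'}\mc C'=\{b'\}\times_B\mc C=\mc C_b$, and since the affine structure on a nodal curve is uniquely determined by its underlying \TPL{}-curve (Definition \ref{def:tropcurve}), this is an identification of tropical curves; hence each fiber of $\pi'$ is a stable tropical curve because the corresponding fiber of $\pi$ is.

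The heart of the proof is the exactness of \eqref{equ:exact sequence} for $\pi'$ over each $b'$, which I would deduce from exactness for $\pi$ over $b=f(b')$. The identity $\pi\circ q'=f\circ p'$ gives a morphism from \eqref{equ:exact sequence} for $\pi$ to the sequence for $\pi'$, with vertical maps $f^*$, $q'^*$, and the identification $\Omega^1_{\mc C_b}\cong\Omega^1_{\mc C'_{b'}}$. For right-exactness I would lift a one-form germ on $\mc C'_{b'}\cong\mc C_b$ to $\mc C$ using right-exactness for $\pi$, then pull it back along $q'$; since $q'$ restricts to the identification on fibers, the result restricts to the given germ. For middle-exactness I would take an affine germ $\eta$ on $\mc C'$ at $x'=(x,b')$ whose restriction to $\mc C'_{b'}$ is constant, write $\eta=q'^*\alpha+p'^*\beta$ via the epimorphism of Proposition \ref{prop:fiber products of tropical spaces} with $\alpha\in\Aff_{\mc C,x}$ and $\beta\in\Aff_{B',b'}$, note that $p'$ is constant on $\mc C'_{b'}$ so that $\alpha\vert_{\mc C_b}$ is forced to be constant, apply middle-exactness for $\pi$ to write $\alpha=\pi^*\gamma$ locally, and conclude via $q'^*\pi^*\gamma=p'^*f^*\gamma$ that $\eta=p'^*(f^*\gamma+\beta)$ is a pull-back from $B'$.

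For left-exactness I would show $\pi'^*\colon\Omega^1_{B',b'}\to\Omega^1_{\mc C'}\vert_{\mc C'_{b'}}$ is injective by restricting along local sections: if an affine germ $\beta$ on $B'$ at $b'$ has $p'^*\beta$ constant on a neighborhood $V'$ of a point of the fiber, then composing with a local section $s$ of $\pi'$ whose image meets $V'$---such sections exist by the construction used in the proof of Lemma \ref{lem:exact sequence for harmonic functions}---shows $\beta=s^*(p'^*\beta)$ is constant near $b'$. The step I would be most careful about is the bookkeeping in middle-exactness: I must check that the decomposition $\eta=q'^*\alpha+p'^*\beta$ can be chosen at the point $x'$ itself, that ``restricts to a constant on the fiber'' really forces $\alpha\vert_{\mc C_b}$ constant rather than merely affine, and that $\pi\circ q'=f\circ p'$ is applied at the level of germs so that $f^*\gamma+\beta$ is a genuine affine germ on $B'$ at $b'$. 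Everything else is formal once the fiber-product identifications and the epimorphism of Proposition \ref{prop:fiber products of tropical spaces} are in hand.
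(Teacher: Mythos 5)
Your proof is correct and takes essentially the same approach as the paper: the paper's entire proof is your middle-exactness step, namely using the epimorphism from Proposition \ref{prop:fiber products of tropical spaces} to reduce an affine germ on $\mc C'$ that is constant on the fiber to a pull-back $q'^*\alpha$ from $\mc C$ (absorbing the $\pi'^*\beta$ summand), applying exactness of \eqref{equ:exact sequence} for $\pi$ to write $\alpha=\pi^*\gamma$, and concluding that the germ equals $\pi'^*(f^*\gamma+\beta)$. Your additional verifications (left- and right-exactness, and the identification of fibers via cancellation of fiber products) are precisely what the paper leaves implicit, being covered by the remark following Definition \ref{def:family} and the base-change stability of families of stable \TPL{}-curves.
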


\begin{proof}
It suffices to prove for every $b\in B'$ the exactness of the sequence displayed in \eqref{equ:exact sequence}. Let $x\in \mc C'_b$ and let $\phi$ be an affine function defined in a neighborhood of $x$ in $\mc C'$ such that $\phi$ is constant on a neighborhood of $x$ in $\mc C'_{b}$. Let $f'\colon \mc C'\to \mc C$ denote the projection. By Proposition \ref{prop:fiber products of tropical spaces}, we may assume that $\phi= f'\phantom{}^*\varphi$ for an affine function $\varphi$ defined in a neighborhood of $f'(x)$ in $\mc C$. By assumption, $\varphi$ is constant on a neighborhood of $f'(x)$ in $\mc C_{f(b)}$. Since   $\pi\colon \mc C\to B$ is a family of tropical curves, it follows that $\varphi=\pi^*\chi$ for some affine function $\chi$ defined on a neighborhood of $f(b)$ in $B$. It follows that $\phi=\pi'\phantom{}^*(f^*\chi)$ in a neighborhood of $x$, finishing the proof.
\end{proof}

\begin{example}
Let $\Z\ltimes\Z$ act on $\R^2$ via $(a,b).(x,y)=(a+x,(-1)^a(b+y))$ and let $C=\R^2/\Z\ltimes\Z$ be the quotient (homeomorphic to a Klein bottle), equipped with the induced affine structure. Let $B=\R/\Z$, also equipped with the induced affine structure, let $\pi\colon C\to B$ denote the morphism with  $\pi(\overline{(x,y)})=\overline x$, and let $s\colon B\to C$ denote the section of $\pi$ with $s(\overline x)=\overline{(x,0)}$. Attaching to $C$ a copy of $\Rbar_{\geq 0}\times B$ along $s(B)$ we obtain an isotrivial family $\mc C\to B$ of one-marked genus one stable \TPL{}-curves over $B$. Define an affine structure on $\mc C$ by declaring a piecewise linear function $\phi$ on $\mc C$ to be affine if it is harmonic on all fibers,  $\phi\vert_{C\setminus s(B)}$ is affine on $C$, and $\phi\vert_{\Rbar_{>0}\times B}$ is affine on $\Rbar_{\geq 0}\times B$. One checks that with this affine structure on the total space, $\mc C\to B$ is an isotrivial family of one-marked genus one stable tropical curves. Because $C$ is not homeomorphic to a trivial $S^1$-bundle over $B$, the family $\mc C\to B$ is not isomorphic to a trivial family, neither as a family of \TPL{}-curves, nor as a family of tropical cruves.  However, the pull-back  of $\mc C\to B$ along a connected double cover $B'\to B$ is isomorphic to a trivial family.
\end{example}

With Lemma \ref{lem:pull-back of family is family} established, we can make the following definition:

\begin{definition}\label{def:moduli}
We denote by $\Mgnbar{g,n}$ the category fibered in groupoids over the category of tropical spaces with 
\begin{equation*}
\Mgnbar{g,n}(B) = \{\text{families of } n\text{-marked genus-}g\text{ stable tropical curves over } B \} 
\end{equation*}
for any tropical space $B$. Considering the category of tropical spaces as a site, as outlined in Convention \ref{conv:tropical site}, the fibered category $\Mgnbar{g,n}$ is a stack.
\end{definition}

\subsection{Representability of the diagonal}

Recall that for any category $\mc M$ fibered in groupoids over a base category $\mc S$ with finite products, the following three conditions are equivalent:
\begin{enumerate}
\item The diagonal $\Delta_{\mc M}\colon \mc M\to \mc M\times_{\mc S}\mc M$ is representable.
\item For any two objects $U$ and $V$ of $\mc S$ and morphisms $U\to \mc M$ and $V\to \mc M$, the fiber product $U\times_{\mc M} V$ is representable by an object in $\mc S$.
\item For any object $U$ of $\mc S$ and any two elements $\xi_1$ and $\xi_2$ of $\mc M(U)$, the presheaf $\mathrm{Isom}_U(\xi_1,\xi_2)$
on $(\mc S/U)$ that assigns to $T\xrightarrow f U$ the set of isomorphisms in $\mc M(T)$ from $f^*\xi_1$ to $f^*\xi_2$ is representable.
\end{enumerate}

\begin{proposition}
\label{prop:representability of diagonal TPL-version}
For every pair $g,n \in \Z_{\geq 0}$ of nonnegative integers with $2g-2+n>0$, the stack $\Mgnbar{g,n}^{\text{\TPL{}}}$ over the category of \TPL{}-spaces has representable diagonal.
\end{proposition}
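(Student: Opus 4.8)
The plan is to verify condition (3) of the equivalence recalled just above the statement: for every \TPL{}-space $U$ and every pair of families $\xi_1,\xi_2 \in \Mgnbar{g,n}^{\text{\TPL{}}}(U)$, the presheaf $\mathrm{Isom}_U(\xi_1,\xi_2)$ on the category of \TPL{}-spaces over $U$ is representable by a \TPL{}-space. Since $\mathrm{Isom}_U(\xi_1,\xi_2)$ is a sheaf for the topology of Convention \ref{conv:TPL site}, representability is local on $U$ and representing objects glue along an open cover; I may therefore pass to a neighborhood of an arbitrary point $b\in U$ on which both $\xi_1$ and $\xi_2$ admit \TPL{}-trivializations. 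The essential finiteness input is that, for fixed $(g,n)$, there are only finitely many $n$-marked genus-$g$ stable graphs, hence only finitely many combinatorial types and finitely many graph isomorphisms among them.

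Next I would analyze what an isomorphism is. Fiberwise, an isomorphism of two $n$-marked stable \TPL{}-curves is a genus- and marking-preserving isomorphism of the underlying metric graphs, that is, a graph isomorphism of their combinatorial types preserving all edge lengths; such a map is rigid, being determined by the underlying graph isomorphism. Over a test object $T\xrightarrow{f} U$, an isomorphism $\Phi\colon f^*\xi_1\to f^*\xi_2$ of families is thus determined by this fiberwise data, varying in the discrete set of graph isomorphisms. Using a \TPL{}-trivialization—so that, over each cone $\sigma$ of the local face structure, $\xi_1$ and $\xi_2$ are pulled back from the universal families $\mc C_{G_\sigma}\to\overline\sigma_{G_\sigma}$ and $\mc C_{H_\sigma}\to\overline\sigma_{H_\sigma}$ via classifying maps $f^1_\sigma,f^2_\sigma$—the datum of $\Phi$ translates into a graph isomorphism $\psi$ between (specializations of) the trivializing graphs, together with the requirement that $\psi$ identify the edge-length functions, i.e.\ $l_e\circ f^1_\sigma=l_{\psi(e)}\circ f^2_\sigma$ for every bounded edge $e$. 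As the two sides are $\PL$-functions pulled back along the classifying maps, and as a graph morphism $G\to G'$ induces the face identification $\overline\sigma_G\hookrightarrow\overline\sigma_{G'}$, this is a closed, locally polyhedral condition on the base.

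I would then construct the representing object. For each graph isomorphism $\psi$ between combinatorial types occurring in the trivialization, let $Z_\psi\subseteq U$ be the locally polyhedral subset cut out by the length-matching equations above, equipped with its induced \TPL{}-structure. The finitely many $Z_\psi$ assemble, after being refined according to the combinatorial type at each point, into a \TPL{}-space $I\to U$ carrying a universal isomorphism, namely the one induced fiberwise by $\psi$ over $Z_\psi$. One checks the universal property directly: a morphism $T\to U$ factoring through $Z_\psi$ is exactly an isomorphism $f^*\xi_1\cong f^*\xi_2$ inducing $\psi$, and the fiberwise analysis shows every isomorphism arises this way. Gluing these local representing objects over the cover of $U$ using that $\mathrm{Isom}_U(\xi_1,\xi_2)$ is a sheaf then yields representability in general, which is condition (3).

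The main obstacle is the combinatorial bookkeeping in this last assembly. The set of fiberwise isomorphisms is \emph{not} locally constant on $U$: as the combinatorial type degenerates across the faces of the local face structure, curves acquire symmetry and the isomorphism set jumps upward, so the pieces $Z_\psi$ lie over different and differently-dimensioned strata and must be indexed over graph isomorphisms among \emph{all} the specialization types $G_\sigma,H_\sigma$, not merely the generic ones. The work is to verify that these locally polyhedral pieces fit together into a single \TPL{}-space with a well-defined continuous ($\PL$) universal isomorphism, and that no isomorphism is either missed or double-counted—precisely the point at which the compatibility of the trivializing graphs under specialization, together with the finiteness of the automorphism groups $\Aut(G)$, must be invoked.
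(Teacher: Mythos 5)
Your overall strategy coincides with the paper's: localize on the base using the sheaf property of $\mathrm{Isom}_U(\xi_1,\xi_2)$, pass to a common \TPL{}-trivialization, observe that an isomorphism of families is fiberwise a marking-preserving graph isomorphism matching edge lengths, and build the representing object from pieces indexed by graph isomorphisms between the trivializing combinatorial types, glued according to compatibility under specialization. Up to that point the two arguments agree step for step.

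The gap is that the step you yourself label ``the main obstacle'' is the actual mathematical content of the proof, and your framing of the pieces as subsets $Z_\psi\subseteq U$ that ``assemble'' cannot work as stated. The representing object is in general \emph{not} obtained from loci inside $U$: over one and the same cone, two distinct graph isomorphisms $\psi\neq\psi'$ can cut out the identical length-matching locus (this happens exactly when the fibers have nontrivial symmetry), so $\mathrm{Isom}_U(\xi_1,\xi_2)$ is a multi-sheeted cover of $U$ rather than a union of subspaces; in Example \ref{ex:diagonal is representable}, the underlying \TPL{}-space of $\Isom_{\T\PP^1}(\mc C^a,\mc C^b)$ is two copies of $\T\PP^1$ glued at their origins, which no refinement of subsets of $\T\PP^1$ produces. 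The paper resolves this with a construction your proposal never supplies: form the set $\Delta$ of pairs $(\sigma,\phi)$ with $\sigma\in\Sigma$ and $\phi\colon G_{\sigma,1}\to G_{\sigma,2}$ an isomorphism of marked graphs; partially order it by $(\tau,\varphi)\preceq(\sigma,\phi)$ iff $\tau$ is a face of $\sigma$ and $\varphi$ is induced by $\phi$ (equivalently, $\phi$ sends contracted edges to contracted edges and stretched edges to stretched edges); take the colimit $T=\injlim_{\delta\in\Delta}\sigma_\delta$, i.e.\ one copy of each cone \emph{per isomorphism}, glued along exactly those faces; and only then cut out $I=\bigcup_\delta P_\delta\subseteq T$ by the length-matching polyhedra $P_\delta=\{x\in\tau\mid\varphi_G(f_{\tau,1}(x))=f_{\tau,2}(x)\}$. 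The universal property also needs a genuine argument that your ``one checks directly'' elides: for a test morphism $h\colon B'\to B$ with isomorphism $\zeta$, one chooses a local face structure $\Theta$ on $B'$ mapping polyhedrally into $\Sigma$, notes that over $\relint(\theta)$ the fiberwise isomorphism comes from a single graph isomorphism $\zeta_\theta$, obtains the unique lift $\theta\to P_{(\sigma,\zeta_\theta)}$, and uses that uniqueness to glue the local lifts into one morphism $B'\to I$. So: right approach and a correct diagnosis of where the difficulty sits, but the device that overcomes it, and hence the proof, is missing.
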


\begin{proof}

We need to show that given two families $\mc C_1\to B$ and $\mc C_2\to  B$ of  $n$-marked genus-$g$ stable \TPL{}-curves over a base $B$, the functor $\mathrm{Isom}_B(\mc C_1,\mc C_2)$ is represented by a \TPL{}-space over $B$. Let $b\in B$, and for $i\in \{1,2\}$ let $(\Sigma_i,(G_{\sigma,i},f_{\sigma,i},\chi_{\sigma,i}))_{\sigma\in \Sigma_i})$ be a \TPL{}-trivialization of $\mc C_i\to B$ at $b$. After taking a common refinement of the local face structures $\Sigma_1$ and $\Sigma_2$ at $b$, we may assume that $\Sigma_1=\Sigma_2$, and we denote this local face structure by $\Sigma$. Since \TPL{}-spaces can be glued along open subsets, we may assume that $B=\vert \Sigma\vert$.

Consider the set $\Delta$ consisting of all pairs $(\sigma,\phi)$, where $\sigma\in\Sigma$ and $\phi\colon G_{\sigma,1}\to G_{\sigma,2}$ is an isomorphism of marked graphs. If $\tau$ is a face of $\sigma$, then $G_{\tau,1}$ and $G_{\tau,2}$ are obtained from $G_{\sigma,1}$ and $G_{\sigma,2}$, respectively, via edge specializations. An isomorphism $\phi\colon G_{\sigma,1}\to G_{\sigma,2}$ induces an isomorphism $G_{\tau,1}\to G_{\tau,2}$ if and only if $\phi$ maps contracted (resp.\ stretched) edges to contracted (resp.\ stretched) edges. We define a partial order $\preceq$ on $\Delta$ by stipulating that $(\tau,\varphi)\preceq (\sigma,\phi)$ if and only if $\tau$ is a face of $\sigma$ and $\varphi$ is induced by $\phi$. For every $\delta = (\tau,\varphi)\in \Delta$, define $\sigma_{\delta}\coloneqq \tau$. Let $T$ be the \TPL{}-space obtained by gluing the cones $\sigma_\delta$ for every $\delta\in \Delta$  according to the partial order on $\Delta$, that is we define $T$ as
\begin{equation}
T=\injlim_{\delta\in\Delta} \sigma_\delta \ .
\end{equation}
For every element $\delta=(\tau,\varphi)$ of $\Delta$, the isomorphism $\varphi$ defines an isomorphism $\varphi_G\colon\overline\sigma_{G_{\tau,1}}\to \overline\sigma_{G_{\tau,2}}$. The set
\begin{equation}
P_\delta=\{x\in \tau\mid \varphi_G(f_{\tau,1}(x))=f_{\tau,2}(x)\}
\end{equation}
is a polyhedron in $\tau=\sigma_\delta$. Let
\begin{equation}
I=\bigcup_{\delta\in \Delta} P_\delta \subseteq T \ ,
\end{equation}
and let $g\colon I\to B$ denote the natural morphism of \TPL{}-spaces induced by the projection $T\to \vert\Sigma\vert$.  By construction, the isomorphisms $\chi_{\sigma,i}$ induce an isomorphism $\chi\colon g^*\mc C_1\to g^*\mc C_2$ of \TPL{}-spaces such that for $\delta=(\tau,\varphi)\in \Delta$ and $x\in \relint(\delta)$ the combinatorial type of the restriction of $\chi$ to the fiber over $x$ is given by $\varphi$.

To finish the proof, we need to show that $(I\xrightarrow g B,\chi)$ represents $\mathrm{Isom}_B(\mc C_1,\mc C_2)$. Let $B'\xrightarrow h B$ be a morphism of \TPL{}-spaces, and let $\zeta\colon h^*\mc C_1\to h^*\mc C_2$ be an isomorphism of families of stable \TPL{}-curves. Let $b'\in B'$ and let $\Theta$ be a local face structure at $b'$ such that $h$ maps the polyhedra of $\Theta$ linearly into polyhedra of $\Sigma$. For $\theta\in \Theta$, let $\sigma\in \Sigma$ be the minimal polyhedron in $\Sigma$ containing $h(\theta)$. Then for every $x\in \relint(\theta)$, the fiber $(h^*\mc C_i)_x$ has combinatorial type $G_{\sigma,i}$. Furthermore, the isomorphism $(h^*\mc C_1)_x\to (h^*\mc C_2)_x$ induced by $\zeta$ is induced by the same isomorphism $\zeta_\theta\colon G_{\sigma,1}\to G_{\sigma,2}$ of stable graphs for all $x\in \relint(\theta)$. It follows that the lift $k_\theta\colon\theta\to P_{(\sigma,\zeta_\theta)}$ of $\theta\xrightarrow{h\vert_\theta} \sigma$ is the unique lift of $\theta\xrightarrow{h\vert_\theta} B$ to $I$ via which $\chi$ induces the restriction $\zeta_\theta\colon (h^*\mc C_1)_\theta\to (h^*\mc C_2)_\theta$ of $\zeta$ to $\theta$. By the uniqueness of $k_\theta$, the morphisms $k_\theta$ glue to a unique lift $k\colon B'\to I$ of $h$ via which $\chi$ induces $\zeta$.
\end{proof}

\begin{theorem}
\label{thm:representable diagonal}
For every pair $g,n\in \Z_{\geq 0}$ of non-negative integers with $2g-2+n>0$, the diagonal of the stack $\Mgnbar{g,n}$ is representable.
\end{theorem}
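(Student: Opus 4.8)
The plan is to reduce the representability of the diagonal of $\Mgnbar{g,n}$ over the site of tropical spaces to the already-established representability of the diagonal of $\Mgnbar{g,n}^{\text{\TPL{}}}$ over the site of \TPL{}-spaces (Proposition \ref{prop:representability of diagonal TPL-version}). Concretely, I would use the equivalent formulation (3) recalled just before the statement: given a tropical space $B$ and two families $\mc C_1\to B$ and $\mc C_2\to B$ of $n$-marked genus-$g$ stable tropical curves, I must show that the presheaf $\mathrm{Isom}_B(\mc C_1,\mc C_2)$ on the category of tropical spaces over $B$ is representable by a tropical space.

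First I would forget the affine structures and pass to the underlying families of \TPL{}-curves $\mc C_1^{\text{\TPL{}}}\to B^{\text{\TPL{}}}$ and $\mc C_2^{\text{\TPL{}}}\to B^{\text{\TPL{}}}$. By Proposition \ref{prop:representability of diagonal TPL-version}, the \TPL{}-isomorphism functor $\mathrm{Isom}^{\text{\TPL{}}}_{B^{\text{\TPL{}}}}(\mc C_1^{\text{\TPL{}}},\mc C_2^{\text{\TPL{}}})$ is represented by a \TPL{}-space $g\colon I\to B^{\text{\TPL{}}}$ carrying a universal \TPL{}-isomorphism $\chi\colon g^*\mc C_1^{\text{\TPL{}}}\to g^*\mc C_2^{\text{\TPL{}}}$. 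The key observation is that a tropical-space isomorphism between $f^*\mc C_1$ and $f^*\mc C_2$ over a tropical space $f\colon T\to B$ is the same datum as a \TPL{}-isomorphism that in addition respects the affine structures; since affine structures on tropical spaces are subsheaves of the \TPL{}-structure and the total spaces $\mc C_i$ and base $B$ already carry such subsheaves, ``respecting the affine structure'' is a \emph{closed condition} cut out inside $I$. I would therefore carve out the locus $J\subseteq I$ on which $\chi$ matches the two affine structures, equip it with the induced tropical structure, and claim $(J\to B,\chi\vert_J)$ represents $\mathrm{Isom}_B(\mc C_1,\mc C_2)$.

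The main work, and the step I expect to be the main obstacle, is to show that this locus $J$ is a \emph{locally polyhedral subset} of $I$, so that it inherits a genuine \TPL{}- and hence tropical structure (using that locally polyhedral subsets of \TPL{}-spaces have a canonical induced structure, and likewise for tropical spaces). To this end I would work locally: over each polyhedron $\sigma_\delta=\tau$ appearing in the construction of $I$, the isomorphism $\chi$ has fixed combinatorial type $\varphi$, and the affine structures on both families are generated by finitely many explicit linear conditions (the slopes along the flags and the transition data across faces, exactly as in the exact sequence \eqref{equ:exact sequence} and in the local models $\mc C_{G_\sigma}$). The condition that $\chi$ pulls back the generating affine functions of $g^*\mc C_2$ to affine functions of $g^*\mc C_1$ then becomes a finite system of integral linear equalities in the cone coordinates, whose solution set is a subpolyhedron of $P_\delta$. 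Intersecting over all $\delta$ shows $J$ is locally polyhedral.

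Finally I would verify the universal property. Given $f\colon T\to B$ with $T$ tropical and an isomorphism $\zeta\colon f^*\mc C_1\to f^*\mc C_2$ of families of tropical curves, forgetting affine structures yields a \TPL{}-isomorphism, hence by the universal property of $I$ a unique lift $k\colon T^{\text{\TPL{}}}\to I$ with $k^*\chi=\zeta$. Because $\zeta$ respects the affine structures, the image $k(T)$ lands in the affine-compatible locus $J$; conversely the induced map $T\to J$ is a morphism of tropical spaces precisely because $\Aff_J$ is the restriction of $\Aff_I$ and the compatibility is built into $J$. Uniqueness is inherited from the uniqueness of $k$ as a \TPL{}-morphism. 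This exhibits $J\to B$ as representing $\mathrm{Isom}_B(\mc C_1,\mc C_2)$, establishing that the diagonal of $\Mgnbar{g,n}$ is representable.
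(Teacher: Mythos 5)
Your reduction to Proposition \ref{prop:representability of diagonal TPL-version} is exactly the paper's first step, but the second step of your plan --- carving out a closed locus $J\subseteq I$ where $\chi$ ``respects the affine structures'' and giving it the induced tropical structure --- does not work, and this is a genuine gap rather than a fixable detail. Compatibility with affine structures is not a condition on the points of $I$ at all. Fiberwise it is vacuous: by Definition \ref{def:tropcurve}, the affine structure of a nodal tropical curve is uniquely determined by its underlying \TPL{}-curve, so every \TPL{}-isomorphism of fibers is automatically linear. The real constraint lives transversally to the fibers, in how affine functions on the total spaces vary from fiber to fiber (the parameter $a$ of Example \ref{ex:tropfam}), and it restricts which \emph{morphisms} $T\to I$ are linear, not which \emph{points} of $I$ are allowed. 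Concretely, take $\mc C_1=\mc C^a$ and $\mc C_2=\mc C^b$ over $B=\T\PP^1$ with $a\neq b$, as in Example \ref{ex:diagonal is representable}. The fibers over $0$ are isomorphic stable tropical curves, so the point $0'\in I$ must lie in the tropical Isom space (apply the universal property to $T=\{\mathrm{pt}\}\to B$ hitting $0$); yet the defect function $m=f^*\phi_a-\chi^*f^*\phi_b$, which has slope $0$ on the rays over $\Rbar_{\geq 0}$ and slope $b-a\neq 0$ on the rays over $\Rbar_{\leq 0}$, is not locally a pull-back of any affine function on $B$ near $0'$. So the locus cut out by your ``integral linear equalities'' would delete $0'$, violating the universal property; while keeping all of $I$ with the restricted (pull-back) affine structure also fails, because then $\chi$ is not a linear isomorphism $f^*\mc C_1\to f^*\mc C_2$, so the pair $(I,\chi)$ is not even an object of $\mathrm{Isom}_B(\mc C_1,\mc C_2)(I)$. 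Either reading of your construction breaks.

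What the paper does is the opposite of cutting: it keeps the entire \TPL{}-space $J$ and \emph{enlarges} its affine structure. Starting from the minimal structure $\Aff_J$ (pull-backs from $B$), it uses Lemma \ref{lem:exact sequence for harmonic functions} to write, for each local affine function $l'$ on $g^*\mc C_2$, a difference $\chi^*l'-l=\pi_1'{}^*m$ with $m$ piecewise linear on the base, and defines $\Aff_I$ to be generated by $\Aff_J$ together with all such defect functions $m$. This makes $\chi$ linear by construction, and the universal property holds because an isomorphism $\zeta$ over $T\to B$ being linear is precisely the statement that the pull-backs to $T$ of the functions $m$ are affine, i.e., that the lifted \TPL{}-map $T\to I$ is linear. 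The representing object thus differs from the \TPL{}-one not in its underlying space but in its sheaf of affine functions --- a possibility your closed-subspace approach structurally rules out.
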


\begin{proof}
We need to show that given two families $\mc C_1\to B$ and $\mc C_2\to  B$ of  $n$-marked genus-$g$ stable tropical curves over a base $B$, the functor $\mathrm{Isom}_B(\mc C_1,\mc C_2)$ is represented by a tropical space over $B$. Since families of stable tropical curves are families of stable \TPL{}-curves, we can apply Proposition \ref{prop:representability of diagonal TPL-version} and obtain a morphism of \TPL{}-spaces $J\xrightarrow {g} B$ and a universal isomorphism $\chi\colon g^*\mc C_1\to g^*\mc C_2$. We equip $J$ with the minimal affine structure such that $g$ becomes linear, that is we let $\Aff_J$ consist of all functions that are locally pull-backs of affine functions on $B$. Denote $g^*\mc C_i$ by $\mc C'_i$, where the pull-back is taken in the category of tropical spaces, and let $\pi'_i\colon \mc C'_i\to J$ be the projections. Let $y\in \mc C_1'$, and let $l'$ be an affine function on $\mc C_2'$ defined on a neighborhood of $\chi(y)$. Since the restriction $(\mc C_1')_{\pi_1'(y)} \to (\mc C_2')_{\pi_1'(y)}$ of $\chi$ to the fiber over $\pi_1'(y)$ is linear, there exists an affine function $l$ on a neighborhood of $y$ in $\mc C'_1$ such that the germs of the restrictions of $l$ and $\chi^*l'$ to $(\mc C_1')_{\pi_1'(y)}$ at $y$ coincide. Since both $l$ and $\chi^*l'$ are harmonic on all fibers, by Lemma \ref{lem:exact sequence for harmonic functions} there exists a piecewise linear function $m$ on a neighborhood of $\pi'_1(y)$ such that $\chi^*l'-l$ coincides with $\pi'_1\phantom{}^*m$ on a neighborhood of $y$. Let $\Aff_I$ be the subsheaf of $\PL_{J}^\fin$ generated by $\Aff_J$ and all piecewise linear functions $m$ obtained this way, let $I$ be the tropical space obtained by replacing $\Aff_J$ by $\Aff_I$, and let $f\colon I\to B$ be the linear morphism induced by $g$. Then by construction, $\chi$ induces an isomorphism $\varphi\colon f^*\mc C_1\to f*\mc C_2$ of families of stable tropical curves. It follows directly from the universal property of $J$ and $\chi$ and the construction of $\Aff_I$ that $(I,\varphi)$ represents $\mathrm{Isom}_B(\mc C_1,\mc C_2)$.
\end{proof}

{
\begin{example}
\label{ex:diagonal is representable}
Let $a,b\in \Z$. Example \ref{ex:tropfam} yields two families $\mc C^a$ and $\mc C^b$ over $\T\PP^1$ of one-marked genus-one stable tropical curves. Let $I= \Isom_{\T\PP^1}(\mc C^a,\mc C^b)$,  $f\colon I\to \T\PP^1$  the structure morphism, and  $\chi\colon f^*\mc C^a\to f^*\mc C^b$  the universal isomorphism. For every $x\in \T\PP^1\setminus \{0\}$, there are  two isomorphisms $\mc C^a_x\to \mc C^b_x$, whereas there exists  one isomorphism $\mc C^a_0\to \mc C^b_0$. According to the proof of Proposition \ref{prop:representability of diagonal TPL-version}, the underlying \TPL{}-space of $I$ is given by two copies of $\T\PP^1$ glued together at their respective origins. Let $0'$ denote the unique preimage under $f$ of $0$. From the description of the affine structures on $\mc C^a$ and $\mc C^b$ in Example \ref{ex:tropfam} and from the description of the affine structure on $I$ in the proof of Theorem \ref{thm:representable diagonal} one concludes that the restriction $I\setminus \{0'\}\to \T\PP^1$ of $f$ is a local isomorphism. To determine the affine structure at $0'$ denote by $\phi_c$, for $c\in\{a,b\}$, the function on a neighborhood in $\mc C^c$ of the finite genus-zero points of $\mc C^c_0$ whose restriction to $T_+$ is given by $y_+$, notation as in Example \ref{ex:tropfam}. Denoting by $f^*\phi_c$ the pull-back of $\phi_c$ to $f^*\mc C^c$, the two functions $f^*\phi_a$ and $\chi^*f^*\phi_b$ are both defined on a neighborhood in $f^*\mc C^a$ of the finite genus-zero points of the central fiber $(f^*\mc C^a)_{0'}$, and their restrictions to the central fiber have the same slope. Therefore, the difference $f^*\phi_a-\chi^*f^*\phi_b$ determines an affine function $m$ on a neighborhood of $0'$ in $I$. The horizontal slopes of $\phi_c$ are $0$ over $\Rbar_{\geq 0}$ and $-c$ over $\Rbar_{\leq 0}$. It follows that $m$ has slope $0$  on the two rays of $I$ mapping to $\Rbar_{\geq 0}$ and slope $b-a$ on the two rays of $I$ mapping to $\Rbar_{\leq 0}$. The pair of functions $(x,m)$ defines a  map $g:I\to S \subset \T\PP^1\times \T\PP^1$, with
\begin{equation}
    S= \Rbar_{\geq 0} 
    \begin{pmatrix}
    1\\0
    \end{pmatrix}
    \cup 
    \Rbar_{\geq 0}
    \begin{pmatrix}
    -1 \\ b-a
    \end{pmatrix}
\end{equation}
such that $f$ is given by the composite of $g$ with the projection to the first coordinate, $g$ is a double cover away from $0'$, and the affine functions on $I$ are  the pull-backs via $g$ of the affine functions on $S$. 
\end{example}
}


\subsection{Affine functions on families of curves}

{
In order to describe the affine structures of the moduli spaces $\Mgnbar{g,n}$, we need to have good control over the affine structures on the total spaces of families of stable tropical curves. By definition,  affine functions on the total space of a family are fiberwise harmonic. In what follows, we investigate under which conditions a fiberwise harmonic function is affine. 
}

\begin{lemma}
\label{lem:local structure at interior of edge}
Let $\pi\colon\mc C\to B$ be a family of stable tropical curves, and let $x\in \mc C$ be a point that is contained in an open edge of $\mc C_{\pi(x)}$. Then there exists an $\epsilon>0$ and an open neighborhood $U$ of $\pi(x)$ such that $x$ has a neighborhood in $\mc C$ which is isomorphic to $U \times (0,\epsilon)$ over $B$. 
\end{lemma}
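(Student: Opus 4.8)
The plan is to produce the desired isomorphism in the form $(\pi,\tau)$, where $\tau$ is a carefully chosen affine function on $\mc C$ that restricts to a coordinate on the edge through $x$, and then to verify that this map is simultaneously a \TPL{}-isomorphism and compatible with the affine structures. Throughout, write $b=\pi(x)$.

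First I would work at the level of \TPL{}-spaces. Since $x$ lies in an open edge of $\mc C_b$, it is a finite point with $\gamma_{\mc C}(x)=0$ and valence two, and the edge $e$ containing it has positive finite length at $b$. Choosing a \TPL{}-trivialization of the family at $b$ and inspecting the explicit model $\mc C_{G_\sigma}\to\sigmabar_{G_\sigma}$, the edge $e$ persists in every combinatorial type $G_\sigma$ occurring near $b$ (it cannot contract, being present in $\mc C_b$), and near the finite interior point $x$ the relevant edge cone $\sigmabar_{G_\sigma,e}$ is a product. Gluing these local products over the cones of the local face structure yields a \TPL{}-isomorphism $(\pi,t)\colon V\xrightarrow{\ \sim\ }U_0\times(0,\epsilon)$ over $B$, where $t$ is the edge-length parameter; by construction $t\in\Harm_{\mc C}$, since its restriction to each fiber is the harmonic edge coordinate of slope one.

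Next I would upgrade $t$ to an affine function. Because $\mc C\to B$ is a family of tropical curves, the sequence \eqref{equ:exact sequence} is exact at $x$; in particular the map $\Omega^1_{\mc C}\vert_{\mc C_b}\to\Omega^1_{\mc C_b}$ is surjective there. The class of the edge coordinate generates $\Omega^1_{\mc C_b,x}\cong\Z$, so I can lift it to an affine function $\tau\in\Aff_{\mc C,x}$ whose fiberwise slope, being integer valued and varying continuously, is locally constant and hence equal to one near $b$. Then $\tau-t$ lies in $\Harm_{\mc C,x}$ and is fiberwise constant, so by the exactness of \eqref{eq:harmes} in Lemma \ref{lem:exact sequence for harmonic functions} it is the pull-back $\pi^*m$ of a piecewise linear function $m$ on the base. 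Consequently $(\pi,\tau)=\Psi\circ(\pi,t)$, where $\Psi(u,s)=(u,s+m(u))$ is a \TPL{}-shear; thus $(\pi,\tau)$ is again a \TPL{}-isomorphism onto an open subset of $B\times\R$, and after shrinking $V$ and reparametrizing it becomes a \TPL{}-isomorphism onto a product $U\times(0,\epsilon)$ over $B$.

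Finally I would identify the affine structures. Every point of the shrunk neighborhood is again an interior edge point of genus zero, so the construction above applies fiberwise: for any affine germ $\phi$ at such a point, subtracting the appropriate integer multiple of $\tau$ makes $\phi$ fiberwise constant, and the defining exactness of \eqref{equ:exact sequence} (equivalently, the Remark following Definition \ref{def:family}) then exhibits the difference as a pull-back from $B$. Hence $\Aff_{\mc C}$ is generated on this neighborhood by $\tau$ and $\pi^*\Aff_B$, which are exactly the pull-backs under $(\pi,\tau)$ of the generators of the product affine structure on $U\times(0,\epsilon)$ (Proposition \ref{prop:fiber products of tropical spaces}). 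Therefore $(\pi,\tau)$ is an isomorphism of tropical spaces over $B$. The hard part is this last identification of the affine structure: it is the only step that genuinely uses the hypothesis that $\pi$ is a family of \emph{tropical} (rather than merely \TPL{}) curves, whereas the underlying topological product structure is essentially read off from the cone-over-graph model.
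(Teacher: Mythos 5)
Your proposal is correct and follows essentially the same route as the paper's proof: a \TPL{}-trivialization gives the topological product, the exactness of \eqref{equ:exact sequence} produces an affine function $\phi$ (your $\tau$) with fiberwise slope one, and the map $(\pi,\phi)$ is then shown to be an isomorphism of tropical spaces onto an open subset of $B\times\R$ carrying the product affine structure. The only cosmetic differences are that you factor $(\pi,\tau)$ through a shear using Lemma \ref{lem:exact sequence for harmonic functions}, where the paper argues directly with $\id_W\times\phi$, and that your final step spells out the generation claim ``$\Aff'$ is generated by $\phi$ and pull-backs from the base'' which the paper asserts with less detail.
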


\begin{proof}
Using a \TPL{}-trivialization, we find a neighborhood $W$ of $\pi(x)$ and a $\delta>0$ such that $x$ has an open neighborhood $V$ in $\mc C$ allowing an isomorphism $f\colon V\to W \times (0,\delta)$ of \TPL{}-spaces over $B$. Equipping $W \times (0,\delta)$ with the affine structure induced by $f$, call it $\Aff'$, the map $f$ becomes an isomorphism of tropical spaces over $B$. Since $\pi\colon \mc C\to B$ is a family of tropical curves, there exists an integral affine function $\phi$ at $x$ that has slope one on $(0,\delta)\cong \{\pi(x)\} \times (0,\delta)$. After potentially shrinking $V$, $W$ and $\delta$, we can assume that $\phi$ is defined all of $W \times (0,\delta)$. Because $\phi$ is harmonic on all fibers, it follows that $\phi$ has slope one on all fibers $\{w\} \times (0,\delta)$ for $w\in W$. Therefore, $\phi$ induces a linear map
\begin{equation}
W \times (0,\delta)\xrightarrow{\id_W\times \phi} W \times \R
\end{equation}
that is an isomorphism of \TPL{}-spaces onto an open subset $V'$ of $W\times \R$. Since the affine structure $\Aff'$ is generated by $\phi$ and the pull-backs of affine function from $W$, it follows that $\id_W \times\phi$  is an isomorphism of tropical spaces onto $V'$. The proof is concluded by observing that  $(\pi(x),\phi(f(x)))$ has a neighborhood in $V'$  isomorphic to $U\times (0,\epsilon)$ for some  $0<\epsilon<\delta$ and some  neighborhood $U$ of $\pi(x)$ in $W$.
\end{proof}

\begin{lemma}
\label{lem:affineness is a clopen condition on fibers}
Let $\pi\colon \mc C\to B$ be a family of stable tropical curves, let $U\subseteq \mc C$ be an open subset such that $\gamma_{\mc C}\vert_U=0$, let $\phi\in \Gamma(U,\Harm_{\mc C})$, and let $b\in B$. Then the set
\begin{equation}
V=\{x\in \mc C_b\cap U\mid \phi_x\in \Aff_{\mc C, x}\}
\end{equation}
is open and closed in $\mc C_b\cap U$.
\end{lemma}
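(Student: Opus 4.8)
The plan is to prove the stronger assertion that the indicator function of $V$ is \emph{locally constant} on $\mc C_b\cap U$: every point $x\in \mc C_b\cap U$ has a neighborhood $N$ in $\mc C_b\cap U$ on which $\phi$ is affine at every point or at no point. Local constancy makes both $V$ and its complement open, which is exactly the claim. One half is formal: if $\phi_x\in \Aff_{\mc C,x}$ then $\phi$ is affine on an entire open neighborhood $W$ of $x$ in $\mc C$, hence affine at every point of the fiber-neighborhood $W\cap \mc C_b$, so $V$ is open. The real content is to produce, at \emph{every} $x$ and regardless of whether $\phi_x$ is affine, a single neighborhood witnessing constancy.

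First I would record the local input. Since $\gamma_{\mc C}\vert_U=0$, every $x\in \mc C_b\cap U$ is a genus-zero point of the fiber, and at all such points one has $\Aff_{\mc C_b,x}=\Harm_{\mc C_b,x}$: this is immediate from Definition \ref{def:tropcurve} at smooth finite points, and at valence-one smooth points and nodes at infinity both sheaves reduce to the constants $\R$. As $\phi\in \Gamma(U,\Harm_{\mc C})$, its fiber restriction $\bar\phi\coloneqq \phi\vert_{\mc C_b}$ is harmonic near $x$, hence \emph{fiberwise affine}: $\bar\phi\in \Aff_{\mc C_b,x}$. Using the surjectivity in the exact sequence \eqref{equ:exact sequence}, I would lift $\bar\phi$ (modulo constants) to an affine function $\ell\in \Aff_{\mc C,x}$ whose fiber restriction agrees with $\bar\phi$ up to a constant near $x$. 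Then $\phi-\ell$ is fiberwise harmonic (affine functions being fiberwise harmonic at genus-zero points) and restricts to a constant on $\mc C_b$ near $x$, so Lemma \ref{lem:exact sequence for harmonic functions} (exactness in the middle) yields $m\in \PL^\fin_{B,b}$ with $\phi-\ell=\pi^\ast m$ on a neighborhood of $x$ in $\mc C$; that is, $\phi=\ell+\pi^\ast m$ near $x$.

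The decomposition reduces affineness to a condition on the base alone. Because $\ell$ is affine on a whole neighborhood $W$ of $x$ in $\mc C$, for every $y\in W\cap \mc C_b$ we have $\phi=\ell+\pi^\ast m$ near $y$, so $\phi_y\in \Aff_{\mc C,y}$ if and only if $(\pi^\ast m)_y\in \Aff_{\mc C,y}$. The latter holds if and only if $m$ is affine at $b$: the forward implication is the linearity of $\pi$, while for the converse $\pi^\ast m$ is constant on fibers, so by the remark following Definition \ref{def:family} it equals $\pi^\ast m'$ for some affine $m'$ near $b$, and since $\pi$ is locally surjective (it admits sections) $\pi^\ast$ is injective on germs, forcing $m=m'$. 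Crucially, the condition \textquoteleft$m$ affine at $b$\textquoteright{} does not depend on the fiber point $y\in W\cap \mc C_b$. Hence $\phi$ is affine at every point of the fiber-neighborhood $W\cap \mc C_b$ or at none, which is the desired local constancy.

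The main obstacle is to treat vertices and other special points uniformly. At an interior edge point one could argue directly via the product chart of Lemma \ref{lem:local structure at interior of edge}, writing $\phi=\alpha t+\pi^\ast\bar\beta$ and reducing to affineness of $\bar\beta$ at $b$; but no such chart exists at a vertex, where a priori affineness could jump between the vertex and the adjacent open edges. The resolution, and the heart of the argument, is that the decomposition $\phi=\ell+\pi^\ast m$ built from the two exact sequences lives on a \emph{full} neighborhood of $x$ in $\mc C$ and therefore applies simultaneously at the vertex and at all nearby edge points, collapsing the question to the single base condition that $m$ be affine at $b$. I would still verify carefully the compatibility of the stalks of the restricted sheaves in \eqref{equ:exact sequence} and the identification $\Aff_{\mc C_b}=\Harm_{\mc C_b}$ at the boundary point types, but I expect no difficulty there.
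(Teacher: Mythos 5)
Your proof is correct, and its first half coincides with the paper's own: both arguments use that $\Aff_{\mc C_b,x}=\Harm_{\mc C_b,x}$ at genus-zero points, lift the fiberwise one-form through the surjection in \eqref{equ:exact sequence}, and then apply the middle exactness of Lemma \ref{lem:exact sequence for harmonic functions} to write $\phi=\ell+\pi^\ast m$ with $\ell\in\Aff_{\mc C,x}$ and $m\in \PL^\fin_{B,b}$. Where you diverge is in how this decomposition is exploited. The paper treats openness as formal and proves only closedness: for $x\in\overline V$ it picks a point $y\in V$ in the interior of an edge of $\mc C_b$ and uses the linear section through $y$ furnished by Lemma \ref{lem:local structure at interior of edge} to push affineness down to the base, namely $s^\ast\phi_y-s^\ast\ell_y\in\Aff_{B,b}$, whence $\phi_x$ is affine. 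You instead prove the two-sided equivalence ``$\phi_y$ affine $\Leftrightarrow$ $m$ affine at $b$'' uniformly for all $y$ in a fiber-neighborhood of an \emph{arbitrary} $x$, handling the nontrivial direction with the exactness axiom of families (the remark after Definition \ref{def:family}) plus injectivity of $\pi^\ast$ on germs, which yields local constancy of the indicator of $V$ outright. Your route buys uniformity (no appeal to a nearby point of $V$, no separate open/closed cases) and replaces the geometric input of Lemma \ref{lem:local structure at interior of edge} by the functorial axiom; the paper's route is shorter once that lemma is in hand and needs no discussion of injectivity of $\pi^\ast$. One small repair: your parenthetical ``(it admits sections)'' is imprecise, since sections of $\pi$ through an arbitrary point, e.g.\ a vertex, need not exist. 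What you actually need is that $\pi$ maps neighborhoods of $y$ onto neighborhoods of $b$, which follows from the \TPL{}-trivializations (the local models are glued coordinate projections, hence open maps); alternatively, route the agreement $\pi^\ast m=\pi^\ast m'$ through a section of Lemma \ref{lem:local structure at interior of edge} passing through an edge-interior point inside the agreement neighborhood. Either patch is one line.
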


\begin{proof}
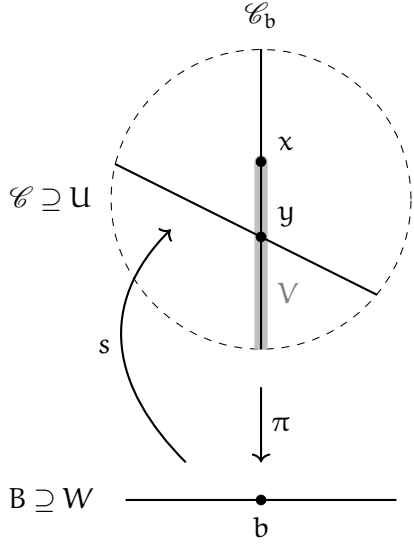
\begin{figure}
    \centering
    \begin{tikzpicture}[thick]
    \begin{scope}
        \node at (-2.8,0) {$\mc C \supseteq U$};
        \node at (0,2) [label= {above:$\mc C_b$}] {};
    
        \clip(0,0) circle (2);
        \coordinate (s) at (-3,1);
        \coordinate (x) at (0,.5) {};
        \draw [dashed] (0,0) circle (2);
        \draw [line width=5pt, line cap=round, gray!50](x)--
            node [pos=.7,right] {\color{gray} $V$}
        (0,-2);
        \draw (0,2)--(0,-2);
        \node [fill=black, circle, inner sep=.05cm, label=10:$x$] at (x) {};
        \node [fill=black, circle, inner sep=.05cm, label=10:$y$] (y) at (0,-.5){};
        \draw (s)--($(s)!2!(y)$);
    \end{scope}
    
    \draw [->] (0,-2.5)--
        node [right] {$\pi$}
    (0,-3.5);
    
    \draw[->]   (-1,-3.5) ..
     node [left]{$s$}
    controls (-2,-2.5) and  ($(s)!.6!(y) -(1,1.5)$) .. ($(s)!.6!(y)-(0,.5)$);
    
    \begin{scope}[yshift=-4cm]   
        \node at (-2.8,0) {$B\supseteq W$};
        \draw (-1.8,0)--(1.8,0);
        \node [fill=black, circle, inner sep=.05cm, label=below:$b$] at (0,0) {};   
    \end{scope}

    \end{tikzpicture}
    \caption{By what we prove in Lemma \ref{lem:affineness is a clopen condition on fibers}, the set $V$ does in fact contain all of $\mc C_b\cap U$ in the situation illustrated here. We draw $x$  in the boundary of $V$ only to better visualize the idea of the proof.}
    \label{fig:clopen}
\end{figure}

Refer to Figure \ref{fig:clopen} throughout this proof.
It is clear that $V$ is open in $\mc C_b \cap U$. To show that it is closed, let $x\in \overline V\cap U$. By
Definition \ref{def:tropcurve}, since $\phi$ restricts to a harmonic function on $\mc C_b$, it defines a one-form on $\mc C_b$; by the short exact sequence of cotangent sheaves \eqref{equ:exact sequence}, such a form admits a lift, that is there exists $m_x\in \Aff_{\mc C,x}$ such that the restrictions of $m$ and $\phi$ to $\mc C_b$ agree in a neighborhood of $x$ in $\mc C_b$. By Lemma \ref{lem:exact sequence for harmonic functions}, there exits $\varphi_b\in \PL_{B,b}^\fin$ such that $\phi_x= m_x+ (\pi^*\varphi)_x$. We need to show that $\varphi_b\in \Aff_{B,b}$. Since $x\in \overline V$, there exists a point $y\in V$ that is contained in the interior of an edge of $\mc C_b$ and such that $m$ is defined at $y$ and $\phi_y-m_y=(\pi^*\varphi)_y$. By Lemma \ref{lem:local structure at interior of edge}, there exists a linear section $s\colon W\to U$ of $\pi$ in a neighborhood $W$ ob $b$ such that $s(b)=y$. We see that
\begin{equation}
\varphi_b=s^*(\pi^*\varphi)_y=s^*\phi_y-s^*m_y\in \Aff_{B,b}. 
\end{equation}
\end{proof}

\begin{lemma}
\label{lem:characterization of affineness in terms of section and harmonicity}
Let $\pi\colon \mc C\to B$ be a family of stable tropical curves and let $s\colon B\to \mc C$ be a linear section of $\pi$ such that $s(c)$ is a smooth point of $\mc C_c$ for all $c\in B$. Moreover, let $b\in B$ such that $\gamma_{\mc C}(s(b))=0$, and let $\phi\in \Harm_{\mc C,s(b)}$. Then $\phi\in \Aff_{\mc C, s(b)}$ if and only if $s^*\phi\in \Aff_{B,b}$.
\end{lemma}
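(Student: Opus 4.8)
The plan is to prove the two implications separately. The forward direction is immediate: if $\phi\in\Aff_{\mc C,s(b)}$, then because $s\colon B\to\mc C$ is a linear morphism of tropical spaces, pulling back affine functions yields affine functions, so $s^*\phi\in\Aff_{B,b}$. No further work is needed here.

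The substance is the reverse implication, so assume $s^*\phi\in\Aff_{B,b}$. First I would produce an affine function $m$ on $\mc C$ near $s(b)$ lifting the fiberwise one-form determined by $\phi$. Since $s(b)$ is a smooth point of genus zero, Definition \ref{def:tropcurve} gives $\Aff_{\mc C_b,s(b)}=\Harm_{\mc C_b,s(b)}$, so the fiberwise-harmonic germ $\phi\vert_{\mc C_b}$ is in fact affine on $\mc C_b$ and determines a class in $\Omega^1_{\mc C_b}$ at $s(b)$. By the exactness of \eqref{equ:exact sequence}, and in particular the surjectivity of $\Omega^1_{\mc C}\vert_{\mc C_b}\to\Omega^1_{\mc C_b}$, this class lifts to the germ of some $m\in\Aff_{\mc C,s(b)}$; thus $m\vert_{\mc C_b}$ and $\phi\vert_{\mc C_b}$ agree up to an additive constant near $s(b)$.

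Next I would compare $\phi$ and $m$ as honest harmonic functions. Affine functions on a total space are fiberwise harmonic, so both $\phi$ and $m$ lie in $\Harm_{\mc C,s(b)}$ and have the same image in $\Harm_{\mc C_b,s(b)}/\R$. The exact sequence of Lemma \ref{lem:exact sequence for harmonic functions} then produces $\varphi\in\PL^\fin_{B,b}$ with $\phi=m+\pi^*\varphi$ near $s(b)$. It remains to upgrade $\varphi$ from piecewise linear to affine. Applying $s^*$ and using $\pi\circ s=\id_B$ gives $\varphi=s^*\phi-s^*m$; since $s^*m\in\Aff_{B,b}$ (as $m$ is affine and $s$ is linear) and $s^*\phi\in\Aff_{B,b}$ by hypothesis, we conclude $\varphi\in\Aff_{B,b}$. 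Then $\pi^*\varphi\in\Aff_{\mc C,s(b)}$ because $\pi$ is linear, whence $\phi=m+\pi^*\varphi\in\Aff_{\mc C,s(b)}$.

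This is essentially a diagram chase once both exact sequences are in hand, so I do not expect a deep obstacle; the point requiring the most care is the interplay between the two sequences. The sequence \eqref{equ:exact sequence} lives at the level of one-forms (functions modulo constants) and is used to manufacture the affine lift $m$, whereas Lemma \ref{lem:exact sequence for harmonic functions} is a sequence of genuine harmonic functions and is used to express the discrepancy $\phi-m$ as an actual pull-back $\pi^*\varphi$. Keeping track of which germs are taken modulo constants and which are not, together with using the smooth genus-zero hypothesis at $s(b)$ precisely to make $\phi\vert_{\mc C_b}$ affine on the fiber, is where I would be most careful. The overall structure closely mirrors the proof of Lemma \ref{lem:affineness is a clopen condition on fibers}, with the given section $s$ playing the role that the auxiliary local section played there.
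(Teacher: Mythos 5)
Your proof is correct and follows essentially the same route as the paper's: the forward direction via linearity of $s$, then the lift $m$ from the cotangent sequence \eqref{equ:exact sequence}, the comparison $\phi-m=\pi^*\varphi$ via Lemma \ref{lem:exact sequence for harmonic functions}, and the identity $\varphi=s^*\phi-s^*m$ to upgrade $\varphi$ to an affine germ. Your extra care about the constant ambiguity in the lift and about using the smooth genus-zero hypothesis to make $\phi\vert_{\mc C_b}$ affine on the fiber only makes explicit what the paper leaves implicit.
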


\begin{proof}
The only if part is clear, so assume that $s^*\phi\in \Aff_{B,b}$. By the exact sequence for cotangent sheaves \eqref{equ:exact sequence}, there exists $m\in \Aff_{\mc C,s(b)}$ such that the restrictions $\phi\vert_{\mc C_b}$ and $m\vert_{\mc C_b}$ coincide. By Lemma \ref{lem:exact sequence for harmonic functions}, there exists a piecewise  linear function $\varphi\in \PL_{B,b}$ such that $\phi -m =\pi^*\varphi$. Since
\begin{equation}
\varphi=s^*\pi^*\varphi=s^*\phi-s^*m\in \Aff_{B,b} \ ,
\end{equation}
it follows that $\phi\in \Aff_{\mc C,s(b)}$.
\end{proof}

\begin{definition}\label{def:funwipo}
Let $\pi\colon \mc C\to B$ be a family of stable tropical curves and let $s\colon B\to \mc C$ be a linear section of $\pi$ such that $s(b)$ is a smooth genus-zero point of $\mc C_b$ for all $b\in B$. For an integer $k\in \Z$, we define the \textbf{sheaf of functions with prescribed order $k$ along $s$}, denoted $\Aff_{\mc C}(ks)$:  its sections over an open subset $U\subseteq \mc C$ are functions $m\in \Gamma(U,\PL_{\mc C})$, possibly with value $\pm\infty$ on $s(B)$, that are affine away from the support of $s$, that is
\begin{equation}
m\vert_{U\setminus s(B)}\in \Gamma(U\setminus s(B), \Aff_{\mc C}) \ ,
\end{equation}
 and such that for all $b\in s^{-1}U$ we have 
\begin{equation}
\sum_{v\in T_{s(b)}\mc C_b} d_v (m\vert_{\mc C_b}) = k \ .
\end{equation}
Since the constant sheaf $\R$ acts on $\Aff_{\mc C}(ks)$ additively, we can take the quotient $\Omega^1_{\mc C}(ks)\coloneqq \Aff_{\mc C}(ks)/\R$, the 
\textbf{sheaf of tropical one-forms with prescribed order $k$ along $s$}.
\end{definition}

If $r\in \Gamma(U,\Aff_{\mc C}(k s))$ and $m\in \Gamma(U,\Aff_{\mc C})$, then $m+r\in \Gamma(U,\Aff_{\mc C}(k s))$. This endows $\Aff_{\mc C}(ks)$ with an action by $\Aff_{\mc C}$. The following two propositions shows that  $\Aff_{\mc C}(ks)$ is in fact an $\Aff_{\mc C}$-torsor. 


\begin{proposition}
\label{prop:aff(ks) is a torsor}
Let $\pi\colon \mc C\to B$ be a family of stable tropical curves, let $s\colon B\to \mc C$ be a linear section of $\pi$ such that $s(b)$ is a smooth genus-zero point of $\mc C_b$ for all $b\in B$, and let $k\in \Z$. Moreover, let $U\subseteq \mc C$ be an open subset and let $l,l'\in \Gamma(U,\Aff_{\mc C}(ks))$. Then there exists a unique $m\in \Gamma(U,\Aff_{\mc C})$ such that $l'=l+m$.
\end{proposition}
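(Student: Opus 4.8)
The plan is to set $m:=l'-l$ and to show that this difference, which a priori is defined and affine only away from the section, extends across $s(B)$ to a (unique) global affine function on $U$. Uniqueness is immediate: if $l+m=l+m'$ with $m,m'\in\Gamma(U,\Aff_{\mc C})$, then $m$ and $m'$ agree on the dense open subset $U\setminus s(B)$, and since affine functions are finite and continuous, they agree on all of $U$. For existence, on $U\setminus s(B)$ both $l$ and $l'$ are affine by Definition \ref{def:funwipo}, so $m=l'-l\in\Gamma(U\setminus s(B),\Aff_{\mc C})$, and in particular $m$ is fiberwise harmonic there. All the content lies in controlling $m$ along the section.

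Fix $b$ with $s(b)\in U$. If $s(b)$ is a finite point of $\mc C_b$, then by Definition \ref{def:finite point} neither $l$ nor $l'$ can take the value $\pm\infty$ at $s(b)$ without being locally constant, so both are finite near $s(b)$ and hence so is $m$. If instead $s(b)$ is an infinite leaf (valence one), then $l$ and $l'$ may be infinite along $s(B)$, but their restrictions to the single incident edge of each nearby fiber $\mc C_c$ are affine with the \emph{same} slope, since both have total fiberwise slope $k$ at $s(c)$ and there is only one direction; thus $m\vert_{\mc C_c}$ is constant near $s(c)$, and $m$ is finite at $s(c)$. In either case the matching order $k$ forces the infinite parts of $l$ and $l'$ to cancel, so that on each local polyhedron the subtraction $l'-l$ avoids $\infty-\infty$ and yields a finite affine expression; hence $m\in\Gamma(U,\PL^\fin_{\mc C})$. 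Moreover, at every section point $s(c)$ the fiberwise slope-defect of $m$ equals $k-k=0$, so $m$ is fiberwise harmonic there as well. Combined with fiberwise harmonicity off the section, this gives $m\in\Gamma(U,\Harm_{\mc C})$.

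It remains to upgrade fiberwise harmonicity to affineness at the section points. After shrinking $U$ around a given $s(b)$ we may assume $\gamma_{\mc C}\vert_U=0$, since $s(b)$ is a genus-zero point and the positive-genus locus is closed and disjoint from it. Now apply Lemma \ref{lem:affineness is a clopen condition on fibers} to $m$ and the fiber $\mc C_b$: the set $V=\{x\in\mc C_b\cap U\mid m_x\in\Aff_{\mc C,x}\}$ is open and closed in $\mc C_b\cap U$. We already know $V$ contains $(\mc C_b\cap U)\setminus\{s(b)\}$, which is dense in $\mc C_b\cap U$ because interior points of the edges incident to $s(b)$ accumulate at $s(b)$; being closed, $V$ must be all of $\mc C_b\cap U$, whence $m\in\Aff_{\mc C,s(b)}$. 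Since $m$ is affine both off the section and at every section point, we conclude $m\in\Gamma(U,\Aff_{\mc C})$, as required.

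The main obstacle is the extension across $s(B)$: individually $l$ and $l'$ may take the value $\pm\infty$ along the section, and the crux is to see that the common order $k$ makes their difference finite and piecewise linear there. Once this is established, Lemma \ref{lem:affineness is a clopen condition on fibers} promotes the evident fiberwise harmonicity of $m$ to genuine affineness essentially for free. Alternatively, one could lift the one-form $m\vert_{\mc C_b}$ through the exact sequence \eqref{equ:exact sequence}, write the resulting discrepancy as $\pi^*\varphi$ via Lemma \ref{lem:exact sequence for harmonic functions}, and verify $\varphi\in\Aff_{B,b}$ by pulling back along a linear section through a nearby interior edge point supplied by Lemma \ref{lem:local structure at interior of edge}; but the clopen argument is the most economical route.
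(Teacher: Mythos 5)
Your skeleton is right, and much of it matches the paper's own proof: uniqueness via density of $U\setminus s(B)$, affineness of $m=l'-l$ away from the section, and the use of Lemma \ref{lem:affineness is a clopen condition on fibers} to promote fiberwise harmonicity to affineness at section points (the paper does exactly this, but only at section points that are \emph{finite} in their fibers). The problem is the step you yourself call the crux: the claim that $m$ extends to a section of $\PL^\fin_{\mc C}$ across the locus $s(N)$, where $N=\{b\in B \mid s(b)\text{ is infinite in }\mc C_b\}$. Your justification --- that the matching fiberwise order $k$ makes the subtraction ``avoid $\infty-\infty$ on each local polyhedron'' --- does not prove this. The prescribed order in Definition \ref{def:funwipo} is a condition on the germs of $l\vert_{\mc C_b}$ and $l'\vert_{\mc C_b}$ at $s(b)$, one fiber at a time; it does give that $m\vert_{\mc C_{b'}}$ is constant on the leg interior of each nearby fiber, hence that a \emph{set-theoretic} extension of $m$ over $s(N)$ exists, but it says nothing by itself about the behavior of that extension transversally to the fibers. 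To know the extension is continuous, piecewise linear, and affine, you must relate the fiberwise constants $c(b')$ on nearby fibers to one another, and that requires the structure of the family near the leg rather than chart-level bookkeeping of linear functionals (note also that at points of $s(N)$ lying over infinite points of $B$, several coordinates of a chart may be infinite simultaneously, so ``the fiber direction'' is not a single distinguished coordinate of an arbitrary chart).

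This is exactly what the paper's proof supplies, and the fix is short: by Lemma \ref{lem:local structure at interior of edge} there are, near a given $x\in s(N)$, a linear section $t\colon W\to V$ through an interior point of the leg and a section $u$ tracing the infinite points of the legs; since $l$ and $l'$ have the same slope in the unique fiber direction at $u(b')$, the function $m\vert_{\mc C_{b'}}$ is constant on the segment between $t(b')$ and $u(b')$ for every $b'\in W$, and therefore $m=(t\circ\pi)^*m$ on the connected component of $x$ in $U\setminus t(W)$. Since $t$ is linear and $m$ is affine near $t(W)$, the function $t^*m$ is affine on $W$, so $\pi^*(t^*m)$ is an affine function on a full neighborhood of $x$ extending $m$. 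This single identification yields finiteness, piecewise linearity, and affineness at once, and it makes your subsequent harmonicity-plus-clopen step at infinite section points unnecessary (though that step would be correct once the extension is in hand). Finally, your closing ``alternative'' is not the paper's argument for this proposition but a re-description of the internals of Lemma \ref{lem:affineness is a clopen condition on fibers}: it likewise presupposes that $m$ is already a finite, fiberwise-harmonic piecewise linear function near $s(b)$, so it does not repair the gap either.
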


\begin{figure}
    \centering
    \begin{tikzpicture}[thick]
        \begin{scope}
            \node at (-1,2) {$U$};
            \coordinate (tl) at (0,0);
            \coordinate (tr) at (4,-.5);
            \coordinate (ul) at (0,4.5);
            \coordinate (ur) at (4,5);
            \coordinate (s) at ($(tl)!.2!(ul)$);
            \coordinate (t2l) at ($(tl)!.6!(ul)$);
            \coordinate (t2r) at ($(tr)!.6!(ur)$);
        
            \fill[gray!30] (t2l)--(t2r)--(ur)--(ul)--cycle;

             \draw (t2l)--
                node[at start, left]{$t$}
             ($(t2l)!.8!(t2r)$);
               
             \draw [dashed] ($(t2l)!.8!(t2r)$)--
                node[at end, inner sep=.05cm, circle, fill=black, label=right:$y$]{}
             (t2r);

             \draw (s)--
                node[at start,left]{$s$}
             ($(s)!.8!(ur)$);
             \draw [dashed] ($(s)!.8!(ur)$)--
                node[at end, inner sep=.05cm, circle, fill=black, label={right:$x\in s(N)$}]{}
             (ur);
             
             \draw (ul)--
                node[at start,left]{$u$}
             ($(ul)!.8!(ur)$);
             \draw [dashed] ($(ul)!.8!(ur)$)--(ur);
             
             
             \draw ($(tr)!.2!(ur)$)--
             ($(tr)!.8!(ur)$);
             \draw[dashed] ($(tr)!.8!(ur)$)--(ur);

             \draw [->] (2,0)--
                node[auto]{$\pi$}
             (2,-1);
        \end{scope}
        
        \begin{scope}[yshift=-1.5cm]
            \node at (-1,0) {$W$};
            \draw (0,0)--(3,0);
            \draw[dashed] (3,0)--
                node[at end, inner sep=.05cm, circle, fill=black, label=right:$b\in N$]{}
            (4,0);
        \end{scope}
    \end{tikzpicture}    
    \caption[]{In the situation depicted here, we have $N=\{b\}$. The affine function $m = l-l'$ is constant along all fibers
    in the gray region, and therefore extends across the section $u$ at infinity.}
    \label{fig:tors}
\end{figure}
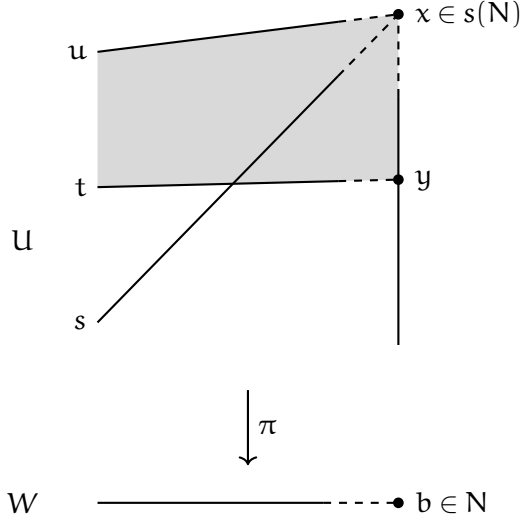
\begin{proof}
We refer to Figure \ref{fig:tors} to illustrate the proof. Let
\begin{equation}
N=\{b\in B\mid s(b)\text{ is infinite in }\mc C_b\} \ .
\end{equation}
The set $s(N)$ closed in $U$ and  $V\coloneqq U\setminus s(N)$ is a dense open subset of $U$. The restrictions of $l$ and $l'$ to $V$ have finite values everywhere, so we can take their difference $m\coloneqq l\vert_{V}-l'\vert_{V}$.
By definition of $\Aff_{\mc C}(ks)$ and Lemma \ref{lem:affineness is a clopen condition on fibers}, the function $m$ is affine.  It suffices to prove that $m$ extends to a function in $\Gamma(U, \Aff_{\mc C})$. Since such an extension is unique by the denseness of $V$, we can do this locally  at a point in $x\in s(N)$, that is we can shrink $U$ to arbitrary small neighborhoods of $x$. In particular, we may assume that $U$ has connected fibers. Let $b=\pi(x)$, and let $y\in \mc C_b \cap V$ be a point in the interior of the leg that contains $x$. By Lemma \ref{lem:local structure at interior of edge}, there exists a neighborhood $W$ of $b$ and a section $t\colon W\to V$ with $t(b)=y$. After potentially shrinking $U$ and $W$ we may assume that $\pi(U)=W$ and that there exists a section $u\colon W\to U$ with $u(b)=x$ and such that $u(b')$ is an infinite point of $\mc C_{b'}$ on the same leg as $t(b')$ for all $b'\in W$. Let $b'\in W$. Since $l\vert_{\mc C_{b'}}$ and $l'\vert_{\mc C_{b'}}$ have the same slope in the unique fiber direction leaving from $u(b')$, the function $m\vert_{\mc C_{b'}}$ has slope zero in that direction. It follows that $m\vert_{\mc C_{b'}}$ is constant on the edge segment between $t(b')$ and $u(b')$. Since this is true for every $b'\in W$, we see that $m=((t\circ\pi)^*m)\vert_ V$ on the connected component of $x$ in $U\setminus t(W)$. 
\end{proof}

\begin{proposition}
\label{prop:aff(ks) is everywhere inhabited}
Let $\pi\colon \mc C\to B$ be a family of stable tropical curves, let $s\colon B\to \mc C$ be a linear section of $\pi$ such that $s(c)$ is a smooth point of $\mc C_{c}$ for all $c\in B$, and let $k\in \Z$. Then for every $b\in B$ and $x\in \mc C_b$ the map
\begin{equation}
\Aff_{\mc C}(ks)_x\to \Aff_{\mc C_{b}}(k\cdot s(b))_x 
\end{equation}
induced by the restriction of functions is surjective.
\end{proposition}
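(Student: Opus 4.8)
The plan is to split into the two cases $x\neq s(b)$ and $x=s(b)$, and to reduce everything to the lifting of genuinely affine germs furnished by the exact sequence \eqref{equ:exact sequence} together with the construction of a single order-$k$ germ at the section. For $x\neq s(b)$ the argument is immediate: since $s$ is a section we have $s(B)\cap\mc C_b=\{s(b)\}$, so $x\notin s(B)$ and the slope condition of Definition \ref{def:funwipo} is vacuous near $x$; hence $\Aff_{\mc C}(ks)_x=\Aff_{\mc C,x}$ and $\Aff_{\mc C_b}(k\cdot s(b))_x=\Aff_{\mc C_b,x}$, and the assertion is exactly the surjectivity of the restriction $\Aff_{\mc C,x}\to\Aff_{\mc C_b,x}$. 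This follows from the surjectivity on the right of \eqref{equ:exact sequence}: I would lift the class of the given germ to a one-form on $\mc C$ near $x$, pick an affine representative, and correct by the resulting locally constant difference to obtain an honest affine lift.

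For $x=s(b)$ I would proceed in two steps. First, suppose I have produced a single germ $R\in\Aff_{\mc C}(ks)_{s(b)}$. Then for arbitrary $\phi\in\Aff_{\mc C_b}(k\cdot s(b))_{s(b)}$ the difference $\phi-R|_{\mc C_b}$ has total slope $k-k=0$ at $s(b)$ and is harmonic away from $s(b)$; since $s(b)$ is a smooth genus-zero point, Definition \ref{def:tropcurve} gives $\phi-R|_{\mc C_b}\in\Aff_{\mc C_b,s(b)}$. Lifting this affine germ to some $m\in\Aff_{\mc C,s(b)}$ exactly as in the previous case (again via \eqref{equ:exact sequence} at the smooth genus-zero point $s(b)$), I set $\tilde\phi=R+m$. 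By the $\Aff_{\mc C}$-action on $\Aff_{\mc C}(ks)$ recorded before Proposition \ref{prop:aff(ks) is a torsor} we have $\tilde\phi\in\Aff_{\mc C}(ks)_{s(b)}$, and by construction $\tilde\phi|_{\mc C_b}=\phi$.

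Thus the whole difficulty lies in producing one order-$k$ germ $R$, and this is the main obstacle: because the affine structure of the total space is not determined by the fibers, a function glued from fiberwise slope data need not be affine on $\mc C\setminus s(B)$, which is what Definition \ref{def:funwipo} requires. I would concentrate all the slope on one direction $v_0\in T_{s(b)}\mc C_b$. If $s(b)$ is infinite (a leg), I take an affine function $h$ of fiber slope $1$ from the product structure of Lemma \ref{lem:local structure at interior of edge}, extend it along the leg to its infinite end, and set $R=-kh$; this is affine off $s(B)$, attains the value $\pm\infty$ on $s(B)$ as permitted, and has total slope $k$ at the section. If $s(b)$ is finite, a neighborhood of $s(b)$ decomposes into the full-dimensional regions $R_w$ swept by the edges in the directions $w\in T_{s(b)}\mc C_b$, glued along $s(B)$; I set $R=0$ on $R_w$ for $w\neq v_0$ and $R=h$ on $R_{v_0}$, where $h$ is affine off $s(B)$, of fiber slope $k$ along $v_0$, and with $h|_{s(B)}=0$.

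The step that genuinely uses the affine structure is the choice of $h$ vanishing on $s(B)$ while staying affine: starting from any affine $h'$ of fiber slope $k$ (Lemma \ref{lem:local structure at interior of edge}), I must subtract $\pi^\ast(s^\ast h')$, so I need $s^\ast h'$ to be affine on the base. To prove this I would lift, via \eqref{equ:exact sequence}, a harmonic fiber germ with slopes $+1$ along $v_0$ and $-1$ along a second direction to a genuinely affine function $H$ on $\mc C$ at $s(b)$; then $s^\ast H$ is affine by Lemma \ref{lem:characterization of affineness in terms of section and harmonicity}, and since $h'$ and $kH$ have the same fiber slope along $R_{v_0}$ they differ there by a fiberwise-constant function, hence by a pull-back from the base (Lemma \ref{lem:exact sequence for harmonic functions}), whence $s^\ast h'$ is affine as well. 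Finally I would check that $R$ is piecewise linear, affine on $\mc C\setminus s(B)$ (the regions $R_w$ meet only along the excised section, so affineness is verified componentwise), continuous across $s(B)$, and has total slope $k$ in every fiber over $s^{-1}U$; this last fiberwise verification, carried out in a \TPL{}-trivialization, is the only remaining routine bookkeeping.
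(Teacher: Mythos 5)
Your treatment of the case $x\neq s(b)$ and your reduction to producing a single germ $R\in\Aff_{\mc C}(ks)_{s(b)}$ are both sound, and in fact mirror the paper's own architecture (it reduces to lifting a germ with slope one in a single direction and zero in the others, handling everything else via the torsor structure and the exactness of \eqref{equ:exact sequence}). The genuine gap is in the construction of $R$, which you correctly identify as the heart of the matter: both of your constructions bend, or concentrate slope, along loci determined by the combinatorics of the \emph{central} fiber, whereas the defining conditions of $\Aff_{\mc C}(ks)$ — affine (hence finite) off $s(B)$, total fiber slope $k$ at $s(c)$ — must hold along the section in \emph{every} nearby fiber, and the section moves relative to that combinatorics. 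In the infinite case, $s(c)$ can lie in the interior of the leg for all $c\neq b$ (this can happen when $b$ is an infinite point of $B$; the paper's Figure \ref{fig:tors} depicts exactly the situation where $s$ is infinite only over $b$). Then $R=-kh$, having nonzero slope along the whole leg, takes the value $\pm\infty$ at the infinite ends of nearby legs, which are not in $s(B)$, so $R$ is not affine off $s(B)$; moreover its total fiber slope at the interior point $s(c)$ is $+k-k=0$, not $k$. In the finite case, the vertex $x=s(b)$ can split into several vertices in nearby fibers (a four-valent vertex splitting into two trivalent ones joined by an edge), and then your regions $R_w$ are not glued along $s(B)$: the $v_0$-edge of $\mc C_c$ may emanate from a vertex $q(c)\neq s(c)$. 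Whichever gluing you adopt, for some nearby fibers either $R$ fails to be harmonic (hence affine) at $q(c)\notin s(B)$, or — if you use the affine lift $H$ of a central-fiber germ, whose slope sits on the $v_0$-edge — the total fiber slope of $R$ at $s(c)$ is $0$ rather than $k$. Your argument for the affineness of $s^*h'$ has the same problem: Lemma \ref{lem:local structure at interior of edge} only provides $h'$ near an interior point of an edge, not along the section, and extending it across the new vertices of nearby fibers is precisely the unresolved difficulty.

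The paper's proof is built around exactly this point. It chooses auxiliary linear sections $t_w$ through edge interiors, one per direction $w\in T_x\mc C_b$, defines on each fiber the harmonic function $\chi_{w,c}$ lifting a one-form of slope one along the path $[t_w(c),t_v(c)]$, normalized by $\chi_{w,c}(t_v(c))=0$ — so that $t_v^*\chi_w=0$ makes $\chi_w$ affine on the total space by Lemmas \ref{lem:affineness is a clopen condition on fibers} and \ref{lem:characterization of affineness in terms of section and harmonicity} — and then assembles the lift \eqref{eq:defplfext} so that it follows $\chi_w$ on the $t_v$-side of $s(c)$ and is \emph{constant} on the far side of $s(c)$, fiber by fiber. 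The bending thus occurs exactly on $s(B)$, harmonicity at intermediate vertices such as $q(c)$ is automatic because each $\chi_{w,c}$ is harmonic, and affineness off $s(B)$ is again checked through the auxiliary sections via $t_w^*\varphi=s^*\chi_w$. The analogous device in the infinite case is the function with slope $k$ between an auxiliary section $t$ and $s$, extended by $(s\circ\pi)^*\varphi$ beyond $s$. If you modify your $R$ so that it becomes constant on the far side of the \emph{section} in every fiber, rather than on the far side of a leg end or of a central-fiber vertex, your outline can be repaired along these lines.
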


\begin{proof}
If $x\neq s(b)$ the assertion follows from the short exact sequence of the cotangent sheaves \eqref{equ:exact sequence}, so we can assume that $x=s(b)$. Let $\phi\in \Aff_{\mc C_{b}}(ks(b))_x$. First assume that $x$ is an infinite point in $\mc C_b$ and refer to Figure \ref{fig:inf}.
\begin{figure}
    \centering

    \begin{tikzpicture}[thick]
        \begin{scope}
            \node at (-1,2) {$U$};
            \coordinate (tl) at (0,0);
            \coordinate (tr) at (4,-.5);
            \coordinate (ul) at (0,4.5);
            \coordinate (ur) at (4,5);
            \coordinate (s) at ($(tl)!.2!(ul)$);
        
            \fill[gray!30] (tl)--(tr)--(ur)--(s)--cycle;
            \node at (3.5,.5)[ pin={[pin distance= 1cm]right:$V$}] {};
            
             \draw (tl)--
                node[at start, left]{$t$}
                node[pin=225:$\phi(t(b))$]{}
             ($(tl)!.8!(tr)$);
             \draw [dashed] ($(tl)!.8!(tr)$)--
                 node[at end, inner sep=.05cm, circle, fill=black, label=below:$t(b)$]{}
             (tr);

             \draw (s)--
                node[at start,left]{$s$}
             ($(s)!.8!(ur)$);
             \draw [dashed] ($(s)!.8!(ur)$)--
                node[at end, inner sep=.05cm, circle, fill=black, label={above:$x$}]{}
             (ur);
             
             \draw (ul)--
                node[at start,left]{$u$}
             ($(ul)!.8!(ur)$);
             \draw [dashed] ($(ul)!.8!(ur)$)--(ur);
             
             
             \draw (tr)--
                node[pin=45:$\phi$]{}
             ($(tr)!.8!(ur)$);
             \draw[dashed] ($(tr)!.8!(ur)$)--(ur);
             
             \node at (2.5,1.7) {$\varphi$};
             \node at (1.2,3.5) {$(s\circ\pi)^*\varphi$};

             \draw [->] (2,-.5)--
                node[auto]{$\pi$}
             (2,-2);
        \end{scope}
        
        \begin{scope}[yshift=-2.5cm]
            \node at (-1,0) {$B$};
            \draw (0,0)--(3.2,0);
            \draw[dashed] (3.2,0)--
                node[at end, inner sep=.05cm, circle, fill=black, label=below:$b$]{}
            (4,0);
        \end{scope}
    \end{tikzpicture}
    
    \caption{Extending the function $\phi$ to a neighborhood of $\mc C_b$: it is extended constantly along the section $t$, by a function $\varphi$ with the appropriate vertical slope between $t$ and $s$, and constantly above $s$. }

    \label{fig:inf}
\end{figure}

 Let $u\colon B\to \mc C$ be the section of $\pi$ such that $u(b)=x$ and such that $u(c)$ is an infinite point in $\mc C_{c}$ for all $c\in B$. After potentially shrinking $B$, Lemma \ref{lem:local structure at interior of edge} guarantees the existence of a section $t\colon B\to \mc C$ such that for all $c\in B$, the point $t(c)$ is contained in the interior of the edge of $\mc C_c$ adjacent to $u(c)$; further we can request that $t(c)\neq s(c)$ for all $c\in B$.
 Let $U$ be the connected component of $\mc C\setminus t(B)$ containing $x$.   Let $\varphi$ be the function on a small neighborhood of  $\overline U$ that has constant value $\phi(t(b))$ on $t(B)$, value $\phi(x)$ on $u(B)$, and whose slope on $U\cap \mc C_c$ in the fiber direction is $k$ for all $c\in B$ (use a \TPL{}-trivialization to see that $\varphi$ is piecewise linear). By Lemma \ref{lem:affineness is a clopen condition on fibers} and Lemma \ref{lem:characterization of affineness in terms of section and harmonicity}, $\varphi$ is affine on  $U\setminus u(B)$. Let $V$ be the connected component of $U\setminus s(B)$ that contains the edge segment between $s(b)$ and $t(b)$ in $\mc C_b$. Then the function 
\begin{equation}
U\to \Rbar,\;\; y\mapsto \begin{cases}
\varphi(y) &,\; y\in V \\
(s\circ\pi)^* \varphi& ,\; y\notin V
\end{cases}
\end{equation}
defines an element in $\Aff_{\mc C}(ks)_x$ that lifts $\phi$.

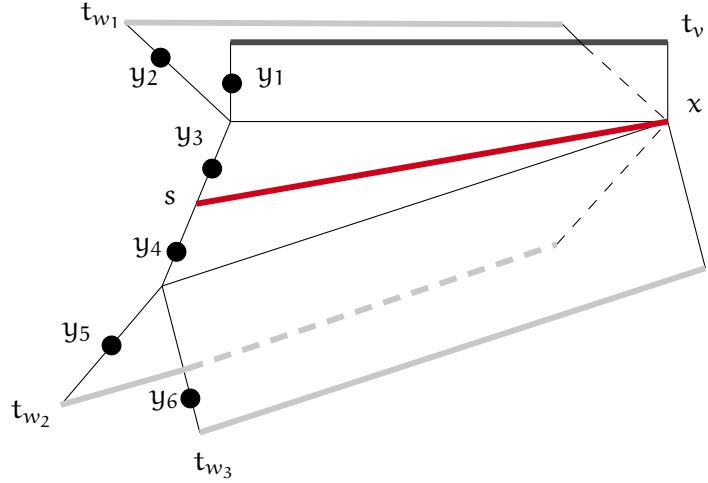
\begin{figure}[tb]
    \centering

\begin{tikzpicture}[x=0.75pt,y=0.75pt,yscale=-1,xscale=1]

\draw   (184,47) -- (404.5,47) -- (404.5,87) -- (184,87) -- cycle ;
\draw    (149.5,170) -- (184,87) ;
\draw    (98.5,230) -- (149.5,170) ;
\draw    (149.5,170) -- (168.5,244) ;
\draw    (404.5,87) -- (423.5,161) ;
\draw  [dash pattern={on 4.5pt off 4.5pt}]  (348.5,149) -- (404.5,87) ;
\draw    (149.5,170) -- (404.5,87) ;
\draw [color={rgb, 255:red, 200; green, 200; blue, 200 }  ,draw opacity=1 ][line width=2.25]  [dash pattern={on 6.75pt off 4.5pt}]  (161.5,212) -- (348.5,149) ;
\draw [color={rgb, 255:red, 200; green, 200; blue, 200 }  ,draw opacity=1 ][line width=2.25]    (168.5,244) -- (251.95,216.84) -- (423.5,161) ;
\draw [color={rgb, 255:red, 200; green, 200; blue, 200 }  ,draw opacity=1 ][line width=2.25]    (98.5,230) -- (161.5,212) ;
\draw [color={rgb, 255:red, 208; green, 2; blue, 27 }  ,draw opacity=1 ][fill={rgb, 255:red, 208; green, 2; blue, 27 }  ,fill opacity=1 ][line width=2.25]    (166.75,128.5) -- (404.5,87) ;
\draw [color={rgb, 255:red, 74; green, 74; blue, 74 }  ,draw opacity=1 ][line width=2.25]    (184,47) -- (404.5,47) ;
\draw    (130.5,37) -- (184,87) ;
\draw  [dash pattern={on 4.5pt off 4.5pt}]  (362.5,49) -- (404.5,87) ;
\draw [color={rgb, 255:red, 200; green, 200; blue, 200 }  ,draw opacity=1 ][line width=2.25]    (130.5,37) -- (351.5,38) ;
\draw    (351.5,38) -- (362.5,49) ;
\draw  [color={rgb, 255:red, 0; green, 0; blue, 0 }  ,draw opacity=1 ][fill={rgb, 255:red, 0; green, 0; blue, 0 }  ,fill opacity=1 ] (180,67.75) .. controls (180,65.13) and (182.13,63) .. (184.75,63) .. controls (187.37,63) and (189.5,65.13) .. (189.5,67.75) .. controls (189.5,70.37) and (187.37,72.5) .. (184.75,72.5) .. controls (182.13,72.5) and (180,70.37) .. (180,67.75) -- cycle ;
\draw  [color={rgb, 255:red, 0; green, 0; blue, 0 }  ,draw opacity=1 ][fill={rgb, 255:red, 0; green, 0; blue, 0 }  ,fill opacity=1 ] (144,54.75) .. controls (144,52.13) and (146.13,50) .. (148.75,50) .. controls (151.37,50) and (153.5,52.13) .. (153.5,54.75) .. controls (153.5,57.37) and (151.37,59.5) .. (148.75,59.5) .. controls (146.13,59.5) and (144,57.37) .. (144,54.75) -- cycle ;
\draw  [color={rgb, 255:red, 0; green, 0; blue, 0 }  ,draw opacity=1 ][fill={rgb, 255:red, 0; green, 0; blue, 0 }  ,fill opacity=1 ] (170,110.75) .. controls (170,108.13) and (172.13,106) .. (174.75,106) .. controls (177.37,106) and (179.5,108.13) .. (179.5,110.75) .. controls (179.5,113.37) and (177.37,115.5) .. (174.75,115.5) .. controls (172.13,115.5) and (170,113.37) .. (170,110.75) -- cycle ;
\draw  [color={rgb, 255:red, 0; green, 0; blue, 0 }  ,draw opacity=1 ][fill={rgb, 255:red, 0; green, 0; blue, 0 }  ,fill opacity=1 ] (152,152.75) .. controls (152,150.13) and (154.13,148) .. (156.75,148) .. controls (159.37,148) and (161.5,150.13) .. (161.5,152.75) .. controls (161.5,155.37) and (159.37,157.5) .. (156.75,157.5) .. controls (154.13,157.5) and (152,155.37) .. (152,152.75) -- cycle ;
\draw  [color={rgb, 255:red, 0; green, 0; blue, 0 }  ,draw opacity=1 ][fill={rgb, 255:red, 0; green, 0; blue, 0 }  ,fill opacity=1 ] (159,226.75) .. controls (159,224.13) and (161.13,222) .. (163.75,222) .. controls (166.37,222) and (168.5,224.13) .. (168.5,226.75) .. controls (168.5,229.37) and (166.37,231.5) .. (163.75,231.5) .. controls (161.13,231.5) and (159,229.37) .. (159,226.75) -- cycle ;
\draw  [color={rgb, 255:red, 0; green, 0; blue, 0 }  ,draw opacity=1 ][fill={rgb, 255:red, 0; green, 0; blue, 0 }  ,fill opacity=1 ] (119.25,200) .. controls (119.25,197.38) and (121.38,195.25) .. (124,195.25) .. controls (126.62,195.25) and (128.75,197.38) .. (128.75,200) .. controls (128.75,202.62) and (126.62,204.75) .. (124,204.75) .. controls (121.38,204.75) and (119.25,202.62) .. (119.25,200) -- cycle ;

\draw (413,74.4) node [anchor=north west][inner sep=0.75pt]    {$x$};
\draw (149,120.4) node [anchor=north west][inner sep=0.75pt]    {$s$};
\draw (107,25.4) node [anchor=north west][inner sep=0.75pt]    {$t_{{w}_{1}}$};
\draw (72,226.4) node [anchor=north west][inner sep=0.75pt]    {$t_{{w}_{2}}$};
\draw (164,252.4) node [anchor=north west][inner sep=0.75pt]    {$t_{{w}_{3}}$};
\draw (410,31.4) node [anchor=north west][inner sep=0.75pt]    {$t_{v}$};
\draw (196,58.4) node [anchor=north west][inner sep=0.75pt]    {$y_{1}$};
\draw (132,58.4) node [anchor=north west][inner sep=0.75pt]    {$y_{2}$};
\draw (155,88.4) node [anchor=north west][inner sep=0.75pt]    {$y_{3}$};
\draw (134,144.4) node [anchor=north west][inner sep=0.75pt]    {$y_{4}$};
\draw (98,187.4) node [anchor=north west][inner sep=0.75pt]    {$y_{5}$};
\draw (142,221.4) node [anchor=north west][inner sep=0.75pt]    {$y_{6}$};

\end{tikzpicture}
     \caption[]{The function $\varphi$ extending $\phi$ may be described informally as follows: it is constantly equal to zero along the section $t_v$; along a given fiber $\mc C_c$, it has vertical slope one until either it hits the section $s$, or it hits a vertex; in the latter case, it continues with slope one towards the section $s$, and it is extended  constantly along other directions.
    According to \eqref{eq:defplfext}, $\varphi$ is defined at the points $y_i$ as follows:
     \begin{eqnarray}
     \varphi(y_1) & = & \chi_{w_1}(y_1) = \chi_{w_2}(y_1) = \chi_{w_3}(y_1), \nonumber\\
      \varphi(y_2) & = & \chi_{w_1}(s(c)),  \nonumber\\
      \varphi(y_3) & = &  \chi_{w_2}(y_3) = \chi_{w_3}(y_3) ,\nonumber\\
         \varphi(y_4) =  \varphi(y_5) =   \varphi(y_6)  & = &  \chi_{w_2}(s(c)) = \chi_{w_3}(s(c)). \nonumber 
     \end{eqnarray}
    }
    \label{fig:tripod}
\end{figure}
Now assume that $x$ is not an infinite point in $\mc C_b$, and refer to Figure \ref{fig:tripod}. Since every element in $\Aff_{\mc C_b}(ks(b))_x$ is a linear combination of elements in $\Aff_{\mc C_b}(s(b))_x$, we may assume that $\phi\in \Aff_{\mc C_b}(s(b))_x$.  In fact, we may assume that there exists a direction $v\in T_x \mc C_b$ such that $d_v\phi=1$ and $d_{w}\phi=0$ for all $w\in T_x\mc C_b\setminus\{v\}$. Let $w\in T_x\mc C_b$. By Lemma \ref{lem:local structure at interior of edge}, there exists, after potentially shrinking $B$, a linear section $t_w\colon B\to \mc C$ of $\pi$ such that $t_w(b)$ is on the edge adjacent to $x$ in the direction specified by $w$, and such that all values of $t_w$ are in the interior of edges of their fibers. We may assume that all these sections are disjoint from $s$, and there exists a neighborhood $U$ of $x$ with simply connected fibers such that $\gamma_{\mc C}\vert_U=0$ and such that for every $c\in B$, every non-compact edge of $U\cap \mc C_c$ contains one of the points $t_w(c)$. For every $c\in B$ and $w\in T_x\mc C_b$ with $w\neq v$, choose a one-form on $U\cap \mc C_c$ whose support contains the path  $[t_w(c), t_v(c)]$ and has slope one in the direction from $t_w(c)$ to $t_v(c)$. There exists a unique affine function $\chi_{w,c}$ on $U\cap \mc C_c$  that lifts this one-form and satisfies $\chi_{w,c}(t_v(c))=0$. 
Let 
\begin{equation}
\chi_w\colon U\to \R,\;\;y\mapsto \chi_{w,\pi(y)}(y) \ .
\end{equation}
By construction, $\chi_w$ is fiberwise harmonic and satisfies $t_v^*\chi_w=0$. Therefore, $\chi_w$ is affine by Lemma \ref{lem:affineness is a clopen condition on fibers} and Lemma \ref{lem:characterization of affineness in terms of section and harmonicity}. Define $\tilde{U}$ to be the connected component of $U\smallsetminus \bigcup_{w\in T_x\mc C_b} t_w(B)$ containing $x$, and construct a function $\varphi\colon \tilde{U}\to \R$ as follows:
\begin{equation} \label{eq:defplfext}
    \varphi(y) = \left\{
\begin{array}{cl}
\chi_w(y), & \mbox{if  $s(c)\in [t_w(c), t_v(c)]$}\\ & \mbox{and $y\in[s(c),t_v(c)]$.}\\
\chi_w(s(c)), & \mbox{else, with $w$ such that $y \in [t_w(c), t_v(c)]$.}
\end{array}
    \right.
\end{equation}


Using a \TPL{}-trivialization one sees that $\varphi$ is piecewise integral linear, and hence 
\begin{equation}
\varphi\vert_{\tilde{U}\setminus s(B)}\in \Gamma(\tilde{U}\setminus s(B),\Harm_{\mc C}) \ .
\end{equation} 
By construction, we have $t_v^*\varphi=0$ and 
\begin{equation}
t_w^*\varphi= s^*\chi_w 
\end{equation}
for $w\in T_x\mc C_b$ with $w\neq v$. Therefore, we have
\begin{equation}
\varphi\vert_{\tilde{U}\setminus s(B)}\in \Gamma(\tilde{U}\setminus s(B),\Aff_{\mc C})
\end{equation}
by Lemma \ref{lem:affineness is a clopen condition on fibers} and Lemma \ref{lem:characterization of affineness in terms of section and harmonicity} and hence 
\begin{equation}
\varphi\in \Gamma(\tilde{U},\Aff_{\mc C}(s)) \ .
\end{equation}
Since $\varphi$ restricts to $\phi$ on $\mc C_b$, this finishes the proof.
\end{proof}

\section{In pursuit of an atlas} 
\label{sec:mumford}

{In this section we attempt to construct an atlas of $\Mgnbar{g,n}$, that is  a local isomorphism $B\to \Mgnbar{g,n}$, where $B$ is a tropical space.  First we eliminate non-trivial automorphisms by considering the space $\Atlass{g,n}$ of {\it cycle-rigidified} curves and then consider the universal \TPL{}-curve over $\Atlass{g,n}$. This has similarities to the construction of tropical Teichm\"uller space in \cite{Ulirsch}. The major challenge is to equip the rigidification and its universal family with affine structures to obtain a family of stable tropical curves. This essentially amounts to recovering the affine structure of $\Mgnbar{g,n}$ from its functorial description. Both the fact that the isotropy groups at curves with higher-genus vertices are too small and the fact that the affine structure on the total space of a family is not determined by the underlying \TPL{}-family (see Example \ref{ex:tropfam}) make it impossible to obtain an atlas for all of $\Mgnbar{g,n}$, but we can obtain a natural surjection onto the locus of \emph{good} curves and a local isomorphism onto the locus of \emph{Mumford} curves.
}


\subsection{The rigidification  $\Atlass{g,n}$ of $\Mgnbar{g,n}$}

\begin{definition}
Let $G$ by an $n$-marked stable graph of genus $g$. An \textbf{oriented cycle basis} for $G$ is  a collection $c^G_1,\ldots, c^G_g\in H_1(G;\Z)$ that generates $H_1(G;\Z)$ and such that each $c^G_i$ is either zero, or corresponds to a primitive cycle in $G$. We  call a stable graph  equipped with an oriented cycle basis a \textbf{cycle-rigidified stable graph}. A \textbf{morphism} between two cycle-rigidified stable graphs $G$ and $G'$ of the same genus $g$ is a morphism  $f\colon G\to G'$ of graphs such that $f_*c^G_i=c^{G'}_i$ for all $1\leq i\leq g$. 
\end{definition}

If $G$ is a cycle-rigidified stable graph of genus $g$ and $G'$ is obtained as a specialization of $G$, then the cycles $c^G_1,\ldots,c^G_g$ induce and oriented cycle basis $c^{G'}_1,\ldots,c^{G'}_{g}$ for $G'$.

\begin{lemma}
\label{lem:ridig graphs are rigid}
Let $G$ be a cycle-rigidified stable graph, and let $f\colon G\to G$ be an automorphism of $G$. Then $f$ is the identity.
\end{lemma}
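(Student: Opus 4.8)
The plan is to extract two rigidity constraints from the two pieces of structure that $f$ must respect — the marking and the oriented cycle basis — and then propagate them across all of $G$ using connectivity and stability. First I would record what $f$ preserves: as an isomorphism of $I$-marked graphs it commutes with $v_G$, $e_G$ and $\gamma_G$ and fixes every leg, since the marking $I\to L(G)$ is a bijection that $f$ must intertwine with $\id_I$. Hence $f$ fixes the unique flag of each leg together with its adjacent vertex, so every marked vertex is a fixed point of $f$. As an isomorphism of cycle-rigidified graphs, $f$ additionally satisfies $f_*c^G_i=c^G_i$ for all $i$, and since the $c^G_i$ generate $H_1(G;\Z)$ this says exactly that $f_*$ is the identity on $H_1(G;\Z)$. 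Because the nonzero $c^G_i$ are classes of primitive oriented cycles that $f_*$ fixes with their orientation, $f$ can neither reverse nor cyclically rotate any of these cycles.

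Next I would pin down the ``cyclic core'' $K$: the subgraph swept out by the primitive cycles representing the basis, together with the positive-genus vertices (whose genera are accounted for, with the correct orientation bookkeeping, by the remaining basis slots). The key mechanism is that $f_*=\id$ on $H_1$ controls edges up to the cut space: for every cycle $z$ the coefficient of an edge $e$ in $z$ equals that of $f(e)$, so $e-f(e)$ is orthogonal to the cycle space and hence lies in the cut space of $G$. If $e\neq f(e)$ this forces $\{e,f(e)\}$ to be a $2$-edge bond, i.e.\ the two edges are in series across a valence-two vertex; stability then enters, since a genus-zero valence-two vertex is forbidden, and the positive-genus (or otherwise already-fixed, marked, higher-valent) vertices are pinned by the rigidification data. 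Pushing this edge by edge shows $f$ fixes each edge and vertex of $K$.

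Then I would propagate fixedness from the anchored set $A$ consisting of the marked vertices, the core $K$, and the positive-genus vertices, to the whole graph. As $G$ is connected, each component of its complement away from $A$ is a tree attached to $A$ along fixed vertices, and stability guarantees its extremal vertices lie in $A$: a genus-zero leaf of such a tree has valence at least three and therefore carries at least two markings, hence is already fixed, while a positive-genus leaf is fixed as an element of $A$. It therefore suffices to know that an automorphism of a tree fixing all of its leaves pointwise is the identity, which follows because a graph automorphism is an isometry for the combinatorial metric and a vertex of a tree is uniquely determined by its vector of distances to the leaves. This closes the induction and yields $f=\id$.

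The step I expect to be the main obstacle is the cyclic core: upgrading ``$f_*$ acts trivially on $H_1(G;\Z)$'' to ``$f$ fixes every edge of $K$''. The homology action only sees cycles modulo the cut space, so the genuinely rigidifying input there is the interaction between the \emph{oriented}, \emph{primitive} cycle basis, the accounting of positive-genus vertices in the remaining basis slots, and stability — precisely the delicate locus where, as the paper goes on to explain, the rigidification fails to behave well enough to furnish an atlas for all of $\Mgnbar{g,n}$.
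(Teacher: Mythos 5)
You should know at the outset that the paper contains no argument for this lemma: it is disposed of by the citation \cite[Lemma 1]{Zimmermann96}, a graph-theoretic rigidity statement for finite connected graphs all of whose vertices have valence at least three, to the effect that an automorphism of such a graph inducing the identity on $H_1(\,\cdot\,;\Z)$ is the identity. Your opening reductions are correct and match how that citation is meant to be applied: the legs are labelled, so $f$ fixes them and their supporting vertices, and $f_*c^G_i=c^G_i$ for a generating collection forces $f_*=\id$ on $H_1(G;\Z)$. The two steps you build on top of this, however, both fail.

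The cut-space step is wrong. From $e-f(e)$ lying in the cut space you infer that $\{e,f(e)\}$ is a two-edge bond, ``i.e.\ the two edges are in series across a valence-two vertex'', and then contradict stability. Neither inference is valid. If $e$ and $f(e)$ are bridges, then $e-f(e)$ lies in the cut space with no bond in sight; and a genuine two-edge bond need not have its edges sharing any vertex at all. Concretely, take two theta graphs and join them by two disjoint edges: this is a stable graph in which every vertex has valence four and the two joining edges form a two-edge bond. There is no valence-two vertex, hence no conflict with stability, so your local argument cannot exclude $f(e)\neq e$; ruling out such automorphisms genuinely requires a global count. That global count is what the cited lemma supplies: writing $V_m$, $E_m$, $E_{\mathrm{inv}}$ for the numbers of moved vertices, moved edges, and inverted edges, the hypothesis $f_*=\id$ on $H_1$ makes the Lefschetz number of $f$ equal to $\chi(G)$, and the Hopf trace formula turns this into $V_m=E_m+2E_{\mathrm{inv}}$; on the other hand every moved vertex has valence at least three and all its incident edges are moved or inverted, giving $3V_m\le 2E_m+2E_{\mathrm{inv}}$, and the two relations force $V_m=E_m=E_{\mathrm{inv}}=0$.

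The second gap is the fatal one, and it cannot be repaired. Positive-genus vertices are not ``pinned by the rigidification data'': the basis slots accounting for vertex genus are the zero class in $H_1(G;\Z)$, which refers to no particular vertex, so for those slots the condition $f_*c_i=c_i$ is vacuous. In fact the statement you are trying to prove fails on exactly that locus. Take $G$ to consist of two genus-one vertices joined by a single bounded edge and no legs: this is a stable graph of genus two with $H_1(G;\Z)=0$, so its only oriented cycle basis is $(0,0)$, and the automorphism exchanging the two vertices fixes the rigidification while being nontrivial. Likewise a genus-zero trivalent vertex attached by three edges to three genus-one vertices is a stable, cycle-rigidified graph of genus three whose automorphism group is the full symmetric group on the three genus-one vertices. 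So your instinct that this is the delicate locus is right, but the delicacy lies in the statement rather than the proof: the reference the paper invokes covers only graphs with all valences at least three (in particular, no positive vertex genera, i.e.\ essentially the Mumford case on which the paper ultimately relies), and any correct version of the lemma must either add that hypothesis or enrich the rigidification so that it records the positive-genus vertices individually.
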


For a proof of  Lemma \ref{lem:ridig graphs are rigid}, see, for example, \cite[Lemma 1]{Zimmermann96}.


\begin{definition} \label{def:atlas}
For every $g,n \in\Z_{\geq 0}$  with $n+2g-2>0$ we define a topological space $\Atlass{g,n}$  by gluing for each cycle-rigidified $n$-marked stable graph $G=(\underline G,(c_i^G)_{1\leq i\leq g})$ the extended cone $\overline \sigma_G\coloneqq\overline \sigma_{\underline G}$ along the face maps $\overline \sigma_{\underline G'}\to \overline \sigma_{\underline G}$ associated to {morphisms} $G\to G'$.  For an open subset $V\subseteq \Atlass{g,n}$ we define $\Gamma(V,\PL_{\Atlass{g,n}})$ as the set of continuous functions from $V$ to $\Rbar$ such that for all $G$, the pull-back to $\overline\sigma_G$ is a piecewise linear function.
\end{definition}

The space $(\Atlass{g,n},\PL_{\Atlass{g,n}})$  is a \TPL{}-space and it follows from Lemma \ref{lem:ridig graphs are rigid} that the morphism $\relint(\sigma_G)\to \Atlass{g,n}$ are injective for all cycle-rigidified $I$-marked stable graphs of genus $g$. {We  now define an affine structure on $\Atlass{g,n}$. The idea is to pretend we had an affine structure on the universal family over $\Atlass{g,n}$ and then check which piecewise linear functions on $\Atlass{g,n}$ are forced to be affine by the exactness of the sequence \eqref{equ:exact sequence}. Informally, we integrate relative one-forms along paths in the fibers of the total space to obtain these functions. To formalize this, we introduce the notions of thickened paths, one-forms along thickened paths, and cross ratios. 
}

\begin{definition}
\label{def:germ of trails}
Let $G$ be a stable graph, and denote by $G^{f}$ the subset of genus zero vertices  or  points of bounded edges. A \textbf{thickened path} $\gamma^{\pm} = (\gamma, f_s, f_e)$ is a path $\gamma: [0,1] \to G^{f}$   together with the choice of an initial  flag $f_s$ incident to $\gamma(0)$ and a terminal flag $f_e$ incident to $\gamma(1)$.

While not strictly necessary, we include the following simplifying assumptions in the definition:
\begin{itemize}
    \item  $\gamma(0) = v_s, \gamma(1) = v_e $ are vertices;
    \item if $\gamma$ is not constant,  $f_s$ is distinct from the flag determined by $\gamma'(0)$ and $f_e$ is distinct from $\gamma'(1)$;
    \item if $\gamma$ is constant, $f_s\not= f_e$.
\end{itemize}

\end{definition}

\begin{remark}
Note that in the definition of thickened paths, $\gamma$ may be constant: in this case a thickened path  consists only of a genus-zero vertex $v$ and two flags $f_s$ and $f_e$ with $v_G(f_s)=v_G(f_e)=v$.
\end{remark}


\begin{definition}
\label{def:1-form along germ}
Let $G$ be a stable graph, $\Gamma$ a tropical curve of combinatorial type $G$ and  $\gamma^{\pm}$ be a thickened path on $G$. A \textbf{one-form along $\gamma^{\pm}$} is  an element $w \in H^0([0,1],\gamma^{-1}\Omega^1_\Gamma)$ such that any local one-form $\omega$ representing $w$  near $\gamma(0)$ (resp. $\gamma(1))$ has slope zero along $f_s$ (resp. $f_e$).
\end{definition}

 Denote by $v_0 = v_s, v_1, \ldots , v_n = v_e$ the ordered multiset of vertices traversed by $\gamma$. Explicitly, a  one-form along $\gamma^{\pm}$ consists of the combinatorial datum of a collection of maps
\begin{equation}
w_i\colon T_{v_i}G \to \Z
\end{equation}
such for $0\leq i\leq n$ such that
\begin{enumerate}
\item 
\begin{equation}
w_0(f_s)=w_n(f_e)=0 \ ,
\end{equation}
\item
for every $0\leq i\leq n$ we have
\begin{equation}
\sum_{f\in T_{v_i}G} w_i(f)=0 \ ,
\end{equation}
\item 
if $1\leq i\leq n$ and $f_{i-1}^+, f_i^-$ denote the two flags belonging to the edge $e_i$ traversed by $\gamma$ between $v_{i-1}$ and $v_i$, then

\begin{equation}
w_{i-1}(f_{i-1}^+)=-w_i(f_{i}^-) \ .
\end{equation}
We may interpret condition (3) as saying that $w$ assigns an integer $w(e_i)$ to any directed edge when traversed by $\gamma$. 
\end{enumerate}


\begin{definition}
A \textbf{cross ratio datum} on a stable graph $G$ is a pair $(\gamma^{\pm},w)$ consisting of a thickened path $\gamma^{\pm}$ on $G$ and a one-form $w$ along $\gamma^{\pm}$.
\end{definition}



If a stable graph $G$ is obtained as a specialization of a stable graph $\widetilde{G}$, then any thickened path $\gamma^{\pm}$ on $G$, which by definition is contained in the set of genus-zero points of $G$, can be lifted uniquely to a thickened path $\widetilde{\gamma}^{\pm}$ of a trail on $\widetilde{G}$: insert into $\gamma$ the edges of $\widetilde{G}$ that are contracted to vertices that $\gamma$ passes through. Any one-form $w$ along $\gamma^{\pm}$ can be uniquely lifted to a one-form $\widetilde{w}$ along $\widetilde{\gamma}^{\pm}$, the values of $\widetilde{w}$ on contracted flags being determined by part (2) in Definition \ref{def:1-form along germ}. Therefore, any cross ratio datum on $G$ lifts uniquely to a cross ratio datum on $\tilde{G}$.

\begin{definition} \label{def:cycleriggraph}
Let $G$ be an $n$-marked cycle-rigidified stable graph of genus $g$, and let $c=(\gamma^{\pm},w)$ be a cross ratio datum on $G$. We define a piecewise-linear function $\xi_c$, the \textbf{cross ratio associated to $c$}, on
\begin{equation}
V(G)=\bigcup_{\widetilde{G}} \relint(\widetilde{\sigma}_G) \ ,
\end{equation}
where the union is taken over all cycle-rigidified stable graphs $\widetilde{G}$ specializing to $G$. Namely, given a stable graph $\widetilde{G}$ specializing to $G$, let $\widetilde{c}=(\widetilde{\gamma}^{\pm},\widetilde{w})$ denote the unique lift of $c$ to $\widetilde{G}$. We then define the restriction of $\xi_c$ to $\relint(\sigma_{\widetilde{G}})$ by
\begin{equation}
\xi_c\vert_{\relint(\sigma_{\widetilde{G}})}= \sum_{1\leq i\leq m} \widetilde{w}(e_i) l_{e_i}  \ ,
\end{equation}
where $e_1,\ldots, e_m$ denote the ordered multiset of directed edges traversed by $\widetilde{\gamma}$.
\end{definition}

\begin{definition}
We define $\Aff_{\Atlass{g,n}}$ as the subsheaf of $\PL_{\Atlass{g,n}}$ generated by the cross ratios $\xi_c$, where $c$ is a cross ratio datum on some $n$-marked cycle-rigidified stable graph of genus $g$.
\end{definition}

To ease the use of cross ratio data, we introduce a generating set for cross ratio data.

\begin{definition}
A cross ratio datum $c=(\gamma^{\pm},w)$ on a stable graph $G$ is said to be \textbf{primitive} if it is of one of the following two types:
\begin{itemize}
\item[(\textbf{Edge})] the path $\gamma$ parameterizes a bounded edge $e$ connecting two genus-zero vertices $v_0$ and $v_1$.
 The one-form $w$ has $w(e) = 1$, and there exist precisely one flag 
  in $T_{v_0}G \smallsetminus \{f_s, \gamma'(0)\}$ and one flag in $T_{v_1}G \smallsetminus \{f_e, \gamma'(1)\}$ where $w$ has non-zero slope. By the balancing condition the slopes along these two special flags are $\pm1$;  we label the flags $f_{\pm1}$ so that  $w(f_{-1}) = -1$ and $w(f_{1}) = 1$. We denote this cross ratio datum by $c_{(e, (f_s,f_e),(f_{-1},f_{1}))}$ and the associated cross ratio by $\xi_{(e, (f_s,f_e),(f_{-1},f_{1}))}$.

\item[(\textbf{Vertex})] 
The image of $\gamma$ consists of a single genus-zero vertex $v$. There exist  two flags $f_{-1},f_{1}\in T_v G \smallsetminus \{f_s, f_e\}$ on which $w$ is nonzero, and we have $w(f_{-1})=-1$ and $w(f_1)=1$. We denote this cross ratio datum by $c_{((f_s,f_e),(f_{-1},f_1))}$ associated cross ratio by $\xi_{((f_s,f_e),(f_{-1},f_1))}$.
\end{itemize}
\end{definition}
\begin{figure}[tb]
    \centering
    \begin{tikzpicture}[thick]
    \draw[thick] (-1,0) -- (-3,0);
    \draw[thick, ->] (2,0.01) -- (2,-0.01);
  \draw[thick] (0.5,0) circle (1.5);
     \node at (2.5,0) {$\Gamma$};
      \node at (-3.5,0) {$\dots$};
       \node at (-2,0.3) {$f_s = f_e$};
        \node at (-0,0.5) {$f_1 = \gamma'(0)$};
         \node at (0.1,-0.5) {$f_{-1} = \gamma'(1)$};
   \end{tikzpicture}
   
    \caption{A primitive cross ratio datum on a tropical elliptic curve. }
    \label{fig:crossratioloopexample}
\end{figure}
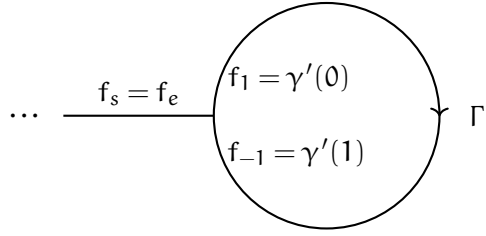

{
\begin{example}
Let $\Gamma$ denote a cycle-rigidified tropical elliptic curve with no points of genus one, as depicted in Figure \ref{fig:crossratioloopexample}. 
As an example of a non-trivial primitive cross ratio of edge type, one may take $\gamma^{\pm}$ to be the path parameterizing the loop in the direction of the cycle-rigidification, with $f_s = f_e$ being the tail of $\Gamma$. The one-form $w$ may be chosen to be the (pull-back via $\gamma^{\pm}$ of the) global one-form $\omega$ with slope zero on the tail of $\Gamma$ and one along the loop. This cross ratio datum produces the function on $(0, \infty)\subset \Atlass{1,1}$ that assigns to each tropical elliptic curve the length of  its loop. There are several other choices for primitive cross ratio data, but up to sign they all produce the length of the loop as a function.  
\end{example}
}

\begin{remark}\label{rem:cross-r}
Given a cycle-rigidified stable graph $\widetilde{G}$ specializing to $G$, a primitive cross ratio datum $c$ on $G$ determines two oriented paths on $\widetilde{G}$: $\widetilde{\gamma}$ connecting $v_{\widetilde{G}}(f_s)$ to $v_{\widetilde{G}}(f_e)$, and $\gamma_w$ connecting $v_{\widetilde{G}}(f_{-1})$ to $v_{\widetilde{G}}(f_1).$
For a point $x$ in $\relint(\sigma_{\widetilde{G}})$, the value $\xi_c(x)$ equals the signed length of the intersection $\widetilde{\gamma} \cap \gamma_w$, where the sign is positive if the two paths are oriented the same way and negative otherwise, see Figure \ref{fig:primitive corssration}. Thus, a primitive cross ratio datum is intimately related to the notions of tropical cross ratios in the literature, which can be viewed as tropicalizations of cross ratios in algebraic geometry \cite{MikhalkinModuli,GathmannMarkwigKontsevich,TyomkinCrossratios,Goldner}. 
\end{remark}

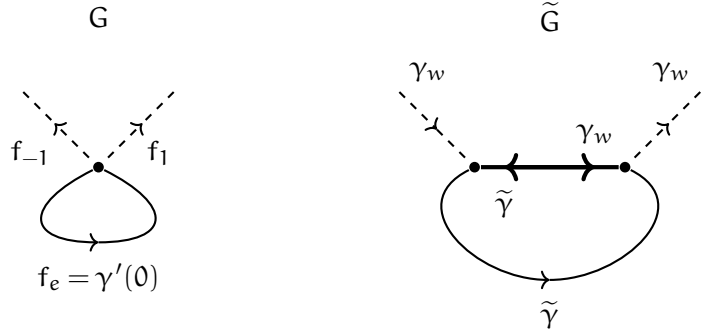
\begin{figure}
    \centering
    \begin{tikzpicture}[thick]
    \begin{scope}
    \node[dot] (v) at (0,0) {};
    \coordinate (vloop) at ($(v)-(0,1)$);
    \node at (0,2) {$G$};
    \draw [->] (v) .. controls ($(v)-(1,.5)$) and ($(vloop)-(1,0)$) .. 
        node [at end, label={below:$f_e=\gamma'(0)$}] {}
    (vloop);

    \draw (vloop) .. controls ($(vloop)+(1,0)$) and ($(v)+(1,-.5)$)  .. (v);
        
    \coordinate (1) at ($(v)+(-1,1)$);
    \coordinate (2) at ($(v)+(1,1)$);
    
    \draw [dashed,-<] (1)--
        node [at end, auto,swap] {$f_{-1}$}
    ($(v)!.5!(1)$);
    \draw [dashed] ($(v)!.5!(1)$) -- (v);
    
    \draw [dashed,-<] (2)--
        node [at end, auto] {$f_{1}$}
    ($(v)!.5!(2)$);
    \draw [dashed] ($(v)!.5!(2)$) -- (v);
    \end{scope}

    \begin{scope} [xshift =5cm]
    \node[dot] (v) at (0,0) {};
    \coordinate (vloop) at ($(v)+(1,-1.5)$);
    \node at (1,2){$\widetilde G$};
    \node [dot] (v2) at (2,0) {};
    \draw [->] (v) .. controls ($(v)-(1,.5)$) and ($(vloop)-(1,0)$) .. 
        node [at end, label={below:$\widetilde \gamma$}] {}
    (vloop);

    \draw (vloop) .. controls ($(vloop)+(1,0)$) and ($(v2)+(1,-.5)$)  .. (v2);
    \draw [ultra thick,->] (v) --
        node [at end, label={above:$\gamma_w$}] {}
    ($(v)!.8!(v2)$);
     \draw [ultra thick,->](v2)-- 
        node [at end, label={below:$\widetilde \gamma$}] {}
     ($(v2)!.8!(v)$);
        
    \coordinate (1) at ($(v)+(-1,1)$);
    \coordinate (2) at ($(v2)+(1,1)$);
    
    \draw [dashed,->] (1)--
        node [at start,auto] {$\gamma_w$}
    ($(v)!.5!(1)$);
    \draw [dashed] ($(v)!.5!(1)$) -- (v);
    
    \draw [dashed,-<] (2)--
        node [at start, auto, swap] {$\gamma_w$}
    ($(v2)!.5!(2)$);
    \draw [dashed] ($(v2)!.5!(2)$) -- (v2);
    \end{scope}
    
    \end{tikzpicture}
    \caption{The thickened edge is traversed by both $\gamma_w$ and $\widetilde \gamma$, but in opposite directions. If $c$ is the primitive cross ration datum on $G$ depicted here, then for every point in $\relint(\sigma_{\widetilde G})$ the value of $\xi_c$ equals the negative of the length of the thickened edge.}
    \label{fig:primitive corssration}
\end{figure}

\begin{proposition}
\label{prop:primitive cross ratios generate all cross ratios}
The sheaf $\Aff_{\Atlass{g,n}}$ is generated by the cross ratios $\xi_c$ associated to primitive cross ratio data $c$.
\end{proposition}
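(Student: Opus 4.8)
The plan is to prove both inclusions of sheaves. Since every primitive cross ratio datum is in particular a cross ratio datum, the subsheaf generated by the primitive cross ratios is contained in $\Aff_{\Atlass{g,n}}$. For the reverse inclusion it suffices to express every cross ratio $\xi_c$ as an integral linear combination of primitive cross ratios; since the $\xi_c$ and the primitive cross ratios are genuine piecewise linear functions on $V(G)$ (rather than mere germs), it is enough to establish such an identity as functions on $V(G)$, and then the generation of the sheaf follows by localizing.

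First I would fix a cross ratio datum $c = (\gamma^{\pm}, w)$ on $G$, with $\gamma$ traversing vertices $v_0, \dots, v_n$ along edges $e_1, \dots, e_n$, and reduce the one-form $w$ to an elementary shape. The key observation is that for a fixed thickened path $\gamma^{\pm}$ the assignment $w \mapsto \xi_{(\gamma^{\pm},w)}$ is $\Z$-linear as a function on all of $V(G)$: on each cone $\relint(\sigma_{\widetilde{G}})$ the value equals $\sum_i \widetilde{w}(e_i)\, l_{e_i}$, and both the lift $w \mapsto \widetilde{w}$ to a specialization $\widetilde{G}$ and this formula are linear in $w$. Interpreting $w$ as a balanced integral flow on the ``caterpillar'' tree whose spine is $\gamma$ and whose legs are the side flags at the $v_i$ (conditions (2) and (3) of Definition \ref{def:1-form along germ} are exactly flow conservation), I would decompose $w$ into a sum of unit flows, each running from one side flag with nonzero slope to another along a sub-path of $\gamma$. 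By the linearity just noted, this reduces the problem to the case in which $w$ is a single unit flow supported on a sub-path $v_p \to v_q$.

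For such a unit flow the geometric description of Remark \ref{rem:cross-r} applies: $\xi_c$ is the signed length of the overlap of the flow-path $\gamma_w$ (joining the two chosen side flags) with the thickened path $\gamma$, which on the open cone is the path length $\sum_{i=p+1}^{q} \pm\, l_{e_i}$. I would then telescope this overlap edge by edge, peeling off one primitive cross ratio of \textbf{Edge} type for each edge $e_i$ of the overlap---choosing the auxiliary flags $f_{-1}, f_1$ to point backward and forward along $\gamma_w$---and inserting a primitive cross ratio of \textbf{Vertex} type at each intermediate vertex $v_{p+1}, \dots, v_{q-1}$ where the path turns. On the open cone $\relint(\sigma_G)$ the Vertex terms vanish and the Edge terms sum to the path length, so the asserted identity is immediate there.

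The hard part will be verifying the telescoping identity on \emph{every} specialization $\widetilde{G}$, not just on the top cone. The issue is that a naive Edge primitive on $e_i$ agrees with the desired contribution on $\relint(\sigma_G)$ but need not agree after a vertex of $\gamma$ degenerates into an edge, because the two geometric paths defining the primitive have endpoints different from those of $\gamma$ and $\gamma_w$ and may diverge along the newly created edges. The Vertex-type primitives are introduced precisely to absorb these boundary discrepancies: applying the overlap-length formula of Remark \ref{rem:cross-r} on each $\relint(\sigma_{\widetilde{G}})$, the corrections supplied by the Vertex terms at the turning vertices should account exactly for the edges of $\widetilde{G}$ contracted by $G$. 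Thus the entire difficulty of the proposition is concentrated in this boundary bookkeeping---choosing the auxiliary flags consistently and checking that the Edge- and Vertex-primitive contributions reassemble $\xi_c$ on all cones simultaneously---whereas the top-dimensional identity and the linear-algebra reduction of the second step are routine.
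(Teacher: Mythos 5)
Your proposal is correct in outline, but it takes a genuinely different route from the paper's. The paper argues by induction on the number of vertices $v_0,\ldots,v_m$ traversed by $\gamma$: the base case $m=0$ decomposes the single-vertex one-form, viewed as an integer vector with zero coordinate sum, into elementary vectors with exactly two entries $\pm 1$ (each of which is a vertex-type primitive), and the inductive step subtracts from $\xi_c$ a $\Z$-combination of edge-type primitives supported on the \emph{last} edge $e_m$, rerouting the slopes at $v_m$ through an auxiliary flag $h$ at $v_{m-1}$, so that the difference is again a cross ratio along the truncated path. You instead decompose $w$ globally into unit flows on the caterpillar tree (your linearity observation and the zero-sum decomposition are both correct) and telescope each unit flow edge by edge. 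Both arguments rest on the same deferred verification, namely that the resulting identities of piecewise linear functions hold on every cone $\relint(\sigma_{\widetilde G})$ of $V(G)$ and not just on $\relint(\sigma_G)$; the paper compresses this into ``a combinatorial exercise.'' What your route buys is a cleaner way to carry out exactly that step: the cross ratio of a unit flow is the signed overlap length of the flow path with the thickened path (the natural extension of Remark \ref{rem:cross-r} beyond primitive data, which you should state and prove explicitly), and the identity can then be checked separately on the tree sitting over each $v_i$ in $\widetilde G$. What the paper's route buys is that it never needs the flow decomposition, at the cost of tracking a modified one-form through the induction.

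One correction to your bookkeeping, which makes your proof easier rather than harder. Taking the flow flags of the $i$-th edge primitive to ``point backward and forward along $\gamma_w$'' means taking them to be the spine flags of $e_{i-1}$ and $e_{i+1}$; this is permissible, and by the symmetry in part (2) of Lemma \ref{lem:properties of edge cross ratios} it is the same as taking those spine flags as the path flags $(f_s,f_e)$ and arbitrary side flags as the flow flags. With this choice the discrepancies you fear at intermediate vertices cancel identically: on any specialization, if $\alpha_i,\beta_i$ denote the points of the tree over $v_i$ where $e_i$ and $e_{i+1}$ attach, and $m_i$ is the median of $\alpha_i$, $\beta_i$, and the attachment point of the chosen side flag, then the two adjacent edge primitives contribute the overlaps $[\alpha_i,m_i]$ and $[m_i,\beta_i]$, which concatenate to the full segment $[\alpha_i,\beta_i]$ contributed by the unit-flow cross ratio. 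So no vertex-type corrections are needed for a unit flow joining legs at distinct vertices. Vertex primitives do enter, but for a different reason that your sketch omits: the flow decomposition also produces \emph{stationary} unit flows whose two endpoints are legs at the same vertex $v_p$, and the cross ratio of such a flow along the full thickened path $\gamma^{\pm}$ equals exactly the vertex primitive at $v_p$ whose path flags are the two spine flags at $v_p$ (or $f_s$, respectively $f_e$, at the ends of $\gamma$). Once these are accounted for, the ``boundary bookkeeping'' you identify as the hard part closes up completely, so your strategy yields a full proof.
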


\begin{proof}
Let $G$ be a cycle-rigidified stable graph  and let $c=(\gamma^{\pm},w)$ be a cross ratio datum on $G$. We  show that $\xi_c$ is a linear combination of primitive cross ratios. Let $v_0, \ldots, v_m$ be the ordered multiset of vertices traversed by $\gamma$. We proceed by induction on $m$ First suppose $m=0$. 
The one-form $w$ along $\gamma^\pm$ is uniquely determined by the map $w_0:T_{v_0}G\setminus\{f_s,f_e\}\to \Z$. We may think of $w_0$ as  an element of the sublattice $H\subset \Z^{\val{v_0}-2}$ of integral vectors whose coordinates add to zero. It is well known that $w_0$ may be written as an integral linear combination of vectors with exactly two non-zero entries, equal to $1$ and $-1$. Each such vector corresponds to a primitive  cross ratio of vertex type, concluding the proof of the base case.


Now suppose $m>0$. Denote by $e_{m-1}, e_m$ the last two oriented edges traversed by $\gamma$, by $f_{m-1}^-,f_{m-1}^+$ the flags tangent to $\gamma$   and incident to $v_{m-1}$, and by $h$ a flag incident to $v_{m-1}$ which is not traversed by $\gamma$. 
Consider the integral affine function
\begin{equation}
\xi\coloneqq \xi_c -\sum_{f\in T_{v_m}G\setminus \{f_e\}} w_m(f)\cdot \xi_{(e_m,(f_{m-1}^-,f_e),(h,f))} \ .
\end{equation}

One may verify that the affine function $\xi$ can be written as a cross ratio
$
\xi=\xi_{(\widetilde{\gamma}^\pm,\widetilde{w})}  ,
$
where $\widetilde{\gamma}^\pm$ is the thickened path in $G$ obtained
by truncating $\gamma$ at $v_{m-1}$
and choosing $\widetilde{f}_e = f_{m-1}^+$, and 
 $\widetilde{w}$ is the one-form  given by $\widetilde{w}_i=w_i$ for $i<m-1$ and 
\begin{equation}
\widetilde{w}_{m-1}(f)=\begin{cases}
0 & , \ f=f_{m-1}^+ \ , \\
w_m(h)+w_m(f_{m-1}^+) & , \ f=h \ , \\
w_m(f) & ,\ \text{ else.}
\end{cases}
\end{equation}
The statement is immediate for any point in $\relint(\sigma_G)$; it is a combinatorial exercise to show that the functions $\xi=\xi_{(\widetilde{\gamma}^\pm,\widetilde{w})}$ agree on the relative interior of cones $\sigma_{\widetilde{G}}$, where $\widetilde{G}$ is a graph specializing to $G$.
Using the induction hypothesis finishes the proof.

\end{proof}

\begin{lemma}
\label{lem:properties of vertex cross ratios}
Let $G$ be a stable graph and let $v\in V(G)$ be a genus-zero vertex, and let $f_1, f_2,f_3,f_4\in T_v G$ be distinct flags.
\begin{enumerate}
\item We have 
\begin{equation*}
\xi_{((f_1,f_2),(f_3,f_4))}= \xi_{((f_3,f_4),(f_1,f_2))} \ .
\end{equation*}
\item
We have
\begin{equation*}
\xi_{((f_1,f_2),(f_3,f_4))}=-\xi_{((f_2,f_1),(f_3,f_4))} \ .
\end{equation*}
\item
We have 
\begin{equation*}
\xi_{((f_1,f_3),(f_2,f_4))}= \xi_{((f_1,f_2),(f_3,f_4))} + \xi_{((f_1,f_4),(f_2,f_3))} \ .
\end{equation*}
\item
If $f_5\in T_vG\setminus\{f_1,f_2,f_3,f_4\}$, then
\begin{equation*}
\xi_{((f_1,f_2),(f_3,f_4))}= \xi_{((f_1,f_5),(f_3,f_4))}+\xi_{((f_5,f_2),(f_3,f_4))}.
\end{equation*}
\end{enumerate}
\end{lemma}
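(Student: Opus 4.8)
The plan is to reduce all four identities to elementary statements about signed overlaps of paths in a tree, and then to package those statements as the bilinearity and symmetry of a single pairing. Since each cross ratio is piecewise linear, hence continuous, on its domain $\bigcup_{\widetilde G}\relint(\sigma_{\widetilde G})$, and is given on each $\relint(\sigma_{\widetilde G})$ by the explicit formula of Definition \ref{def:cycleriggraph}, it suffices to verify each equality after restriction to a single $\relint(\sigma_{\widetilde G})$, for every cycle-rigidified stable graph $\widetilde G$ specializing to $G$. Fix such a $\widetilde G$ and let $T\subseteq \widetilde G$ be the subtree onto which $v$ expands, that is the subgraph of genus-zero vertices and edges contracted by $\widetilde G\to G$ to $v$; for a flag $a\in T_vG$ write $u_a$ for the vertex of $T$ to which $a$ is attached. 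By Remark \ref{rem:cross-r}, for distinct flags $a,b,c,d\in T_vG$ the restriction of $\xi_{((a,b),(c,d))}$ to $\relint(\sigma_{\widetilde G})$ equals the signed overlap length of the oriented path $P_{ab}$ in $T$ from $u_a$ to $u_b$ with the oriented path $P_{cd}$ in $T$ from $u_c$ to $u_d$; equivalently it equals $\int_{P_{ab}}\omega_{cd}$, where $\omega_{cd}$ is the one-form taking value $+1$ on the edges of $P_{cd}$ (traversed in the $c\to d$ direction) and $0$ elsewhere, and $\int_P\omega=\sum_e\omega(e)\,l_e$. Note that both lifted paths genuinely lie in $T$, since the lifting recipe only inserts edges contracted to $v$.

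Next I would encode each oriented path as an integral $1$-chain in $T$, so that $\partial P_{ab}=u_b-u_a$ and, under the identification of $\omega_{cd}$ with its supporting chain, $\partial\omega_{cd}=u_d-u_c$. Because $T$ is a tree it has no nonzero $1$-cycles, so a $1$-chain is determined by its boundary; consequently $P_{ab}+P_{bc}=P_{ac}$, $P_{ba}=-P_{ab}$, and the analogous relations hold for the $\omega$'s. Fixing the positive edge lengths $l_e$, the assignment $B(ab;cd):=\int_{P_{ab}}\omega_{cd}=\sum_e P_{ab}(e)\,\omega_{cd}(e)\,l_e$ is then $\Z$-bilinear in the two chains, and it is symmetric, $B(ab;cd)=B(cd;ab)$, since the summand is symmetric under exchanging the two paths. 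This symmetry is precisely identity (1). Writing $B(ab;cd)=\beta(u_b-u_a,\,u_d-u_c)$ for the induced symmetric bilinear form $\beta$ on degree-zero $0$-chains, identity (2) is the relation $\beta(u_a-u_b,\cdot)=-\beta(u_b-u_a,\cdot)$, and identity (4) is additivity in the first slot, $\beta(u_5-u_1,\cdot)+\beta(u_2-u_5,\cdot)=\beta(u_2-u_1,\cdot)$; both are immediate from bilinearity.

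The only identity requiring a genuine computation is (3), and this is where I expect the (mild) work to concentrate. Using bilinearity and the symmetry $\beta(u_i,u_j)=\beta(u_j,u_i)$, one expands
\begin{equation*}
\beta(u_2-u_1,\,u_4-u_3)+\beta(u_4-u_1,\,u_3-u_2)=\beta(u_3-u_1,\,u_4-u_2),
\end{equation*}
whose left-hand side is $B(12;34)+B(14;23)$ and whose right-hand side is $B(13;24)$; after distributing, the terms $\beta(u_1,u_3)$ and $\beta(u_2,u_4)$ cancel and the remaining six terms reorganize into the right-hand side. Thus (3) holds on $\relint(\sigma_{\widetilde G})$, completing the verification on every cone. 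The real content of the argument is therefore the passage, via the triviality of $H_1(T)$, from the geometric signed-overlap description of cross ratios supplied by Remark \ref{rem:cross-r} to the formal symmetric bilinear pairing $\beta$; once this reduction is in place, the four relations are exactly the standard linear identities satisfied by any symmetric bilinear form evaluated on differences of points.
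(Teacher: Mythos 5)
Your proof is correct, and its backbone---the signed-overlap interpretation of primitive vertex cross ratios from Remark \ref{rem:cross-r}---is exactly the input the paper uses. For parts (1), (2) and (4) the two arguments coincide in substance: switching the roles of the two paths, reversing an orientation, and concatenating paths at an intermediate point. Where you genuinely diverge is part (3). The paper verifies it by reducing to the Pl\"ucker relation on $\Mgn{0,4}$, i.e.\ by checking the identity on the three combinatorial types of the quartet spanned by the four flags (Figure \ref{fig:cross ratios on M04}); implicit in that reduction is the observation that the overlap pattern depends only on the quartet topology of the four attachment points in the contracted tree. You instead observe that, since a $1$-chain on a tree is determined by its boundary, the overlap pairing descends to a well-defined symmetric bilinear form $\beta$ on degree-zero $0$-chains, after which all four identities---including (3), via the cancellation of $\beta(u_1,u_3)$ and $\beta(u_2,u_4)$, which I checked and is right---are formal consequences of bilinearity and symmetry. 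Your route buys uniformity: no case analysis, and (3) is exposed as the polarization-type identity that any symmetric bilinear form satisfies on differences of points; the paper's route buys brevity and a picture that makes the connection to classical cross ratios visible. One point worth making explicit in your write-up: the subgraph $T$ contracted to $v$ is a tree precisely because $\gamma_G(v)=0$ forces it to have trivial first Betti number; this is what makes $\partial_1$ injective and hence $\beta$ well defined, so it deserves a sentence rather than being folded into the word ``subtree.''
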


\begin{proof}
The interpretation of primitive cross ratios in Remark \ref{rem:cross-r} makes these statements transparent:
$(1)$ states invariance when switching the roles  of $\tilde{\gamma}$ and $\gamma_w$; the sign is changed when reversing the orientation of one of the paths, implying  $(2)$; $(4)$ is stating that the cross ratio is additive when decomposing the path $\widetilde{\gamma}$ from $f_1$ to $f_2$ as the sum of two paths by introducing an intermediate point $f_5$. Finally, $(3)$ is  obtained from the Pl\"ucker relations in $\Mgn{0,4}$ (see Figure \ref{fig:cross ratios on M04}).

\begin{figure}
    \centering
    \begin{tikzpicture}[auto,thick]
        \coordinate (t) at (0,1);
        \coordinate (b) at (0,0);
        \coordinate (ol) at ($(t)+(-1,1)$);
        \coordinate (or) at ($(t)+(1,1)$);
        \coordinate (ul) at ($(b)+ (-1,-1)$);
        \coordinate (ur) at ($(b)+(1,-1)$);
        
        \draw (t) -- 
            node {$e$}
        (b);
        
        \draw [dashed,->] (t)--
            node[at end] {$f_1$}
        ($(t)!.5!(ol)$);
        
        \draw [dashed,->] (t)--
            node[at end,swap] {$f_2$}
        ($(t)!.5!(or)$);
        
        \draw [dashed,->] (b)--
            node[at end,swap] {$f_3$}
        ($(b)!.5!(ul)$);
        
        \draw [dashed, ->] (b)--
            node[at end] {$f_4$}
        ($(b)!.5!(ur)$);
        
        \draw [dashed] ($(t)!.5!(ol)$) -- (ol)
                        ($(t)!.5!(or)$)-- (or)
                        ($(b)!.5!(ul)$) --(ul)
                        ($(b)!.5!(ur)$)--(ur);
                        
        \node at (0,-2.5) {$
        \begin{aligned}
        \xi_{((f_1,f_3),(f_2,f_4))}&= l_e  \\ 
        \xi_{((f_1,f_2),(f_3,f_4))}&= 0 \\
        \xi_{((f_1,f_4),(f_2,f_3))}&= l_e
        \end{aligned}$};
        
        \begin{scope}[xshift=5cm]
            \coordinate (t) at (0,1);
        \coordinate (b) at (0,0);
        \coordinate (ol) at ($(t)+(-1,1)$);
        \coordinate (or) at ($(t)+(1,1)$);
        \coordinate (ul) at ($(b)+ (-1,-1)$);
        \coordinate (ur) at ($(b)+(1,-1)$);
        
         \draw (t) -- 
            node {$e$}
        (b);
        
        \draw [dashed,->] (t)--
            node[at end] {$f_1$}
        ($(t)!.5!(ol)$);
        
        \draw [dashed,->] (t)--
            node[at end,swap] {$f_3$}
        ($(t)!.5!(or)$);
        
        \draw [dashed,->] (b)--
            node[at end,swap] {$f_2$}
        ($(b)!.5!(ul)$);
        
        \draw [dashed, ->] (b)--
            node[at end] {$f_4$}
        ($(b)!.5!(ur)$);
        
        \draw [dashed] ($(t)!.5!(ol)$) -- (ol)
                        ($(t)!.5!(or)$)-- (or)
                        ($(b)!.5!(ul)$) --(ul)
                        ($(b)!.5!(ur)$)--(ur);
                        
        \node at (0,-2.5) {$
        \begin{aligned}
        \xi_{((f_1,f_3),(f_2,f_4))}&= 0  \\ 
        \xi_{((f_1,f_2),(f_3,f_4))}&= l_e \\
        \xi_{((f_1,f_4),(f_2,f_3))}&= -l_e
        \end{aligned}$};
        \end{scope}
        
        \begin{scope}[xshift=10cm]
            \coordinate (t) at (0,1);
        \coordinate (b) at (0,0);
        \coordinate (ol) at ($(t)+(-1,1)$);
        \coordinate (or) at ($(t)+(1,1)$);
        \coordinate (ul) at ($(b)+ (-1,-1)$);
        \coordinate (ur) at ($(b)+(1,-1)$);
         \draw (t) -- 
            node {$e$}
        (b);
        
        \draw [dashed,->] (t)--
            node[at end] {$f_1$}
        ($(t)!.5!(ol)$);
        
        \draw [dashed,->] (t)--
            node[at end,swap] {$f_4$}
        ($(t)!.5!(or)$);
        
        \draw [dashed,->] (b)--
            node[at end,swap] {$f_2$}
        ($(b)!.5!(ul)$);
        
        \draw [dashed, ->] (b)--
            node[at end] {$f_3$}
        ($(b)!.5!(ur)$);
        
        \draw [dashed] ($(t)!.5!(ol)$) -- (ol)
                        ($(t)!.5!(or)$)-- (or)
                        ($(b)!.5!(ul)$) --(ul)
                        ($(b)!.5!(ur)$)--(ur);
                        
        \node at (0,-2.5) {$
        \begin{aligned}
        \xi_{((f_1,f_3),(f_2,f_4))}&= -l_e  \\ 
        \xi_{((f_1,f_2),(f_3,f_4))}&= -l_e \\
        \xi_{((f_1,f_4),(f_2,f_3))}&= 0
        \end{aligned}$};
        \end{scope}
                                
    \end{tikzpicture}
    \caption[]{
    One has $\xi_{((f_1,f_3),(f_2,f_4))}= \xi_{((f_1,f_2),(f_3,f_4))} + \xi_{((f_1,f_4),(f_2,f_3))}$ 
 on $\Mgn{0,4}$.
    }
    \label{fig:cross ratios on M04}
\end{figure}
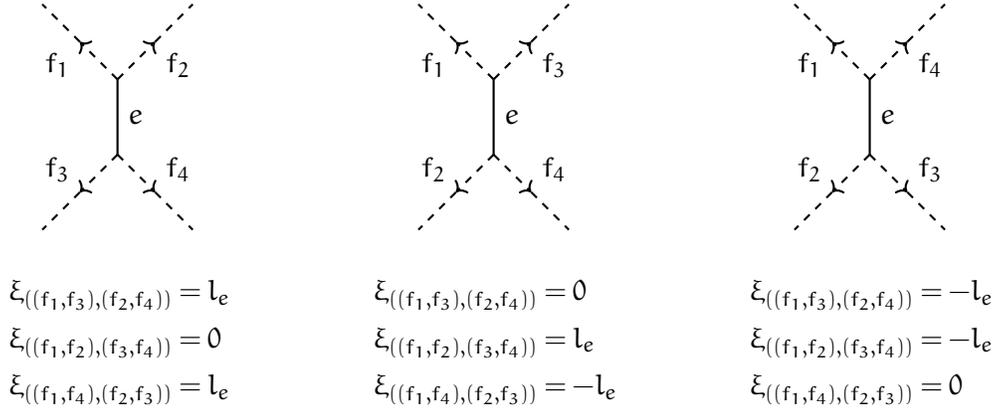

\end{proof}


\begin{lemma}
\label{lem:properties of edge cross ratios}
Let $G$ be a stable graph, let $e = \{f,f'\}\in E_b(G)$ be a bounded  oriented edge connecting two (not necessarily distinct) genus-zero vertices $v_0$ and $v_1$;
Let $f_s,f_{-1}\in T_{v_0}G\setminus \{f\}$ be two distinct flags, and let $f_e,f_{1}\in T_{v_1}G\setminus \{f'\}$ be two distinct flags. 
\begin{enumerate}
\item We have
\begin{equation*}
\xi_{(e,(f_s,f_e),(f_{-1},f_1))}=\xi_{(-e,(f_e,f_s),(f_1,f_{-1}))} \ .
\end{equation*}
\item
We have
\begin{equation*}
\xi_{(e,(f_s,f_e),(f_{-1},f_1))}=\xi_{(e,(f_{-1},f_1),(f_s,f_e))} \ .
\end{equation*}
\item If $h\in T_{v_1}G\setminus\{f_e,f_{-1},f'\}$, then
\begin{equation*}
\xi_{(e,(f_s,f_e),(f_{-1},f_1))}=\xi_{(e,(f_s,h),(f_{-1},f_1))}+\xi_{((h,f_e),(f',f_1))} \ .
\end{equation*}
\end{enumerate}
\end{lemma}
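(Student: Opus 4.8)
The plan is to prove all three identities through the geometric interpretation of primitive cross ratios from Remark~\ref{rem:cross-r}, just as in the proof of Lemma~\ref{lem:properties of vertex cross ratios}. Two cross ratios coincide if and only if they agree on $\relint(\sigma_{\widetilde G})$ for every cycle-rigidified stable graph $\widetilde G$ specializing to $G$, so I would fix such a $\widetilde G$ and argue inside it. On $\widetilde G$ the edge datum $c_{(e,(f_s,f_e),(f_{-1},f_1))}$ determines a main path $\widetilde\gamma$ running from the $f_s$-side across the lift of $e$ to the $f_e$-side, and a form path $\gamma_w$ running from the $f_{-1}$-side across $e$ to the $f_1$-side; by Remark~\ref{rem:cross-r} the value of the cross ratio at a point of $\relint(\sigma_{\widetilde G})$ is the signed length of the overlap $\widetilde\gamma\cap\gamma_w$, counted positively on segments the two paths traverse in the same direction. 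Equivalently, by Definition~\ref{def:cycleriggraph}, it is the integral $\int_{\widetilde\gamma}w$ of the one-form along the main path. I would phrase everything in terms of this signed overlap, which is symmetric in the two paths, additive under concatenation, and sign-reversing under reversal of orientation --- exactly the three features the identities encode.

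For (1), reversing the orientation of $e$ and of both flag pairs replaces $(\widetilde\gamma,\gamma_w)$ by the pair of reversed paths; the overlap set is unchanged and both paths are reversed simultaneously, so the relative orientation on every common segment is preserved and the signed length is unchanged. For (2), interchanging the two flag pairs interchanges the roles of $\widetilde\gamma$ and $\gamma_w$, and since the signed overlap does not depend on which path is named first, the value is unchanged. Both steps are immediate and parallel parts (1) and (2) of Lemma~\ref{lem:properties of vertex cross ratios}.

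The content is in (3), which I read as the additivity statement that the difference of the two edge cross ratios, with terminal flags $f_e$ and $h$ respectively, equals the vertex cross ratio $\xi_{((h,f_e),(f',f_1))}$ at $v_1$. The two main paths $\widetilde\gamma_{f_s\to f_e}$ and $\widetilde\gamma_{f_s\to h}$ share all their bounded edges from the $f_s$-side up to the endpoint of $e$ over $v_1$, and differ only inside the subtree $T_1$ of $\widetilde G$ lying over $v_1$ (a tree precisely because $v_1$ has genus zero). Cancelling the common part, the difference localizes to $\int_{[h,f_e]}w$, the integral over the geodesic from $h$ to $f_e$ in $T_1$. On $T_1$ the one-form of the edge datum has slope $-1$ along $f'$ and $+1$ along $f_1$ --- the sign on $f'$ being forced by $w(e)=1$ together with part (3) of Definition~\ref{def:1-form along germ} --- which is exactly the one-form of the vertex datum $c_{((h,f_e),(f',f_1))}$; hence $\int_{[h,f_e]}w=\xi_{((h,f_e),(f',f_1))}$ by Definition~\ref{def:cycleriggraph}. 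I expect to confirm the numerical upshot with the tree-metric identity $(f_e\cdot f_1)_{r}-(h\cdot f_1)_{r}=\tfrac12\bigl(d(h,f_1)+d(f_e,r)-d(h,r)-d(f_e,f_1)\bigr)$, where $r$ is the attaching point of $e$ in $T_1$ and $(\,\cdot\,)_r$ denotes the Gromov product, which makes the matching of signs transparent.

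The main obstacle I anticipate is the same ``combinatorial exercise'' flagged in the proof of Proposition~\ref{prop:primitive cross ratios generate all cross ratios}: verifying (3) uniformly across all specializations $\widetilde G$, and in particular in the degenerate case where $e$ is a loop, i.e.\ $v_0=v_1$. There the subtree over the single vertex contains the lifts of both ends of $e$ as well as all of $f_s,f_{-1},f_e,f_1,h$, the form path $\gamma_w$ meets this tree in two segments rather than one, and the two main paths can fail to share a clean initial segment; the localization to a single geodesic and the cancellation of the spurious overlap with the $f_{-1}$-segment of $\gamma_w$ is exactly where the hypothesis $h\neq f_{-1}$ (vacuous when $v_0\neq v_1$) must be used. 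I would treat this case by the same signed-overlap bookkeeping applied to the tree over $v_0=v_1$, after confirming that the four flags $h,f_e,f',f_1$ remain pairwise distinct so that the vertex cross ratio on the right-hand side is a well-defined primitive datum.
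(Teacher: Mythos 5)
Your proof is correct and takes essentially the same route as the paper, which disposes of this lemma by declaring it "analogous" to Lemma~\ref{lem:properties of vertex cross ratios}, whose own proof rests on exactly the signed-overlap interpretation of Remark~\ref{rem:cross-r} that you use for (1), (2), and the additivity/localization argument for (3). Your write-up is simply a fleshed-out version of that analogy, with the loop case $v_0=v_1$ treated explicitly where the paper leaves it implicit.
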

\begin{proof} These statements and their proofs are analogous to those in Lemma \ref{lem:properties of vertex cross ratios}.
\end{proof}
{The next technical lemma describes cross ratios on cones of cycle-rigidified graphs with some edges of infinite length.}

\begin{lemma}
\label{lem:cross ratios at infinity}
Let $G$ be a cycle-rigidified $I$-marked genus-$g$ stable graph, let $G_b$ the stable graph obtained by making all edges that are not legs bounded, and let $G_\infty$ denote the stable graph obtained by contracting the edges in $G_b$ that correspond to the  bounded edges of $G$. Using the notation $V(G)$ as in Definition \ref{def:cycleriggraph},  the restriction 
\begin{equation}
\Gamma(V(G), \Aff_{\Atlass{g,n}})\to \Gamma(V(G_b), \Aff_{\Atlass{g,n}}) 
\end{equation}

identifies  $\Gamma(V(G), \Aff_{\Atlass{g,n}})$ with the group of all integral affine functions on $V(G_b)$ that are constant on $x+\relint(\sigma_{G_\infty})$ for some (and hence any) $x\in  \relint(\sigma_{G_b})$.
\end{lemma}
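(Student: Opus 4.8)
The plan is to exploit that $\sigma_G$ is the face of $\sigma_{G_b}$ at infinity in the directions indexed by $S\coloneqq E_\infty(G)\setminus L(G)$, the edges that are infinite in $G$ but bounded in $G_b$. Writing $\sigma_{G_b}=\Rbar_{\geq0}^{E_b(G)}\times\Rbar_{\geq0}^{S}$, the face $\sigma_{G_\infty}$ is cut out by $l_e=0$ for $e\in E_b(G)$ and $\sigma_G$ by $l_e=\infty$ for $e\in S$. First I would record two facts. (a) $V(G_b)\subseteq V(G)$: since $\sigma_G$ is a face of $\sigma_{G_b}$ and faces of faces are faces, any cone $\sigma_{\widetilde G}$ having $\sigma_{G_b}$ as a face also has $\sigma_G$ as a face. (b) $V(G_b)$ is dense in $V(G)$: given $y\in\relint(\sigma_{\widetilde G})$ with $\widetilde G\to G$, un-stretching the $S$-edges that are infinite in $\widetilde G$ yields a graph $\widetilde G_b\to G_b$ whose cone has $\sigma_{\widetilde G}$ as its face at $S$-infinity, so $y$ is a limit of points of $\relint(\sigma_{\widetilde G_b})\subseteq V(G_b)$. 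Hence the restriction map is defined on an open dense subset.

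Injectivity is then immediate from (b): a section of $\Aff_{\Atlass{g,n}}$ is continuous, so one vanishing on the dense set $V(G_b)$ vanishes on $V(G)$. That the image is $S$-constant is a finiteness argument: any $f\in\Gamma(V(G),\Aff_{\Atlass{g,n}})$ is $\R$-valued (a section of $\PL^\fin_{\Atlass{g,n}}$), and for any cone $\sigma_{\widetilde G}\subseteq V(G_b)$ the face obtained by stretching all $S$-edges lies in $V(G)$; as $f$ is affine on $\sigma_{\widetilde G}$ and extends continuously with finite values to that face at infinity, its slopes in the $S$-directions vanish, which on $\sigma_{G_b}$ is exactly the asserted condition. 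Conversely, for $h\in\Gamma(V(G_b),\Aff_{\Atlass{g,n}})$ the condition of being constant on $x+\relint(\sigma_{G_\infty})$ for a single $x\in\relint(\sigma_{G_b})$ already forces vanishing $S$-slope on every cone of $V(G_b)$: the edges of $S$ are never contracted by a morphism $\widetilde G\to G_b$, so $\sigma_{G_b}$ is a face of $\sigma_{\widetilde G}$ along which the $S$-coordinates remain free, and compatibility of $h$ across this face propagates the vanishing of its $S$-slope. This also justifies the ``for some (and hence any)'' clause.

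It remains to prove surjectivity onto the group $T$ of $S$-constant sections, which is the heart of the matter. Given $h\in T$, being $S$-constant and finite it extends by (b) to a unique continuous function $f$ on $V(G)$, piecewise linear on each cone; the only point to verify is that $f$ is a section of $\Aff_{\Atlass{g,n}}$, i.e. locally a $\Z$-linear combination of cross ratios. I would reduce this to the statement that $T$ is generated by the cross ratios $\xi_c$ with $c$ a cross ratio datum on $G$ itself — equivalently, on the graph obtained from $G_b$ by stretching the $S$-edges to infinity. Such $\xi_c$ involve only bounded edges of $G$, so they are $S$-constant and extend across the faces at infinity, hence lie in the image of the restriction map.

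Writing $h$ in terms of primitive cross ratios on $G_b$ via Proposition \ref{prop:primitive cross ratios generate all cross ratios}, and using the relations of Lemma \ref{lem:properties of vertex cross ratios} and Lemma \ref{lem:properties of edge cross ratios} to split the underlying thickened paths at the endpoints of each $e\in S$, one isolates the flux of each datum across the $S$-edges; the hypothesis that $h$ is $S$-constant says precisely that, edge by edge in $S$, these fluxes cancel, so that after cancellation $h$ is rewritten using cross ratio data that neither traverse nor terminate on an edge of $S$, i.e. data coming from $G$. The main obstacle is exactly this recombination: tracking how the primitive cross ratios involving the $S$-edges reassemble once their total flux is known to vanish, and checking that the resulting $G$-cross-ratios reproduce $h$ not merely on $\sigma_{G_b}$ but consistently on every cone of $V(G_b)$. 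The identities in Lemma \ref{lem:properties of vertex cross ratios}(3),(4) and Lemma \ref{lem:properties of edge cross ratios}(3) are designed for this bookkeeping, and I expect the induction to proceed by cutting one infinite edge of $S$ at a time.
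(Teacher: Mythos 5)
Your setup is sound: the density of $V(G_b)$ in $V(G)$, the resulting injectivity of restriction, and the fact that restrictions of sections over $V(G)$ are constant on the slices $x+\relint(\sigma_{G_\infty})$ are all correct, and your finiteness argument for the last point (an affine function with nonzero slope in an $S$-direction could not extend with finite values to the face at infinity) is a perfectly good variant of the paper's observation that cross ratio data on $G$ simply never traverse infinite edges. The problem is surjectivity, which you correctly identify as the heart of the matter and then do not prove. Your text says that after splitting thickened paths at the endpoints of the $S$-edges ``the fluxes cancel,'' that the ``main obstacle is exactly this recombination,'' and that you ``expect the induction to proceed by cutting one infinite edge of $S$ at a time.'' That is a plan with an acknowledged hole, not an argument: nothing in the proposal establishes that the $S$-involving part of the decomposition actually vanishes, and the bookkeeping you defer is precisely where a naive path-splitting induction gets complicated, because the correction terms produced by Lemma \ref{lem:properties of edge cross ratios}(3) themselves reference flags on $S$-edges.

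The missing idea is a normalization plus a linear-independence observation, which is how the paper closes the argument with no induction at all. Write $\xi=\xi_b+\xi_\infty$ with $\xi_b$ a combination of primitive cross ratios not involving the $S$-edges and $\xi_\infty$ a combination of primitive \emph{edge-type} cross ratios whose traversed edge lies in $E_\infty(G)$. By Lemma \ref{lem:properties of edge cross ratios}(3), any two edge-type primitive cross ratios using the same edge agree up to vertex-type terms, so after absorbing those into $\xi_b$ one may assume $\xi_\infty=\sum_{e\in E_\infty(G)\setminus L(G)} n_e\,\xi_e$ with exactly one primitive edge-type cross ratio $\xi_e$ per infinite edge. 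Now the decisive point: on $\relint(\sigma_{G_b})$ the cross ratio $\xi_e$ equals $l_e$ (the traversed edge contributes with coefficient one and no other bounded edge is traversed), so on a slice $x+\relint(\sigma_{G_\infty})$ the functions $\xi_e$ restrict to the coordinates $l_e$ up to constants and are therefore linearly independent there, while $\xi_b$ is constant on the slice. Constancy of $\xi$ then forces $n_e=0$ for every $e$, i.e.\ $\xi_\infty=0$, and $\xi=\xi_b$ is visibly the restriction of a section over $V(G)$, since every primitive cross ratio datum on $G_b$ avoiding the $S$-edges induces one on $G$. This single observation replaces the entire ``flux cancellation'' recombination you were hoping to organize edge by edge.
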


\begin{proof}
Since $V(G)$ is contained in the closure of $V(G_b)$, the restriction of functions from $V(G)$ to $V(G_b)$ is injective. By definition, the thickened paths appearing in cross ratio data on $G$ do not include any infinite edges of $G$, which implies that restrictions of integral affine functions on $V(G)$ to $V(G_b)$ are constant on $x+\relint(\sigma_{G_\infty})$ for any $x\in \relint(\sigma_{G,b})$. Conversely, let $\xi$ be an integral affine function on $V(G_b)$ that is constant on $x+\relint(\sigma_{G_\infty})$ for any $x\in \relint(\sigma_{G_b})$. 
{We may write $\xi=\xi_b +\xi_{\infty}$,
where $\xi_b$ is a linear combination of  primitive cross ratios  on $G_b$ that do not involve any of the infinite edges of $G$, and $\xi_{\infty}$ is a linear combination of primitive cross ratios of edge-type on $G_b$ that involve only edges in $E_\infty(G)$.
By part (3) of Lemma \ref{lem:properties of edge cross ratios}, two edge-type primitive cross ratios that use the same edge are proportional modulo primitive cross ratios of vertex-type. Hence, we may assume that
\begin{equation}
    \xi_\infty = \sum_{e \in E_\infty(G)} \xi_e \ ,
\end{equation}
where $\xi_e$ is a primitive edge-type cross ratio that uses the edge $e$.
Noting that the $\xi_e$'s are linearly independent on $x+\relint(\sigma_{G_\infty})$ for any $x\in \relint(\sigma_{G_b})$ implies that $\xi_\infty = 0$.
If $c$ is a primitive cross ratio datum on $G_b$ that does not involve any infinite edge of $G$, then $c$ induces a primitive cross ratio datum $c'$ on $G$ and $\xi_c=\xi_{c'}\vert_{V(G_b)}$. 
Since  $\xi = \xi_b$ is a linear combination of such primitive cross ratios, it follows that $\xi$ is the restriction of an affine function on $V(G)$.
}
\end{proof}


\subsection{Tautological maps as morphisms of tropical spaces.}
\label{subsec:family over atlas}

In \cite[Section 4]{CCUW}, tautological  morphisms of moduli spaces of tropical curves are defined as morphisms of cone stacks. We show they are in fact morphisms of stacks on tropical spaces.

Set theoretically, each point $u\in \Atlass{g,n\sqcup\{\star\}}$ corresponds to a cycle-rigidified $(n\sqcup\{\star\})$-marked tropical stable curve $(C,(x_i)_{i\in n\sqcup\{\star\}})$,  of genus $g$. Omitting the marking $s_\star$ and stabilizing yields a cycle-rigidified $n$-marked tropical stable curve of genus $g$, which corresponds to a point in $\Atlass{g,n}$. This map is called the {\it forgetful morphism}, and we denote it by $\pi_\star$. For each $i\in I$ the map $\pi_\star$ has a section: if $(C,(x_i)_{i\in I})$ is an $I$-marked tropical stable curve of genus $g$, then we can attach a tripod (with three infinite edges) to the point $x_i$, move the marking $x_i$ to one of the two newly created ends, and mark the other end by $\star$. The new compact edge formed has infinite length. We denote this section by $s_i$.

\begin{proposition}
\label{prop:atlas gives family of TPL-curves}
Let $G$ be a cycle-rigidified stable graph. If $\gamma\colon \Atlass{g,n\sqcup \{\star\}} \to \Z$ is the function that assigns to every point in $\relint(\sigma_G)$ the genus of the vertex of $G$ that is adjacent to the leg marked by $\star$, then $(\pi_\star\colon \Atlass{g,n\sqcup\{\star\}}\to \Atlass{g,n}, (s_i)_{i\in [n]},\gamma)$ is a family of $n$-marked stable \TPL{}-curves of genus $g$. 

Morover, if $G$ is a cycle-rigidified $n$-marked genus-$g$ stable graph and $b\in \relint(\sigma_G)$, then the combinatorial type of the fiber $(\Atlass{g,n\sqcup\{\star\}})_b$ can be naturally identified with the underlying graph $\underline{G}$, and  the edge of $(\Atlass{g,n\sqcup\{\star\}})_b$ corresponding to $e\in E_b(G)$ has length $l_e(b)$.
\end{proposition}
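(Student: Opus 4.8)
The plan is to reduce everything to the assertion that $\Atlass{g,n\sqcup\{\star\}}$ is, over each cone of $\Atlass{g,n}$, the universal curve built in the previous subsection. Concretely, I would establish the following \emph{central claim}: for every cycle-rigidified $n$-marked stable graph $H$ there is a canonical isomorphism of \TPL{}-spaces over $\sigmabar_H$
\[
\Phi_H\colon \mc C_H \xrightarrow{\ \cong\ } \Atlass{g,n\sqcup\{\star\}}\times^{\TPL{}}_{\Atlass{g,n}} \sigmabar_H ,
\]
intertwining $\pi_H$ with the base change of $\pi_\star$, carrying the tautological sections of $\mc C_H$ to the $s_i$, and matching $\gamma_{\mc C_H}$ with $\gamma$. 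In particular, $\pi_\star$ is a morphism of \TPL{}-spaces, since locally it is the structure map $\pi_H$. Granting the claim, both conditions in the definition of a family of stable \TPL{}-curves follow. For condition (2), at a point $b\in\relint(\sigma_H)$ I would take the local face structure $\Sigma$ on $\Atlass{g,n}$ consisting of the cones $\sigma_{\tilde H}$ for cycle-rigidified $n$-marked graphs $\tilde H$ specializing to $H$ (the star of $\sigma_H$); for $\sigma=\sigma_{\tilde H}$ set $G_\sigma=\underline{\tilde H}$, let $f_\sigma\colon\sigma\to\sigmabar_{G_\sigma}$ be the identity (so $f_\sigma(\relint\sigma)=\relint(\sigma_{G_\sigma})$), and let $\chi_\sigma$ be $\Phi_{\underline{\tilde H}}^{-1}$ restricted over $\sigma$. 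Condition (1) and the \emph{Moreover} part then follow immediately from the already-established description of the fibers of $\pi_H\colon\mc C_H\to\sigmabar_H$: the fiber over $b\in\relint(\sigma_H)$ has combinatorial type $\underline H$ with the edge corresponding to $e\in E_b(H)$ of length $l_e(b)$.

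To prove the central claim I would argue combinatorially, matching the cones out of which $\mc C_H$ is glued with the $(n\sqcup\{\star\})$-marked types lying over $H$. A point of $\Atlass{g,n\sqcup\{\star\}}\times_{\Atlass{g,n}}\sigmabar_H$ over $b\in\relint(\sigma_H)$ is a cycle-rigidified $(n\sqcup\{\star\})$-marked stable curve whose stabilization after forgetting $\star$ is the curve $\Gamma_b$ of type $H$ with edge lengths $b$; such data amounts to a choice of location for $\star$ on $\Gamma_b$. I would enumerate the corresponding types $\tilde H$ with $\pi_\star(\tilde H)=H$: attaching the $\star$-leg at a vertex $v$ of $H$ (no new edge, $\sigma_{\tilde H}\cong\sigma_H$), subdividing a bounded edge $e$ by a genus-zero trivalent vertex carrying $\star$ (one extra length parameter bounded by $l_e$), subdividing a leg $l$, and, for an infinite non-leg edge $e$, inserting $\star$ on one of the two branches meeting the node at infinity. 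These four families of types are in order-preserving bijection with the cones $\sigmabar_{H,v}$, $\sigmabar_{H,e}$ ($e\in E_b(H)$), $\sigmabar_{H,l}$, and $\sigmabar_{H,e}$ ($e\in E_\infty(H)\setminus L(H)$) used to build $\mc C_H$, and the evident linear identifications (e.g.\ the fiber coordinate $y\le l_e(x)$ defining $\sigma_{H,e}$ is the length of the half-edge carrying $\star$) assemble to $\Phi_H$. Because adding, moving, or sliding $\star$ leaves $H_1$ unchanged, each underlying graph $\underline{\tilde H}$ carries a unique compatible cycle-rigidification, so by Lemma \ref{lem:ridig graphs are rigid} the relevant relative interiors embed and each type occurs exactly once; this guarantees that $\Phi_H$ introduces no identifications beyond those already present in $\mc C_H$. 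The sections and genus function match for free: via $\Phi_H$ the value of $\gamma$ at a point is the genus of the $\Gamma_b$-point carrying $\star$, which is exactly the fiberwise genus recorded by $\gamma_{\mc C_H}$; and the $s_i$ are the loci where $\star$ is slid to the $i$-th infinite end, matching the marked-point faces of $\mc C_H$. Finally I would check compatibility of $\Phi_H$ with the face maps $\sigmabar_{H'}\to\sigmabar_H$, so that the local pieces glue, using the identification $\mc C_{H'}\cong\mc C_H\times^{\TPL{}}_{\sigmabar_H}\sigmabar_{H'}$ recorded earlier.

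I expect the main obstacle to lie in the infinite strata: the legs and, above all, the non-leg infinite edges (nodes at infinity). There one must verify that sliding $\star$ toward a marked end reproduces precisely the section $s_i$ (a tripod attached at infinity, as in the definition of $s_i$), that sliding $\star$ across a node at infinity is faithfully modeled by the two-branch cone $\sigmabar_G\times(\Rbar_{\geq 0}\times\{\infty\}\cup\{\infty\}\times\Rbar_{\geq 0})$, and that forgetting $\star$ followed by stabilization behaves correctly in these limits. The finite strata are essentially the classical universal curve over $\Mgn{0,n}$ transplanted into each cone, and the underlying cone-complex statement is the content of \cite{CCUW}; the genuinely new work is to confirm that the \TPL{}-structures (not merely the cone data) agree across these infinite faces. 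I would also take care to confirm directly that $\gamma$ coincides with $\gamma_{\mc C_H}$ including the $H_1$-correction term in the latter, by checking that this term vanishes at every point of the fibers over $\relint(\sigma_H)$.
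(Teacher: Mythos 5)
Your proposal is correct in substance, but it is far more laborious than the paper's argument, and the difference is instructive. The paper's proof is essentially one line: the statement at the level of cone complexes --- that over each $\sigmabar_H$ the space $\Atlass{g,n\sqcup\{\star\}}$ is the glued curve $\mc C_H$, with the asserted fibers, sections, and genus function --- is taken directly from \cite[Proposition 4.5]{CCUW}, and the only point left to check is that $\pi_\star$ pulls back piecewise linear functions to piecewise linear functions, which holds by construction of the gluing in Definition \ref{def:atlas}. Your ``central claim,'' with its enumeration of the $(n\sqcup\{\star\})$-marked types over $H$ ($\star$ at a vertex, on a bounded edge, on a leg, on a branch of a node at infinity) matched against the cones $\sigmabar_{H,v}$, $\sigmabar_{H,e}$, $\sigmabar_{H,l}$, $\sigmabar_{H,e}$ from which $\mc C_H$ is glued, is precisely a hand-made proof of that cited result, transplanted to the rigidified spaces, with Lemma \ref{lem:ridig graphs are rigid} playing exactly the role you assign it. So your route buys self-containedness at the cost of redoing the combinatorics of \cite{CCUW}, while the paper buys brevity by citation; to your credit, you correctly isolate the genuinely new content (the \TPL{}-structure rather than the cone data) and the delicate strata (legs and nodes at infinity). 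One logical slip should be repaired: your central claim is phrased via the fiber product $\Atlass{g,n\sqcup\{\star\}}\times^{\TPL{}}_{\Atlass{g,n}}\sigmabar_H$, which can only be formed once $\pi_\star$ is already known to be a morphism of \TPL{}-spaces (Proposition \ref{prop:tpl-fiber products} concerns morphisms), so deducing ``in particular $\pi_\star$ is a morphism of \TPL{}-spaces'' from the claim is circular as written. The repair is immediate and is exactly the paper's added observation, which should come first in your argument: a function on $\Atlass{g,n}$ is piecewise linear if and only if its pull-back to every $\sigmabar_G$ is, and the maps $\sigmabar_{\tilde H}\to \sigmabar_H$ underlying $\pi_\star$ are integral linear on each cone (the lengths of the two halves of a subdivided bounded edge add, and the length of a new edge created on a leg is simply forgotten), so $\pi_\star$ pulls back piecewise linear functions to piecewise linear functions by construction.
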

\begin{proof} 
This proposition follows from \cite[Proposition 4.5]{CCUW} by observing that by construction $\pi_\star$ pulls-back piecewise linear functions to piecewise-linear functions. 
\end{proof}

\begin{proposition}
\label{prop:left-exactness of sequence on atlas}
The maps $\pi_\star$ and $s_i $ are morphism of tropical spaces. Furthermore, for every $x\in \Atlass{g,n\sqcup \{\star\}}$, the sequence
\begin{equation}
0\to 
\Omega^1_{\Atlass{g,n},\pi_\star(x)}\to 
\Omega^1_{\Atlass{g,n\sqcup\{\star\}},x} \to
\Omega^1_{(\Atlass{g,n\sqcup \{\star\}})_{\pi_\star(x)},x}  \to
0
\end{equation}
is exact.
\end{proposition}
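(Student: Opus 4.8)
The plan is to first upgrade the \TPL{}-morphisms $\pi_\star$ and $s_i$ of Proposition \ref{prop:atlas gives family of TPL-curves} to morphisms of tropical spaces, and then to verify exactness stalkwise. Since both affine structures are generated by cross ratios (Proposition \ref{prop:primitive cross ratios generate all cross ratios}), in each case it suffices to control pullbacks of primitive cross ratios. For $\pi_\star$, a primitive cross ratio datum on a target graph $G$ lifts canonically to every source graph $G'$ with $\pi_\star(G')=G$: the thickened path and its one-form live in the genus-zero locus, which is unaffected by adjoining the infinite $\star$-leg, and when forgetting $\star$ contracts an unstable vertex one invokes the canonical lift-along-specialization discussed before Definition \ref{def:cycleriggraph}. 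Hence $\pi_\star^\ast\xi_c$ is again a cross ratio and $\pi_\star$ is linear. For $s_i$, the attached tripod vertex $w$ is trivalent of genus zero with all three incident edges infinite; a vertex-type datum at $w$ is impossible (it would require four distinct flags) and an edge-type datum using the infinite tripod edge restricts to a constant on the image of $s_i$ by Lemma \ref{lem:cross ratios at infinity}, while every cross ratio not meeting the tripod pulls back to the corresponding cross ratio on $\Atlass{g,n}$; so $s_i$ is linear as well.

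For the exact sequence, I would dispose of surjectivity on the right immediately: the fiber $(\Atlass{g,n\sqcup\{\star\}})_{\pi_\star(x)}$ carries the induced (fiber-product) affine structure, obtained as the base change of the closed immersion $\{\pi_\star(x)\}\hookrightarrow \Atlass{g,n}$, so the restriction $\Aff_{\Atlass{g,n\sqcup\{\star\}},x}\to \Aff_{(\Atlass{g,n\sqcup\{\star\}})_{\pi_\star(x)},x}$ is an epimorphism by definition of a closed immersion, and it stays surjective after quotienting by $\R$. Injectivity of $\pi_\star^\ast$ follows because $\pi_\star$ is open and surjective with connected fibers, so a base function whose pullback is constant near $x$ is already constant near $\pi_\star(x)$.

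The genuine content is exactness in the middle: an affine germ $m$ at $x$ whose restriction to the fiber is constant must be a pullback. Here I would use that cross ratios restrict to harmonic functions along the fibers of the universal curve (they integrate one-forms), so $\Aff_{\Atlass{g,n\sqcup\{\star\}}}\subseteq \Harm_{\Atlass{g,n\sqcup\{\star\}}}$. When $\gamma_{\mc C}(x)=0$, Lemma \ref{lem:exact sequence for harmonic functions} applies — recall $\pi_\star$ is a family of stable \TPL{}-curves, not of tropical curves, but that lemma only needs the \TPL{} structure — and yields $m=\pi_\star^\ast\varphi$ with $\varphi\in \PL^\fin_{\Atlass{g,n},\pi_\star(x)}$. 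To promote $\varphi$ to an affine function I would build a local linear section $s$ of $\pi_\star$ through a genus-zero point near $x$ by keeping the $\star$-marking at a fixed nearby position; since $s$ is linear and $m$ is affine, $\varphi=s^\ast m\in \Aff_{\Atlass{g,n},\pi_\star(x)}$. When $\gamma_{\mc C}(x)>0$, i.e. $\star$ lies at a positive-genus vertex, no cross ratio can involve the $\star$-leg, so every affine germ at $x$ is already a pullback and the fiber term vanishes, making exactness automatic.

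The main obstacle is this middle-exactness and its interaction with the degenerate locus. Two points need care: first, confirming that affine functions on the atlas really are fiberwise harmonic, so that Lemma \ref{lem:exact sequence for harmonic functions} may be invoked; and second, ensuring the promotion step returns an affine, rather than merely piecewise linear, function on the base. The delicate feature — and the conceptual heart of the argument — is that precisely along $\star$-legs adjacent to a positive-genus vertex the fiber acquires no nonconstant affine functions. This is exactly what keeps the sequence exact even though such fibers fail to be tropical curves, and it is also what renders the sections one needs automatically linear there, since there are no cross ratios sensitive to the $\star$-position to obstruct linearity.
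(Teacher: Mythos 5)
Your treatment of the linearity of $\pi_\star$ and $s_i$, of left- and right-exactness, and of the degenerate case (where the $\star$-leg meets a positive-genus vertex, so that no cross ratio involves $f_\star$ and everything is a pull-back) agrees in substance with the paper's proof. Where you genuinely diverge is at the heart of the matter, exactness in the middle at points with $\gamma(x)=0$: the paper proves this by a direct combinatorial normal-form argument, using the relations of Lemma \ref{lem:properties of vertex cross ratios} and Lemma \ref{lem:properties of edge cross ratios} to write an arbitrary affine germ, modulo pull-backs, as an explicit $\Z$-linear combination of finitely many cross ratios involving $f_\star$, and then checking that constancy on the fiber forces the coefficients to vanish (or to assemble into a pull-back). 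You instead propose to route through Lemma \ref{lem:exact sequence for harmonic functions} together with fiberwise harmonicity of cross ratios and a local \emph{linear} section of $\pi_\star$.

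The gap is precisely that last ingredient. A local section of $\pi_\star$ ``keeping the $\star$-marking at a fixed nearby position'' is easy to produce as a \TPL{}-section, but its \emph{linearity} is exactly as strong as the statement you are proving, and you give no argument for it. You cannot obtain it from Lemma \ref{lem:local structure at interior of edge} (or Lemma \ref{lem:characterization of affineness in terms of section and harmonicity}), because those results are stated for families of stable \emph{tropical} curves, i.e.\ they already presuppose the exactness of the sequence \eqref{equ:exact sequence} for the family at hand — invoking them for $\pi_\star\colon \Atlass{g,n\sqcup\{\star\}}\to \Atlass{g,n}$ would be circular; this is why the paper only produces such sections for the atlas family \emph{after} Proposition \ref{prop:left-exactness of sequence on atlas} and Proposition \ref{prop:family of tropical curve over atlas} are in place. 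To prove linearity of your section directly, at a point where cross ratios genuinely involve $f_\star$ (e.g.\ $\star$ attached at a genus-zero vertex of valence at least four, or at a trivalent vertex with a bounded edge whose endpoints have genus zero), you must show that the pull-back under the section of every cross ratio involving $f_\star$ is again a cross ratio on $\Atlass{g,n}$ — say via the path-intersection description of Remark \ref{rem:cross-r}, replacing $f_\star$ by a flag anchoring the section. That computation is doable, but it is combinatorial work of the same nature and magnitude as the paper's direct argument; as written, your proposal has concentrated the entire difficulty of the proposition into the single unproven word ``linear.''
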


\begin{proof}
Let $\underline{G}$ be a cycle-rigidified $n$-marked genus-$g$ stable graph, and let $c$ be a cross ratio datum. For any cycle-rigidified $(n\sqcup \{\star\})$-marked genus-$g$ stable graph $G$ whose stabilization after forgetting the $\star$-marked leg is $\underline{G}$, $c$ defines a cross ratio datum $c'$ on $G$ that does not involve the $\star$-marked leg. It follows immediately from the definitions that $(\pi_\star^*\xi_c)_x=(\xi_{c'})_x$ at any point of $\relint(\sigma_G)$. It follows that $\pi_\star$ is linear. It also follows that for any cross ratio datum $c'$ on $H$ which does not involve the $\star$-marked leg, the cross ratio $\xi_{c'}$ agrees with the pull-back of a cross ratio defined by a cross ratio datum $c$ on $\underline{G}$. 

To see that $s_i$ is linear, let $y\in \Atlass{g,n}$, and let $G$ be the cycle-rigidified $(n\sqcup\{\star\})$-marked genus-$g$ stable graph such that $s_i(y)\in \relint(\sigma_G)$. By construction of $s_i$, the $\star$-marked leg in $G$ is adjacent to a three-valent genus-zero vertex, all of whose adjacent edges are unbounded. It follows that there is no cross ratio datum on $G$ that involves the $\star$-marked leg. Therefore, every integral affine linear function $\xi$ at $s_i(y)$ is locally the pull-back via $\pi_\star$ of an integral-linear $\xi'$ function at $y$. Since $s_i$ is a section of $\pi_\star$, we then have $s_i^*\xi=\xi'$. We conclude that $s_i$ is linear. 

To prove the exactness of the sequence, let $x\in \Atlass{g,n\sqcup\{\star\}}$. 
Since the integral affine functions on the fiber over $\pi_\star(x)$ are precisely the restrictions of integral affine functions on $\Atlass{g,n\sqcup\{\star\}}$, the sequence is exact on the right.
The exactness on the left follows immediately from the fact that images of neighborhoods of $x$ under $\pi_\star$ are neighborhoods of $\pi_\star(x)$.  It remains to show exactness in the middle. Let $G$ be the cycle-rigidified $(n\sqcup\{\star\})$-marked genus-$g$ stable graph such that $x\in\relint(\sigma_G)$ and let $\xi\in (Aff_{\Atlass{g,n\sqcup \{\star\}}})_x$ be a function whose restriction to the fiber $(\Atlass{g,n\sqcup\{\star\}})_{\pi_\star(x)}$ is constant in a neighborhood of $x$. Denote by  $f_\star\in L(G)$ the $\star$-marked leg in $G$ and let $v\coloneqq v_G(f_\star)$. If $\gamma_G(v)>0$ or $v$ is a three-valent vertex whose adjacent edges are all unbounded, there is no cross ratio datum on $G$ involving $f_\star$. In that case, all integral affine functions at $x$ are pull-backs under $\pi_\star$ of integral affine functions at $\pi_\star(x)$ and we are done. We may thus assume $v$  is a genus-zero vertex $v$ that is either at least four-valent, or three-valent and adjacent to at least one bounded edge whose vertices have genus zero.

First assume the latter case. 
Let $\{f_1,f_2\}= T_v G \setminus {f_\star}$. Without loss of generality we may assume that $e_G(f_1)$ is a bounded edge whose vertices have genus zero. Let $v_1$ be the second vertex of $e_G(f_1)$  and let $f_{1,e},f_{1,e}'\in T_{v_1}G$ be distinct flags such that $e_G(f_{1,e})\neq e_G(f_1)\neq e_G(f_{1,e}')$. If $e_G(f_2)$ is either unbounded or adjacent to a vertex of positive genus, by Lemma \ref{lem:properties of edge cross ratios}, we have 
\begin{equation}
\xi \equiv \lambda\cdot\xi_{(e_G(f_1),(f_\star,f_{1,e}), (f_2,f_{1,e}'))} 
\end{equation}
for some $\lambda\in \Z$, where the congruence holds modulo cross ratios pulled back from $\Atlass{g,n}$.
The slope of the restriction of the cross ratio $\xi_{(e_G(f_1),(f_\star,f_{1,e}), (f_2,f_{1,e}'))}$ to the fiber $(\Atlass{g,n\sqcup\{\star\}})_{\pi_\star(x)}$ in the direction corresponding to $f_1$ is $-1$, so  $\xi$ being constant on the fiber implies $\lambda=0$. It follows that $\xi$ equals the pull-back of an integral affine function from  $\Atlass{g,n}$. If $e_G(f_2)$ is bounded and its vertices have genus zero, define $v_2$, $f_{2,e}$, and $f_{2,e}'$ similarly as for $f_1$. Then again by Lemma \ref{lem:properties of edge cross ratios}, we have
\begin{equation}
\xi \equiv \lambda_1\cdot\xi_{(e_G(f_1),(f_\star,f_{1,e}), (f_2,f_{1,e}'))}   + \lambda_2\cdot\xi_{(e_G(f_2),(f_\star,f_{2,e}), (f_1,f_{2,e}'))}  
\end{equation}
for some $\lambda_1,\lambda_2\in \Z$, where again the congruence holds modulo cross ratios pulled back from $\Atlass{g,n}$. 
One may observe that $\xi$ being constant on the fiber implies $\lambda_1=\lambda_2$. Let $\underline{G}$ be the cycle-rigidified $I$-marked genus-$g$ stable graph obtained by removing $f_\star$ and $v$ from $G$ and replacing the edges $e_G(f_1)$ and $e_G(f_2)$ by a single edge $e$ connecting $v_1$ and $v_2$. Then $\pi_{\star}(x)\in \relint(\sigma_{\underline{G}})$.
Then one sees that
\begin{equation}
\pi_\star^*(\xi_{(e,(f_{1,e},f_{2,e}),(f_{1,e}',f_{2,e}'))}) = \xi_{(e_G(f_1),(f_\star,f_{1,e}), (f_2,f_{1,e}'))}   + \xi_{(e_G(f_2),(f_\star,f_{2,e})(f_1,f_{2,e}'))} \ .
\end{equation}
It follows that $\xi$ equals the pull-back of an integral affine function from the $\Atlass{g,n}$. This concludes the analysis of $v$ being a three-valent vertex.

Now assume that $v$ is a genus-zero vertex of valence at least four. 
Observe that if $\xi$ is a primitive cross ratio of edge type containing $f_\star$, by Lemma \ref{lem:properties of edge cross ratios} (3), it may be replaced by a cross ratio of edge type not containing $f_\star$ plus a cross ratio of vertex type. We may therefore assume that we start with $\xi$  a linear combination of cross ratios of vertex type containing $f_\star$.

We  manipulate $\xi$ using the relations provided by Lemma \ref{lem:properties of vertex cross ratios}. Fix three distinct flags $f_1,f_2, f_3\in T_v G\smallsetminus \{f_\star\}$. Through repeated applications of part $(4)$ we may guarantee that all cross ratios in $\xi$ involve the flags $f_\star, f_1, f_2$. We may use parts $(1),(2),(3)$ to express $\xi$ as a linear combinations of the cross ratios 
$\xi_{((f_\star,f_1),(f_2,h))}$ and $\xi_{((f_\star,h),(f_1,f_2))}$, for any $h\in T_vG \smallsetminus \{f_\star, f_1, f_2\}$. With one more application of $(4)$ we observe:
\begin{equation}
\xi_{((f_\star,h),(f_1,f_2))} = \xi_{((f_\star,f_3),(f_1,f_2))}+\xi_{((f_3,h),(f_1,f_2))},
\end{equation}
hence modulo cross ratios not involving $f_\star$ we may assume that $h=f_3$ in the second family of cross ratios.

In conclusion, we may write 
\begin{equation}
\xi\equiv \mu\cdot \xi_{((f_\star,f_3),(f_1,f_2))}+ \sum_{h\in T_v G\setminus\{f_\star,f_1,f_2\}} \lambda_h\cdot \xi_{((f_\star,f_1),(f_2,h))}
\end{equation}
for appropriate choices of $\mu,\lambda_h\in \Z$, i.e. $\xi$ is a linear combination of $\val(v)-2$ cross ratios. Notice  that $\xi$ being constant on the fiber of $\pi_\star(x)$ imposes $\val(v) -2$ independent linear homogeneous conditions on the coefficients  $\mu$, $ \lambda_h$, hence $\xi\equiv 0$ modulo cross ratios not involving $f_\star$, which is equivalent to $\xi$ being-pulled back from an integral affine function from  $\Atlass{g,n}$.
\end{proof}


\begin{proposition}
\label{prop:maps to atlas are linear}
Let $\pi\colon \mc C \to B$ be a family of $n$-marked genus-$g$ stable tropical curves, and let $f\colon B\to \Atlass{g,n}$ and $f_\star\colon \mc C\to \Atlass{g,n\sqcup\{\star\}}$ be morphisms of \TPL{}-spaces such that the diagram
\begin{center}
\begin{tikzpicture}[auto]
\matrix[matrix of math nodes, row sep= 5ex, column sep= 4em, text height=1.5ex, text depth= .25ex]{
|(C)| \mc C	&
|(n1)|	\Atlass{g,n\sqcup\{\star\}}	\\
|(B)| B	&	
|(n)| \Atlass{g,n}	\\
};
\begin{scope}[->,font=\footnotesize]
\draw (C) --node{$f_\star$} (n1);
\draw (B)--node{$f$} (n);
\draw (C) --node{$\pi$} (B);
\draw (n1) --node{$\pi_\star$} (n);
\end{scope}
\end{tikzpicture}
\end{center}
is a Cartesian diagram of \TPL{}-spaces exhibiting the family of $n$-marked stable \TPL{}-curve $\pi\colon \mc C\to B$ as the pull-back of  $\pi_\star\colon \Atlass{g,n\sqcup\{\star\}}\to \Atlass{g,n}$. Then $f$ and $f_\star$ are linear.
\end{proposition}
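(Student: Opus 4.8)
The plan is to use that the affine structure of $\Atlass{g,m}$ is generated by cross ratios (Proposition \ref{prop:primitive cross ratios generate all cross ratios}), so that linearity of a morphism into $\Atlass{g,m}$ amounts to checking that the pull-back of every \emph{primitive} cross ratio is affine. The strategy is then to realize each such pull-back, locally over a point of $B$, as a difference of values of an explicitly constructed affine function on $\mc C$ evaluated along linear sections. The key point that avoids any circularity is that affine functions on the total space exist in abundance because $\pi\colon\mc C\to B$ is a \emph{family of tropical curves}: the exact sequence \eqref{equ:exact sequence} and the torsor statements of Propositions \ref{prop:aff(ks) is a torsor} and \ref{prop:aff(ks) is everywhere inhabited} are available, and the local sections provided by Lemma \ref{lem:local structure at interior of edge} are linear (a constant section of $U\times(0,\epsilon)\to U$ pulls back the fiber coordinate to a constant). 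I would prove linearity of $f$ first and then deduce linearity of $f_\star$.

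\emph{Linearity of $f$.} Fix $b\in B$ and a primitive cross ratio datum $c=(\gamma^{\pm},w)$ on the combinatorial type of $\mc C_b$. Lifting the fiberwise one-form $w$ through the surjection in \eqref{equ:exact sequence} (extending it along the connected path $\gamma$ exactly as in the construction of the functions $\chi_w$ in the proof of Proposition \ref{prop:aff(ks) is everywhere inhabited}) produces an affine function $\Psi\in\Gamma(V,\Aff_{\mc C})$ on a neighborhood of the path whose restriction to every nearby fiber realizes $w$. Using Lemma \ref{lem:local structure at interior of edge} I would choose linear sections $t_s,t_e$ meeting the branches at the initial and terminal flags $f_s,f_e$; since $w$ has slope zero along these flags, the quantities $t_s^{*}\Psi$ and $t_e^{*}\Psi$ do not depend on the exact position of the sections along their edges. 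Comparing with the defining formula for $\xi_c$ and its interpretation as a signed length (Remark \ref{rem:cross-r}), one gets $f^{*}\xi_c=t_e^{*}\Psi-t_s^{*}\Psi$ on a neighborhood of $b$. As $\Psi$ is affine and $t_s,t_e$ are linear, the right-hand side lies in $\Aff_{B}$, so $f^{*}\xi_c$ is affine and $f$ is linear.

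\emph{Linearity of $f_\star$.} For a cross ratio $\xi$ on $\Atlass{g,n\sqcup\{\star\}}$ not involving the $\star$-leg, the proof of Proposition \ref{prop:left-exactness of sequence on atlas} shows $\xi=\pi_\star^{*}\xi_c$ for a cross ratio $\xi_c$ on $\Atlass{g,n}$; hence $f_\star^{*}\xi=\pi^{*}f^{*}\xi_c$ is affine by the previous step and linearity of $\pi$. For a primitive cross ratio $\xi$ involving $\star$, the function $f_\star^{*}\xi$ is fiberwise harmonic, because $\xi$ restricts to a one-form on the fibers of $\pi_\star$. I would lift its fiberwise restriction to an affine function $\Psi\in\Aff_{\mc C}$ via \eqref{equ:exact sequence}; then $f_\star^{*}\xi-\Psi$ is constant on fibers, and by exactness in the middle of \eqref{equ:exact sequence} it equals $\pi^{*}\eta$ near a given point. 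To see $\eta\in\Aff_B$, evaluate along a linear section $s$ of $\pi$ landing in the interior of an edge (Lemma \ref{lem:local structure at interior of edge}): since $\pi_\star\circ(f_\star\circ s)=f$, the pull-back $s^{*}f_\star^{*}\xi=(f_\star\circ s)^{*}\xi$ expresses $\xi$ restricted to the locus where $\star$ subdivides an edge, which is a combination of cross ratios pulled back through $f$ (affine by the first step) and of the edge-position function $s^{*}\phi$ with $\phi$ the local affine coordinate of Lemma \ref{lem:local structure at interior of edge}; both are affine, so $\eta=s^{*}f_\star^{*}\xi-s^{*}\Psi$ is affine and $f_\star^{*}\xi=\Psi+\pi^{*}\eta$ is affine. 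Lemmas \ref{lem:affineness is a clopen condition on fibers} and \ref{lem:characterization of affineness in terms of section and harmonicity} guarantee that the local conclusions glue to global affineness on each fiber.

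\emph{Main obstacle.} The substantive work is the uniform realization $f^{*}\xi_c=t_e^{*}\Psi-t_s^{*}\Psi$ valid simultaneously on all cones meeting the neighborhood: one must check that the single affine function $\Psi$ realizes the one-form $w$ and its specializations on every nearby fiber, which requires gluing the local lifts along the whole path $\gamma$ and tracking how $w$ lifts under edge specialization. A second delicate point is the behavior along infinite edges and the $\star$-leg, where the sections tend to infinity and one must invoke the description of cross ratios at infinity (Lemma \ref{lem:cross ratios at infinity}) to ensure the constructed functions are genuinely affine up to the boundary. I expect this coordination across combinatorial types, rather than the construction at a single point, to be the crux of the argument.
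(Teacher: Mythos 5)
Your argument for the linearity of $f$, and for the $\star$-free cross ratios in the $f_\star$ step, is essentially the paper's own proof: lift the fiberwise one-form through the exact sequence \eqref{equ:exact sequence} to an affine function on a neighborhood of the path, pick linear sections at the initial and terminal flags using Lemma \ref{lem:local structure at interior of edge}, and identify the pull-back of the cross ratio with the difference of the two evaluations.

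The gap is in the remaining case, a primitive cross ratio $\xi$ whose one-form has nonzero slope $k$ on the $\star$-leg, and it sits exactly where you defer to an unproved decomposition: the claim that $(f_\star\circ s)^{*}\xi$ is ``a combination of cross ratios pulled back through $f$ and of the edge-position function $s^{*}\phi$''. This is not justified, and it is essentially equivalent to the statement being proven. Test it on $g=n=1$ with fibers a loop attached to a vertex $v$ carrying the leg, $x$ in the interior of the loop, and $\xi$ the cross ratio whose value is the distance from $v$ to the $\star$-point along one side of the loop. Then $(f_\star\circ s)^{*}\xi = d(v(\cdot),s(\cdot)) = s^{*}\phi - v^{*}\phi$, so your decomposition amounts to the affineness of $v^{*}\phi$, i.e.\ to the linearity of the \emph{vertex} section $v\colon B\to \mc C$. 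That is precisely what is not available at this stage: in the paper this is Lemma \ref{lem:section at vertex}, whose proof runs through Lemma \ref{lem:local structure at genus 0 point}, Corollary \ref{cor:representative for M0n}, Theorem \ref{thm:existance of atlas} and Proposition \ref{prop:local lifts to atlas}, all of which depend on Proposition \ref{prop:maps to atlas are linear} itself, so invoking it (explicitly or implicitly) is circular. Nor can it be extracted from the tools you do allow yourself: lifting one-forms with slopes $(1,-1,0)$, $(1,0,-1)$, $(0,1,-1)$ at the trivalent vertex and evaluating at linear sections in the three adjacent edges only shows that the pairwise sums $d(v,s_i)+d(v,s_j)$ are affine, hence that $2\,d(v,s_i)$ is affine; integrality of the affine structure blocks dividing by $2$, so this route genuinely fails rather than merely being incomplete.

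The source of the difficulty, which the paper isolates, is that a one-form with slope $k\neq 0$ on the $\star$-leg does not restrict to a one-form on the fiber: it restricts only to a section of $\Omega^1_{\mc C_b}(kx)$, a one-form with prescribed order $k$ at the moving point. The paper's proof therefore passes to the diagonal family $\mc D=\mc C\times_B\mc C\to \mc C$, where the moving point becomes the diagonal section $\Delta$, lifts the form to a section of $\Omega^1_{\mc D}(k\Delta)$ using Propositions \ref{prop:aff(ks) is a torsor} and \ref{prop:aff(ks) is everywhere inhabited}, and then evaluates along linear sections avoiding $\Delta$ exactly as in the argument for $f$; the torsor property makes the conclusion independent of the choice of lift. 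Your proposal would need to be repaired along these lines, by some device that realizes ``distance to the moving point'' as a prescribed-order affine object on a family over $\mc C$, rather than by evaluation at vertex sections.
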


\begin{proof}
{Since the affine structure on $\Atlass{g,n}$ is generated by  cross ratios, to show that $f$ is linear we need to show that cross ratios induce linear functions on $B$. }Let $b\in B$, and let $G$ be the cycle-rigidified, $n$-marked, genus-$g$ stable graph such that $f(b)\in \relint(\sigma_G)$. Since $f_\star$ induces an isomorphism $\mc C_b \cong (\Atlass{g, n\sqcup\{\star\}})_{f(b)}$, the combinatorial type of $\mc C_b$ is naturally identified with $G$ and the edge-length of the edge in $\mc C_b$ corresponding to  $e\in E_b(G)$ is given by $l_e(f(b))$. Let $c = (\gamma^\pm, w)$ be a cross ratio datum on $G$, and let us first assume that the image of $\gamma$ is a simple curve. Refer to Figure \ref{fig:techp} for the notation in this proof. Let $V_\gamma$ denote a contractible neighborhood of the image of $\gamma$ and $\omega\in \Omega^1_{\mc C_b}$ such that  $w = \gamma^\ast \omega$. By the exactness of \eqref{equ:exact sequence}, $\omega$ may be lifted to a one-form $\widetilde{\omega}\in \Omega^1_{\mc  C} (\widetilde{V}_\gamma)$, with $\widetilde{V}_\gamma$ an open set in $\mc C$ restricting to $V_\gamma$ on $\mc C_b$. Further, we may choose $\widetilde{\phi} \in Aff_{\mc C}(\widetilde{V}_\gamma)$ lifting $\widetilde{\omega}$.

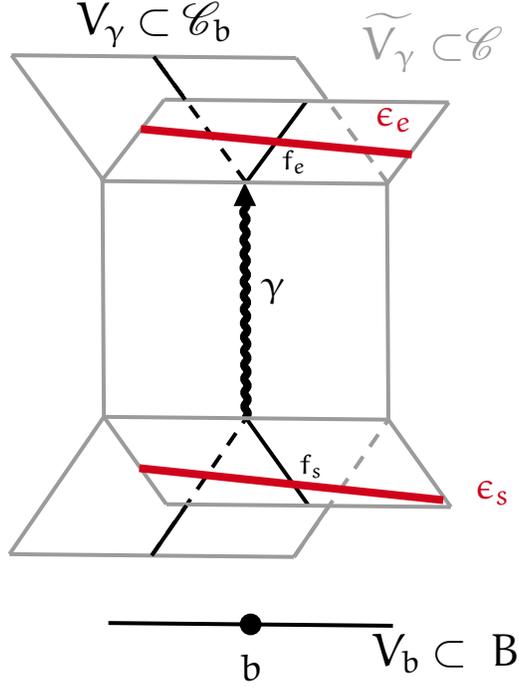
\begin{figure}[tb]
\begin{center}

\begin{tikzpicture}[x=0.75pt,y=0.75pt,yscale=-.7,xscale=.7]


\draw [line width=1.5]    (169.13,455.38) -- (374.13,457.38) ;
\draw [line width=1.5]    (311.88,79) -- (268.38,137.75) ;
\draw [line width=1.5]  [dash pattern={on 5.63pt off 4.5pt}]  (268.38,137.75) -- (201.88,47) ;
\draw [line width=3]    (267.41,143.75) -- (267.46,151.75) .. controls (269.14,153.41) and (269.15,155.08) .. (267.49,156.75) .. controls (265.84,158.42) and (265.85,160.09) .. (267.52,161.75) .. controls (269.19,163.41) and (269.2,165.08) .. (267.55,166.75) .. controls (265.9,168.42) and (265.91,170.09) .. (267.58,171.75) .. controls (269.25,173.41) and (269.26,175.08) .. (267.61,176.75) .. controls (265.96,178.42) and (265.97,180.09) .. (267.64,181.75) .. controls (269.31,183.41) and (269.32,185.08) .. (267.67,186.75) .. controls (266.02,188.42) and (266.03,190.09) .. (267.7,191.75) .. controls (269.37,193.41) and (269.38,195.08) .. (267.73,196.75) .. controls (266.08,198.42) and (266.09,200.09) .. (267.76,201.75) .. controls (269.43,203.41) and (269.44,205.08) .. (267.78,206.75) .. controls (266.13,208.42) and (266.14,210.09) .. (267.81,211.75) .. controls (269.48,213.41) and (269.49,215.08) .. (267.84,216.75) .. controls (266.19,218.42) and (266.2,220.09) .. (267.87,221.75) .. controls (269.54,223.41) and (269.55,225.08) .. (267.9,226.75) .. controls (266.25,228.42) and (266.26,230.09) .. (267.93,231.75) .. controls (269.6,233.41) and (269.61,235.08) .. (267.96,236.75) .. controls (266.31,238.42) and (266.32,240.09) .. (267.99,241.75) .. controls (269.66,243.41) and (269.67,245.08) .. (268.02,246.75) .. controls (266.37,248.42) and (266.38,250.09) .. (268.05,251.75) .. controls (269.72,253.41) and (269.73,255.08) .. (268.08,256.75) .. controls (266.43,258.42) and (266.44,260.09) .. (268.11,261.75) .. controls (269.78,263.41) and (269.79,265.08) .. (268.14,266.75) .. controls (266.49,268.42) and (266.5,270.09) .. (268.17,271.75) .. controls (269.84,273.41) and (269.85,275.08) .. (268.2,276.75) .. controls (266.55,278.42) and (266.56,280.09) .. (268.23,281.75) .. controls (269.9,283.41) and (269.91,285.08) .. (268.26,286.75) .. controls (266.6,288.42) and (266.61,290.09) .. (268.28,291.75) .. controls (269.95,293.41) and (269.96,295.08) .. (268.31,296.75) .. controls (266.66,298.42) and (266.67,300.09) .. (268.34,301.75) .. controls (270.01,303.41) and (270.02,305.08) .. (268.37,306.75) -- (268.38,307.75) -- (268.38,307.75) ;
\draw [shift={(267.38,137.75)}, rotate = 89.66] [fill={rgb, 255:red, 0; green, 0; blue, 0 }  ][line width=0.08]  [draw opacity=0] (16.97,-8.15) -- (0,0) -- (16.97,8.15) -- cycle    ;
\draw [line width=1.5]  [dash pattern={on 5.63pt off 4.5pt}]  (200.38,406.75) -- (268.38,307.75) ;
\draw [line width=1.5]    (268.38,307.75) -- (313.38,370.75) ;
\draw  [fill={rgb, 255:red, 0; green, 0; blue, 0 }  ,fill opacity=1 ][line width=1.5]  (265.25,456.38) .. controls (265.25,452.85) and (268.1,450) .. (271.63,450) .. controls (275.15,450) and (278,452.85) .. (278,456.38) .. controls (278,459.9) and (275.15,462.75) .. (271.63,462.75) .. controls (268.1,462.75) and (265.25,459.9) .. (265.25,456.38) -- cycle ;
\draw [color={rgb, 255:red, 155; green, 155; blue, 155 }  ,draw opacity=1 ][line width=1.5]    (164.88,136.75) -- (369.88,138.75) ;
\draw [color={rgb, 255:red, 155; green, 155; blue, 155 }  ,draw opacity=1 ][line width=1.5]    (209.38,78) -- (414.38,80) ;
\draw [color={rgb, 255:red, 155; green, 155; blue, 155 }  ,draw opacity=1 ][line width=1.5]    (99.38,46) -- (304.38,48) ;
\draw [color={rgb, 255:red, 155; green, 155; blue, 155 }  ,draw opacity=1 ][line width=1.5]    (165.88,306.75) -- (370.88,308.75) ;
\draw [color={rgb, 255:red, 155; green, 155; blue, 155 }  ,draw opacity=1 ][line width=1.5]    (210.88,369.75) -- (415.88,371.75) ;
\draw [color={rgb, 255:red, 155; green, 155; blue, 155 }  ,draw opacity=1 ][line width=1.5]    (97.88,405.75) -- (302.88,407.75) ;
\draw [color={rgb, 255:red, 155; green, 155; blue, 155 }  ,draw opacity=1 ][line width=1.5]  [dash pattern={on 5.63pt off 4.5pt}]  (370.88,138.75) -- (304.38,48) ;
\draw [color={rgb, 255:red, 155; green, 155; blue, 155 }  ,draw opacity=1 ][line width=1.5]    (414.38,80) -- (369.88,138.75) ;
\draw [color={rgb, 255:red, 155; green, 155; blue, 155 }  ,draw opacity=1 ][line width=1.5]    (369.88,138.75) -- (370.88,308.75) ;
\draw [color={rgb, 255:red, 155; green, 155; blue, 155 }  ,draw opacity=1 ][line width=1.5]    (164.88,136.75) -- (165.88,306.75) ;
\draw [color={rgb, 255:red, 155; green, 155; blue, 155 }  ,draw opacity=1 ][line width=1.5]    (209.38,78) -- (163.88,137) ;
\draw [color={rgb, 255:red, 155; green, 155; blue, 155 }  ,draw opacity=1 ][line width=1.5]    (165.88,136.75) -- (99.38,46) ;
\draw [color={rgb, 255:red, 155; green, 155; blue, 155 }  ,draw opacity=1 ][line width=1.5]    (326.38,78) -- (304.38,48) ;
\draw [color={rgb, 255:red, 155; green, 155; blue, 155 }  ,draw opacity=1 ][line width=1.5]    (370.88,308.75) -- (415.88,371.75) ;
\draw [color={rgb, 255:red, 155; green, 155; blue, 155 }  ,draw opacity=1 ][line width=1.5]    (165.88,306.75) -- (210.88,369.75) ;
\draw [color={rgb, 255:red, 155; green, 155; blue, 155 }  ,draw opacity=1 ][line width=1.5]    (97.88,405.75) -- (165.88,306.75) ;
\draw [color={rgb, 255:red, 155; green, 155; blue, 155 }  ,draw opacity=1 ][line width=1.5]  [dash pattern={on 5.63pt off 4.5pt}]  (302.88,407.75) -- (370.88,308.75) ;
\draw [color={rgb, 255:red, 155; green, 155; blue, 155 }  ,draw opacity=1 ][line width=1.5]    (302.88,407.75) -- (327.38,371.75) ;
\draw [line width=1.5]    (200.38,406.75) -- (225.38,370.75) ;
\draw [line width=1.5]    (201.88,47) -- (224.38,78) ;
\draw [color={rgb, 255:red, 208; green, 2; blue, 27 }  ,draw opacity=1 ][line width=3]    (410.38,366.75) -- (191.38,343.75) ;
\draw [color={rgb, 255:red, 208; green, 2; blue, 27 }  ,draw opacity=1 ][line width=3]    (387.75,117.56) -- (192.5,99.19) ;

\draw (278,205.4) node [anchor=north west][inner sep=0.75pt]  [font=\Large]  {$\gamma $};
\draw (305,331.4) node [anchor=north west][inner sep=0.75pt]    {$f_{s}$};
\draw (292.13,111.78) node [anchor=north west][inner sep=0.75pt]    {$f_{e}$};
\draw (144,6.4) node [anchor=north west][inner sep=0.75pt]  [font=\LARGE]  {$V_{\gamma } \subset \mathcal{C}_{b}$};
\draw (350,13.4) node [anchor=north west][inner sep=0.75pt]  [font=\LARGE]  {$\textcolor[rgb]{0.61,0.61,0.61}{\widetilde{\textcolor[rgb]{0.61,0.61,0.61}{V}\textcolor[rgb]{0.61,0.61,0.61}{_{\gamma }}}}\textcolor[rgb]{0.61,0.61,0.61}{\ \subset }\textcolor[rgb]{0.61,0.61,0.61}{\mathcal{C}}$};
\draw (262,476.4) node [anchor=north west][inner sep=0.75pt]  [font=\Large]  {$b$};
\draw (431,351.4) node [anchor=north west][inner sep=0.75pt]  [font=\Large]  {$\textcolor[rgb]{0.82,0.01,0.11}{\epsilon }\textcolor[rgb]{0.82,0.01,0.11}{_{s}}$};
\draw (359,82.4) node [anchor=north west][inner sep=0.75pt]  [font=\Large]  {$\textcolor[rgb]{0.82,0.01,0.11}{\epsilon }\textcolor[rgb]{0.82,0.01,0.11}{_{e}}$};
\draw (357,461.4) node [anchor=north west][inner sep=0.75pt]  [font=\LARGE]  {$V_{b} \subset \ B$};

\end{tikzpicture}
\end{center}
\caption{We illustrate the notation relevant to the proof of Prop \ref{prop:maps to atlas are linear}.}
\label{fig:techp}
\end{figure}

By Lemma \ref{lem:local structure at interior of edge}, there exists a neighborhood $V$ of $b$ and linear sections $\epsilon_s, \epsilon_e \colon V\to \mc C$ of $\pi$, whose images are contained in $f_s$ and $f_e$.


By construction, we have
\begin{equation}
(f^*\xi_c)_b = \left( \epsilon_e^*\widetilde{\phi}-\epsilon_s^*\widetilde{\phi}\right)_b \ .
\end{equation}
The right side of this equation is contained in $\Aff_{B,b}$. We conclude that $f$ is linear at $b$, and since $b$ was arbitrary that $f$ is linear.
If $\gamma$ is not a simple path, one may apply a standard local lifting argument to a refinement of $\gamma$ and conduct essentially the same proof.

To show that $f_\star$ is linear let $x\in \mc C$ be a point in $\mc C_b$. The position of $x$ in $\mc C_b$, together with the identification of the combinatorial type of $\mc C_b$ with $G$ induced by $g$ uniquely determines the $(n\sqcup \{\star\})$-marked graph $G_\star$ with $f_\star(x)\in \relint(\sigma_{G_\star})$. Let $c=(\gamma^\pm,w)$ be a cross ratio datum on $G_\star$. By Proposition \ref{prop:primitive cross ratios generate all cross ratios} we may assume that $c$ is primitive. If $c$ does not involve the $\star$-marked leg then $c$ defines a cross ratio datum $c'$ on $G$ and $\xi_c=\pi_\star^*\xi_{c'}$. Then $f_\star^*\xi_c=(f\circ \pi)$ is integral affine at $x$ by the linearity of $\pi$ and $f$. We may thus assume that $c$ involves the $\star$-marked leg. By Lemma \ref{lem:properties of vertex cross ratios} and Lemma \ref{lem:properties of edge cross ratios} we may further assume that the thickened path $\gamma^\pm$ does not involve the $\star$-marked leg, but only the one-form does. 
Let $\Gamma$ be the stable tropical curve obtained by attaching to $\mc C_b$  an additional leg at $x$  and marking it by $\star$. Then 
$w$ is an element  of $ \Gamma([0,1],\gamma^{-1}\Omega^1_\Gamma)$. Because the inclusion $\mc C_b\to \Gamma$ is not linear, $w$ cannot be pulled back to an element in $\Gamma([0,1],\gamma^{-1}\Omega^1_{\mc C_b})$. However, it can be pulled back to an element $\omega\in \Gamma([0,1],\gamma^{-1}\Omega^1_{\mc C_b}(kx))$ for an appropriate choice of $k\in \Z$ (since $c$ is primitive, $k=\pm 1$). Let $\mc D=\mc C\times_B\mc C$ and let $\Delta\colon \mc C\to \mc D$ be the diagonal map. Then the projection $\mc D\to \mc C$, together with the pull-backs of the sections and the genus function, is an $n$-marked genus-$g$ stable tropical curve. The fiber $\mc D_x$ can be canonically identified with $\mc C_b$, and $x$ maps to $\Delta(x)$ in this identification. Therefore, $\gamma$ can be viewed as a path in $\mc D_x$, and $\omega$ can be viewed as an element in $\Gamma([0,1],\gamma^{-1}\Omega^1_{\mc D_x}(k\Delta(x))$. By Proposition \ref{prop:aff(ks) is a torsor} and Proposition \ref{prop:aff(ks) is everywhere inhabited}, $\omega$ lifts to an element in $\Gamma([0,1],\gamma^{-1}\Omega^1_{\mc D}(k\Delta))$. Using Proposition \ref{prop:aff(ks) is a torsor}, one can now proceed similarly as in the proof of the linearity of $f$ above.

\end{proof}

\begin{definition} \label{def:mumfcusp}
We say that a stable tropical curve is a \textbf{Mumford-curve} if the genus-function of its combinatorial type is identically zero. We denote by $\MgnMf{g,n}$ the full subcategory of $\Mgnbar{g,n}$ consisting of all families of Mumford curves. 
\end{definition}

\begin{proposition}\label{prop:mumfopen}
For every pair $g,n\in \Z^{\geq 0}$  with $2g-2+n>0$, the stack $\MgnMf{g,n}$ is an open substack of $\Mgnbar{g,n}$, that is the inclusion $\MgnMf{g,n}\to \Mgnbar{g,n}$ is represented by an open immersion.
\end{proposition}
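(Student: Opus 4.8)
The plan is to reduce the assertion to a statement about a single family and then establish openness fiberwise using the local combinatorial structure of families. Since $\MgnMf{g,n}$ is by definition the full subcategory of $\Mgnbar{g,n}$ whose objects are families of Mumford curves, and since being a family of Mumford curves is stable under base change (a fiber of a pulled-back family is a fiber of the original family), the inclusion $\MgnMf{g,n}\hookrightarrow\Mgnbar{g,n}$ is a monomorphism of stacks cut out by a condition on objects. Thus it suffices to fix an arbitrary family $\pi\colon\mc C\to B$ of $n$-marked genus-$g$ stable tropical curves, i.e.\ a morphism $B\to\Mgnbar{g,n}$, and to show that the subset
\begin{equation*}
U=\{b\in B\mid \mc C_b\text{ is a Mumford curve}\}\subseteq B
\end{equation*}
is open, where $\mc C_b$ is Mumford exactly when all vertices of its combinatorial type have genus zero. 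Granting this, a morphism $T\to B$ factors (necessarily uniquely) through $\MgnMf{g,n}\times_{\Mgnbar{g,n}}B$ if and only if $\mc C_T\to T$ is a family of Mumford curves, which holds if and only if $T\to B$ factors through $U$. Hence $U$, equipped with the induced tropical structure, represents the fiber product, and the inclusion $U\hookrightarrow B$ is an open immersion.

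To prove that $U$ is open, I would fix $b\in U$ and choose a \TPL{}-trivialization $(\Sigma,(G_\sigma,f_\sigma,\chi_\sigma)_{\sigma\in\Sigma})$ of the family at $b$. Let $\tau_0=\bigcap_{\sigma\in\Sigma}\sigma$ be the minimal cone of $\Sigma$; it lies in $\Sigma$ by the intersection axiom, and by minimality together with the face axiom one has $b\in\relint(\tau_0)$, while $\tau_0$ is a face of every $\sigma\in\Sigma$. The trivialization identifies the combinatorial type of $\mc C_b$ with $G_{\tau_0}$, which is therefore Mumford. Because $\relint(\tau_0)$ admits a neighborhood equal to the open star of $\tau_0$, after shrinking I may assume that for every $c\in\vert\Sigma\vert$ the minimal cone $\rho$ with $c\in\relint(\rho)$ satisfies $\tau_0\preceq\rho$. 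Since $c\in\sigma$ for some $\sigma\in\Sigma$, we then have $\tau_0\preceq\rho\preceq\sigma$, and since $b\in\tau_0\subseteq\rho$ the face axiom of a local face structure forces $\rho\in\Sigma$; consequently $\mc C_c$ has combinatorial type $G_\rho$.

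The heart of the argument is the combinatorial observation that, because $\tau_0$ is a face of $\rho$, the graph $G_{\tau_0}$ is a specialization of $G_\rho$, equivalently $G_\rho$ is a generization of $G_{\tau_0}$. Inspecting the specialization operations shows that the total vertex genus $\sum_{v}\gamma(v)$ increases only through loop contractions, whereas stretching an edge and contracting a non-loop edge leave all vertex genera unchanged; hence the total vertex genus is non-decreasing under specialization and non-increasing under generization. As $G_{\tau_0}$ has total vertex genus zero, so does its generization $G_\rho$, and therefore $G_\rho$ is Mumford and $c\in U$. This exhibits a neighborhood of $b$ contained in $U$, proving openness.

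The main obstacle I anticipate is not the genus monotonicity, which is immediate from the definitions of the specialization operations, but rather the topological bookkeeping of local face structures: correctly identifying the minimal cone $\tau_0$, passing to the open star to control the combinatorial types of nearby fibers, and verifying the membership $\rho\in\Sigma$ so that the combinatorial type of $\mc C_c$ is indeed $G_\rho$. Once these points are settled, the conclusion that the inclusion $U\hookrightarrow B$ is an open immersion representing $\MgnMf{g,n}\times_{\Mgnbar{g,n}}B$, and hence that $\MgnMf{g,n}$ is an open substack, follows formally.
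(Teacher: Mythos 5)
Your proof is correct and follows essentially the same route as the paper's: fix a family, take a \TPL{}-trivialization at a point $b$ of the Mumford locus, identify the minimal cone $\tau_0$ with $b\in\relint(\tau_0)$, and use that vertex genus can only increase under edge specialization (loop contraction), so every $G_\rho$ with $\tau_0\preceq\rho$ is again Mumford. The only differences are cosmetic: you spell out the stack-theoretic reduction to openness of $U$, and you control nearby fibers via the open star of $\tau_0$ rather than the paper's "interior of $\vert\Sigma\vert$", which is if anything slightly more careful.
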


\begin{proof}
We need to show that for every family $\mc C\to B$ of stable tropical curves the locus 
\begin{equation}
U\coloneqq \{b\in B \mid \mc C_b \text{ is a Mumford curve}\}
\end{equation}
is open in $B$. Let $b\in U$, and let $(\Sigma,(G_\sigma,f_\sigma,\chi_\sigma)_{\sigma\in \Sigma})$ be a \TPL{}-trivialization of $\mc C\to B$ at $b$. Let $\tau\in \Sigma$ be the inclusion-minimal polyhedron in $\Sigma$ containing $b$. By assumption, the function $\gamma_{G_\tau}$ is identically zero. For every $\sigma$ in $\Sigma$, the polyhedron $\tau$ is a face of $\sigma$, and therefore the combinatorial type $G_\tau$ is a contraction of $G_\sigma$. It follows immediately that $\gamma_{G_\sigma}$ is identically zero as well. It follows that $U$ contains the interior of  $\vert \Sigma\vert$.
\end{proof}

\begin{definition}\label{def:good}
We denote by $\GoodAtlas{g,n}\subseteq \Atlass{g,n}$ the union of the relative interiors of the cones $\sigma_G$ corresponding to cycle-rigidified stable graphs $G$ where every bounded edge is adjacent to at least one genus-zero vertex, and every vertex of an unbounded edge has genus zero.
Moreover, we denote by $\Atlas{g,n}\subset \GoodAtlas{g,n}$ the union of the relative interiors of the cones $\sigma_G$ corresponding to cycle-rigidified stable graphs $G$ with $\gamma_G$  identically zero. 

\end{definition}

\begin{lemma}
\label{lem:forgetful morphism between Mumford loci is proper}
The subset $\Atlas{g,n}$ is open in $\Atlass{g,n}$.  Furthermore, we have $\pi_\star^{-1}\Atlas{g,n}=\Atlas{g,n\sqcup\{\star\}}$.
\end{lemma}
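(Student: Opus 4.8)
The plan is to treat both assertions as combinatorial statements about how the vertex genera of stable graphs behave under edge specialization, exploiting that a point of $\Atlass{g,n}$ lies in the Mumford locus $\Atlas{g,n}$ precisely when the genus function $\gamma_G$ of its combinatorial type $G$ vanishes identically.

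For the openness of $\Atlas{g,n}$, I would argue directly with the colimit topology of Definition \ref{def:atlas}: a subset is open exactly when its preimage under every structure map $\overline\sigma_{\widetilde G}\to\Atlass{g,n}$ is open. Fixing a cycle-rigidified $n$-marked genus-$g$ stable graph $\widetilde G$ and a point $y\in\overline\sigma_{\widetilde G}$, the face containing $y$ corresponds to the specialization $\widehat G(y)$ obtained by contracting the edges $e$ with $l_e(y)=0$ and stretching those with $l_e(y)=\infty$, and $y$ lies over $\Atlas{g,n}$ if and only if $\gamma_{\widehat G(y)}\equiv 0$. The point I would isolate first is that stretching leaves all vertex genera unchanged, whereas contracting the zero-length edges collapses each connected component $C$ of the contracted subgraph to a vertex of genus $\sum_{v\in C}\gamma_{\widetilde G}(v)+h^1(C)$. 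Hence $\gamma_{\widehat G(y)}\equiv 0$ holds if and only if $\gamma_{\widetilde G}\equiv 0$ and the zero-length edge set $Z(y)=\{e\in E_b(\widetilde G):l_e(y)=0\}$ spans a forest.

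Granting this, the preimage of $\Atlas{g,n}$ in $\overline\sigma_{\widetilde G}$ is empty when $\gamma_{\widetilde G}\not\equiv 0$, and otherwise equals the complement in $\overline\sigma_{\widetilde G}$ of $\bigcup_c\{y:l_e(y)=0\text{ for all }e\in c\}$, where $c$ runs over the finitely many cycles of $\widetilde G$. Each member of this union is a closed coordinate subspace, so the preimage is open in every cone, and therefore $\Atlas{g,n}$ is open. I expect this to be the most delicate point: the naive guess is to require merely that no individual loop be contracted, but a cyclic arrangement of several non-loop edges (already a bigon) also collapses to a positive-genus vertex, so the correct open condition is the acyclicity of $Z(y)$, not just the survival of loops.

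For the equality $\pi_\star^{-1}\Atlas{g,n}=\Atlas{g,n\sqcup\{\star\}}$, I would proceed stratum by stratum, using that $\pi_\star$ acts on combinatorial types (Proposition \ref{prop:atlas gives family of TPL-curves} and the construction preceding it) by sending $\relint(\sigma_G)$ into $\relint(\sigma_{G'})$, where $G'$ is the stabilization of $G$ after deleting the leg $\star$. It then suffices to check that $\gamma_{G'}\equiv 0$ if and only if $\gamma_G\equiv 0$. Because $2g-2+n>0$, the only vertex that can become unstable upon deleting $\star$ is the genus-zero trivalent vertex $v=v_G(\star)$, and stabilization then merely smooths $v$ by joining its two surviving flags, altering no vertex genus. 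The one scenario in which smoothing would instead contract a loop and manufacture a genus-one vertex is when $v$ supports only a loop together with $\star$; but this forces $v$ to be the whole connected curve and hence $2g-2+n=0$, which is excluded. Thus stabilization neither creates nor removes positive-genus vertices, giving both inclusions at once. The main obstacle I anticipate here is precisely ruling out this loop-contraction pathology, which is exactly where the hypothesis $2g-2+n>0$ does essential work.
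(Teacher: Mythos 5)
Your proof is correct, but it follows a more self-contained combinatorial route than the paper's. For openness, the paper computes nothing on the cones of $\Atlass{g,n}$: it invokes Proposition \ref{prop:atlas gives family of TPL-curves} to view $\pi_\star\colon\Atlass{g,n\sqcup\{\star\}}\to\Atlass{g,n}$ as a family of stable \TPL{}-curves whose fiber over a point of $\relint(\sigma_G)$ has combinatorial type $G$, and then applies Proposition \ref{prop:mumfopen} (whose proof only uses \TPL{}-trivializations, hence applies here): around a Mumford point the minimal cone $\tau$ has $\gamma_{G_\tau}\equiv 0$, every adjacent type $G_\sigma$ specializes to $G_\tau$, and since specialization only merges vertex genera and adds first Betti numbers, $\gamma_{G_\sigma}\equiv 0$ as well. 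You instead work directly in the quotient topology of the gluing and make explicit the criterion the paper leaves implicit: the specialization at $y$ is Mumford if and only if $\gamma_{\widetilde G}\equiv 0$ and the zero-length edge set $Z(y)$ spans a forest, so the preimage in each extended cone is the complement of finitely many closed faces indexed by circuits. Your route buys a concrete description of $\Atlas{g,n}$ inside every cone, and you are right that the subtle point is acyclicity of $Z(y)$ rather than mere survival of loops; the paper's route buys brevity by reusing a statement already proven for arbitrary families. For the second assertion, the paper's entire proof is the remark that forgetting a marking and stabilizing never increases the genus function; your stratum-by-stratum version is the same observation, but spelled out in both directions (stabilization neither creates nor destroys positive-genus vertices), which is what the set equality genuinely requires, so yours is if anything the more complete argument.

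One small point to tighten: when you classify which vertex can become unstable after deleting $\star$, there is, besides the genus-zero trivalent vertex, a second numerically possible case, namely a vertex $v=v_G(\star)$ with $\gamma_G(v)=1$ and $\val(v)=1$, whose unique flag is $\star$. It is excluded by exactly the connectivity argument you use for the loop pathology (such a $v$ would be the whole curve, forcing $(g,n)=(1,0)$ and hence $2g-2+n=0$), but since your sentence asserts that only the genus-zero trivalent case occurs, this case deserves explicit mention.
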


\begin{proof}
The openness follows directly from Proposition \ref{prop:mumfopen} and Proposition \ref{prop:atlas gives family of TPL-curves}. The second statement follows from the fact that forgetting a marking and stabilizing never increases the genus function.
\end{proof}

\begin{proposition}
\label{prop:family of tropical curve over atlas}
All integral linear functions on $\Atlass{g,n\sqcup \{\star\}}$ are harmonic on the fibers of $\pi_\star\colon \Atlass{g,n\sqcup\{\star\}}\to \Atlass{g,n}$. Moreover, the fibers of $\pi_\star$ over $\GoodAtlas{g,n}$ are stable tropical curves and $\pi_\star^{-1}\GoodAtlas{g,n}\to \GoodAtlas{g,n}$, together with the induced sections and the induced genus function, is a family of $n$-marked genus-$g$ stable tropical curves.
In particular, $\Atlas{g, n\sqcup\{\star\}}\to \Atlas{g,n}$ is a family of $n$-marked genus-$g$ stable tropical curves.
\end{proposition}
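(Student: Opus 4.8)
The plan is to treat the three assertions in order, leaning on the facts that the affine structure of $\Atlass{g,n\sqcup\{\star\}}$ is generated by cross ratios (by primitive ones, Proposition \ref{prop:primitive cross ratios generate all cross ratios}), that $\pi_\star$ is already a family of stable \TPL{}-curves (Proposition \ref{prop:atlas gives family of TPL-curves}), and that the linearity of $\pi_\star$ together with the exactness of the cotangent sequence \eqref{equ:exact sequence} are supplied by Proposition \ref{prop:left-exactness of sequence on atlas}.

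For the harmonicity statement, harmonicity of a restriction to a fiber is a local and additive condition, so it suffices to verify it for a primitive cross ratio $\xi_c$ with $c=(\gamma^\pm,w)$. A point $x$ of the fiber $\mc C_b=\pi_\star^{-1}(b)$ is a position of the $\star$-leg on the universal curve over $b$, and moving $x$ in a direction $f\in T_x\mc C_b$ slides the $\star$-vertex. A direct slope computation, conveniently organized through the signed-intersection description of Remark \ref{rem:cross-r}, shows that the slopes $d_f(\xi_c|_{\mc C_b})$ are, up to sign, the values of the balanced one-form $w$ at the flags of $T_x\mc C_b$. Hence $\sum_{f\in T_x\mc C_b}d_f(\xi_c|_{\mc C_b})=0$ by the balancing condition (part (2) of Definition \ref{def:1-form along germ}), and $\xi_c|_{\mc C_b}$ is harmonic.

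The substantive part is to show that the fibers over $\GoodAtlas{g,n}$ are stable tropical curves. Each such fiber is already a stable \TPL{}-curve, so I only need to check that the induced affine structure $\Aff_{\mc C_b}$ agrees stalkwise with the one prescribed in Definition \ref{def:tropcurve}. At an infinite valence-one point or a node at infinity the relevant $\PL^\fin$-stalk is already $\R$, so nothing is needed. At a point $x$ of positive genus, every cross ratio whose germ at $x$ is defined corresponds to a datum on the combinatorial type of $x$ or one of its specializations, in each of which the $\star$-leg remains attached to a vertex of positive genus; such data cannot involve the $\star$-leg, so the cross ratio is pulled back from $\Atlass{g,n}$ and therefore constant on the fiber, giving $\Aff_{\mc C_b,x}=\R$ as required. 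At a finite genus-zero point $x$ the previous paragraph gives $\Aff_{\mc C_b,x}\subseteq\Harm_{\mc C_b,x}$, and the reverse inclusion is exactly where the hypothesis of Definition \ref{def:good} enters. Writing an arbitrary harmonic germ as a combination of primitive slope assignments, I realize each such assignment as the restriction of a cross ratio obtained by attaching $\star$ at $x$: if $x$ is a genus-zero vertex this is a vertex-type cross ratio on the $(\geq 4)$-valent vertex, while if $x$ lies in the interior of an edge or leg, attaching $\star$ produces a bounded edge joining $x$ to an adjacent vertex which, by the good hypothesis, has genus zero, so an edge-type cross ratio on that bounded edge does the job.

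Finally, I would assemble the family statement. Over $\GoodAtlas{g,n}$ the restriction of $\pi_\star$ is a family of stable \TPL{}-curves (Proposition \ref{prop:atlas gives family of TPL-curves}); its total space carries the cross-ratio affine structure, with respect to which $\pi_\star$ is linear and \eqref{equ:exact sequence} is exact (Proposition \ref{prop:left-exactness of sequence on atlas}); and the fibers are stable tropical curves by the previous step. All conditions of Definition \ref{def:family} are thus met, so $\pi_\star^{-1}\GoodAtlas{g,n}\to\GoodAtlas{g,n}$ is a family of $n$-marked genus-$g$ stable tropical curves. Since $\Atlas{g,n}\subseteq\GoodAtlas{g,n}$ and $\pi_\star^{-1}\Atlas{g,n}=\Atlas{g,n\sqcup\{\star\}}$ by Lemma \ref{lem:forgetful morphism between Mumford loci is proper}, restricting this family yields the final claim. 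I expect the main obstacle to be the reverse inclusion $\Harm_{\mc C_b,x}\subseteq\Aff_{\mc C_b,x}$ at genus-zero points, i.e.\ producing enough cross ratios; this is precisely the realization that fails for points trapped between positive-genus vertices, and it is what singles out $\GoodAtlas{g,n}$ as the correct locus. Everything else is bookkeeping built on the cited results.
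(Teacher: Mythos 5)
Your proposal is correct and follows essentially the same route as the paper's proof: reduce to primitive cross ratios via Proposition \ref{prop:primitive cross ratios generate all cross ratios}, verify fiberwise harmonicity by a slope computation (your one-form/balancing phrasing via Remark \ref{rem:cross-r} is equivalent to the paper's explicit case analysis of vertex- and edge-type data), realize harmonic germs at genus-zero fiber points by vertex-type cross ratios at vertices and edge-type cross ratios toward a genus-zero endpoint supplied by the hypothesis of Definition \ref{def:good}, and assemble the family using Proposition \ref{prop:left-exactness of sequence on atlas} and conclude the ``in particular'' statement from Lemma \ref{lem:forgetful morphism between Mumford loci is proper}. A minor merit of your write-up is that you explicitly verify $\Aff_{\mc C_b,x}=\R$ at positive-genus points of the fiber (no cross ratio can involve the $\star$-leg there), a case the paper's proof passes over silently in its enumeration of fiber point types.
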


\begin{proof}
Let $G$ be a cycle-rigidified $(n\sqcup\{\star\})$-marked genus-$g$ stable graph and let $c$ be a cross ratio datum on $G$. We need to show that $\xi_c$ is harmonic on all fibers. By Proposition \ref{prop:primitive cross ratios generate all cross ratios} we may assume that $c$ is primitive. If $c$ does not involve the $\star$-marked flag $f_\star$, then $\xi_c$ coincides with the pull-back of a cross ratio from $\Atlass{g,n}$ and thus is constant on all fibers. We may thus assume that $c$ involves $f_\star$. By Lemma \ref{lem:properties of vertex cross ratios} and Lemma \ref{lem:properties of edge cross ratios}, we may assume that either
$
c=c_{((f_\star,f_e),(f_{-1},f_1))}
$
or $
c=c_{(e, (f_\star,f_e),(f_{-1},f_1))}.
$
In the first case, the restriction of $\xi_c$ to a fiber near a point in $\relint(\sigma_G)$ has slope $1$ in the direction corresponding to $f_{-1}$, 
slope $\pm 1$ in the direction of corresponding to $f_{\mp 1}$, 
and slope $0$ in all other directions. It follows that the restrictions of $\xi_c$ to the fibers of $\pi_\star$ are harmonic. Moreover, by 
taking appropriate linear combinations of primitive vertex-type cross ratios, we see
that if $v$ is a genus-zero vertex of valence at least three in a fiber of $\pi_\star$, then all harmonic functions in a neighborhood of $v$ in $\pi_\star^{-1}(\pi_\star(v))$ are restrictions of integral affine functions on $\Atlass{g,n\sqcup\{\star\}}$.

In the second case, the restriction of $\xi_c$ to a fiber near a point in $\relint(\sigma_G)$ has slope $-1$ in the direction corresponding to $e$, slope $1$ in the direction corresponding to $f_{-1}$, and slope $0$ in all other directions. It follows that the restriction of $\xi_c$ to the fibers of $\pi_\star$ is harmonic as well. Moreover, we see that if $v$ is a point in the interior of an edge in a fiber of $\pi_\star$ that is adjacent to at least one finite genus-zero vertex, then all harmonic functions in a neighborhood of $v$ in $\pi_\star^{-1}(\pi_\star(v))$ are restrictions of integral affine functions on $\Atlass{g,n\sqcup\{\star\}}$. 

If $b\in \GoodAtlas{g,n}$, then all points of the fiber $(\Atlas{g,n\sqcup\{\star\}})_b$ are either nodes, markings, genus-zero vertices of valence at least three, or points in the interior of an edge that is adjacent to at least one finite genus-zero vertex. By what we have seen, the restrictions of integral affine functions on $\Atlass{g,n\sqcup\{\star\}}$ to $(\Atlas{g,n\sqcup\{\star\}})_b$ are precisely the harmonic functions. It follows that $(\Atlas{g,n\sqcup\{\star\}})_b$ is a {stable} tropical curve. Together with Proposition \ref{prop:left-exactness of sequence on atlas}, it follows that the $\pi_\star^{-1}\GoodAtlas{g,n}\to \GoodAtlas{g,n}$ is a family of $n$-marked genus-$g$ stable tropical curves.

The ``in particular''-statement is a consequence of Lemma \ref{lem:forgetful morphism between Mumford loci is proper} and the fact that $\Atlas{g,n}$ is contained in $\GoodAtlas{g,n}$.
\end{proof}

\begin{lemma}
\label{lem:bijection on fibers defines isomorphism}
Let $\pi\colon\mc C\to B$ and $\pi'\colon\mc C'\to B$ be two families of $n$-marked genus-$g$ pre-stable \TPL{}-curves, and let $f\colon \mc C\to \mc C'$ be a bijective morphism of families of $n$-marked pre-stable \TPL{}-curves such that the induced morphisms $\mc C_b\to \mc C'_b$ are isomorphisms for every $b\in B$. Then $f$ is an isomorphism of \TPL{}-curves. Moreover, if both $\pi$ and $\pi'$ are families of tropical curves and $f$ is linear, then $f$ is an isomorphism of tropical curves.
\end{lemma}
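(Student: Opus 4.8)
The plan is to establish the \TPL{}-statement first and then upgrade it to the tropical statement using the exact sequence \eqref{equ:exact sequence}. Since being an isomorphism of \TPL{}-spaces is local on source and target, and $f$ is already a continuous bijection commuting with the projections to $B$, it suffices to show that every $x\in\mc C$ has an open neighborhood on which $f$ restricts to an isomorphism onto an open subset of $\mc C'$. Fix such an $x$, set $b=\pi(x)$ and $x'=f(x)\in\mc C'_b$, and choose \TPL{}-trivializations of $\pi$ and $\pi'$ at $b$. After passing to a common refinement of the two local face structures and shrinking $B$, I may assume both are defined over the same local face structure $\Sigma$ and that the isomorphisms $\chi_\sigma,\chi'_\sigma$ present $\mc C$ and $\mc C'$ over each $\sigma\in\Sigma$ as the standard models $\mc C_{G_\sigma}\times^{\TPL{}}_{\sigmabar_{G_\sigma}}\sigma$ and $\mc C_{G'_\sigma}\times^{\TPL{}}_{\sigmabar_{G'_\sigma}}\sigma$.

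Next I would exploit that the fiber maps $f_b$ are isomorphisms of marked \TPL{}-curves. They identify the combinatorial types, so over the connected $\relint(\sigma)$ the induced graph isomorphism is locally constant and gives a fixed identification $G_\sigma\cong G'_\sigma$; moreover $f$ commutes with $\pi,\pi'$ and preserves edge lengths, so the linear maps $\sigma\to\sigmabar_{G_\sigma}$ coming from the two trivializations agree. Transporting $f$ through $\chi_\sigma$ and $\chi'_\sigma$ then turns it into a self-morphism $\tilde f$ of the single standard model $\mc C_{G_\sigma}\times^{\TPL{}}_{\sigmabar_{G_\sigma}}\sigma$ over $\sigma$ that is a fiberwise automorphism preserving the labeled combinatorial type and the sections. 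I would then verify that $\tilde f$ is an isomorphism cell by cell: it carries each building cone $\sigmabar_{G_\sigma,v}$, $\sigmabar_{G_\sigma,e}$, $\sigmabar_{G_\sigma,l}$ onto the matching cone, and on each the induced map over $\sigma$ is elementary — on an edge cone $\sigmabar_{G_\sigma,e}$ with fiber coordinate $y\in[0,l_e]$ a fiberwise isometry fixing the two endpoints is either $y\mapsto y$ or $y\mapsto l_e-y$, and on vertex and leg cones it is a relabeling — hence a \TPL{}-isomorphism in every case. These glue to an isomorphism, so $f$ is a local isomorphism at $x$, proving the first claim.

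For the tropical statement, I assume $\pi,\pi'$ are families of tropical curves and $f$ is linear; since $f$ is already known to be a \TPL{}-isomorphism, it remains only to show $f^{-1}$ is linear, i.e. that $f^*$ identifies $\Aff_{\mc C'}$ with $\Aff_{\mc C}$ rather than merely mapping the former into the latter. This I would extract from \eqref{equ:exact sequence}: for each $b$ the pullback $f^*$ maps the short exact sequence $0\to\Omega^1_{B,b}\to\Omega^1_{\mc C'}|_{\mc C'_b}\to\Omega^1_{\mc C'_b}\to 0$ to the corresponding sequence for $\mc C$, acting as the identity on the base term and — because the fibers of families of tropical curves are nodal and hence carry a unique affine structure, so that $f_b$ is automatically a tropical isomorphism — as an isomorphism $\Omega^1_{\mc C'_b}\to\Omega^1_{\mc C_b}$ on the quotient term. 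The five lemma forces the middle map to be an isomorphism, so $f^*$ is an isomorphism on the stalks of $\Omega^1$ at $x'$; combining with the identity on the constant subsheaf $\R$ yields that $f^*$ is an isomorphism on the stalks of $\Aff$, which is exactly the linearity of $f^{-1}$.

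The hard part will be the \TPL{}-step, not the tropical upgrade: a priori $f$ is only a fiberwise-bijective continuous morphism, and one must rule out that it fails to be open or fails to admit a piecewise linear inverse. The leverage comes entirely from the rigidity of the standard local models $\mc C_{G}\times^{\TPL{}}_{\sigmabar_G}\sigma$ supplied by a \TPL{}-trivialization: matching combinatorial types and edge lengths pins $f$ down to a map that is forced to be an isomorphism on each elementary cell. Once this is secured, the tropical statement is a formal diagram chase with \eqref{equ:exact sequence}, whose only genuine input is that nodal fibers have a unique affine structure.
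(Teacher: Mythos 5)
Your proof is correct and follows essentially the same route as the paper: a common refinement of \TPL{}-trivializations together with the fiberwise graph identifications $G_\sigma\cong G'_\sigma$ to obtain the \TPL{}-isomorphism, and then the commutative diagram of cotangent sequences \eqref{equ:exact sequence} with a five-lemma argument for the tropical upgrade. Your added cell-by-cell verification and the observation that nodal fibers carry a unique affine structure (so the fiber maps are automatically tropical isomorphisms) are exactly the points the paper uses, stated slightly more explicitly.
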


\begin{proof}
Let $b\in B$. Let $(\Sigma,(G_\sigma,f_\sigma,\chi_\sigma)_{\sigma\in \Sigma})$ and $(\Sigma',(G_\sigma',f_\sigma',\chi_\sigma')_\sigma\in\Sigma)$ be \TPL{}-trivializations of $\pi$ and $\pi'$, respectively. After taking a common refinement of $\Sigma$ and $\Sigma'$, we may assume that $\Sigma=\Sigma'$. Since $f$ is an isomorphism on fibers it induces for every $\sigma\in\Sigma$ an isomorphism $G_\sigma\xrightarrow\cong G_\sigma'$ such that the resulting morphism
\begin{equation}
\mc C\times_B^{\mTPL}\sigma\xrightarrow{\chi_\sigma}
\mc C_{G_\sigma} \times^{\mTPL}_{\overline\sigma_{G_\sigma}} \sigma\xrightarrow\cong
\mc C_{G_\sigma'} \times^{\mTPL}_{\overline\sigma_{G_\sigma'}} \sigma \xrightarrow{\chi'_\sigma{}^{-1}}
\mc C'\times_B^{\mTPL}\sigma
\end{equation}
coincides with the restriction of $f$ to $\mc C\times_B^{\mTPL}\sigma$. Therefore, the restriction of $f$ to $\pi^{-1}\sigma$ is an isomorphism onto its image for all $\sigma\in\Sigma$, which implies that the restriction of $f$ to $\pi^{-1}\vert\Sigma\vert$ is an isomorphism of \TPL{}-spaces. Since being an isomorphism is local, we conclude that $f$ is an isomorphism of \TPL{}-spaces.

Now assume that both $\pi$ and $\pi'$ are families of tropical curves and that $f$ is linear.
It suffices to show that the pull-back $\Omega^1_{\mc C'}\to \Omega^1_{\mc C}$ is an isomorphism. So let $p\in \mc C$, let $p'=f(p)$, and let $b=\pi(b)$. Then by the definition of families of semi-stable curves, there exists a commutative diagram
\begin{equation}
\begin{tikzpicture}[auto]
\matrix[matrix of math nodes, row sep= 5ex, column sep= 4em, text height=1.5ex, text depth= .25ex]{
|(0lo)| 0 & [-1.5em]
|(Bo)| \Omega^1_{B,b} &
|(Cp)|  \Omega^1_{\mc C',p'} &
|(Cpfib)| \Omega^1_{\mc C'_b,p'} &
|(0ro)| 0 \\
|(0lu)| 0 &
|(Bu)| \Omega^1_{B,b} &
|(C)|  \Omega^1_{\mc C,p} &
|(Cfib)|  \Omega^1_{\mc C_b,p} &
|(0ru)|  0 \\
};
\begin{scope}[->,font=\footnotesize]
\draw (0lo) -- (Bo);
\draw (Bo)--(Cp);
\draw (Cp)--(Cpfib);
\draw (Cpfib)--(0ro);
\draw (0lu) -- (Bu);
\draw (Bu)--(C);
\draw (C)--(Cfib);
\draw (Cfib)--(0ru);
\draw (Bo)--node{$\id$}(Bu);
\draw (Cp)--(C);
\draw (Cpfib)--(Cfib);
\end{scope}
\end{tikzpicture} 
\end{equation}
with exact rows. The leftmost vertical arrow is the identity, and hence an isomorphism, and the rightmost vertical arrow is an isomorphism because the morphism $\mc C_b\to \mc C'_b$ of the fibers induced by $f$ is an isomorphism by assumption. Therefore, the vertical arrow in the middle is an isomorphism as well, finishing the proof.
\end{proof}

\begin{proposition}

\label{prop:local lifts to atlas}
Let $\pi\colon \mc C\to B$ be a family of $n$-marked genus-$g$ Mumford $\TPL{}$-curves, let $b\in B$, and assume we are given a Cartesian diagram of \TPL{}-spaces
\begin{center}
\begin{tikzpicture}[auto]
\matrix[matrix of math nodes, row sep= 5ex, column sep= 4em, text height=1.5ex, text depth= .25ex]{
|(Cb)| \mc C_b	&
|(Istar)|	\Atlas{g,n\sqcup\{\star\}}	\\
|(b)| \{b\}	&	
|(I)| \Atlas{g,n}	\\
};
\begin{scope}[->,font=\footnotesize]
\draw (Cb) -- (Istar);
\draw (b)-- (I);
\draw (Cb) -- (b);
\draw (Istar) --node{$\pi_\star$} (I);
\end{scope}
\end{tikzpicture}
\end{center}
that is compatible with the sections.

Then there exists a neighborhood $U$ of $b$ in $B$ and morphisms
\begin{align*}
f& \colon U \to \Atlas{g,n} \\
f_\star&\colon\mc C_U \to  \Atlas{g,n\sqcup\{\star\}}
\end{align*}
that are compatible with the sections, such that the diagram 

\begin{center}
\begin{tikzpicture}[auto]
\matrix[matrix of math nodes, row sep= 5ex, column sep= 4em, text height=1.5ex, text depth= .25ex]{
|(Cb)| \mc C_b	&
|(CU)|	\mc C_U		&
|(Istar)|	\Atlas{g,n\sqcup\{\star\}}	\\
|(b)| \{b\}	&	
|(U)| U &
|(I)| \Atlas{g,n}	\\
};
\begin{scope}[->,font=\footnotesize]
\draw (Cb)-- (CU);
\draw (CU) -- node{$f_\star$}  (Istar);
\draw (b)-- (U);
\draw (U)--node{$f$}(I);
\draw (Cb) -- (b);
\draw (CU)--(U);
\draw (Istar) --node{$\pi_\star$} (I);
\end{scope}
\end{tikzpicture}
\end{center}
is commutative and both squares are Cartesian in the category of \TPL{}-spaces. Moreover, if $\mc C\to B$ is a family of tropical curves, then both $f$ and $f_\star$ are linear and the squares are Cartesian in the category of tropical spaces.
\end{proposition}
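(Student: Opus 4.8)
The plan is to build the lifts $f$ and $f_\star$ first as morphisms of \TPL{}-spaces making both squares Cartesian in \TPL{}-spaces, and only afterwards to upgrade to the tropical statement by invoking Proposition \ref{prop:maps to atlas are linear} and Lemma \ref{lem:bijection on fibers defines isomorphism}. The whole \TPL{}-construction takes place on a \TPL{}-trivialization of $\mc C\to B$ at $b$: I choose a local face structure $\Sigma$ together with the data $(G_\sigma,f_\sigma,\chi_\sigma)_{\sigma\in\Sigma}$, let $\tau\in\Sigma$ be the inclusion-minimal polyhedron containing $b$, and take $U=\vert\Sigma\vert$, a neighborhood of $b$.

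To construct $f$, I use that the given Cartesian square identifies $\mc C_b$ with the fiber of $\pi_\star$ over a point $p_b\in\relint(\overline\sigma_{\hat G})$ for a unique cycle-rigidified graph $\hat G$ with underlying graph $G_\tau$; transporting the cycle basis of $\hat G$ along this identification equips $G_\tau$ with a rigidification $\hat G_\tau$. For each $\sigma\in\Sigma$ the Mumford graph $G_\sigma$ specializes to $G_\tau$, and since no vertex may acquire positive genus this specialization only stretches edges and contracts non-loop edges; hence $H_1(G_\sigma;\Z)\to H_1(G_\tau;\Z)$ is an isomorphism and $\hat G_\tau$ lifts uniquely to a rigidification $\hat G_\sigma$ of $G_\sigma$, compatibly along the face relations in $\Sigma$. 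Composing each $f_\sigma\colon\sigma\to\overline\sigma_{G_\sigma}=\overline\sigma_{\hat G_\sigma}$ with the characteristic map $\overline\sigma_{\hat G_\sigma}\to\Atlas{g,n}$ then yields maps that glue to a morphism $f\colon U\to\Atlas{g,n}$, where Lemma \ref{lem:ridig graphs are rigid} ensures the cones are labeled unambiguously by rigidified graphs. By Proposition \ref{prop:atlas gives family of TPL-curves}, the \TPL{}-pullback $f^\ast\Atlas{g,n\sqcup\{\star\}}=U\times^{\mTPL}_{\Atlas{g,n}}\Atlas{g,n\sqcup\{\star\}}$ has combinatorial type $\hat G_\sigma$ over $\relint(\sigma)$, matching that of $\mc C_U$.

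To construct $f_\star$, I combine the trivialization isomorphisms $\chi_\sigma$ with the identification of $\mc C_{G_\sigma}$ with the restriction of $\pi_\star$ to $\overline\sigma_{\hat G_\sigma}$, producing over each $\sigma$ an isomorphism $\mc C_U\times_U\sigma\cong (f^\ast\Atlas{g,n\sqcup\{\star\}})\times_U\sigma$ compatible with the sections. These assemble into a single bijective morphism of families of \TPL{}-curves over $U$ that is a fiberwise isomorphism, so by Lemma \ref{lem:bijection on fibers defines isomorphism} it is an isomorphism of \TPL{}-spaces; I define $f_\star$ as its composite with the projection to $\Atlas{g,n\sqcup\{\star\}}$. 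By construction both squares are Cartesian in \TPL{}-spaces, the maps respect the sections, and they restrict over $b$ to the originally given data.

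For the tropical upgrade, assume $\mc C\to B$ is a family of tropical curves. Then $\mc C_U\to U$ is one as well (Lemma \ref{lem:pull-back of family is family}), and the above exhibits it as the \TPL{}-pullback of $\pi_\star$, which is a family of tropical curves by Proposition \ref{prop:family of tropical curve over atlas}; thus Proposition \ref{prop:maps to atlas are linear} shows $f$ and $f_\star$ are linear. Comparing $\mc C_U$ with the tropical fiber product $U\times_{\Atlas{g,n}}\Atlas{g,n\sqcup\{\star\}}$, which shares the same underlying \TPL{}-space by Proposition \ref{prop:fiber products of tropical spaces}, the linear fiberwise-iso induced by $f_\star$ is a tropical isomorphism by the linear part of Lemma \ref{lem:bijection on fibers defines isomorphism}, so both squares are Cartesian in tropical spaces. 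I expect the main obstacle to be the coherent \TPL{}-construction of the middle two paragraphs, namely lifting the cycle rigidification uniformly across all cones of the trivialization and checking that the fiberwise identifications patch into a genuine morphism of families; it is the rigidity of cycle-rigidified graphs and the Mumford hypothesis, which forbids loop contractions and keeps $H_1$ constant along the specializations occurring in the family, that make these lifts and gluings unambiguous.
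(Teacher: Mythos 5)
Your proposal is correct and follows essentially the same route as the paper's proof: a \TPL{}-trivialization at $b$, transporting the cycle rigidification from the given fiber identification to $G_\tau$, lifting it uniquely to each $G_\sigma$ via the Mumford hypothesis, gluing the resulting maps over $U=\vert\Sigma\vert$, and then invoking Proposition \ref{prop:maps to atlas are linear} and Lemma \ref{lem:bijection on fibers defines isomorphism} for the tropical upgrade. The only difference is cosmetic: you spell out why the rigidification lifts uniquely (specializations between Mumford types cannot contract loops, so $H_1$ is preserved), a point the paper leaves implicit.
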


\begin{proof}
let $(\Sigma,(G_\sigma,f_\sigma,\chi_\sigma)_{\sigma\in \Sigma})$ be a \TPL{}-trivialization of $\pi$ at $b$, and let $\tau\in \Sigma$ be the unique stratum with $b\in\relint(\tau)$. Let $x$ be the image of $b$ in $\Atlas{g,n}$, and let $H$ be the unique cycle-rigidified $n$-marked genus-$g$ stable graph with $x\in \relint(\sigma_H)$. The given isomorphism $\mc C_b\cong (\Atlas{g,n\sqcup\{\star\}})_b$ induces an isomorphism $H\cong G_\tau$ of stable graphs, and thus induces a cycle rigidification on $G_\tau$. For every face $\sigma\in\Sigma$, the stable graph $G_\sigma$ specializes to $G_\tau$. Since $\mc C\to B$ is a family of Mumford curves, the cycle rigidification on $G_\tau$ lifts uniquely to a cycle-rigidification on $G_\sigma$. With these cycle rigidifications, each pair of morphisms $(f_\sigma,\chi_\sigma)$ defines a Cartesian square
\begin{center}
\begin{tikzpicture}[auto]
\matrix[matrix of math nodes, row sep= 5ex, column sep= 4em, text height=1.5ex, text depth= .25ex]{
|(CU)|	\mc C_\sigma		&
|(Istar)|	\Atlas{g,n\sqcup\{\star\}}	\\
|(U)| \sigma &
|(I)| \Atlas{g,n}	\\
};
\begin{scope}[->,font=\footnotesize]
\draw (CU) -- node{$f_{\star,\sigma}$}  (Istar);
\draw (U)--node{$f_\sigma$}(I);
\draw (CU)--(U);
\draw (Istar) --node{$\pi_\star$} (I);
\end{scope}
\end{tikzpicture}
\end{center}
of \TPL{}-spaces that is compatible with the sections. By construction, the morphisms $f_\sigma$ and $f_{\star,\sigma}$ glue to morphisms $f\colon \vert \Sigma\vert\to \Atlas{g,n}$ and $f_\star\colon \mc C_{\vert \Sigma\vert} \to \Atlas{g,n \sqcup \{\star\}}$, so we can take $U=\vert\Sigma\vert$.

Now assume that $\mc C \to B$ is a family of tropical curves. By Proposition \ref{prop:maps to atlas are linear}, both $f$ and $f_\star$ are linear. By Lemma \ref{lem:bijection on fibers defines isomorphism}, it follows that the given squares are Cartesian in the category of tropical spaces.
\end{proof}

\begin{theorem}
\label{thm:existance of atlas}
The morphism $\Atlas{g,n}\to \MgnMf{g,n}$  defined by the family $\pi_\star\colon\Atlas{g,n\sqcup\{\star\}}\to \Atlas{g,n}$ is a covering of $\MgnMf{g,n}$, that is for any morphism $T\to \MgnMf{g,n}$ whose domain is a tropical space, the projection $T\times_{\MgnMf{g,n}} \Atlas{g,n}\to T$ is a local isomorphism whose underlying morphism of topological spaces is a covering.
\end{theorem}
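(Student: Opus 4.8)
The plan is to show directly that the projection $p\colon P\to T$, where $P\coloneqq T\times_{\MgnMf{g,n}}\Atlas{g,n}$, is a local isomorphism of tropical spaces whose underlying map is a topological covering. First I would note that $P$ is representable by a tropical space: since $\MgnMf{g,n}\to\Mgnbar{g,n}$ is an open immersion (Proposition \ref{prop:mumfopen}), the fiber product over $\MgnMf{g,n}$ coincides with the one over $\Mgnbar{g,n}$, which is a tropical space by Theorem \ref{thm:representable diagonal}. Letting $\mc C\to T$ denote the family classified by $T\to\MgnMf{g,n}$, the space $P$ represents isomorphisms between $\mc C$ and the pull-back of the universal family $\pi_\star$; a point of $P$ over $t\in T$ is thus a pair $(x,\phi)$ with $x\in\Atlas{g,n}$ and $\phi\colon\mc C_t\xrightarrow{\cong}(\Atlas{g,n\sqcup\{\star\}})_x$. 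The two projections are $p$ and $q\colon P\to\Atlas{g,n}$.

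I would then analyze the fibers of $p$. Transporting the cycle-rigidification of $x$ through $\phi$ assigns to $(x,\phi)$ a cycle-rigidification of $\mc C_t$, and this is a bijection onto the finite, nonempty set of such rigidifications: surjectivity is clear since a rigidification determines a point of $\Atlas{g,n}$ together with a tautological isomorphism, while injectivity follows from Lemma \ref{lem:ridig graphs are rigid}, because two pairs inducing the same rigidification differ by an automorphism of a cycle-rigidified curve, which must be trivial. The decisive combinatorial input, and the place where the Mumford hypothesis enters, is that inside $\Atlas{g,n}$ a specialization never contracts a loop (doing so would raise a vertex genus and leave the Mumford locus); it can only contract non-loop edges or stretch edges, operations that induce isomorphisms on $H_1$ and hence bijections on oriented cycle bases. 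Consequently, for $t'$ in a suitable neighborhood of $t$, specialization identifies the rigidifications of $\mc C_{t'}$ with those of $\mc C_t$ bijectively.

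Next I would produce local sections. Given $\tilde t=(x,\phi)\in p^{-1}(t)$, the isomorphism $\phi$ is exactly the Cartesian datum in the hypothesis of Proposition \ref{prop:local lifts to atlas}, which yields a neighborhood $U$ of $t$, a linear morphism $f\colon U\to\Atlas{g,n}$, and an isomorphism $\mc C_U\cong f^*\pi_\star$ of families of tropical curves; the universal property of $P$ turns these into a section $s\colon U\to P$ of $p$ with $s(t)=\tilde t$. By the uniqueness of the lift in Proposition \ref{prop:local lifts to atlas}, the rigidification of $\mc C_{t'}$ classified by $s(t')$ is the one specializing to that of $\tilde t$. Any point of $P$ sufficiently close to $\tilde t$ has $q$-image close to $x\in\relint(\sigma_G)$, hence lies in a cone of a graph specializing to $G$ and carries the rigidification specializing to that of $\tilde t$; by the bijection of the previous step this rigidification is unique, so the point lies in the image of $s$. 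Thus $s$ identifies $U$ with an open neighborhood of $\tilde t$ in $P$ on which $p$ is the inverse morphism, proving that $p$ is a local isomorphism at $\tilde t$.

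Finally I would assemble the covering. Fixing $t$, I enumerate $p^{-1}(t)=\{\tilde t_1,\dots,\tilde t_k\}$, take the sections $s_i\colon U_i\to P$ from the previous step, and set $U=\bigcap_i U_i$. Distinct rigidifications of $\mc C_t$ lift to distinct rigidifications of $\mc C_{t'}$, so the open sets $s_i(U)$ are pairwise disjoint; conversely every point of $p^{-1}(t')$ is a rigidification of $\mc C_{t'}$, which specializes to some $\tilde t_i$ and is therefore its unique lift $s_i(t')$. Hence $p^{-1}(U)=\bigsqcup_{i=1}^{k}s_i(U)$ with each $s_i$ an isomorphism onto its image over $U$, which is precisely the local triviality of a covering. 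I expect the genuinely delicate part to be the fiber-bijection analysis of the second paragraph — tracking how cycle-rigidifications transform under specialization and confirming that the Mumford condition rules out exactly the loop contractions that would otherwise destroy the constancy of the number of sheets.
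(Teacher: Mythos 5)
Your proposal is correct and takes essentially the same route as the paper's proof: identify the fiber of $T\times_{\MgnMf{g,n}}\Atlas{g,n}\to T$ over $t$ with the set of cycle-rigidifications of $\mc C_t$, use the Mumford hypothesis (specializations never contract loops, so rigidifications lift uniquely) to see that this count is locally constant, and invoke Proposition \ref{prop:local lifts to atlas} to produce the local sections that assemble into the covering. The only cosmetic difference is that you obtain the fiber count by a direct bijection with rigidifications via Lemma \ref{lem:ridig graphs are rigid}, whereas the paper runs an orbit--stabilizer computation over isomorphism classes of cycle-rigidified graphs; both hinge on the same rigidity lemma and yield the same answer.
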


\begin{proof}
Let $T\to \MgnMf{g,n}$ be a morphism from a tropical space $T$, represented by a family $\mc C\to T$ of $n$-marked genus-$g$ stable tropical curves. Then the fiber product $F\coloneqq T\times_{\MgnMf{g,n}} \Atlas{g,n}$ represents the functor that assigns to any tropical $T$-space $S\xrightarrow f T$ the set of pairs $(\varphi,\chi)$ consisting of a morphism $\varphi \colon S\to \Atlas{g,n}$ and an isomorphism $\chi\colon f^*\mc C\to \varphi^*\Atlas{g,n\sqcup\{\star\}}$. First we show that $F\to T$ has finite fibers and that the cardinality of the fibers is locally constant on $T$. Let $t\in T$, and let $G$ be the combinatorial type of $\mc C_t$. For every isomorphism type $G'$ of cycle-rigidified graph whose underlying stable graph is isomorphic to $G$  there is a point $x$ in the relative interior of $\overline\sigma_{G'}$ such that $\mc C_t$ is isomorphic to $(\Atlas{g,n\sqcup\{\star\}})_x$. If $C$ denotes the set of cycle-rigidifications of $G$, then there are precisely $\vert C/\Aut(G)\vert$ choices for $G'$. By Lemma \ref{lem:ridig graphs are rigid}, that is $\vert C\vert/\vert \Aut(G)\vert$ many choices. Now for each such choice of $G'$, the group $\Aut(G)$ acts naturally on $\overline\sigma_{G'}$, and it acts transitively on the set of all $x\in \relint(\sigma_{G'})$ such that $\mc C_t$ is isomorphic to $(\Atlas{g,n\sqcup\{\star\}})_x$. Finally, for every such $x$ there are $\Aut(G)_x$ many isomorphisms $\mc C_t\to (\Atlas{g,n\sqcup\{\star\}})_x$. Applying the Orbit-Stabilizer Theorem yields that $t$ has precisely 
\begin{equation}
\vert C\vert/\vert\Aut(G)\vert \cdot \vert \Aut(G)\vert= \vert C\vert 
\end{equation}
inverse images in $F$. We conclude that the number of inverse images of a point $t\in T$ is the number of cycle-rigidifications of the combinatorial type of $\mc C_t$. To see the that this number is locally constant on $T$, let $(\Sigma,(G_\sigma,f_\sigma,\chi_\sigma)_{\sigma\in\Sigma})$ be a \TPL{}-trivialization of $\mc C\to T$ at a point $t\in T$. Let $\tau\in \Sigma$ be the unique minimal polyhedron in $\Sigma$. Then $t\in \relint(\tau)$, and in particular $G_\tau$ is the combinatorial type of $\mc C_t$. For all $\sigma\in \Sigma$, the combinatorial type $G_\tau$ is a specialization of $G_\sigma$, so cycle-rigidifications on $G_\tau$ lift uniquely to cycle-rigidification of $G_\sigma$. Since for every $t'\in \relint(\sigma)$, the curve $\mc C_{t'}$ is of combinatorial type $G_\sigma$, this shows that the fibers of $F\to T$ have the same cardinality on $\vert \Sigma\vert$. Together with Proposition \ref{prop:local lifts to atlas}, this finishes the proof.
\end{proof}

Since in genus zero every curve is a Mumford curve, we immediately obtain the following corollary.

\begin{corollary}
\label{cor:representative for M0n}
For  $n\geq 3$, the stack $\Mgnbar{0,n}$ is represented by $\Atlass{0,n}=\Atlas{0,n}$.
\end{corollary}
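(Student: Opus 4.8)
The plan is to deduce the statement from the already-established fact (Theorem \ref{thm:existance of atlas}) that $\Atlas{g,n}\to \MgnMf{g,n}$ is a covering, by checking that in genus zero this covering has a single sheet and that its source is all of $\Atlass{0,n}$.

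First I would record the purely combinatorial input of genus zero. For a connected stable graph $G$ one has $g(G)=h^1(G)+\sum_{v\in V(G)}\gamma_G(v)$ with $h^1(G)=\#E(G)-\#L(G)-\#V(G)+1\geq 0$ and each $\gamma_G(v)\geq 0$; hence $g=0$ forces both $\gamma_G\equiv 0$ and $H_1(G;\Z)=0$. The vanishing of $\gamma_G$ for every genus-zero combinatorial type shows that every genus-zero stable tropical curve is a Mumford curve, so $\MgnMf{0,n}=\Mgnbar{0,n}$; it also shows that every cone $\sigma_G$ of $\Atlass{0,n}$ has $\gamma_G\equiv 0$, so by Definition \ref{def:good} the union $\Atlas{0,n}$ of the relative interiors $\relint(\sigma_G)$ is all of $\Atlass{0,n}$, giving the asserted equality $\Atlass{0,n}=\Atlas{0,n}$. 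The vanishing of $H_1(G;\Z)$ means that the only oriented cycle basis is the empty one, so each genus-zero stable graph carries a unique cycle rigidification.

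With these observations, Theorem \ref{thm:existance of atlas} already gives that $\Atlas{0,n}\to \MgnMf{0,n}=\Mgnbar{0,n}$ is a covering, and I would upgrade ``covering'' to ``isomorphism'' using the fiber count from its proof. Given any morphism $T\to \Mgnbar{0,n}$ from a tropical space, classifying a family $\mc C\to T$, the proof of Theorem \ref{thm:existance of atlas} identifies the fiber of the base change $T\times_{\Mgnbar{0,n}}\Atlas{0,n}\to T$ over a point $t$ with the set of cycle rigidifications of the combinatorial type of $\mc C_t$. By the previous paragraph this set is a singleton, so $T\times_{\Mgnbar{0,n}}\Atlas{0,n}\to T$ is a local isomorphism of tropical spaces whose underlying continuous map is a bijective covering, hence a homeomorphism and therefore an isomorphism of tropical spaces.

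Finally I would conclude representability: since the base change of $\Atlas{0,n}\to \Mgnbar{0,n}$ along every morphism from a tropical space is an isomorphism, this morphism is itself an isomorphism of stacks, which is exactly the statement that the tropical space $\Atlas{0,n}$ (equivalently $\Atlass{0,n}$) represents $\Mgnbar{0,n}$, with universal family $\pi_\star\colon \Atlas{0,n\sqcup\{\star\}}\to \Atlas{0,n}$. Essentially all of the work is inherited from Theorem \ref{thm:existance of atlas} together with the triviality of $H_1$ in genus zero; the sole additional point requiring care is the stack-theoretic upgrade from a degree-one covering to an isomorphism, namely that a universally bijective local isomorphism over the stack exhibits an equivalence. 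I do not expect this to present a genuine obstacle.
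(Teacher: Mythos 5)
Your proposal is correct and takes essentially the same route as the paper: the paper deduces the corollary ``immediately'' from Theorem \ref{thm:existance of atlas} together with the observation that in genus zero every curve is a Mumford curve, which is exactly your argument. The details you supply---that $g=0$ forces $\gamma_G\equiv 0$ and $H_1(G;\Z)=0$, so every stable graph has a unique (empty) cycle rigidification, the covering has singleton fibers, and a morphism from a tropical space to the stack whose every base change is an isomorphism is an equivalence---are precisely what the paper leaves implicit.
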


{
\begin{example}
\label{ex:fiber product over M11}
We have already noted earlier that there does not exist a local isomorphism  onto the full moduli space $\Mgnbar{g,n}$ for any $g>0$: in fact, the issue is topological; because the isotropy group of a tropical curve with vertices of genus greater one is too small compared to the isotropy groups of Mumford curves that are close to it in the moduli space, there does not  exist a local homeomorphism onto $\Mgnbar{g,n}$. This beckons the question whether there is a well-behaved class of morphisms of tropical spaces besides local isomorphisms in which one could look for an atlas for $\Mgnbar{g,n}$. We do not treat this question in detail, but instead provide a computation that makes this seem like a fruitless endeavor. Namely, we discuss the surjections $f_a\colon \T\PP^1\to \Mgnbar{1,1}$ associated to the families $\mc C^a\to \T\PP^1$ from Example \ref{ex:tropfam}. To answer the question if these are ``well-behaved'', we compute the fiber products $F_{a,b}= \T\PP^1 {\times}_{f_a,\Mgnbar{1,1},f_b}\T\PP^1$ for all $a,b\in \Z$. If $p_1,p_2\colon \T\PP^1\times \T\PP^1\to \T\PP^1$ denote the projections to the first and second factor, the fiber product $F_{a,b}$ is given by $\Isom_{\T\PP^1\times \T\PP^1}(p_1^*\mc C^a,p_2^*\mc C^b)$. This is computed following the proof of Theorem \ref{thm:representable diagonal}. Analogously to Example \ref{ex:diagonal is representable}, one sees that $F_{a,b}$ is the tropical space obtained by doubling the rays of the tropical subspace
\begin{equation}
    S= \Rbar_{\geq 0} \begin{pmatrix}
    1\\1\\0
    \end{pmatrix}
    \cup
    \Rbar_{\geq 0} \begin{pmatrix}
    1\\-1\\b
    \end{pmatrix}
    \cup\Rbar_{\geq 0} \begin{pmatrix}
    -1\\-1\\b-a
    \end{pmatrix}
    \cup\Rbar_{\geq 0} \begin{pmatrix}
    -1\\1\\-a
    \end{pmatrix}
\end{equation}
of $(\T\PP^1)^3$. More precisely, the fiber product $F_{a,b}$ is obtained by gluing two copies of $S$ at their respective origins $(0,0,0)$; the affine functions are generated by pull-backs of affine functions on $S$ on the glued space. The two projections from $F_{a,b}$ to $\T\PP^1$ are given by the composite of the natural map $F_{a,b}\to S$ and the projections from $S$ onto the first and second coordinate of $(\T\PP^1)^3$. One first notes that, as expected, neither of the projections is a local isomorphism, or even a local homeomorphism, over the origin $0\in \T\PP^1$ due to the lack of nontrivial automorphisms of $\mc C^a_0$ and $\mc C^b_0$. Moreover, neither $F_{a,b}$ nor $S$ is smooth over the origin even though $\T\PP^1$ is; if $a\neq b$ there even is a non-harmonic affine function on $F_{a,b}$. In particular, tropical cycles on either of the $\T\PP^1$ factors can in general not be pulled back to $F_{a,b}$ (see Section \ref{sec:intersection theory} for the definition of tropical cycles on tropical spaces).
\end{example}
}

\subsection{The moduli space $\Mgnbar{0,n}$ is a tropical toric variety}

It is known \cite{SpeyerSturmfels,GathmannKerberMarkwig} that the underlying set of $\Mgnbar{0,n}$ can be mapped onto a fan in affine space using the distances between the markings. Let us quickly recall the details. One embeds $\R^n$ into $\R^{n\choose 2}$ (where $n\choose 2$ counts the  two-element subsets of $[n]$) by mapping a tuple $(x_i)_{i}$ to the tuple $(x_i+x_j)_{\{i,j\}}$. We can then map a moduli point $[\Gamma]\in\Mgn{0,n}$ to the point $d(\Gamma)\coloneqq(d_\Gamma(i,j)/2)_{\{i,j\}}$ in the quotient $\R^{n\choose 2}/\R^n$, where $d_\Gamma(i,j)$ denotes the distance between 
the vertices opposite to the markings $i$ and $j$, respectively. The image of all curves of a given combinatorial type is a an open cone, and the closures of these cones define a rational, polyhedral fan in $\R^{n\choose 2}/\R^n$, which we  denote by $\Sigma_n$. 

\begin{remark}
The factor $\frac 12$ in the definition of $d(\Gamma)$ does not appear in \cite{GathmannKerberMarkwig} or \cite{SpeyerSturmfels}, but has been shown in \cite{GrossCorrespondence} to appear when tropicalizing the Pl{\"u}cker-coordinates on the algebraic moduli space  $\Mgnbar{0,n}^\alg$ of $n$-marked genus-zero stable curves.
\end{remark}

\begin{lemma}
\label{lem:cross ratio vs distance coordinates}
Let $C$ be the subgroup of the integral linear functions on $\R^{n\choose 2}/\R^n$ generated by the functions
\begin{equation}
x_{ij}+x_{kl}-x_{ik}-x_{jl} 
\end{equation}
corresponding to four distinct elements $i,j,k,l\in [n]$.
Then the functions in $C$ are precisely the integral linear functions on $\R^{n\choose 2}/\R^n$. 
\end{lemma}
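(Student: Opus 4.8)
The plan is to translate the statement into a question about the integer kernel of an incidence matrix, and then to exhibit an explicit $\Z$-basis of that kernel whose members lie in $C$.

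First I would pull integral linear functions on $\R^{\binom n2}/\R^n$ back along the quotient map: such a pullback is an integral linear functional $\sum_{\{i,j\}}c_{ij}x_{ij}$ that vanishes on the image of $\R^n$, i.e.\ satisfies $\sum_{j\ne m}c_{mj}=0$ for every vertex $m$. Since the quotient map sends $\Z^{\binom n2}$ onto the integral points of the quotient, this pullback identifies the lattice $\Lambda$ of all integral linear functions on $\R^{\binom n2}/\R^n$ with $\ker_\Z B=\{c\in\Z^{\binom n2}:\sum_{j\ne m}c_{mj}=0\ \forall m\}$, where $B$ is the (unsigned) vertex--edge incidence matrix of the complete graph $K_n$; concretely $\ker_\Z B$ is the lattice of integer edge-weightings of $K_n$ whose weighted degree vanishes at every vertex. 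Under this identification the generator $x_{ij}+x_{kl}-x_{ik}-x_{jl}$ becomes the weighting supported on the $4$-cycle $i\,j\,l\,k$ with alternating signs $+,-,+,-$; a one-line check shows its weighted degree is zero at each of $i,j,k,l$, so $C\subseteq\Lambda$ and it remains to prove $\Lambda\subseteq C$.

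For the reverse inclusion I would fix the pivot set of edges $P=\{\{1,j\}:2\le j\le n\}\cup\{\{2,3\}\}$, whose underlying graph is the star centred at $1$ together with the edge $\{2,3\}$. This graph is connected and non-bipartite (it contains the triangle on $\{1,2,3\}$) and has exactly $n$ vertices and $n$ edges, so the $n\times n$ block $B_P$ of $B$ is non-singular; hence $\ker_\Z B$ contains no nonzero weighting supported on $P$. The complementary \emph{free} edges are exactly the edges among $\{2,\dots,n\}$ other than $\{2,3\}$, and their number $\binom{n-1}2-1=\binom n2-n$ matches the rank of $\ker_\Z B$. The heart of the argument is to produce, for each free edge $e$, an element $b_e\in C$ whose only nonzero coordinate among the free edges is a $+1$ on $e$ itself. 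For $e=\{2,j\}$ or $e=\{3,j\}$ with $j\ge 4$ a single alternating $4$-cycle on $\{1,2,3,j\}$ suffices, since its other three edges all lie in $P$; for $e=\{i,j\}$ with $4\le i<j$ I would take $b_e$ to be a sum of two $4$-cycles, one on $\{1,2,i,j\}$ carrying $e$ together with an auxiliary free edge $\{2,j\}$, and one on $\{1,2,3,j\}$ that cancels $\{2,j\}$ while contributing only pivot edges.

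With these $b_e$ in hand the conclusion is immediate: given any $a\in\ker_\Z B$, the difference $a-\sum_{e\ \mathrm{free}}a_e\,b_e$ lies in $\ker_\Z B$ and has all free coordinates equal to zero, hence is supported on $P$; by non-singularity of $B_P$ it vanishes, so $a=\sum_e a_e\,b_e\in C$. The main obstacle, and the reason the even $4$-cycles rather than triangles are forced on us, is that $B$ is not unimodular: each square block coming from a spanning non-bipartite unicyclic subgraph has determinant $\pm 2$, so the pivot columns generate only the index-two "even-sum" sublattice of $\Z^n$. This is exactly what defeats a naive one-generator-per-free-edge basis and makes the two-term construction of $b_e$ for the interior edges the crucial point; once that combinatorial bookkeeping is done, the linear-algebra wrap-up is routine. (For $n=3$ the free-edge set and $C$ are both empty, so the statement holds vacuously.)
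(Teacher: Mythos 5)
Your proof is correct, and it takes a genuinely different route from the paper's. Your setup is sound: integral linear functions on $\R^{\binom{n}{2}}/\R^n$ pull back exactly to integral functionals $\sum c_{ij}x_{ij}$ with $\sum_{j\neq m}c_{mj}=0$ for every $m$, i.e.\ to the integer kernel of the unsigned vertex--edge incidence matrix of $K_n$; the generators become alternating $4$-cycles; the pivot block on the star at $1$ plus the edge $\{2,3\}$ (spanning, connected, unicyclic, non-bipartite) is nonsingular; and your elements $b_e$ check out, e.g.\ for $e=\{i,j\}$ with $i,j\geq 4$ the combination $(x_{ij}+x_{12}-x_{1i}-x_{2j})+(x_{2j}+x_{13}-x_{23}-x_{1j})$ is a sum of two generators whose only free coordinate is $+1$ on $\{i,j\}$, so subtracting $\sum_e a_e b_e$ from any kernel element leaves something supported on the pivot edges, which must vanish. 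The paper instead runs a lexicographic extremal argument: assuming some integral function lies outside $C$, it takes one whose minimal support index $s=\{m<k\}$ is as large as possible, notes that a function supported on the four largest coordinates must be zero, and subtracts the single generator $a_{mk}(x_{mk}+x_{in}-x_{mn}-x_{ik})$ to clear the coefficient at $s$ and push the support strictly upward, contradicting maximality. Both proofs are elimination by generators, but you eliminate all free coordinates at once against an explicitly constructed $\Z$-basis of the kernel lattice lying inside $C$, whereas the paper eliminates one coordinate at a time against a term order. Your version buys constructiveness and transparency: it exhibits an explicit basis and isolates exactly the lattice-theoretic subtlety (the pivot block has determinant $\pm 2$, so unimodularity cannot be invoked) that a sloppier argument would trip over. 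The paper's version needs no graph-theoretic apparatus and is shorter to state, but its lexicographic bookkeeping is the more delicate part to verify.
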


\begin{proof}
First of all, note that the functions $x_{ij}+x_{kl}-x_{ik}-x_{jl}$ are integral linear functions on $\R^{n\choose 2}$ that vanish on $\R^n$. Therefore, they define integral linear functions on the quotient space. 

Assume there exists a nonzero integral linear function $f$ on $\R^{n\choose 2}/\R^n$ that is not contained in $C$. It can be represented by an integral linear function $F$ on $\R^{n\choose 2}$ that vanishes on $\R^n$. We can uniquely express $F$ as
\begin{equation}
F=\sum_{e\in \R^{n\choose 2}} a_e x_e \ ,
\end{equation}
where the coefficients $a_e\in \Z$. Order $\left[{n\choose 2}\right]$ lexicographically. Let
$
S_F=\left\{e \in \left[{n\choose 2}\right]\mid a_e\neq 0\right\} \ $ and
\begin{equation}
s = \max\{\min \left(e \in S_F) \mid F\not\in C\right \};
\end{equation}

in simple terms, any function supported on elements strictly greater than $s$ must belong to $C$. It is an elementary linear algebra exercise to verify that if a function's support is a subset of the four largest coordinate functions, then it is identically zero and hence it belongs to $C$. We may therefore assume that $s = \{m<k\}<\{n-3<n\}$. Choose $F\notin C$  such that $\min(S_F) = s$; since $F$ must vanish on $\R^n$, $k<n$. Choose one element $i \in \{n-3, n-2, n-1\}\smallsetminus \{m,k\}$.


Define $F'$ as
\begin{equation}
F'=F-a_{mk}\cdot (x_{mk}+x_{in}-x_{mn}-x_{ik}) \ .
\end{equation}
Then the coefficient of $x_{mk}$ is zero in $F'$ and for all coordinates $x_e$ that appear in $F'$ with nonzero coefficient, we have $e>\{m<k\}$ . As the integral linear function $f'$ on $\R^{n\choose 2}/\R^n$ induced by $F'$ is still not contained in $C$, this is a contradiction. 
\end{proof}

\begin{theorem}
\label{thm:Mg0n is what it's supposed to be}
The map $d\colon \Mgn{0,n}\to \R^{n\choose 2}/\R^n$ is a closed embedding, which extends to an isomorphism $\Mgnbar{0,n}\to \overline \Sigma_n$.
\end{theorem}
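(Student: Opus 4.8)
The plan is to use Corollary \ref{cor:representative for M0n}, which identifies $\Mgnbar{0,n}$ with the extended cone complex $\Atlass{0,n}=\Atlas{0,n}$ as a tropical space, and then to prove that the distance map $d$ defines an isomorphism $\Atlass{0,n}\xrightarrow{\cong}\overline\Sigma_n$ of tropical spaces. The argument splits into two parts: first that $d$ is a homeomorphism of the underlying extended cone complexes, and second that it matches the two affine structures. The second part is where the cross-ratio machinery enters and is the crux of the proof.

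For the first part I would appeal to the known tree-metric results \cite{SpeyerSturmfels,GathmannKerberMarkwig}. On each cone $\sigma_G$ the map $d$ is the restriction of an integral linear map, since the distances between leaves are integral linear functions of the edge lengths; hence $d$ is a morphism of \TPL{}-spaces. The four-point condition, together with the uniqueness of the tree realizing a given tree metric, shows that $d$ is injective, that it carries $\relint(\sigma_G)$ bijectively onto the relative interior of a cone of $\Sigma_n$, and that it respects the face structure. Therefore $d$ is a homeomorphism of cone complexes, which extends to a homeomorphism of the associated extended cone complexes $\Atlass{0,n}\to\overline\Sigma_n$. As the support of $\Sigma_n$ is closed in $\R^{n\choose 2}/\R^n$, the restriction to the finite locus $\Mgn{0,n}$ is in particular a closed embedding.

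It then remains to identify the affine structures under this homeomorphism, that is to show $d^*\Aff_{\overline\Sigma_n}=\Aff_{\Atlass{0,n}}$. By the construction of the affine structure on a tropical toric variety and by Lemma \ref{lem:cross ratio vs distance coordinates}, the sheaf $\Aff_{\overline\Sigma_n}$ is generated by the functions $x_{ij}+x_{kl}-x_{ik}-x_{jl}$. On the other side, Proposition \ref{prop:primitive cross ratios generate all cross ratios} shows that $\Aff_{\Atlass{0,n}}$ is generated by the primitive cross ratios. The key step is that these two generating sets correspond under $d$. Using the interpretation of a primitive cross ratio as the signed length of the overlap of two paths (Remark \ref{rem:cross-r}) and choosing a leaf in each of the four relevant directions, one sees that every primitive cross ratio, of vertex or of edge type, equals the signed overlap of two leaf-to-leaf paths, and that such an overlap is exactly the four-point expression $\pm(x_{ij}+x_{kl}-x_{ik}-x_{jl})$; conversely each such distance expression pulls back to a primitive cross ratio. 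The two inclusions $d^*\Aff_{\overline\Sigma_n}\subseteq\Aff_{\Atlass{0,n}}$ and $\Aff_{\Atlass{0,n}}\subseteq d^*\Aff_{\overline\Sigma_n}$ follow, and hence $d$ is an isomorphism of tropical spaces.

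The main obstacle I anticipate is the bookkeeping in this final matching. One must verify that the signed-overlap value of a primitive cross ratio is independent of the chosen leaf representatives in the four directions (the contributions cancel in the distance differences), keep track of the signs arising from the orientations of the two paths, and treat vertex-type and edge-type primitive cross ratios uniformly. Everything else is either cited tree combinatorics or an immediate consequence of the generation statements in Lemma \ref{lem:cross ratio vs distance coordinates} and Proposition \ref{prop:primitive cross ratios generate all cross ratios}.
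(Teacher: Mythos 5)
Your treatment of the finite locus is essentially the paper's own argument: the identity between the pull-backs $d^*(x_{ij}+x_{kl}-x_{ik}-x_{jl})$ and vertex-type cross ratios (the paper's equation \eqref{eq:crtocr}), together with the trick of replacing the four flags of a primitive cross ratio by legs $g_i$ chosen in the appropriate connected components of the complement, is exactly how the paper proves that $d$ is linear and that the stalks $\Aff_{\Mgn{0,n},p}$ are generated by pull-backs via $d$. So on $\Mgn{0,n}$ your proposal is sound and matches the paper's route, including the sign/representative bookkeeping you flag, which the paper disposes of by checking the three combinatorial types of the subtree spanned by four leaves.

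The genuine gap is at the points at infinity, i.e., in the claim that $d$ extends to an isomorphism $\Mgnbar{0,n}\to\overline\Sigma_n$ of tropical spaces rather than merely of \TPL{}-spaces. You assert that $\Aff_{\overline\Sigma_n}$ is generated by the functions $x_{ij}+x_{kl}-x_{ik}-x_{jl}$, citing the toric construction and Lemma \ref{lem:cross ratio vs distance coordinates}; but that lemma concerns integral linear functions on the vector space $\R^{n\choose 2}/\R^n$ and only describes the sheaf on the open dense part $\Sigma_n$. Near an infinite point of $\overline\Sigma_n$, most four-point functions take the value $\infty$ (their path overlap contains an infinite edge), so they are not sections of $\Aff_{\overline\Sigma_n}$ there at all: by the definition of the affine structure on a tropical toric variety, the stalk at an infinite point consists of the finite-valued germs whose restriction to $\Sigma_n$ is affine, equivalently the affine functions that are constant in the infinite directions. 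The inclusion you need at such a point — that every such germ is a combination of four-point functions, equivalently of cross ratios, that remain finite near the point — is a nontrivial statement; it is precisely the content of Lemma \ref{lem:cross ratios at infinity}, whose proof requires decomposing an affine function as $\xi=\xi_b+\xi_\infty$ and using a linear-independence argument on the infinite directions to force $\xi_\infty=0$. Since your two-sided generation argument never invokes this lemma (or reproves its content), it establishes the isomorphism only over the finite locus and leaves the comparison of affine structures at the boundary, and hence the extension statement of the theorem, unproved.
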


\begin{proof}
Since we defined the \TPL{}-space $\overline{\mathcal M}_{0,n}$ as an extended cone complex in the same way as in \cite{SpeyerSturmfels,GathmannKerberMarkwig}, it follows 
that $d$ extends to an isomorphism $\Mgnbar{0,n}\to \overline\Sigma_n$ of \TPL{}-spaces. To show that $d$ is linear, it suffices to show that $d^*(x_{ij}+x_{kl}-x_{ik}-x_{jl})$ is integral affine on $\overline{\mathcal M}_{0,n}$ for all distinct $i,j,k,l\in [n]$ by Lemma \ref{lem:cross ratio vs distance coordinates}. For any moduli point $[\Gamma]\in \Mgn{0,n}$, the value of $x_{ij}+x_{kl}-x_{ik}-x_{jl}$ on $d(\Gamma)$ only depends on the subtree of $\Gamma$ spanned by the leaves marked by $i$, $j$, $k$ , and $l$. By checking this on the three combinatorial types for this subtree, one sees that
\begin{equation}\label{eq:crtocr}
d^*(x_{ij}+x_{kl}-x_{ik}-x_{jl})=\xi_{((f_i,f_l),(f_k,f_j))} \ ,
\end{equation}
where for $m\in \{i,j,k,l\}$ we denote by $f_m$ the leg marked by $m$ in the $n$-marked tree $T_0$ consisting of one vertex with $n$ legs attached. To show that $d$ is a closed embedding we need to show that at every moduli point $p\in \Mgn{0,n}$, the stalk $\Aff_{\Mgn{0,n},p}$ is generated by pull-backs of integral affine functions via $d$. It follows from \eqref{eq:crtocr} that this is true if $p=[T_0]$;  it then suffices to show that the stalk $\Aff_{\Mgn{0,n},p}$ is generated by the cross ratios on $T_0$. Let $G$ be the unique $n$-marked genus-zero graph with $p\in \relint(\sigma_G)$, and let $\xi$ be a primitive cross ratio on $G$. Then either $\xi=\xi_{(e,(f_1,f_2),(f_3,f_4))}$ or $\xi=\xi_{((f_1,f_2),(f_3,f_4))}$. 
For $1\leq i\leq 4$ choose a leg $g_i$ of $G$ in the connected component of $G\smallsetminus f_i$ that does not contain any other of the $f_j's$ (informally, make a simple path in the graph starting in the direction pointed by $f_i$ until you reach a leg). 
The marked legs $g_i$ can also be viewed as legs on $T_0$ and one sees that $\xi=\xi_{((g_1,g_2),(g_3,g_4))}$, where the latter is defined by a cross ratio datum on $T_0$.

To show that $d$ induces an isomorphism $\Mgnbar{0,n}\to \overline\Sigma_n$ we need to compare the affine structures at the infinity points. Let $x$ be an infinite point of $\overline\Sigma_n$, and let $\sigma\in \Sigma_n$ such that $x\in \overline\sigma$, and let $\tau$ be the face of $\sigma$ corresponding to the infinite face of $\overline\sigma$ that contains $x$ in its relative interior. By definition, the integral affine functions at $x$ are precisely the functions in a neighborhood of $\relint(\sigma)$ that are constant on $y+\relint(\tau)$ for any $y\in \relint(\sigma)$. By Lemma \ref{lem:cross ratios at infinity}, the analogous statement holds on $\Mgnbar{0,n}$. As we have already shown that $d$ defines an isomorphism between $\Mgn{0,n}$ and $\Sigma_n$, this completes the proof.
\end{proof}

\section{Tautological morphisms} \label{sec:morphisms}

{
The universal family over $\MgnMf{g,n}$ is constructed using the forgetful morphism (of tropical spaces) described in Section \ref{subsec:family over atlas}. In this section we show the forgetful morphism is a morphism of stacks; to do so,  one needs to define the stabilization of a \emph{family} of stable curves after forgetting a section. 
}

\begin{definition}
\label{def:stabilization}
Let $\pi\colon \mc D\to B$ be a  family of $(n\sqcup\{\star\})$-marked genus-$g$ stable tropical curves. When one forgets the $\star$-section, the fibers of the family are not $n$-marked  stable curves because they have an unmarked smooth infinite point.  A \textbf{stabilization} of $\pi$ consists of a family $\epsilon \colon \mc C\to B$ of $n$-marked genus-$g$ stable tropical curves, together with a morphism $\phi\colon \mc D\to \mc C$ over $B$ that respect the genus functions and the $i$-marked section for all $i\in I$, and such that 
\begin{enumerate}
\item For every $b\in B$, the induced morphism $\mc D_b\to \mc C_b$ is the stabilization of the smooth tropical curve $\mc D_b$ after forgetting the $\star$-marking.
\item $\mc C$ has the quotient topology with respect to $\phi$. 
\item for every open subset $U\subseteq \mc C$, we have 
\begin{equation} 
\Gamma(U,\PL_{\mc C})= \{\varphi\colon U\to \Rbar \mid \varphi\circ \phi\in \Gamma(\phi^{-1} U,\PL_{\mc D})\} \ .
\end{equation}
\item The  pull-back morphism $\Aff_{\mc C}\to \phi_*\Aff_{\mc D}$ is an isomorphism.
\end{enumerate}
\end{definition}
See Figure \ref{fig:stabi} for an illustration of Definition \ref{def:stabilization}.
The following Lemma shows that if a stabilization exists, then it is unique up to isomorphism.

\begin{figure}
    \centering

    \vspace{1cm}
    
    \begin{tikzpicture}[thick]
        \begin{scope}
            \node at (1.5,2) {$\mc D$};
        
            \coordinate (lv) at (0,0);
            \coordinate (rvt) at (2.5,0);
            \coordinate (rvb) at (2.5,-2);
            \coordinate (lstar) at ($(lv)-(1,1.2)$);
            \coordinate (rstar) at ($(rvb)-(1,1.2)$);
            \coordinate (l1) at ($(lv)+(-1,1.2)$);
            \coordinate (r1) at ($(rvt)+(-1,1.2)$);
            \coordinate (l2) at ($(lv)+(1,1.4)$);
            \coordinate (r2) at ($(rvt)+(1,1.4)$);
            \coordinate (l3) at ($(lv)+(.2,-1.4)$);
            \coordinate (r3) at ($(rvb)+(.2,-1.4)$);
            
            \coordinate (x) at ($(lv)!.6!(rvb)$);
            \begin{scope}
                 \clip[rotate=141.34,xslant=-.8,yscale=.6,shift=(x)]   (.4,0) arc (0:180:.4) [yscale=1/0.6,xslant=.8,rotate=-141.34]-- ($(-1,-1.2)+(-38.66:.4)$)--++(141.34:.8)--cycle;
                \foreach \a in {-5,...,5}
                {
                     \draw [gray] ($(lv)!.6+\a/30!(rvb)$)--($(lv)!.6+\a/30!(rvb)-(1,1.2)$);
                }
            \end{scope}
            \draw[thin]  (x) -- ++(141.34:.4)-- ++(-1,-1.2)--++(-38.66:.8)--++($0.9*(1,1.2)$);
            \node [pin={-170:$\phi^{-1}U$}] at ($(x)-0.8*(1,1.2)$) {};

            
            \begin{scope}            
                \clip (x) --+(-38.66:.4) arc (-38.66:141.34:.4)--cycle;    
                \foreach \a in {-5,...,5}
                    \draw [gray] ($(lv)!.6+\a/27!(rvb)+(0,1)$)--($(lv)!.6+\a/27!(rvb)+(0,-1)$);
            \end{scope}
            \draw[thin] (x) --+(-38.66:.4) arc (-38.66:141.34:.4)--cycle;
            
            \begin{scope}
                \clip [rotate=141.34,xslant=-.8,yscale=.6] (x) --+(.4,0) arc (0:180:.4)--cycle;
                \foreach \a in {-5,...,5}
                    \draw [gray] ($(lv)!.6+\a/27!(rvb)$)--($(lv)!.6+\a/27!(rvb)+(.2,-1.4)$);
            \end{scope}            
            \draw [thin,rotate=141.34,xslant=-.8,yscale=.6] (x) --+(.4,0) arc (0:180:.4)--cycle;

            \draw (lv)-- (rvt) -- (rvb)--cycle;
            \draw (lv)--(l1);
            \draw[dashed]($(l1)!.7!(r1)$)--(r1)--(rvt);
            \draw (l1)--($(l1)!.7!(r1)$);

            \draw (lv)--(l2)--(r2)--(rvt);
            
            \draw (lv)--(lstar);
            \draw [ultra thick](lstar) --
                node [at start,label={[label distance=-.15cm]left:$\star$}]{}
            (rstar);
            \draw (rstar)--($(rstar)!.4!(rvb)$);
            \draw[dashed] ($(rstar)!.4!(rvb)$)--(rvb);

            \draw (lv)--(l3)--(r3)--(rvb);


            
   
        \end{scope}

        \draw [->] (4,-1) -- 
            node[above]{$\phi$}
        (5.5,-1);

        \begin{scope}[xshift=7cm]
            \node at (1.5,2) {$\mc C$};
            
            \coordinate (lv) at (0,0);
            \coordinate (rvt) at (2.5,0);
            \coordinate (rvb) at (2.5,-2);
            \coordinate (l1) at ($(lv)+(-1,1.2)$);
            \coordinate (r1) at ($(rvt)+(-1,1.2)$);
            \coordinate (l2) at ($(lv)+(1,1.4)$);
            \coordinate (r2) at ($(rvt)+(1,1.4)$);
            \coordinate (l3) at ($(lv)+(0,-3.1)$);
            \coordinate (r3) at ($(rvb)+(0,-1.6)$);

            \begin{scope}
                \coordinate (center) at ($(lv)!.6!(rvb)$);
                \node at ($(center)-(0,.65)$) {$U$};
            
            \clip (center) circle (.4);    
            \foreach \a in {-5,...,5}
                \draw [gray] ($(lv)!.6+\a/27!(rvb)+(0,2)$)--($(lv)!.6+\a/27!(rvb)+(0,-2)$);
            \end{scope}
            \draw[thin] (center) circle (.4);

            \draw (lv)-- (rvt) -- (rvb)--cycle;
            \draw (lv)--(l1);
            \draw[dashed]($(l1)!.7!(r1)$)--(r1)--(rvt);
            \draw (l1)--($(l1)!.7!(r1)$);
            
            \draw (lv)--(l2)--(r2)--(rvt);
            
            \draw (lv)--(l3)--(r3)--(rvb);


  
        \end{scope}
        
        \begin{scope}[xshift=3.5cm, yshift=-6cm]
        \draw [<-] (.2,.7)--
            node[auto] {$\pi$}
        +(-1.5,1.3);
        \draw [<-] (2.3,.7)--
            node[auto,swap]{$\epsilon$}
        +(1.5,1.3);
        
        \draw (0,0)--
            node[below]{$B$}
        (2.5,0);        
        \end{scope}
    \end{tikzpicture}
    \caption{The stabilization of a family of curves. The picture shows a neighborhood  $U$ of a point in the image under $\phi$ of the $\star$-marked leg: its inverse image contains infinite $\star$-marked legs, which implies that any function in  $\phi_*\Aff_{\mc D}(U)$ must have zero slope along the $\star$-leg. }
    \label{fig:stabi}
\end{figure}
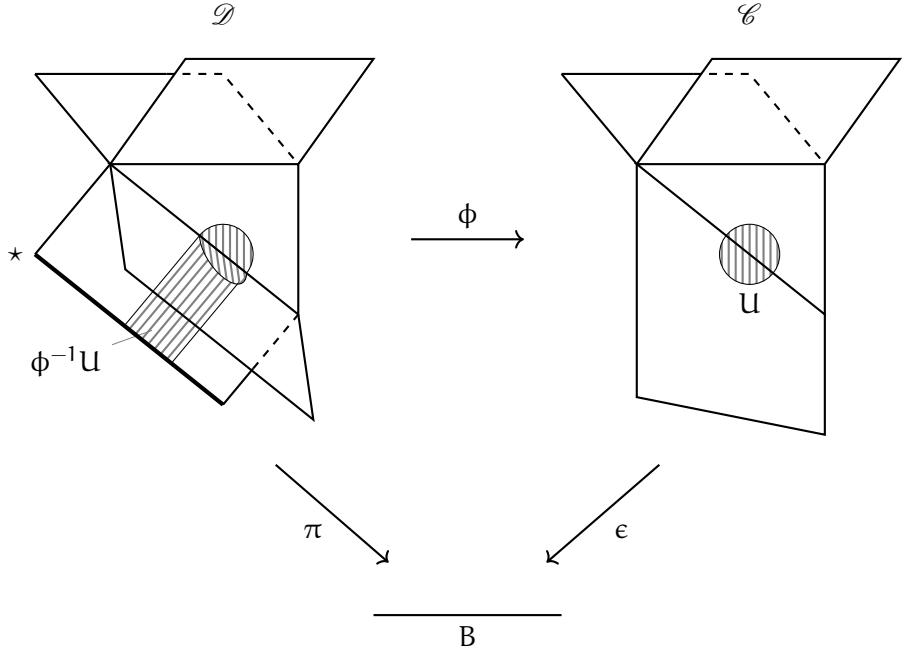

\begin{lemma}
\label{lem:uniqueness of stabilizations}
Let $\phi\colon\mc D\to \mc C$ be the stabilization of an $(n\sqcup\{\star\})$-marked family of stable tropical curves $\mc D\to B$ after forgetting the $\star$-section, let $\mc C'\to B$ be an $n$-marked family of stable tropical curves, and let $\phi'\colon\mc D\to \mc C'$ be a morphism over $B$ that exhibits $\mc C'_b$ as the stabilization of $\mc D_b$ for all $b\in B$. Then there exists a unique morphism $\varphi\colon \mc C\to \mc C'$ such that $\varphi\circ \phi=\phi'$, and this morphism $\varphi$ is an isomorphism of families of tropical curves. 
\end{lemma}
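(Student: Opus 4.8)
The plan is to construct $\varphi$ using the universal property encoded in the definition of stabilization, and then to verify that it is an isomorphism fiberwise, whence an isomorphism of families by Lemma \ref{lem:bijection on fibers defines isomorphism}. The uniqueness statement will be the easy part and should be dispatched first: since $\phi$ is surjective (its fibers $\mc D_b\to \mc C_b$ are surjective stabilizations) and $\mc C$ carries the quotient topology with respect to $\phi$ by condition (2) of Definition \ref{def:stabilization}, any two morphisms $\varphi,\varphi'$ with $\varphi\circ\phi=\varphi'\circ\phi=\phi'$ must agree as maps of sets, and agreement of the underlying continuous maps forces agreement as morphisms of \TPL{}-spaces (hence of tropical spaces). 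So uniqueness reduces to the set-theoretic surjectivity of $\phi$.

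For existence, I would first build $\varphi$ as a map of sets. For each $b\in B$, the fiberwise stabilization morphisms $\phi_b\colon \mc D_b\to \mc C_b$ and $\phi'_b\colon \mc D_b\to \mc C'_b$ are both stabilizations of the same curve $\mc D_b$ after forgetting $\star$, and stabilization of a single curve is unique up to unique isomorphism; this yields a canonical bijection $\varphi_b\colon \mc C_b\to \mc C'_b$ with $\varphi_b\circ\phi_b=\phi'_b$. Assembling these over all $b$ gives a map of sets $\varphi\colon \mc C\to \mc C'$ over $B$ with $\varphi\circ\phi=\phi'$. The content is then to promote $\varphi$ to a morphism of tropical spaces. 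Continuity follows from the quotient-topology condition (2): a subset $W\subseteq \mc C'$ is open iff $\phi'^{-1}W=\phi^{-1}(\varphi^{-1}W)$ is open in $\mc D$ (using that $\phi'$ is itself a morphism, hence continuous, and that $\mc C'$ likewise carries an appropriate topology), so $\varphi^{-1}W$ is open by the defining property of the quotient topology on $\mc C$. That $\varphi$ pulls back piecewise linear functions to piecewise linear functions is precisely condition (3): for $\psi\in\Gamma(W,\PL_{\mc C'})$ we have $\psi\circ\varphi\circ\phi=\psi\circ\phi'\in\Gamma(\phi'^{-1}W,\PL_{\mc D})$ (since $\phi'$ is a morphism), and the characterization of $\PL_{\mc C}$ in terms of $\phi$ then shows $\psi\circ\varphi\in\Gamma(\varphi^{-1}W,\PL_{\mc C})$. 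Linearity of $\varphi$ follows the same way from condition (4), which identifies $\Aff_{\mc C}$ with the pushforward $\phi_*\Aff_{\mc D}$ (and analogously for $\mc C'$ via $\phi'$).

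Once $\varphi$ is a linear morphism of families that restricts to the bijections $\varphi_b$ on each fiber, the final step is to conclude it is an isomorphism. The fibers $\mc C_b\to \mc C'_b$ are isomorphisms of tropical curves, since both realize the unique stabilization of $\mc D_b$; hence by Lemma \ref{lem:bijection on fibers defines isomorphism} (applied to the linear bijective morphism $\varphi$) it is an isomorphism of families of tropical curves.

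I expect the main obstacle to be the careful bookkeeping in promoting the set-theoretic $\varphi$ to a morphism, specifically getting conditions (2), (3), and (4) to interlock correctly: one must use that $\mc C'$ also enjoys the characterizations of Definition \ref{def:stabilization} with respect to $\phi'$, so that the comparison of sheaves $\PL$ and $\Aff$ on $\mc C$ and $\mc C'$ factors cleanly through $\mc D$. The subtlety is that $\mc C'$ is only assumed to be \emph{a} family whose fibers are stabilizations, not a priori \emph{the} stabilization; however, the fiberwise uniqueness of stabilization plus the quotient description of $\mc C$ forces $\phi'$ to factor through $\phi$, and it is this factorization — rather than any global universal property of $\mc C'$ — that drives the argument. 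Verifying that the factorization respects the sheaf-theoretic data (and not merely the topology) is where the real care is needed.
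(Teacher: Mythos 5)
Your proposal is correct and takes essentially the same approach as the paper's own proof: uniqueness from surjectivity of $\phi$, existence by assembling the fiberwise-unique stabilization bijections into a map $\varphi$ with $\varphi\circ\phi=\phi'$, continuity and (piecewise) linearity from conditions (2)--(4) of Definition \ref{def:stabilization}, and the conclusion via Lemma \ref{lem:bijection on fibers defines isomorphism}. The extra care you flag about $\mc C'$ only being a fiberwise stabilization is exactly how the paper handles it too, so no changes are needed.
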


\begin{proof}
The uniqueness of $\varphi$ immediately follows from the fact that $\phi$ is surjective. To show the existence of $\varphi$, we note that for all $b\in B$, the morphisms 
\begin{align*}
\phi\vert_{\mc D_b}&\colon \mc D_b\to \mc C_b \quad\text{ and} \\
\phi'\vert_{\mc D_b}&\colon \mc D_b\to \mc C_b'
\end{align*}
are both stabilizations of $\mc D_b$. This implies the existence of a bijection $\varphi\colon \mc C\to \mc C'$ such that $\varphi\circ \phi=\phi'$. This map is continuous because $\mc C$ has the quotient topology with respect to $\phi$, and it is a morphism of tropical spaces because functions on $\mc C$ are piecewise integral linear or integral affine if and only if their pull-backs under $\phi$ are. By Lemma \ref{lem:bijection on fibers defines isomorphism}, it follows that $\varphi$ is an isomorphism of families of tropical curves.
\end{proof}

\begin{lemma}
\label{lem:stabilizations exists}
Let  $\pi\colon \mc D\to B$ be a family of $(n\sqcup\{\star\})$-marked stable tropical curves.
Then there exists a stabilization of $\pi$ when forgetting the $\star$-section.
\end{lemma}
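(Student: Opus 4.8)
The plan is to construct the stabilization $\phi\colon \mc D\to \mc C$ explicitly by collapsing the unstable components of $\mc D$ that appear after forgetting the $\star$-marking, and then to verify the four conditions of Definition \ref{def:stabilization}. First I would analyze fiberwise what needs to be contracted: forgetting $s_\star$ destabilizes a fiber $\mc D_b$ exactly when $s_\star(b)$ is attached to a genus-zero vertex that becomes two-valent or to a genus-zero two-valent vertex lying on a bounded edge, etc. In each such situation the stabilization $\mc C_b$ is obtained by either deleting the $\star$-leg and straightening a resulting two-valent vertex, or contracting a bounded edge that becomes a ``stub''. The key point is that these operations can be performed \emph{simultaneously over the base}: using a \TPL{}-trivialization $(\Sigma,(G_\sigma,f_\sigma,\chi_\sigma))$ at a point $b$, the combinatorial stabilization of each $G_\sigma$ after forgetting $f_\star$ is a well-defined stable graph $G_\sigma^{\mathrm{st}}$, and the assignments are compatible with specialization, so they glue.

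The cleanest route, which I would pursue, is to use the atlas machinery already developed rather than reconstructing everything by hand. Since the forgetful morphism $\pi_\star\colon \Atlass{g,n\sqcup\{\star\}}\to \Atlass{g,n}$ has been defined set-theoretically at the start of Section \ref{subsec:family over atlas} precisely by ``omitting $s_\star$ and stabilizing,'' the stabilization on the level of the universal family over the atlas already exists by construction. Concretely, I would first treat the Mumford case: if $\mc D\to B$ is a family of Mumford curves, Proposition \ref{prop:local lifts to atlas} gives, locally around any $b$, a Cartesian lift $f_\star\colon \mc D\to \Atlas{g,n\sqcup\{\star\}}$ over $f\colon B\to \Atlas{g,n}$. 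Composing $f_\star$ with the set-theoretic forgetful map to $\Atlass{g,n}$ and pulling back the universal family $\pi_\star^{-1}\GoodAtlas{g,n}\to\GoodAtlas{g,n}$ (which is a family of stable tropical curves by Proposition \ref{prop:family of tropical curve over atlas}) produces the candidate $\mc C\to B$ together with $\phi\colon \mc D\to \mc C$. For the general case, the presence of positive-genus vertices is harmless because $s_\star$ can only destabilize at genus-zero points, so the contraction is always supported in the ``good'' locus and the same local construction applies. Local pieces then glue by the uniqueness statement of Lemma \ref{lem:uniqueness of stabilizations}.

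Having built $\phi$ locally and glued, I would verify the defining conditions. Condition (1) holds by construction since we contract exactly the fiberwise-unstable locus. For condition (2), I would define the topology on $\mc C$ as the quotient topology with respect to $\phi$, which is forced; the content is that this quotient is Hausdorff and paracompact, which follows from the local model where $\phi$ collapses a compact tripod-neighborhood of the $\star$-leg to a point or an edge to a point. Conditions (3) and (4) are where the affine-structure bookkeeping lives: I would \emph{define} $\PL_{\mc C}$ and $\Aff_{\mc C}$ by pushforward, i.e.\ by the formulas in (3) and (4), and then show these sheaves make $\mc C\to B$ a genuine family of tropical curves — that is, that the fibers are stable tropical curves and that the exact sequence \eqref{equ:exact sequence} holds. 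The crucial local check is that a function on $\mc D$ descends through $\phi$ precisely when it has slope zero along the forgotten $\star$-leg (illustrated in Figure \ref{fig:stabi}): this is exactly the torsor/extension analysis carried out in Proposition \ref{prop:aff(ks) is a torsor} and Proposition \ref{prop:aff(ks) is everywhere inhabited}, which guarantee both that enough affine functions exist on $\mc C$ and that they restrict correctly to fibers.

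The main obstacle I expect is condition (4) together with fiberwise exactness of \eqref{equ:exact sequence} for $\mc C$ at the image points of contracted edges, where several local branches of $\mc D$ are identified. The difficulty is that an affine function on $\mc C$ near such a point corresponds to a $\phi_*$-invariant affine function on $\mc D$, and I must confirm that the pushforward sheaf $\phi_*\Aff_{\mc D}$ is still a genuine affine structure generating the correct one-forms fiberwise — in particular that no affine functions are lost (surjectivity onto $\Omega^1_{\mc C_b}$) and none are spuriously created. I would handle this by reducing to the explicit local models of a contracted tripod and a contracted bounded edge, and in each model computing $\phi_*\Harm_{\mc D}$ directly, using Lemma \ref{lem:exact sequence for harmonic functions} and Lemma \ref{lem:characterization of affineness in terms of section and harmonicity} to identify which fiberwise-harmonic functions are affine. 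This is a finite case analysis whose routine details I will not grind through here.
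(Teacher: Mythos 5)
Your primary route --- the one you say you would actually pursue --- has a gap that is fatal outside the Mumford locus. Proposition \ref{prop:local lifts to atlas} is stated, and its proof only works, for families of \emph{Mumford} curves: the argument needs the cycle-rigidification of the central fiber to lift uniquely to the combinatorial types of nearby fibers, which fails as soon as positive-genus vertices occur. Your claim that positive-genus vertices are ``harmless because the contraction is supported in the good locus'' misidentifies the obstruction: the problem is not \emph{where} $\phi$ contracts, but that near a non-Mumford fiber the family $\mc D\to B$ is not, even locally, a pull-back of any universal family over the atlas, because the affine structure on the total space is genuinely extra data beyond the underlying \TPL{}-family. Example \ref{ex:tropfam} is exactly this phenomenon: the families $\mc C^a\to \T\PP^1$ have identical underlying \TPL{}-families and identical fibers but non-isomorphic affine structures, and their central fiber lies outside the good locus, so no pull-back from $\pi_\star^{-1}\GoodAtlas{g,n}\to\GoodAtlas{g,n}$ can produce them all; this is the very reason Theorem \ref{thm:universal family} is restricted to $\MgnMf{g,n}$. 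Moreover, even on the Mumford locus your route is nearly circular: knowing that the pulled-back universal family, with the map induced by the atlas-level forgetful morphism, satisfies conditions (2)--(4) of Definition \ref{def:stabilization} is precisely Proposition \ref{prop:forgetful morphisms between atlases are forgetful morphisms}, which the paper deduces \emph{from} Lemma \ref{lem:stabilizations exists}; and Lemma \ref{lem:uniqueness of stabilizations} cannot upgrade your fiberwise stabilization to a stabilization, since it presupposes that a stabilization already exists. (There is also a typing slip: the classifying map of the $(n\sqcup\{\star\})$-marked family $\mc D\to B$ goes $B\to \Atlas{g,n\sqcup\{\star\}}$, with $\mc D\to \Atlas{g,n\sqcup\{\star,\diamond\}}$ on total spaces, not $\mc D\to\Atlas{g,n\sqcup\{\star\}}$ over $B\to\Atlas{g,n}$.)

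Your fallback (first, third and fourth paragraphs) is in outline the paper's actual proof: work locally over a \TPL{}-trivialization, stabilize each graph $G_\sigma$ combinatorially and glue the cones $\sigma\times_{\overline\sigma_{G'_\sigma}}\mc C_{G'_\sigma}$ to get the \TPL{}-level $\phi\colon\mc D\to\mc C$, then define $\Aff_{\mc C}$ by descent, i.e.\ as those piecewise linear functions whose pull-back under $\phi$ is affine. But you defer exactly the step that carries all the content: showing that with this affine structure each fiber $\mc C_b$ is a stable tropical curve (i.e.\ $\Aff_{\mc C_b}=\Harm_{\mc C_b}$) and that the sequence \eqref{equ:exact sequence} remains exact. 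The paper does not do a case analysis of local models here; it pushes the exact sequence $0\to\Aff_{B,b}\to\Aff_{\mc D}\vert_{\mc D_b}\to\Omega^1_{\mc D_b}\to 0$ forward along $\phi_b$, using that $\phi_b$ is proper with contractible fibers so that $R^1\phi_{b*}$ of the relevant sheaves vanishes, and a diagram chase together with $\phi_{b*}\Aff_{\mc D_b}=\Harm_{\mc C_b}$ (a statement about the stabilization of a \emph{single} curve) yields both claims at once. Note also that the tools you single out as crucial, Propositions \ref{prop:aff(ks) is a torsor} and \ref{prop:aff(ks) is everywhere inhabited}, concern functions with prescribed order along a section and play no role in this proof; what makes descent work is the elementary observation that affine functions on $\mc D$ are fiberwise harmonic with slope zero at the infinite leaves of the contracted subtrees, hence constant on the fibers of $\phi$.
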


\begin{proof}
Because stabilizations are unique if they exists by Lemma \ref{lem:uniqueness of stabilizations}, we can prove the existence locally around a point $b\in B$, that is we may assume that $B=\vert\Sigma\vert$ for some \TPL{}-trivialization $(\Sigma,(G_\sigma,f_\sigma,\chi_\sigma)_{\sigma\in \Sigma})$ at $b$. For each $\sigma\in\Sigma$, the stable graph $G_\sigma$ is $(n\sqcup\{\star\})$-marked. Forgetting the $\star$-marked leg and stabilizing one obtains an $n$-marked stable graph $G'_\sigma$ and $f_\sigma$ induces a map $f_\sigma'\colon \sigma\to \overline\sigma_{G'_\sigma}$. Gluing the families of \TPL{}-curves $\sigma\times_{\overline\sigma_{G'_\sigma}}\mc C_{G_{\sigma}'}$ yields an $n$-marked \TPL{}-curve $\mc C\to B$ and a morphism $\phi\colon \mc D\to \mc C$ of \TPL{}-spaces such that the induced morphisms $\mc D_b\to \mc C_b$ are stabilizations for all $b\in B$. Since $\mc C_{G'_\sigma}$ has the quotient topology with respect to the natural morphism $\mc C_{G_\sigma}\to \mc C_{G'_\sigma}$  and the piecewise linear functions on $\mc C_{G'_\sigma}$ are precisely those functions which pull-back to piecewise linear functions on $\mc C_{G_\sigma}$, 
 the same is true for $\mc C$, that is $\mc C$ has the quotient topology with respect to $\phi$ and the piecewise linear functions on $\mc C$ are precisely those functions whose pull-back under $\phi$ is piecewise linear on $\mc D$. We define an affine structure $\Aff_{\mc C}$ on $\mc C$ by declaring its sections on an open subset $U\subseteq \mc C$ to be
\begin{equation}
\Gamma(U,\Aff_{\mc C})=\{m\in \Gamma(U,\PL_{\mc C})\mid \phi^*m\in \Gamma(\phi^{-1}U,\Aff_{\mc D})\}
\end{equation}
Let $m'\in \Gamma(\phi^{-1}U,\Aff_{\mc D})$. Since $\phi$ stabilizes $\mc D$ fiberwise, the function $m'$ is constant on the fibers of $\phi$. Therefore, $m'=\phi^*m$ for a unique function $m$ on $U$, and by definition of $\Aff_{\mc C}$ we have $m\in \Gamma(U,\Aff_{\mc C})$. It follows that the pull-back defines an isomorphism
\begin{equation}
\Aff_{\mc C}\to \phi_*\Aff_{\mc D} \ .
\end{equation}
It is left to show that the fibers of $\mc C$ are {stable} tropical curves and that the sequences of cotangent sheaves on the fibers of $\mc C$ are exact. Let $b\in B$. Since
\begin{equation}
0\to \Aff_{B,b}\to \Aff_{\mc D}\vert_{\mc D_b} \to \Omega^1_{\mc D_b} \to 0
\end{equation}
is an exact sequence of sheaves on $\mc D_b$, it follows that the lower row in the commutative diagram
\begin{equation}
\begin{tikzpicture}[auto]
\matrix[matrix of math nodes, row sep= 5ex, column sep= 4em, text height=1.5ex, text depth= .25ex]{
|(0lo)| 0 & [-1.5em]
|(Bo)| \Aff_{B,b} &
|(Cp)|  \Aff_{\mc C}\vert_{\mc C_b} &
|(Cpfib)| \Omega^1_{\mc C_b} &
|(0ro)| 0 \\
|(0lu)| 0 &
|(Bu)| \phi_{b*}\Aff_{B,b} &
|(C)|  \phi_{b*}\left(\Aff_{\mc D}\vert_{\mc D_b}\right) &
|(Cfib)| \phi_{b*}\Omega^1_{\mc D_b} &
|(0ru)|  0 \\
};
\begin{scope}[->,font=\footnotesize]
\draw (0lo) -- (Bo);
\draw (Bo)--(Cp);
\draw (Cp)--(Cpfib);
\draw (Cpfib)--(0ro);
\draw (0lu) -- (Bu);
\draw (Bu)--(C);
\draw (C)--(Cfib);
\draw (Cfib)--(0ru);
\draw (Bo)--(Bu);
\draw (Cp)--(C);
\draw (Cpfib)--(Cfib);
\end{scope}
\end{tikzpicture} 
\end{equation}
is left-exact. Note that since $\phi$ is proper, one has 
\begin{equation}
\phi_{b*}\left(\Aff_{\mc D}\vert_{\mc D_b}\right)\cong \left(\phi_*\Aff_{\mc D}\right)\vert_{\mc C_b}
\end{equation}
so that the diagram makes sense. The fibers of $\phi_b=\phi\vert_{\mc D_b}$ are contractible and $\phi_b$ is proper, so we have $R^1\phi_{b*}\Aff_{B,b}=0$ and the lower row in the commutative diagram is exact. For the same reason, the vertical morphism to the left is an isomorphism. It follows from the fact that $\Aff_{\mc C}\to \phi_*\Aff_{\mc D}$ is an isomorphism, that the vertical morphism in the middle is an isomorphism as well. Together with the fact that the upper row of the diagram is exact to the right, this implies that the upper row is exact and the vertical morphism to the right is an isomorphism as well. It remains to show that $\Aff_{\mc C_b}$ is the sheaf of harmonic functions on $\mc C_b$. Since $R^1\phi_{b*}\R=0$ and $\phi_{b*}\R=\R$, the fact that $\Omega^1_{\mc C_b}\to \phi_{b*}\Omega^1_{\mc D_b}$ is an isomorphism implies that $\Aff_{\mc C_b}\to \phi_{b*}\Aff_{\mc D_b}$ is an isomorphism. But because $\phi_b$ is the stabilization of $\mc D_b$, one has 
\begin{equation}
\phi_{b*}\Aff_{\mc D_b}= \Harm_{\mc C_b} \ .
\end{equation}
\end{proof}

\begin{proposition}
\label{prop:forgetful morphism is a morphism}
Choosing for every family of $(n\sqcup\{\star\})$-marked genus-$g$ stable tropical curves $\mc C\to B$ a stabilization of $\mc C\to B$ after forgetting the $\star$-section defines a morphism $\Mgnbar{g,n\sqcup\{\star\}}\to \Mgnbar{g,n}$.
\end{proposition}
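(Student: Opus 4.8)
<br>

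The plan is to verify that the stabilization construction, which by Lemmas \ref{lem:uniqueness of stabilizations} and \ref{lem:stabilizations exists} produces for each family a unique (up to canonical isomorphism) stabilized family, assembles into a morphism of stacks. The content to check is functoriality: the assignment must be compatible with base change, i.e.\ it must send a Cartesian diagram over $\Mgnbar{g,n\sqcup\{\star\}}$ to a Cartesian diagram over $\Mgnbar{g,n}$.

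First I would set up the data. A morphism $\Mgnbar{g,n\sqcup\{\star\}}\to \Mgnbar{g,n}$ of stacks (categories fibered in groupoids) is precisely the assignment, to each object $\mc D\to B$ of $\Mgnbar{g,n\sqcup\{\star\}}(B)$, of an object $\phi\colon \mc D\to \mc C$ with $\mc C\to B$ in $\Mgnbar{g,n}(B)$, done in a way that respects pullbacks. Concretely, given a morphism $f\colon B'\to B$ and the induced Cartesian square exhibiting $\mc D'= B'\times_B \mc D$ as the pullback family, I must produce a canonical isomorphism between the stabilization of $\mc D'$ and the pullback $B'\times_B \mc C$ of the stabilization of $\mc D$, and these isomorphisms must satisfy the obvious cocycle condition for composable base changes.

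The key step is therefore base-change compatibility, and here I would invoke the uniqueness in Lemma \ref{lem:uniqueness of stabilizations} rather than construct the isomorphism by hand. Let $\mc C\to B$ be the chosen stabilization of $\mc D\to B$ with stabilizing morphism $\phi$. Pulling back along $f\colon B'\to B$ gives a family $f^*\mc C\to B'$ of $n$-marked stable tropical curves (by Lemma \ref{lem:pull-back of family is family}) together with the base-changed morphism $f^*\phi\colon \mc D'\to f^*\mc C$ over $B'$. Since stabilization is a fiberwise condition, for every $b'\in B'$ the induced map $(\mc D')_{b'}\to (f^*\mc C)_{b'}$ is identified with $\phi_{f(b')}$ on the corresponding fibers, hence is the stabilization of the fiber $(\mc D')_{b'}$. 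Thus $f^*\phi$ exhibits $f^*\mc C$ fiberwise as the stabilization of $\mc D'$, and Lemma \ref{lem:uniqueness of stabilizations} yields a \emph{unique} isomorphism $f^*\mc C\xrightarrow{\cong}\mathrm{Stab}(\mc D')$ commuting with the two maps from $\mc D'$. The uniqueness clause is exactly what forces these comparison isomorphisms to be canonical and to compose correctly, so the cocycle condition for a pair of base changes $B''\to B'\to B$ follows automatically from uniqueness of the isomorphism $g^*f^*\mc C\to \mathrm{Stab}(\mc D'')$.

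The main obstacle, and the step deserving the most care, is confirming that the base change $f^*\phi$ genuinely exhibits $f^*\mc C$ as a \emph{fiberwise} stabilization in the sense required to apply Lemma \ref{lem:uniqueness of stabilizations} --- that is, checking the fibers of $f^*\mc C\to B'$ are the correct stable curves and that $f^*\phi$ restricts to the fiberwise stabilization maps. This is where the definition of fiber products of tropical spaces (Proposition \ref{prop:fiber products of tropical spaces}) and the identification of fibers of a pulled-back family enter; the point is that both the underlying \TPL{}-curve of a fiber and its stabilization depend only on the combinatorial type of the fiber over the image point, and pullback does not alter the fiber over $b'$ beyond relabeling it by $f(b')$. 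Once this fiberwise identification is in place, everything else is formal from the uniqueness lemma. I would also remark that the resulting morphism is, up to the canonical isomorphisms just produced, independent of the choices of stabilizations made in Lemmas \ref{lem:uniqueness of stabilizations} and \ref{lem:stabilizations exists}, so the morphism of stacks is well defined.
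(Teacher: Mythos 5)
Your proposal is correct and follows essentially the same route as the paper: both construct the induced morphism $\mc D'\to f^*\mc C$ from the composite $\mc D'\to\mc D\xrightarrow{\phi}\mc C$, observe that it is a fiberwise stabilization because pullback does not change fibers, and conclude via the uniqueness statement of Lemma \ref{lem:uniqueness of stabilizations} that $f^*\mc C$ is the stabilization of $\mc D'$. Your additional remarks on the cocycle condition for composable base changes make explicit what the paper leaves implicit, but the mathematical content is the same.
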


\begin{proof}
We need to show that if $\mc D'\to B'$ is a family of $(n\sqcup\{\star\})$-marked tropical curves that is the pull-back of a family $(n\sqcup\{\star\})$-marked tropical curves $\mc D\to B$ under a morphism $f\colon B'\to B$, then the pull-back under $f$ of stabilization $\mc C\to B$ of $\mc D\to B$ after forgetting the $\star$-section is the stabilization of $\mc D'\to B'$ after forgetting the $\star$-section. Let $\phi\colon \mc D\to \mc C$ be the morphism exhibiting $\mc C$ as the stabilization of $\mc D$. Then the composite 
\begin{equation}
\mc D'\to \mc D\xrightarrow{\phi} \mc C
\end{equation}
induces a morphism $g\colon\mc D'\to f^*\mc C$. Since $\mc D'$ and $\mc D$ have the same fibers and $\mc C$ is obtained by stabilizing $\mc D$, the morphism $g$ is a fiber-wise stabilization of $\mc D'$. By Lemma \ref{lem:uniqueness of stabilizations}, it follows that $f^*\mc C$ is the stabilization of $\mc D'$.
\end{proof}

\begin{definition}
\label{def:forgetful morphism}
We call the morphism from Proposition \ref{prop:forgetful morphism is a morphism} the \textbf{forgetful morphism} associated to the $\star$-marking, and  we also denote it by
\begin{equation}
\pi_\star\colon \Mgnbar{g,n\sqcup\{\star\}}\to \Mgnbar{g,n} \ .
\end{equation}
\end{definition}

The following result shows that the forgetful morphisms $\pi_\star\colon \Atlas{g,n\sqcup\{\star\}}\to \Atlas{g,n}$ used in \S\ref{subsec:family over atlas} are compatible with the forgetful morphisms $\pi_\star\colon \Mgnbar{g,n\sqcup\{\star\}}\to \Mgnbar{g,n}$.

\begin{proposition}
\label{prop:forgetful morphisms between atlases are forgetful morphisms} 
Let $g,n$  be two non-negative integers such that $n+2g-2>0$. Then the morphism
\begin{equation}
\xymatrix{
\Atlas{g,n\sqcup\{\star,\diamond\}}  \ar[rr]^{\hspace{-1cm}\pi_\diamond\times\pi_\star} 
& & \Atlas{g,n\sqcup\{\star\}}\times_{\Atlas{g,n}}\Atlas{g,n\sqcup\{\diamond\}}
}
\end{equation}
is the stabilization of the family of $(n\sqcup\{\star\})$-marked genus-$g$ stable tropical curves 
\begin{equation}
\Atlas{g,n\sqcup\{\star,\diamond\}}\xrightarrow{\pi_\diamond} \Atlas{g,n\sqcup\{\star\}}
\end{equation}
when forgetting the $\star$-section.
\end{proposition}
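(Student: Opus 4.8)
The plan is to exploit the uniqueness of stabilizations. Write $\mc D\coloneqq\Atlas{g,n\sqcup\{\star,\diamond\}}$ and $B\coloneqq\Atlas{g,n\sqcup\{\star\}}$, and regard $\pi_\diamond\colon \mc D\to B$ as the family of $(n\sqcup\{\star\})$-marked genus-$g$ stable tropical curves under consideration. By Lemma \ref{lem:stabilizations exists} the stabilization $\phi\colon \mc D\to\mc C$ of $\pi_\diamond$ after forgetting the $\star$-section exists. By Lemma \ref{lem:uniqueness of stabilizations} it therefore suffices to produce a family of $n$-marked genus-$g$ stable tropical curves $\mc C'\to B$ together with a morphism $\phi'\colon \mc D\to \mc C'$ over $B$, compatible with the genus functions and with the $i$-marked sections for $i\in [n]$, that restricts on every fiber $\mc D_b$ to the stabilization after forgetting $\star$; the resulting canonical isomorphism $\mc C\xrightarrow{\cong}\mc C'$ then identifies $\phi'$ with \emph{the} stabilization. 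I take $\mc C'\coloneqq \Atlas{g,n\sqcup\{\star\}}\times_{\Atlas{g,n}}\Atlas{g,n\sqcup\{\diamond\}}$ and $\phi'\coloneqq\pi_\diamond\times\pi_\star$.

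First I would observe that $\mc C'$ is exactly the pull-back of the family $\pi_\diamond\colon \Atlas{g,n\sqcup\{\diamond\}}\to\Atlas{g,n}$ along the forgetful morphism $\pi_\star\colon B\to \Atlas{g,n}$. The former is a family of $n$-marked genus-$g$ stable tropical curves by the ``in particular''-statement of Proposition \ref{prop:family of tropical curve over atlas} (after relabeling $\star\rightsquigarrow\diamond$), and the latter is a morphism of tropical spaces by Proposition \ref{prop:left-exactness of sequence on atlas}; hence $\mc C'\to B$ is a family of $n$-marked genus-$g$ stable tropical curves by Lemma \ref{lem:pull-back of family is family}. The two forgetful morphisms $\pi_\diamond\colon \mc D\to B$ and $\pi_\star\colon \mc D\to \Atlas{g,n\sqcup\{\diamond\}}$ are linear by Proposition \ref{prop:left-exactness of sequence on atlas}, and the square formed by these two maps together with the forgetful morphisms $\pi_\star\colon B\to\Atlas{g,n}$ and $\pi_\diamond\colon \Atlas{g,n\sqcup\{\diamond\}}\to\Atlas{g,n}$ commutes, since forgetting $\star$ and $\diamond$ in either order produces the same stabilized $n$-marked curve \cite[Section 4]{CCUW}. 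By the universal property of the fiber product (Proposition \ref{prop:fiber products of tropical spaces}), $\phi'=\pi_\diamond\times\pi_\star$ is a morphism of tropical spaces, and it is a morphism over $B$ because its first component is the structure map $\pi_\diamond$. Since the forgetful morphisms preserve the $[n]$-markings and, on the Mumford locus, all genus functions vanish identically, $\phi'$ is automatically compatible with the sections and the genus functions.

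It remains to verify the fiberwise condition, which I expect to be the main obstacle. Fix $b\in B$, corresponding to an $(n\sqcup\{\star\})$-marked Mumford curve. By Proposition \ref{prop:atlas gives family of TPL-curves} the fiber $\mc D_b=\pi_\diamond^{-1}(b)$ is the $(n\sqcup\{\star\})$-marked curve determined by $b$, and via the second projection $q\colon \mc C'\to\Atlas{g,n\sqcup\{\diamond\}}$ the fiber $\mc C'_b$ is identified with the fiber of $\pi_\diamond\colon \Atlas{g,n\sqcup\{\diamond\}}\to\Atlas{g,n}$ over $\pi_\star(b)$, that is with the $n$-marked curve obtained from $b$ by forgetting $\star$. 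Because $q\circ\phi'=\pi_\star$, the map $\phi'_b\colon \mc D_b\to\mc C'_b$ coincides, under this identification, with the restriction of the forgetful morphism $\pi_\star\colon\mc D\to\Atlas{g,n\sqcup\{\diamond\}}$ to $\mc D_b$, which by the very construction of $\pi_\star$ is the contraction of $\mc D_b$ stabilizing it after forgetting the $\star$-leg. Hence $\phi'_b$ is the stabilization of $\mc D_b$, and the reduction of the first paragraph completes the argument. The delicate point is precisely this last identification: one must match the set-theoretic contraction $\pi_\star\vert_{\mc D_b}$ with the abstract stabilization map of Definition \ref{def:stabilization}, which requires tracing through the cone-complex descriptions of the forgetful morphisms in \cite[Section 4]{CCUW} and is the only step that is not purely formal.
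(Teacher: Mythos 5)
Your proposal is correct and follows essentially the same route as the paper: the paper's proof also considers the square formed by the two forgetful morphisms, observes that $\pi_\diamond\times\pi_\star$ stabilizes the family fiber-wise by the very definition of the forgetful maps on the atlases, and then concludes via Lemma \ref{lem:stabilizations exists} and Lemma \ref{lem:uniqueness of stabilizations}. The only difference is that you spell out the details the paper leaves implicit (that the fiber product is a family of $n$-marked stable tropical curves via Lemma \ref{lem:pull-back of family is family}, and the linearity and section/genus compatibility of $\pi_\diamond\times\pi_\star$), which is a faithful expansion rather than a different argument.
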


\begin{proof}
Consider the diagram
\begin{equation}
\xymatrix{
\Atlas{g,n\sqcup\{\star,\diamond\}}  \ar[r]^{\hspace{-1cm}\pi_\diamond\times\pi_\star} 
\ar[dr]^{\pi_\diamond}
&  \Atlas{g,n\sqcup\{\star\}}\times_{\Atlas{g,n}}\Atlas{g,n\sqcup\{\diamond\}}\ar[d]^{\pi_\diamond} 
\ar[r] & \Atlas{g,n\sqcup\{\diamond\}}\ar[d]^{\pi_\diamond}
\\
 &\Atlas{g,n\sqcup\{\star\}} \ar[r]^{\pi_\star} & \Atlas{g,n}
}
\end{equation}
According to the definition of $\pi_\diamond$ in Section \ref{subsec:family over atlas}, the morphism $\pi_\diamond\times\pi_\star$ stabilizes the family $\pi$ fiber-wise. By \ref{lem:uniqueness of stabilizations} and \ref{lem:stabilizations exists}, it follows that it is the stabilization.
\end{proof}

\begin{lemma}
\label{lem:local structure at genus 0 point}
Let $\pi\colon \mc C\to B$ be a family of $n$-marked genus-$g$ tropical stable curves and let $x\in \mc C_b$ be a vertex with $\gamma_{\mc C_b}(x)=0$ and $\val(x) \geq 3$. Then, denoting $J = T_x(\mc C_b)$, there exist open neighborhoods $U$ and $V$ of $b=\pi(x)$ and $x$, respectively, and linear morphisms $f\colon U\to \Mgnbar{0, J}$ and $f_\star\colon V\to \Mgnbar{0, J\sqcup\{\star\}}$ such that the diagram
\begin{center}
\begin{tikzpicture}[auto]
\matrix[matrix of math nodes, row sep= 5ex, column sep= 4em, text height=1.5ex, text depth= .25ex]{
|(C)| V 	&
|(n1)|	\Mgnbar{0,J\sqcup\{\star\}}	\\
|(B)| U	&	
|(n)| \Mgnbar{0,J}	\\
};
\begin{scope}[->,font=\footnotesize]
\draw (C) --node{$f_\star$} (n1);
\draw (B)--node{$f$} (n);
\draw (C) --node{$\pi$} (B);
\draw (n1) --node{$\pi_\star$} (n);
\end{scope}
\end{tikzpicture}
\end{center}
is commutative and the induced morphism $V\to U\times_{\Mgnbar{0,J}}\Mgnbar{0, J\sqcup\{\star\}}$ is an open immersion. 
\end{lemma}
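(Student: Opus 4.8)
The plan is to exhibit the restriction of $\mc C\to B$ near $x$ as a pull-back of the universal curve over the genus-zero moduli space $\Mgnbar{0,J}$. I will construct a neighbourhood $U$ of $b$, a neighbourhood $V$ of $x$, a family $\pi^{loc}\colon\mc C^{loc}\to U$ of $J$-marked genus-zero stable tropical curves whose fibre over $b'$ is the star of $\mc C_{b'}$ at $x$ with each direction $d\in J$ prolonged to an infinite leg marked by $d$, a linear morphism $f\colon U\to\Mgnbar{0,J}$ classifying $\mc C^{loc}$ (so that $\mc C^{loc}\cong f^*\Mgnbar{0,J\sqcup\{\star\}}$, using that $\Mgnbar{0,J}$ is represented by its atlas and carries the universal family $\pi_\star$ by Corollary \ref{cor:representative for M0n}), together with a \TPL{}-morphism $f_\star^{loc}\colon\mc C^{loc}\to\Mgnbar{0,J\sqcup\{\star\}}$ and an open immersion $j\colon V\to\mc C^{loc}$. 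Because the fibre product $U\times_{\Mgnbar{0,J}}\Mgnbar{0,J\sqcup\{\star\}}$ is exactly $f^*\Mgnbar{0,J\sqcup\{\star\}}=\mc C^{loc}$, setting $f_\star\coloneqq f_\star^{loc}\circ j$ makes the square commute and identifies the induced morphism $V\to U\times_{\Mgnbar{0,J}}\Mgnbar{0,J\sqcup\{\star\}}$ with $j$. The lemma thus reduces to constructing $\mc C^{loc}$, $f$, $j$ and to checking that $j$ is an open immersion of tropical spaces.

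For the construction I would fix a \TPL{}-trivialization $(\Sigma,(G_\sigma,f_\sigma,\chi_\sigma)_{\sigma\in\Sigma})$ of $\pi$ at $b$, with minimal cone $\tau\ni b$ and $v\in V(G_\tau)$ the genus-zero vertex corresponding to $x$, so that $T_vG_\tau$ is identified with $J$. For each $\sigma\succeq\tau$ the vertex $v$ is the image of a connected subgraph $G_\sigma^v\subseteq G_\sigma$ under the specialization $G_\sigma\to G_\tau$; since $\gamma_{G_\tau}(v)=0$ and contracting a loop raises the genus, $G_\sigma^v$ is a genus-zero tree, and turning its external flags (which biject with $J$) into legs yields a $J$-marked genus-zero stable graph $H_\sigma$ — stability is inherited from $G_\sigma$ because the operation preserves valences, and $\val(x)\geq 3$ guarantees $\lvert J\rvert\geq 3$. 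The maps $f_\sigma$ restrict to length maps $\sigma\to\overline\sigma_{H_\sigma}$, and gluing the pulled-back families $\mc C_{H_\sigma}\times_{\overline\sigma_{H_\sigma}}\sigma$ produces the genus-zero \TPL{}-family $\mc C^{loc}\to U$ with $U=\lvert\Sigma\rvert$, together with \TPL{}-morphisms $f$ and $f_\star^{loc}$ exhibiting $\mc C^{loc}$ as the \TPL{}-pull-back. Using Lemma \ref{lem:local structure at interior of edge} I would, exactly as in the proof of Lemma \ref{lem:exact sequence for harmonic functions}, choose for each $d\in J$ a linear section $s_d$ of $\pi$ meeting the interior of the edge of $\mc C_b$ in direction $d$, and take $V$ to be the connected component of $x$ in $\pi^{-1}(U)\setminus\bigcup_{d}s_d(U)$; its fibres are the stars cut off at the $s_d$, and truncating the legs of $\mc C^{loc}$ at these points realizes $j\colon V\to\mc C^{loc}$ as an open immersion of \TPL{}-spaces over $U$.

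The analytic content is the linearity of $f$, which I would prove by transcribing the argument of Proposition \ref{prop:maps to atlas are linear} into the star. Every cross ratio on $\Mgnbar{0,J}$ is, by Remark \ref{rem:cross-r}, an integral of a relative one-form $w$ along a thickened path $\gamma^\pm$ lying entirely inside the expansion of $v$, hence inside the fibres of $V\subseteq\mc C$. Using the exactness of \eqref{equ:exact sequence} for $\mc C\to B$ I lift $w$ to a one-form $\widetilde\omega$ and then to $\widetilde\phi\in\Aff_{\mc C}$ on a thickening of $\gamma$, and evaluate $f^*\xi_c=\bigl(\epsilon_e^*\widetilde\phi-\epsilon_s^*\widetilde\phi\bigr)_b\in\Aff_{U,b}$ along the linear sections supplied by Lemma \ref{lem:local structure at interior of edge}; for cross ratios involving the $\star$-leg one invokes Proposition \ref{prop:aff(ks) is a torsor} and Proposition \ref{prop:aff(ks) is everywhere inhabited}, just as in the second half of that proof. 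With $f$ linear, $\mc C^{loc}=f^*\Mgnbar{0,J\sqcup\{\star\}}$ becomes a family of genus-zero stable tropical curves by Lemma \ref{lem:pull-back of family is family}; its affine structure is generated by base pull-backs and by cross ratios pulled back along $f_\star^{loc}$, and the same lifting computation shows these restrict under $j$ to functions in $\Aff_V$. Hence $j$, and therefore $f_\star=f_\star^{loc}\circ j$, is linear and the displayed square commutes.

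It remains to upgrade the \TPL{}-open-immersion $j$ to an open immersion of tropical spaces, i.e.\ to show $j^{*}\Aff_{\mc C^{loc}}=\Aff_{V}$ on the open image $j(V)$. The inclusion $\subseteq$ is the linearity of $j$ just obtained. For $\supseteq$ I would note that $V\to U$ and $\mc C^{loc}\vert_{j(V)}\to U$ are the \emph{same} \TPL{}-family over the same base, and that on a genus-zero family affineness of a germ is characterized intrinsically, in terms of fibrewise harmonicity together with the behaviour along linear sections, by Lemma \ref{lem:affineness is a clopen condition on fibers} and Lemma \ref{lem:characterization of affineness in terms of section and harmonicity}; since this criterion refers only to the underlying \TPL{}-family and to $\Aff_U$, it selects the same functions on $V$ and on $j(V)$. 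Thus the two affine structures agree and $j$ is an open immersion. I expect the main obstacle to be exactly this package of linearity statements — the faithful transcription of Proposition \ref{prop:maps to atlas are linear} to the local tree at $x$, including the $\Aff_{\mc C}(ks)$-torsor bookkeeping for the $\star$-marking, and the ensuing identification of the two affine structures across $j$ — rather than the purely combinatorial construction of $\mc C^{loc}$ and $V$.
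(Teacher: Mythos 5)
Your overall strategy is sound and could be completed, but it is a genuinely different — and substantially more laborious — route than the paper's. The paper performs the surgery directly at the level of \emph{tropical} spaces: it truncates $\mc C$ along the product cylinders $W_v\cong (0,\epsilon)\times U$ supplied by Lemma \ref{lem:local structure at interior of edge} (whose isomorphisms are linear), and glues on copies of $(0,\infty]\times U$, so that the resulting family $\widetilde{\mc C}\to U$ carries an affine structure for free: on the core it is the restriction of $\Aff_{\mc C}$, on the glued legs it is the product structure, and these agree on the overlaps. Since $\widetilde{\mc C}\to U$ is then already a family of $J$-marked genus-zero stable \emph{tropical} curves, Corollary \ref{cor:representative for M0n} and Theorem \ref{thm:universal family} hand over the classifying maps $f$ and $f_\star$, their linearity, and the Cartesian square in the category of tropical spaces with no further work; and $V$ is an open subset of $\widetilde{\mc C}$ \emph{by construction}, so there is no second affine structure on $V$ to compare. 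Your route instead builds the local family combinatorially as a \TPL{}-family and pulls the affine structure back from the moduli space, which forces you to pay twice for what the paper gets for free: first, you must prove linearity of the classifying map by hand, and note that you cannot simply cite Proposition \ref{prop:maps to atlas are linear} — its hypotheses require the classified family to already be a family of tropical curves, whereas your $V\to U$ has non-compact fibers and your $\mc C^{loc}\to U$ is at that stage only a \TPL{}-family — so a genuine transcription into the ambient family $\mc C\to B$ is needed (it does go through, because the relevant paths and one-forms live in the star and the zero-slope condition along $f_s,f_e$ makes the section-pullback formula insensitive to the truncation); second, you must reconcile the two affine structures on $V$.

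On that last step your justification is imprecise as stated: the criterion of Lemma \ref{lem:characterization of affineness in terms of section and harmonicity} is \emph{not} intrinsic to the underlying \TPL{}-family and $\Aff_U$, because it refers to \emph{linear} sections, and linearity of a section depends on exactly the affine structure you are trying to pin down. The fix is available from what you have already established: once you know $j^*\Aff_{\mc C^{loc}}\subseteq \Aff_V$, every section that is linear for the structure induced from $\mc C$ is automatically linear for the pulled-back structure; applying Lemma \ref{lem:characterization of affineness in terms of section and harmonicity} (for the family $\mc C^{loc}\to U$) along such sections handles germs at edge-interior points, and Lemma \ref{lem:affineness is a clopen condition on fibers} then propagates affineness along the connected fiber from the edge points to the vertices, giving the reverse inclusion. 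With that repair, and with the transcription of Proposition \ref{prop:maps to atlas are linear} (including the $\Aff_{\mc C}(ks)$-torsor argument via the diagonal family for cross ratios involving $\star$) carried out in full, your proof is correct — but the comparison makes clear what the paper's choice of doing the surgery tropically, rather than combinatorially, buys: it eliminates both of these burdens entirely.
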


\begin{proof}
 For each $v\in T_x(\mc C_b)$, let $x_v$ be a point in the interior of the edge in $\mc C_b$ that is adjacent to $x$ in the direction of $v$. By Lemma \ref{lem:local structure at interior of edge}, there exists an open neighborhood $U$ of $B$, neighborhoods $W_v$ of $x_v$ for all $v\in T_x(\mc C_b)$, an $\epsilon>0$, and isomorphisms $W_v\cong (0,\epsilon)\times U$ over $B$. Let $V'$ the connected component of 
\begin{equation}
\mc C_U\setminus\bigcup_{v\in T_x(\mc C_b)} W_v
\end{equation}
containing $x$, and let
\begin{equation}
V= V'\cup \bigcup_{v\in T_x(\mc C_b)} W_v \ .
\end{equation}

Gluing for each $v\in T_x(\mc C_b)$ a copy of $(0, \infty] \times U$ to $V$ via the identification
\begin{equation}
W_v\cong(0,\epsilon) \times U,
\end{equation}
we obtain a family $\widetilde{\mc C}\to U$ of $J$-marked genus-zero stable tropical curves over $U$. By Corollary \ref{cor:representative for M0n}, there exist a Cartesian diagram
\begin{equation}
\begin{tikzpicture}[auto]
\matrix[matrix of math nodes, row sep= 5ex, column sep= 4em, text height=1.5ex, text depth= .25ex]{
|(C)| \widetilde{\mc C} 	&
|(n1)|	\Mgnbar{0,J\sqcup\{\star\}}	\\
|(B)| U	&	
|(n)| \Mgnbar{0,J} \ .	\\
};
\begin{scope}[->,font=\footnotesize]
\draw (C) -- (n1);
\draw (B)--node{$f$} (n);
\draw (C) -- (B);
\draw (n1) --node{$\pi_\star$} (n);
\end{scope}
\end{tikzpicture}
\end{equation}
Noting that by construction, $V$ is an open subset of $\widetilde {\mc C}$ finishes the proof.
\end{proof}

\begin{remark}
Lemma \ref{lem:local structure at genus 0 point} may be generalized to apply to points $x$ in the interior of an edge of a fiber (where $\val(x) = 2$); one must attach  products of $U$  with a tripod, rather than just copies of $(0, \infty]$, to the sets $W_v$.
\end{remark}

\begin{lemma}
\label{lem:remembering a section}
Let $\pi: \mc C\to B$ be a family of $n$-marked genus-$g$ tropical stable curves and let $t\colon B\to \mc C$ be a linear section such that $\gamma_{\mc C}(t(b))=0$ for all $b\in B$. Then there exists a family of $(n\sqcup\{\star\})$-marked genus-$g$ tropical stable curves $\mc D\to B$ and a morphism $\phi\colon\mc D \to \mc C$ over $B$ that exhibits $\mc C\to B$ as the stabilization of $\mc D\to B$ after forgetting the $\star$-section $s_\star\colon B\to \mc D$ and such that $\phi\circ s_\star=t$. Moreover,  $\phi\colon \mc D\to \mc C$ is unique up to unique isomorphism, that is if $\phi'\colon\mc D'\to \mc C$ is a second such family, then there exists a unique isomorphism $\varphi\colon \mc D\to \mc D'$ with $\phi'\circ\varphi=\phi$.
\end{lemma}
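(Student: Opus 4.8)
The plan is to prove uniqueness first, then construct $(\mc D,\phi,s_\star)$ locally near the section and glue, and finally isolate the affine-structure verification as the real work. \emph{Uniqueness.} The fiber $\mc D_b$ is forced: since $\phi_b\colon\mc D_b\to\mc C_b$ is the stabilization after forgetting $\star$ and $\phi_b(s_\star(b))=t(b)$, the curve $\mc D_b$ must be the $(n\sqcup\{\star\})$-marked curve obtained from $\mc C_b$ by sprouting an infinite $\star$-leg at the genus-zero point $t(b)$ (this turns an interior point of an edge into a trivalent vertex, and raises the valence of a vertex by one). This recipe is canonical, so for any two solutions $\phi\colon\mc D\to\mc C$ and $\phi'\colon\mc D'\to\mc C$ the fibers are canonically isomorphic compatibly with $\phi,\phi'$ and the $\star$-sections; these identifications assemble into a bijective morphism $\varphi\colon\mc D\to\mc D'$ over $\mc C$ with $\varphi\circ s_\star=s_\star'$. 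Because both total spaces carry the quotient topology and piecewise linear structure induced from their maps to $\mc C$ (Definition \ref{def:stabilization}(2),(3)) and, as discussed below, the same affine structure near the new leg, $\varphi$ is linear and an isomorphism on fibers, so Lemma \ref{lem:bijection on fibers defines isomorphism} promotes it to an isomorphism of families of tropical curves. Having uniqueness, it suffices to build $(\mc D,\phi,s_\star)$ locally on $B$ and glue.

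\emph{Existence.} Away from $t(B)$ nothing changes, so the content is local around $x=t(b)$. Assume first $x$ is finite, hence an interior point of an edge or a vertex of $\mc C_b$. By Lemma \ref{lem:local structure at genus 0 point} (and the edge-interior variant of its Remark), after shrinking there are linear maps $f\colon U\to\Mgnbar{0,J}$ and $f_\bullet\colon V\to\Mgnbar{0,J\sqcup\{\bullet\}}$ exhibiting a neighborhood $V$ of $x$ in $\mc C$ as an open subset of the pull-back along $f$ of the universal genus-zero curve, where $J=T_x\mc C_b$ and $\bullet$ is the universal moving point. The section $t$ then corresponds to the classifying map $g\coloneqq f_\bullet\circ t|_U\colon U\to\Mgnbar{0,J\sqcup\{\star\}}$ (relabelling the extra mark as $\star$) with $\pi_\star\circ g=f$, which records the $J$-marked curve with the new marking $\star$ placed at $t$. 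I would then define $\mc D$ over this chart as the pull-back along $g$ of the universal curve $\pi_\bullet\colon\Mgnbar{0,J\sqcup\{\star,\bullet\}}\to\Mgnbar{0,J\sqcup\{\star\}}$, with $s_\star$ as the new section. Since $\Mgnbar{0,J}=\Atlas{0,J}$ is representable (Corollary \ref{cor:representative for M0n}) and $\pi_\bullet$ is there a family of tropical curves (Proposition \ref{prop:family of tropical curve over atlas}), Lemma \ref{lem:pull-back of family is family} shows $\mc D\to U$ is a family of $(n\sqcup\{\star\})$-marked stable tropical curves; and Proposition \ref{prop:forgetful morphisms between atlases are forgetful morphisms}, together with the base-change compatibility of stabilizations from Proposition \ref{prop:forgetful morphism is a morphism}, shows that forgetting $\star$ stabilizes $\mc D$ back to $V\cong U\times_{\Mgnbar{0,J}}\Mgnbar{0,J\sqcup\{\bullet\}}$, yielding $\phi$ with $\phi\circ s_\star=t$. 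When $x$ is an infinite point, hence a marking $s_i(b)$, I would instead use the tripod/section construction of Section \ref{subsec:family over atlas} directly. The local pieces agree on overlaps by the uniqueness already established, so they glue to the desired $(\mc D,\phi,s_\star)$ over $B$.

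\emph{Main obstacle.} The delicate point—and the reason the torsor sheaves were introduced—is the affine structure of $\mc D$ across the newly created leg, i.e.\ the exactness of \eqref{equ:exact sequence} from Definition \ref{def:family} at the new vertex $t(b)$ and the isomorphism $\Aff_{\mc C}\cong\phi_*\Aff_{\mc D}$ of Definition \ref{def:stabilization}(4). At the new vertex $\Omega^1_{\mc D_b}$ has rank two, so a fiber one-form with slope $-k$ into the leg must be lifted to an affine function on $\mc D$; restricting such a lift to the $\mc C$-side gives a function whose outgoing fiber slopes at $t(b)$ sum to $k$, that is, a section of $\Aff_{\mc C}(kt)$ in the sense of Definition \ref{def:funwipo}. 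Propositions \ref{prop:aff(ks) is a torsor} and \ref{prop:aff(ks) is everywhere inhabited} assert precisely that such sections exist locally and are unique modulo $\Aff_{\mc C}$, which is exactly the data needed to extend affine functions across the leg: this both pins down the affine structure of $\mc D$ near $t(B)$ independently of the chart (justifying the identification used in the uniqueness step) and furnishes the lifts making \eqref{equ:exact sequence} exact, while the functions of vanishing leg-slope form the sub-torsor that descends along $\phi$ and realizes Definition \ref{def:stabilization}(4). The real work lies in checking these compatibilities carefully—in particular that the meromorphic behaviour at $t(B)$ encoded by $\Aff_{\mc C}(kt)$ matches the honest affine functions appearing up the leg, and that the genus-zero local models patch with the unchanged region of $\mc C$ where the marking $\star$ is absent.
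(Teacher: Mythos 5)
Your existence argument is essentially the paper's: you reduce to a neighborhood of $t(B)$, use Lemma \ref{lem:local structure at genus 0 point} to place yourself inside a genus-zero family, and define $\mc D$ as the pull-back of the universal curve over $\Mgnbar{0,J\sqcup\{\star\}}$ along the classifying map of $(\mc C,t)$, with Proposition \ref{prop:forgetful morphisms between atlases are forgetful morphisms} identifying the stabilization; that half is sound. The genuine gap is in uniqueness. Before you can invoke Lemma \ref{lem:bijection on fibers defines isomorphism}, you must know that the fiberwise-canonical bijection $\varphi\colon\mc D\to\mc D'$ is a morphism of tropical spaces at all, and your justification runs the wrong way: conditions (2) and (3) of Definition \ref{def:stabilization} say that the topology and PL functions on the \emph{target} $\mc C$ are induced as a quotient along $\phi$; they say nothing about the structure of $\mc D$ or $\mc D'$ being induced from $\mc C$, and they cannot, since $\phi$ collapses the entire $\star$-leg of each fiber to the point $t(b)$. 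So continuity, PL-ness and linearity of $\varphi$ along the new legs do not follow from what you cite. Your final paragraph implicitly concedes this by deferring the chart-independence of the affine structure near $t(B)$ to compatibilities with the torsors $\Aff_{\mc C}(kt)$ of Propositions \ref{prop:aff(ks) is a torsor} and \ref{prop:aff(ks) is everywhere inhabited}, compatibilities you describe but never verify; since your gluing of local charts also rests on this uniqueness, the construction is incomplete as written.

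The repair is available inside your own setup, and it is how the paper argues: after the genus-zero reduction, $\Mgnbar{0,J\sqcup\{\star\}}$ is a fine moduli space by Corollary \ref{cor:representative for M0n}, so a family of genus-zero stable tropical curves is determined by its fibers. The classifying maps of $\mc D$ and $\mc D'$ agree pointwise, hence agree as morphisms (a morphism of tropical spaces is determined by its underlying map), hence both families are canonically isomorphic to the pull-back of the same universal curve, compatibly with the stabilization maps and the $\star$-sections. This observation also shows that your "main obstacle" is a misdiagnosis: because $\mc D$ is constructed as a pull-back of a family of tropical curves (Proposition \ref{prop:family of tropical curve over atlas} together with Lemma \ref{lem:pull-back of family is family}), the exactness of \eqref{equ:exact sequence} and condition (4) of Definition \ref{def:stabilization} come for free; no lifting of one-forms across the new leg via the torsor sheaves has to be done by hand in this lemma.
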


\begin{proof}
Since any morphism $\phi\colon \mc D\to \mc C$ in question is an isomorphism over $\mc C\setminus t(B)$, the construction of $\phi$ is local around $t(B)$. Using Lemma \ref{lem:local structure at genus 0 point}, we may thus assume that $\mc C\to B$ is a family tropical curves of genus zero. 
For the remainder of the proof, refer to the following diagram:
\begin{equation}
    \xymatrix{
    \mc D \ar[rr] \ar[dr] \ar[ddr] & & \Mgnbar{0,n\sqcup\{\star,\diamond\}}\ar[d]^{\pi_\diamond}  \ar@/^1.5pc/[dr]^{\pi_\diamond\times \pi_\star} &\\
    & \mc C  \ar@/^3pc/[rr]\hole
    \ar[r]^{f_\star} \ar[d]_{\pi} & \Mgnbar{0,n\sqcup\{\star\}} \ar[d]^{\pi_\star} & \Mgnbar{0,n\sqcup\{\star\}}\times_{\Mgnbar{0,n}}\Mgnbar{0,n\sqcup\{\diamond\}}
    \ar[d]\ar[l]\\
     & B \ar[r]^f \ar[ur]  \ar@/_/[u]_t& \Mgnbar{0,n} & \Mgnbar{0,n\sqcup\{\diamond\}} \ar[l]^{\pi_\diamond}
    }
\end{equation}
By Corollary \ref{cor:representative for M0n}, there exist morphisms $f\colon B\to \Mgnbar{0,n}$ and $f_\star\colon \mc C\to \Mgnbar{0,n\sqcup\{\star\}}$ identifying $\mc C$ with $f^*(\Mgnbar{0,n\sqcup\{\star\}})$. Since $t$ is a section of $\pi$, we obtain an induced identification of $\mc C$ with $(f_\star\circ t)^*(\Mgnbar{0,n\sqcup\{\star\}}\times_{\Mgnbar{0,n}}\Mgnbar{0,n\sqcup\{\diamond\}})$. It follows from Proposition \ref{prop:forgetful morphisms between atlases are forgetful morphisms} that the morphism
\begin{equation}
\mc D\coloneqq (f_\star\circ t)^*(\Mgnbar{0,n\sqcup\{\star,\diamond\}})\xrightarrow{(f_\star\circ t)^*(\pi_\diamond\times\pi_\star)} (f_\star\circ t)^*(\Mgnbar{0,n\sqcup\{\star\}}\times_{\Mgnbar{0,n}}\Mgnbar{0,n\sqcup\{\diamond\}})\cong \mc C 
\end{equation} 
is a stabilization such that the $\star$-section of $\mc D$ is mapped to $t$ in $\mc C$.  This shows existence. Uniqueness follows from the fact that a family of genus-zero stable tropical curves is uniquely determined by its fibers.
\end{proof}

We may now use the construction from Lemma \ref{lem:remembering a section} to define  sections $s_i$ as morphisms of moduli spaces. 

\begin{proposition}
\label{prop:section is a morphism}{
The morphism  $s_i: \Mgnbar{g,n}\to \Mgnbar{g,n\sqcup\{\star\}}$ is defined as follows.
To a family of $n$-marked genus-$g$ stable tropical curves $(\mc C\to B,(s_i)_{i\in I}, \gamma_{\mc C})$, assign a family of $(n\sqcup\{\star\})$-marked genus-$g$ curves $\mc D\to B$
such that:
\begin{itemize}
    \item there is a morphism $\phi\colon \mc D\to \mc C$ over $B$ exhibiting $\mc C$ as the stabilization of $\mc D$ after forgetting the $\star$-section; 
    \item $\phi\circ s_\star= s_i$ after forgetting the $\star$-section.
\end{itemize}}

\end{proposition}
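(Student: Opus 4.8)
The plan is to obtain $s_i$ as an immediate application of Lemma \ref{lem:remembering a section}, and then to promote the resulting object-level assignment to a morphism of stacks by invoking the uniqueness clauses already established. First I would note that the hypotheses of Lemma \ref{lem:remembering a section} are satisfied with $t = s_i$: the section $s_i$ of a family $\mc C\to B$ is linear, and its image consists of smooth infinite points of the fibers, which by Definition \ref{def:tropcurve} have genus zero, so $\gamma_{\mc C}(s_i(b)) = 0$ for every $b\in B$. Lemma \ref{lem:remembering a section} then produces a family $\mc D\to B$ of $(n\sqcup\{\star\})$-marked genus-$g$ stable tropical curves together with a morphism $\phi\colon \mc D\to \mc C$ over $B$ exhibiting $\mc C$ as the stabilization of $\mc D$ after forgetting the $\star$-section and satisfying $\phi\circ s_\star = s_i$. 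This is exactly the object assigned in the statement, so the construction is well-defined on objects and lands in $\Mgnbar{g,n\sqcup\{\star\}}(B)$.

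The second step is to verify functoriality. For an isomorphism $\mc C_1\to \mc C_2$ of families over $B$ respecting the sections, composing $\phi_1$ with it exhibits $\mc C_2$ as a stabilization of $\mc D_1$ with $\star$-section mapping to $s_i$; the uniqueness clause of Lemma \ref{lem:remembering a section} then yields a canonical isomorphism $\mc D_1\to \mc D_2$, so the assignment respects the groupoid structure. For base change along $f\colon B'\to B$, set $\mc C' = f^*\mc C$ with its induced section $s_i'$, and let $\mc D'\to B'$ be the family produced by the construction applied to $(\mc C', s_i')$. By Proposition \ref{prop:forgetful morphism is a morphism}, stabilization is compatible with base change, so $f^*\phi\colon f^*\mc D\to \mc C'$ exhibits $\mc C'$ as the stabilization of $f^*\mc D$ after forgetting the $\star$-section; moreover $f^*\phi\circ f^*s_\star = f^*s_i = s_i'$. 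Thus $f^*\mc D$ together with $f^*\phi$ is a family of the kind characterized in Lemma \ref{lem:remembering a section} for $(\mc C', s_i')$, and the uniqueness clause gives a canonical isomorphism $f^*\mc D\to \mc D'$. These canonical isomorphisms are the coherence data exhibiting $s_i$ as a morphism of the fibered categories.

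The main obstacle is the base-change compatibility; however, it does not require any new argument, reducing entirely to two facts proved earlier: that stabilizations commute with pullback (Proposition \ref{prop:forgetful morphism is a morphism}) and that stabilizations are determined up to unique isomorphism by their fibers together with the affine structure (the uniqueness in Lemma \ref{lem:remembering a section}, itself built on Lemma \ref{lem:bijection on fibers defines isomorphism}). Because all the identifications are canonical, the cocycle condition for the pseudofunctor follows formally from uniqueness, so no separate coherence check is needed.
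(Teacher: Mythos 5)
Your proof is correct and takes essentially the same route as the paper's: existence and uniqueness of $(\mc D,\phi)$ come from Lemma \ref{lem:remembering a section} applied with $t=s_i$, and compatibility with base change is reduced to the fact that stabilization commutes with pullback, which is exactly the content of the proof of Proposition \ref{prop:forgetful morphism is a morphism} (the paper's proof literally says to ``proceed similarly'' to that argument). Your write-up simply makes explicit what the paper leaves implicit --- verifying the hypotheses of the lemma (linearity of $s_i$ and genus zero at its image) and spelling out how the canonical isomorphisms from the uniqueness clause furnish the coherence data.
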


\begin{proof}
By Lemma \ref{lem:remembering a section}, the morphism $\phi:\mc D\to \mc C$ exists and is unique up to unique isomorphism. To show that this defines a morphism of stacks one needs to show that the choice of $\mc D$ is compatible with fiber products. For this, one proceeds similarly as in the proof of Proposition \ref{prop:forgetful morphism is a morphism}.
\end{proof}

\begin{definition}
\label{def:section}
We call the morphism $s_i\colon \Mgnbar{g,n}\to \Mgnbar{g,n\sqcup\{\star\}}$ from Proposition \ref{prop:section is a morphism} the \textbf{$i$-th section}.
\end{definition}

\begin{theorem}
\label{thm:universal family}
The forgetful morphism $\pi_\star \colon \MgnMf{g,n\sqcup\{\star\}}\to \MgnMf{g,n}$ represents the universal family. More precisely, if $B\to \Mgnbar{g,n}$ is defined via a family of $n$-marked genus-$g$ tropical Mumford curves $\pi\colon\mathcal C\to B$, then there is a morphism $\mc C\to \Mgnbar{g,n\sqcup\{\star\}}$ such that the resulting square
\begin{center}
\begin{tikzpicture}[auto]
\matrix[matrix of math nodes, row sep= 5ex, column sep= 4em, text height=1.5ex, text depth= .25ex]{
|(C)| \mathcal C 	&
|(n1)|	\MgnMf{g,n\sqcup\{\star\}}	\\
|(B)| B	&	
|(n)| \MgnMf{g,n}	\\
};
\begin{scope}[->,font=\footnotesize]
\draw (C) -- (n1);
\draw (B)-- (n);
\draw (C) --node{$\pi$} (B);
\draw (n1) --node{$\pi_\star$} (n);
\end{scope}
\end{tikzpicture}
\end{center}
is Cartesian. Moreover, the pull-back of the $i$-th section from $\MgnMf{g,n}$ to $B$ coincides with the $i$-th section on $B$.
\end{theorem}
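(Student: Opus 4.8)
The plan is to construct the classifying morphism $\mc C\to\MgnMf{g,n\sqcup\{\star\}}$ directly from the ``remembering a section'' construction, and then verify the Cartesian property through the functor of points, so that the essential content is reduced to the uniqueness clause of Lemma \ref{lem:remembering a section}. First I would build the morphism. The family $\pi\colon\mc C\to B$ pulls back along itself to the family $p_2\colon\mc C\times_B\mc C\to\mc C$ of $n$-marked genus-$g$ Mumford curves over the base $\mc C$ (a family by Lemma \ref{lem:pull-back of family is family}), which carries the diagonal $\Delta\colon\mc C\to\mc C\times_B\mc C$ as a linear section. Since every point of a Mumford curve has genus zero, the hypothesis $\gamma(\Delta(x))=0$ of Lemma \ref{lem:remembering a section} holds, so there is a family $\mc D\to\mc C$ of $(n\sqcup\{\star\})$-marked Mumford curves together with a morphism $\phi\colon\mc D\to\mc C\times_B\mc C$ exhibiting $\mc C\times_B\mc C\to\mc C$ as the stabilization of $\mc D\to\mc C$ after forgetting $\star$, and with $\phi\circ s_\star=\Delta$. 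As stabilizing and adding a marking never increase the genus function, $\mc D\to\mc C$ is again a family of Mumford curves, so it defines the desired morphism $\mc C\to\MgnMf{g,n\sqcup\{\star\}}$.

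The square commutes because $\pi_\star$ sends $\mc D\to\mc C$ to its stabilization $\mc C\times_B\mc C\to\mc C$, while the composite $\mc C\xrightarrow{\pi}B\to\MgnMf{g,n}$ is classified by $\pi^*\mc C=\mc C\times_B\mc C\to\mc C$; these agree. The core of the argument is showing the square is Cartesian, which I would do by the functor of points. Write $W=B\times_{\MgnMf{g,n}}\MgnMf{g,n\sqcup\{\star\}}$; a morphism $S\to W$ consists of $a\colon S\to B$ and a family $\mc E\to S$ of $(n\sqcup\{\star\})$-marked Mumford curves, together with an isomorphism between the stabilization of $\mc E$ and $a^*\mc C=S\times_B\mc C$. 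Given such data, the stabilization morphism $\psi\colon\mc E\to S\times_B\mc C$ composed with the $\star$-section yields a section $\sigma=\psi\circ s_\star$ of the projection $S\times_B\mc C\to S$; setting $h\coloneqq\pr_{\mc C}\circ\sigma\colon S\to\mc C$ gives a morphism with $\pi\circ h=a$ and $\sigma(s)=(s,h(s))$.

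A direct computation of the pullback identifies $h^*(\mc C\times_B\mc C)$ with $S\times_B\mc C$ and $h^*\Delta$ with the graph $s\mapsto(s,h(s))$ of $h$, which is exactly $\sigma$; hence both $h^*\mc D\to S\times_B\mc C$ and $\mc E\to S\times_B\mc C$ are stabilizations whose $\star$-section lands on $\sigma$, and the uniqueness part of Lemma \ref{lem:remembering a section} furnishes a canonical isomorphism $h^*\mc D\cong\mc E$. Conversely, any $h\in\Hom(S,\mc C)$ with $\pi\circ h=a$ recovers this data via $h^*\mc D$, and tracing the $\star$-section shows the two assignments are mutually inverse; this gives $\mc C\cong W$, i.e.\ the square is Cartesian. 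The one point demanding care is the bookkeeping for the identification $h^*(\mc C\times_B\mc C)\cong S\times_B\mc C$ and the equality $h^*\Delta=\sigma$, which is precisely what makes the uniqueness clause of Lemma \ref{lem:remembering a section} applicable; I expect this to be the main, though routine, obstacle.

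Finally, for the statement about sections I would specialize the construction to $S=B$, $a=\id_B$, and $\mc E=\mc D'$ the family obtained by applying the $i$-th section morphism (Proposition \ref{prop:section is a morphism}) to $\mc C\to B$. Here $a^*\mc C=\mc C$ with $\pr_{\mc C}=\id$, and by definition of $s_i$ the $\star$-section of $\mc D'$ maps under stabilization to the $i$-th section $s_i\colon B\to\mc C$ of the family; thus $\sigma=s_i$ and $h=\pr_{\mc C}\circ\sigma=s_i$. Therefore the pullback of the $i$-th section from $\MgnMf{g,n}$ coincides with the $i$-th section on $B$, as claimed.
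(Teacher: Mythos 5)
Your proof is correct and follows essentially the same route as the paper: both arguments reduce everything to the existence-and-uniqueness statement of Lemma \ref{lem:remembering a section}, by identifying morphisms $S\to B\times_{\MgnMf{g,n}}\MgnMf{g,n\sqcup\{\star\}}$ with linear sections of $S\times_B\mc C\to S$, i.e.\ with morphisms $S\to \mc C$ over $B$. The only difference is presentational: you make the classifying morphism $\mc C\to \MgnMf{g,n\sqcup\{\star\}}$ explicit by remembering the diagonal section of $\mc C\times_B\mc C\to \mc C$, whereas the paper leaves this morphism implicit and works directly with the functor of points of the fiber product.
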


\begin{proof}
For any tropical space $T\stackrel{f}{\to} B$ over $B$, the groupoid of morphisms from $T$ to $B\times_{\MgnMf{g,n}}\MgnMf{g,n\sqcup\{\star\}}$ over $B$ has as objects the pairs $(\mc D,\phi)$ consisting of a family $(n\sqcup\{\star\})$-marked genus-$g$ tropical Mumford curves and a morphism $\phi:\mc D\to \mc C_T $ of $n$-marked families of tropical curves that exhibits $\mc C_T = f^\ast \mc C$ as the stabilization of $\mc D$ after forgetting the $\star$-section. By Lemma \ref{lem:remembering a section}, between any two objects in this groupoid there is at most one isomorphism. Therefore, we can take isomorphism classes of theses pairs $(\mc D,\phi)$ and represent $B\times_{\MgnMf{g,n}}\MgnMf{g,n\sqcup\{\star\}}$ by a functor. Again by Lemma \ref{lem:remembering a section}, the equivalence class of one of the pairs $(\mc D,\phi)$ only depends on the composite $\phi\circ s_\star$ of the $\star$-section and $\phi$. Lemma \ref{lem:remembering a section} also states that every linear section $T\to \mc C_T$ appears as $\phi\circ s_\star$ for some pair $(\mc D,\phi)$. Therefore, the fiber product $B\times_{\MgnMf{g,n}}\MgnMf{g,n\sqcup\{\star\}}$ is the functor assigning to $T/B$ the sections of $\mc C_T$. This is naturally equivalent to the functor represented by the family $\pi\colon \mc C\to B$. The statement about the sections follows immediately.
\end{proof}

\section{Tropical intersection theory on $\Mgnbar{g,n}$}
\label{sec:intersection theory}

The main goal of this section is to define  $\psi$-classes on $\Mgnbar{g,n}$. 
To achieve this goal, we need to develop {some elements of}  intersection theory for tropical spaces and stacks over the category of tropical spaces. For a computational approach to tropical intersection theory, in particular of moduli spaces for curves of genus zero, see the \textsc{polymake} \cite{polymake} extension {\it a-tint} \cite{Atint}.

\subsection{Cycles and divisors on tropical spaces and stacks}
Much of the tropical intersection theory developed for tropical varieties in $\R^n$ \cite{MikhalkinICM,AllermannRau,GathmannKerberMarkwig,RauModuli,Flossmann,FS,Katz}, and more generally for weakly embedded cone complexes \cite{GrossToroidal}, tropical manifolds \cite{Shaw,FrancoisRau}, and rational polyhedral spaces \cite{Lefschetz,GrossShokriehSheaf}, carries over to tropical spaces. We explain the adaptions that have to be made.

\subsubsection{Tropical cycles}

Let $X$ be a tropical space. We say that a function $f\colon X\to \Z$ is \emph{constructible}, if at every $x\in X$ there exists a local face structure $\Sigma$ such that $f\vert_\sigma$ is constant for every $\sigma\in \Sigma$.  For a constructible function $f\colon X\to \Z$, we denote by
\begin{equation}
\vert f\vert=\overline{\{ x\in X\mid f(x)\neq 0\}}
\end{equation}
its support. Let $\mathrm{W}_k(X)$ denote the quotient of the Abelian group of constructible functions $X\to \Z$ whose support is at most $k$-dimensional by the subgroup of all constructible functions $X\to \Z$ whose support is at most $(k-1)$-dimensional. Given an element $\alpha \in \mathrm W_k(X)$ represented by a function $f\colon X\to \Z$, the closure of the set of all points $x\in \vert f\vert$ where the local dimension of $\vert f \vert$ is $k$ only depends on $\alpha$. We call it the \emph{support} of $\alpha$ and denote it by $\vert \alpha \vert$.

Let $\phi\colon X\to Y$ be a morphism of \TPL{}-spaces, where $X$ is purely $n$-dimensional and $\dim(Y)=m$. Let $x\in X$. Then there exist local face structures $\Sigma$ and $\Delta$ of $X$  and $Y$ at $x$ and $\phi(x)$, respectively, such that $\phi(\sigma)\in\Delta$ and the restriction $\phi\vert_\sigma\colon \sigma\to \phi(\sigma)$ is linear for all $\sigma\in \Sigma$. For every point $z$ in the relative interior of some {$n$}-dimensional $\sigma\in\Sigma$ we define the \emph{multiplicity} $\mathrm{mult}_z(\phi)$ of $\phi$ at $z$ as follows: if $\dim(\phi(\sigma))<{n}$, then $\mathrm{mult}_z(\phi)=0$, otherwise $\phi\vert_\sigma$ can be expressed in suitable coordinates as $\R^{n}\to \R^{m},\;\; x\mapsto Ax+b$ for appropriate choices of a matrix $A\in \Z^{m\times n}$ and a vector $b\in \R^{m}$, and we set $\mathrm{mult}_z(\phi)$ equal to the index of $A(\Z^{n})$ in $\Z^{m}$. Note that this can be expressed as the $\gcd$ of all basic minors of $A$. The domain of 
$z\mapsto\mathrm{mult}_z(\phi)$  depends on the choices of $\Sigma$ and $\Delta$, but its values do not. Therefore, we obtain a well-defined element $\mathrm{mult}(\phi)\in W_{n}(X)$.

Still assuming that $X$ is purely $n$-dimensional, we say that $[\alpha]\in \mathrm{W}_n(X)$ is a \emph{tropical $n$-cycle} if {it satisfies the following \textit{balancing condition}:} for every open set $U\subseteq X$, affine function $\phi\in \Gamma(U,\Aff_X)$, local face structure $\Sigma$ on $U$ such that {$\alpha$ and $\mathrm{mult}(\phi)$ 
have} constant value  on the relative interiors of any $\sigma\in\Sigma$, and for every $(n-1)$-dimensional polyhedron $\tau\in \Sigma$ with $\phi\vert_\tau=0$ we have
\begin{equation}\label{def:balancing}
\sum_{\sigma:\tau\preceq\sigma} {{\alpha\cdot \mathrm{sign}(\phi\vert_{\sigma}) \cdot \mathrm{mult}(\phi)}}=0 \ ,
\end{equation}
where the sum is taken over all $n$-dimensional $\sigma\in \Sigma$ containing $\tau$. {Since $\phi\vert_\tau=0$, the sign $\mathrm{sign}(\phi\vert_\sigma)$ of $\phi$ on $\sigma$ is well-defined for all $\sigma\in \Sigma$ containing $\tau$.} We denote the subgroup of $\mathrm{W}_n(X)$ consisting of all tropical $n$-cycles by $Z_n(X)$.

Now assume that $X$ is an arbitrary tropical space, and let $k$ be a nonnegative integer. The group $Z_k(X)$ of \emph{tropical $k$-cycles} is the subgroup consisting of all elements of $\mathrm{W}_k(X)$ represented by a function $\alpha\colon X\to \Z$ such that there exists a purely $k$-dimensional locally polyhedral subset $Y\subseteq X$ such that $\alpha\vert_{X\setminus Y}=0$ and $\alpha\vert_{Y}$ represents an element in $Z_k(Y)$.

\begin{example}
If $X$ is a smooth connected tropical curve, then the tropical one-cycles on $X$ are  represented by  constant functions $X\to \Z$.
\end{example}

\begin{example}
If $X$ can be embedded in $\R^d$ for some $d$, then our definition of balancing is equivalent to the definition in \cite{AllermannRau}. In particular, the constant function with value one defines a tropical cycle $[\Mgnbar{0,n}]$ on $\Mgnbar{0,n}$  by Theorem \ref{thm:Mg0n is what it's supposed to be} and \cite[Theorem 3.7]{GathmannKerberMarkwig}. The proof in \cite{GathmannKerberMarkwig} generalizes easily to higher genus in that the constant function with value one defines a tropical cycle $[\Atlas{g,n}]$ on $\Atlas{g,n}$. 
\end{example}

Let $f\colon X\to Y$ be a proper morphism of tropical spaces, and let $\alpha\in W_k(X)$. Let $f_\alpha\colon \vert \alpha\vert\to f(\vert\alpha\vert)$ denote the morphism induced by $f$. Then we define the \emph{push-forward} $f_*\alpha$ by $f_*\alpha=0$ if $\dim(f(\vert\alpha\vert))<k$, and by
\begin{equation}
f_*\alpha\colon f(\vert\alpha\vert) \to \Z,\;\; y\mapsto \sum_{x\in f_\alpha^{-1}\{y\}} \mathrm{mult}_x(f_\alpha)\cdot \alpha(x)
\end{equation}
otherwise. Note that $f_*\alpha$ is a well-defined element of $W_k(Y)$ when extended by zero to all of $Y$. If $\alpha\in Z_k(X)$, then a straightforward adaption of the proof of \cite[Proposition 2.25]{GathmannKerberMarkwig} to the setting of tropical spaces shows that $f_*\alpha\in Z_k(Y)$.

\subsubsection{Tropical Cartier divisors}

\begin{definition}
Let $X$ be a tropical space. A piecewise linear function $\phi\in \Gamma(X,\PL_X)$ is a \textbf{rational function} if at every $x\in X$ there exists a local face structure $\Sigma$ such that for every  $\sigma\in \Sigma$ there are an open neighborhood $U$ of $\sigma$  and $m\in \Gamma(U,\Aff_X)$ with $\phi\vert_\sigma=m\vert_\sigma$. We denote by $\APL_X$ the subsheaf of $\PL_X$ whose sections over an open subset $U\subseteq X$ are rational functions on $U$.
\end{definition}

By definition, every  integral affine function is rational.

\begin{definition}
Let $X$ be a tropical space. The \textbf{sheaf of tropical Cartier divisors} on $X$, denoted by $\Divs_X$, is defined by
\begin{equation}
\Divs_X\coloneqq ~\raisebox{.7ex}{$\APL_X$}\Big/\raisebox{-.7ex}{$\Aff_X$} \ .
\end{equation}
The group $\Div(X)$ of tropical Cartier divisors on $X$ is defined by 
\begin{equation}
\Div(X)\coloneqq \Gamma(X, \Divs_X) \ .
\end{equation}
For any $\phi\in \Gamma(X,\APL_X)$ we denote its image in $\Div(X)$ by $\mathrm{div}(\phi)$.
\end{definition}

Let $X$ be a purely $n$-dimensional tropical space, let $\alpha\in Z_n(X)$ be a tropical $n$-cycle on $X$, and let $\phi\in \Gamma(X,\APL_X)$ be a rational function on $X$. By definition,  there exists at any $x\in X$ a local face structure $\Sigma$ such that for every $\sigma\in \Sigma$ there exists a neighborhood $U_\sigma$ of $\sigma$ and an integral affine function $m_\sigma \in \Gamma(U,\Aff_X)$ such that $\phi\vert_\sigma =m_\sigma\vert_\sigma$, and such that $\alpha$ is constant on the relative interior of every $\sigma\in\Sigma$. Let $\tau\in\Sigma$ be an $(n-1)$-dimensional polyhedron. 
  Since $\alpha$ is a tropical cycle,  the sum
\begin{equation}
d_\tau=\sum_{\sigma :\tau\preceq\sigma} \mathrm{sign}(m_\sigma-m_\tau)\cdot \mathrm{mult}(m_\sigma-m_\tau)\cdot \alpha \ 
\end{equation}
 is independent of the choice of $m_\tau$. Define a function $\vert\Sigma\vert\to\Z$ whose value is $d_\tau$ on the relative interior of an $(n-1)$-dimensional polyhedron $\tau\in \Sigma$, and zero everywhere else. Since this construction is local, it defines an element in $\mathrm{W}_{n-1}(X)$ which we denote by $\mathrm{div}(\phi)\cdot \alpha$. If $\phi\in \Gamma(X,\Aff_X)$, then by construction, one has $\mathrm{div}(\phi)\cdot \alpha=0$. So because the construction of $\mathrm{div}(\phi)\cdot \alpha$ is local and bilinear, it induces a bilinear pairing 
\begin{equation}
\Div(X)\times Z_n(X)\to \mathrm{W}_{n-1}(X) \ .
\end{equation}
We denote the image of a pair $(D,\alpha)$ under this pairing by $D\cdot \alpha$. Exactly as in \cite[Proposition 3.7 (a)]{AllermannRau}, one proves that $D\cdot \alpha\in Z_{n-1}(X)$. 

The above construction generalizes to any tropical space $X$, $D\in \Div(X)$ and $\alpha\in Z_k(X)$ for any non-negative integer $k$, by setting
\begin{equation}
D\cdot \alpha\coloneqq D\vert_{\vert\alpha\vert}\cdot \alpha ~.
\end{equation}
For any $k\geq 0$ one thus obtains a bilinear pairing
\begin{equation}
\Div(X)\times Z_k(X)\to Z_{k-1}(X) ,\;\; (D,\alpha)\mapsto D\cdot \alpha ~.
\end{equation}

\begin{remark}
\label{rem:divisors at infinity}
The constant function with value one on $X=\T$ defines a cycle $[X]\in Z_1(X)$. One would expect that for every zero-cycle $\alpha\in Z_0(X)$ there exists a divisor $D\in \Div(X)$ with $D\cdot[X]=\alpha$. However, with our definitions this is true only if $\vert\alpha\vert$ does not contain the point $\infty$. This can be fixed by modifying the sheaf $\APL_X$ in a way that allows functions with value $\pm\infty$ so that there would be a rational function on $X$ approaching $\infty$ with nonzero slope. As doing so would introduce several technical difficulties without increasing the generality of our statements, we decided to work with the simpler version of $\APL_X$ (and hence $\Div(X)$) introduced above.
\end{remark}

\begin{definition}
Given  a tropical space $X$, we define the \textbf{weak Chow group $A_*(X)$ of $X$} as the quotient of $Z_*(X)$ by its subgroup generated by all elements of the form $\mathrm{div}(\phi)\cdot \alpha$, where $\alpha\in Z_k(X)$ for some non-negative integer $k$, and $\phi\in \Gamma(\vert\alpha\vert,\APL_{\vert \alpha\vert})$. 
We say that two tropical cycles $\alpha$ and $\beta$ on $X$ are \textbf{weakly rational equivalent} if they have the same image in $A_*(X)$.
\end{definition}


Exactly as in \cite[Proposition 3.7 (b)]{AllermannRau}, one proves that for any two divisors $D,E\in \Div(X)$ and $\alpha\in Z_*(X)$ one has
\begin{equation}
D\cdot (E\cdot \alpha)=E\cdot(D\cdot\alpha)  \ .
\end{equation}
It follows from this that every divisor $D\in \Div(X)$ defines a morphism
\begin{equation}
A_*(X)\to A_*(X),\;\;[\alpha]\mapsto [D\cdot\alpha] \ .
\end{equation}

If $X$ is a compact tropical space, then every tropical zero-cycle $\alpha$ on $X$ can only have finite support. Therefore, one can integrate zero-cycles and obtain an integer
\begin{equation}
\int_X\alpha\coloneqq \sum_{x\in X}\alpha(x) \ ,
\end{equation}
the \emph{degree} of $\alpha$. Since the sum over all points of the sum of all outgoing slopes at a point of a rational function on a compact tropical curve is zero, the degree induces a map
\begin{equation}
\int_X\colon A_0(X)\to \Z
\end{equation}
on the Chow group of a compact tropical curve $X$.

\subsubsection{Tropical line bundles}

If $X$ is a tropical space, then  a tropical line bundle may be described in the following equivalent ways:
\begin{enumerate}
    \item a tropical space $\pi\colon \mc L\to X$ over $X$ such that $X$ can be covered by open subsets $U$ such that $\mc L_U=\pi^{-1}U\cong U \times \T$ over $U$.
    \item an $\Aff_X$-torsor, that is a sheaf $\mc A$ of sets with an $\Aff_X$-action such that $\mc A_x$ is an $\Aff_{X,x}$-torsor for all $x\in X$.
    \item  a cocycle $\{\xi_{\alpha, \alpha'}\}$ determining an element  of $H^1(X,\Aff_X)$.
\end{enumerate}
If $\mc L$ is a tropical space as in (1), the linear sections $U\to \mc L_U$ whose values are never infinite form an $\Aff_X$-torsor, showing that (1) implies (2).
Given an  $\Aff_X$-torsor $\mc A$, differences of local sections for the torsor give the elements of a cocycle, yielding  (2) implies (3).
Finally, a cocycle provides patching data to construct a tropical space $\mc L$, showing that (3) implies (1).



\begin{example}
Given a family of tropical curves $\pi: \mc C \to B$ together with an affine linear section $s:B\to \mc C$ supported on genus zero-points, the sheaf $Aff_{\mc C}(ks)$ from Definition \ref{def:funwipo} defines a tropical line bundle on $\mc C$.
\end{example}

\begin{example}[Line bundles on $\T\PP^1$]
\label{ex:troplinebundles}
For $a\in \ZZ$, we describe a line bundle that we denote $\calO_{\T\PP^1}(a)$ in the three different ways introduced in this section:
\begin{enumerate}
    \item Let $\mc L = \mc L_{-\infty} \cup \mc L_{\infty}$, where $\mc L_{-\infty}= [-\infty, \infty)\times \TT$, with affine integral coordinates $(x_-, y_-)$ and $L_{\infty}= (-\infty, \infty]\times \TT$, with affine integral coordinates $(x_+, y_+)$. The two charts are glued together via the transition functions:
    \begin{align}
        x_+ &= x_- & y_+ &= y_-+ax_-.
    \end{align}
    Projection to the first factor provides a map to $\T\PP^1$, and by construction $\T\PP^1$ is covered by two open sets $U$ with $\mc L_U\cong U\times \TT$. We wish to consider the constant section $s_0:[-\infty, \infty)\to \mc L_{-\infty}$ given by $y_- = 0$, and observe that in the $+$-coordinates the local equation for $s_0$ is $y_+ = a x_+$.
      \item Consider the sheaf $Aff_{\T\PP^1}(a \infty)$: its sections in a (small) neighborhood of $-\infty$ are constant, whereas its sections in a  neighborhood of $+\infty$ have slope $-a$. Given a section of $Aff_{\T\PP^1}(a \infty)$, one may obtain a section of $\mc L$ by adding to it the distinguished section $s_0$.
      \item Given two open sets $[-\infty, \infty)$ and $(-\infty, \infty]$ covering $\T\PP^1$, we get the cocycle
    \begin{equation}
        \xi_{\infty, -\infty} =  ax.
    \end{equation}
  
\end{enumerate}
\end{example}

\begin{definition}
Let $X$ be a tropical space. The \textbf{Picard group $\Pic(X)$ of $X$} is given by 
\begin{equation}
\Pic(X)=H^1(X,\Aff_X) \ .
\end{equation}
\end{definition}

A rational section $s$ of a tropical line bundle $\mc L\to X$ is a continuous section $s\colon X\to \mc L$ such that whenever $U\subseteq X$ is an open subset and $\mc L_U\xrightarrow{\cong} \T\times U$ is an isomorphism, the composite
\begin{equation}
U\xrightarrow{s\vert_U} \mc L_U\to  \T\times  U\to \T
\end{equation}
is contained in $\Gamma(U, \APL_X)$.

Given a tropical Cartier divisor $D\in \Div(X)$, one defines a tropical line bundle $\mc L(D)$ by taking the image of $D$ under the connecting morphism
$\Div(X)\to \Pic(X)$
associated to the short exact sequence
\begin{equation}
0\to \Aff_X\to \APL_X\to \Divs_X \to 0 \ .
\end{equation}
The bundle $\mc L(D)$ comes with a canonical rational section, and conversely every rational section of a tropical line bundle $\mc L$ defines a Cartier divisor. Just as in algebraic geometry, one obtains a bijection between $\Div(X)$ and isomorphism classes of Pairs $(\mc L, s)$ consisting of a tropical line bundle $\mc L$ on $X$ and a rational section $s$ of $\mc L$.
Differently from algebraic geometry, tropical line bundles need not have any rational section, as shown in Example \ref{ex:ratsec}.
\begin{example}\label{ex:ratsec}
Let $X =\R$ be the tropical space with the exotic affine structure {generated by the restriction of the identity to $(-1,1)$: it satisfies} $Aff_{X,x} = \Z \oplus \R$ for $-1< x< 1$, and $Aff_{X,x} = \R$ otherwise. 
Let $U_{1} = (-\infty, 1)$ and $U_{-1} = (-1, \infty)$ be an open cover of $X$, and let $m \in Aff_X(-1,1)\setminus \R$ be a cocycle defining a line bundle $\mc L$ on $X$. Since the only sections of $Aff_X(U_{\pm1})$ are constants, we deduce that $\mc L$ does not admit any rational section. 

\end{example}
We denote by $\SPic(X)$ the subgroup of $\Pic(X)$ consisting of all line bundles that admit a rational section. In other words, $\SPic(X)$ is the image of $\Div(X)$ in $\Pic(X)$. 
For every $\mc L\in \SPic(S)$ there exists a divisor $D\in \Div(X)$ such that $\mc L=\mc L(D)$. This divisor induces a map
\begin{equation}
A_*(X)\to A_*(X),\;\; \alpha\mapsto D\cdot \alpha \ ,
\end{equation}
which does not depend on the choice of $D$. This map is called the \textbf{first Chern class of $\mc L$}, denoted by $c_1(\mc L)$. The image of $\alpha\in A_*(X)$ under $c_1(\mc L)$ is denoted by $c_1(\mc L)\frown \alpha$.

\begin{remark}
If every point of a tropical space $X$ has a neighborhood that allows a closed embedding into $\T^r$ for some positive integer $r$, then the sheaf $\APL_X$ agrees with the sheaf $\mc M$ from \cite{Lefschetz}. It is shown in Lemma 4.5 in \textit{loc.\ cit.\ }that $H^1(X,\mc M)=0$, which implies that $\SPic(X)=\Pic(X)$ in this case.
\end{remark}

\subsubsection{Line bundles and their Chern classes on stacks over tropical spaces}

We  define divisors and line bundles on a stack $\mc M$ over the site of tropical spaces with representable diagonal. Because we want our definitions to apply to $\Mgnbar{g,n}$, we do not assume that $\mc M$ has an atlas. The lack of an atlas makes it impossible to define a group of tropical cycles on $\mc M$ and we settle with considering tropical cycles on tropical spaces $T$ equipped with a morphism $T\to \mc M$.

{
\begin{remark} \label{rem:fundcl}
One might be tempted to define tropical cycles on the stack $\Mgnbar{g,n}$ as balanced functions on its underlying set  $\vert \Mgnbar{g,n}\vert=\Mgnbar{g,n}(pt)$ of isomorphism classes of $n$-marked genus-$g$ stable tropical curves. The underlying set $\vert\Mgnbar{g,n}\vert$ does have a natural \TPL{}-structure, obtained for example by viewing it is a quotient of the underlying set of $\Atlass{g,n}$, but since there is no natural surjection from a tropical space onto $\Mgnbar{g,n}$, this \TPL{}-space $\vert\Mgnbar{g,n}\vert$ does not have a natural affine structure. In particular, there is no notion of balancing on $\vert\Mgnbar{g,n}\vert$, and thus no notion of tropical cycles. In the case $g=n=1$ this is illustrated by Example \ref{ex:fiber product over M11}:  there is a surjection $f_a\colon \T\PP^1\to \Mgnbar{1,1}$ for every $a\in \Z$ via which one could induce an affine structure on $\vert\Mgnbar{1,1}\vert$; however,  the fiber products $\T\PP^1\times_{f_a,\Mgnbar{1,1},f_b}\T\PP^1$ are pathological: for example, the constant weights do not define a one-cycle on them if $a\neq b$. This shows that these affine structure are incompatible. 
\end{remark}
}

It is possible to define  tropical line bundles and their Chern classes on $\mc M$.
A \emph{tropical line bundle on $\mc M$} is 
given by specifying for every morphism $T\xrightarrow f\mc M$ a line bundle $f^*\mc L$ in a way that is compatible with pull-backs.
Since line bundles can be tensored in a way compatible with pull-backs, all tropical line bundles on $\mc M$ form a group, the tropical Picard group $\Pic(\mc M)$ of $\mc M$. By construction, one can pull-back tropical line bundles along any morphism from a tropical space to $\mc M$. {The first Chern class of $
\mc L$ on the stack may be defined using the first Chern class of $f^*\mc L$ for all suitable pull-backs.}


\begin{definition}
\label{def:fcc} Given a tropical line bundle $\mc L$ on $\mc M$, a morphism $T\xrightarrow f \mc M$ such that $f^*\mc L\in \SPic(T)$, and a cycle $\alpha\in Z_*(T)$, we define the cycle
\begin{equation}
c_1(\mc L)\frown \alpha\coloneqq c_1(f^*\mc L)\frown \alpha \ .
\end{equation}

We call the collections of the morphisms
\begin{equation}
A_*(T)\to A_*(T) ,\;\; \alpha\mapsto c_1(\mc L)\frown \alpha
\end{equation}
for all maps $T\xrightarrow f\mc M$ such that $f^*\mc L\in \SPic(T)$ the \textbf{first Chern class of {$\mc L$ on} $\mc M$}.
\end{definition}

\subsection{Boundary divisors and $\psi$-classes}

We begin by defining tropical line bundles that are closely related to the cone subcomplexes at infinity that are usually thought of as tropical boundary divisors (\cite{ACP}).

\begin{definition}
\label{def:boundary line bundle}
Let   $g,n \in \Z^{\geq 0}$ with $n+2g-2>1$, and let $i,j\in I$ with $i\neq j$. We define the \textbf{boundary line bundle} $\mc L(D_{i,j})\in \Pic(\Mgnbar{g,n})$ by defining 
\begin{equation}
f^*\mc L(D_{i,j})= s_i^*\phi^*(\Aff_{\mc C}(\phi\circ s_j)) \ ,
\end{equation}
for a morphism $f\colon B\to \Mgnbar{g,n}$  corresponding to a family of tropical stable curves $\mc D\to B$ with sections $(s_k)_{k\in I}$, where $\phi\colon \mc D\to \mc C$ is the stabilization of $\mc D$ after forgetting the $i$-th mark.
\end{definition}

\begin{remark}
As observed in Remark \ref{rem:divisors at infinity}, our definition of tropical Cartier divisors does not allow them to be supported on strata at infinity. Therefore, given a family of stable tropical curves $\mc C\to B$ and a section corresponding to a marked end $s\colon B\to \mc C$, there is no tropical Cartier divisor on $\mc C$ that is supported on $s(B)$. As already explained in Remark \ref{rem:divisors at infinity}, one can enlarge the sheaf $\APL_X$ to remedy this, which would allow us to define boundary divisors $D_{i,j}$ whose associated line bundles $\mc L(D_{i,j})$ agree with the boundary line bundles from Definition \ref{def:boundary line bundle}. But because this involves several technicalities while not increasing the generality of our results, we refrain from doing so. 
\end{remark}

While the definition of  $\mc L(D_{i,j})$ is asymmetrical in $i$ and $j$, one has $\mc L(D_{i,j})=\mc L(D_{j,i})$. We omit the proof of this fact as it is a technical exercise relying on the intuitive fact that for families of tropical curves where the $i$-th and $j$-th leg of every fiber are attached at the same vertex, there exists an isomorphism of tropical spaces exchanging the two legs. We have all ingredients to define the tropical $\psi$-classes.

\begin{definition}\label{def:psi}
Let  $g,n \in \Z^{\geq 0}$ with $n+2g-2>0$, and let $i\in [n]$. We define the \textbf{$i$-th cotangent line bundle} $\bL_i\in \Pic(\Mgnbar{g,n})$ by defining $f^*\bL_i =s_i^*(\mc \Aff_{\mc C}(-s_i))$ for a morphism $f\colon B\to \Mgnbar{g,n}$  corresponding to a family of $n$-marked genus-$g$ stable tropical curves $\mc C\to B$ with $i$-th section $s_i$. The \textbf{$i$-th psi-class} $\psi_i$ is defined by
\begin{equation*}
\psi_i=c_1(\bL_i) \ .
\end{equation*}
\end{definition}

{
\begin{example}
\label{ex:psimanyvalues}
We compute the class $f_a^\ast\psi_1$ for the family of elliptic curves described in Example \ref{ex:tropfam}, corresponding to a map $f_a: \T\PP^1 \to \Mgnbar{1,1}$. 

Consider the section $s_1: \T\PP^1 \to \mc C^a$, defined by $s_1 (x) = (x, \infty)$. 

Denote by $T = T_-\cup T_+ \subset \mc C^a$, and construct two open sets covering $s_1$:
\begin{align}
    T_{-\infty} &= T \smallsetminus \mc C^a_\infty &  T_{\infty} &= T \smallsetminus \mc C^a_{-\infty}.
\end{align}

For these two open sets, we consider  sections of the sheaf $Aff_{\mc C^a}(-s_1)$:
\begin{align}
 y_- &= y_+ - ax_+  \in Aff_{\mc C^a}(-s_1)(T_{-\infty}) &
 y_- -ax_- &= y_+ \in Aff_{\mc C^a}(-s_1)(T_{\infty}).
\end{align}
Observe that neither $y_-$ nor $y_+$ restrict to the germ of an affine function for any point $x\in s_1$, making it impossible to pull them back to $\T\PP^1$ via $s_1$. However, we may consider the cocycle $\xi \in Aff_{\mc C^a}( T_{-\infty}\cap T_{\infty}) $:
\begin{equation}
    \xi = y_+-y_- = ax_+ = -ax_-.
\end{equation}
Since $\xi$ is constant along vertical fibers, its pull-back via $s_1$ determines a cocycle for $f_a^\ast\bL_1$. 
\begin{equation} \label{eq:cocforpsi}
    f_a^\ast\bL_1  = \left\{ s_1^\ast(T_{-\infty})\cap s_1^\ast(T_{-\infty}), s_1^\ast (\xi) \right\} = \left\{(-\infty, \infty), ax \right\}.
\end{equation}
As seen in Example \ref{ex:troplinebundles}, \eqref{eq:cocforpsi} shows that $f_a^\ast\bL_1 \cong \calO_{\T\PP^1}(a)$, which implies that the degree of 
$f_a^\ast \psi_1= c_1(f_a^\ast\bL_1) = a$.

This example illustrates that the degree of the class $f_a^\ast\psi_1$ does not depend only on the set theoretical map from the base to the moduli space, but it detects in an essential way the affine structure on the family of curves. 
\end{example}
}
The following lemma will be needed in the proof of the pull-back formula for the tropical $\psi$-classes.

\begin{lemma}
\label{lem:section at vertex}
Let $\mc C\to B$ be a family of $n$-marked stable tropical curves and let $i\in [n]$. Assume that for every $b\in B$ the vertex $v(b)$ of $\mc C_b$  that is adjacent to the $i$-marked leg is three-valent. Then the map $v\colon B\to \mc C$ is linear. 
\end{lemma}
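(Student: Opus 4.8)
The plan is to show that the section $v$, sending $b$ to the vertex of $\mc C_b$ adjacent to the $i$-marked leg, is a morphism of tropical spaces by verifying linearity locally. Since $v$ is already a continuous section of the projection on the underlying \TPL{}-spaces (its image is the locus of these three-valent vertices, which moves continuously with $b$), the only thing to check is that $v$ pulls affine functions back to affine functions. Linearity is a local condition, so I would fix $b \in B$ and work in a neighborhood.

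The key geometric observation is that $v$ is ``almost'' the $i$-th section $s_i$, differing from it only by moving out along the $i$-marked leg to its attaching vertex. Concretely, near the vertex $v(b)$ the fiber looks like a tripod: the $i$-marked leg together with the two other directions $w_1, w_2 \in T_{v(b)}\mc C_b$ emanating from the three-valent vertex. The plan is to invoke Lemma \ref{lem:local structure at interior of edge} to produce, after shrinking $B$, two linear sections $t_1, t_2\colon B\to \mc C$ of $\pi$ whose values lie in the interiors of the edges adjacent to $v(b)$ in the directions $w_1, w_2$; these are bona fide linear sections landing in smooth genus-zero edge-interiors. Then I would express $v$ as the intersection point of the two edges carrying $t_1$ and $t_2$; equivalently, given an affine function $\phi$ defined near $v(b)$ in $\mc C$, I want to recover $v^*\phi$ as an affine function of data pulled back through $t_1, t_2$ and the section $s_i$.

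The cleanest route is to use Lemma \ref{lem:characterization of affineness in terms of section and harmonicity}: for a linear section $s$ landing in smooth genus-zero points, an affine function $\phi$ on $\mc C$ satisfies $\phi \in \Aff_{\mc C, s(b)}$ iff $s^*\phi \in \Aff_{B,b}$. Since $v(b)$ is a smooth genus-zero point (it is three-valent of genus zero, by the tripod hypothesis and stability), I would like to apply this characterization directly to the section $v$ itself. The subtlety is that the criterion is stated for sections landing in genus-zero smooth points, and $v(b)$ qualifies; so what remains is to verify that $v$ is indeed a linear section in the first place, i.e.\ that the underlying map lands in the genus-zero locus and varies linearly. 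For this I would note that along the edges adjacent to $v$, the function $\phi$ is fiberwise harmonic and hence determined by its slopes and its value at one point; using the two sections $t_1, t_2$ in the edge interiors, whose pullbacks $t_j^*\phi$ are affine on $B$, and the constancy of fiber slopes over the connected base (as in the proof of Lemma \ref{lem:exact sequence for harmonic functions}), I would solve for the value $\phi(v(b))$ as an affine combination of the $t_j^*\phi$ and the edge lengths, each of which is affine on the base. This exhibits $v^*\phi$ as affine.

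The main obstacle I anticipate is bookkeeping the dependence of $v^*\phi$ on the edge-length coordinates: $v(b)$ is the common endpoint of the two edges, and writing $\phi(v(b))$ in terms of the values $\phi(t_j(b))$ requires subtracting the slope of $\phi$ times the distance from $t_j(b)$ to $v(b)$. That distance is itself a piecewise linear function of $b$ (an edge-length coordinate coming from a \TPL{}-trivialization), and one must confirm it is genuinely affine on $B$ rather than merely piecewise linear — this follows because these edge lengths are pullbacks of affine coordinates $l_e$ under the trivialization morphism $f_\sigma$, hence affine, and because the relevant fiber slopes of $\phi$ are locally constant over the connected neighborhood. Once these slopes are seen to be locally constant and the edge lengths affine, the expression for $v^*\phi$ is a $\Z$-linear combination of affine functions on $B$, giving linearity of $v$ and completing the proof.
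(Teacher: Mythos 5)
Your overall strategy — reduce linearity of $v$ to showing that $v^*\phi$ is an affine combination of pullbacks $t_j^*\phi$ under linear sections into the adjacent edge interiors — is workable, but the step on which everything hinges contains a genuine error. You justify the affineness of the fiberwise distance $\mathrm{dist}(v(b),t_j(b))$ by saying it is the pullback of a coordinate $l_e$ under the trivialization morphism $f_\sigma$, "hence affine." This conflates the two layers of structure that the paper is careful to keep apart: a \TPL{}-trivialization is data on the underlying \TPL{}-space only, and the morphisms $f_\sigma$ are linear with respect to the chart-induced affine structure on the polyhedra of the local face structure, \emph{not} with respect to $\Aff_B$. Pullbacks of the $l_e$ therefore land in $\PL_B$, not in $\Aff_B$. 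Indeed, if edge lengths of fibers were automatically affine on the base, most of the paper's apparatus would be unnecessary; Example \ref{ex:tropfam} gives an explicit counterexample, where the loop length of the fiber of $\mc C^a$ over $x\in\T\PP^1$ is $\vert x\vert$, which is piecewise linear but not affine at $0$. Whether a given edge-length function is affine is precisely what the affine structure of the family encodes, so it cannot be read off from the \TPL{}-trivialization; it must be extracted from the defining exact sequence \eqref{equ:exact sequence}.

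The gap can be closed, and then your route genuinely differs from the paper's. Using the right-exactness of \eqref{equ:exact sequence}, lift the fiberwise harmonic germ at $v(b)$ with slopes $1$, $0$, $-1$ along the directions of $t_1$, $t_2$, and the $i$-marked leg to an affine function $\phi_2$ on a neighborhood of $v(b)$ in $\mc C$; since slopes are locally constant (the discreteness argument from Lemma \ref{lem:exact sequence for harmonic functions}), one gets
\begin{equation*}
\mathrm{dist}(v(c),t_1(c))=t_1^*\phi_2-t_2^*\phi_2 \quad\text{and}\quad v^*\phi_2=t_2^*\phi_2 ,
\end{equation*}
so the distance is affine after all, and then your formula $v^*\phi=t_1^*\phi-(d_{w_1}\phi)\cdot\mathrm{dist}(v,t_1)$ exhibits $v^*\phi$ as affine for arbitrary $\phi$. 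The paper avoids all of this bookkeeping: it applies Lemma \ref{lem:local structure at genus 0 point} with $J=T_{v(b)}\mc C_b$, and since $\#J=3$ and $\Mgnbar{0,J}$ is a point, the family is isomorphic near $v(b)$ to an open subset of a constant family, from which linearity of $v$ is immediate. Note that Lemma \ref{lem:local structure at genus 0 point} itself is proved by exactly the kind of section-and-extension argument you are attempting, so your patched proof essentially re-derives the special case of that lemma that is needed here.
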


\begin{proof}
Let $b\in B$, and let $J=T_{v(b)}\mc C_b$. By Lemma \ref{lem:local structure at genus 0 point}, there is an open neighborhood $V$ of $v(b)$ in $\mc C$, an open neighborhood $U$ of $b$ in $B$, and linear morphisms $f\colon U\to \Mgnbar{0, J}$ and $f_\star\colon V\to \Mgnbar{0, J\sqcup\{\star\}}$ such that the diagram
\begin{equation}
\begin{tikzpicture}[auto]
\matrix[matrix of math nodes, row sep= 5ex, column sep= 4em, text height=1.5ex, text depth= .25ex]{
|(C)| V 	&
|(n1)|	\Mgnbar{0,J\sqcup\{\star\}}	\\
|(B)| U	&	
|(n)| \Mgnbar{0,J}	\\
};
\begin{scope}[->,font=\footnotesize]
\draw (C) --node{$f_\star$} (n1);
\draw (B)--node{$f$} (n);
\draw (C) -- (B);
\draw (n1) --node{$\pi_\star$} (n);
\end{scope}
\end{tikzpicture}
\end{equation}
is commutative and the induced morphism $V\to U\times_{\Mgnbar{0,J}}\Mgnbar{0, J\sqcup\{\star\}}$ is an open immersion. Since $\#J=3$, this means that $\mc C\to B$ is isomorphic to an open subset of a constant family in a neighborhood of $v(b)$. It follows immediately that $v$ is linear. 
\end{proof}

\begin{theorem}
\label{thm:pull-back of psi class}
Let $g,n\in \Z^{\geq 0}$ with $n+2g-2>0$, and let $i\in [n]$. Then
\begin{equation*}
\bL_i=\pi_\star^*\bL_i\otimes \mc L(D_{i,\star})  \ ,
\end{equation*}
where $\pi_\star$ denotes the forgetful morphism from Definition \ref{def:forgetful morphism}.
\end{theorem}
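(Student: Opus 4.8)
The plan is to verify the asserted identity of line bundles on the stack by testing it against an arbitrary object of $\Mgnbar{g,n\sqcup\{\star\}}$. So let $f\colon B\to \Mgnbar{g,n\sqcup\{\star\}}$ be given by a family $\pi\colon\mc D\to B$ of $(n\sqcup\{\star\})$-marked genus-$g$ stable tropical curves with sections $(s_k)_k$, let $\phi\colon\mc D\to\mc C$ be its stabilization after forgetting $\star$, and let $\phi'\colon\mc D\to\mc C'$ be its stabilization after forgetting $i$. Unwinding Definition \ref{def:psi} and Definition \ref{def:boundary line bundle} and using $s_i^*\phi^*=(\phi\circ s_i)^*$, the three line bundles on $B$ to be compared are $f^*\bL_i=s_i^*\Aff_{\mc D}(-s_i)$, $f^*\pi_\star^*\bL_i=(\phi s_i)^*\Aff_{\mc C}(-\phi s_i)$, and $f^*\mc L(D_{i,\star})=(\phi' s_i)^*\Aff_{\mc C'}(\phi' s_\star)$; all three arise by restricting, along the $i$-section, torsors living on (pullbacks to $\mc D$ of) the sheaves of affine functions. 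First I would record the two structural facts that drive everything: because $\phi$ and $\phi'$ are morphisms of tropical spaces (Definition \ref{def:stabilization}(4)), the pullbacks $\phi^*$ and $\phi'^*$ carry affine functions to affine functions; and $\phi$ preserves the germ of the $i$-leg at its endpoint at infinity (the contraction produced by forgetting $\star$ only affects finite vertices), whereas $\phi'$ collapses the entire $i$-leg to a single point.

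The heart of the argument is a natural comparison morphism. Given local sections $\lambda$ of $\Aff_{\mc C}(-\phi s_i)$ near $\phi s_i(B)$ and $\mu$ of $\Aff_{\mc C'}(\phi' s_\star)$ near $\phi' s_i(B)$, I would form $\phi^*\lambda+\phi'^*\mu$ on $\mc D$ and claim it is a section of $\Aff_{\mc D}(-s_i)$. It is affine away from $s_i$, being a sum of pullbacks of affine functions under linear maps, and its order along $s_i$ is computed flag by flag: $\phi$ being an isomorphism near the $i$-leg endpoint gives $\phi^*\lambda$ order $-1$ there, while $\phi'$ collapsing the $i$-leg forces $\phi'^*\mu$ to be constant near $s_i$, hence of order $0$, so the two contributions add to $-1$. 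Passing to cocycles relative to a common cover and restricting along $s_i$, the differences $(\phi s_i)^*(\lambda_\alpha-\lambda_\beta)$ and $(\phi' s_i)^*(\mu_\alpha-\mu_\beta)$ are precisely transition functions for $f^*\pi_\star^*\bL_i$ and $f^*\mc L(D_{i,\star})$, so the system $\{\phi^*\lambda_\alpha+\phi'^*\mu_\alpha\}$ exhibits $f^*\bL_i$ as the tensor product $f^*\pi_\star^*\bL_i\otimes f^*\mc L(D_{i,\star})$. Since the construction is manifestly compatible with base change, this yields the desired natural isomorphism of line bundles on the stack.

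What remains — and what I expect to be the only substantial point — is to justify that $\{\phi^*\lambda_\alpha+\phi'^*\mu_\alpha\}$ really is a trivializing system for $\Aff_{\mc D}(-s_i)$ on a neighborhood of all of $s_i(B)$, i.e.\ that the order bookkeeping holds uniformly. Here I would invoke Lemma \ref{lem:local structure at genus 0 point} to reduce, near the vertex $v$ adjacent to the $i$-leg, to a pullback from $\Mgnbar{0,J}$, and then argue with cross ratios (Lemma \ref{lem:properties of vertex cross ratios}, Lemma \ref{lem:properties of edge cross ratios}) or the distance coordinates of Theorem \ref{thm:Mg0n is what it's supposed to be}, splitting according to whether $v$ together with $\star$ forms a tripod. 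In the non-tripod case $\phi$ is an isomorphism near $s_i$ and $\phi'$ sends $s_i$ to an interior point away from the $\star$-section, so $f^*\mc L(D_{i,\star})$ is canonically trivial and the isomorphism degenerates to $f^*\bL_i\cong f^*\pi_\star^*\bL_i$. In the tripod case $v$ is genus-zero and trivalent, the reduction applies, and the sections $\lambda,\mu$ combine as in the model computation above.

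The main obstacle is the tripod case along the boundary stratum $D_{i,\star}$ itself, where the third edge $e$ of the tripod has infinite length and is realized through a node at infinity. There $\phi'$ sends $s_i(B)$ to a section of $\mc C'$ that runs off toward the $\star$-section as $b\to D_{i,\star}$, exactly as the section $s_1$ runs to infinity in Example \ref{ex:psimanyvalues}; this is precisely what makes $f^*\mc L(D_{i,\star})$ nontrivial. I would verify, by a direct computation in the coordinates of the cone over the tripod graph, that $\phi'^*\mu$ retains order $0$ along $s_i$ and that the resulting transition function matches the order-one prescription defining $\mc L(D_{i,\star})$ — so that the correction term appears to the first power and not to a higher multiple. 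This boundary order computation, rather than the formal torsor bookkeeping, is the crux of the proof.
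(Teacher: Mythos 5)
Your overall strategy---express all three bundles as restrictions of $\Aff$-torsors along the $i$-th section and compare explicit local trivializations---is the same as the paper's, and away from the boundary divisor your bookkeeping is correct. One structural difference is worth noting: you work with \emph{both} stabilizations $\phi$ (forgetting $\star$) and $\phi'$ (forgetting $i$), so you use Definition \ref{def:boundary line bundle} for $\mc L(D_{i,\star})$ literally, whereas the paper's proof uses only $\phi$ and represents the boundary bundle by the cocycle $(\phi\circ s_\star)^*(\chi_{\alpha'}-\chi_\alpha)$, i.e.\ it really computes $\mc L(D_{\star,i})$ and silently invokes the symmetry $\mc L(D_{i,\star})=\mc L(D_{\star,i})$ that is stated without proof after Definition \ref{def:boundary line bundle}. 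If your route worked, it would avoid that appeal.

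However, there is a genuine gap, and it sits exactly at the locus you yourself call the crux. Let $D\subseteq B$ be the locus where the $i$/$\star$ tripod has all three edges infinite. Over $b\in D$ \emph{both} stabilizations contract the $i$-th leg: $\phi$ crushes everything beyond the node at infinity (the vertex $v(b)$, the $i$-leg and the $\star$-leg) to the single infinite point $\phi(s_i(b))$ of the merged leg of $\mc C_b$, and $\phi'$ crushes the same region to $\phi'(s_i(b))=\phi'(s_\star(b))$. Since $\lambda$ has prescribed order $-1$ along $\phi\circ s_i$ it takes the value $+\infty$ at $\phi(s_i(b))$, and since $\mu$ has order $+1$ along $\phi'\circ s_\star$ it takes the value $-\infty$ at $\phi'(s_\star(b))$; hence on an entire fiber-neighborhood of $s_i(b)$ the sum $\phi^*\lambda+\phi'^*\mu$ is of the undefined form $\infty-\infty$. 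You can instead take the continuous extension from the finite side of the node at infinity (there the slopes $+1$ and $-1$ cancel, so the sum is constant), but that extension has fiberwise slope $0$ along the $i$-leg over $D$, while every section of $\Aff_{\mc D}(-s_i)$ must have fiberwise slope of absolute value one there, since $s_i(b)$ is an infinite point with a single adjacent direction. So both of your order claims---``$\phi$ is an isomorphism near the $i$-leg endpoint'' and ``$\phi'^*\mu$ is constant, hence of order $0$, near $s_i$''---fail on $D$, and no coordinate computation can repair this: the Ansatz itself is the obstruction, because a sum of pullbacks along two maps that both contract the $i$-leg can never produce the required slope $-1$ along it. What is needed near $D$ is a trivializing section of $\Aff_{\mc D}(-s_i)$ built intrinsically on $\mc D$; this is exactly what the paper does, taking the fiberwise harmonic function that vanishes along the tripod-vertex section $t_v\colon b\mapsto v(b)$ (linear by Lemma \ref{lem:section at vertex}) with slope one toward $s_i$, while anchoring the trivializations of $\pi_\star^*\bL_i$ and of the boundary bundle at auxiliary sections $r_\delta$ chosen in the interior of the $i$-leg of the stabilized family $\mc C$, and then checking the cocycle identity case by case over the three strata $B\setminus Y$, $Y\setminus D$ and $D$.
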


{
\begin{proof}
Let $\mc D\to B$ be a family of $(n\sqcup\{\star\})$-marked genus-$g$ stable tropical curves and  $\phi\colon \mc D\to \mc C$ its stabilization after forgetting the $\star$-section. Let $Y$ denote the open subset of $B$ consisting of all $b\in B$ such that the $i$-marked and $\star$-marked legs of $\mc D_b$ are adjacent to the same three-valent vertex $v(b)$, and let $D$ denote the closed subset of $Y$ consisting of all $b\in Y$ such that all three edges adjacent to $v(b)$ are infinite. Alternatively, one may characterize the points $b\in D$ as those whose fiber's $i$-th marked leg  is contracted by $\phi$.
Let $\{U_\delta\}_{\delta\in \Delta}$ be an open cover of $D$, and $\{U_\lambda\}_{\lambda\in \Lambda}$ an open cover of $B \smallsetminus Y$ such that:
\begin{itemize}
    \item  for any $\delta \in \Delta,\   \lambda\in \Lambda$, one has $U_\delta \cap U_\lambda = \varnothing;$ 
    \item for any $\lambda\in \Lambda$, there is a linear section $t_\lambda\colon U_\lambda \to \mc D_{U_\lambda}$  whose image is contained in the $i$-th marked leg of $\mc D_b$ for all $b\in U_\lambda$;
     \item for any $\delta\in \Delta$, there is a linear section $r_\delta \colon U_\delta \to \mc C_{U_\delta}$  whose image is contained in the $i$-th marked leg of $\mc C_b$ for all $b\in U_\delta$.
\end{itemize}

All this data is organized in the following diagram:
\begin{equation}
    \xymatrix{
 \bigcup_{\lambda\in \lambda} \mc D_{U_\lambda} \ar[dd] & \ar@{_{(}->}[l] \mc D \ar[rr]^\phi \ar[dr]&  & \mc C \ar[dl] & {\mc C}_D  \ar[dd] \ar@{_{(}->}[l] \ar@{^{(}->}[r]& \bigcup_{\delta\in \Delta} \mc C_{U_\delta} \ar[dd] \\
   &  & B \ar@/^/[ul]^{s_i}  \ar@/^2pc/[ul]^{s_\star} & & &  \\
\bigcup_{\lambda\in \lambda} U_\lambda \ar@/^2pc/[uu]^{\bigcup_{\lambda\in \Lambda} t_\lambda} & \ar@{_{(}->}[l] B\smallsetminus Y  \ar@{^{(}->}[ur] &  & Y \ar@{_{(}->}[ul]  & D  \ar@{_{(}->}[l] \ar@{^{(}->}[r]& \bigcup_{\delta\in \Delta} U_\delta \ar@/_2pc/[uu]_{\bigcup_{\delta\in \Delta} r_\delta},
 }
\end{equation}

{Let $A=\Delta\sqcup \{v\}\sqcup \Lambda$; denote $U_v=Y\setminus D$ and $t_v:U_v \to {\mc D}_{U_v}$ to be the linear section $b\mapsto v(b)$. Then  $\{U_\alpha\}_{\alpha\in A}$
is an open cover of $B$}, which we will use to describe the cocycles for the line bundles we are interested in comparing.

To describe the cocycle for $\bL_i$ we need to 
construct local sections of $s_i^\ast (Aff_{\mc D}(-s_i))$ for all open sets in the open cover.
For $\alpha\in \Lambda \sqcup \{v\}$, let $\varphi_\alpha$ be the fiberwise harmonic function defined in a fiberwise connected neighorhood of $t_\alpha(U_\alpha)\cup s_i(U_\alpha)$  in $\mc D_{U_\alpha}$  that has constant value zero on $t_\alpha(U_\alpha)$ and slope one in the direction of $s_i(U_\alpha)$. For $\alpha\in \Delta$, define and denote $\varphi_\alpha = \varphi_v$, the fiberwise harmonic function equal to zero at $v(b)$ and with slope one towards $s_i(U_\alpha)$.
Note that the local section $t_v$ is linear by Lemma \ref{lem:section at vertex}, hence  by Lemma \ref{lem:affineness is a clopen condition on fibers} and Lemma \ref{lem:characterization of affineness in terms of section and harmonicity} we see that $\varphi_\alpha$ defines a section of $\Aff_{\mc D}(-s_i)$ {for all $\alpha\in A$}.
The line bundle $\bL_i$ is represented by the cocycle $(\xi_{\alpha,\alpha'}) \in \check{H}^1(\{U_\alpha\}_{\alpha\in A},\Aff_B)$, where
{
\begin{equation}\label{eq:coc0}
\xi_{\alpha,\alpha'}=
\begin{cases}
s_i^*(\varphi_v-\varphi_v) = 0, &   \alpha,\alpha'\in \Delta\\
s_i^*(\varphi_v-\varphi_v) = 0, &  \alpha\in \Delta,~ \alpha'=v \\
s_i^*(\varphi_\alpha-\varphi_v), & \alpha\in \Lambda,~ \alpha'=v \\
s_i^*(\varphi_\alpha-\varphi_{\alpha'}),  & \alpha,\alpha'\in \Lambda \ .
\end{cases}
\end{equation} 
}

To describe the cocycles for $\pi_\star^*\bL_i$ and  $\mc L(D_{i,\star})$, we need to work with linear (local) sections of the stabilization $\mc C$. For $\delta\in \Delta$, we are given sections $r_\delta$ by construction. For $\alpha\in \Lambda \sqcup \{v\}$ , define $r_\alpha=\phi\circ t_\alpha$ ; for $\alpha \in A$,  let $\chi_\alpha$ be the fiberwise harmonic function defined in a fiberwise connected neighborhood of $r_\alpha(U_\alpha)\cup \phi(s_i(U_\alpha))$ in $\mc C_{U_\alpha}$ that has constant value zero on $r_\alpha(U_\alpha)$ and slope one in the direction of $\phi(s_i(U_\alpha))$ 
By Lemma \ref{lem:affineness is a clopen condition on fibers} and Lemma \ref{lem:characterization of affineness in terms of section and harmonicity},  each $\chi_\alpha$ defines a section of $\Aff_{\mc C}(-\phi\circ s_i)$. For $\alpha\in \Lambda \sqcup \{v\}$, note that $\phi^\ast(\chi_\alpha)$ is constantly zero along the end marked by $\star$.

 The line bundle $\pi_\star^*\bL_i$ is represented by the cocycle 
 \begin{equation}\label{eq:coc1}
    ((\phi\circ s_i)^*(\chi_{\alpha}-\chi_{\alpha'})), 
 \end{equation} and $\mc L(D_{i,\star})$ is represented by the cocycle $(\zeta_{\alpha,\alpha'})$, where
{
\begin{equation}\label{eq:coc2}
\zeta_{\alpha,\alpha'}=
\begin{cases}
(\phi\circ s_\star)^*(\chi_{\alpha'}-\chi_{\alpha}), &   \alpha,\alpha'\in \Delta\\
(\phi\circ s_\star)^*(-\chi_{\alpha}) , &  \alpha\in \Delta,~ \alpha'=v \\
 0, & \alpha\in \Lambda, \alpha'=v \\
0,  & \alpha,\alpha'\in \Lambda \ .
\end{cases}
\end{equation} 
We implicitly assume here that $\chi_\delta$ is defined on $(\phi\circ s_\star)(U_\delta)$ for all $\delta\in \Delta$. This can always be achieved after potentially shrinking the open sets $U_\delta$.
}

Since $\chi_{\alpha}$ and $\chi_{\alpha'}$ always have the same slope on the $i$-marked leg, we have 
\begin{equation}\label{eq:coc3}
(\phi\circ s_i)^*(\chi_{\alpha'}-\chi_{\alpha})=(\phi\circ s_\star)^*(\chi_{\alpha'}-\chi_{\alpha})
\end{equation}
for all {$\alpha,\alpha'\in \Delta\cup \{v\}$}. Moreover, for $\alpha \in \Lambda \cup\{v\}$, in a neighborhood of $s_i(U_\alpha)$, we have
\begin{equation}\label{eq:coc4}
\phi^*\chi_\alpha = \varphi_\alpha. 
\end{equation}

Combining \eqref{eq:coc0}, \eqref{eq:coc1}, \eqref{eq:coc2}, \eqref{eq:coc3}, \eqref{eq:coc4} {with the fact that $(\phi\circ s_\star)^*\chi_v=0$}, one sees  that for every $(\alpha, \alpha')\in A\times A$,
\begin{equation}
 (\phi\circ s_i)^*(\chi_{\alpha}-\chi_{\alpha'}) + \zeta_{\alpha,\alpha'}=\xi_{\alpha,\alpha'},
\end{equation}
which implies
\begin{equation}
\pi_\star^*\bL_i\otimes \mc L(D_{i,\star}) \cong \bL_i .
\end{equation}
\end{proof}
}

{In case the vertex $v$ adjacent to the $i$-marked leg has always genus zero, one can  find a Cartier divisor $D$ with $\mc L(D)\cong\bL_i$ such that  $D$ is supported at the points $b$ where $v(b)$ has valence greater than three.}

\begin{proposition}
\label{prop:representing psi-class by divisor}
Let $\pi\colon\mc C\to B$ be a family of $n$-marked genus-$g$ stable tropical curves corresponding to a map $f\colon B\to \Mgnbar{g,n}$, let $i\in [n]$, and assume that for every $b\in B$ the vertex $v(b)$ of $\mc C_b$ that is adjacent to the $i$-marked leg has genus zero. For every $b\in B$, let $J_b= T_{v(b)}\mc C_b$, where we identify $i$ with the element of $J_b$ corresponding to the $i$-marked leg, let $j_b,k_b\in J_b\setminus\{i\}$ be distinct, and let $V_b$ be an open neighborhood of $v(b)$ that is isomorphic to an open subset of a family of  $J_b$-marked genus-zero stable tropical curves $\mc C_b\to U_b$ over $U_b=\pi(V_b)$, which exists by Lemma \ref{lem:local structure at genus 0 point}. For every $x\in U_b$, the intersection in $\mc C_b$ of the paths from $v(x)$ to the $j_b$-th and $k_b$-th marked point, respectively, is contained in $V_b$ and we denote by $\chi_b(x)$ the length of that path. Then the datum $\{(U_b,\chi_b)\}_{b\in B}$ defines a Cartier divisor $D\in \Div(B)$ such that $\mc L(D)\cong f^*\bL_i$. 
\end{proposition}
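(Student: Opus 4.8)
The plan is to work entirely in a neighborhood of the $i$-section and reduce everything to the genus-zero local model. Since both the line bundle $f^*\bL_i=s_i^*(\Aff_{\mc C}(-s_i))$ and the proposed divisor data $\chi_b$ depend only on the germ of $\mc C\to B$ along $s_i(B)$, and since $v(b)$ has genus zero, Lemma \ref{lem:local structure at genus 0 point} lets me assume locally that $\mc C\to B$ is an open subset of the pull-back under a linear map $f\colon U_b\to \Mgnbar{0,J}$ (where $J=J_b=T_{v(x)}\mc C_x$, a constant set on $U_b\cap U_{b'}$) of the universal $J$-marked genus-zero family. Under the distance-coordinate description of Theorem \ref{thm:Mg0n is what it's supposed to be}, I will first record that $\chi_b$ is the pull-back of the globally defined $\PL$-function $\tfrac12\big(d(i,j_b)+d(i,k_b)-d(j_b,k_b)\big)$ on $\Mgnbar{0,J}$, where $d(a,b)$ is the distance between the vertices adjacent to the $a$- and $b$-marked legs; this equals the length $d(v(x),w(x))$ of the segment from $v(x)$ to the median $w(x)=\mathrm{med}(i,j_b,k_b)$, i.e.\ exactly the overlap of the two paths in the statement.

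For rationality I will show $\chi_b\in\Gamma(U_b,\APL_B)$. On a cone of combinatorial type $G$ the function $\chi_b$ equals $\sum_{e\in P}l_e$, the sum of the lengths of the edges on the segment $P=[v(i),w]$. If $w=v(i)$ this is the zero function; otherwise, choosing an auxiliary flag $m$ at $v(i)$ pointing away from $P$ (which exists by stability, and whose direction leads to a leg distinct from $i,j_b,k_b$), the overlap interpretation of cross ratios in Remark \ref{rem:cross-r} identifies $\sum_{e\in P}l_e$ with the restriction of the primitive cross ratio $\xi_{((f_i,f_{j_b}),(f_m,f_{k_b}))}$: both $[\,i\to j_b\,]$ and $[\,m\to k_b\,]$ traverse $P$ in the same direction and diverge exactly at its endpoints. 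As cross ratios are affine and the choice of $m$ may depend on the cone, this exhibits $\chi_b$ as rational, pulling back under the linear map $f$ to a rational function on $U_b$.

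For the Cartier condition I will check $\chi_b-\chi_{b'}\in\Gamma(\Aff_B)$ on overlaps. Writing $x_{ab}=d(a,b)/2$, the difference is the pull-back of $x_{ij_b}+x_{ik_b}-x_{j_bk_b}-x_{ij_{b'}}-x_{ik_{b'}}+x_{j_{b'}k_{b'}}$, an integral linear function on $\R^{\binom{|J|}{2}}$ that vanishes on the image of $\R^{|J|}$ (both triples evaluate to $2t_i$ there). Hence it descends to an integral linear function on $\R^{\binom{|J|}{2}}/\R^{|J|}$, which by Lemma \ref{lem:cross ratio vs distance coordinates} and Theorem \ref{thm:Mg0n is what it's supposed to be} is an affine function on $\Mgnbar{0,J}$; pulling back under the linear $f$ gives an element of $\Aff_B$. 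Thus $\{(U_b,\chi_b)\}$ glues to a global Cartier divisor $D\in\Div(B)$.

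The crux is to identify $\mc L(D)$ with $f^*\bL_i$, for which I will match \v Cech cocycles on the cover $\{U_b\}$. Via the connecting map $\Div(B)\to\Pic(B)$, the bundle $\mc L(D)$ is represented by $\{\chi_b-\chi_{b'}\}$. To trivialize $f^*\bL_i=s_i^*(\Aff_{\mc C}(-s_i))$ over $U_b$, I will take $\varphi_b$ to be the fiberwise-harmonic function with order $-1$ along $s_i$ and order $+1$ along $s_{j_b}$, normalized by $s_{k_b}^*\varphi_b=0$; by Propositions \ref{prop:aff(ks) is a torsor} and \ref{prop:aff(ks) is everywhere inhabited} together with Lemmas \ref{lem:affineness is a clopen condition on fibers} and \ref{lem:characterization of affineness in terms of section and harmonicity}, this defines a local section of $\Aff_{\mc C}(-s_i)$ near $s_i(U_b)$, hence a trivialization. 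Since $\varphi_b$ has order $-1$ along $s_i$, the difference $\varphi_b-\varphi_{b'}$ has order zero and so is constant along the $i$-leg, whence $s_i^*(\varphi_b-\varphi_{b'})$ equals its value at $v(i)$. A direct computation on the fiber — $\varphi_b$ increases with unit slope along $[\,j_b\to i\,]$ and is constant on branches, and the median $w_b$ is the projection of $k_b$, where $\varphi_b$ vanishes — gives $\varphi_b(v(i))=d(v(i),w_b)=\chi_b$. Hence the transition cocycle of $f^*\bL_i$ is $\{\chi_b-\chi_{b'}\}$, equal to that of $\mc L(D)$, proving $f^*\bL_i\cong\mc L(D)$. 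The main technical obstacle I anticipate is verifying that the fiberwise definition of $\varphi_b$ genuinely assembles into a total-space section of $\Aff_{\mc C}(-s_i)$ in a neighborhood of $s_i(U_b)$ and that the normalization by the linear section $s_{k_b}$ keeps it affine — precisely what the torsor results and affineness criteria above are designed to supply — together with carefully tracking the sign convention for the order along $s_i$, so as to land on $\mc L(D)$ rather than its inverse.
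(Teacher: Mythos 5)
Your proposal follows the paper's architecture quite closely: localize along the $i$-section via Lemma \ref{lem:local structure at genus 0 point}, identify $\chi_b$ with (a pull-back of) a cross ratio to get rationality, build local trivializing sections of the torsor $\Aff_{\mc C}(-s_i)$, and match \v{C}ech cocycles so that the transition functions are $\chi_b-\chi_{b'}$. The differences are in the choices: the paper takes $\xi_b=f_b^*\xi_{(\star,j_b),(i,k_b)}$, which is affine on $V_b$ for free because $f_b$ is linear, notes it has outgoing slope one in the $i$-direction, and extends it along the actual $i$-leg using Lemmas \ref{lem:affineness is a clopen condition on fibers} and \ref{lem:characterization of affineness in terms of section and harmonicity}; then both the Cartier condition and $\mc L(D)\cong f^*\bL_i$ fall out of the single identity $s_i^*(\xi_b-\xi_{b'})=v^*(\xi_b-\xi_{b'})=\chi_b-\chi_{b'}$. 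Your $\varphi_b$ (unit flow between the $i$- and $j_b$-marks, normalized at $k_b$) is a different but equally serviceable torsor section: it also satisfies $v^*\varphi_b=\chi_b$ and has the correct slope on the leg. Note, however, that a function with prescribed orders along \emph{two} sections is not literally an object of Definition \ref{def:funwipo}, and Proposition \ref{prop:aff(ks) is everywhere inhabited} only produces fiberwise lifts for one section at a point; moreover $s_{j_b}$ and $s_{k_b}$ are sections of the auxiliary genus-zero model, not of $\mc C$. So the "anticipated obstacle" you flag is real work, and it is precisely the work the paper's pull-back-a-cross-ratio-and-extend trick is designed to avoid.

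There is one concrete error: the parenthetical claim that $J=T_{v(x)}\mc C_x$ is "a constant set on $U_b\cap U_{b'}$" is false. The valence of $v(x)$ jumps exactly at the points supporting $D$, and $J_b$, $J_{b'}$ are tangent sets at different base points with no canonical identification; on the overlap a direction in $J_b$ may correspond to a strictly coarser or finer splitting of directions at $v(x)$ than a direction in $J_{b'}$. Consequently your middle paragraph, which treats $x_{ij_b}+x_{ik_b}-x_{j_bk_b}-x_{ij_{b'}}-x_{ik_{b'}}+x_{j_{b'}k_{b'}}$ as a single integral linear function on one space $\R^{\binom{|J|}{2}}\big/\R^{|J|}$, does not parse as written: the marks $j_b,k_b$ and $j_{b'},k_{b'}$ live in different local models, so there is no one $\Mgnbar{0,J}$ through which both $\chi_b$ and $\chi_{b'}$ factor. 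Fortunately this paragraph is redundant rather than load-bearing: once your cocycle computation shows $\varphi_b-\varphi_{b'}$ is a difference of sections of $\Aff_{\mc C}(-s_i)$, Proposition \ref{prop:aff(ks) is a torsor} makes it affine, its restriction to each fiber is constant along the $i$-leg, and pulling back along the linear section $s_i$ gives $\chi_b-\chi_{b'}\in\Gamma(\Aff_B)$, which is the Cartier condition. You should delete the distance-coordinate argument rather than try to repair it.
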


\begin{proof}
For every $b\in B$, the identification of $V_b$ with an open subset of the family of genus-zero curves $\mc C_b\to U_b$ induces a morphism $f_b\colon V_b \to \Mgn{0,J_b\sqcup\{\star\}}$. By definition of $\chi_b$, we have
\begin{equation}
\chi_b=(f_b\circ v)^*\xi_{(\star, j_b), (i,k_b)} \ .
\end{equation}
It follows from this that $\chi_b\in \Gamma(U_b,\APL_X)$. Moreover, ${f}_b^*\xi_{(\star,j_b),(i,k_b)}$ has outgoing slope one from $v(b)$ in the direction determined by {$i$}. By Lemma \ref{lem:affineness is a clopen condition on fibers} and Lemma \ref{lem:characterization of affineness in terms of section and harmonicity}, ${f}_b^*\xi_{(\star,j_b),(i,k_b)}$ can be extended to a section $\xi_b$ of $\Aff_{\mc C}(-s_i)$ over an open set including $V_b$ and the whole $i$-marked leg of $\mc C_x$ for every $x\in U_b$. Given two points $b,b'\in B$, the functions $\xi_b$ and $\xi_{b'}$ have the same slope on the $i$-marked leg, and hence {their difference is} constant on that leg of $\mc C_x$ for all $x\in U_b\cap U_{b'}$. It follows that 
\begin{equation}
s_i^*(\xi_b-\xi_{b'})=v^*(\xi_b-\xi_{b'})=\chi_b-\chi_b' \ ,
\end{equation}
from which we conclude that $\{(U_b,\chi_b)\}_{b\in B}$ defines a divisor $D$ on $B$ for which $\mc L(D)\cong f^*\bL_i$.
\end{proof}

\begin{corollary}
\label{cor:support of psi-class}
Let $\mc C\to B$ be a family of $n$-marked tropical stable curves corresponding to a morphism $f\colon B\to \Mgnbar{g,n}$, and let $i\in [n]$ such that the vertex $v(b)$ adjacent to the $i$-marked leg of $\mc C_b$ has genus zero for all $b\in B$. Moreover, assume that $v(b)$ is three-valent for all $b$ in a dense open subset of $B$.  Then there exists a divisor $D\in \Div(B)$ with $\mc L(D)\cong f^*\bL_i$ and whose support is contained in the set of all $b\in B$ such that $v(b)$ is at least four-valent.
\end{corollary}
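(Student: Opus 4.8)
The plan is to take the divisor produced by Proposition \ref{prop:representing psi-class by divisor} and show that its local defining data are trivial away from the four-valent locus. Since by hypothesis $v(b)$ has genus zero for all $b$, that proposition applies verbatim and yields a Cartier divisor $D\in\Div(B)$ with $\mc L(D)\cong f^*\bL_i$, given locally by the data $\{(U_b,\chi_b)\}_{b\in B}$, where $\chi_b(x)$ is the length of the intersection of the paths from $v(x)$ to the $j_b$- and $k_b$-marked points inside the local genus-zero model $V_b$ of Lemma \ref{lem:local structure at genus 0 point}. It then suffices to prove that $\chi_b$ is integral affine on the locus where $v$ is three-valent, for then $\mathrm{div}(\chi_b)$ vanishes there and no such point can lie in $\mathrm{supp}(D)$.

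First I would record that three-valence of $v$ is an open condition on $B$. Indeed, the fibers over the relative interior of a single cone of a \TPL{}-trivialization share a combinatorial type, and a small neighborhood of a point $b_0\in\relint(\sigma)$ meets only the relative interiors of cones $\sigma'$ having $\sigma$ as a face; passing to such a $\sigma'$ corresponds to un-contracting edges, which can only decrease the valence of the vertex adjacent to the $i$-leg. As stability forces $\val(v)\ge 3$ at a genus-zero vertex, the valence must then stay equal to three throughout a neighborhood of any three-valent point. Consequently the set $Z=\{b\in B\mid \val(v(b))\ge 4\}$ is closed, and its complement is exactly the three-valent locus.

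Next I would compute $\chi_b$ on this open locus. Fixing $b_0$ with $v(b_0)$ three-valent and shrinking, I may assume $v$ is three-valent throughout $U_{b_0}$, so that $J_b=\{i,j,k\}$ with $j,k$ the two directions at $v$ other than the $i$-leg, constant for $b$ near $b_0$. In the local model of Lemma \ref{lem:local structure at genus 0 point} the three legs $i,j,k$ emanate from the single central vertex $v(x)$ in distinct directions, so the path from $v(x)$ to the $j$-marked point and the path from $v(x)$ to the $k$-marked point meet only in $\{v(x)\}$. Hence their intersection has length zero, i.e.\ $\chi_b\equiv 0$ near $b_0$. The zero function is integral affine, so $\mathrm{div}(\chi_b)=0$ in a neighborhood of $b_0$ and the germ of $D$ at $b_0$ vanishes.

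Combining these observations, $\mathrm{supp}(D)$ is disjoint from the three-valent locus and is therefore contained in $Z$, as claimed; the density hypothesis is what guarantees that $Z$ is a proper, nowhere dense closed subset, so that $D$ is genuinely supported in the expected higher-codimension stratum rather than all of $B$. I expect the only point requiring real care to be the openness/closedness bookkeeping for the valence condition via the specialization order on combinatorial types—everything else is immediate from the explicit shape of $\chi_b$ furnished by Proposition \ref{prop:representing psi-class by divisor}.
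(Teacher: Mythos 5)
Your proposal is correct and follows essentially the same route as the paper: take the divisor $\{(U_b,\chi_b)\}$ from Proposition \ref{prop:representing psi-class by divisor}, shrink $U_b$ around any point where $v(b)$ is three-valent so that the valence stays three (which, as you note, is forced by stability and the genus-zero hypothesis under specialization of combinatorial types), and observe that then $\chi_b\equiv 0$, so such points cannot lie in the support of $D$. The extra detail you supply on openness of three-valence and on the trivial intersection of the two paths in the tree-like local model is exactly what the paper's terser proof leaves implicit.
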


\begin{proof}
Let $\{(U_b,\chi_b)\}_{b\in B}$ be as in the statement of Proposition \ref{prop:representing psi-class by divisor}, and let $D$ be the Cartier divisor represented by that data. For every $b\in B$ for which $v(b)$ is trivalent we may shrink $U_b$ such that $v(x)$ is three-valent for all $x\in U_b$. In that case $\chi_b=0$, and hence $b$ is not contained in the support of $D$. 
\end{proof}

{One  recovers the description of the cycle representing $\psi_i$ on $M_{0,n}^{trop}$ from \cite{MikhalkinModuli,KerberMarkwig}.}


\begin{corollary}
\label{cor:psi class in genus 0}
Let $[\Mgnbar{0,n}]\in Z_*(\Mgnbar{0,n})$ denote the top-dimensional cycle on $\Mgnbar{0,n}$ represented by the constant function with value one. Then $\psi_i\cdot [\Mgnbar{0,n}]$ is represented by the function on $\Mgnbar{0,n}$ that is one on all points corresponding to curves where the $i$-marked leg is adjacent to a four-valent vertex, and zero everywhere else.
\end{corollary}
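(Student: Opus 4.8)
The plan is to deduce the statement from Corollary \ref{cor:support of psi-class} together with a purely local weight computation at the relevant codimension-one strata. First I would take $f = \id\colon \Mgnbar{0,n}\to\Mgnbar{0,n}$, so that the associated family is the universal curve $\pi_\star\colon \Mgnbar{0,n\sqcup\{\star\}}\to\Mgnbar{0,n}$ (Theorem \ref{thm:universal family}) and $\psi_i\cdot[\Mgnbar{0,n}] = c_1(\bL_i)\frown[\Mgnbar{0,n}]$. In genus zero every vertex of every fiber has genus zero, and over the dense union of top-dimensional cones (trivalent trees) the vertex $v(b)$ adjacent to the $i$-marked leg is trivalent; hence the hypotheses of Corollary \ref{cor:support of psi-class} are satisfied. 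This produces a Cartier divisor $D\in\Div(\Mgnbar{0,n})$ with $\mc L(D)\cong\bL_i$ whose support is contained in the locus where $v(b)$ has valence at least four. Restricting attention to the codimension-one skeleton, the support of $D\cdot[\Mgnbar{0,n}]$ is therefore contained in the union of the codimension-one cones parametrizing curves with a single four-valent vertex adjacent to the $i$-marked leg, which are exactly the strata named in the statement. It remains only to compute the weight of $D\cdot[\Mgnbar{0,n}]$ on each such stratum.

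For the weight I would localize at a point $b_0$ of such a stratum $\tau$ and apply Lemma \ref{lem:local structure at genus 0 point} at the four-valent vertex $v$, with $J = T_{v}(\mc C_{b_0}) = \{i,a,b,c\}$. This identifies a neighborhood of $\tau$ with an open subset of $\Mgnbar{0,J} = \Mgnbar{0,4}$ near its cone point, whose three rays $\rho_1,\rho_2,\rho_3$ correspond to the three resolutions of $v$ and on which $[\Mgnbar{0,n}]$ restricts to the cycle of weight one on each ray. By Proposition \ref{prop:representing psi-class by divisor} the local rational function cutting out $D$ is $\chi = \xi_{((\star,j),(i,k))}$ for distinct $j,k\in\{a,b,c\}$, which by the geometric interpretation of primitive cross ratios (Remark \ref{rem:cross-r} and Figure \ref{fig:cross ratios on M04}) computes the length of the common initial segment of the paths from $v$ to $j$ and from $v$ to $k$. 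This length vanishes on the two resolutions that separate $j$ from $k$ and equals the contracted edge-length on the single resolution $\{i,\text{third}\}\mid\{j,k\}$; hence $\chi$ has outgoing slope $0$ along two of the rays and $1$ along the third. The weight of $\mathrm{div}(\chi)\cdot[\Mgnbar{0,4}]$ at the cone point is then $0+0+1 = 1$.

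Combining the two steps, $\psi_i\cdot[\Mgnbar{0,n}]$ takes value $1$ on every codimension-one stratum where the $i$-marked leg meets a four-valent vertex and $0$ everywhere else, which is the asserted representative. The main obstacle is precisely the local weight computation: one must correctly match the local model of Lemma \ref{lem:local structure at genus 0 point} with $\Mgnbar{0,4}$, track the slopes of the cross ratio $\chi$ across all three maximal cones, and verify that the outcome is independent of the auxiliary choice of $j$ and $k$ (which is guaranteed by the well-definedness of $D$ in Proposition \ref{prop:representing psi-class by divisor}). The remaining steps are a direct assembly of the results cited above.
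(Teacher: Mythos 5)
Your proposal is correct and follows essentially the same route as the paper: both take the Cartier divisor from Proposition \ref{prop:representing psi-class by divisor} (whose support is controlled as in Corollary \ref{cor:support of psi-class}), and then compute the weight at a generic point of a codimension-one stratum by observing that the defining function $\chi$ has slope $0$, $0$, $1$ on the three adjacent maximal cones, giving weight one. The only slight imprecision is your claim that Lemma \ref{lem:local structure at genus 0 point} ``identifies'' a neighborhood of $\tau$ with an open subset of $\Mgnbar{0,4}$ --- it only provides a linear morphism to $\Mgnbar{0,4}$ along which $\chi$ is pulled back --- but this does not affect the slope computation, which matches the paper's argument exactly.
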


\begin{proof}
Let $\{(U_b,\chi_b)\}_{b\in B}$ be as in the statement of Proposition \ref{prop:representing psi-class by divisor}, and let $D$ be the Cartier divisor represented by that data. As we have seen in the proof of Corollary  \ref{cor:support of psi-class}, the support of $D$ are precisely those points of $\Mgnbar{0,n}$ corresponding to curves where the $i$-marked leg is adjacent to a vertex of valence at least four. Let $G$ be a $n$-marked genus-zero graph such that all edges except legs of $G$ are bounded and all vertices of $G$ are three-valent, except the vertex $v$ adjacent to the $i$-marked leg, which is four-valent. Let $b\in \relint(\sigma_G)$ be a point corresponding to a curve $\Gamma$. Then $\chi_b$ vanishes on $U_b\cap \sigma_G$. Let $j_b,k_b\in T_v G\setminus\{i\}$ be distinct, as in the statement of Proposition \ref{prop:representing psi-class by divisor}, and let $l_b$ be the fourth element of $T_v G$. The cone $\sigma_G$ is adjacent to precisely three top-dimensional cones of $\Mgnbar{0,n}$, corresponding to three-valent stable graphs $G_{j_b}$, $G_{k_b}$ and $G_{l_b}$, where the subscript denotes the tangent direction which remains adjacent to the $i$-marked leg. By construction, the rational function $\chi_b$ has multiplicity one on $U_b\cap\sigma_{G_{l_b}}$ and it is identically zero on $U_b\cap \sigma_{G_{j_b}}$ and on $U_b\cap \sigma_{G_{k_b}}$. It follows that $\psi_i\cap [\Mgnbar{0,n}]$ has weight one at $b$.
\end{proof}

\begin{theorem}
\label{thm:germ of dilaton}
Let $\pi\colon \mc C\to B$ be a family of $n$-marked genus-$g$ tropical Mumford curves corresponding to a map $f\colon B\to \MgnMf{g,n}$ and let $f_\star\colon \mc C\to \MgnMf{g,n\sqcup\{\star\}}$ be the pull-back of $f$ under the forgetful morphism. Furthermore, assume that $B$ is purely $n$-dimensional and let $\alpha\in Z_n(B)$. Then $\alpha\circ \pi$ is a tropical $(n+1)$-cycle on $\mc C$, and $\psi_\star\cdot (\alpha\circ\pi)$ is represented by the tropical $n$-cycle $\mc C\to \Z$ that has value zero on all 
genus zero, one-valent vertices of fibers, and $(\val(x)-2)\cdot \alpha(\pi(x))$ at all other points $x\in \mc C$.
\end{theorem}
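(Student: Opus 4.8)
Looking at this theorem, I need to prove that $\psi_\star \cdot (\alpha \circ \pi)$ is represented by a specific cycle. Let me think through the strategy.

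The statement is about computing the intersection of the $\psi_\star$ class with the pullback cycle $\alpha \circ \pi$ on the total space $\mathcal{C}$. The key is that $\psi_\star = c_1(\mathbb{L}_\star)$ where $\mathbb{L}_\star = s_\star^*(\mathrm{Aff}_{\mathcal{D}}(-s_\star))$, and the $\star$-section corresponds to the "universal point" on the family.

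The plan is to use the explicit divisorial representation provided by Proposition \ref{prop:representing psi-class by divisor} and Corollary \ref{cor:support of psi-class}. The crucial observation is that when we pull back $f$ under $\pi_\star$ to get $f_\star \colon \mathcal{C} \to \MgnMf{g,n\sqcup\{\star\}}$, the total space $\mathcal{C}$ itself becomes the base of a new family, namely the pullback of the universal family $\pi_\star\colon \MgnMf{g,n\sqcup\{\star\}} \to \MgnMf{g,n}$. The $\star$-marked leg on this new family is attached at the point of $\mathcal{C}$ determined by the tautological section.

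\textbf{Step one: setting up the divisor.} I would apply Proposition \ref{prop:representing psi-class by divisor} to the family over $\mathcal{C}$ associated to $f_\star$. Since all fibers are Mumford curves (genus zero vertices everywhere), the vertex $v(x)$ adjacent to the $\star$-marked leg always has genus zero, so the hypothesis is satisfied. This produces a Cartier divisor $D \in \Div(\mathcal{C})$ with $\mathcal{L}(D) \cong f_\star^* \mathbb{L}_\star$. By Corollary \ref{cor:support of psi-class}, $D$ is supported where the $\star$-leg meets an at-least-four-valent vertex. The key geometric fact: a point $x \in \mathcal{C}$ in the interior of an edge or at a vertex of valence $k$ of a fiber $\mathcal{C}_b$ corresponds, after adding the $\star$-leg at $x$, to a curve where the $\star$-leg is adjacent to a vertex of valence $k+1$. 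So $x$ contributes to $D$ precisely when the ambient valence $\val(x) \geq 3$, i.e. $x$ is a vertex or interior edge point rather than a generic smooth point, and the multiplicity calculation must yield $\val(x) - 2$.

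\textbf{Step two: local multiplicity computation.} The main work is computing the weight of $\psi_\star \cdot (\alpha\circ\pi)$ at a point $x \in \mathcal{C}$. I would localize using Lemma \ref{lem:local structure at genus 0 point}: near $x$ the family $f_\star$ looks like an open subset of $\Mgnbar{0,J\sqcup\{\star\}} \to \Mgnbar{0,J}$ where $J = T_x\mathcal{C}_b$ has $\val(x)$ elements. The cycle $\alpha\circ\pi$ restricted to this chart is the pullback of a top cycle, and the divisor $D$ has local potential $\chi$ given by a cross ratio as in Proposition \ref{prop:representing psi-class by divisor}. The computation then reduces to a genus-zero calculation essentially identical to Corollary \ref{cor:psi class in genus 0}: at a vertex of valence $m = \val(x)$, the tropical $\psi_\star$ class (the self-intersection bending contribution) accumulates multiplicity $m-2$ because the $\star$-leg can be split off toward any of the $m-2$ non-distinguished directions, each contributing weight one via the cross-ratio's slope behavior across the adjacent top cones.

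\textbf{Step three: assembling and handling edge points versus vertices.} I expect the main obstacle to be the bookkeeping at points $x$ lying in the interior of a bounded edge versus at honest vertices, and ensuring the factor $\alpha(\pi(x))$ appears correctly. At an interior edge point $\val(x) = 2$, so $\val(x) - 2 = 0$, consistent with no contribution; at genus-zero one-valent vertices (which are the markings $s_i$ themselves) the stated value is zero, which I must verify is consistent with the section not creating extra valence. The factor $\alpha(\pi(x))$ enters because the cycle $\alpha\circ\pi$ carries weight $\alpha(\pi(x))$ on the relevant cones, and the divisor-cycle intersection is bilinear in the weight. I would verify that the local multiplicity of $\chi_b$ on each top-dimensional cone of $\mathcal{C}$ through $x$, summed with signs against the balancing, yields $(\val(x)-2)\cdot\alpha(\pi(x))$, where the factor $\val(x)-2$ counts the directions into which the $\star$-leg resolves. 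The hardest part will be confirming that no contributions arise from the base directions (the $\alpha$ directions are "orthogonal" to the fiber cross-ratio) so that the computation genuinely factors as fiber-multiplicity times $\alpha(\pi(x))$; this uses the exact sequence \eqref{equ:exact sequence} to separate base and fiber coordinates and Lemma \ref{lem:characterization of affineness in terms of section and harmonicity} to control how $\chi$ extends.

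\begin{remark}
One subtlety worth isolating: the statement asserts value zero on genus-zero one-valent vertices of fibers. These are exactly the infinite marked points $s_i(b)$, where $\val = 1$ so $\val - 2 = -1$ would be wrong; the resolution is that at such infinite points the local picture differs and the cross-ratio potential is constant, giving genuine contribution zero rather than $\val - 2$. I would treat these points separately at the end, confirming the divisor $D$ from Proposition \ref{prop:representing psi-class by divisor} is trivial there.
\end{remark}
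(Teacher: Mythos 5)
Your proposal follows essentially the same route as the paper's proof: you apply Proposition \ref{prop:representing psi-class by divisor} to the $(n\sqcup\{\star\})$-marked family over $\mc C$ obtained by pulling back along the forgetful morphism, use Corollary \ref{cor:support of psi-class} to locate the support on the locus where the fiber valence is at least three, and obtain the local multiplicity from the slopes of the cross-ratio potential $\chi_x$ --- slope one in the $\val(x)-2$ non-distinguished fiber directions, slope zero in the two distinguished directions $j,k$ --- which gives $(\val(x)-2)\cdot\alpha(\pi(x))$ by bilinearity in the weights. Your remark on the one-valent vertices (the images of the sections $s_i$) is also the correct resolution: there the $\star$-leg attaches at a three-valent vertex whose adjacent edges are all infinite, the local potential is constant, and the contribution vanishes; this is exactly how the paper's support argument excludes these points. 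One small caveat: Corollary \ref{cor:psi class in genus 0} literally records weight only at four-valent vertices, because higher-valence loci have higher codimension in $\Mgnbar{0,n}$ and are invisible to a codimension-one cycle there; in the present setting the locus of higher-valence vertices of fibers \emph{is} codimension one in $\mc C$, so for $\val(x)>3$ you cannot simply quote that corollary and must run the slope computation directly --- which, to be fair, you do describe.

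There is one genuine, though easily repaired, gap: the theorem also asserts that $\alpha\circ\pi$ is a tropical $(n+1)$-cycle on $\mc C$, and your proposal uses this (you intersect $D$ with $\alpha\circ\pi$ and invoke bilinearity of the divisor--cycle pairing) without ever proving it. Balancing of $\alpha\circ\pi$ is not automatic from balancing of $\alpha$: one needs the observation, via Lemma \ref{lem:local structure at genus 0 point}, that away from a codimension-two locus the total space $\mc C$ is locally isomorphic over $B$ to a product of $B$ with a smooth tropical curve, so that balancing of $\alpha\circ\pi$ follows from balancing of $\alpha$ on $B$ together with the fact that constant integer weights define cycles on smooth tropical curves. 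This is the first paragraph of the paper's proof and should be added to yours before the intersection computation begins.
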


\begin{proof}
By Proposition \ref{lem:local structure at genus 0 point}, the total space $\mc C$ is locally isomorphic to the product of $B$ and a stable tropical curve away from a codimension-two locus on $\mc C$. The composition $\alpha\circ \pi$ is a tropical cycle because of this, the facts that $\alpha$ is a tropical cycle on $B$ and that constant integer weights define tropical cycles on smooth tropical curves. 

To compute $\psi_\star\cdot (\alpha\circ \pi)$, let $\{(U_x,\chi_x)\}_{x\in \mc C}$ be as in the statement of Proposition \ref{prop:representing psi-class by divisor} (applied to the $(n\sqcup \{\star\})$-marked family over $\mc C$), and let $D$ be the Cartier divisor represented by that data. As in the proof of Corollary  \ref{cor:support of psi-class}, the support of $D$ is  the set of all $x\in \mc C$ having valence at least three in its fiber. Let $x$ be a generic point in the support of $D$, and let $v$ be the vertex adjacent to the $\star$-marked leg in the fiber over $x$ in the $(n\sqcup\{\star\})$-marked family $\mc C_\star\to \mc C$. The rational function $\chi_x$ depends on a choice of two distinct tangent vectors in $T_v (\mc C_\star)_x$ that do not point in the direction of the $\star$-marked leg. This corresponds to the choice of two distinct tangent vectors $j,k\in T_x(\mc C_{\pi(x)})$. With this choice, the function $\chi_x$ has slope one in the direction corresponding to any $t\in T_x(\mc C_{\pi(x)})$, and slope zero in the directions corresponding to $j$ and $k$. It follows that the multiplicity of $D\cdot (\alpha \circ \pi)$ at $x$ is given by $(\val(x)-2)\alpha(\pi(x))$.
\end{proof}

\begin{remark}
Theorem \ref{thm:germ of dilaton} can be interpreted as the germ of a tropical dilaton equation. Although we have not mentioned this in \S\ref{sec:intersection theory}, the push-forward of tropical cycles respects rational equivalence. Together with the fact the sum $\sum_v (\val(v)-2)$ over all finite vertices $v$ of an $n$-marked genus-$g$ stable graph equals $2g-2+n$, we conclude that
\begin{equation*}
\pi_*(\psi_\star\cdot(\alpha\circ\pi))=(2g-2+n)\cdot\alpha 
\end{equation*}
in the situation of the theorem. In the case where the base is $\MgnMf{g,n}$, we obtain 
\begin{equation}
\pi_*(\psi_\star\cdot[\MgnMf{g,n\sqcup\{\star\}}])=(2g-2+n)\cdot [\MgnMf{g,n}] \ .
\end{equation}
\end{remark}

\section{Correspondence theorems in genus $1$}\label{sec:classes}

In this section, we evaluate $\psi$ classes on some one-dimensional cycles of genus one tropical curves.
The first example is a Gromov-Witten cycle, i.e.\ a family of tropical stable maps to the plane subject to point conditions \cite{Tyomkin,GathmannKerberMarkwig,R17,RSW19}. The second family of examples consists of one-dimensional spaces of admissible covers of the projective line \cite{Caporaso, CMRadmissible}. In both cases, the theory of tropical $\psi$-classes provides a correspondence with its algebro-geometric counterpart.




\subsection{Families of tropical curves from well-spaced stable maps.}
\label{sec:71}

Given a degree $d$ and a non-negative integer $n$, we  consider the space  of all tropical stable maps into $\T\PP^2$ of degree $d$ with $n$ marked points as described in \cite{R17}. We slightly modify the construction of \emph{loc. cit}.\ by adding cycle-rigidifications to the source curves and taking the extended cone complex to obtain a compact \TPL{}-space $\TSM_{1,n}(\T\PP^2,d)$.  As some combinatorial types of these tropical stable maps are super-abundant, the dimension of $\TSM_{1,n}(\T\PP^2)$ is larger than the expected dimension. 
We consider the  closed subspace $\WS_{n}(\T\PP^2,d) \subset \TSM_{1,n}(\T\PP^2)$ consisting of all tropical stable maps that are {\it well-spaced} in the sense of \cite[Definition 4.4.4]{RSW19} (following well-spacedness as a realizability condition in \cite{SpeyerDiss,SpeyerUniformizing}). This space is pure-dimensional of dimension
\begin{equation}
\dim(\WS_{n}(\T\PP^2,d))= 3d+n \ .
\end{equation}
Evaluating stable maps at the marked points yields the evaluation maps 
\begin{equation}
\ev_i\colon \WS_{n}(\T\PP^2,d) \to \T\PP^2
\end{equation}
for each $1\leq i\leq n$;  forgetting the map and stabilizing the source curve defines a map
\begin{equation}
\src\colon \WS_{n}(\T\PP^2,d)\to \Atlass{1,n\acolor{+3d}} \ .
\end{equation}
We endow $\WS_{n}(\T\PP^2,d)$ with the smallest affine structure such that $\src$ and $\ev_i$ for all $1\leq i\leq n$ become morphisms of tropical spaces.

Choose eight general points $p_1,\ldots,p_8$ in $\R^2$. We may assume that the points are horizontally stretched: they lie in a thin neighborhood of a line of  very small negative slope, their numbering and  horizontal spacing increases from left to right.  Because the points are in general position and $\WS_{8}(\T\PP^2,3)$ is $17$-dimensional, the set
\begin{equation}
B=\bigcap_{i=1}^8\ev^{-1}_i\{p_i\}
\end{equation}
is a one-dimensional subset of $\WS_{8}(\T\PP^2,3)$. The corresponding family of tropical stable maps is  described explicitly in Appendix \ref{app:main}.
Since $B$ is defined as a generic intersection of the sets $\ev_i^{-1}\{p_i\}$,  we may define a tropical one-cycle on $B$ is via tropical intersection theory. 

\begin{lemma}
\label{lem:cycle on neighborhood of pencil}
There exists an open neighborhood $U$ of $B$ in $\WS_{8}(\T\PP^2,3)$ such that the function  $\alpha_U$ assigning to a point $b\in B$ corresponding to a tropical stable map $\Gamma_b\to \T\PP^2$ the value zero, if $\Gamma$ has a loop attached to a trivalent vertex or the midpoint of an edge of multiplicity one, and the number of automorphisms of $\Gamma_b$, else, defines a tropical $17$-cycle on $U$. 
\end{lemma}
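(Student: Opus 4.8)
The plan is to verify directly that the weighting $\alpha_U$ satisfies the balancing condition \eqref{def:balancing}, since being a tropical $17$-cycle is a condition that may be checked locally at the codimension-one faces of $U$. Because $\WS_{8}(\T\PP^2,3)$ is pure-dimensional of dimension $17$, the cone $\sigma_b$ having a generic $b\in B$ in its relative interior is top-dimensional; after shrinking $U$ we may assume that every $17$-dimensional cone of $U$ meets $B$, and that $\alpha_U$ is the constructible function that is locally constant on the top cones, with the value on each such cone prescribed by the combinatorial type of the generic stable map it parametrizes. It then suffices to show, for every codimension-one face $\tau$ of $U$ and every affine function $\phi$ vanishing on $\tau$, that the weighted sum over the top cones $\sigma\succeq\tau$ appearing in \eqref{def:balancing} is zero. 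I would organize the verification according to the degeneration producing the wall $\tau$, reading off the adjacent top cones, their primitive generators in the affine structure of $\WS_{8}(\T\PP^2,3)$, and their $\alpha_U$-weights from the explicit combinatorial description of the family given in Appendix \ref{app:main}.

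The walls split into two kinds. The first are those along which only the genus-zero part of the source curve degenerates: a bounded edge disjoint from the cycle contracts, producing a four-valent genus-zero vertex away from the loop. Here balancing should reduce to the genus-zero theory. Near such a wall the maps $\src$ and $\ev_i$ realize a neighborhood in $\WS_{8}(\T\PP^2,3)$ as (an open subset of) a product of a tropical $\Mgnbar{0,4}$-factor with the complementary moduli, in the spirit of Lemma \ref{lem:local structure at genus 0 point}; the three ways of resolving the four-valent vertex give the three adjacent top cones, whose normal directions are the three cross-ratio directions on $\Mgnbar{0,4}$. Balancing then follows from the balancing of $\Mgnbar{0,4}$ and of tropical stable maps to $\T\PP^2$ \cite{GathmannKerberMarkwig,R17}, once one checks that the automorphism weights together with the lattice indices $\mathrm{mult}(\phi)$ of the three resolutions produce the cancellation $\sum_\sigma \alpha_U\cdot\mathrm{sign}(\phi\vert_\sigma)\cdot\mathrm{mult}(\phi)=0$ in the normal space of $\tau$.

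The second kind of wall, where the genus-one cycle degenerates, is the main obstacle. These are the walls at which the loop shrinks toward a vertex, at which a vertex of the loop collides with another vertex of the source, or at which the cycle migrates onto a single vertex or a single edge. Away from well-spacedness these are precisely the superabundant strata that fail to balance and whose adjacent cones have the wrong dimensions; the content of the lemma is that imposing well-spacedness \cite[Definition 4.4.4]{RSW19} restores pure-dimensionality around these walls, and that the vanishing rule defining $\alpha_U$ is exactly calibrated to the surviving cones. I expect the heart of the argument to be a local computation at each such wall: using Appendix \ref{app:main}, enumerate the top cones $\sigma\succeq\tau$ that survive well-spacedness, observe that any cone whose generic curve carries a loop attached to a trivalent vertex or to the midpoint of a multiplicity-one edge has weight zero and hence drops out of \eqref{def:balancing}, and verify that the primitive generators of the remaining automorphism-weighted cones sum to zero modulo the span of $\tau$. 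The well-spacedness inequalities — that the relevant minimal length around the cycle is attained at least twice — are precisely what forces the surviving cones into a balanced configuration while singling out the weight-zero ones as the degenerate loop positions that do not participate.

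Carrying out this cancellation wall by wall, together with the genus-zero case, establishes the balancing condition \eqref{def:balancing} at every codimension-one face of $U$, and hence that $\alpha_U\in Z_{17}(U)$. The delicate point throughout is the bookkeeping of which well-spaced cones bound each cycle-degeneration wall and the verification that the automorphism weights, rather than bare lattice weights, are what make these local sums vanish; this is where the explicit family of Appendix \ref{app:main} is indispensable.
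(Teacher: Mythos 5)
Your overall skeleton matches the paper's: take $U$ to be a union of relative interiors of cones meeting $B$ (genericity of the eight points then forces $U$ to contain only $16$- and $17$-dimensional cones), and check \eqref{def:balancing} facet by facet, splitting the facets into those in the Mumford locus and those where the genus-one cycle has degenerated. For the Mumford facets your plan is fine in substance; the paper simply cites an existing balancing theorem for this locus (\cite[Theorem 3.3.8]{Torchiani}) instead of your local $\Mgnbar{0,4}$-product argument, which is a difference of packaging rather than of mathematics.

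The genuine gap is at the second kind of facet, where you propose the wrong mechanism. In $U$ the only non-Mumford facets $\tau$ parametrize maps whose source has a two-valent genus-one vertex mapping to the midpoint of a multiplicity-two edge, and exactly two top-dimensional cones are adjacent: $\sigma_1$, where the vertex is resolved by a loop contracted by the map, and $\sigma_2$, where it is resolved by two parallel multiplicity-one edges. Your plan — show that the \emph{automorphism-weighted primitive generators} of the surviving cones sum to zero modulo the span of $\tau$ — presupposes a lattice in which the normal directions of $\sigma_1$ and $\sigma_2$ are visible and opposite; in the naive cone-complex coordinates they are independent (loop length versus cycle length), so no choice of positive weights makes such a sum vanish, and this is exactly the superabundance you flagged. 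The paper's proof avoids any cancellation: because $\Aff_{\WS_{8}(\T\PP^2,3)}$ is the \emph{minimal} structure making $\src$ and the $\ev_i$ linear, and because cross ratios never involve edges adjacent to a positive-genus vertex, the germs of affine functions at $\tau$ are generated by the lengths $l_e$ of edges $e\in E_0$ away from the genus-one vertex together with pull-backs along the $\ev_i$; one then checks that $\bigl(\prod_{e\in E_0} l_e\times \prod_{i=1}^8\ev_i\bigr)(\sigma_k)=\bigl(\prod_{e\in E_0} l_e\times \prod_{i=1}^8\ev_i\bigr)(\tau)$ for $k=1,2$, so every affine function vanishing on $\tau$ vanishes on a whole neighborhood of $\tau$, and \eqref{def:balancing} holds \emph{vacuously}. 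In particular the automorphism weights and the well-spacedness inequalities play no role in balancing at these facets (well-spacedness only secures pure-dimensionality; the weights are calibrated for the later degree computations and for matching the Kerber--Markwig multiplicities in the Mumford locus), so attributing the local cancellation to them is the step of your argument that would not go through.
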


\begin{proof}
{We define $U$ to be the union of relative interiors of cones in $\WS_{8}(\T\PP^2,3)$ (considered as an extended cone complex) that have nonempty intersection with $B$. Because the eight points used to define $B$ are chosen generically, $U$ is open and only contains the relative interiors of $16$- and $17$-dimensional cones.
We have to check balancing around every $16$-dimensional facet. If the facet is contained in the locus of Mumford curves,  this follows from \cite[Theorem 3.3.8]{Torchiani}. If, on the other hand, a facet $\tau$ is contained in the locus of curves that have a genus one vertex, then it parametrizes maps whose source curve has a two-valent genus one vertex that is mapped to the midpoint of an edge of multiplicity two. The integer affine functions defined in a neighborhood of $\tau$ are  the linear combinations of the edge lengths $l_e$ for all edges $e$ contained in the set $E_0$ of edges not adjacent to the genus one vertex and pull-backs of integer affine functions on $\T\PP^2$ via the evaluation maps. There are two $17$-dimensional faces adjacent to $\tau$. In one of them, call it $\sigma_1$, the genus one vertex is resolved by inserting a loop that is contracted by the map. In the other one, call it $\sigma_2$, the genus one vertex is resolved by inserting two parallel edges on which the map has multiplicity one. In either case, every combination of values of evaluation maps and edge lengths $l_e$ for $e\in E_0$ on $\sigma_i$ is also attained on $\tau$. More precisely, we have 
\begin{equation}
\left(\prod_{e\in E_0} l_e\times \prod_{i=1}^8 \ev_i\right)(\sigma_k) = \left(\prod_{e\in E_0} l_e\times \prod_{i=1}^8 \ev_i\right)(\tau)  
\end{equation}
for all $k\in \{1,2\}$. It follows that every integral affine function that vanishes on $\tau$ also vanishes on a small neighborhood of $\tau$. Therefore, the balancing condition is vacuously true around $\tau$.
}
\end{proof}

\noindent
For the following definition, note that for every $p\in \R^2$, the tropical line $L_p$ with vertex at $p$
defines a tropical Cartier divisor on $\T\PP^2$ and that $L_p^2=[p]$. 

\begin{definition}
We define the cycle $\alpha_B$ on $B$ as
\begin{equation}
\alpha_B=\left(\prod_{i=1}^n (\ev_i^*L_{p_i})^2\right) \cdot \alpha_U \ ,
\end{equation}
where $U$ is a sufficiently small neighborhood of $B$ in $\WS_{8}(\T\PP^2,3)$ and $\alpha_U$ is defined as in Lemma \ref{lem:cycle on neighborhood of pencil}.
\end{definition}

No leg of the source curve of any tropical stable map corresponding to some $b\in B$ is adjacent to a genus one vertex, hence the set $\src(B)$ is contained in $\GoodAtlas{1,17}$. Therefore, there exists a natural family of $17$-marked genus one stable tropical curves $\mc C\coloneqq B\times_{\GoodAtlas{1,17}}\GoodAtlas{1,17\sqcup \{\star\}} \to B$ by Proposition \ref{prop:family of tropical curve over atlas}. For $1\leq i\leq 8$, let $\mc C_i\to B$ be the family of one-marked genus one stable tropical curves obtained from $\mc C\to B$ by forgetting all marks but the $i$-th one.

\begin{proposition}
\label{prop:degree of psi1 on pencil}
For $i = 1, \ldots , 8$, let $f_i\colon B\to \Mgnbar{1,\{i\}}$ be the morphism corresponding to the family $\mc C_i\to B$. Then we have 
\begin{equation}
\int_Bf_i^*\psi_i\cdot \alpha_B =432 \ .
\end{equation}
\end{proposition}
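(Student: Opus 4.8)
The plan is to reduce the global number $\int_B f_i^*\psi_i\cdot\alpha_B$ to a finite sum of local contributions, concentrated at the special points of $B$ where the combinatorial type of the one-marked family $\mc C_i\to B$ degenerates, and then to read off those contributions from the explicit combinatorial model of the family in Appendix \ref{app:main}. First I would record the structural hypotheses that make the tools of Section \ref{sec:intersection theory} available: as noted just before the statement, no marked leg of a source curve parametrized by $B$ is adjacent to a vertex of positive genus, so for the family $\mc C_i\to B$ the vertex $v(b)$ adjacent to the $i$-marked leg has genus zero for $b$ on the dense open locus, and it is three-valent there. Corollary \ref{cor:support of psi-class} then produces a Cartier divisor $D_i\in\Div(B)$ with $\mc L(D_i)\cong f_i^*\bL_i$, supported on the finite set where the $i$-marked leg becomes incident to a four-valent vertex, so that
\begin{equation*}
\int_B f_i^*\psi_i\cdot\alpha_B=\int_B D_i\cdot\alpha_B=\sum_{b}\,(D_i\cdot\alpha_B)(b),
\end{equation*}
the sum running over the finitely many such points. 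At the fibers where instead a genus-one vertex appears and Proposition \ref{prop:representing psi-class by divisor} no longer applies directly, I would compute $c_1(f_i^*\bL_i)$ from a \v{C}ech cocycle of the line bundle $s_i^*\mathrm{Aff}_{\mc C_i}(-s_i)$ exactly as in Example \ref{ex:psimanyvalues}, where the degree of $f_a^*\psi_1$ was seen to equal the twisting parameter $a$ of the affine structure; the passage between the one-marked class and the richer multi-marked source is governed by the pull-back relation $\bL_i=\pi_\star^*\bL_i\otimes\mc L(D_{i,\star})$ of Theorem \ref{thm:pull-back of psi class}.

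Next I would compute the local multiplicities. Near a contributing point $b$, the space $B$ is a union of rays of the extended cone complex $\WS_8(\T\PP^2,3)$, and $D_i$ is represented there by the function $\chi_b$ of Proposition \ref{prop:representing psi-class by divisor}, recording the length of the edge created when the four-valent vertex $v(b)$ is resolved. Exactly as in the proof of Corollary \ref{cor:psi class in genus 0}, the outgoing slope $d_\sigma\chi_b$ along an adjacent ray $\sigma$ equals $1$ on the unique resolution separating the $i$-leg from both reference directions and $0$ on the others, so that the local $\psi_i$-multiplicity is one and
\begin{equation*}
(D_i\cdot\alpha_B)(b)=\sum_{\sigma\ni b}\big(d_\sigma\chi_b\big)\cdot\alpha_B(\sigma).
\end{equation*}
Thus each special point contributes the $\alpha_B$-weight of the ray along which the $i$-leg detaches.

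Finally I would evaluate this weighted sum from the explicit family of Appendix \ref{app:main}. Because the eight points are horizontally stretched and generic, the contributing locus, the rays of $B$ adjacent to each of its points, the slopes $d_\sigma\chi_b$, and the intersection weights
\begin{equation*}
\alpha_B(\sigma)=\Big(\prod_{j=1}^{8}(\ev_j^*L_{p_j})^2\cdot\alpha_U\Big)(\sigma)
\end{equation*}
can all be read off, and summing the contributions gives $432$; the answer is independent of $i$ since the eight points enter the construction symmetrically. The main obstacle is precisely this last bookkeeping: one must enumerate all combinatorial types of well-spaced degree-three genus-one tropical stable maps through the eight stretched points, assemble them into the one-dimensional cycle $B$ (checking balancing as in Lemma \ref{lem:cycle on neighborhood of pencil}), and compute each weight $\alpha_B(\sigma)$ from the local multiplicities of the evaluation maps together with the cap against $\alpha_U$. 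This enumeration is the heart of the matter and is the content of Appendix \ref{app:main}; once it is in hand, the weighted count of four-valent points is a direct calculation yielding $432$.
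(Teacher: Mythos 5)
Your overall framework is the right one and matches the paper's: localize $f_i^*\psi_i$ to the finitely many points of $B$ where the $i$-marked leg meets a four-valent vertex (Proposition \ref{prop:representing psi-class by divisor} and Corollary \ref{cor:support of psi-class}), and compute each contribution as a sum of slope-times-weight over the adjacent rays. But there is a genuine gap in your local computation: you assert that at every contributing point the slope of $\chi_b$ is $1$ on the unique resolving ray, so that each point contributes exactly the $\alpha_B$-weight of the ray along which the $i$-leg detaches, and you then wave at symmetry in $i$. Both claims fail. For $i\in\{1,2,5\}$ the $i$-marked leg is adjacent to a four-valent vertex mapping to the \emph{interior of an edge of multiplicity two} of the image cubic, and there the dilation factor of the map enters: in the local model $\phi\colon V\to \Mgnbar{0,4}$ of Lemma \ref{lem:local structure at genus 0 point}, the map $\phi$ has slope $1$ on the ray of weight $2$ and slope $2$ on the rays of weight $1$ (this is forced by balancing of $\phi_*(\alpha_B)$), so the local contribution is $2$, not $1$, regardless of which reference directions you choose when forming $\chi_b$. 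Correspondingly, the number of contributing points is \emph{not} independent of $i$: there are $432$ points each contributing $1$ when $i\in\{3,4,6,7,8\}$, but only $216$ points each contributing $2$ when $i\in\{1,2,5\}$. The equality $432\cdot 1=216\cdot 2$ is a computation, not a consequence of any symmetry among the eight markings, and your argument as written would return the wrong answer (half the correct value, or an ill-defined one) for $i\in\{1,2,5\}$.

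A secondary, more structural issue: your patch for fibers where Proposition \ref{prop:representing psi-class by divisor} fails — an ad hoc \v{C}ech cocycle computation modeled on Example \ref{ex:psimanyvalues} — is not developed and is harder than needed. The paper instead observes that the $i$-th marked point never lies on a component contracted by the stabilization $\mc C\to\mc C_i$, so Theorem \ref{thm:pull-back of psi class} lets one replace $f_i^*\psi_i$ by $g^*\psi_i$ for the $17$-marked family $\mc C\to B$, in which the $i$-marked leg is adjacent to a genus-zero vertex in \emph{every} fiber; the divisor representative then exists globally and no cocycle argument is required. You cite the right theorem but use it only as a vague bridge; making that reduction the first step is what renders the rest of the proof clean, and it is also where the two-case analysis above (multiplicity-two edge or not) must then be carried out explicitly rather than deferred to the appendix.
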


\begin{proof}
First we observe that the $i$-th marked point never sits on a component that is contracted by the stabilization $\mc C\to \mc C_i$. So by Theorem \ref{thm:pull-back of psi class}, it suffices to prove that
\begin{equation}
\int_B g^*\psi_i\cdot \alpha_B=432 \ ,
\end{equation}
where $g\colon B\to \Mgnbar{1,17}$ is the morphism induced by the $17$-marked family $\mc C \to B$.

The $i$-marked leg is attached to a genus-zero vertex in every fiber of $\mc C$, allowing us to use Proposition \ref{prop:representing psi-class by divisor} to compute the degree of $\psi_i$. Using the representative for $\psi_i$ produced by the proposition, we see that there is a contribution for every curve in the family contained in the subset $\Psi$ of $B$ consisting of curves where the $i$-marked leg is adjacent to a four-valent vertex. As one sees combinatorially (see Figure \ref{fig:vertmar}), all points in $B$ for which this happens have the same image curve $\Gamma'$, that is they correspond to the same well-spaced map to $\T\PP^2$ up to orientation of the cycle and marking of the legs on which the map is not constant. 

There are two cases to consider. In the first case, for $i\in\{3,4,6,7,8\}$,  the image curve $\Gamma'$ does not have any edges of multiplicity two. There are $2\cdot (3!)^3=432$ different curves in $B$ with given image curve $\Gamma'$, so $\Psi$ has $432$ elements. For each $b=[\Gamma\to \T\PP^2]\in \Psi$, the $i$-th marked point maps to a three-valent vertex in $\Gamma'$. Moreover, the star of  $\Gamma'$ at that vertex is a tropical line in suitable coordinates. Therefore, locally around $b$, the space $B$ is isomorphic to the space of $\{i\}$-marked plane tropical lines passing through a given point in the plane (see Figure \ref{fig:local picture at 3-valent vertex}). By \cite[Proposition 4.7]{GathmannKerberMarkwig}, this space is isomorphic to $\Mgnbar{0,4}$. {The weights of $\alpha_B$ are computed as explained in Appendix \ref{app:main}; computations analogous to the one illustrated in Figure \ref{fig:matrixmult} show the weights are equal to one locally around $b$.} By Corollary \ref{cor:psi class in genus 0} it follows that the contribution of $b$ to the degree of $\psi_i$ is one. The total degree of $\psi_i$ on $B$ is thus equal to $432$.

In the second case, occurring when $i\in\{1,2,5\},$  the image curve $\Gamma'$ has an edge of multiplicity two.  If $i=5$, then the $i$-th leg is adjacent to the loop of $\Gamma$, whereas when $i\in\{1,2\}$ it is adjacent to two legs on which the map is non-constant. In either case, $\Psi$ consists of $216$ points. In any map $b=[\Gamma\to \T\PP^2]\in \Psi$, the $i$-th leg is adjacent to a four-valent vertex $v$ of $\Gamma$ that maps to the interior of the multiplicity-two edge in $\Gamma'$. Let $f_1,f_2,f_3$, the three remaining directions of $\Gamma$ at $v$ such that the map has dilation factor two in direction $f_1$. Each of these flags corresponds to the three directions $e_1,e_2,e_3$ on the base at $b$, as shown in Figure \ref{fig:local picture at multiplicity 2 edge}. { Computing the weights as in Appendix \ref{app:main}, Figure \ref{fig:matrixmult}, one sees the weight $\alpha_B$ is two in the direction of $e_1$ and one in the other two directions.} Using Lemma \ref{lem:local structure at genus 0 point}, the contribution of $\psi_i$ at $b$ is uniquely determined by the map $\phi\colon V\to \Mgnbar{0,4}$ defined in a neighborhood $V$ of $b$  by restricting one's attention to a neighborhood of $v$ in the total space of the family. In each direction $e_i$ one obtains a different combinatorial type of four-marked curves, so $\phi$ maps each of the directions $e_i$ to different directions at the cone-point of $\Mgnbar{0,4}$. Furthermore, $\phi$ has slope one in the direction of $e_1$. Therefore, $\phi_*(\alpha_B\vert_U)$ has weight two everywhere. By the tropical projection formula and Corollary \ref{cor:psi class in genus 0}, we conclude that the contribution at $b$ to $\psi_i$ is two. The total degree of $\psi_i$ on $B$ is thus equal to $2\cdot 216=432$.
\end{proof}

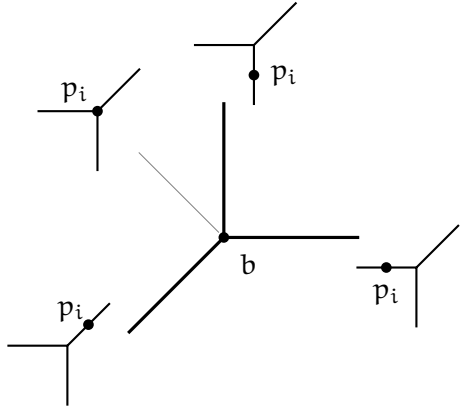
\begin{figure}
    \centering
    \begin{tikzpicture}[thick]
        \begin{scope}[very thick]
            \node[dot,label={-45:$b$},pin={[pin distance=1.5cm]135:{}}]{};
            \clip (0,0) circle (1.8cm);
            \draw (0,0)-- (2,0)
                  (0,0)--(0,2)
                  (0,0)--(-2,-2);   
        \end{scope}
    
        \begin{scope}[scale=.8,xshift=-2.1cm, yshift=2.1cm]
            \node [dot, label={[label distance=-.1cm]135:$p_i$}] at (0,0) {};
            \clip (0,0) circle (1cm);
            \draw (0,0)-- (-1,0)
                  (0,0)--(0,-1)
                  (0,0)--(1,1);            
        \end{scope}
        
        \begin{scope}[scale=.8,xshift=3.2cm,yshift=-.5cm]
            \node [dot, label=below:$p_i$] at (-.5,0) {};
            \clip (0,0) circle (1cm);
            \draw (0,0)-- (-1,0)
                  (0,0)--(0,-1)
                  (0,0)--(1,1);            
        \end{scope}
        
        \begin{scope}[scale=.8,xshift=.5cm,yshift=3.2cm]
            \node [dot, label=right:$p_i$] at (0,-.5) {};
            \clip (0,0) circle (1cm);
            \draw (0,0)-- (-1,0)
                  (0,0)--(0,-1)
                  (0,0)--(1,1);            
        \end{scope}
        
        \begin{scope}[scale=.8,yshift=-1.8cm, xshift=-2.6cm]
            \node [dot, label={[label distance=-.2cm]135:$p_i$}] at (.35,.35) {};
            \clip (0,0) circle (1cm);
            \draw (0,0)-- (-1,0)
                  (0,0)--(0,-1)
                  (0,0)--(1,1);            
        \end{scope}        
    \end{tikzpicture}
    \caption{If $b$ corresponds to a point where $p_i$ coincides with a three-valent vertex of the image curve, the family $B$ is isomorphic to a tropical line locally around $b$.}
    \label{fig:local picture at 3-valent vertex}
\end{figure}

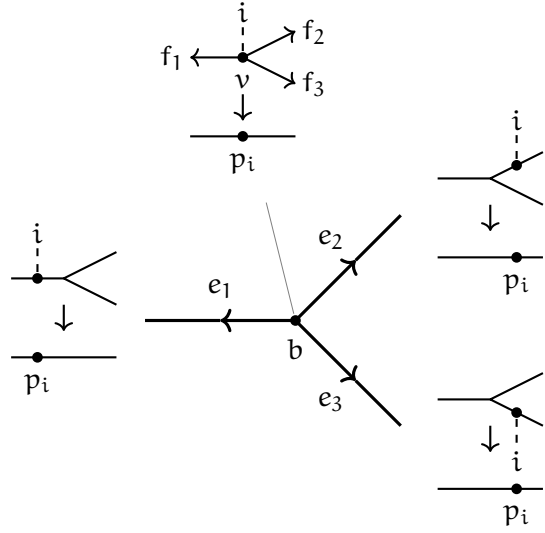
\begin{figure}
    \centering

    \begin{tikzpicture}[thick]
        
        \begin{scope}[very thick]
            \node at (0,0) [dot,label=below:$b$,pin={[pin distance=1.5cm]100:{}}] {} ;
            \draw[->] (0,0)--
                node [at end, label=above:$e_1$]{}
            (-1,0);
            \draw (-1,0)--(-2,0);
            
            \draw[->] (0,0)--
                node [at end, label={[label distance=-.2cm]135:$e_2$}]{}
            (.8,.8);
            \draw (.8,.8)--(1.4,1.4);
            
            \draw[->] (0,0)--
                node [at end, label={[label distance=-.2cm]-135:$e_3$}]{}
            (.8,-.8);
            \draw (.8,-.8)--(1.4,-1.4);

            
        \end{scope}

        \begin{scope}[scale=.7,xshift=-2cm,yshift=5cm]
            \node[dot, label=below:$v$] (vert) at (1,0){};
            \draw [<-] (0,0) --
                node [at start, label={[label distance=-.2cm]left:$f_1$}] {} 
            (1,0);
            
            \draw [->]( 1,0)--
                node [at end, label={[label distance=-.2cm]right:$f_2$}]{}
            (2,.5);
            \draw [->] (1,0)--
                node [at end, label={[label distance=-.2cm]right:$f_3$}]{}
            (2,-.5);
            \draw [dashed] (vert)--
                node[at end,label={[label distance=-.3cm]above:$i$}] {}
            +(0,.7);
            \draw[->] (1,-.7)--(1,-1.2);
            \draw (0,-1.5)--(2,-1.5);
            \node [dot,label=below:$p_i$] at (1,-1.5){};
        \end{scope}

        \begin{scope}[scale=.7,xshift=-5.4cm, yshift=.8cm]
            \node[dot] (vert) at (.5,0){};
            \draw (0,0) -- (1,0)
                  (1,0)-- (2,.5)
                  (1,0)--(2,-.5);
            \draw [dashed] (vert)--
                node[at end,label={[label distance=-.3cm]above:$i$}] {}
            +(0,.7);
            \draw[->] (1,-.5)--(1,-1);
            \draw (0,-1.5)--(2,-1.5);
            \node [dot,label=below:$p_i$] at (.5,-1.5){};
        \end{scope}

        \begin{scope}[scale=.7,xshift=2.7cm, yshift=2.7cm]
            \node[dot] (vert) at (1.5,.25){};
            \draw (0,0) -- (1,0)
                  (1,0)-- (2,.5)
                  (1,0)--(2,-.5);
            \draw [dashed] (vert)--
                node[at end,label={[label distance=-.3cm]above:$i$}] {}
            +(0,.7);
            \draw[->] (1,-.5)--(1,-1);
            \draw (0,-1.5)--(2,-1.5);
            \node [dot,label=below:$p_i$] at (1.5,-1.5){};
        \end{scope}

        \begin{scope}[scale=.7,xshift=2.7cm, yshift=-1.5cm]
            \node[dot] (vert) at (1.5,-.25){};
            \draw (0,0) -- (1,0)
                  (1,0)-- (2,.5)
                  (1,0)--(2,-.5);
            \draw [dashed] (vert)--
                node[at end,label={[label distance=-.2cm]below:$i$}] {}
            +(0,-.6);
            \draw[->] (1,-.5)--(1,-1);
            \draw (0,-1.7)--(2,-1.7);
            \node [dot,label=below:$p_i$] at (1.5,-1.7){};
        \end{scope}        
    \end{tikzpicture}
    
    \caption{The flag $f_i$ corresponds to the direction $e_i\in T_b B$ where the $i$-marked leg moves away from the vertex in the direction given by $f_i$.}
    \label{fig:local picture at multiplicity 2 edge}
\end{figure}

\begin{proposition}
\label{prop:degree of map from pencil to moduli space}
Let $g\colon B\to \Atlass{1,1}$ be the composition $B\hookrightarrow \WS_{8}(\T\PP^2,3)\xrightarrow{\src} \Atlass{1,17}\to \Atlass{1,1}$. Then we have 
\begin{equation}
g_*\alpha_B=12\cdot 432\cdot [\Atlass{1,1}]
\end{equation}
\end{proposition}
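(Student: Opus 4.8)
The plan is to exploit that $\Atlass{1,1}$ is one-dimensional. Identifying $\Atlass{1,1}$ with the ray $[0,\infty]$ that parametrizes the lattice length of the cycle of a genus-one curve, the top cycle $[\Atlass{1,1}]$ is the constant function with value one, and every element of $Z_1(\Atlass{1,1})$ is a constant multiple of $[\Atlass{1,1}]$. Since $g_*\alpha_B\in Z_1(\Atlass{1,1})$, it suffices to produce the integer $c$ with $g_*\alpha_B=c\cdot[\Atlass{1,1}]$ and to show $c=12\cdot 432$. The geometric meaning of $g$ is transparent: for $b\in B$ with source curve $\Gamma_b$, the value $g(b)$ is the length of the cycle of $\Gamma_b$, i.e.\ the tropical $j$-invariant. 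This length is unchanged by forgetting the nine non-contracted ends and all but one of the eight marks, because forgetting a mark either removes a leg disjoint from the cycle or merges two edges whose lengths add up.

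I would then compute $c$ as the $\alpha_B$-weighted number of preimages of a generic interior point. For a generic cycle length $\ell\in(0,\infty)$ one has
\begin{equation*}
c=\sum_{b\in g^{-1}(\ell)}\mathrm{mult}_b(g)\cdot\alpha_B(b) \ ,
\end{equation*}
and $g^{-1}(\ell)$ is exactly the set of well-spaced tropical stable maps in $B$ whose source has a cycle of length $\ell$. The key step is to organize this fiber as a product of a geometric factor and a combinatorial one, precisely as in the proof of Proposition \ref{prop:degree of psi1 on pencil}. Forgetting the labels of the nine non-contracted ends (three in each of the three ray directions) and the orientation of the cycle identifies such a $b$ with an unlabeled, unoriented tropical cubic through the eight points having cycle length $\ell$; conversely each such cubic is recovered in $2\cdot(3!)^3=432$ ways. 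Here the orientation genuinely contributes a factor of two, since the two cycle-orientations of a genus-one curve become isomorphic cycle-rigidified graphs after forgetting the marks down to one leg (the loop flip then fixes that leg), and hence map to the same point of $\Atlass{1,1}$ while remaining distinct in $B$. For generic $\ell$ the relevant maps have no nontrivial automorphisms, $g$ has slope $\pm 1$, and the point-condition intersection weights are one (computed exactly as in Appendix \ref{app:main}), so $\mathrm{mult}_b(g)\cdot\alpha_B(b)=1$ and $c=432\cdot(\text{number of unlabeled geometric cubics over }\ell)$.

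It then remains to show that the number of unlabeled tropical cubics through the eight points with a fixed generic cycle length equals $12$. This is the degree of the tropical $j$-invariant map on the family, and it is the combinatorial shadow of the classical statement that the $j$-map of a generic pencil of plane cubics---equivalently of the rational elliptic surface $Bl_{p_1,\dots,p_9}\PP^2$---has degree twelve. Tropically this number coincides with the count of the degenerate members of the pencil, the rational cubics through the eight points, which is the genus-zero degree-three tropical Gromov--Witten invariant equal to $12$ \cite{KerberMarkwig}. Concretely I would read this off from the explicit combinatorial description of $B$ in Appendix \ref{app:main}: on each top cell $g$ is linear with a computable slope and $\alpha_B$ has a computable weight, so that summing $\mathrm{mult}_b(g)\cdot\alpha_B(b)$ over the cells meeting $g^{-1}(\ell)$ produces $12\cdot 432$.

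The main obstacle is exactly this final enumeration, which requires controlling the global structure of $B$---the gluing of its segments, the slopes of $g$, and the weights of $\alpha_B$---well enough to certify both the geometric count twelve and the uniform weight one. I expect the most economical route is to first note that $g_*\alpha_B$ is automatically $\ell$-independent (there are no interior vertices on $\Atlass{1,1}$ at which to impose a balancing condition), and then to evaluate the count at a convenient value of $\ell$, for instance by letting $\ell$ tend to the degenerate end where the cycle breaks and the source becomes rational, so that the twelve rational members of the pencil, each carrying its $432$ orientations and labelings, are counted directly. The resulting value is consistent with Proposition \ref{prop:degree of psi1 on pencil} and with the ratio $\tfrac{1}{24}$ predicted by the correspondence theorem, once the relationship between the cycle-rigidified space $\Atlass{1,1}$ and the stack $\Mgnbar{1,1}$ is taken into account.
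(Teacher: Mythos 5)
Your route is, at bottom, the same as the paper's: use the one-dimensionality of $\Atlass{1,1}$ to reduce everything to a single degree, extract the factor $432=2\cdot(3!)^3$ from the choice of cycle orientation and of labels for the nine non-contracted ends, and delegate the factor $12$ to Kerber--Markwig \cite{KerberMarkwig} (equivalently the count of rational cubics through eight points \cite{FloorDiagrams}). The paper phrases the reduction slightly differently --- the degree of $g$ with respect to $\alpha_B$ equals the degree, with respect to $\alpha_U$, of the combined map $U\to \Atlass{1,1}\times\prod_{i=1}^8\T\PP^2$, which is then exactly \cite[Theorem 6.3]{KerberMarkwig} once the multiplicity conventions are matched --- and your generic-fiber computation is just that degree statement unwound. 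So the architecture is fine; the problem is in how you evaluate the fiber.

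The genuine gap is your claim that for generic $\ell$ every point of $g^{-1}(\ell)$ satisfies $\mathrm{mult}_b(g)\cdot\alpha_B(b)=1$, so that $c=432\cdot\#\{\text{unlabeled cubics over }\ell\}$ with a \emph{set-theoretic} count of $12$. The twelve is a weighted count, not a cardinality. Already among rational cubics through eight stretched points, a curve containing an edge of weight two has Mikhalkin multiplicity four, so there are strictly fewer than twelve distinct curves; correspondingly, the pencil $B$ contains edges parametrizing maps whose cycle double-covers an edge of multiplicity two, and on these neither $\alpha_B$ nor the slope of $g$ is $1$ (compare the weight-two directions computed in the proof of Proposition \ref{prop:degree of psi1 on pencil} and the weight computations of Appendix \ref{app:main}). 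So the set-theoretic fiber of $g$ has fewer than $12\cdot 432$ points, and your multiplicity-one step fails. What must actually be checked --- and this is precisely the content of the paper's proof --- is that the product $\mathrm{mult}_b(g)\cdot\alpha_B(b)$ reproduces the $\ev\times j$ multiplicities of \cite{KerberMarkwig}, with the one modification that the two Kerber--Markwig strata resolving a cycle that degenerates onto a weight-two edge are replaced here by a single stratum (a loop contracted to the midpoint) carrying twice the multiplicity; only then does \cite[Theorem 6.3]{KerberMarkwig} give the weighted total $12$. Your fallback of specializing $\ell\to 0$ does not evade this: the fiber over the vertex of $\Atlass{1,1}$ is not generic (several edges of $B$, corresponding to the different resolutions of a node, meet there), the source curves do not become rational but acquire a genus-one vertex or a contracted loop, and the curve with the weight-two edge must again be counted with multiplicity four, so the naive count of ``twelve rational members, each with $432$ labelings'' cannot be read off there without reintroducing exactly the weights you suppressed.
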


\begin{proof}
Since every point in $\Atlass{1,1}$ except the initial point has a neighborhood isomorphic to an open subset of $\T$, every tropical one-cycle on $\Atlass{1,1}$ is a multiple of $[\Atlass{1,1}]$. Therefore, we have to prove that $g$ has degree $12\cdot 432$, where the degree is defined with respect to $\alpha_B$. By definition of $\alpha_B$, the degree of $g$ is the same as the degree with respect to $\alpha_U$ of the map
\begin{equation}
U\xrightarrow{g\times \prod_{i=1}^8\ev_i} \Atlass{1,1}\times \prod_{i=1}^8 \T\PP^2 \ ,
\end{equation}
where $U$ is the neighborhood of $B$ from Proposition \ref{prop:cycle on admissible cover space}. The weights of $\alpha_U$ are defined exactly as  in \cite[Definition 3.5]{KerberMarkwig}, except that pairs of strata appearing in part (c) of that definition are replaced by one stratum, parameterizing curves with a loop contracting at the mid-point of an edge of weight two, with twice the multiplicity. The degree of $g$ is $12\cdot 432$ from \cite[Theorem 6.3]{KerberMarkwig} and the fact that there are $12$ tropical rational curves passing through eight general points in $\T\PP^2$ when counted with multiplicity \cite{FloorDiagrams}. The factor of $432$ does not appear in \cite{KerberMarkwig}: it arises from  orienting the cycle and marking all unbounded edges.
\end{proof}


We can interpret Proposition \ref{prop:degree of psi1 on pencil} and Proposition \ref{prop:degree of map from pencil to moduli space} combined as the tropical version of the equality
\begin{equation}
\psi_1=\frac 1{24}[\mathrm{pt}]
\end{equation}
that holds on the algebraic moduli space of  genus one stable curves $\Mgnbar{1,1}$.
The lack of a fundamental class for $\Mgnbar{1,1}$ does not allow to  integrate  the class of a point. However, we  may view $\Atlass{1,1}$ as a two-to-one cover of $\Mgnbar{1,1}$. Then the map $f\colon B\to \Mgnbar{1,1}$ defined by the family $\mc C_1\to B$ has degree $24\cdot 432$. We thus have obtained that the pull-back of $\psi_1$ via a map of degree $24\cdot 432$ is a zero-dimensional cycle of degree $432$.


\subsection{Families of tropical curves from admissible covers}
\label{sec:72}

We turn our attention to families coming from tropicalizing admissible covers. Adapting the notions from \cite{Caporaso,CMRadmissible}  to allow nodes at infinity, we say that a \emph{tropical admissible cover} is a harmonic map $f\colon\Gamma_1\to \Gamma_2$ of stable curves such that $f$  does not contract any edges, preimages of marked points are marked points, preimages of nodes at infinity are nodes at infinity, and the local Riemann-Hurwitz condition 
\begin{equation}
    2g_x-2+\val(x)= d_x(2g_{f(x)}-2+\val(f(x)))
\end{equation}is satisfied for any point   $x\in \Gamma_1$. Note that $f$ might not be a linear morphism of tropical curves because the affine structure of $\Gamma_1$ at a point $x\in \Gamma_1$ of genus greater zero is trivial, whereas the affine structure of $\Gamma_2$ at $f(x)$ need not be trivial. However, $f$ is linear away from the locus of points of higher genus since there harmonicity is equivalent to linearity.  Modifying the construction from \cite[\S 3.2.2]{CMRadmissible} of a cone complex of tropical admissible covers by adding cycle-rigidifications to the source curves and taking the extended cone complex, one obtains compact \TPL{}-spaces of cycle-oriented tropical admissible covers.

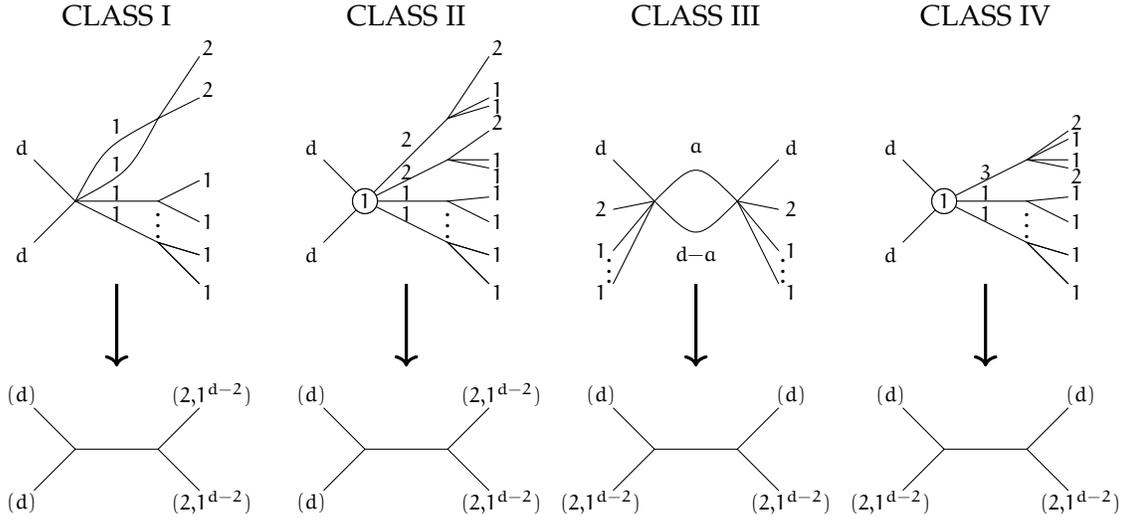
\begin{figure}

\begin{tikzpicture}
\begin{scope}[scale=0.55, shift={(0,0)}]
\draw (-1,-1)--(0,0)--(-1,1);
\draw (0,0) --(2,0);
\draw (3,-1)--(2,0)--(3,1);
\node at (-1.3, -1.3) {$\scriptstyle (d)$};
\node at (-1.3, +1.3) {$\scriptstyle (d)$};
\node at (3.3, -1.3) {$\scriptstyle (2,1^{d-2})$};
\node at (3.3, 1.3) {$\scriptstyle (2,1^{d-2})$};
\end{scope}

\begin{scope}[scale=0.55, shift={(0,6)}]

\node at (1,4.5) {CLASS I};

\draw (-1,-1)--(0,0)--(-1,1);
\draw (0,0) --(2,0);
\draw (0,0) --(2,-1);

\draw (0,0) .. controls (.7,1.3) ..(2,2);
\draw (0,0) .. controls (1.3,.7) ..(2,2);

\draw (3,3.5) -- (2,2) -- (3 , 2.5);
\draw (3,.5) -- (2,0) -- (3 , -.5);
\draw (3,-1.3) -- (2,-1) -- (3 , -2);
\draw (3,-1.3) -- (2,-1) -- (3 , -2);

\node at (-1.3, -1.3) {$\scriptstyle d$};
\node at (-1.3, +1.3) {$\scriptstyle d$};

\node at (1, 1.8) {$\scriptstyle 1$};
\node at (1, .9) {$\scriptstyle 1$};
\node at (1, .2) {$\scriptstyle 1$};
\node at (1, -.3) {$\scriptstyle 1$};
\node at (2, -.4) {$\vdots$};
\node at (3.2, 2.7) {$\scriptstyle 2$};
\node at (3.2, 3.7) {$\scriptstyle 2$};
\node at (3.2, -1.3) {$\scriptstyle 1$};
\node at (3.2, -2.2) {$\scriptstyle 1$};
\node at (3.2, -.5) {$\scriptstyle 1$};
\node at (3.2, .5) {$\scriptstyle 1$};

\draw[very thick, ->] (1,-2)--(1,-4);
\end{scope}

\begin{scope}[scale=0.55, shift={(7,0)}]
\draw (-1,-1)--(0,0)--(-1,1);
\draw (0,0) --(2,0);
\draw (3,-1)--(2,0)--(3,1);
\node at (-1.3, -1.3) {$\scriptstyle (d)$};
\node at (-1.3, +1.3) {$\scriptstyle (d)$};
\node at (3.3, -1.3) {$\scriptstyle (2,1^{d-2})$};
\node at (3.3, 1.3) {$\scriptstyle (2,1^{d-2})$};
\end{scope}

\begin{scope}[scale=0.55, shift={(7,6)}]
\node at (1,4.5) {CLASS II};

\filldraw[ fill= white](0,0) circle (0.3);
\draw (-1,-1)--(0,0)--(-1,1);
\draw (0,0) --(2,0);
\draw (0,0) --(2,-1);

\draw (0,0) -- (2,2);
\draw (0,0) -- (2,1);

\draw (3,3.5) -- (2,2) -- (3 , 2.5);
\draw (2,2) -- (3 , 2.3);

\draw (3,.8) -- (2,1) -- (3 , 1.7);
\draw (2,1) -- (3 , 1);

\draw (3,.1) -- (2,0) -- (3 , -.5);
\draw (3,-1.3) -- (2,-1) -- (3 , -2);
\draw (3,-1.3) -- (2,-1) -- (3 , -2);

\filldraw[ fill= white](0,0) circle (0.3);
\node at (0, 0) {$\scriptstyle 1$};
\node at (-1.3, -1.3) {$\scriptstyle d$};
\node at (-1.3, +1.3) {$\scriptstyle d$};
\node at (1, 1.5) {$\scriptstyle 2$};
\node at (1, .7) {$\scriptstyle 2$};
\node at (1, .2) {$\scriptstyle 1$};
\node at (1, -.3) {$\scriptstyle 1$};
\node at (2, -.4) {$\vdots$};
\node at (3.2, 2.7) {$\scriptstyle 1$};
\node at (3.2, 2.3) {$\scriptstyle 1$};
\node at (3.2, 3.7) {$\scriptstyle 2$};
\node at (3.2, -1.3) {$\scriptstyle 1$};
\node at (3.2, -2.2) {$\scriptstyle 1$};
\node at (3.2, -.5) {$\scriptstyle 1$};
\node at (3.2, .2) {$\scriptstyle 1$};

\node at (3.2, 1) {$\scriptstyle 1$};
\node at (3.2, .6) {$\scriptstyle 1$};
\node at (3.2, 1.9) {$\scriptstyle 2$};

\draw[very thick, ->] (1,-2)--(1,-4);

\end{scope}

\begin{scope}[scale=0.55, shift={(14,0)}]
\draw (-1,-1)--(0,0)--(-1,1);
\draw (0,0) --(2,0);
\draw (3,-1)--(2,0)--(3,1);
\node at (-1.3, -1.3) {$\scriptstyle (2,1^{d-2})$};
\node at (-1.3, +1.3) {$\scriptstyle (d)$};
\node at (3.3, -1.3) {$\scriptstyle (2,1^{d-2})$};
\node at (3.3, 1.3) {$\scriptstyle (d)$};
\end{scope}

\begin{scope}[scale=0.55, shift={(14,6)}]

\node at (1,4.5) {CLASS III};
\draw (-1,-.2)--(0,0)--(-1,1);
\draw (3,-.2)--(2,0)--(3,1);

\draw (0,0) .. controls (1,1) .. (2,0);
\draw (0,0) .. controls (1,-1) .. (2,0);
\draw (0,0)-- (-1,-2);
\draw (0,0)-- (-1,-1.2);
\draw (2,0)-- (3,-2);
\draw (2,0)-- (3,-1.2);

\node at (-1.3, +1.3) {$\scriptstyle d$};
\node at (3.3, +1.3) {$\scriptstyle d$};
\node at (-1.3, -.2) {$\scriptstyle 2$};
\node at (3.3, -.2) {$\scriptstyle 2$};
\node at (-1.3, -1.2) {$\scriptstyle 1$};
\node at (3.3, -1.2) {$\scriptstyle 1$};
\node at (1, 1.3) {$\scriptstyle a$};
\node at (1, -1.3) {$\scriptstyle d-a$};

\node at (-1.3, -2.2) {$\scriptstyle 1$};
\node at (3.3, -2.2) {$\scriptstyle 1$};

\node at (-1.1,-1.5) {$\vdots$};
\node at (3.1,-1.5) {$\vdots$};

\draw[very thick, ->] (1,-2)--(1,-4);

\end{scope}

\begin{scope}[scale=0.55, shift={(21,6)}]
\node at (1,4.5) {CLASS IV};

\filldraw[ fill= white](0,0) circle (0.3);
\draw (-1,-1)--(0,0)--(-1,1);
\draw (0,0) --(2,0);
\draw (0,0) --(2,-1);

\draw (0,0) -- (2,1);


\draw (3,.8) -- (2,1) -- (3 , 1.7);
\draw (3,1.5) -- (2,1) -- (3 , 1);

\draw (3,.1) -- (2,0) -- (3 , -.5);
\draw (3,-1.3) -- (2,-1) -- (3 , -2);
\draw (3,-1.3) -- (2,-1) -- (3 , -2);

\filldraw[ fill= white](0,0) circle (0.3);
\node at (0, 0) {$\scriptstyle 1$};
\node at (-1.3, -1.3) {$\scriptstyle d$};
\node at (-1.3, +1.3) {$\scriptstyle d$};
\node at (1, .7) {$\scriptstyle 3$};
\node at (1, .2) {$\scriptstyle 1$};
\node at (1, -.3) {$\scriptstyle 1$};
\node at (2, -.4) {$\vdots$};
\node at (3.2, -1.3) {$\scriptstyle 1$};
\node at (3.2, -2.2) {$\scriptstyle 1$};
\node at (3.2, -.5) {$\scriptstyle 1$};
\node at (3.2, .2) {$\scriptstyle 1$};

\node at (3.2, 1.5) {$\scriptstyle 1$};
\node at (3.2, 1) {$\scriptstyle 1$};
\node at (3.2, .6) {$\scriptstyle 2$};
\node at (3.2, 1.9) {$\scriptstyle 2$};

\draw[very thick, ->] (1,-2)--(1,-4);

\end{scope}

\begin{scope}[scale=0.55, shift={(21,0)}]
\draw (-1,-1)--(0,0)--(-1,1);
\draw (0,0) --(2,0);
\draw (3,-1)--(2,0)--(3,1);
\node at (-1.3, -1.3) {$\scriptstyle (2,1^{d-2})$};
\node at (-1.3, +1.3) {$\scriptstyle (d)$};
\node at (3.3, -1.3) {$\scriptstyle (2,1^{d-2})$};
\node at (3.3, 1.3) {$\scriptstyle (d)$};
\end{scope}
\end{tikzpicture}

\caption{Classes of combinatorial types of degree-$d$ tropical admissible covers of a tropical genus-zero curve with ramification profile $((d),(d),(2,1^{d-2}),(2,1^{d-2}))$}
\label{fig:admissible combinatorial types}
\end{figure}

We  focus on a specific class of one dimensional examples.
For $d\geq 2$, denote by $B_d$ the compact \TPL{}-space of tropical admissible covers of degree-$d$  of a genus-zero tropical curve with ramification profile  $((d),(d),(2,1^{d-2}),(2,1^{d-2}))$. By the Riemann-Hurwitz formula the genus of the source curve is one.

 The combinatorial types of covers in $B_d$ can be divided into four  classes, depicted in Figure \ref{fig:admissible combinatorial types}, where one has to add markings and  cycle-rigidifications  to obtain a combinatorial type. There are $(d-2)!$ combinatorial types of class I, ${d-2 \choose 2}^2(d-4)!$ types of class II, $2(d-1)$ combinatorial types of class III, and $(d-2)^2\cdot (d-3)!$ types of class IV. So $B_d$ is isomorphic, as a \TPL{}-space, to the space obtained by gluing together $(d-2)!+{d-2 \choose 2}^2(d-4)!+2(d-1)+(d-2)^2\cdot (d-3)!$ copies of $[0,\infty]$ at their respective element $0$.

By construction, there are natural \emph{source} and \emph{branch} maps
\begin{align*}
\src &\colon B_d \to \Atlass{1,2d},\;\; [\Gamma_1\to \Gamma_2]\mapsto [\Gamma_1] \ ,\text{ and} \\
\br &\colon B_d\to \Mgnbar{0,4},\;\; [\Gamma_1\to \Gamma_2]\mapsto [\Gamma_2] \ .
\end{align*}
Define $\Aff_{B_d}$ to be the smallest affine structure on $B_d$
that makes $\src$ and $\br$ linear. 


Let $\mc C_1= B_d\times_{\Atlass{1,2d}}\Atlass{1,2d\sqcup\{\star\}}$, and let $\mc C_2= B_d\times_{\Mgnbar{0,4}}\Mgnbar{0,4\sqcup\{\star\}}$. Then the projection $\pi_1\colon\mc C_1\to B_d$ is a family of $2d$-marked genus-one stable \TPL{}-curves and the projection $\pi_2\colon\mc C_2 \to B_d$ is a family of four-marked genus-zero stable tropical curves. By construction of $B_d$, there is a morphism $F\colon\mc C_1\to \mc C_2$ of \TPL{}-spaces over $B_d$ such that the induced morphism $F_b\colon(\mc C_1)_b\to (\mc C_2)_b$ identifies $b$ with the modular point $[(\mc C_1)_b\xrightarrow{F_b}(\mc C_2)_b]$.

{Unfortunately, $\mc C_1\to B_d$ is not a family of tropical curves because $\Atlass{1,2d\sqcup\{\star\}}\to \Atlass{1,2d}$ is not a family of tropical curves. The problem is not the exactness of the sequence of cotangent sheaves, but the lack of enough affine functions on $\mc C_1$ to make the fibers stable tropical curves. The idea to fix this is to define the pull-back of affine functions via the map $F$ to be affine functions on  $\mc C_1$. On a leg of a fiber of $\mc C_1\to B_d$ where the multiplicity of $F$ is $d$, one then obtains a function with slope $d$. One would like to divide this function by $d$ to obtain a fiberwise harmonic function on $\mc C_1$ with vertical slope one, but if the horizontal slopes of that function are not all divisible by $d$, which they usually are not, this does not define a piecewise integral linear function. To fix this, one takes a degree-$\lcm(2,d)$ cover $\widetilde B_d$ of $B_d$ whose underlying topological space is $B_d$, whose sheaf of piecewise linear functions is given by $\PL_{\widetilde B_d}=\frac 1{\lcm(2,d)}\PL_{B_d}$, and whose sheaf of affine functions $\Aff_{\widetilde B_d}$ is given by the saturation of $\Aff_{B_d}$ in $\PL_{\widetilde B_d}$. In other words, a piecewise integral linear function $\phi$ on $\widetilde B_d$ is a section in $\Aff_{\widetilde B_d}$ if and only if there exists a positive integer $n$, such that $n\phi$ is a section of $\Aff_{B_d}$. Let $\widetilde{\mc C_1}=\widetilde B_d\times_{B_d}^{\mathrm{TPL}} \mc C_1$, let $\widetilde{\mc C_2}=\widetilde B_d\times_{B_d}\mc C_2$, and let $\widetilde F\colon \widetilde{\mc C_1}\to \widetilde{\mc C_2}$ be the morphism of \TPL{}-spaces induced by $F$. Finally,  $\widetilde{\mc C_1}$ is made into a tropical space by equipping it with the smallest affine structure that is saturated in $\PL_{\widetilde{\mc C_1}}$, makes the projection $\widetilde{\mc C_1}\to \mc C_1$ linear everywhere, and makes $\widetilde F$ linear at all points of genus zero.

\begin{lemma}
$\widetilde{ \mc C_1}\to \widetilde B_d$ is a family of tropical stable curves.
\end{lemma}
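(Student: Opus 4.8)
The plan is to check the conditions of Definition \ref{def:family} for $\pi\colon\widetilde{\mc C_1}\to\widetilde B_d$. The underlying object is already a family of stable \TPL{}-curves: since $\PL_{B_d}\subseteq\PL_{\widetilde B_d}$, the identity of underlying spaces is a morphism $\widetilde B_d\to B_d$ of \TPL{}-spaces, and $\widetilde{\mc C_1}=\widetilde B_d\times^{\text{\TPL{}}}_{B_d}\mc C_1$ is the base change of the family of $2d$-marked genus-one stable \TPL{}-curves $\mc C_1\to B_d$, hence is itself such a family. By construction $\widetilde{\mc C_1}$ is equipped with an affine structure making $\pi$ linear. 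So the real content is to prove, for each $b\in\widetilde B_d$, that the fiber $\Gamma_1\coloneqq(\widetilde{\mc C_1})_b$ is a stable tropical curve and that the cotangent sequence \eqref{equ:exact sequence} is exact.

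First I would show that every section of $\Aff_{\widetilde{\mc C_1}}$ restricts to a fiberwise harmonic function. The affine structure is generated, up to saturation, by pull-backs from $\mc C_1$ and by $\widetilde F$-pull-backs of affine functions on $\widetilde{\mc C_2}$ at genus-zero points. Pull-backs from $\mc C_1$ are fiberwise harmonic by Proposition \ref{prop:family of tropical curve over atlas}. For the $\widetilde F$-pull-backs, note that $\widetilde F$ restricts on $\Gamma_1$ to the harmonic admissible cover $F_b\colon\Gamma_1\to\Gamma_2$, so for any harmonic $m$ on $\Gamma_2$ the local Riemann--Hurwitz/harmonicity identity gives
\[
\sum_{v\in T_x\Gamma_1}d_v(F_b^*m)=\sum_{w\in T_{F_b(x)}\Gamma_2}\Big(\sum_{v\mapsto w}m_v\Big)\,d_w(m)=d_x\sum_{w}d_w(m)=0,
\]
where $m_v$ is the expansion factor of $F_b$ along $v$ and $d_x$ the local degree. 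Hence $F_b^*m$ is harmonic on $\Gamma_1$. Since harmonicity is preserved by $\Q$-linear combinations and by the saturation defining $\Aff_{\widetilde{\mc C_1}}$, this proves $\Aff_{\widetilde{\mc C_1}}\vert_{\Gamma_1}\subseteq\Harm_{\Gamma_1}$.

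The reverse inclusion I would treat by the location of $x\in\Gamma_1$. At the unique positive-genus vertex $v_1$ the $\mc C_1$-generators restrict to constants (there are no non-trivial cross ratios at positive-genus points), the $\widetilde F$-generators do not apply, and saturation preserves constancy; thus $\Aff_{\Gamma_1,v_1}=\R$, as required. At genus-zero vertices of valence $\ge 3$ and at interior points of edges adjacent to at least one genus-zero vertex, Proposition \ref{prop:family of tropical curve over atlas} shows the cross ratios pulled back from $\mc C_1$ already exhaust $\Harm_{\Gamma_1}$. The only remaining genus-zero points are the interiors of the legs adjacent to $v_1$, and here the missing harmonic germs are supplied by $\widetilde F$; this is the crux of the argument. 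Such a leg has some ramification index $m\in\{1,2,d\}$ and maps under $\widetilde F$ to a leg of the genus-zero stable tropical curve $\Gamma_2$, which---$\widetilde{\mc C_2}\to\widetilde B_d$ being a family of tropical curves by Lemma \ref{lem:pull-back of family is family}, Theorem \ref{thm:universal family}, and Corollary \ref{cor:representative for M0n}---carries an affine function of fiberwise slope one. Its $\widetilde F$-pull-back has fiberwise slope $m$, and the function $\tfrac1m\widetilde F^*(\cdots)$ has fiberwise slope one along the leg. The decisive point is that $\tfrac1m\widetilde F^*(\cdots)$ is again a section of $\PL_{\widetilde{\mc C_1}}$: the fiberwise slope becomes the integer $1$, while the base-direction slopes are divided by $m$, and since $m\mid\lcm(2,d)$ and $\PL_{\widetilde B_d}=\tfrac1{\lcm(2,d)}\PL_{B_d}$ they remain piecewise linear on $\widetilde B_d$. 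As $m\cdot\tfrac1m\widetilde F^*(\cdots)\in\Aff_{\widetilde{\mc C_1}}$ and $\Aff_{\widetilde{\mc C_1}}$ is saturated, the quotient lies in $\Aff_{\widetilde{\mc C_1}}$ and restricts to the desired slope-one harmonic germ. This shows $\Aff_{\Gamma_1,x}=\Harm_{\Gamma_1,x}$ and completes the proof that $\Gamma_1$ is a stable tropical curve. I expect this divisibility-and-saturation step, together with the bookkeeping that $\tfrac1m\widetilde F^*(\cdots)$ is genuinely integral in every direction, to be the principal obstacle: it is exactly where the cover degree $\lcm(2,d)$ is forced.

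Finally, for the exactness of \eqref{equ:exact sequence} I would invoke Lemma \ref{lem:exact sequence for harmonic functions}, which applies since $\widetilde{\mc C_1}\to\widetilde B_d$ is a family of stable \TPL{}-curves. Surjectivity on the right is precisely the construction of extending functions carried out in the previous step, so that the fiberwise one-forms all lift. Exactness on the left is automatic because images of neighborhoods under $\pi$ are neighborhoods. Exactness in the middle---an affine function restricting constantly to $\Gamma_1$ is locally a pull-back from $\widetilde B_d$---follows by intersecting the harmonic exact sequence of Lemma \ref{lem:exact sequence for harmonic functions} with the affine subsheaves and pulling back along a linear section as in Lemmas \ref{lem:affineness is a clopen condition on fibers} and \ref{lem:characterization of affineness in terms of section and harmonicity}; near $v_1$ it is immediate from the constancy established above. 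This last part is routine compared with the divisibility step.
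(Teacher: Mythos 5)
Your proof is correct and follows essentially the same route as the paper: reduce via Proposition \ref{prop:family of tropical curve over atlas} to the legs adjacent to the genus-one vertex, and there produce the missing slope-one harmonic germ as $\tfrac1m\widetilde F^*(\phi)$, which is exactly the point where the paper invokes the passage from $B_d$ to $\widetilde B_d$ and the saturation of the affine structure. The paper merely states the fiberwise harmonicity of the generators and the exactness of \eqref{equ:exact sequence} as immediate (the latter deduced from exactness on $\mc C_1$ and $\widetilde{\mc C_2}$ plus saturation, rather than via Lemma \ref{lem:exact sequence for harmonic functions} and linear sections as you do), so your write-up simply fills in details the paper leaves implicit.
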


\begin{proof} 
It easily follows from the fact that the cotangent sequence \eqref{equ:exact sequence} is exact on $\mc C_1$ and $\widetilde {\mc C_2}$ by Proposition \ref{prop:left-exactness of sequence on atlas}, together with the fact that $\Aff_{\widetilde B_d}$ is saturated, that the cotangent sequence is also exact on $\widetilde{\mc C_1}$ as well. Since all functions in $\Aff_{\widetilde{\mc C_1}}$ are fiberwise harmonic by construction, it remains to be shown that the fibers are tropical stable curves, that is that all harmonic functions on the fibers are affine. By the proof of Proposition \ref{prop:family of tropical curve over atlas}, this is clear everywhere except at points $x\in \mc C_2$ that are contained in a leg in its fiber that is adjacent to a genus-one vertex. So assume that $x$ is contained in such a leg. Let $m\in \{1,2,d\}$ denote the multiplicity of $\widetilde F$ at $x$, and let $\phi$ be an integral affine function in a neighborhood of $\widetilde F(x)$ with vertical slope one. Then by construction $\frac 1m \widetilde F^*(\phi)$ is an integral affine function in a neighborhood of $x$ of slope one (for this we needed to replace $B_d$ by $\widetilde B_d$). This finishes the proof.
\end{proof}
}

{
We will not refer to $B_d$ again and work exclusively with $\widetilde B_d$ in the rest of this section. To ease the notation, we replace $B_d$ with $\widetilde B_d$, $\mc C_i$ with $\widetilde{\mc C}_i$, and $\br$ ($\src$) with the composite of $\br$ ($\src$) and the natural map $\widetilde B_d\to B_d$. 
}

We define a weight on $B_d$ following \cite[Definition 2.6]{BBM}: the value at a point $f: \Gamma_1 \to \Gamma_2$ is the automorphism-weighted product of multiplicities on the edges and the local Hurwitz numbers at the vertices of the corresponding  source curve:

\begin{definition}
The weight $\alpha_{B_d}$ on $B_d$ depends on the class of the combinatorial type of the covers parameterized, as labeled in Figure \ref{fig:admissible combinatorial types}. Specifically: 
\begin{equation}
 \alpha_{B_d}(b) = 
 \left\{
 \begin{array}{cl}
  (d-1)!    &  \mbox{$F_b$ is of class I,} \\
 \frac 1 {3} {(d-4)!\cdot (d-1)(d-2)(d-3)}    &  \mbox{$F_b$ is of class II,}\\
 \gcd(a,d-a) [(d-2)!]^2 &
 \mbox{$F_b$ is of class III,}\\
 \frac 1 {3} {(d-3)!\cdot (d-1)(d-2)}    &  \mbox{$F_b$ is of class IV.}
 \end{array}
 \right.
\end{equation}
\end{definition}

We assume that the first marked point of the family $\mc C_1\to B_d$ is one of the two points of ramification order $d$. Let $\tilde\pi: B_d \to \Atlass{1,1}$ the the composition of  the source map $\src$ with the forgetful morphism forgetting all but the first marked point.
\begin{proposition}
\label{prop:cycle on admissible cover space}
\begin{enumerate}[label=\alph*)]
\item[]
\item The weight $\alpha_{B_d}$ defines a tropical one-cycle on ${B_d}$.
\item We have 
\begin{equation}
\br_* \alpha_{B_d}= \frac{\lcm(2,d)[(d-2)!]^2(d-1)d(d+1)}{6} [\Mgnbar{0,4}] \ .
\end{equation}
\item We have 
\begin{equation}
\tilde\pi_*\alpha_{B_d}= 2\lcm(2,d)[(d-2)!]^2 (d-1)(d+1)[\Atlass{1,1}] \ .
\end{equation}
\end{enumerate}
\end{proposition}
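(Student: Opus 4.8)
The plan is to prove \textbf{(b)} and \textbf{(c)} first by a direct weighted-degree computation, and then to deduce the balancing \textbf{(a)} from them. The organizing principle is that, by construction, $\Aff_{\widetilde B_d}$ is the smallest (saturated) affine structure making $\src$ and $\br$ linear, so every affine function near the cone point of $\widetilde B_d$ is an integral combination of pullbacks $\src^*\xi$ and $\br^*\eta$ of cross ratios. Writing the balancing condition \eqref{def:balancing} at the cone point $0$ against such a generator $\phi=\src^*\xi+\br^*\eta$ splits the slope sum as $\sum_\sigma\alpha_{B_d}(\sigma)\,d_{\src_*u_\sigma}\xi+\sum_\sigma\alpha_{B_d}(\sigma)\,d_{\br_*u_\sigma}\eta$, so that \textbf{(a)} is equivalent to the simultaneous balancing of $\src_*\alpha_{B_d}$ in $\Atlass{1,2d}$ and of $\br_*\alpha_{B_d}$ in $\Mgnbar{0,4}$. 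The balancing of $\br_*\alpha_{B_d}$ will be a byproduct of \textbf{(b)}: once I show that the weighted degree of $\br$ over a generic point equals the same number $W$ on all three rays, it follows both that $\br_*\alpha_{B_d}=W\,[\Mgnbar{0,4}]$ and that $\br_*\alpha_{B_d}$ is balanced, since on the trivalent fan $\Mgnbar{0,4}$ (Theorem \ref{thm:Mg0n is what it's supposed to be}) the ray generators sum to zero and hence a balanced one-cycle must have constant weights. For the balancing of $\src_*\alpha_{B_d}$ I would either invoke the adaptation of the arguments of \cite{CMRadmissible,BBM} to the extended cycle-rigidified setting, or verify the single linear relation at the cone point directly by tracking the $\src$-images of the primitive ray generators of the four classes of Figure \ref{fig:admissible combinatorial types}.

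For \textbf{(b)} and \textbf{(c)} the core task is to compute $\mathrm{mult}(\br|_{\sigma_\Gamma})$ and $\mathrm{mult}(\tilde\pi|_{\sigma_\Gamma})$ for each combinatorial type. These are lattice indices read off from the relation between source and target edge lengths: a bounded source edge with expansion factor $w$ over a target edge of length $\ell$ has length $\ell/w$, so the primitive generator of $\sigma_\Gamma$ in $B_d$ sits at the target length equal to the $\operatorname{lcm}$ of the relevant $w$'s, and passing to $\widetilde B_d$—whose affine structure is the saturation of $\Aff_{B_d}$ inside $\tfrac{1}{\lcm(2,d)}\PL_{B_d}$—rescales this primitive generator by the global factor $\lcm(2,d)$. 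For a class III cover with loop degrees $a$ and $d-a$ this gives $\mathrm{mult}(\br|_{\sigma_\Gamma})=\lcm(2,d)\,\lcm(a,d-a)$, and multiplying by the weight $\gcd(a,d-a)\,[(d-2)!]^2$ produces the clean contribution $\lcm(2,d)\,a(d-a)\,[(d-2)!]^2$ because $\lcm(a,d-a)\gcd(a,d-a)=a(d-a)$. The central combinatorial identity is then
\begin{equation*}
\sum_{a=1}^{d-1}a(d-a)=\frac{(d-1)d(d+1)}{6},
\end{equation*}
which is exactly the numerator of $W$. I would carry out the same bookkeeping class by class, using the stated cardinalities $(d-2)!$, $\binom{d-2}{2}^2(d-4)!$, $2(d-1)$, $(d-2)^2(d-3)!$ together with the four weight formulas, noting that classes I, II lie over the ray where the two totally ramified points collide and classes III, IV over the two remaining rays, and checking that the total equals $W$ on each ray. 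For \textbf{(c)} only the loop-carrying classes I and III contribute over a generic point of $\Atlass{1,1}$ (classes II and IV have a genus-one vertex, hence loop length zero and map to the cone point); here the multiplicity of $\tilde\pi$ on a class III ray is $\lcm(2,d)\,d/\gcd(a,d-a)$ and the weight again cancels the $\gcd$, so that the contributions from classes I and III combine to give the factor $(d-1)(d+1)$. Finally $[\Atlass{1,1}]$ is balanced for free, since the cone point carries a genus-one vertex and thus admits no nonconstant affine functions.

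The step I expect to be the main obstacle is the precise determination of the local multiplicities on $\widetilde B_d$. The interplay of the saturation defining $\Aff_{\widetilde B_d}$, the global scaling by $\tfrac{1}{\lcm(2,d)}$, and the several source edges lying over a single target edge with distinct expansion factors makes each lattice-index computation delicate; it is exactly here that the factor $\lcm(2,d)$ enters, and getting it right for all four classes, together with the automorphism weighting encoded in $\alpha_{B_d}$, is where the care lies. A secondary difficulty is the balancing of $\src_*\alpha_{B_d}$ in the high-dimensional space $\Atlass{1,2d}$ needed for \textbf{(a)}: although it reduces to a finite check at the cone point, verifying that the class-by-class contributions cancel in the tangent lattice of $\Atlass{1,2d}$ requires tracking the $\src$-images of the primitive ray generators, which is substantially more involved than the one-dimensional targets $\Mgnbar{0,4}$ and $\Atlass{1,1}$ appearing in \textbf{(b)} and \textbf{(c)}.
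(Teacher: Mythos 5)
Your part (c) is essentially the computation the paper itself performs, and your reduction of part (a) to two separate conditions---one for $\br$-pullbacks, one for $\src$-pullbacks, using that the affine structure on the base is generated by such pullbacks---is also the paper's strategy, with the $\br$-half correctly reduced to (b). However, there are two genuine gaps.

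The first concerns the $\src$-half of (a). You anticipate a ``substantially more involved'' verification, to be done either by adapting \cite{CMRadmissible,BBM} or by ``tracking the $\src$-images of the primitive ray generators'' and checking ``the single linear relation at the cone point.'' There is no such relation to check: the vertex of $B_d$ maps under $\src$ to the cone point of $\Atlass{1,2d}$, whose corresponding curve is a single genus-one vertex with $2d$ legs. Cross ratio data require genus-zero vertices or bounded edges, so this graph carries none, and hence the only germs of affine functions at that point of $\Atlass{1,2d}$ are constants. Balancing against $\src$-pullbacks is therefore \emph{vacuous}---this is exactly the paper's one-sentence argument. Worse, your substitute check is ill-posed: the rays of $B_d$ map into different cones of $\Atlass{1,2d}$ (loops for classes I and III, genus-one vertices for II and IV), and in the absence of affine functions there is no ambient lattice in which their weighted images could be required to cancel; imposing such a naive relation would in general fail, and would wrongly suggest that (a) is false.

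The second concerns your direct computation of (b), where the paper instead simply cites \cite{BBM}. Your bookkeeping is internally inconsistent because of a misplacement of class IV. You put classes III and IV together over the two rays of $\Mgnbar{0,4}$ where a $(d)$-point meets a $(2,1^{d-2})$-point; this follows the target labels in Figure \ref{fig:admissible combinatorial types}, but those labels are erroneous in the fourth column (they contradict the source picture drawn above them). Over such a ray, the weight-$d$ leg forces the source vertex above each target vertex to have local degree exactly $d$, and the local Riemann--Hurwitz condition then forces that vertex to have genus zero and exactly two edges; so class III is the \emph{only} class over these rays, while class IV, whose genus-one vertex carries both weight-$d$ legs, lies over the same ray as classes I and II. Your own numbers show this is forced: with $\mathrm{mult}(\br)=\lcm(2,d)\lcm(a,d-a)$ and weight $\gcd(a,d-a)[(d-2)!]^2$, class III alone already contributes $\lcm(2,d)[(d-2)!]^2\sum_{a=1}^{d-1}a(d-a)=W$ over each such ray, where $W$ denotes the asserted degree; since class IV rays are not contracted by $\br$ and carry positive weight, your assignment would make the total over those rays strictly exceed $W$, contradicting (b). The computation can be repaired---classes I, II and IV must jointly account for $W$ over the remaining ray---but as written the class-by-class check would not close.
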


\begin{proof}
For  a) we need to check that the weights are balanced at the vertex of $B_d$. For germs of affine functions obtained by pull-back via $\br$, balancing  is equivalent to  the fact that $\br$ has a well-defined degree, hence this follows from b). There are no-nontrivial germs of affine functions on the vertex of $\Atlass{1,2d}$, hence balancing holds vacuously for functions pulled-back via $\src$.

Part b) is  \cite[Theorem 2.11]{BBM}, which in particular shows that the degree of the branch morphism is well-defined.

For c), we compute the contribution to $\tilde\pi_\ast \alpha_{B_d}$ by rays of $B_d$ of combinatorial types belonging to each of the three classes identified in Figure \ref{fig:admissible combinatorial types}.
For a ray parameterizing curves of class $I$, $\tilde\pi$ is an integral map of slope $2\lcm(2,d)$. There are $(d-2)!$ such rays, and to each of them $\alpha_{B_d}$ assigns weight $(d-1)!$.

Rays of class $II$ and $IV$ are contracted to the vertex of $\Atlass{1,1}$, and hence do not contribute.

The function $\tilde\pi$ restricted to a ray parameterizing curves of class $III$ has slope $\lcm(2,d)lcm(a, d-a)\left(\frac{1}{a}+ \frac{1}{d-a}\right)$. The weight on the ray is  $\gcd(a,d-a) [(d-2)!]^2$, hence the contribution of the ray is independent of $a$, equal to $\lcm(2,d)d[(d-2)!]^2$. There are $2(d-1)$ rays of class III. Putting all contributions together we see that the degree of $\tilde \pi$ equals
\begin{multline}
\overbrace{(d-2)!\cdot 2\lcm(2,d) (d-1)!}^{\text{contribution from class I}}+ \overbrace{2(d-1)\cdot \lcm(2,d)d[(d-2)!]^2}^{\text{contribution from class III}} =\\
=2\lcm(2,d)[(d-2)!]^2 (d-1)(d+1)
\end{multline}
\end{proof}

\begin{proposition}
\label{prop:degree of psi on admissible covers space}
Let $\pi: B_d \to \Mgnbar{1,1}$ the the composition of  the source map $\src$ with the forgetful morphism forgetting all but the first marked point.
We have 
\begin{equation} 
\int_{B_d}\pi^\ast\psi_1 \cdot \alpha_{B_d}= \frac{\lcm(2,d)[(d-2)!]^2(d-1)(d+1)}{6} \ .
\end{equation}
\end{proposition}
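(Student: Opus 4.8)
The plan is to reduce the computation to the genus-zero $\psi$-class on the base $\Mgnbar{0,4}$, using that the first marked point is totally ramified of order $d$, and then to feed in Proposition \ref{prop:cycle on admissible cover space}(b). First I would dispose of the forgetful morphism. Writing $\pi$ as the source map followed by the composite of forgetful morphisms $\Mgnbar{1,2d}\to\Mgnbar{1,1}$ dropping all but the first marking, repeated application of Theorem \ref{thm:pull-back of psi class} expresses $\bL_1$ on the full $2d$-marked family $\mc C_1\to B_d$ as the pull-back of $\bL_1$ tensored with $\bigotimes_\star\mc L(D_{1,\star})$. Each correction $\mc L(D_{1,\star})$ is supported where the first leg and a forgotten leg $\star$ lie on a common tripod with three infinite edges, hence share a vertex of the source curve. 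Since a harmonic map sends a source vertex to a single base vertex, such a shared vertex would be a second preimage of the first branch point, contradicting total ramification (profile $(d)$). Thus every $D_{1,\star}$ vanishes and $\pi^*\psi_1=c_1(\bL_1)$ computed on $\mc C_1\to B_d$.

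Next I would identify this class with $\tfrac1d\,\br^*\psi_1^{\Mgnbar{0,4}}$. By the construction of the tropical structure on $\mc C_1$ (the content of the lemma preceding this subsection), in a neighborhood of the first marked leg every affine function of fiberwise slope one has the form $\tfrac1d F^*\phi$, with $\phi$ an affine function of fiberwise slope one near the first branch section $\sigma_1$ of $\mc C_2$; the division by $d$ is legitimate because $d\mid \lcm(2,d)$ and $\Aff_{B_d}$ is saturated. Since $F\circ s_1=\sigma_1$, applying $s_1^*$ to the cocycle describing $\sigma_1^*(\Aff_{\mc C_2}(-\sigma_1))=\br^*\bL_1^{\Mgnbar{0,4}}$ (in the manner of Example \ref{ex:troplinebundles} and Example \ref{ex:psimanyvalues}) scales that cocycle by $\tfrac1d$. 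Passing to first Chern classes gives
\begin{equation*}
\pi^*\psi_1 = \frac1d\,\br^*\psi_1^{\Mgnbar{0,4}}
\end{equation*}
as $\Q$-divisor classes on $B_d$; crucially this holds uniformly, including along the class~II and class~IV rays where the first leg abuts a genus-one vertex, precisely because the affine structure there was forced through $F$.

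Finally, since $B_d$ is compact, $\br$ is proper and the tropical projection formula yields
\begin{equation*}
\int_{B_d}\pi^*\psi_1\cdot \alpha_{B_d} = \frac1d\int_{B_d}\br^*\psi_1^{\Mgnbar{0,4}}\cdot \alpha_{B_d}=\frac1d\int_{\Mgnbar{0,4}}\psi_1^{\Mgnbar{0,4}}\cdot \br_*\alpha_{B_d}.
\end{equation*}
Corollary \ref{cor:psi class in genus 0} gives $\int_{\Mgnbar{0,4}}\psi_1^{\Mgnbar{0,4}}\cdot[\Mgnbar{0,4}]=1$ (the unique central vertex of $\Mgnbar{0,4}$, where all four legs meet a four-valent vertex), and Proposition \ref{prop:cycle on admissible cover space}(b) gives $\br_*\alpha_{B_d}=\tfrac{\lcm(2,d)[(d-2)!]^2(d-1)d(d+1)}{6}[\Mgnbar{0,4}]$. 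Substituting and cancelling the factor $d$ against $\tfrac1d$ produces the asserted value $\tfrac{\lcm(2,d)[(d-2)!]^2(d-1)(d+1)}{6}$.

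The hard part will be the second step: making the scaling factor $\tfrac1d$ rigorous at the level of line bundles rather than merely heuristic from ramification. This requires unwinding the definition of $\Aff_{\mc C_1}$ near the totally ramified leg and checking that no affine functions other than the $\tfrac1d F^*\phi$ contribute slope to the cocycle of $\bL_1$. The saturatedness of $\Aff_{B_d}$ and the explicit slope bookkeeping in the lemma asserting that $\mc C_1\to B_d$ is a family of tropical stable curves are exactly what make this control possible, and the total ramification of the first branch point is what both forces the scaling and (in the first step) kills the forgetful corrections.
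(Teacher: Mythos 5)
Your proposal takes the same route as the paper's proof: the key identity $d\cdot\pi^*\psi_1=\br^*\psi_\diamond$ forced by total ramification at the first mark, followed by the projection formula, Proposition \ref{prop:cycle on admissible cover space}(b), and $\int_{\Mgnbar{0,4}}\psi_\diamond\cdot[\Mgnbar{0,4}]=1$ from Corollary \ref{cor:psi class in genus 0}. Your second and third steps, including the justification of the factor $\frac{1}{d}$ via the construction of the affine structure on $\mc C_1$ through $F$ and the saturation over the $\lcm(2,d)$-cover, are exactly the paper's argument (stated there more tersely as ``$F$ has degree $d$ on the one-marked leg''), and the final arithmetic is correct.

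The flaw is in your first step. The vanishing of the corrections $\mc L(D_{1,\star})$ on $B_d$ is true, but not for the reason you give. You claim that legs $1$ and $\star$ sharing a vertex of the source curve would produce a second preimage of the first branch point, contradicting the profile $(d)$. That is a non-sequitur: the shared vertex maps to a \emph{finite} vertex of the target, and the two legs adjacent to it can perfectly well map to \emph{different} branch legs. In fact such shared vertices are everywhere in $B_d$: in classes I, II and IV the two totally ramified legs $1$ and $2$ are adjacent to the same source vertex, in class III leg $1$ shares its vertex with all $d-1$ preimages of one of the simple branch points, and in the central fiber all $2d$ legs meet the single genus-one vertex. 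What actually kills $D_{1,\star}$ is the rest of its definition: the shared vertex must be a genus-zero \emph{three-valent} vertex all of whose edges are infinite. In every fiber over $B_d$, and in every intermediate stabilization, the vertex carrying leg $1$ either has genus one (classes II and IV, and the central fiber) or carries both flags of the cycle (classes I and III); since stabilization never removes genus or the cycle, this vertex always has positive genus or valence at least four once leg $\star$ is also attached to it, so it is never a tripod of the required form. With this repaired, your step 1 holds and the proof goes through, agreeing with the paper's (which sidesteps the issue by formulating its key identity directly for $\pi^*\psi_1$).
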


\begin{proof}
The map $F\colon \mc C_1\to \mc C_2$ has degree $d$ on the one-marked leg in every fiber, so if $F$ maps the one-marked leg in $\mc C_1$ to the $\diamond$-marked leg in $\mc C_2$, then we have 
\begin{equation}
d\cdot \pi^\ast\psi_1= \br^*\psi_\diamond \ .
\end{equation}
Using the tropical projection formula \cite[Proposition 7.7]{AllermannRau} and Proposition \ref{prop:cycle on admissible cover space} b), we thus obtain
\begin{multline*}
\int_{B_d}\pi^\ast\psi_1\cdot\alpha_{B_d}=
\frac 1d \int_{B_d}\br^*\psi_\diamond\cdot \alpha_{B_d}=  
\frac 1d \int_{\Mgnbar{0,4}}\br_*(\br^*\psi_\diamond\cdot \alpha_{B_d})=
\frac 1d\int_{\Mgnbar{0,4}}\psi_\diamond \cdot \br_*\alpha_{B_d} =\\
=\frac{[\lcm(2,d)(d-2)]^2(d-1)(d+1)}6\int_{\Mgnbar{0,4}} \psi_\diamond \cdot [\Mgnbar{0,4}]=
\frac{\lcm(2,d)[(d-2)]^2(d-1)(d+1)}6 \ ,
\end{multline*}
where the last equality follows from Corollary \ref{cor:psi class in genus 0}.
\end{proof}

Proposition \ref{prop:degree of psi on admissible covers space} together with Proposition \ref{prop:cycle on admissible cover space} c) show that the ratio of the evaluation of $\pi^\ast\psi_1\cdot \alpha_{B_d}$ with the degree of $src$ is $1/12$. Together with the fact that stack $\Mgnbar{1,1}$ has a generic $B\mu_2$ stabilizer, we have that for the admissible covers families studied $\psi$ behaves as the pull-back of a class of degree $1/24$.


\appendix
\addtocontents{toc}{\protect\setcounter{tocdepth}{1}}
\renewcommand*{\thetheorem}{\Alph{section}.\arabic{theorem}}

\newcommand{%
     \resizebox{}{!}{\input{}}  
}[2]{%
     \resizebox{#1}{!}{\input{#2}}  
}
\section{Cycle of degree $3$ tropical stable maps} \label{app:main}
In this Appendix we explicitly describe the one dimensional cycle 
$
B=\bigcap_{i=1}^8 \ev^{-1}_i\{p_i\} \subset \WS_{8}(\mathbb{TP}^2,3)
$ studied in Section \ref{sec:71}.
\begin{figure}[h] 
     \resizebox{1 \textwidth}{!}{\input{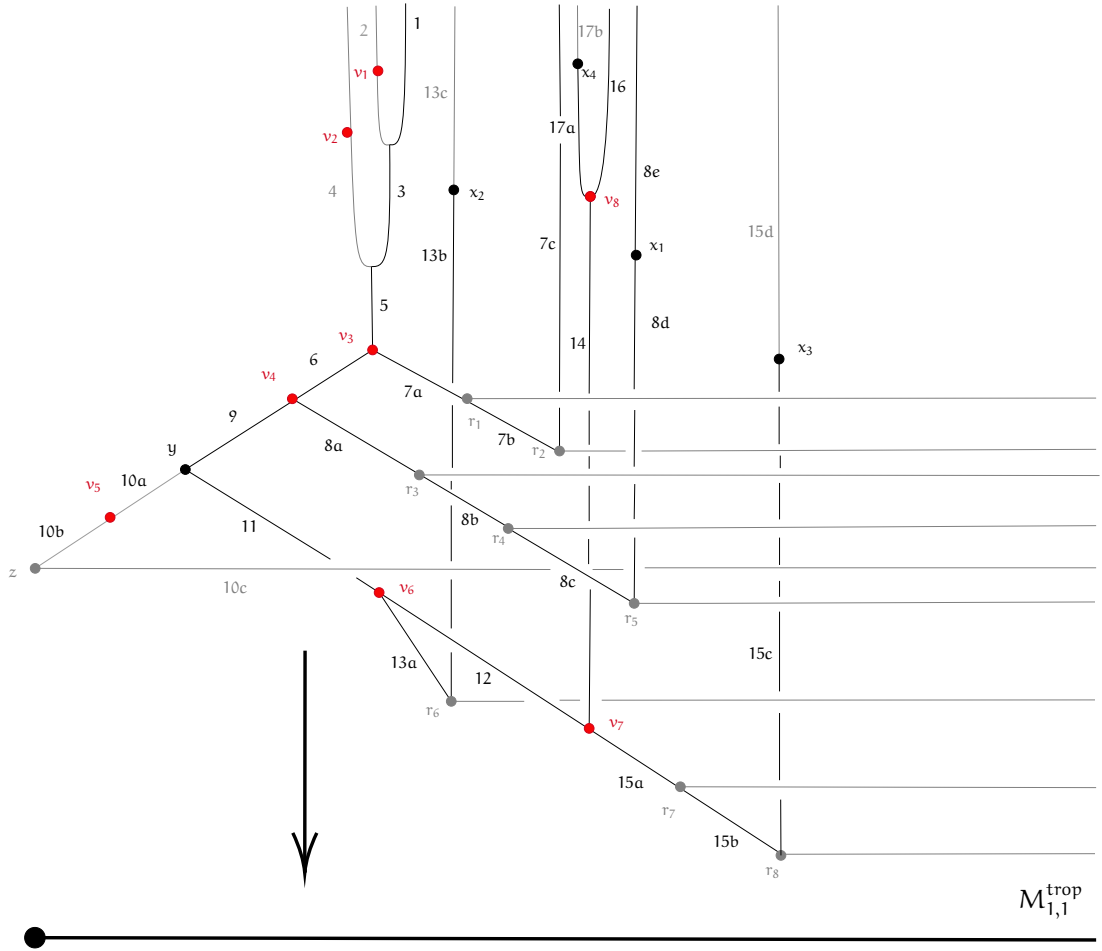}}  

\caption{ Genus one, degree three maps through $8$ general points in the plane. The part of $B$ which is drawn in black coincides with a tropical pencil of cubics through $8$ points. In gray are the tropical stable maps with un underlying constant image curve.}
\label{fig:fam}
\end{figure}
To be precise, Figure \ref{fig:fam} depicts a quotient of the cycle $B$  by the action of the group $S_3^3$ permuting the labels of the infinite ends. Equivalently, one may think of parameterizing tropical stable maps where the infinite ends are unlabeled. The vertical map to $M_{1,1}^{trop}$ is given by the tropical $j$-invariant.

The different combinatorial types of curves appearing, together with the corresponding lattice paths and floor diagrams \cite{MikhalkinEnumerative,FominMikhalkin}, are depicted in Figure \ref{fig:combty} (spanning multiple pages).

\pagebreak
The vertices of $B$ contain  eigth rational nodal curves, as depicted in Figure \ref{fig:ratnod}.
Among the vertices of $B$ are when each of the marked points sits at a four-valent vertex  (or equivalently the mark is incident to a three-valent vertex of the image cubic).
These are the points of $B$ where the various $\psi$ classes are supported. These are depicted in Figure \ref{fig:vertmar}. The remaining vertices in $B$ are collected in Figure \ref{fig:remver}.
\pagebreak

\begin{tabular}{c|c}
     \resizebox{.4 \textwidth}{!}{\input{1.tikz}}  
 & %
     \resizebox{.4 \textwidth}{!}{\input{2.tikz}}  
 \\ \hline
     \resizebox{.4 \textwidth}{!}{\input{3.tikz}}  
 & %
     \resizebox{.4 \textwidth}{!}{\input{4.tikz}}  
 \\ \hline
     \resizebox{.4 \textwidth}{!}{\input{5.tikz}}  
 & %
     \resizebox{.4 \textwidth}{!}{\input{6.tikz}}  
 \\ \hline
     \resizebox{.4 \textwidth}{!}{\input{7a.tikz}}  
 & %
     \resizebox{.4 \textwidth}{!}{\input{7b.tikz}}  
 \\ \hline
     \resizebox{.4 \textwidth}{!}{\input{7c.tikz}}  
 & %
     \resizebox{.4 \textwidth}{!}{\input{8a.tikz}}  
 \\  \hline
     \resizebox{.4 \textwidth}{!}{\input{8b.tikz}}  
 & %
     \resizebox{.4 \textwidth}{!}{\input{8c.tikz}}  
 \\ \hline
     \resizebox{.4 \textwidth}{!}{\input{8d.tikz}}  
 & %
     \resizebox{.4 \textwidth}{!}{\input{8e.tikz}}  
\\ 
\end{tabular}

\pagebreak

\begin{tabular}{c|c}
     \resizebox{.4 \textwidth}{!}{\input{9.tikz}}  
 & %
     \resizebox{.4 \textwidth}{!}{\input{10a.tikz}}  
 \\ \hline
     \resizebox{.4 \textwidth}{!}{\input{10b.tikz}}  
 & %
     \resizebox{.4 \textwidth}{!}{\input{10c.tikz}}  
 \\ \hline
     \resizebox{.4 \textwidth}{!}{\input{11.tikz}}  
 & %
     \resizebox{.4 \textwidth}{!}{\input{12.tikz}}  
 \\ \hline
     \resizebox{.4 \textwidth}{!}{\input{13a.tikz}}  
 & %
     \resizebox{.4 \textwidth}{!}{\input{13b.tikz}}  
 \\ \hline
     \resizebox{.4 \textwidth}{!}{\input{13c.tikz}}  
 & %
     \resizebox{.4 \textwidth}{!}{\input{14.tikz}}  
 \\  \hline
     \resizebox{.4 \textwidth}{!}{\input{15a.tikz}}  
 & %
     \resizebox{.4 \textwidth}{!}{\input{15b.tikz}}  
 \\ \hline
     \resizebox{.4 \textwidth}{!}{\input{15c.tikz}}  
 & %
     \resizebox{.4 \textwidth}{!}{\input{15d.tikz}}  
\\ 
\end{tabular}

\pagebreak

\begin{figure}[h]
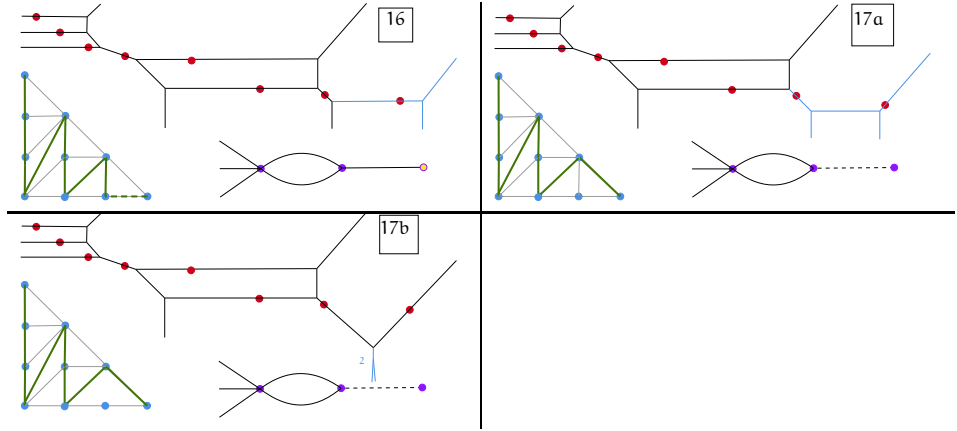

\begin{tabular}{c|c}
     \resizebox{.4 \textwidth}{!}{\input{16.tikz}}  
 & %
     \resizebox{.4 \textwidth}{!}{\input{17a.tikz}}  
 \\ \hline
     \resizebox{.4 \textwidth}{!}{\input{17b.tikz}}  
 &  \\ 
\end{tabular}
\caption{The combinatorial types of tropical stable maps associated to each segment of the cycle $B$.}
\label{fig:combty}
\end{figure}
\vspace{1cm}

\begin{figure}[h]
\begin{tabular}{c|c}
     \resizebox{.4 \textwidth}{!}{

\begin{tikzpicture}[x=0.75pt,y=0.75pt,yscale=-1,xscale=1]

\draw  [color={rgb, 255:red, 208; green, 2; blue, 27 }  ,draw opacity=1 ][fill={rgb, 255:red, 208; green, 2; blue, 27 }  ,fill opacity=1 ] (61,32) .. controls (61,29.24) and (63.24,27) .. (66,27) .. controls (68.76,27) and (71,29.24) .. (71,32) .. controls (71,34.76) and (68.76,37) .. (66,37) .. controls (63.24,37) and (61,34.76) .. (61,32) -- cycle ;
\draw  [color={rgb, 255:red, 208; green, 2; blue, 27 }  ,draw opacity=1 ][fill={rgb, 255:red, 208; green, 2; blue, 27 }  ,fill opacity=1 ] (96,55) .. controls (96,52.24) and (98.24,50) .. (101,50) .. controls (103.76,50) and (106,52.24) .. (106,55) .. controls (106,57.76) and (103.76,60) .. (101,60) .. controls (98.24,60) and (96,57.76) .. (96,55) -- cycle ;
\draw  [color={rgb, 255:red, 208; green, 2; blue, 27 }  ,draw opacity=1 ][fill={rgb, 255:red, 208; green, 2; blue, 27 }  ,fill opacity=1 ] (162,69) .. controls (162,66.24) and (164.24,64) .. (167,64) .. controls (169.76,64) and (172,66.24) .. (172,69) .. controls (172,71.76) and (169.76,74) .. (167,74) .. controls (164.24,74) and (162,71.76) .. (162,69) -- cycle ;
\draw [color={rgb, 255:red, 0; green, 0; blue, 0 }  ,draw opacity=1 ]   (44,84) -- (181,83) ;
\draw  [color={rgb, 255:red, 208; green, 2; blue, 27 }  ,draw opacity=1 ][fill={rgb, 255:red, 208; green, 2; blue, 27 }  ,fill opacity=1 ] (212,84) .. controls (212,81.24) and (214.24,79) .. (217,79) .. controls (219.76,79) and (222,81.24) .. (222,84) .. controls (222,86.76) and (219.76,89) .. (217,89) .. controls (214.24,89) and (212,86.76) .. (212,84) -- cycle ;
\draw  [color={rgb, 255:red, 208; green, 2; blue, 27 }  ,draw opacity=1 ][fill={rgb, 255:red, 208; green, 2; blue, 27 }  ,fill opacity=1 ] (294,107) .. controls (294,104.24) and (296.24,102) .. (299,102) .. controls (301.76,102) and (304,104.24) .. (304,107) .. controls (304,109.76) and (301.76,112) .. (299,112) .. controls (296.24,112) and (294,109.76) .. (294,107) -- cycle ;
\draw  [color={rgb, 255:red, 208; green, 2; blue, 27 }  ,draw opacity=1 ][fill={rgb, 255:red, 208; green, 2; blue, 27 }  ,fill opacity=1 ] (375,126.5) .. controls (375,123.74) and (377.24,121.5) .. (380,121.5) .. controls (382.76,121.5) and (385,123.74) .. (385,126.5) .. controls (385,129.26) and (382.76,131.5) .. (380,131.5) .. controls (377.24,131.5) and (375,129.26) .. (375,126.5) -- cycle ;
\draw  [color={rgb, 255:red, 208; green, 2; blue, 27 }  ,draw opacity=1 ][fill={rgb, 255:red, 208; green, 2; blue, 27 }  ,fill opacity=1 ] (466.5,146) .. controls (466.5,143.24) and (468.74,141) .. (471.5,141) .. controls (474.26,141) and (476.5,143.24) .. (476.5,146) .. controls (476.5,148.76) and (474.26,151) .. (471.5,151) .. controls (468.74,151) and (466.5,148.76) .. (466.5,146) -- cycle ;
\draw  [color={rgb, 255:red, 208; green, 2; blue, 27 }  ,draw opacity=1 ][fill={rgb, 255:red, 208; green, 2; blue, 27 }  ,fill opacity=1 ] (599,166) .. controls (599,163.24) and (601.24,161) .. (604,161) .. controls (606.76,161) and (609,163.24) .. (609,166) .. controls (609,168.76) and (606.76,171) .. (604,171) .. controls (601.24,171) and (599,168.76) .. (599,166) -- cycle ;
\draw [color={rgb, 255:red, 0; green, 0; blue, 0 }  ,draw opacity=1 ]   (152.5,32) -- (44,33) ;
\draw [color={rgb, 255:red, 0; green, 0; blue, 0 }  ,draw opacity=1 ]   (152.5,32) -- (172.5,13) ;
\draw    (181,83) -- (360,83) ;
\draw [color={rgb, 255:red, 0; green, 0; blue, 0 }  ,draw opacity=1 ]   (181,83) -- (203,106) ;
\draw    (203,106) -- (359,107) ;
\draw    (360,83) -- (359,107) ;
\draw [color={rgb, 255:red, 0; green, 0; blue, 0 }  ,draw opacity=1 ]   (203,106) -- (203,188) ;
\draw    (360,83) -- (432,1) ;
\draw    (359,107) -- (401,146) ;
\draw    (401,146) -- (604,147) ;
\draw    (604,147) -- (604,184) ;
\draw    (604,147) -- (654,84) ;
\draw    (401,146) -- (401,186) ;
\draw   (539,7) -- (589,7) -- (589,57) -- (539,57) -- cycle ;
\draw [color={rgb, 255:red, 0; green, 0; blue, 0 }  ,draw opacity=1 ][fill={rgb, 255:red, 74; green, 144; blue, 226 }  ,fill opacity=1 ]   (152.5,32) -- (153.5,55) ;
\draw [color={rgb, 255:red, 0; green, 0; blue, 0 }  ,draw opacity=1 ]   (153.5,55) -- (180.5,83) ;
\draw    (153.5,55) -- (40.5,55) ;

\draw (541.08,10.35) node [anchor=north west][inner sep=0.75pt]  [font=\huge,rotate=-358.58]  {$r_{1}$};

\end{tikzpicture}}  
 & %
     \resizebox{.4 \textwidth}{!}{

\begin{tikzpicture}[x=0.75pt,y=0.75pt,yscale=-1,xscale=1]

\draw  [color={rgb, 255:red, 208; green, 2; blue, 27 }  ,draw opacity=1 ][fill={rgb, 255:red, 208; green, 2; blue, 27 }  ,fill opacity=1 ] (29,20) .. controls (29,17.24) and (31.24,15) .. (34,15) .. controls (36.76,15) and (39,17.24) .. (39,20) .. controls (39,22.76) and (36.76,25) .. (34,25) .. controls (31.24,25) and (29,22.76) .. (29,20) -- cycle ;
\draw  [color={rgb, 255:red, 208; green, 2; blue, 27 }  ,draw opacity=1 ][fill={rgb, 255:red, 208; green, 2; blue, 27 }  ,fill opacity=1 ] (64,43) .. controls (64,40.24) and (66.24,38) .. (69,38) .. controls (71.76,38) and (74,40.24) .. (74,43) .. controls (74,45.76) and (71.76,48) .. (69,48) .. controls (66.24,48) and (64,45.76) .. (64,43) -- cycle ;
\draw  [color={rgb, 255:red, 208; green, 2; blue, 27 }  ,draw opacity=1 ][fill={rgb, 255:red, 208; green, 2; blue, 27 }  ,fill opacity=1 ] (130,57) .. controls (130,54.24) and (132.24,52) .. (135,52) .. controls (137.76,52) and (140,54.24) .. (140,57) .. controls (140,59.76) and (137.76,62) .. (135,62) .. controls (132.24,62) and (130,59.76) .. (130,57) -- cycle ;
\draw [color={rgb, 255:red, 0; green, 0; blue, 0 }  ,draw opacity=1 ]   (22.5,107) -- (92.5,106.49) -- (159.5,106) ;
\draw  [color={rgb, 255:red, 208; green, 2; blue, 27 }  ,draw opacity=1 ][fill={rgb, 255:red, 208; green, 2; blue, 27 }  ,fill opacity=1 ] (212,84) .. controls (212,81.24) and (214.24,79) .. (217,79) .. controls (219.76,79) and (222,81.24) .. (222,84) .. controls (222,86.76) and (219.76,89) .. (217,89) .. controls (214.24,89) and (212,86.76) .. (212,84) -- cycle ;
\draw  [color={rgb, 255:red, 208; green, 2; blue, 27 }  ,draw opacity=1 ][fill={rgb, 255:red, 208; green, 2; blue, 27 }  ,fill opacity=1 ] (294,107) .. controls (294,104.24) and (296.24,102) .. (299,102) .. controls (301.76,102) and (304,104.24) .. (304,107) .. controls (304,109.76) and (301.76,112) .. (299,112) .. controls (296.24,112) and (294,109.76) .. (294,107) -- cycle ;
\draw  [color={rgb, 255:red, 208; green, 2; blue, 27 }  ,draw opacity=1 ][fill={rgb, 255:red, 208; green, 2; blue, 27 }  ,fill opacity=1 ] (375,126.5) .. controls (375,123.74) and (377.24,121.5) .. (380,121.5) .. controls (382.76,121.5) and (385,123.74) .. (385,126.5) .. controls (385,129.26) and (382.76,131.5) .. (380,131.5) .. controls (377.24,131.5) and (375,129.26) .. (375,126.5) -- cycle ;
\draw  [color={rgb, 255:red, 208; green, 2; blue, 27 }  ,draw opacity=1 ][fill={rgb, 255:red, 208; green, 2; blue, 27 }  ,fill opacity=1 ] (466.5,146) .. controls (466.5,143.24) and (468.74,141) .. (471.5,141) .. controls (474.26,141) and (476.5,143.24) .. (476.5,146) .. controls (476.5,148.76) and (474.26,151) .. (471.5,151) .. controls (468.74,151) and (466.5,148.76) .. (466.5,146) -- cycle ;
\draw  [color={rgb, 255:red, 208; green, 2; blue, 27 }  ,draw opacity=1 ][fill={rgb, 255:red, 208; green, 2; blue, 27 }  ,fill opacity=1 ] (599,166) .. controls (599,163.24) and (601.24,161) .. (604,161) .. controls (606.76,161) and (609,163.24) .. (609,166) .. controls (609,168.76) and (606.76,171) .. (604,171) .. controls (601.24,171) and (599,168.76) .. (599,166) -- cycle ;
\draw [color={rgb, 255:red, 0; green, 0; blue, 0 }  ,draw opacity=1 ]   (118,20) -- (9.5,21) ;
\draw [color={rgb, 255:red, 0; green, 0; blue, 0 }  ,draw opacity=1 ]   (120.5,20) -- (140.5,1) ;
\draw    (159.5,83) -- (360,83) ;
\draw [color={rgb, 255:red, 0; green, 0; blue, 0 }  ,draw opacity=1 ]   (159.5,106) -- (359,107) ;
\draw    (360,83) -- (359,107) ;
\draw [color={rgb, 255:red, 0; green, 0; blue, 0 }  ,draw opacity=1 ]   (159.5,106) -- (159.5,146) -- (159.5,188) ;
\draw    (360,83) -- (432,1) ;
\draw    (359,107) -- (401,146) ;
\draw    (401,146) -- (604,147) ;
\draw    (604,147) -- (604,184) ;
\draw    (604,147) -- (654,84) ;
\draw    (401,146) -- (401,186) ;
\draw   (539,7) -- (589,7) -- (589,57) -- (539,57) -- cycle ;
\draw [color={rgb, 255:red, 0; green, 0; blue, 0 }  ,draw opacity=1 ][fill={rgb, 255:red, 74; green, 144; blue, 226 }  ,fill opacity=1 ]   (120.5,20) -- (121.5,43) ;
\draw [color={rgb, 255:red, 0; green, 0; blue, 0 }  ,draw opacity=1 ]   (121.5,43) -- (159.5,83) ;
\draw    (121.5,43) -- (8.5,43) ;
\draw [color={rgb, 255:red, 0; green, 0; blue, 0 }  ,draw opacity=1 ]   (159.5,83) -- (159.5,106) ;

\draw (549.81,8.55) node [anchor=north west][inner sep=0.75pt]  [font=\huge,rotate=-358.58]  {$r_{2}$};

\end{tikzpicture}}  
 \\ \hline
     \resizebox{.4 \textwidth}{!}{

\begin{tikzpicture}[x=0.75pt,y=0.75pt,yscale=-1,xscale=1]

\draw  [color={rgb, 255:red, 208; green, 2; blue, 27 }  ,draw opacity=1 ][fill={rgb, 255:red, 208; green, 2; blue, 27 }  ,fill opacity=1 ] (23,37) .. controls (23,34.24) and (25.24,32) .. (28,32) .. controls (30.76,32) and (33,34.24) .. (33,37) .. controls (33,39.76) and (30.76,42) .. (28,42) .. controls (25.24,42) and (23,39.76) .. (23,37) -- cycle ;
\draw  [color={rgb, 255:red, 208; green, 2; blue, 27 }  ,draw opacity=1 ][fill={rgb, 255:red, 208; green, 2; blue, 27 }  ,fill opacity=1 ] (73,62) .. controls (73,59.24) and (75.24,57) .. (78,57) .. controls (80.76,57) and (83,59.24) .. (83,62) .. controls (83,64.76) and (80.76,67) .. (78,67) .. controls (75.24,67) and (73,64.76) .. (73,62) -- cycle ;
\draw  [color={rgb, 255:red, 208; green, 2; blue, 27 }  ,draw opacity=1 ][fill={rgb, 255:red, 208; green, 2; blue, 27 }  ,fill opacity=1 ] (137,82) .. controls (137,79.24) and (139.24,77) .. (142,77) .. controls (144.76,77) and (147,79.24) .. (147,82) .. controls (147,84.76) and (144.76,87) .. (142,87) .. controls (139.24,87) and (137,84.76) .. (137,82) -- cycle ;
\draw [color={rgb, 255:red, 0; green, 0; blue, 0 }  ,draw opacity=1 ]   (14,83) -- (181,83) ;
\draw  [color={rgb, 255:red, 208; green, 2; blue, 27 }  ,draw opacity=1 ][fill={rgb, 255:red, 208; green, 2; blue, 27 }  ,fill opacity=1 ] (187,95) .. controls (187,92.24) and (189.24,90) .. (192,90) .. controls (194.76,90) and (197,92.24) .. (197,95) .. controls (197,97.76) and (194.76,100) .. (192,100) .. controls (189.24,100) and (187,97.76) .. (187,95) -- cycle ;
\draw  [color={rgb, 255:red, 208; green, 2; blue, 27 }  ,draw opacity=1 ][fill={rgb, 255:red, 208; green, 2; blue, 27 }  ,fill opacity=1 ] (294,107) .. controls (294,104.24) and (296.24,102) .. (299,102) .. controls (301.76,102) and (304,104.24) .. (304,107) .. controls (304,109.76) and (301.76,112) .. (299,112) .. controls (296.24,112) and (294,109.76) .. (294,107) -- cycle ;
\draw  [color={rgb, 255:red, 208; green, 2; blue, 27 }  ,draw opacity=1 ][fill={rgb, 255:red, 208; green, 2; blue, 27 }  ,fill opacity=1 ] (375,126.5) .. controls (375,123.74) and (377.24,121.5) .. (380,121.5) .. controls (382.76,121.5) and (385,123.74) .. (385,126.5) .. controls (385,129.26) and (382.76,131.5) .. (380,131.5) .. controls (377.24,131.5) and (375,129.26) .. (375,126.5) -- cycle ;
\draw  [color={rgb, 255:red, 208; green, 2; blue, 27 }  ,draw opacity=1 ][fill={rgb, 255:red, 208; green, 2; blue, 27 }  ,fill opacity=1 ] (466.5,146) .. controls (466.5,143.24) and (468.74,141) .. (471.5,141) .. controls (474.26,141) and (476.5,143.24) .. (476.5,146) .. controls (476.5,148.76) and (474.26,151) .. (471.5,151) .. controls (468.74,151) and (466.5,148.76) .. (466.5,146) -- cycle ;
\draw  [color={rgb, 255:red, 208; green, 2; blue, 27 }  ,draw opacity=1 ][fill={rgb, 255:red, 208; green, 2; blue, 27 }  ,fill opacity=1 ] (599,166) .. controls (599,163.24) and (601.24,161) .. (604,161) .. controls (606.76,161) and (609,163.24) .. (609,166) .. controls (609,168.76) and (606.76,171) .. (604,171) .. controls (601.24,171) and (599,168.76) .. (599,166) -- cycle ;
\draw [color={rgb, 255:red, 0; green, 0; blue, 0 }  ,draw opacity=1 ]   (162,38) -- (10,38) ;
\draw [color={rgb, 255:red, 0; green, 0; blue, 0 }  ,draw opacity=1 ]   (162,38) -- (198,-2) ;
\draw [color={rgb, 255:red, 0; green, 0; blue, 0 }  ,draw opacity=1 ]   (161.5,61) -- (181,83) ;
\draw    (181,83) -- (360,83) ;
\draw [color={rgb, 255:red, 0; green, 0; blue, 0 }  ,draw opacity=1 ]   (181,83) -- (203,106) ;
\draw    (203,106) -- (359,107) ;
\draw    (360,83) -- (359,107) ;
\draw [color={rgb, 255:red, 0; green, 0; blue, 0 }  ,draw opacity=1 ]   (203,106) -- (203,188) ;
\draw    (360,83) -- (432,1) ;
\draw    (359,107) -- (401,146) ;
\draw    (401,146) -- (604,147) ;
\draw    (604,147) -- (604,184) ;
\draw    (604,147) -- (654,84) ;
\draw    (401,146) -- (401,186) ;
\draw   (539,7) -- (589,7) -- (589,57) -- (539,57) -- cycle ;
\draw [color={rgb, 255:red, 0; green, 0; blue, 0 }  ,draw opacity=1 ]   (162,38) -- (161.5,61) ;
\draw    (161.25,61.5) -- (10.75,62.5) ;

\draw (551.08,8.35) node [anchor=north west][inner sep=0.75pt]  [font=\huge,rotate=-358.58]  {$r_{3}$};

\end{tikzpicture}}  
 & %
     \resizebox{.4 \textwidth}{!}{

\begin{tikzpicture}[x=0.75pt,y=0.75pt,yscale=-1,xscale=1]

\draw  [color={rgb, 255:red, 208; green, 2; blue, 27 }  ,draw opacity=1 ][fill={rgb, 255:red, 208; green, 2; blue, 27 }  ,fill opacity=1 ] (29,20) .. controls (29,17.24) and (31.24,15) .. (34,15) .. controls (36.76,15) and (39,17.24) .. (39,20) .. controls (39,22.76) and (36.76,25) .. (34,25) .. controls (31.24,25) and (29,22.76) .. (29,20) -- cycle ;
\draw  [color={rgb, 255:red, 208; green, 2; blue, 27 }  ,draw opacity=1 ][fill={rgb, 255:red, 208; green, 2; blue, 27 }  ,fill opacity=1 ] (64,43) .. controls (64,40.24) and (66.24,38) .. (69,38) .. controls (71.76,38) and (74,40.24) .. (74,43) .. controls (74,45.76) and (71.76,48) .. (69,48) .. controls (66.24,48) and (64,45.76) .. (64,43) -- cycle ;
\draw  [color={rgb, 255:red, 208; green, 2; blue, 27 }  ,draw opacity=1 ][fill={rgb, 255:red, 208; green, 2; blue, 27 }  ,fill opacity=1 ] (102,82) .. controls (102,79.24) and (104.24,77) .. (107,77) .. controls (109.76,77) and (112,79.24) .. (112,82) .. controls (112,84.76) and (109.76,87) .. (107,87) .. controls (104.24,87) and (102,84.76) .. (102,82) -- cycle ;
\draw [color={rgb, 255:red, 0; green, 0; blue, 0 }  ,draw opacity=1 ]   (11,82) -- (128,82) ;
\draw  [color={rgb, 255:red, 208; green, 2; blue, 27 }  ,draw opacity=1 ][fill={rgb, 255:red, 208; green, 2; blue, 27 }  ,fill opacity=1 ] (133,92) .. controls (133,89.24) and (135.24,87) .. (138,87) .. controls (140.76,87) and (143,89.24) .. (143,92) .. controls (143,94.76) and (140.76,97) .. (138,97) .. controls (135.24,97) and (133,94.76) .. (133,92) -- cycle ;
\draw  [color={rgb, 255:red, 208; green, 2; blue, 27 }  ,draw opacity=1 ][fill={rgb, 255:red, 208; green, 2; blue, 27 }  ,fill opacity=1 ] (294,107) .. controls (294,104.24) and (296.24,102) .. (299,102) .. controls (301.76,102) and (304,104.24) .. (304,107) .. controls (304,109.76) and (301.76,112) .. (299,112) .. controls (296.24,112) and (294,109.76) .. (294,107) -- cycle ;
\draw  [color={rgb, 255:red, 208; green, 2; blue, 27 }  ,draw opacity=1 ][fill={rgb, 255:red, 208; green, 2; blue, 27 }  ,fill opacity=1 ] (375,126.5) .. controls (375,123.74) and (377.24,121.5) .. (380,121.5) .. controls (382.76,121.5) and (385,123.74) .. (385,126.5) .. controls (385,129.26) and (382.76,131.5) .. (380,131.5) .. controls (377.24,131.5) and (375,129.26) .. (375,126.5) -- cycle ;
\draw  [color={rgb, 255:red, 208; green, 2; blue, 27 }  ,draw opacity=1 ][fill={rgb, 255:red, 208; green, 2; blue, 27 }  ,fill opacity=1 ] (466.5,146) .. controls (466.5,143.24) and (468.74,141) .. (471.5,141) .. controls (474.26,141) and (476.5,143.24) .. (476.5,146) .. controls (476.5,148.76) and (474.26,151) .. (471.5,151) .. controls (468.74,151) and (466.5,148.76) .. (466.5,146) -- cycle ;
\draw  [color={rgb, 255:red, 208; green, 2; blue, 27 }  ,draw opacity=1 ][fill={rgb, 255:red, 208; green, 2; blue, 27 }  ,fill opacity=1 ] (599,166) .. controls (599,163.24) and (601.24,161) .. (604,161) .. controls (606.76,161) and (609,163.24) .. (609,166) .. controls (609,168.76) and (606.76,171) .. (604,171) .. controls (601.24,171) and (599,168.76) .. (599,166) -- cycle ;
\draw [color={rgb, 255:red, 0; green, 0; blue, 0 }  ,draw opacity=1 ]   (128,21) -- (12.5,19) ;
\draw [color={rgb, 255:red, 0; green, 0; blue, 0 }  ,draw opacity=1 ]   (128,21) -- (148,2) ;
\draw [color={rgb, 255:red, 74; green, 144; blue, 226 }  ,draw opacity=1 ]   (157,-2) -- (390,-2) ;
\draw [color={rgb, 255:red, 0; green, 0; blue, 0 }  ,draw opacity=1 ]   (128,82) -- (150,105) ;
\draw    (150,105) -- (359,107) ;
\draw [color={rgb, 255:red, 0; green, 0; blue, 0 }  ,draw opacity=1 ]   (360,42) -- (359,107) ;
\draw [color={rgb, 255:red, 0; green, 0; blue, 0 }  ,draw opacity=1 ]   (150,105) -- (150,187) ;
\draw [color={rgb, 255:red, 0; green, 0; blue, 0 }  ,draw opacity=1 ]   (360,42) -- (396,0) ;
\draw    (359,107) -- (401,146) ;
\draw    (401,146) -- (604,147) ;
\draw    (604,147) -- (604,184) ;
\draw    (604,147) -- (654,84) ;
\draw    (401,146) -- (401,186) ;
\draw   (539,7) -- (589,7) -- (589,57) -- (539,57) -- cycle ;
\draw [color={rgb, 255:red, 0; green, 0; blue, 0 }  ,draw opacity=1 ]   (360,42) -- (15.25,43) ;
\draw [color={rgb, 255:red, 0; green, 0; blue, 0 }  ,draw opacity=1 ]   (128,21) -- (128,82) ;

\draw (547.08,8.35) node [anchor=north west][inner sep=0.75pt]  [font=\huge,rotate=-358.58]  {$r_{4}$};

\end{tikzpicture}}  
 \\ \hline
     \resizebox{.4 \textwidth}{!}{

\begin{tikzpicture}[x=0.75pt,y=0.75pt,yscale=-1,xscale=1]

\draw  [color={rgb, 255:red, 208; green, 2; blue, 27 }  ,draw opacity=1 ][fill={rgb, 255:red, 208; green, 2; blue, 27 }  ,fill opacity=1 ] (17,22) .. controls (17,19.24) and (19.24,17) .. (22,17) .. controls (24.76,17) and (27,19.24) .. (27,22) .. controls (27,24.76) and (24.76,27) .. (22,27) .. controls (19.24,27) and (17,24.76) .. (17,22) -- cycle ;
\draw  [color={rgb, 255:red, 208; green, 2; blue, 27 }  ,draw opacity=1 ][fill={rgb, 255:red, 208; green, 2; blue, 27 }  ,fill opacity=1 ] (57,44) .. controls (57,41.24) and (59.24,39) .. (62,39) .. controls (64.76,39) and (67,41.24) .. (67,44) .. controls (67,46.76) and (64.76,49) .. (62,49) .. controls (59.24,49) and (57,46.76) .. (57,44) -- cycle ;
\draw  [color={rgb, 255:red, 208; green, 2; blue, 27 }  ,draw opacity=1 ][fill={rgb, 255:red, 208; green, 2; blue, 27 }  ,fill opacity=1 ] (95,83) .. controls (95,80.24) and (97.24,78) .. (100,78) .. controls (102.76,78) and (105,80.24) .. (105,83) .. controls (105,85.76) and (102.76,88) .. (100,88) .. controls (97.24,88) and (95,85.76) .. (95,83) -- cycle ;
\draw [color={rgb, 255:red, 0; green, 0; blue, 0 }  ,draw opacity=1 ]   (4,83) -- (121,83) ;
\draw  [color={rgb, 255:red, 208; green, 2; blue, 27 }  ,draw opacity=1 ][fill={rgb, 255:red, 208; green, 2; blue, 27 }  ,fill opacity=1 ] (126,93) .. controls (126,90.24) and (128.24,88) .. (131,88) .. controls (133.76,88) and (136,90.24) .. (136,93) .. controls (136,95.76) and (133.76,98) .. (131,98) .. controls (128.24,98) and (126,95.76) .. (126,93) -- cycle ;
\draw  [color={rgb, 255:red, 208; green, 2; blue, 27 }  ,draw opacity=1 ][fill={rgb, 255:red, 208; green, 2; blue, 27 }  ,fill opacity=1 ] (287,108) .. controls (287,105.24) and (289.24,103) .. (292,103) .. controls (294.76,103) and (297,105.24) .. (297,108) .. controls (297,110.76) and (294.76,113) .. (292,113) .. controls (289.24,113) and (287,110.76) .. (287,108) -- cycle ;
\draw  [color={rgb, 255:red, 208; green, 2; blue, 27 }  ,draw opacity=1 ][fill={rgb, 255:red, 208; green, 2; blue, 27 }  ,fill opacity=1 ] (368,127.5) .. controls (368,124.74) and (370.24,122.5) .. (373,122.5) .. controls (375.76,122.5) and (378,124.74) .. (378,127.5) .. controls (378,130.26) and (375.76,132.5) .. (373,132.5) .. controls (370.24,132.5) and (368,130.26) .. (368,127.5) -- cycle ;
\draw  [color={rgb, 255:red, 208; green, 2; blue, 27 }  ,draw opacity=1 ][fill={rgb, 255:red, 208; green, 2; blue, 27 }  ,fill opacity=1 ] (459.5,147) .. controls (459.5,144.24) and (461.74,142) .. (464.5,142) .. controls (467.26,142) and (469.5,144.24) .. (469.5,147) .. controls (469.5,149.76) and (467.26,152) .. (464.5,152) .. controls (461.74,152) and (459.5,149.76) .. (459.5,147) -- cycle ;
\draw  [color={rgb, 255:red, 208; green, 2; blue, 27 }  ,draw opacity=1 ][fill={rgb, 255:red, 208; green, 2; blue, 27 }  ,fill opacity=1 ] (592,167) .. controls (592,164.24) and (594.24,162) .. (597,162) .. controls (599.76,162) and (602,164.24) .. (602,167) .. controls (602,169.76) and (599.76,172) .. (597,172) .. controls (594.24,172) and (592,169.76) .. (592,167) -- cycle ;
\draw [color={rgb, 255:red, 0; green, 0; blue, 0 }  ,draw opacity=1 ]   (352,21) -- (5,21) ;
\draw [color={rgb, 255:red, 0; green, 0; blue, 0 }  ,draw opacity=1 ]   (121,45) -- (132.8,33.79) -- (164,4) ;
\draw [color={rgb, 255:red, 0; green, 0; blue, 0 }  ,draw opacity=1 ]   (121,83) -- (143,106) ;
\draw    (143,106) -- (352,108) ;
\draw [color={rgb, 255:red, 0; green, 0; blue, 0 }  ,draw opacity=1 ]   (352,21) -- (352,108) ;
\draw [color={rgb, 255:red, 0; green, 0; blue, 0 }  ,draw opacity=1 ]   (143,106) -- (143,188) ;
\draw [color={rgb, 255:red, 0; green, 0; blue, 0 }  ,draw opacity=1 ]   (352,21) -- (370,5) ;
\draw    (352,108) -- (394,147) ;
\draw    (394,147) -- (597,148) ;
\draw    (597,148) -- (597,185) ;
\draw    (597,148) -- (647,85) ;
\draw    (394,147) -- (394,187) ;
\draw   (532,8) -- (582,8) -- (582,58) -- (532,58) -- cycle ;
\draw [color={rgb, 255:red, 0; green, 0; blue, 0 }  ,draw opacity=1 ]   (121,45) -- (3.13,45) ;
\draw [color={rgb, 255:red, 0; green, 0; blue, 0 }  ,draw opacity=1 ]   (121,45) -- (121,83) ;

\draw (540.08,9.35) node [anchor=north west][inner sep=0.75pt]  [font=\huge,rotate=0]  {$r_{5}$};

\end{tikzpicture}}  
 & %
     \resizebox{.4 \textwidth}{!}{

\begin{tikzpicture}[x=0.75pt,y=0.75pt,yscale=-1,xscale=1]

\draw  [color={rgb, 255:red, 208; green, 2; blue, 27 }  ,draw opacity=1 ][fill={rgb, 255:red, 208; green, 2; blue, 27 }  ,fill opacity=1 ] (29,20) .. controls (29,17.24) and (31.24,15) .. (34,15) .. controls (36.76,15) and (39,17.24) .. (39,20) .. controls (39,22.76) and (36.76,25) .. (34,25) .. controls (31.24,25) and (29,22.76) .. (29,20) -- cycle ;
\draw  [color={rgb, 255:red, 208; green, 2; blue, 27 }  ,draw opacity=1 ][fill={rgb, 255:red, 208; green, 2; blue, 27 }  ,fill opacity=1 ] (64,43) .. controls (64,40.24) and (66.24,38) .. (69,38) .. controls (71.76,38) and (74,40.24) .. (74,43) .. controls (74,45.76) and (71.76,48) .. (69,48) .. controls (66.24,48) and (64,45.76) .. (64,43) -- cycle ;
\draw  [color={rgb, 255:red, 208; green, 2; blue, 27 }  ,draw opacity=1 ][fill={rgb, 255:red, 208; green, 2; blue, 27 }  ,fill opacity=1 ] (106,65) .. controls (106,62.24) and (108.24,60) .. (111,60) .. controls (113.76,60) and (116,62.24) .. (116,65) .. controls (116,67.76) and (113.76,70) .. (111,70) .. controls (108.24,70) and (106,67.76) .. (106,65) -- cycle ;
\draw [color={rgb, 255:red, 0; green, 0; blue, 0 }  ,draw opacity=1 ]   (11,65) -- (128,65) ;
\draw  [color={rgb, 255:red, 208; green, 2; blue, 27 }  ,draw opacity=1 ][fill={rgb, 255:red, 208; green, 2; blue, 27 }  ,fill opacity=1 ] (160,78) .. controls (160,75.24) and (162.24,73) .. (165,73) .. controls (167.76,73) and (170,75.24) .. (170,78) .. controls (170,80.76) and (167.76,83) .. (165,83) .. controls (162.24,83) and (160,80.76) .. (160,78) -- cycle ;
\draw  [color={rgb, 255:red, 208; green, 2; blue, 27 }  ,draw opacity=1 ][fill={rgb, 255:red, 208; green, 2; blue, 27 }  ,fill opacity=1 ] (375.01,109.76) .. controls (375.01,107) and (377.24,104.76) .. (380.01,104.76) .. controls (382.77,104.76) and (385.01,107) .. (385.01,109.76) .. controls (385.01,112.52) and (382.77,114.76) .. (380.01,114.76) .. controls (377.24,114.76) and (375.01,112.52) .. (375.01,109.76) -- cycle ;
\draw  [color={rgb, 255:red, 208; green, 2; blue, 27 }  ,draw opacity=1 ][fill={rgb, 255:red, 208; green, 2; blue, 27 }  ,fill opacity=1 ] (466.5,146) .. controls (466.5,143.24) and (468.74,141) .. (471.5,141) .. controls (474.26,141) and (476.5,143.24) .. (476.5,146) .. controls (476.5,148.76) and (474.26,151) .. (471.5,151) .. controls (468.74,151) and (466.5,148.76) .. (466.5,146) -- cycle ;
\draw  [color={rgb, 255:red, 208; green, 2; blue, 27 }  ,draw opacity=1 ][fill={rgb, 255:red, 208; green, 2; blue, 27 }  ,fill opacity=1 ] (599,166) .. controls (599,163.24) and (601.24,161) .. (604,161) .. controls (606.76,161) and (609,163.24) .. (609,166) .. controls (609,168.76) and (606.76,171) .. (604,171) .. controls (601.24,171) and (599,168.76) .. (599,166) -- cycle ;
\draw [color={rgb, 255:red, 0; green, 0; blue, 0 }  ,draw opacity=1 ]   (109,20) -- (12.5,19) ;
\draw [color={rgb, 255:red, 0; green, 0; blue, 0 }  ,draw opacity=1 ]   (109,20) -- (129,1) ;
\draw [color={rgb, 255:red, 0; green, 0; blue, 0 }  ,draw opacity=1 ]   (108.5,43) -- (128,65) ;
\draw [color={rgb, 255:red, 0; green, 0; blue, 0 }  ,draw opacity=1 ]   (181,83) -- (380.01,83.02) ;
\draw [color={rgb, 255:red, 0; green, 0; blue, 0 }  ,draw opacity=1 ]   (181,83) -- (224,125.5) ;
\draw [color={rgb, 255:red, 0; green, 0; blue, 0 }  ,draw opacity=1 ]   (245,145) -- (401,146) ;
\draw [color={rgb, 255:red, 0; green, 0; blue, 0 }  ,draw opacity=1 ]   (380,126.5) -- (380.01,83.02) ;
\draw [color={rgb, 255:red, 0; green, 0; blue, 0 }  ,draw opacity=1 ]   (224,125.5) -- (245,145) ;
\draw [color={rgb, 255:red, 0; green, 0; blue, 0 }  ,draw opacity=1 ]   (380.01,83.02) -- (452.01,1.02) ;
\draw [color={rgb, 255:red, 0; green, 0; blue, 0 }  ,draw opacity=1 ]   (401,146) -- (604,147) ;
\draw    (604,147) -- (604,184) ;
\draw    (604,147) -- (654,84) ;
\draw [color={rgb, 255:red, 0; green, 0; blue, 0 }  ,draw opacity=1 ]   (380,126.5) -- (379.01,182.02) ;
\draw   (539,7) -- (589,7) -- (589,57) -- (539,57) -- cycle ;
\draw [color={rgb, 255:red, 0; green, 0; blue, 0 }  ,draw opacity=1 ]   (109,20) -- (108.5,43) ;
\draw [color={rgb, 255:red, 0; green, 0; blue, 0 }  ,draw opacity=1 ]   (108.5,43) -- (11,43) ;
\draw [color={rgb, 255:red, 0; green, 0; blue, 0 }  ,draw opacity=1 ]   (128,65) -- (172.76,80.2) -- (181,83) ;
\draw  [color={rgb, 255:red, 208; green, 2; blue, 27 }  ,draw opacity=1 ][fill={rgb, 255:red, 208; green, 2; blue, 27 }  ,fill opacity=1 ] (245.01,85.76) .. controls (245.01,83) and (247.24,80.76) .. (250.01,80.76) .. controls (252.77,80.76) and (255.01,83) .. (255.01,85.76) .. controls (255.01,88.52) and (252.77,90.76) .. (250.01,90.76) .. controls (247.24,90.76) and (245.01,88.52) .. (245.01,85.76) -- cycle ;
\draw [color={rgb, 255:red, 0; green, 0; blue, 0 }  ,draw opacity=1 ]   (245,145) -- (245,182) ;

\draw (541.08,10.35) node [anchor=north west][inner sep=0.75pt]  [font=\huge,rotate=-358.58]  {$r_{6}$};

\end{tikzpicture}}  
 \\ \hline
     \resizebox{.4 \textwidth}{!}{

\begin{tikzpicture}[x=0.75pt,y=0.75pt,yscale=-1,xscale=1]

\draw  [color={rgb, 255:red, 208; green, 2; blue, 27 }  ,draw opacity=1 ][fill={rgb, 255:red, 208; green, 2; blue, 27 }  ,fill opacity=1 ] (29,20) .. controls (29,17.24) and (31.24,15) .. (34,15) .. controls (36.76,15) and (39,17.24) .. (39,20) .. controls (39,22.76) and (36.76,25) .. (34,25) .. controls (31.24,25) and (29,22.76) .. (29,20) -- cycle ;
\draw  [color={rgb, 255:red, 208; green, 2; blue, 27 }  ,draw opacity=1 ][fill={rgb, 255:red, 208; green, 2; blue, 27 }  ,fill opacity=1 ] (64,43) .. controls (64,40.24) and (66.24,38) .. (69,38) .. controls (71.76,38) and (74,40.24) .. (74,43) .. controls (74,45.76) and (71.76,48) .. (69,48) .. controls (66.24,48) and (64,45.76) .. (64,43) -- cycle ;
\draw  [color={rgb, 255:red, 208; green, 2; blue, 27 }  ,draw opacity=1 ][fill={rgb, 255:red, 208; green, 2; blue, 27 }  ,fill opacity=1 ] (106,65) .. controls (106,62.24) and (108.24,60) .. (111,60) .. controls (113.76,60) and (116,62.24) .. (116,65) .. controls (116,67.76) and (113.76,70) .. (111,70) .. controls (108.24,70) and (106,67.76) .. (106,65) -- cycle ;
\draw [color={rgb, 255:red, 0; green, 0; blue, 0 }  ,draw opacity=1 ]   (11,65) -- (128,65) ;
\draw  [color={rgb, 255:red, 208; green, 2; blue, 27 }  ,draw opacity=1 ][fill={rgb, 255:red, 208; green, 2; blue, 27 }  ,fill opacity=1 ] (160,78) .. controls (160,75.24) and (162.24,73) .. (165,73) .. controls (167.76,73) and (170,75.24) .. (170,78) .. controls (170,80.76) and (167.76,83) .. (165,83) .. controls (162.24,83) and (160,80.76) .. (160,78) -- cycle ;
\draw  [color={rgb, 255:red, 208; green, 2; blue, 27 }  ,draw opacity=1 ][fill={rgb, 255:red, 208; green, 2; blue, 27 }  ,fill opacity=1 ] (359,126.5) .. controls (359,123.74) and (361.24,121.5) .. (364,121.5) .. controls (366.76,121.5) and (369,123.74) .. (369,126.5) .. controls (369,129.26) and (366.76,131.5) .. (364,131.5) .. controls (361.24,131.5) and (359,129.26) .. (359,126.5) -- cycle ;
\draw  [color={rgb, 255:red, 208; green, 2; blue, 27 }  ,draw opacity=1 ][fill={rgb, 255:red, 208; green, 2; blue, 27 }  ,fill opacity=1 ] (444,141) .. controls (444,138.24) and (446.24,136) .. (449,136) .. controls (451.76,136) and (454,138.24) .. (454,141) .. controls (454,143.76) and (451.76,146) .. (449,146) .. controls (446.24,146) and (444,143.76) .. (444,141) -- cycle ;
\draw  [color={rgb, 255:red, 208; green, 2; blue, 27 }  ,draw opacity=1 ][fill={rgb, 255:red, 208; green, 2; blue, 27 }  ,fill opacity=1 ] (575.5,161.5) .. controls (575.5,158.74) and (577.74,156.5) .. (580.5,156.5) .. controls (583.26,156.5) and (585.5,158.74) .. (585.5,161.5) .. controls (585.5,164.26) and (583.26,166.5) .. (580.5,166.5) .. controls (577.74,166.5) and (575.5,164.26) .. (575.5,161.5) -- cycle ;
\draw [color={rgb, 255:red, 0; green, 0; blue, 0 }  ,draw opacity=1 ]   (109,20) -- (12.5,19) ;
\draw [color={rgb, 255:red, 0; green, 0; blue, 0 }  ,draw opacity=1 ]   (109,20) -- (129,1) ;
\draw [color={rgb, 255:red, 0; green, 0; blue, 0 }  ,draw opacity=1 ]   (108.5,43) -- (128,65) ;
\draw [color={rgb, 255:red, 0; green, 0; blue, 0 }  ,draw opacity=1 ]   (181,83) -- (449.01,82.02) ;
\draw [color={rgb, 255:red, 0; green, 0; blue, 0 }  ,draw opacity=1 ]   (181,83) -- (224,125.5) ;
\draw [color={rgb, 255:red, 0; green, 0; blue, 0 }  ,draw opacity=1 ]   (224,125.5) -- (449,125.5) ;
\draw [color={rgb, 255:red, 0; green, 0; blue, 0 }  ,draw opacity=1 ]   (449,125.5) -- (449.01,82.02) ;
\draw [color={rgb, 255:red, 0; green, 0; blue, 0 }  ,draw opacity=1 ]   (224,125.5) -- (224.01,184.02) ;
\draw [color={rgb, 255:red, 0; green, 0; blue, 0 }  ,draw opacity=1 ]   (449.01,82.02) -- (521.01,0.02) ;
\draw [color={rgb, 255:red, 0; green, 0; blue, 0 }  ,draw opacity=1 ]   (449,126) -- (581.5,127) ;
\draw [color={rgb, 255:red, 0; green, 0; blue, 0 }  ,draw opacity=1 ]   (581.5,127) -- (581.01,173.02) ;
\draw [color={rgb, 255:red, 0; green, 0; blue, 0 }  ,draw opacity=1 ]   (581.5,127) -- (631.5,64) ;
\draw [color={rgb, 255:red, 0; green, 0; blue, 0 }  ,draw opacity=1 ]   (449,126) -- (449,166) ;
\draw   (539,7) -- (589,7) -- (589,57) -- (539,57) -- cycle ;
\draw [color={rgb, 255:red, 0; green, 0; blue, 0 }  ,draw opacity=1 ]   (109,20) -- (108.5,43) ;
\draw [color={rgb, 255:red, 0; green, 0; blue, 0 }  ,draw opacity=1 ]   (108.5,43) -- (11,43) ;
\draw [color={rgb, 255:red, 0; green, 0; blue, 0 }  ,draw opacity=1 ]   (128,65) -- (181,83) ;
\draw  [color={rgb, 255:red, 208; green, 2; blue, 27 }  ,draw opacity=1 ][fill={rgb, 255:red, 208; green, 2; blue, 27 }  ,fill opacity=1 ] (258,84.5) .. controls (258,81.74) and (260.24,79.5) .. (263,79.5) .. controls (265.76,79.5) and (268,81.74) .. (268,84.5) .. controls (268,87.26) and (265.76,89.5) .. (263,89.5) .. controls (260.24,89.5) and (258,87.26) .. (258,84.5) -- cycle ;

\draw (547.08,10.35) node [anchor=north west][inner sep=0.75pt]  [font=\huge,rotate=-358.58]  {$r_{7}$};

\end{tikzpicture}}  
 & %
     \resizebox{.4 \textwidth}{!}{

\begin{tikzpicture}[x=0.75pt,y=0.75pt,yscale=-1,xscale=1]

\draw  [color={rgb, 255:red, 208; green, 2; blue, 27 }  ,draw opacity=1 ][fill={rgb, 255:red, 208; green, 2; blue, 27 }  ,fill opacity=1 ] (29,20) .. controls (29,17.24) and (31.24,15) .. (34,15) .. controls (36.76,15) and (39,17.24) .. (39,20) .. controls (39,22.76) and (36.76,25) .. (34,25) .. controls (31.24,25) and (29,22.76) .. (29,20) -- cycle ;
\draw  [color={rgb, 255:red, 208; green, 2; blue, 27 }  ,draw opacity=1 ][fill={rgb, 255:red, 208; green, 2; blue, 27 }  ,fill opacity=1 ] (64,43) .. controls (64,40.24) and (66.24,38) .. (69,38) .. controls (71.76,38) and (74,40.24) .. (74,43) .. controls (74,45.76) and (71.76,48) .. (69,48) .. controls (66.24,48) and (64,45.76) .. (64,43) -- cycle ;
\draw  [color={rgb, 255:red, 208; green, 2; blue, 27 }  ,draw opacity=1 ][fill={rgb, 255:red, 208; green, 2; blue, 27 }  ,fill opacity=1 ] (106,65) .. controls (106,62.24) and (108.24,60) .. (111,60) .. controls (113.76,60) and (116,62.24) .. (116,65) .. controls (116,67.76) and (113.76,70) .. (111,70) .. controls (108.24,70) and (106,67.76) .. (106,65) -- cycle ;
\draw [color={rgb, 255:red, 0; green, 0; blue, 0 }  ,draw opacity=1 ]   (11,65) -- (128,65) ;
\draw  [color={rgb, 255:red, 208; green, 2; blue, 27 }  ,draw opacity=1 ][fill={rgb, 255:red, 208; green, 2; blue, 27 }  ,fill opacity=1 ] (160,78) .. controls (160,75.24) and (162.24,73) .. (165,73) .. controls (167.76,73) and (170,75.24) .. (170,78) .. controls (170,80.76) and (167.76,83) .. (165,83) .. controls (162.24,83) and (160,80.76) .. (160,78) -- cycle ;
\draw  [color={rgb, 255:red, 208; green, 2; blue, 27 }  ,draw opacity=1 ][fill={rgb, 255:red, 208; green, 2; blue, 27 }  ,fill opacity=1 ] (359,126.5) .. controls (359,123.74) and (361.24,121.5) .. (364,121.5) .. controls (366.76,121.5) and (369,123.74) .. (369,126.5) .. controls (369,129.26) and (366.76,131.5) .. (364,131.5) .. controls (361.24,131.5) and (359,129.26) .. (359,126.5) -- cycle ;
\draw  [color={rgb, 255:red, 208; green, 2; blue, 27 }  ,draw opacity=1 ][fill={rgb, 255:red, 208; green, 2; blue, 27 }  ,fill opacity=1 ] (444,141) .. controls (444,138.24) and (446.24,136) .. (449,136) .. controls (451.76,136) and (454,138.24) .. (454,141) .. controls (454,143.76) and (451.76,146) .. (449,146) .. controls (446.24,146) and (444,143.76) .. (444,141) -- cycle ;
\draw  [color={rgb, 255:red, 208; green, 2; blue, 27 }  ,draw opacity=1 ][fill={rgb, 255:red, 208; green, 2; blue, 27 }  ,fill opacity=1 ] (614.75,171.51) .. controls (614.75,168.75) and (616.99,166.51) .. (619.75,166.51) .. controls (622.51,166.51) and (624.75,168.75) .. (624.75,171.51) .. controls (624.75,174.27) and (622.51,176.51) .. (619.75,176.51) .. controls (616.99,176.51) and (614.75,174.27) .. (614.75,171.51) -- cycle ;
\draw [color={rgb, 255:red, 0; green, 0; blue, 0 }  ,draw opacity=1 ]   (109,20) -- (12.5,19) ;
\draw [color={rgb, 255:red, 0; green, 0; blue, 0 }  ,draw opacity=1 ]   (109,20) -- (129,1) ;
\draw [color={rgb, 255:red, 0; green, 0; blue, 0 }  ,draw opacity=1 ]   (108.5,43) -- (128,65) ;
\draw [color={rgb, 255:red, 0; green, 0; blue, 0 }  ,draw opacity=1 ]   (181,83) -- (471,84.02) ;
\draw [color={rgb, 255:red, 0; green, 0; blue, 0 }  ,draw opacity=1 ]   (181,83) -- (224,125.5) ;
\draw [color={rgb, 255:red, 0; green, 0; blue, 0 }  ,draw opacity=1 ]   (224,125.5) -- (449,125.5) ;
\draw [color={rgb, 255:red, 0; green, 0; blue, 0 }  ,draw opacity=1 ]   (449,126) -- (471,104.02) ;
\draw [color={rgb, 255:red, 0; green, 0; blue, 0 }  ,draw opacity=1 ]   (224,125.5) -- (224.01,184.02) ;
\draw [color={rgb, 255:red, 0; green, 0; blue, 0 }  ,draw opacity=1 ]   (471,104.02) -- (543,22.02) ;
\draw [color={rgb, 255:red, 0; green, 0; blue, 0 }  ,draw opacity=1 ]   (470,84.02) -- (617.5,83) ;
\draw [color={rgb, 255:red, 0; green, 0; blue, 0 }  ,draw opacity=1 ]   (617.5,83) -- (618.02,189.02) ;
\draw [color={rgb, 255:red, 0; green, 0; blue, 0 }  ,draw opacity=1 ]   (617.5,83) -- (655,37.02) ;
\draw [color={rgb, 255:red, 0; green, 0; blue, 0 }  ,draw opacity=1 ]   (449,126) -- (449,183.02) ;
\draw   (557.98,8) -- (608,8) -- (608,58.02) -- (557.98,58.02) -- cycle ;
\draw [color={rgb, 255:red, 0; green, 0; blue, 0 }  ,draw opacity=1 ]   (109,20) -- (108.5,43) ;
\draw [color={rgb, 255:red, 0; green, 0; blue, 0 }  ,draw opacity=1 ]   (108.5,43) -- (11,43) ;
\draw [color={rgb, 255:red, 0; green, 0; blue, 0 }  ,draw opacity=1 ]   (128,65) -- (181,83) ;
\draw  [color={rgb, 255:red, 208; green, 2; blue, 27 }  ,draw opacity=1 ][fill={rgb, 255:red, 208; green, 2; blue, 27 }  ,fill opacity=1 ] (258,84.5) .. controls (258,81.74) and (260.24,79.5) .. (263,79.5) .. controls (265.76,79.5) and (268,81.74) .. (268,84.5) .. controls (268,87.26) and (265.76,89.5) .. (263,89.5) .. controls (260.24,89.5) and (258,87.26) .. (258,84.5) -- cycle ;

\draw (565.08,9.35) node [anchor=north west][inner sep=0.75pt]  [font=\huge,rotate=-358.58]  {$r_{8}$};

\end{tikzpicture}}  
 
\end{tabular}
\caption{The vertices of $B$ corresponding to rational nodal curves.}
\label{fig:ratnod}
\end{figure}
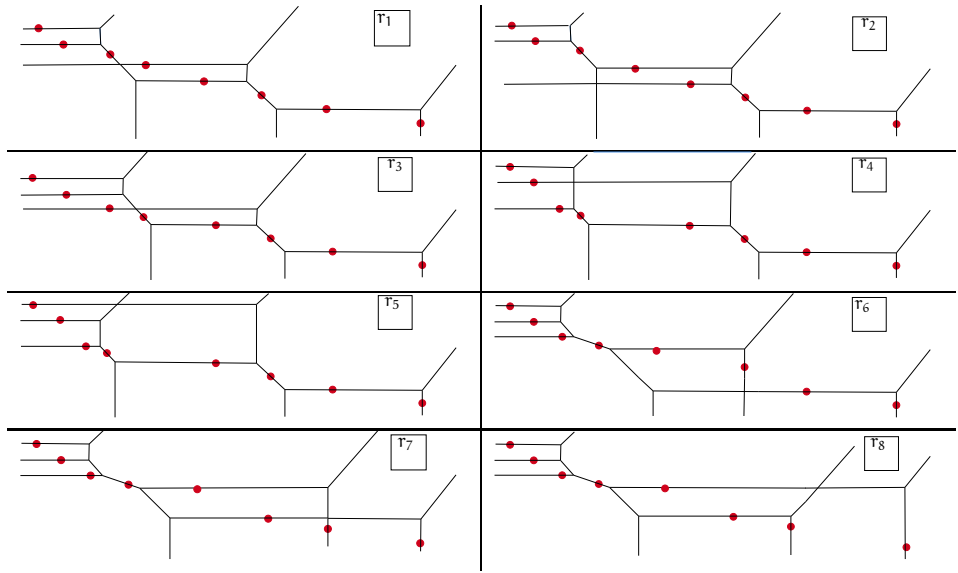


\begin{figure}[h]
\begin{tabular}{c|c}
     \resizebox{.4 \textwidth}{!}{

\begin{tikzpicture}[x=0.75pt,y=0.75pt,yscale=-1,xscale=1]

\draw  [color={rgb, 255:red, 208; green, 2; blue, 27 }  ,draw opacity=1 ][fill={rgb, 255:red, 208; green, 2; blue, 27 }  ,fill opacity=1 ] (31.5,45) .. controls (31.5,42.24) and (33.74,40) .. (36.5,40) .. controls (39.26,40) and (41.5,42.24) .. (41.5,45) .. controls (41.5,47.76) and (39.26,50) .. (36.5,50) .. controls (33.74,50) and (31.5,47.76) .. (31.5,45) -- cycle ;
\draw  [color={rgb, 255:red, 208; green, 2; blue, 27 }  ,draw opacity=1 ][fill={rgb, 255:red, 208; green, 2; blue, 27 }  ,fill opacity=1 ] (86,61) .. controls (86,58.24) and (88.24,56) .. (91,56) .. controls (93.76,56) and (96,58.24) .. (96,61) .. controls (96,63.76) and (93.76,66) .. (91,66) .. controls (88.24,66) and (86,63.76) .. (86,61) -- cycle ;
\draw  [color={rgb, 255:red, 208; green, 2; blue, 27 }  ,draw opacity=1 ][fill={rgb, 255:red, 208; green, 2; blue, 27 }  ,fill opacity=1 ] (145,72) .. controls (145,69.24) and (147.24,67) .. (150,67) .. controls (152.76,67) and (155,69.24) .. (155,72) .. controls (155,74.76) and (152.76,77) .. (150,77) .. controls (147.24,77) and (145,74.76) .. (145,72) -- cycle ;
\draw  [color={rgb, 255:red, 208; green, 2; blue, 27 }  ,draw opacity=1 ][fill={rgb, 255:red, 208; green, 2; blue, 27 }  ,fill opacity=1 ] (212,84) .. controls (212,81.24) and (214.24,79) .. (217,79) .. controls (219.76,79) and (222,81.24) .. (222,84) .. controls (222,86.76) and (219.76,89) .. (217,89) .. controls (214.24,89) and (212,86.76) .. (212,84) -- cycle ;
\draw  [color={rgb, 255:red, 208; green, 2; blue, 27 }  ,draw opacity=1 ][fill={rgb, 255:red, 208; green, 2; blue, 27 }  ,fill opacity=1 ] (294,107) .. controls (294,104.24) and (296.24,102) .. (299,102) .. controls (301.76,102) and (304,104.24) .. (304,107) .. controls (304,109.76) and (301.76,112) .. (299,112) .. controls (296.24,112) and (294,109.76) .. (294,107) -- cycle ;
\draw  [color={rgb, 255:red, 208; green, 2; blue, 27 }  ,draw opacity=1 ][fill={rgb, 255:red, 208; green, 2; blue, 27 }  ,fill opacity=1 ] (375,126.5) .. controls (375,123.74) and (377.24,121.5) .. (380,121.5) .. controls (382.76,121.5) and (385,123.74) .. (385,126.5) .. controls (385,129.26) and (382.76,131.5) .. (380,131.5) .. controls (377.24,131.5) and (375,129.26) .. (375,126.5) -- cycle ;
\draw  [color={rgb, 255:red, 208; green, 2; blue, 27 }  ,draw opacity=1 ][fill={rgb, 255:red, 208; green, 2; blue, 27 }  ,fill opacity=1 ] (466.5,146) .. controls (466.5,143.24) and (468.74,141) .. (471.5,141) .. controls (474.26,141) and (476.5,143.24) .. (476.5,146) .. controls (476.5,148.76) and (474.26,151) .. (471.5,151) .. controls (468.74,151) and (466.5,148.76) .. (466.5,146) -- cycle ;
\draw  [color={rgb, 255:red, 208; green, 2; blue, 27 }  ,draw opacity=1 ][fill={rgb, 255:red, 208; green, 2; blue, 27 }  ,fill opacity=1 ] (599,166) .. controls (599,163.24) and (601.24,161) .. (604,161) .. controls (606.76,161) and (609,163.24) .. (609,166) .. controls (609,168.76) and (606.76,171) .. (604,171) .. controls (601.24,171) and (599,168.76) .. (599,166) -- cycle ;
\draw [color={rgb, 255:red, 0; green, 0; blue, 0 }  ,draw opacity=1 ]   (104,45) -- (36.5,45) ;
\draw [color={rgb, 255:red, 0; green, 0; blue, 0 }  ,draw opacity=1 ]   (104,45) -- (141.5,0) ;
\draw    (103.5,45.5) -- (119,61) ;
\draw    (119,61) -- (11,61) ;
\draw    (119,61) -- (181,83) ;
\draw    (181,83) -- (360,83) ;
\draw    (181,83) -- (203,106) ;
\draw    (203,106) -- (359,107) ;
\draw    (360,83) -- (359,107) ;
\draw    (203,106) -- (203,188) ;
\draw    (360,83) -- (432,1) ;
\draw    (359,107) -- (401,146) ;
\draw    (401,146) -- (604,147) ;
\draw    (604,147) -- (604,184) ;
\draw    (604,147) -- (654,84) ;
\draw    (401,146) -- (401,186) ;
\draw [color={rgb, 255:red, 0; green, 0; blue, 0 }  ,draw opacity=1 ]   (0.5,42) .. controls (32.5,40) and (32.5,44) .. (36.5,45) ;
\draw [color={rgb, 255:red, 0; green, 0; blue, 0 }  ,draw opacity=1 ]   (-1.5,50) .. controls (17.5,49) and (32.5,51) .. (36.5,45) ;
\draw   (544,9) -- (594,9) -- (594,59) -- (544,59) -- cycle ;

\draw (554,10.4) node [anchor=north west][inner sep=0.75pt]  [font=\huge]  {$v_{1}$};
\draw (64,25.4) node [anchor=north west][inner sep=0.75pt]  [color={rgb, 255:red, 0; green, 0; blue, 0 }  ,opacity=1 ]  {$\textcolor[rgb]{0,0,0}{2}$};

\end{tikzpicture}}  
 & %
     \resizebox{.4 \textwidth}{!}{

\begin{tikzpicture}[x=0.75pt,y=0.75pt,yscale=-1,xscale=1]

\draw  [color={rgb, 255:red, 208; green, 2; blue, 27 }  ,draw opacity=1 ][fill={rgb, 255:red, 208; green, 2; blue, 27 }  ,fill opacity=1 ] (7.5,21) .. controls (7.5,18.24) and (9.74,16) .. (12.5,16) .. controls (15.26,16) and (17.5,18.24) .. (17.5,21) .. controls (17.5,23.76) and (15.26,26) .. (12.5,26) .. controls (9.74,26) and (7.5,23.76) .. (7.5,21) -- cycle ;
\draw  [color={rgb, 255:red, 208; green, 2; blue, 27 }  ,draw opacity=1 ][fill={rgb, 255:red, 208; green, 2; blue, 27 }  ,fill opacity=1 ] (57,60.5) .. controls (57,57.74) and (59.24,55.5) .. (62,55.5) .. controls (64.76,55.5) and (67,57.74) .. (67,60.5) .. controls (67,63.26) and (64.76,65.5) .. (62,65.5) .. controls (59.24,65.5) and (57,63.26) .. (57,60.5) -- cycle ;
\draw  [color={rgb, 255:red, 208; green, 2; blue, 27 }  ,draw opacity=1 ][fill={rgb, 255:red, 208; green, 2; blue, 27 }  ,fill opacity=1 ] (145,72) .. controls (145,69.24) and (147.24,67) .. (150,67) .. controls (152.76,67) and (155,69.24) .. (155,72) .. controls (155,74.76) and (152.76,77) .. (150,77) .. controls (147.24,77) and (145,74.76) .. (145,72) -- cycle ;
\draw [color={rgb, 255:red, 0; green, 0; blue, 0 }  ,draw opacity=1 ]   (62,60.5) -- (120,61.5) ;
\draw  [color={rgb, 255:red, 208; green, 2; blue, 27 }  ,draw opacity=1 ][fill={rgb, 255:red, 208; green, 2; blue, 27 }  ,fill opacity=1 ] (212,84) .. controls (212,81.24) and (214.24,79) .. (217,79) .. controls (219.76,79) and (222,81.24) .. (222,84) .. controls (222,86.76) and (219.76,89) .. (217,89) .. controls (214.24,89) and (212,86.76) .. (212,84) -- cycle ;
\draw  [color={rgb, 255:red, 208; green, 2; blue, 27 }  ,draw opacity=1 ][fill={rgb, 255:red, 208; green, 2; blue, 27 }  ,fill opacity=1 ] (294,107) .. controls (294,104.24) and (296.24,102) .. (299,102) .. controls (301.76,102) and (304,104.24) .. (304,107) .. controls (304,109.76) and (301.76,112) .. (299,112) .. controls (296.24,112) and (294,109.76) .. (294,107) -- cycle ;
\draw  [color={rgb, 255:red, 208; green, 2; blue, 27 }  ,draw opacity=1 ][fill={rgb, 255:red, 208; green, 2; blue, 27 }  ,fill opacity=1 ] (375,126.5) .. controls (375,123.74) and (377.24,121.5) .. (380,121.5) .. controls (382.76,121.5) and (385,123.74) .. (385,126.5) .. controls (385,129.26) and (382.76,131.5) .. (380,131.5) .. controls (377.24,131.5) and (375,129.26) .. (375,126.5) -- cycle ;
\draw  [color={rgb, 255:red, 208; green, 2; blue, 27 }  ,draw opacity=1 ][fill={rgb, 255:red, 208; green, 2; blue, 27 }  ,fill opacity=1 ] (466.5,146) .. controls (466.5,143.24) and (468.74,141) .. (471.5,141) .. controls (474.26,141) and (476.5,143.24) .. (476.5,146) .. controls (476.5,148.76) and (474.26,151) .. (471.5,151) .. controls (468.74,151) and (466.5,148.76) .. (466.5,146) -- cycle ;
\draw  [color={rgb, 255:red, 208; green, 2; blue, 27 }  ,draw opacity=1 ][fill={rgb, 255:red, 208; green, 2; blue, 27 }  ,fill opacity=1 ] (599,166) .. controls (599,163.24) and (601.24,161) .. (604,161) .. controls (606.76,161) and (609,163.24) .. (609,166) .. controls (609,168.76) and (606.76,171) .. (604,171) .. controls (601.24,171) and (599,168.76) .. (599,166) -- cycle ;
\draw [color={rgb, 255:red, 0; green, 0; blue, 0 }  ,draw opacity=1 ]   (120,21) -- (7.5,21) ;
\draw [color={rgb, 255:red, 0; green, 0; blue, 0 }  ,draw opacity=1 ]   (120,21) -- (140,2) ;
\draw    (120.5,61) -- (181,83) ;
\draw    (181,83) -- (360,83) ;
\draw    (181,83) -- (203,106) ;
\draw    (203,106) -- (359,107) ;
\draw    (360,83) -- (359,107) ;
\draw    (203,106) -- (203,188) ;
\draw    (360,83) -- (432,1) ;
\draw    (359,107) -- (401,146) ;
\draw    (401,146) -- (604,147) ;
\draw    (604,147) -- (604,184) ;
\draw    (604,147) -- (654,84) ;
\draw    (401,146) -- (401,186) ;
\draw   (539,7) -- (589,7) -- (589,57) -- (539,57) -- cycle ;
\draw    (120,21) -- (120.5,61) ;
\draw [color={rgb, 255:red, 0; green, 0; blue, 0 }  ,draw opacity=1 ]   (6.5,66) .. controls (48,67) and (47.5,69) .. (62.5,60) ;
\draw [color={rgb, 255:red, 0; green, 0; blue, 0 }  ,draw opacity=1 ]   (5.5,54) .. controls (49.5,52) and (49.5,55) .. (62.5,60) ;

\draw (547.81,10.55) node [anchor=north west][inner sep=0.75pt]  [font=\huge,rotate=-358.58]  {$v_{2}$};

\end{tikzpicture}}  
 \\ \hline
     \resizebox{.4 \textwidth}{!}{

\begin{tikzpicture}[x=0.75pt,y=0.75pt,yscale=-1,xscale=1]

\draw  [color={rgb, 255:red, 208; green, 2; blue, 27 }  ,draw opacity=1 ][fill={rgb, 255:red, 208; green, 2; blue, 27 }  ,fill opacity=1 ] (29,20) .. controls (29,17.24) and (31.24,15) .. (34,15) .. controls (36.76,15) and (39,17.24) .. (39,20) .. controls (39,22.76) and (36.76,25) .. (34,25) .. controls (31.24,25) and (29,22.76) .. (29,20) -- cycle ;
\draw  [color={rgb, 255:red, 208; green, 2; blue, 27 }  ,draw opacity=1 ][fill={rgb, 255:red, 208; green, 2; blue, 27 }  ,fill opacity=1 ] (64,43) .. controls (64,40.24) and (66.24,38) .. (69,38) .. controls (71.76,38) and (74,40.24) .. (74,43) .. controls (74,45.76) and (71.76,48) .. (69,48) .. controls (66.24,48) and (64,45.76) .. (64,43) -- cycle ;
\draw  [color={rgb, 255:red, 208; green, 2; blue, 27 }  ,draw opacity=1 ][fill={rgb, 255:red, 208; green, 2; blue, 27 }  ,fill opacity=1 ] (145,72) .. controls (145,69.24) and (147.24,67) .. (150,67) .. controls (152.76,67) and (155,69.24) .. (155,72) .. controls (155,74.76) and (152.76,77) .. (150,77) .. controls (147.24,77) and (145,74.76) .. (145,72) -- cycle ;
\draw [color={rgb, 255:red, 0; green, 0; blue, 0 }  ,draw opacity=1 ]   (13,71) -- (150,72) ;
\draw  [color={rgb, 255:red, 208; green, 2; blue, 27 }  ,draw opacity=1 ][fill={rgb, 255:red, 208; green, 2; blue, 27 }  ,fill opacity=1 ] (212,84) .. controls (212,81.24) and (214.24,79) .. (217,79) .. controls (219.76,79) and (222,81.24) .. (222,84) .. controls (222,86.76) and (219.76,89) .. (217,89) .. controls (214.24,89) and (212,86.76) .. (212,84) -- cycle ;
\draw  [color={rgb, 255:red, 208; green, 2; blue, 27 }  ,draw opacity=1 ][fill={rgb, 255:red, 208; green, 2; blue, 27 }  ,fill opacity=1 ] (294,107) .. controls (294,104.24) and (296.24,102) .. (299,102) .. controls (301.76,102) and (304,104.24) .. (304,107) .. controls (304,109.76) and (301.76,112) .. (299,112) .. controls (296.24,112) and (294,109.76) .. (294,107) -- cycle ;
\draw  [color={rgb, 255:red, 208; green, 2; blue, 27 }  ,draw opacity=1 ][fill={rgb, 255:red, 208; green, 2; blue, 27 }  ,fill opacity=1 ] (375,126.5) .. controls (375,123.74) and (377.24,121.5) .. (380,121.5) .. controls (382.76,121.5) and (385,123.74) .. (385,126.5) .. controls (385,129.26) and (382.76,131.5) .. (380,131.5) .. controls (377.24,131.5) and (375,129.26) .. (375,126.5) -- cycle ;
\draw  [color={rgb, 255:red, 208; green, 2; blue, 27 }  ,draw opacity=1 ][fill={rgb, 255:red, 208; green, 2; blue, 27 }  ,fill opacity=1 ] (466.5,146) .. controls (466.5,143.24) and (468.74,141) .. (471.5,141) .. controls (474.26,141) and (476.5,143.24) .. (476.5,146) .. controls (476.5,148.76) and (474.26,151) .. (471.5,151) .. controls (468.74,151) and (466.5,148.76) .. (466.5,146) -- cycle ;
\draw  [color={rgb, 255:red, 208; green, 2; blue, 27 }  ,draw opacity=1 ][fill={rgb, 255:red, 208; green, 2; blue, 27 }  ,fill opacity=1 ] (599,166) .. controls (599,163.24) and (601.24,161) .. (604,161) .. controls (606.76,161) and (609,163.24) .. (609,166) .. controls (609,168.76) and (606.76,171) .. (604,171) .. controls (601.24,171) and (599,168.76) .. (599,166) -- cycle ;
\draw [color={rgb, 255:red, 0; green, 0; blue, 0 }  ,draw opacity=1 ]   (120,20) -- (13.5,19) ;
\draw [color={rgb, 255:red, 0; green, 0; blue, 0 }  ,draw opacity=1 ]   (120,20) -- (140,1) ;
\draw [color={rgb, 255:red, 0; green, 0; blue, 0 }  ,draw opacity=1 ]   (150,72) -- (181,83) ;
\draw    (181,83) -- (360,83) ;
\draw    (181,83) -- (203,106) ;
\draw    (203,106) -- (359,107) ;
\draw    (360,83) -- (359,107) ;
\draw    (203,106) -- (203,188) ;
\draw    (360,83) -- (432,1) ;
\draw    (359,107) -- (401,146) ;
\draw    (401,146) -- (604,147) ;
\draw    (604,147) -- (604,184) ;
\draw    (604,147) -- (654,84) ;
\draw    (401,146) -- (401,186) ;
\draw   (539,7) -- (589,7) -- (589,57) -- (539,57) -- cycle ;
\draw [color={rgb, 255:red, 0; green, 0; blue, 0 }  ,draw opacity=1 ][fill={rgb, 255:red, 74; green, 144; blue, 226 }  ,fill opacity=1 ]   (120,20) -- (121,43) ;
\draw [color={rgb, 255:red, 0; green, 0; blue, 0 }  ,draw opacity=1 ]   (121,43) -- (150,72) ;
\draw    (122.75,44) -- (13,43) ;

\draw (547.81,9.55) node [anchor=north west][inner sep=0.75pt]  [font=\huge,rotate=-358.58]  {$v_{3}$};

\end{tikzpicture}}  
 & %
     \resizebox{.4 \textwidth}{!}{

\begin{tikzpicture}[x=0.75pt,y=0.75pt,yscale=-1,xscale=1]

\draw  [color={rgb, 255:red, 208; green, 2; blue, 27 }  ,draw opacity=1 ][fill={rgb, 255:red, 208; green, 2; blue, 27 }  ,fill opacity=1 ] (29,20) .. controls (29,17.24) and (31.24,15) .. (34,15) .. controls (36.76,15) and (39,17.24) .. (39,20) .. controls (39,22.76) and (36.76,25) .. (34,25) .. controls (31.24,25) and (29,22.76) .. (29,20) -- cycle ;
\draw  [color={rgb, 255:red, 208; green, 2; blue, 27 }  ,draw opacity=1 ][fill={rgb, 255:red, 208; green, 2; blue, 27 }  ,fill opacity=1 ] (64,43) .. controls (64,40.24) and (66.24,38) .. (69,38) .. controls (71.76,38) and (74,40.24) .. (74,43) .. controls (74,45.76) and (71.76,48) .. (69,48) .. controls (66.24,48) and (64,45.76) .. (64,43) -- cycle ;
\draw  [color={rgb, 255:red, 208; green, 2; blue, 27 }  ,draw opacity=1 ][fill={rgb, 255:red, 208; green, 2; blue, 27 }  ,fill opacity=1 ] (106,66) .. controls (106,63.24) and (108.24,61) .. (111,61) .. controls (113.76,61) and (116,63.24) .. (116,66) .. controls (116,68.76) and (113.76,71) .. (111,71) .. controls (108.24,71) and (106,68.76) .. (106,66) -- cycle ;
\draw [color={rgb, 255:red, 0; green, 0; blue, 0 }  ,draw opacity=1 ]   (11,65) -- (128,65) ;
\draw  [color={rgb, 255:red, 208; green, 2; blue, 27 }  ,draw opacity=1 ][fill={rgb, 255:red, 208; green, 2; blue, 27 }  ,fill opacity=1 ] (177,84) .. controls (177,81.24) and (179.24,79) .. (182,79) .. controls (184.76,79) and (187,81.24) .. (187,84) .. controls (187,86.76) and (184.76,89) .. (182,89) .. controls (179.24,89) and (177,86.76) .. (177,84) -- cycle ;
\draw  [color={rgb, 255:red, 208; green, 2; blue, 27 }  ,draw opacity=1 ][fill={rgb, 255:red, 208; green, 2; blue, 27 }  ,fill opacity=1 ] (294,107) .. controls (294,104.24) and (296.24,102) .. (299,102) .. controls (301.76,102) and (304,104.24) .. (304,107) .. controls (304,109.76) and (301.76,112) .. (299,112) .. controls (296.24,112) and (294,109.76) .. (294,107) -- cycle ;
\draw  [color={rgb, 255:red, 208; green, 2; blue, 27 }  ,draw opacity=1 ][fill={rgb, 255:red, 208; green, 2; blue, 27 }  ,fill opacity=1 ] (375,126.5) .. controls (375,123.74) and (377.24,121.5) .. (380,121.5) .. controls (382.76,121.5) and (385,123.74) .. (385,126.5) .. controls (385,129.26) and (382.76,131.5) .. (380,131.5) .. controls (377.24,131.5) and (375,129.26) .. (375,126.5) -- cycle ;
\draw  [color={rgb, 255:red, 208; green, 2; blue, 27 }  ,draw opacity=1 ][fill={rgb, 255:red, 208; green, 2; blue, 27 }  ,fill opacity=1 ] (466.5,146) .. controls (466.5,143.24) and (468.74,141) .. (471.5,141) .. controls (474.26,141) and (476.5,143.24) .. (476.5,146) .. controls (476.5,148.76) and (474.26,151) .. (471.5,151) .. controls (468.74,151) and (466.5,148.76) .. (466.5,146) -- cycle ;
\draw  [color={rgb, 255:red, 208; green, 2; blue, 27 }  ,draw opacity=1 ][fill={rgb, 255:red, 208; green, 2; blue, 27 }  ,fill opacity=1 ] (599,166) .. controls (599,163.24) and (601.24,161) .. (604,161) .. controls (606.76,161) and (609,163.24) .. (609,166) .. controls (609,168.76) and (606.76,171) .. (604,171) .. controls (601.24,171) and (599,168.76) .. (599,166) -- cycle ;
\draw [color={rgb, 255:red, 0; green, 0; blue, 0 }  ,draw opacity=1 ]   (109,20) -- (12.5,19) ;
\draw [color={rgb, 255:red, 0; green, 0; blue, 0 }  ,draw opacity=1 ]   (109,20) -- (129,1) ;
\draw [color={rgb, 255:red, 0; green, 0; blue, 0 }  ,draw opacity=1 ]   (108.5,43) -- (128,65) ;
\draw    (181,83) -- (360,83) ;
\draw [color={rgb, 255:red, 0; green, 0; blue, 0 }  ,draw opacity=1 ]   (182,84) -- (204,107) ;
\draw    (203,106) -- (359,107) ;
\draw    (360,83) -- (359,107) ;
\draw [color={rgb, 255:red, 0; green, 0; blue, 0 }  ,draw opacity=1 ]   (203,106) -- (203,188) ;
\draw    (360,83) -- (432,1) ;
\draw    (359,107) -- (401,146) ;
\draw    (401,146) -- (604,147) ;
\draw    (604,147) -- (604,184) ;
\draw    (604,147) -- (654,84) ;
\draw    (401,146) -- (401,186) ;
\draw   (538,7) -- (588,7) -- (588,57) -- (538,57) -- cycle ;
\draw [color={rgb, 255:red, 0; green, 0; blue, 0 }  ,draw opacity=1 ]   (109,20) -- (108.5,43) ;
\draw    (108.5,43) -- (11,43) ;
\draw [color={rgb, 255:red, 0; green, 0; blue, 0 }  ,draw opacity=1 ]   (128,65) -- (181,83) ;

\draw (547.08,11.35) node [anchor=north west][inner sep=0.75pt]  [font=\huge,rotate=-358.58]  {$v_{4}$};

\end{tikzpicture}}  
 \\ \hline
     \resizebox{.4 \textwidth}{!}{

\begin{tikzpicture}[x=0.75pt,y=0.75pt,yscale=-1,xscale=1]

\draw  [color={rgb, 255:red, 208; green, 2; blue, 27 }  ,draw opacity=1 ][fill={rgb, 255:red, 208; green, 2; blue, 27 }  ,fill opacity=1 ] (29,20) .. controls (29,17.24) and (31.24,15) .. (34,15) .. controls (36.76,15) and (39,17.24) .. (39,20) .. controls (39,22.76) and (36.76,25) .. (34,25) .. controls (31.24,25) and (29,22.76) .. (29,20) -- cycle ;
\draw  [color={rgb, 255:red, 208; green, 2; blue, 27 }  ,draw opacity=1 ][fill={rgb, 255:red, 208; green, 2; blue, 27 }  ,fill opacity=1 ] (64,43) .. controls (64,40.24) and (66.24,38) .. (69,38) .. controls (71.76,38) and (74,40.24) .. (74,43) .. controls (74,45.76) and (71.76,48) .. (69,48) .. controls (66.24,48) and (64,45.76) .. (64,43) -- cycle ;
\draw  [color={rgb, 255:red, 208; green, 2; blue, 27 }  ,draw opacity=1 ][fill={rgb, 255:red, 208; green, 2; blue, 27 }  ,fill opacity=1 ] (106,65) .. controls (106,62.24) and (108.24,60) .. (111,60) .. controls (113.76,60) and (116,62.24) .. (116,65) .. controls (116,67.76) and (113.76,70) .. (111,70) .. controls (108.24,70) and (106,67.76) .. (106,65) -- cycle ;
\draw [color={rgb, 255:red, 0; green, 0; blue, 0 }  ,draw opacity=1 ]   (11,65) -- (128,65) ;
\draw  [color={rgb, 255:red, 208; green, 2; blue, 27 }  ,draw opacity=1 ][fill={rgb, 255:red, 208; green, 2; blue, 27 }  ,fill opacity=1 ] (160,78) .. controls (160,75.24) and (162.24,73) .. (165,73) .. controls (167.76,73) and (170,75.24) .. (170,78) .. controls (170,80.76) and (167.76,83) .. (165,83) .. controls (162.24,83) and (160,80.76) .. (160,78) -- cycle ;
\draw  [color={rgb, 255:red, 208; green, 2; blue, 27 }  ,draw opacity=1 ][fill={rgb, 255:red, 208; green, 2; blue, 27 }  ,fill opacity=1 ] (334.5,106.75) .. controls (334.5,103.99) and (336.74,101.75) .. (339.5,101.75) .. controls (342.26,101.75) and (344.5,103.99) .. (344.5,106.75) .. controls (344.5,109.51) and (342.26,111.75) .. (339.5,111.75) .. controls (336.74,111.75) and (334.5,109.51) .. (334.5,106.75) -- cycle ;
\draw  [color={rgb, 255:red, 208; green, 2; blue, 27 }  ,draw opacity=1 ][fill={rgb, 255:red, 208; green, 2; blue, 27 }  ,fill opacity=1 ] (375,126.5) .. controls (375,123.74) and (377.24,121.5) .. (380,121.5) .. controls (382.76,121.5) and (385,123.74) .. (385,126.5) .. controls (385,129.26) and (382.76,131.5) .. (380,131.5) .. controls (377.24,131.5) and (375,129.26) .. (375,126.5) -- cycle ;
\draw  [color={rgb, 255:red, 208; green, 2; blue, 27 }  ,draw opacity=1 ][fill={rgb, 255:red, 208; green, 2; blue, 27 }  ,fill opacity=1 ] (466.5,146) .. controls (466.5,143.24) and (468.74,141) .. (471.5,141) .. controls (474.26,141) and (476.5,143.24) .. (476.5,146) .. controls (476.5,148.76) and (474.26,151) .. (471.5,151) .. controls (468.74,151) and (466.5,148.76) .. (466.5,146) -- cycle ;
\draw  [color={rgb, 255:red, 208; green, 2; blue, 27 }  ,draw opacity=1 ][fill={rgb, 255:red, 208; green, 2; blue, 27 }  ,fill opacity=1 ] (599,166) .. controls (599,163.24) and (601.24,161) .. (604,161) .. controls (606.76,161) and (609,163.24) .. (609,166) .. controls (609,168.76) and (606.76,171) .. (604,171) .. controls (601.24,171) and (599,168.76) .. (599,166) -- cycle ;
\draw [color={rgb, 255:red, 0; green, 0; blue, 0 }  ,draw opacity=1 ]   (109,20) -- (12.5,19) ;
\draw [color={rgb, 255:red, 0; green, 0; blue, 0 }  ,draw opacity=1 ]   (109,20) -- (129,1) ;
\draw [color={rgb, 255:red, 0; green, 0; blue, 0 }  ,draw opacity=1 ]   (108.5,43) -- (128,65) ;
\draw [color={rgb, 255:red, 0; green, 0; blue, 0 }  ,draw opacity=1 ]   (246.5,107) -- (246.5,189) ;
\draw [color={rgb, 255:red, 0; green, 0; blue, 0 }  ,draw opacity=1 ]   (359,107) -- (445.5,3) ;
\draw [color={rgb, 255:red, 0; green, 0; blue, 0 }  ,draw opacity=1 ]   (359,107) -- (401,146) ;
\draw    (401,146) -- (604,147) ;
\draw    (604,147) -- (604,184) ;
\draw    (604,147) -- (654,84) ;
\draw    (401,146) -- (401,186) ;
\draw   (539,7) -- (589,7) -- (589,57) -- (539,57) -- cycle ;
\draw [color={rgb, 255:red, 0; green, 0; blue, 0 }  ,draw opacity=1 ]   (109,20) -- (108.5,43) ;
\draw [color={rgb, 255:red, 0; green, 0; blue, 0 }  ,draw opacity=1 ]   (108.5,43) -- (11,43) ;
\draw [color={rgb, 255:red, 0; green, 0; blue, 0 }  ,draw opacity=1 ]   (128,65) -- (246.5,107) ;
\draw  [color={rgb, 255:red, 0; green, 0; blue, 0 }  ,draw opacity=1 ] (264,106.75) .. controls (264,104.71) and (280.9,103.06) .. (301.75,103.06) .. controls (322.6,103.06) and (339.5,104.71) .. (339.5,106.75) .. controls (339.5,108.79) and (322.6,110.44) .. (301.75,110.44) .. controls (280.9,110.44) and (264,108.79) .. (264,106.75) -- cycle ;
\draw [color={rgb, 255:red, 0; green, 0; blue, 0 }  ,draw opacity=1 ]   (246.5,107) -- (266,107.25) ;
\draw [color={rgb, 255:red, 0; green, 0; blue, 0 }  ,draw opacity=1 ]   (339.5,106.75) -- (359,107) ;

\draw (541.08,10.35) node [anchor=north west][inner sep=0.75pt]  [font=\huge,rotate=-358.58]  {$v_{5}$};
\draw (252,89.4) node [anchor=north west][inner sep=0.75pt]    {$2$};
\draw (343,88.4) node [anchor=north west][inner sep=0.75pt]    {$2$};

\end{tikzpicture}}  
 & %
     \resizebox{.4 \textwidth}{!}{

\begin{tikzpicture}[x=0.75pt,y=0.75pt,yscale=-1,xscale=1]

\draw  [color={rgb, 255:red, 208; green, 2; blue, 27 }  ,draw opacity=1 ][fill={rgb, 255:red, 208; green, 2; blue, 27 }  ,fill opacity=1 ] (29,20) .. controls (29,17.24) and (31.24,15) .. (34,15) .. controls (36.76,15) and (39,17.24) .. (39,20) .. controls (39,22.76) and (36.76,25) .. (34,25) .. controls (31.24,25) and (29,22.76) .. (29,20) -- cycle ;
\draw  [color={rgb, 255:red, 208; green, 2; blue, 27 }  ,draw opacity=1 ][fill={rgb, 255:red, 208; green, 2; blue, 27 }  ,fill opacity=1 ] (64,43) .. controls (64,40.24) and (66.24,38) .. (69,38) .. controls (71.76,38) and (74,40.24) .. (74,43) .. controls (74,45.76) and (71.76,48) .. (69,48) .. controls (66.24,48) and (64,45.76) .. (64,43) -- cycle ;
\draw  [color={rgb, 255:red, 208; green, 2; blue, 27 }  ,draw opacity=1 ][fill={rgb, 255:red, 208; green, 2; blue, 27 }  ,fill opacity=1 ] (106,65) .. controls (106,62.24) and (108.24,60) .. (111,60) .. controls (113.76,60) and (116,62.24) .. (116,65) .. controls (116,67.76) and (113.76,70) .. (111,70) .. controls (108.24,70) and (106,67.76) .. (106,65) -- cycle ;
\draw [color={rgb, 255:red, 0; green, 0; blue, 0 }  ,draw opacity=1 ]   (11,65) -- (128,65) ;
\draw  [color={rgb, 255:red, 208; green, 2; blue, 27 }  ,draw opacity=1 ][fill={rgb, 255:red, 208; green, 2; blue, 27 }  ,fill opacity=1 ] (160,78) .. controls (160,75.24) and (162.24,73) .. (165,73) .. controls (167.76,73) and (170,75.24) .. (170,78) .. controls (170,80.76) and (167.76,83) .. (165,83) .. controls (162.24,83) and (160,80.76) .. (160,78) -- cycle ;
\draw  [color={rgb, 255:red, 208; green, 2; blue, 27 }  ,draw opacity=1 ][fill={rgb, 255:red, 208; green, 2; blue, 27 }  ,fill opacity=1 ] (375,126.5) .. controls (375,123.74) and (377.24,121.5) .. (380,121.5) .. controls (382.76,121.5) and (385,123.74) .. (385,126.5) .. controls (385,129.26) and (382.76,131.5) .. (380,131.5) .. controls (377.24,131.5) and (375,129.26) .. (375,126.5) -- cycle ;
\draw  [color={rgb, 255:red, 208; green, 2; blue, 27 }  ,draw opacity=1 ][fill={rgb, 255:red, 208; green, 2; blue, 27 }  ,fill opacity=1 ] (466.5,146) .. controls (466.5,143.24) and (468.74,141) .. (471.5,141) .. controls (474.26,141) and (476.5,143.24) .. (476.5,146) .. controls (476.5,148.76) and (474.26,151) .. (471.5,151) .. controls (468.74,151) and (466.5,148.76) .. (466.5,146) -- cycle ;
\draw  [color={rgb, 255:red, 208; green, 2; blue, 27 }  ,draw opacity=1 ][fill={rgb, 255:red, 208; green, 2; blue, 27 }  ,fill opacity=1 ] (599,166) .. controls (599,163.24) and (601.24,161) .. (604,161) .. controls (606.76,161) and (609,163.24) .. (609,166) .. controls (609,168.76) and (606.76,171) .. (604,171) .. controls (601.24,171) and (599,168.76) .. (599,166) -- cycle ;
\draw [color={rgb, 255:red, 0; green, 0; blue, 0 }  ,draw opacity=1 ]   (109,20) -- (12.5,19) ;
\draw [color={rgb, 255:red, 0; green, 0; blue, 0 }  ,draw opacity=1 ]   (109,20) -- (129,1) ;
\draw [color={rgb, 255:red, 0; green, 0; blue, 0 }  ,draw opacity=1 ]   (108.5,43) -- (128,65) ;
\draw [color={rgb, 255:red, 0; green, 0; blue, 0 }  ,draw opacity=1 ]   (181,83) -- (380.01,83.02) ;
\draw [color={rgb, 255:red, 0; green, 0; blue, 0 }  ,draw opacity=1 ]   (181,83) -- (224,125.5) ;
\draw [color={rgb, 255:red, 0; green, 0; blue, 0 }  ,draw opacity=1 ]   (224,125.5) -- (380,126.5) ;
\draw [color={rgb, 255:red, 0; green, 0; blue, 0 }  ,draw opacity=1 ]   (380,126.5) -- (380.01,83.02) ;
\draw [color={rgb, 255:red, 0; green, 0; blue, 0 }  ,draw opacity=1 ]   (224,125.5) -- (224.01,184.02) ;
\draw    (380.01,83.02) -- (452.01,1.02) ;
\draw    (380,126.5) -- (401,146) ;
\draw    (401,146) -- (604,147) ;
\draw    (604,147) -- (604,184) ;
\draw    (604,147) -- (654,84) ;
\draw    (401,146) -- (401,186) ;
\draw   (539,7) -- (589,7) -- (589,57) -- (539,57) -- cycle ;
\draw [color={rgb, 255:red, 0; green, 0; blue, 0 }  ,draw opacity=1 ]   (109,20) -- (108.5,43) ;
\draw [color={rgb, 255:red, 0; green, 0; blue, 0 }  ,draw opacity=1 ]   (108.5,43) -- (11,43) ;
\draw [color={rgb, 255:red, 0; green, 0; blue, 0 }  ,draw opacity=1 ]   (128,65) -- (181,83) ;
\draw  [color={rgb, 255:red, 208; green, 2; blue, 27 }  ,draw opacity=1 ][fill={rgb, 255:red, 208; green, 2; blue, 27 }  ,fill opacity=1 ] (296.01,83.01) .. controls (296.01,80.25) and (298.25,78.01) .. (301.01,78.01) .. controls (303.76,78.01) and (306,80.25) .. (306,83.01) .. controls (306,85.76) and (303.76,88) .. (301.01,88) .. controls (298.25,88) and (296.01,85.76) .. (296.01,83.01) -- cycle ;

\draw (547.08,11.35) node [anchor=north west][inner sep=0.75pt]  [font=\huge,rotate=-358.58]  {$v_{6}$};

\end{tikzpicture}}  
 \\ \hline
     \resizebox{.4 \textwidth}{!}{

\begin{tikzpicture}[x=0.75pt,y=0.75pt,yscale=-1,xscale=1]

\draw  [color={rgb, 255:red, 208; green, 2; blue, 27 }  ,draw opacity=1 ][fill={rgb, 255:red, 208; green, 2; blue, 27 }  ,fill opacity=1 ] (29,20) .. controls (29,17.24) and (31.24,15) .. (34,15) .. controls (36.76,15) and (39,17.24) .. (39,20) .. controls (39,22.76) and (36.76,25) .. (34,25) .. controls (31.24,25) and (29,22.76) .. (29,20) -- cycle ;
\draw  [color={rgb, 255:red, 208; green, 2; blue, 27 }  ,draw opacity=1 ][fill={rgb, 255:red, 208; green, 2; blue, 27 }  ,fill opacity=1 ] (64,43) .. controls (64,40.24) and (66.24,38) .. (69,38) .. controls (71.76,38) and (74,40.24) .. (74,43) .. controls (74,45.76) and (71.76,48) .. (69,48) .. controls (66.24,48) and (64,45.76) .. (64,43) -- cycle ;
\draw  [color={rgb, 255:red, 208; green, 2; blue, 27 }  ,draw opacity=1 ][fill={rgb, 255:red, 208; green, 2; blue, 27 }  ,fill opacity=1 ] (106,65) .. controls (106,62.24) and (108.24,60) .. (111,60) .. controls (113.76,60) and (116,62.24) .. (116,65) .. controls (116,67.76) and (113.76,70) .. (111,70) .. controls (108.24,70) and (106,67.76) .. (106,65) -- cycle ;
\draw [color={rgb, 255:red, 0; green, 0; blue, 0 }  ,draw opacity=1 ]   (11,65) -- (128,65) ;
\draw  [color={rgb, 255:red, 208; green, 2; blue, 27 }  ,draw opacity=1 ][fill={rgb, 255:red, 208; green, 2; blue, 27 }  ,fill opacity=1 ] (160,78) .. controls (160,75.24) and (162.24,73) .. (165,73) .. controls (167.76,73) and (170,75.24) .. (170,78) .. controls (170,80.76) and (167.76,83) .. (165,83) .. controls (162.24,83) and (160,80.76) .. (160,78) -- cycle ;
\draw  [color={rgb, 255:red, 208; green, 2; blue, 27 }  ,draw opacity=1 ][fill={rgb, 255:red, 208; green, 2; blue, 27 }  ,fill opacity=1 ] (359,126.5) .. controls (359,123.74) and (361.24,121.5) .. (364,121.5) .. controls (366.76,121.5) and (369,123.74) .. (369,126.5) .. controls (369,129.26) and (366.76,131.5) .. (364,131.5) .. controls (361.24,131.5) and (359,129.26) .. (359,126.5) -- cycle ;
\draw  [color={rgb, 255:red, 208; green, 2; blue, 27 }  ,draw opacity=1 ][fill={rgb, 255:red, 208; green, 2; blue, 27 }  ,fill opacity=1 ] (466.5,146) .. controls (466.5,143.24) and (468.74,141) .. (471.5,141) .. controls (474.26,141) and (476.5,143.24) .. (476.5,146) .. controls (476.5,148.76) and (474.26,151) .. (471.5,151) .. controls (468.74,151) and (466.5,148.76) .. (466.5,146) -- cycle ;
\draw  [color={rgb, 255:red, 208; green, 2; blue, 27 }  ,draw opacity=1 ][fill={rgb, 255:red, 208; green, 2; blue, 27 }  ,fill opacity=1 ] (599,166) .. controls (599,163.24) and (601.24,161) .. (604,161) .. controls (606.76,161) and (609,163.24) .. (609,166) .. controls (609,168.76) and (606.76,171) .. (604,171) .. controls (601.24,171) and (599,168.76) .. (599,166) -- cycle ;
\draw [color={rgb, 255:red, 0; green, 0; blue, 0 }  ,draw opacity=1 ]   (109,20) -- (12.5,19) ;
\draw [color={rgb, 255:red, 0; green, 0; blue, 0 }  ,draw opacity=1 ]   (109,20) -- (129,1) ;
\draw [color={rgb, 255:red, 0; green, 0; blue, 0 }  ,draw opacity=1 ]   (108.5,43) -- (128,65) ;
\draw [color={rgb, 255:red, 0; green, 0; blue, 0 }  ,draw opacity=1 ]   (181,83) -- (449.01,82.02) ;
\draw [color={rgb, 255:red, 0; green, 0; blue, 0 }  ,draw opacity=1 ]   (181,83) -- (224,125.5) ;
\draw [color={rgb, 255:red, 0; green, 0; blue, 0 }  ,draw opacity=1 ]   (224,125.5) -- (449,125.5) ;
\draw [color={rgb, 255:red, 0; green, 0; blue, 0 }  ,draw opacity=1 ]   (449,125.5) -- (449.01,82.02) ;
\draw [color={rgb, 255:red, 0; green, 0; blue, 0 }  ,draw opacity=1 ]   (224,125.5) -- (224.01,184.02) ;
\draw [color={rgb, 255:red, 0; green, 0; blue, 0 }  ,draw opacity=1 ]   (449.01,82.02) -- (521.01,0.02) ;
\draw [color={rgb, 255:red, 0; green, 0; blue, 0 }  ,draw opacity=1 ]   (449,125.5) -- (470,145) ;
\draw    (471.5,146) -- (604,147) ;
\draw    (604,147) -- (604,184) ;
\draw    (604,147) -- (654,84) ;
\draw [color={rgb, 255:red, 0; green, 0; blue, 0 }  ,draw opacity=1 ]   (470,145) -- (470,185) ;
\draw   (539,7) -- (589,7) -- (589,57) -- (539,57) -- cycle ;
\draw [color={rgb, 255:red, 0; green, 0; blue, 0 }  ,draw opacity=1 ]   (109,20) -- (108.5,43) ;
\draw [color={rgb, 255:red, 0; green, 0; blue, 0 }  ,draw opacity=1 ]   (108.5,43) -- (11,43) ;
\draw [color={rgb, 255:red, 0; green, 0; blue, 0 }  ,draw opacity=1 ]   (128,65) -- (181,83) ;
\draw  [color={rgb, 255:red, 208; green, 2; blue, 27 }  ,draw opacity=1 ][fill={rgb, 255:red, 208; green, 2; blue, 27 }  ,fill opacity=1 ] (258,84.5) .. controls (258,81.74) and (260.24,79.5) .. (263,79.5) .. controls (265.76,79.5) and (268,81.74) .. (268,84.5) .. controls (268,87.26) and (265.76,89.5) .. (263,89.5) .. controls (260.24,89.5) and (258,87.26) .. (258,84.5) -- cycle ;

\draw (548.08,11.35) node [anchor=north west][inner sep=0.75pt]  [font=\huge,rotate=-358.58]  {$v_{7}$};

\end{tikzpicture}}  
 & %
     \resizebox{.4 \textwidth}{!}{

\begin{tikzpicture}[x=0.75pt,y=0.75pt,yscale=-1,xscale=1]

\draw  [color={rgb, 255:red, 208; green, 2; blue, 27 }  ,draw opacity=1 ][fill={rgb, 255:red, 208; green, 2; blue, 27 }  ,fill opacity=1 ] (29,20) .. controls (29,17.24) and (31.24,15) .. (34,15) .. controls (36.76,15) and (39,17.24) .. (39,20) .. controls (39,22.76) and (36.76,25) .. (34,25) .. controls (31.24,25) and (29,22.76) .. (29,20) -- cycle ;
\draw  [color={rgb, 255:red, 208; green, 2; blue, 27 }  ,draw opacity=1 ][fill={rgb, 255:red, 208; green, 2; blue, 27 }  ,fill opacity=1 ] (64,43) .. controls (64,40.24) and (66.24,38) .. (69,38) .. controls (71.76,38) and (74,40.24) .. (74,43) .. controls (74,45.76) and (71.76,48) .. (69,48) .. controls (66.24,48) and (64,45.76) .. (64,43) -- cycle ;
\draw  [color={rgb, 255:red, 208; green, 2; blue, 27 }  ,draw opacity=1 ][fill={rgb, 255:red, 208; green, 2; blue, 27 }  ,fill opacity=1 ] (106,65) .. controls (106,62.24) and (108.24,60) .. (111,60) .. controls (113.76,60) and (116,62.24) .. (116,65) .. controls (116,67.76) and (113.76,70) .. (111,70) .. controls (108.24,70) and (106,67.76) .. (106,65) -- cycle ;
\draw [color={rgb, 255:red, 0; green, 0; blue, 0 }  ,draw opacity=1 ]   (11,65) -- (128,65) ;
\draw  [color={rgb, 255:red, 208; green, 2; blue, 27 }  ,draw opacity=1 ][fill={rgb, 255:red, 208; green, 2; blue, 27 }  ,fill opacity=1 ] (160,78) .. controls (160,75.24) and (162.24,73) .. (165,73) .. controls (167.76,73) and (170,75.24) .. (170,78) .. controls (170,80.76) and (167.76,83) .. (165,83) .. controls (162.24,83) and (160,80.76) .. (160,78) -- cycle ;
\draw  [color={rgb, 255:red, 208; green, 2; blue, 27 }  ,draw opacity=1 ][fill={rgb, 255:red, 208; green, 2; blue, 27 }  ,fill opacity=1 ] (359,126.5) .. controls (359,123.74) and (361.24,121.5) .. (364,121.5) .. controls (366.76,121.5) and (369,123.74) .. (369,126.5) .. controls (369,129.26) and (366.76,131.5) .. (364,131.5) .. controls (361.24,131.5) and (359,129.26) .. (359,126.5) -- cycle ;
\draw  [color={rgb, 255:red, 208; green, 2; blue, 27 }  ,draw opacity=1 ][fill={rgb, 255:red, 208; green, 2; blue, 27 }  ,fill opacity=1 ] (455.25,135.75) .. controls (455.25,132.99) and (457.49,130.75) .. (460.25,130.75) .. controls (463.01,130.75) and (465.25,132.99) .. (465.25,135.75) .. controls (465.25,138.51) and (463.01,140.75) .. (460.25,140.75) .. controls (457.49,140.75) and (455.25,138.51) .. (455.25,135.75) -- cycle ;
\draw  [color={rgb, 255:red, 208; green, 2; blue, 27 }  ,draw opacity=1 ][fill={rgb, 255:red, 208; green, 2; blue, 27 }  ,fill opacity=1 ] (599,147) .. controls (599,144.24) and (601.24,142) .. (604,142) .. controls (606.76,142) and (609,144.24) .. (609,147) .. controls (609,149.76) and (606.76,152) .. (604,152) .. controls (601.24,152) and (599,149.76) .. (599,147) -- cycle ;
\draw [color={rgb, 255:red, 0; green, 0; blue, 0 }  ,draw opacity=1 ]   (109,20) -- (12.5,19) ;
\draw [color={rgb, 255:red, 0; green, 0; blue, 0 }  ,draw opacity=1 ]   (109,20) -- (129,1) ;
\draw [color={rgb, 255:red, 0; green, 0; blue, 0 }  ,draw opacity=1 ]   (108.5,43) -- (128,65) ;
\draw [color={rgb, 255:red, 0; green, 0; blue, 0 }  ,draw opacity=1 ]   (181,83) -- (449.01,82.02) ;
\draw [color={rgb, 255:red, 0; green, 0; blue, 0 }  ,draw opacity=1 ]   (181,83) -- (224,125.5) ;
\draw [color={rgb, 255:red, 0; green, 0; blue, 0 }  ,draw opacity=1 ]   (224,125.5) -- (449,125.5) ;
\draw [color={rgb, 255:red, 0; green, 0; blue, 0 }  ,draw opacity=1 ]   (449,125.5) -- (449.01,82.02) ;
\draw [color={rgb, 255:red, 0; green, 0; blue, 0 }  ,draw opacity=1 ]   (224,125.5) -- (224.01,184.02) ;
\draw [color={rgb, 255:red, 0; green, 0; blue, 0 }  ,draw opacity=1 ]   (449.01,82.02) -- (521.01,0.02) ;
\draw [color={rgb, 255:red, 0; green, 0; blue, 0 }  ,draw opacity=1 ]   (449,125.5) -- (471.5,146) ;
\draw    (471.5,146) -- (604,147) ;
\draw    (604,147) -- (604,184) ;
\draw    (604,147) -- (654,84) ;
\draw [color={rgb, 255:red, 0; green, 0; blue, 0 }  ,draw opacity=1 ]   (471.5,146) -- (471.5,191) ;
\draw   (539,7) -- (589,7) -- (589,57) -- (539,57) -- cycle ;
\draw [color={rgb, 255:red, 0; green, 0; blue, 0 }  ,draw opacity=1 ]   (109,20) -- (108.5,43) ;
\draw [color={rgb, 255:red, 0; green, 0; blue, 0 }  ,draw opacity=1 ]   (108.5,43) -- (11,43) ;
\draw [color={rgb, 255:red, 0; green, 0; blue, 0 }  ,draw opacity=1 ]   (128,65) -- (181,83) ;
\draw  [color={rgb, 255:red, 208; green, 2; blue, 27 }  ,draw opacity=1 ][fill={rgb, 255:red, 208; green, 2; blue, 27 }  ,fill opacity=1 ] (258,84.5) .. controls (258,81.74) and (260.24,79.5) .. (263,79.5) .. controls (265.76,79.5) and (268,81.74) .. (268,84.5) .. controls (268,87.26) and (265.76,89.5) .. (263,89.5) .. controls (260.24,89.5) and (258,87.26) .. (258,84.5) -- cycle ;

\draw (547.08,12.35) node [anchor=north west][inner sep=0.75pt]  [font=\huge,rotate=-358.58]  {$v_{8}$};

\end{tikzpicture}}  
 \\ \end{tabular}
\caption{Vertices of $B$ corresponding to where one of the the marks sits at a four-valent vertex.}
\label{fig:vertmar}
\end{figure}
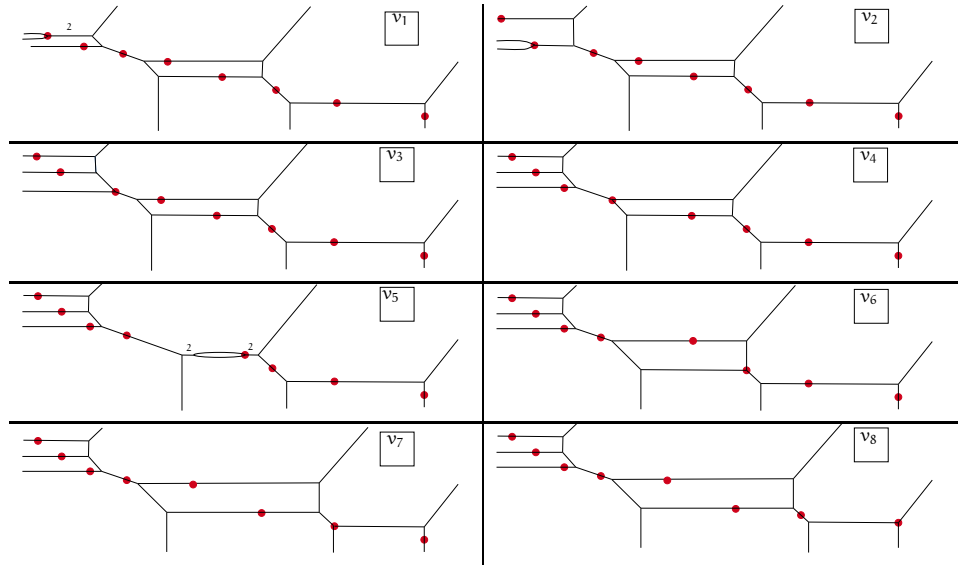

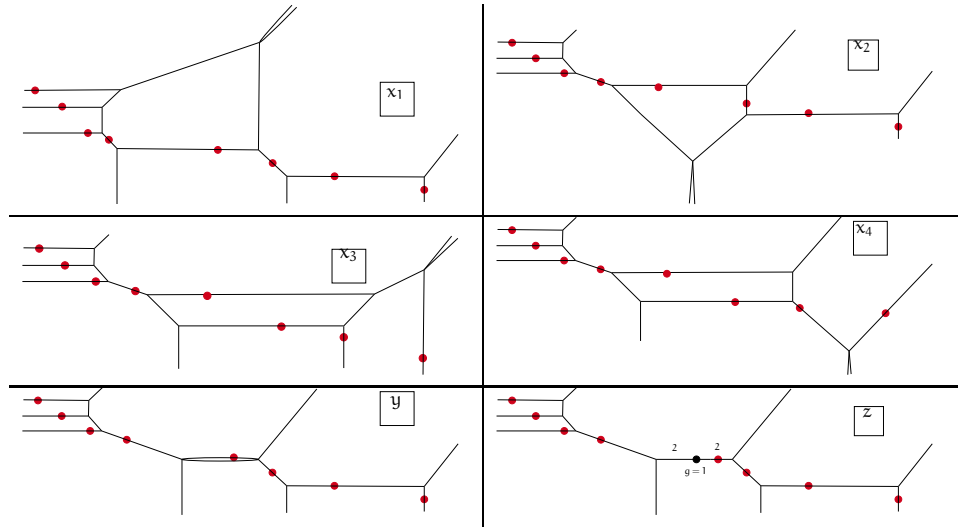
\begin{figure}[h]
\begin{tabular}{c|c}
     \resizebox{.4 \textwidth}{!}{

\begin{tikzpicture}[x=0.75pt,y=0.75pt,yscale=-1,xscale=1]

\draw  [color={rgb, 255:red, 208; green, 2; blue, 27 }  ,draw opacity=1 ][fill={rgb, 255:red, 208; green, 2; blue, 27 }  ,fill opacity=1 ] (20,196) .. controls (20,193.24) and (22.24,191) .. (25,191) .. controls (27.76,191) and (30,193.24) .. (30,196) .. controls (30,198.76) and (27.76,201) .. (25,201) .. controls (22.24,201) and (20,198.76) .. (20,196) -- cycle ;
\draw  [color={rgb, 255:red, 208; green, 2; blue, 27 }  ,draw opacity=1 ][fill={rgb, 255:red, 208; green, 2; blue, 27 }  ,fill opacity=1 ] (60,220) .. controls (60,217.24) and (62.24,215) .. (65,215) .. controls (67.76,215) and (70,217.24) .. (70,220) .. controls (70,222.76) and (67.76,225) .. (65,225) .. controls (62.24,225) and (60,222.76) .. (60,220) -- cycle ;
\draw  [color={rgb, 255:red, 208; green, 2; blue, 27 }  ,draw opacity=1 ][fill={rgb, 255:red, 208; green, 2; blue, 27 }  ,fill opacity=1 ] (98,259) .. controls (98,256.24) and (100.24,254) .. (103,254) .. controls (105.76,254) and (108,256.24) .. (108,259) .. controls (108,261.76) and (105.76,264) .. (103,264) .. controls (100.24,264) and (98,261.76) .. (98,259) -- cycle ;
\draw [color={rgb, 255:red, 0; green, 0; blue, 0 }  ,draw opacity=1 ]   (7,259) -- (124,259) ;
\draw  [color={rgb, 255:red, 208; green, 2; blue, 27 }  ,draw opacity=1 ][fill={rgb, 255:red, 208; green, 2; blue, 27 }  ,fill opacity=1 ] (129,269) .. controls (129,266.24) and (131.24,264) .. (134,264) .. controls (136.76,264) and (139,266.24) .. (139,269) .. controls (139,271.76) and (136.76,274) .. (134,274) .. controls (131.24,274) and (129,271.76) .. (129,269) -- cycle ;
\draw  [color={rgb, 255:red, 208; green, 2; blue, 27 }  ,draw opacity=1 ][fill={rgb, 255:red, 208; green, 2; blue, 27 }  ,fill opacity=1 ] (290,284) .. controls (290,281.24) and (292.24,279) .. (295,279) .. controls (297.76,279) and (300,281.24) .. (300,284) .. controls (300,286.76) and (297.76,289) .. (295,289) .. controls (292.24,289) and (290,286.76) .. (290,284) -- cycle ;
\draw  [color={rgb, 255:red, 208; green, 2; blue, 27 }  ,draw opacity=1 ][fill={rgb, 255:red, 208; green, 2; blue, 27 }  ,fill opacity=1 ] (371,303.5) .. controls (371,300.74) and (373.24,298.5) .. (376,298.5) .. controls (378.76,298.5) and (381,300.74) .. (381,303.5) .. controls (381,306.26) and (378.76,308.5) .. (376,308.5) .. controls (373.24,308.5) and (371,306.26) .. (371,303.5) -- cycle ;
\draw  [color={rgb, 255:red, 208; green, 2; blue, 27 }  ,draw opacity=1 ][fill={rgb, 255:red, 208; green, 2; blue, 27 }  ,fill opacity=1 ] (462.5,323) .. controls (462.5,320.24) and (464.74,318) .. (467.5,318) .. controls (470.26,318) and (472.5,320.24) .. (472.5,323) .. controls (472.5,325.76) and (470.26,328) .. (467.5,328) .. controls (464.74,328) and (462.5,325.76) .. (462.5,323) -- cycle ;
\draw  [color={rgb, 255:red, 208; green, 2; blue, 27 }  ,draw opacity=1 ][fill={rgb, 255:red, 208; green, 2; blue, 27 }  ,fill opacity=1 ] (595,343) .. controls (595,340.24) and (597.24,338) .. (600,338) .. controls (602.76,338) and (605,340.24) .. (605,343) .. controls (605,345.76) and (602.76,348) .. (600,348) .. controls (597.24,348) and (595,345.76) .. (595,343) -- cycle ;
\draw [color={rgb, 255:red, 0; green, 0; blue, 0 }  ,draw opacity=1 ]   (151.5,195) -- (9,196) ;
\draw [color={rgb, 255:red, 0; green, 0; blue, 0 }  ,draw opacity=1 ]   (124,221) -- (135.8,209.79) -- (151.5,195) ;
\draw [color={rgb, 255:red, 0; green, 0; blue, 0 }  ,draw opacity=1 ]   (124,259) -- (146,282) ;
\draw    (146,282) -- (355,284) ;
\draw [color={rgb, 255:red, 0; green, 0; blue, 0 }  ,draw opacity=1 ]   (356.5,125) -- (355,284) ;
\draw [color={rgb, 255:red, 0; green, 0; blue, 0 }  ,draw opacity=1 ]   (146,282) -- (146,364) ;
\draw    (355,284) -- (397,323) ;
\draw    (397,323) -- (600,324) ;
\draw    (600,324) -- (600,361) ;
\draw    (600,324) -- (650,261) ;
\draw    (397,323) -- (397,363) ;
\draw   (535,184) -- (585,184) -- (585,234) -- (535,234) -- cycle ;
\draw [color={rgb, 255:red, 0; green, 0; blue, 0 }  ,draw opacity=1 ]   (124,221) -- (6.13,221) ;
\draw [color={rgb, 255:red, 0; green, 0; blue, 0 }  ,draw opacity=1 ]   (124,221) -- (124,259) ;
\draw [color={rgb, 255:red, 0; green, 0; blue, 0 }  ,draw opacity=1 ]   (151.5,195) -- (356.5,125) ;
\draw    (404.5,70) .. controls (347.5,131) and (332.5,153) .. (411.5,73) ;

\draw (543.08,191.35) node [anchor=north west][inner sep=0.75pt]  [font=\huge,rotate=-358.58]  {$x_{1}$};

\end{tikzpicture}}  
 & %
     \resizebox{.4 \textwidth}{!}{

\begin{tikzpicture}[x=0.75pt,y=0.75pt,yscale=-1,xscale=1]

\draw  [color={rgb, 255:red, 208; green, 2; blue, 27 }  ,draw opacity=1 ][fill={rgb, 255:red, 208; green, 2; blue, 27 }  ,fill opacity=1 ] (29,20) .. controls (29,17.24) and (31.24,15) .. (34,15) .. controls (36.76,15) and (39,17.24) .. (39,20) .. controls (39,22.76) and (36.76,25) .. (34,25) .. controls (31.24,25) and (29,22.76) .. (29,20) -- cycle ;
\draw  [color={rgb, 255:red, 208; green, 2; blue, 27 }  ,draw opacity=1 ][fill={rgb, 255:red, 208; green, 2; blue, 27 }  ,fill opacity=1 ] (64,43) .. controls (64,40.24) and (66.24,38) .. (69,38) .. controls (71.76,38) and (74,40.24) .. (74,43) .. controls (74,45.76) and (71.76,48) .. (69,48) .. controls (66.24,48) and (64,45.76) .. (64,43) -- cycle ;
\draw  [color={rgb, 255:red, 208; green, 2; blue, 27 }  ,draw opacity=1 ][fill={rgb, 255:red, 208; green, 2; blue, 27 }  ,fill opacity=1 ] (106,65) .. controls (106,62.24) and (108.24,60) .. (111,60) .. controls (113.76,60) and (116,62.24) .. (116,65) .. controls (116,67.76) and (113.76,70) .. (111,70) .. controls (108.24,70) and (106,67.76) .. (106,65) -- cycle ;
\draw [color={rgb, 255:red, 0; green, 0; blue, 0 }  ,draw opacity=1 ]   (11,65) -- (128,65) ;
\draw  [color={rgb, 255:red, 208; green, 2; blue, 27 }  ,draw opacity=1 ][fill={rgb, 255:red, 208; green, 2; blue, 27 }  ,fill opacity=1 ] (160,78) .. controls (160,75.24) and (162.24,73) .. (165,73) .. controls (167.76,73) and (170,75.24) .. (170,78) .. controls (170,80.76) and (167.76,83) .. (165,83) .. controls (162.24,83) and (160,80.76) .. (160,78) -- cycle ;
\draw  [color={rgb, 255:red, 208; green, 2; blue, 27 }  ,draw opacity=1 ][fill={rgb, 255:red, 208; green, 2; blue, 27 }  ,fill opacity=1 ] (375.01,109.76) .. controls (375.01,107) and (377.24,104.76) .. (380.01,104.76) .. controls (382.77,104.76) and (385.01,107) .. (385.01,109.76) .. controls (385.01,112.52) and (382.77,114.76) .. (380.01,114.76) .. controls (377.24,114.76) and (375.01,112.52) .. (375.01,109.76) -- cycle ;
\draw  [color={rgb, 255:red, 208; green, 2; blue, 27 }  ,draw opacity=1 ][fill={rgb, 255:red, 208; green, 2; blue, 27 }  ,fill opacity=1 ] (466.5,124) .. controls (466.5,121.24) and (468.74,119) .. (471.5,119) .. controls (474.26,119) and (476.5,121.24) .. (476.5,124) .. controls (476.5,126.76) and (474.26,129) .. (471.5,129) .. controls (468.74,129) and (466.5,126.76) .. (466.5,124) -- cycle ;
\draw  [color={rgb, 255:red, 208; green, 2; blue, 27 }  ,draw opacity=1 ][fill={rgb, 255:red, 208; green, 2; blue, 27 }  ,fill opacity=1 ] (599,144) .. controls (599,141.24) and (601.24,139) .. (604,139) .. controls (606.76,139) and (609,141.24) .. (609,144) .. controls (609,146.76) and (606.76,149) .. (604,149) .. controls (601.24,149) and (599,146.76) .. (599,144) -- cycle ;
\draw [color={rgb, 255:red, 0; green, 0; blue, 0 }  ,draw opacity=1 ]   (109,20) -- (12.5,19) ;
\draw [color={rgb, 255:red, 0; green, 0; blue, 0 }  ,draw opacity=1 ]   (109,20) -- (129,1) ;
\draw [color={rgb, 255:red, 0; green, 0; blue, 0 }  ,draw opacity=1 ]   (108.5,43) -- (128,65) ;
\draw [color={rgb, 255:red, 0; green, 0; blue, 0 }  ,draw opacity=1 ]   (181,83) -- (380.01,83.02) ;
\draw [color={rgb, 255:red, 0; green, 0; blue, 0 }  ,draw opacity=1 ]   (181,83) -- (224,125.5) ;
\draw [color={rgb, 255:red, 0; green, 0; blue, 0 }  ,draw opacity=1 ]   (380,126.5) -- (380.01,83.02) ;
\draw [color={rgb, 255:red, 0; green, 0; blue, 0 }  ,draw opacity=1 ]   (224,125.5) -- (300.51,195.02) ;
\draw [color={rgb, 255:red, 0; green, 0; blue, 0 }  ,draw opacity=1 ]   (380.01,83.02) -- (452.01,1.02) ;
\draw [color={rgb, 255:red, 0; green, 0; blue, 0 }  ,draw opacity=1 ]   (380,126.5) -- (604,125) ;
\draw    (604,125) -- (604,162) ;
\draw    (604,125) -- (654,62) ;
\draw [color={rgb, 255:red, 0; green, 0; blue, 0 }  ,draw opacity=1 ]   (380,126.5) ;
\draw   (530,16) -- (574.51,16) -- (574.51,60.51) -- (530,60.51) -- cycle ;
\draw [color={rgb, 255:red, 0; green, 0; blue, 0 }  ,draw opacity=1 ]   (109,20) -- (108.5,43) ;
\draw [color={rgb, 255:red, 0; green, 0; blue, 0 }  ,draw opacity=1 ]   (108.5,43) -- (11,43) ;
\draw [color={rgb, 255:red, 0; green, 0; blue, 0 }  ,draw opacity=1 ]   (128,65) -- (181,83) ;
\draw  [color={rgb, 255:red, 208; green, 2; blue, 27 }  ,draw opacity=1 ][fill={rgb, 255:red, 208; green, 2; blue, 27 }  ,fill opacity=1 ] (245.01,85.76) .. controls (245.01,83) and (247.24,80.76) .. (250.01,80.76) .. controls (252.77,80.76) and (255.01,83) .. (255.01,85.76) .. controls (255.01,88.52) and (252.77,90.76) .. (250.01,90.76) .. controls (247.24,90.76) and (245.01,88.52) .. (245.01,85.76) -- cycle ;
\draw [color={rgb, 255:red, 0; green, 0; blue, 0 }  ,draw opacity=1 ]   (300.51,195.02) -- (303.51,258.27) ;
\draw [color={rgb, 255:red, 0; green, 0; blue, 0 }  ,draw opacity=1 ]   (300.51,195.02) -- (380,126.5) ;
\draw [color={rgb, 255:red, 0; green, 0; blue, 0 }  ,draw opacity=1 ]   (300.51,195.02) -- (295.51,257.27) ;

\draw (536.08,15.35) node [anchor=north west][inner sep=0.75pt]  [font=\huge,rotate=-358.58]  {$x_{2}$};

\end{tikzpicture}}  
 \\ \hline
     \resizebox{.4 \textwidth}{!}{

\begin{tikzpicture}[x=0.75pt,y=0.75pt,yscale=-1,xscale=1]

\draw  [color={rgb, 255:red, 208; green, 2; blue, 27 }  ,draw opacity=1 ][fill={rgb, 255:red, 208; green, 2; blue, 27 }  ,fill opacity=1 ] (29,20) .. controls (29,17.24) and (31.24,15) .. (34,15) .. controls (36.76,15) and (39,17.24) .. (39,20) .. controls (39,22.76) and (36.76,25) .. (34,25) .. controls (31.24,25) and (29,22.76) .. (29,20) -- cycle ;
\draw  [color={rgb, 255:red, 208; green, 2; blue, 27 }  ,draw opacity=1 ][fill={rgb, 255:red, 208; green, 2; blue, 27 }  ,fill opacity=1 ] (64,43) .. controls (64,40.24) and (66.24,38) .. (69,38) .. controls (71.76,38) and (74,40.24) .. (74,43) .. controls (74,45.76) and (71.76,48) .. (69,48) .. controls (66.24,48) and (64,45.76) .. (64,43) -- cycle ;
\draw  [color={rgb, 255:red, 208; green, 2; blue, 27 }  ,draw opacity=1 ][fill={rgb, 255:red, 208; green, 2; blue, 27 }  ,fill opacity=1 ] (106,65) .. controls (106,62.24) and (108.24,60) .. (111,60) .. controls (113.76,60) and (116,62.24) .. (116,65) .. controls (116,67.76) and (113.76,70) .. (111,70) .. controls (108.24,70) and (106,67.76) .. (106,65) -- cycle ;
\draw [color={rgb, 255:red, 0; green, 0; blue, 0 }  ,draw opacity=1 ]   (11,65) -- (128,65) ;
\draw  [color={rgb, 255:red, 208; green, 2; blue, 27 }  ,draw opacity=1 ][fill={rgb, 255:red, 208; green, 2; blue, 27 }  ,fill opacity=1 ] (160,78) .. controls (160,75.24) and (162.24,73) .. (165,73) .. controls (167.76,73) and (170,75.24) .. (170,78) .. controls (170,80.76) and (167.76,83) .. (165,83) .. controls (162.24,83) and (160,80.76) .. (160,78) -- cycle ;
\draw  [color={rgb, 255:red, 208; green, 2; blue, 27 }  ,draw opacity=1 ][fill={rgb, 255:red, 208; green, 2; blue, 27 }  ,fill opacity=1 ] (359,126.5) .. controls (359,123.74) and (361.24,121.5) .. (364,121.5) .. controls (366.76,121.5) and (369,123.74) .. (369,126.5) .. controls (369,129.26) and (366.76,131.5) .. (364,131.5) .. controls (361.24,131.5) and (359,129.26) .. (359,126.5) -- cycle ;
\draw  [color={rgb, 255:red, 208; green, 2; blue, 27 }  ,draw opacity=1 ][fill={rgb, 255:red, 208; green, 2; blue, 27 }  ,fill opacity=1 ] (444,141) .. controls (444,138.24) and (446.24,136) .. (449,136) .. controls (451.76,136) and (454,138.24) .. (454,141) .. controls (454,143.76) and (451.76,146) .. (449,146) .. controls (446.24,146) and (444,143.76) .. (444,141) -- cycle ;
\draw  [color={rgb, 255:red, 208; green, 2; blue, 27 }  ,draw opacity=1 ][fill={rgb, 255:red, 208; green, 2; blue, 27 }  ,fill opacity=1 ] (551.75,169.51) .. controls (551.75,166.75) and (553.99,164.51) .. (556.75,164.51) .. controls (559.51,164.51) and (561.75,166.75) .. (561.75,169.51) .. controls (561.75,172.27) and (559.51,174.51) .. (556.75,174.51) .. controls (553.99,174.51) and (551.75,172.27) .. (551.75,169.51) -- cycle ;
\draw [color={rgb, 255:red, 0; green, 0; blue, 0 }  ,draw opacity=1 ]   (109,20) -- (12.5,19) ;
\draw [color={rgb, 255:red, 0; green, 0; blue, 0 }  ,draw opacity=1 ]   (109,20) -- (129,1) ;
\draw [color={rgb, 255:red, 0; green, 0; blue, 0 }  ,draw opacity=1 ]   (108.5,43) -- (128,65) ;
\draw [color={rgb, 255:red, 0; green, 0; blue, 0 }  ,draw opacity=1 ]   (181,83) -- (491.01,82.02) ;
\draw [color={rgb, 255:red, 0; green, 0; blue, 0 }  ,draw opacity=1 ]   (181,83) -- (224,125.5) ;
\draw [color={rgb, 255:red, 0; green, 0; blue, 0 }  ,draw opacity=1 ]   (224,125.5) -- (449,125.5) ;
\draw [color={rgb, 255:red, 0; green, 0; blue, 0 }  ,draw opacity=1 ]   (449,125.5) -- (491.01,82.02) ;
\draw [color={rgb, 255:red, 0; green, 0; blue, 0 }  ,draw opacity=1 ]   (224,125.5) -- (224.01,184.02) ;
\draw [color={rgb, 255:red, 0; green, 0; blue, 0 }  ,draw opacity=1 ]   (491.01,82.02) -- (558.52,49.02) ;
\draw [color={rgb, 255:red, 0; green, 0; blue, 0 }  ,draw opacity=1 ]   (558.52,49.02) -- (557.02,192.02) ;
\draw [color={rgb, 255:red, 0; green, 0; blue, 0 }  ,draw opacity=1 ]   (558.52,49.02) -- (596.02,3.03) ;
\draw [color={rgb, 255:red, 0; green, 0; blue, 0 }  ,draw opacity=1 ]   (449,126) -- (449,183.02) ;
\draw   (432.99,21) -- (479.01,21) -- (479.01,67.02) -- (432.99,67.02) -- cycle ;
\draw [color={rgb, 255:red, 0; green, 0; blue, 0 }  ,draw opacity=1 ]   (109,20) -- (108.5,43) ;
\draw [color={rgb, 255:red, 0; green, 0; blue, 0 }  ,draw opacity=1 ]   (108.5,43) -- (11,43) ;
\draw [color={rgb, 255:red, 0; green, 0; blue, 0 }  ,draw opacity=1 ]   (128,65) -- (181,83) ;
\draw  [color={rgb, 255:red, 208; green, 2; blue, 27 }  ,draw opacity=1 ][fill={rgb, 255:red, 208; green, 2; blue, 27 }  ,fill opacity=1 ] (258,84.5) .. controls (258,81.74) and (260.24,79.5) .. (263,79.5) .. controls (265.76,79.5) and (268,81.74) .. (268,84.5) .. controls (268,87.26) and (265.76,89.5) .. (263,89.5) .. controls (260.24,89.5) and (258,87.26) .. (258,84.5) -- cycle ;
\draw [color={rgb, 255:red, 0; green, 0; blue, 0 }  ,draw opacity=1 ]   (558.52,49.02) -- (604.52,6.02) ;

\draw (441.08,19.35) node [anchor=north west][inner sep=0.75pt]  [font=\huge,rotate=-358.58]  {$x_{3}$};

\end{tikzpicture}}  
 & %
     \resizebox{.4 \textwidth}{!}{

\begin{tikzpicture}[x=0.75pt,y=0.75pt,yscale=-1,xscale=1]

\draw  [color={rgb, 255:red, 208; green, 2; blue, 27 }  ,draw opacity=1 ][fill={rgb, 255:red, 208; green, 2; blue, 27 }  ,fill opacity=1 ] (29,20) .. controls (29,17.24) and (31.24,15) .. (34,15) .. controls (36.76,15) and (39,17.24) .. (39,20) .. controls (39,22.76) and (36.76,25) .. (34,25) .. controls (31.24,25) and (29,22.76) .. (29,20) -- cycle ;
\draw  [color={rgb, 255:red, 208; green, 2; blue, 27 }  ,draw opacity=1 ][fill={rgb, 255:red, 208; green, 2; blue, 27 }  ,fill opacity=1 ] (64,43) .. controls (64,40.24) and (66.24,38) .. (69,38) .. controls (71.76,38) and (74,40.24) .. (74,43) .. controls (74,45.76) and (71.76,48) .. (69,48) .. controls (66.24,48) and (64,45.76) .. (64,43) -- cycle ;
\draw  [color={rgb, 255:red, 208; green, 2; blue, 27 }  ,draw opacity=1 ][fill={rgb, 255:red, 208; green, 2; blue, 27 }  ,fill opacity=1 ] (106,65) .. controls (106,62.24) and (108.24,60) .. (111,60) .. controls (113.76,60) and (116,62.24) .. (116,65) .. controls (116,67.76) and (113.76,70) .. (111,70) .. controls (108.24,70) and (106,67.76) .. (106,65) -- cycle ;
\draw [color={rgb, 255:red, 0; green, 0; blue, 0 }  ,draw opacity=1 ]   (11,65) -- (128,65) ;
\draw  [color={rgb, 255:red, 208; green, 2; blue, 27 }  ,draw opacity=1 ][fill={rgb, 255:red, 208; green, 2; blue, 27 }  ,fill opacity=1 ] (160,78) .. controls (160,75.24) and (162.24,73) .. (165,73) .. controls (167.76,73) and (170,75.24) .. (170,78) .. controls (170,80.76) and (167.76,83) .. (165,83) .. controls (162.24,83) and (160,80.76) .. (160,78) -- cycle ;
\draw  [color={rgb, 255:red, 208; green, 2; blue, 27 }  ,draw opacity=1 ][fill={rgb, 255:red, 208; green, 2; blue, 27 }  ,fill opacity=1 ] (359,126.5) .. controls (359,123.74) and (361.24,121.5) .. (364,121.5) .. controls (366.76,121.5) and (369,123.74) .. (369,126.5) .. controls (369,129.26) and (366.76,131.5) .. (364,131.5) .. controls (361.24,131.5) and (359,129.26) .. (359,126.5) -- cycle ;
\draw  [color={rgb, 255:red, 208; green, 2; blue, 27 }  ,draw opacity=1 ][fill={rgb, 255:red, 208; green, 2; blue, 27 }  ,fill opacity=1 ] (454.5,135.25) .. controls (454.5,132.49) and (456.74,130.25) .. (459.5,130.25) .. controls (462.26,130.25) and (464.5,132.49) .. (464.5,135.25) .. controls (464.5,138.01) and (462.26,140.25) .. (459.5,140.25) .. controls (456.74,140.25) and (454.5,138.01) .. (454.5,135.25) -- cycle ;
\draw  [color={rgb, 255:red, 208; green, 2; blue, 27 }  ,draw opacity=1 ][fill={rgb, 255:red, 208; green, 2; blue, 27 }  ,fill opacity=1 ] (581.5,143) .. controls (581.5,140.24) and (583.74,138) .. (586.5,138) .. controls (589.26,138) and (591.5,140.24) .. (591.5,143) .. controls (591.5,145.76) and (589.26,148) .. (586.5,148) .. controls (583.74,148) and (581.5,145.76) .. (581.5,143) -- cycle ;
\draw [color={rgb, 255:red, 0; green, 0; blue, 0 }  ,draw opacity=1 ]   (109,20) -- (12.5,19) ;
\draw [color={rgb, 255:red, 0; green, 0; blue, 0 }  ,draw opacity=1 ]   (109,20) -- (129,1) ;
\draw [color={rgb, 255:red, 0; green, 0; blue, 0 }  ,draw opacity=1 ]   (108.5,43) -- (128,65) ;
\draw [color={rgb, 255:red, 0; green, 0; blue, 0 }  ,draw opacity=1 ]   (181,83) -- (449.01,82.02) ;
\draw [color={rgb, 255:red, 0; green, 0; blue, 0 }  ,draw opacity=1 ]   (181,83) -- (224,125.5) ;
\draw [color={rgb, 255:red, 0; green, 0; blue, 0 }  ,draw opacity=1 ]   (224,125.5) -- (449,125.5) ;
\draw [color={rgb, 255:red, 0; green, 0; blue, 0 }  ,draw opacity=1 ]   (449,125.5) -- (449.01,82.02) ;
\draw [color={rgb, 255:red, 0; green, 0; blue, 0 }  ,draw opacity=1 ]   (224,125.5) -- (224.01,184.02) ;
\draw [color={rgb, 255:red, 0; green, 0; blue, 0 }  ,draw opacity=1 ]   (449.01,82.02) -- (521.01,0.02) ;
\draw [color={rgb, 255:red, 0; green, 0; blue, 0 }  ,draw opacity=1 ]   (449,125.5) -- (532.51,199.02) ;
\draw [color={rgb, 255:red, 0; green, 0; blue, 0 }  ,draw opacity=1 ]   (532.51,199.02) -- (535.51,233.02) ;
\draw [color={rgb, 255:red, 0; green, 0; blue, 0 }  ,draw opacity=1 ]   (532.51,199.02) -- (655.51,70.02) ;
\draw [color={rgb, 255:red, 0; green, 0; blue, 0 }  ,draw opacity=1 ]   (532.51,199.02) -- (530.51,234.02) ;
\draw   (539,7) -- (589,7) -- (589,57) -- (539,57) -- cycle ;
\draw [color={rgb, 255:red, 0; green, 0; blue, 0 }  ,draw opacity=1 ]   (109,20) -- (108.5,43) ;
\draw [color={rgb, 255:red, 0; green, 0; blue, 0 }  ,draw opacity=1 ]   (108.5,43) -- (11,43) ;
\draw [color={rgb, 255:red, 0; green, 0; blue, 0 }  ,draw opacity=1 ]   (128,65) -- (181,83) ;
\draw  [color={rgb, 255:red, 208; green, 2; blue, 27 }  ,draw opacity=1 ][fill={rgb, 255:red, 208; green, 2; blue, 27 }  ,fill opacity=1 ] (258,84.5) .. controls (258,81.74) and (260.24,79.5) .. (263,79.5) .. controls (265.76,79.5) and (268,81.74) .. (268,84.5) .. controls (268,87.26) and (265.76,89.5) .. (263,89.5) .. controls (260.24,89.5) and (258,87.26) .. (258,84.5) -- cycle ;

\draw (541.08,10.35) node [anchor=north west][inner sep=0.75pt]  [font=\huge,rotate=-358.58]  {$x_{4}$};

\end{tikzpicture}}  
 \\ \hline
     \resizebox{.4 \textwidth}{!}{

\begin{tikzpicture}[x=0.75pt,y=0.75pt,yscale=-1,xscale=1]

\draw  [color={rgb, 255:red, 208; green, 2; blue, 27 }  ,draw opacity=1 ][fill={rgb, 255:red, 208; green, 2; blue, 27 }  ,fill opacity=1 ] (29,20) .. controls (29,17.24) and (31.24,15) .. (34,15) .. controls (36.76,15) and (39,17.24) .. (39,20) .. controls (39,22.76) and (36.76,25) .. (34,25) .. controls (31.24,25) and (29,22.76) .. (29,20) -- cycle ;
\draw  [color={rgb, 255:red, 208; green, 2; blue, 27 }  ,draw opacity=1 ][fill={rgb, 255:red, 208; green, 2; blue, 27 }  ,fill opacity=1 ] (64,43) .. controls (64,40.24) and (66.24,38) .. (69,38) .. controls (71.76,38) and (74,40.24) .. (74,43) .. controls (74,45.76) and (71.76,48) .. (69,48) .. controls (66.24,48) and (64,45.76) .. (64,43) -- cycle ;
\draw  [color={rgb, 255:red, 208; green, 2; blue, 27 }  ,draw opacity=1 ][fill={rgb, 255:red, 208; green, 2; blue, 27 }  ,fill opacity=1 ] (106,65) .. controls (106,62.24) and (108.24,60) .. (111,60) .. controls (113.76,60) and (116,62.24) .. (116,65) .. controls (116,67.76) and (113.76,70) .. (111,70) .. controls (108.24,70) and (106,67.76) .. (106,65) -- cycle ;
\draw [color={rgb, 255:red, 0; green, 0; blue, 0 }  ,draw opacity=1 ]   (11,65) -- (128,65) ;
\draw  [color={rgb, 255:red, 208; green, 2; blue, 27 }  ,draw opacity=1 ][fill={rgb, 255:red, 208; green, 2; blue, 27 }  ,fill opacity=1 ] (160,78) .. controls (160,75.24) and (162.24,73) .. (165,73) .. controls (167.76,73) and (170,75.24) .. (170,78) .. controls (170,80.76) and (167.76,83) .. (165,83) .. controls (162.24,83) and (160,80.76) .. (160,78) -- cycle ;
\draw  [color={rgb, 255:red, 208; green, 2; blue, 27 }  ,draw opacity=1 ][fill={rgb, 255:red, 208; green, 2; blue, 27 }  ,fill opacity=1 ] (318,104) .. controls (318,101.24) and (320.24,99) .. (323,99) .. controls (325.76,99) and (328,101.24) .. (328,104) .. controls (328,106.76) and (325.76,109) .. (323,109) .. controls (320.24,109) and (318,106.76) .. (318,104) -- cycle ;
\draw  [color={rgb, 255:red, 208; green, 2; blue, 27 }  ,draw opacity=1 ][fill={rgb, 255:red, 208; green, 2; blue, 27 }  ,fill opacity=1 ] (375,126.5) .. controls (375,123.74) and (377.24,121.5) .. (380,121.5) .. controls (382.76,121.5) and (385,123.74) .. (385,126.5) .. controls (385,129.26) and (382.76,131.5) .. (380,131.5) .. controls (377.24,131.5) and (375,129.26) .. (375,126.5) -- cycle ;
\draw  [color={rgb, 255:red, 208; green, 2; blue, 27 }  ,draw opacity=1 ][fill={rgb, 255:red, 208; green, 2; blue, 27 }  ,fill opacity=1 ] (466.5,146) .. controls (466.5,143.24) and (468.74,141) .. (471.5,141) .. controls (474.26,141) and (476.5,143.24) .. (476.5,146) .. controls (476.5,148.76) and (474.26,151) .. (471.5,151) .. controls (468.74,151) and (466.5,148.76) .. (466.5,146) -- cycle ;
\draw  [color={rgb, 255:red, 208; green, 2; blue, 27 }  ,draw opacity=1 ][fill={rgb, 255:red, 208; green, 2; blue, 27 }  ,fill opacity=1 ] (599,166) .. controls (599,163.24) and (601.24,161) .. (604,161) .. controls (606.76,161) and (609,163.24) .. (609,166) .. controls (609,168.76) and (606.76,171) .. (604,171) .. controls (601.24,171) and (599,168.76) .. (599,166) -- cycle ;
\draw [color={rgb, 255:red, 0; green, 0; blue, 0 }  ,draw opacity=1 ]   (109,20) -- (12.5,19) ;
\draw [color={rgb, 255:red, 0; green, 0; blue, 0 }  ,draw opacity=1 ]   (109,20) -- (129,1) ;
\draw [color={rgb, 255:red, 0; green, 0; blue, 0 }  ,draw opacity=1 ]   (108.5,43) -- (128,65) ;
\draw [color={rgb, 255:red, 0; green, 0; blue, 0 }  ,draw opacity=1 ]   (246.5,107) -- (246.5,189) ;
\draw    (359,107) -- (445.5,3) ;
\draw    (359,107) -- (401,146) ;
\draw    (401,146) -- (604,147) ;
\draw    (604,147) -- (604,184) ;
\draw    (604,147) -- (654,84) ;
\draw    (401,146) -- (401,186) ;
\draw   (539,7) -- (589,7) -- (589,57) -- (539,57) -- cycle ;
\draw [color={rgb, 255:red, 0; green, 0; blue, 0 }  ,draw opacity=1 ]   (109,20) -- (108.5,43) ;
\draw [color={rgb, 255:red, 0; green, 0; blue, 0 }  ,draw opacity=1 ]   (108.5,43) -- (11,43) ;
\draw [color={rgb, 255:red, 0; green, 0; blue, 0 }  ,draw opacity=1 ]   (128,65) -- (246.5,107) ;
\draw   (246.5,107) .. controls (246.5,105.62) and (271.68,104.5) .. (302.75,104.5) .. controls (333.82,104.5) and (359,105.62) .. (359,107) .. controls (359,108.38) and (333.82,109.5) .. (302.75,109.5) .. controls (271.68,109.5) and (246.5,108.38) .. (246.5,107) -- cycle ;

\draw (553.08,13.35) node [anchor=north west][inner sep=0.75pt]  [font=\huge,rotate=-358.58]  {$y$};

\end{tikzpicture}}  
 & %
     \resizebox{.4 \textwidth}{!}{

\begin{tikzpicture}[x=0.75pt,y=0.75pt,yscale=-1,xscale=1]

\draw  [color={rgb, 255:red, 208; green, 2; blue, 27 }  ,draw opacity=1 ][fill={rgb, 255:red, 208; green, 2; blue, 27 }  ,fill opacity=1 ] (29,20) .. controls (29,17.24) and (31.24,15) .. (34,15) .. controls (36.76,15) and (39,17.24) .. (39,20) .. controls (39,22.76) and (36.76,25) .. (34,25) .. controls (31.24,25) and (29,22.76) .. (29,20) -- cycle ;
\draw  [color={rgb, 255:red, 208; green, 2; blue, 27 }  ,draw opacity=1 ][fill={rgb, 255:red, 208; green, 2; blue, 27 }  ,fill opacity=1 ] (64,43) .. controls (64,40.24) and (66.24,38) .. (69,38) .. controls (71.76,38) and (74,40.24) .. (74,43) .. controls (74,45.76) and (71.76,48) .. (69,48) .. controls (66.24,48) and (64,45.76) .. (64,43) -- cycle ;
\draw  [color={rgb, 255:red, 208; green, 2; blue, 27 }  ,draw opacity=1 ][fill={rgb, 255:red, 208; green, 2; blue, 27 }  ,fill opacity=1 ] (106,65) .. controls (106,62.24) and (108.24,60) .. (111,60) .. controls (113.76,60) and (116,62.24) .. (116,65) .. controls (116,67.76) and (113.76,70) .. (111,70) .. controls (108.24,70) and (106,67.76) .. (106,65) -- cycle ;
\draw [color={rgb, 255:red, 0; green, 0; blue, 0 }  ,draw opacity=1 ]   (11,65) -- (128,65) ;
\draw  [color={rgb, 255:red, 208; green, 2; blue, 27 }  ,draw opacity=1 ][fill={rgb, 255:red, 208; green, 2; blue, 27 }  ,fill opacity=1 ] (160,78) .. controls (160,75.24) and (162.24,73) .. (165,73) .. controls (167.76,73) and (170,75.24) .. (170,78) .. controls (170,80.76) and (167.76,83) .. (165,83) .. controls (162.24,83) and (160,80.76) .. (160,78) -- cycle ;
\draw  [color={rgb, 255:red, 208; green, 2; blue, 27 }  ,draw opacity=1 ][fill={rgb, 255:red, 208; green, 2; blue, 27 }  ,fill opacity=1 ] (333,107) .. controls (333,104.24) and (335.24,102) .. (338,102) .. controls (340.76,102) and (343,104.24) .. (343,107) .. controls (343,109.76) and (340.76,112) .. (338,112) .. controls (335.24,112) and (333,109.76) .. (333,107) -- cycle ;
\draw  [color={rgb, 255:red, 208; green, 2; blue, 27 }  ,draw opacity=1 ][fill={rgb, 255:red, 208; green, 2; blue, 27 }  ,fill opacity=1 ] (375,126.5) .. controls (375,123.74) and (377.24,121.5) .. (380,121.5) .. controls (382.76,121.5) and (385,123.74) .. (385,126.5) .. controls (385,129.26) and (382.76,131.5) .. (380,131.5) .. controls (377.24,131.5) and (375,129.26) .. (375,126.5) -- cycle ;
\draw  [color={rgb, 255:red, 208; green, 2; blue, 27 }  ,draw opacity=1 ][fill={rgb, 255:red, 208; green, 2; blue, 27 }  ,fill opacity=1 ] (466.5,146) .. controls (466.5,143.24) and (468.74,141) .. (471.5,141) .. controls (474.26,141) and (476.5,143.24) .. (476.5,146) .. controls (476.5,148.76) and (474.26,151) .. (471.5,151) .. controls (468.74,151) and (466.5,148.76) .. (466.5,146) -- cycle ;
\draw  [color={rgb, 255:red, 208; green, 2; blue, 27 }  ,draw opacity=1 ][fill={rgb, 255:red, 208; green, 2; blue, 27 }  ,fill opacity=1 ] (599,166) .. controls (599,163.24) and (601.24,161) .. (604,161) .. controls (606.76,161) and (609,163.24) .. (609,166) .. controls (609,168.76) and (606.76,171) .. (604,171) .. controls (601.24,171) and (599,168.76) .. (599,166) -- cycle ;
\draw [color={rgb, 255:red, 0; green, 0; blue, 0 }  ,draw opacity=1 ]   (109,20) -- (12.5,19) ;
\draw [color={rgb, 255:red, 0; green, 0; blue, 0 }  ,draw opacity=1 ]   (109,20) -- (129,1) ;
\draw [color={rgb, 255:red, 0; green, 0; blue, 0 }  ,draw opacity=1 ]   (108.5,43) -- (128,65) ;
\draw [color={rgb, 255:red, 0; green, 0; blue, 0 }  ,draw opacity=1 ]   (246.5,107) -- (246.5,189) ;
\draw [color={rgb, 255:red, 0; green, 0; blue, 0 }  ,draw opacity=1 ]   (359,107) -- (445.5,3) ;
\draw [color={rgb, 255:red, 0; green, 0; blue, 0 }  ,draw opacity=1 ]   (359,107) -- (401,146) ;
\draw    (401,146) -- (604,147) ;
\draw    (604,147) -- (604,184) ;
\draw    (604,147) -- (654,84) ;
\draw    (401,146) -- (401,186) ;
\draw   (539,29) -- (582,29) -- (582,72) -- (539,72) -- cycle ;
\draw [color={rgb, 255:red, 0; green, 0; blue, 0 }  ,draw opacity=1 ]   (109,20) -- (108.5,43) ;
\draw [color={rgb, 255:red, 0; green, 0; blue, 0 }  ,draw opacity=1 ]   (108.5,43) -- (11,43) ;
\draw [color={rgb, 255:red, 0; green, 0; blue, 0 }  ,draw opacity=1 ]   (128,65) -- (246.5,107) ;
\draw [color={rgb, 255:red, 0; green, 0; blue, 0 }  ,draw opacity=1 ]   (246.5,107) -- (327,107) ;
\draw [color={rgb, 255:red, 0; green, 0; blue, 0 }  ,draw opacity=1 ]   (327,107) -- (359,107) ;
\draw  [color={rgb, 255:red, 0; green, 0; blue, 0 }  ,draw opacity=1 ][fill={rgb, 255:red, 0; green, 0; blue, 0 }  ,fill opacity=1 ] (301,107) .. controls (301,104.24) and (303.24,102) .. (306,102) .. controls (308.76,102) and (311,104.24) .. (311,107) .. controls (311,109.76) and (308.76,112) .. (306,112) .. controls (303.24,112) and (301,109.76) .. (301,107) -- cycle ;

\draw (550.08,30.35) node [anchor=north west][inner sep=0.75pt]  [font=\huge,rotate=-358.58]  {$z$};
\draw (269,83.4) node [anchor=north west][inner sep=0.75pt]    {$2$};
\draw (287,117.4) node [anchor=north west][inner sep=0.75pt]    {$g=1$};
\draw (332,82.4) node [anchor=north west][inner sep=0.75pt]    {$2$};

\end{tikzpicture}}  
 
 \end{tabular}
\caption{The remaining vertices of $B$.}
\label{fig:remver}
\end{figure}

The weights of the cycle $\alpha_B$ are computed using the techniques of \cite[Remark 4.8]{KerberMarkwig}, \cite[Corollary 2.27]{GathmannKerberMarkwig}. Given an edge $e$ of $B$, corresponding to a given topological type of maps, choose a tree containing all eight markings of the source curve; $x,y$ and the edge lengths  $l_1, \ldots ,l_{15}$ of its compact edges  may be used as integral linear coordinates for the cone of $WS_{8}(\mathbb{TP}^2,3)$ containing $e$. Then $\alpha_B$ assigns to $e$ the gcd of the $16\times 16$ minors of the matrix representing the linear map $\prod_{i=1}^8 ev_i$. In Figure \ref{fig:matrixmult}, we show as an example the case of $e$ corresponding to the edge labeled $5$ in Figure \ref{fig:fam}. For the choice of  tree of $\Gamma$ and labeling of its compact edges  depicted in the figure, the resulting matrix is explicitly computed. Deleting the fifth column, we shaded a block triangular decomposition of the remaining matrix that is easily seen to have determinant one. It follows that the weight of the edge labeled $5$ in $B$ is equal to one.

\usetikzlibrary{fit}
\tikzset{%
  highlight/.style={rectangle,rounded corners,fill=red!15,draw = white,
    fill opacity=0.4,thick,inner sep=0pt}
}
\newcommand{\tikzmark}[2]{\tikz[overlay,remember picture,
  baseline=(#1.base)] \node (#1) {#2};}
\newcommand{\Highlight}[1][submatrix]{%
    \tikz[overlay,remember picture]{
    \node[highlight,fit=(left.north west) (right.south east)] (#1) {};}
}
\begin{figure}
    \centering

{

     \resizebox{.6 \textwidth}{!}{

\begin{tikzpicture}[x=0.75pt,y=0.75pt,yscale=-1,xscale=1]

\draw  [color={rgb, 255:red, 208; green, 2; blue, 27 }  ,draw opacity=1 ][fill={rgb, 255:red, 208; green, 2; blue, 27 }  ,fill opacity=1 ] (29,20) .. controls (29,17.24) and (31.24,15) .. (34,15) .. controls (36.76,15) and (39,17.24) .. (39,20) .. controls (39,22.76) and (36.76,25) .. (34,25) .. controls (31.24,25) and (29,22.76) .. (29,20) -- cycle ;
\draw  [color={rgb, 255:red, 208; green, 2; blue, 27 }  ,draw opacity=1 ][fill={rgb, 255:red, 208; green, 2; blue, 27 }  ,fill opacity=1 ] (64,43) .. controls (64,40.24) and (66.24,38) .. (69,38) .. controls (71.76,38) and (74,40.24) .. (74,43) .. controls (74,45.76) and (71.76,48) .. (69,48) .. controls (66.24,48) and (64,45.76) .. (64,43) -- cycle ;
\draw  [color={rgb, 255:red, 208; green, 2; blue, 27 }  ,draw opacity=1 ][fill={rgb, 255:red, 208; green, 2; blue, 27 }  ,fill opacity=1 ] (145,72) .. controls (145,69.24) and (147.24,67) .. (150,67) .. controls (152.76,67) and (155,69.24) .. (155,72) .. controls (155,74.76) and (152.76,77) .. (150,77) .. controls (147.24,77) and (145,74.76) .. (145,72) -- cycle ;
\draw [color={rgb, 255:red, 0; green, 0; blue, 0 }  ,draw opacity=1 ]   (14.5,61) -- (120.5,61) ;
\draw  [color={rgb, 255:red, 208; green, 2; blue, 27 }  ,draw opacity=1 ][fill={rgb, 255:red, 208; green, 2; blue, 27 }  ,fill opacity=1 ] (212,84) .. controls (212,81.24) and (214.24,79) .. (217,79) .. controls (219.76,79) and (222,81.24) .. (222,84) .. controls (222,86.76) and (219.76,89) .. (217,89) .. controls (214.24,89) and (212,86.76) .. (212,84) -- cycle ;
\draw  [color={rgb, 255:red, 208; green, 2; blue, 27 }  ,draw opacity=1 ][fill={rgb, 255:red, 208; green, 2; blue, 27 }  ,fill opacity=1 ] (294,107) .. controls (294,104.24) and (296.24,102) .. (299,102) .. controls (301.76,102) and (304,104.24) .. (304,107) .. controls (304,109.76) and (301.76,112) .. (299,112) .. controls (296.24,112) and (294,109.76) .. (294,107) -- cycle ;
\draw  [color={rgb, 255:red, 208; green, 2; blue, 27 }  ,draw opacity=1 ][fill={rgb, 255:red, 208; green, 2; blue, 27 }  ,fill opacity=1 ] (375,126.5) .. controls (375,123.74) and (377.24,121.5) .. (380,121.5) .. controls (382.76,121.5) and (385,123.74) .. (385,126.5) .. controls (385,129.26) and (382.76,131.5) .. (380,131.5) .. controls (377.24,131.5) and (375,129.26) .. (375,126.5) -- cycle ;
\draw  [color={rgb, 255:red, 208; green, 2; blue, 27 }  ,draw opacity=1 ][fill={rgb, 255:red, 208; green, 2; blue, 27 }  ,fill opacity=1 ] (466.5,146) .. controls (466.5,143.24) and (468.74,141) .. (471.5,141) .. controls (474.26,141) and (476.5,143.24) .. (476.5,146) .. controls (476.5,148.76) and (474.26,151) .. (471.5,151) .. controls (468.74,151) and (466.5,148.76) .. (466.5,146) -- cycle ;
\draw  [color={rgb, 255:red, 208; green, 2; blue, 27 }  ,draw opacity=1 ][fill={rgb, 255:red, 208; green, 2; blue, 27 }  ,fill opacity=1 ] (599,166) .. controls (599,163.24) and (601.24,161) .. (604,161) .. controls (606.76,161) and (609,163.24) .. (609,166) .. controls (609,168.76) and (606.76,171) .. (604,171) .. controls (601.24,171) and (599,168.76) .. (599,166) -- cycle ;
\draw [color={rgb, 255:red, 0; green, 0; blue, 0 }  ,draw opacity=1 ]   (100.5,19) -- (11.5,19) ;
\draw [color={rgb, 255:red, 0; green, 0; blue, 0 }  ,draw opacity=1 ]   (100.5,19) -- (120.5,0) ;
\draw [color={rgb, 255:red, 0; green, 0; blue, 0 }  ,draw opacity=1 ]   (120.5,61) -- (181,83) ;
\draw    (181,83) -- (360,83) ;
\draw    (181,83) -- (203,106) ;
\draw    (203,106) -- (359,107) ;
\draw    (360,83) -- (359,107) ;
\draw    (203,106) -- (203,188) ;
\draw    (360,83) -- (432,1) ;
\draw    (359,107) -- (401,146) ;
\draw    (401,146) -- (604,147) ;
\draw    (604,147) -- (604,184) ;
\draw    (604,147) -- (654,84) ;
\draw    (401,146) -- (401,186) ;
\draw [color={rgb, 255:red, 0; green, 0; blue, 0 }  ,draw opacity=1 ][fill={rgb, 255:red, 74; green, 144; blue, 226 }  ,fill opacity=1 ]   (100.5,19) -- (101.5,42) ;
\draw [color={rgb, 255:red, 0; green, 0; blue, 0 }  ,draw opacity=1 ]   (101.5,42) -- (120.5,61) ;
\draw    (101.5,42) -- (12.5,41) ;

\draw (60.5,-2.6) node [anchor=north west][inner sep=0.75pt]    {$l_{1}$};
\draw (102.5,22.4) node [anchor=north west][inner sep=0.75pt]    {$l_{2}$};
\draw (79,22.4) node [anchor=north west][inner sep=0.75pt]    {$l_{3}$};
\draw (112,37.9) node [anchor=north west][inner sep=0.75pt]    {$l_{4}$};
\draw (132,46.9) node [anchor=north west][inner sep=0.75pt]    {$l_{5}$};
\draw (165,60.4) node [anchor=north west][inner sep=0.75pt]    {$l_{6}$};
\draw (190.5,65.4) node [anchor=north west][inner sep=0.75pt]    {$l_{7}$};
\draw (177,96.9) node [anchor=north west][inner sep=0.75pt]    {$l_{8}$};
\draw (323.06,105.4) node [anchor=north west][inner sep=0.75pt]  [xslant=0.12]  {$l_{10}$};
\draw (367.5,99.4) node [anchor=north west][inner sep=0.75pt]    {$l_{11}$};
\draw (396,122.9) node [anchor=north west][inner sep=0.75pt]    {$l_{12}$};
\draw (432,126.9) node [anchor=north west][inner sep=0.75pt]    {$l_{13}$};
\draw (526.5,129.4) node [anchor=north west][inner sep=0.75pt]    {$l_{14}$};
\draw (608,140.9) node [anchor=north west][inner sep=0.75pt]    {$l_{15}$};
\draw (239,107.9) node [anchor=north west][inner sep=0.75pt]    {$l_{9}$};

\end{tikzpicture}}  

}

\tiny{
$$
\left[
\begin{array}{ccccccccccccccccc}

\tikzmark{left}{1} & 0 & 0 & 0 & 0 & 0 & 0& 0 & 0 & 0 & 0 & 0 & 0 & 0 & 0 & 0 & 0 
\\
0 & \tikzmark{right}{1} & 0 & 0 & 0 & 0 & 0 & 0 & 0 & 0 & 0 & 0 & 0 & 0 & 0 & 0 & 0  \\ 
\Highlight[first]
1 & 0 & \tikzmark{left}{1} & 0 & -1 & 0 & 0& 0 & 0 & 0 & 0 & 0 & 0 & 0 & 0 & 0 & 0 
\\
0 & 1 & 0 & \tikzmark{right}{-1} & 0 & 0 & 0 & 0 & 0 & 0 & 0 & 0 & 0 & 0 & 0 & 0 & 0 \\ \Highlight[first]
1 & 0 & 1 & 0 & 0 & \tikzmark{left}{1} & {2}& 0 & 0 & 0 & 0 & 0 & 0 & 0 & 0 & 0 & 0 
\\
0 & 1 & 0 & -1 & 0 & -1 & \tikzmark{right}{-1} & 0 & 0 & 0 & 0 & 0 & 0 & 0 & 0 & 0 & 0  \\ \Highlight[first]
1 & 0 & 1 & 0 & 0 & 1 & 2& \tikzmark{left}{2} & 1 & 0 & 0 & 0 & 0 & 0 & 0 & 0 & 0 
\\
0 & 1 & 0 & -1 & 0 & -1 & -1 & -1 & \tikzmark{right}{0} & 0 & 0 & 0 & 0 & 0 & 0 & 0 & 0  \\ \Highlight[first]
1 & 0 & 1 & 0 & 0 & 1 & 2& 2 & 0 & \tikzmark{left}{1} & 1 & 0 & 0 & 0 & 0 & 0 & 0 
\\
0 & 1 & 0 & -1 & 0 & -1 & -1 & -1 & 0 & -1 & \tikzmark{right}{0} & 0 & 0 & 0 & 0 & 0 & 0  \\\Highlight[first]
1 & 0 & 1 & 0 & 0 & 1 & 2& 2 & 0 & {1} & 1 & \tikzmark{left}{1} & 1 & 0 & 0 & 0 & 0
\\
0 & 1 & 0 & -1 & 0 & -1 & -1 & -1 & 0 & -1 & 0 & 0 & \tikzmark{right}{-1} & {0} & 0 & 0 & 0  \\ \Highlight[first]

1 & 0 & 1 & 0 & 0 & 1 & 2& 2 & 0 & {1} & 1 & 1 & 1 & \tikzmark{left}{1} & 1 & 0 & 0 
\\
0 & 1 & 0 & -1 & 0 & -1 & -1 & -1 & 0 & -1 & 0 & 0 & -1 & {-1} & \tikzmark{right}{0} & 0 & 0  \\ \Highlight[first]
1 & 0 & 1 & 0 & 0 & 1 & 2& 2 & 0 & {1} & 1 & 1 & 1 & 1 & 1  & \tikzmark{left}{1} & 0
\\
0 & 1 & 0 & -1 & 0 & -1 & -1 & -1 & 0 & -1 & 0 & 0 & -1 & -1 & 0 & 0  & \tikzmark{right}{-1} \\ \Highlight[first]
\end{array}
\right]
$$}
    \caption{In the upper part of the Figure, we choose a  tree containing all eight markings for a curve $\Gamma$ in the topological type  of maps denoted 5 in Figure \ref{fig:fam}. The matrix below represents the function $\prod_{i=1}^{8} ev_i $ in the coordinates $x,y, l_1, \ldots, l_{15}$ for the cone parameterizing maps of this topological type.}
    \label{fig:matrixmult}
\end{figure}
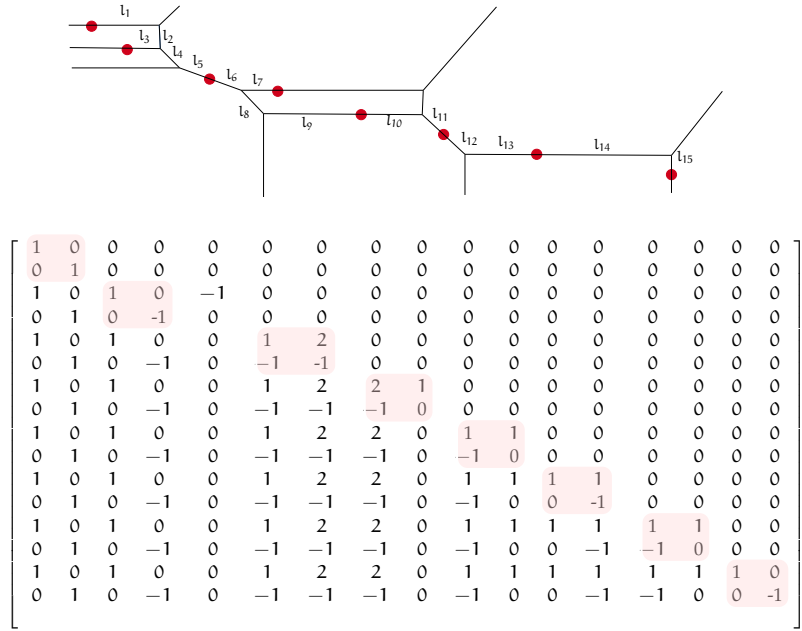

\bibliographystyle{amstest2}
\bibliography{lib}

\end{document}